\newtheorem{thm}{Theorem}[section]
\newtheorem{cor}[thm]{\scshape{Corollary}}
\newtheorem{prop}[thm]{\scshape{Proposition}}
\newtheorem{lemma}[thm]{\scshape{Lemma}}
\newtheorem{claim}[thm]{\scshape{Claim}}
\newtheorem{question}[thm]{\scshape{Question}}
\newtheorem{notation}[thm]{\scshape{Notation}}
\newtheorem{conj}[thm]{\scshape{Conjecture}}
\newtheorem{remark}[thm]{\scshape{Remark}}
\newtheorem{obs}[thm]{\scshape{Observation}}
\newtheorem{example}[thm]{\scshape{Example}}
\newtheorem*{thm*}{Theorem}
\newtheorem*{cor*}{Corollary}
\def\G{\Gamma}
\def\D{\Delta}
\def\={\cong}
\def\mc{\mathcal}
\def\mbb{\mathbb}
\newcommand{\N}[0]{\mathbb{N}}
\newcommand{\subg}[1]{\langle #1 \rangle}
\newcommand{\frp}[0]{\ast}
\newcommand{\F}[0]{\mathbb{F}}
\renewcommand{\G}[0]{\mathcal{G}}
\newcommand{\Z}[0]{\mathbb{Z}}
\newcommand{\GG}[0]{\mathbb{G}}
\newcommand{\nin}[0]{\notin}
\newcommand{\bvee}[3]{ \bigvee\limits_{#1}^{#3}#2}
\newcommand {\BZ}{\mathbb{Z}}
\newcommand{\dis}[1]{d(*, #1 *)}
\newcommand {\onto}{\twoheadrightarrow}
\newcommand{\gvv}{\{G_{v}\}_{v\in \Gamma}} %
\newcommand{\hww}[0]{\{H_{w}\}_{w\in V\Delta}}
\newcommand{\hh}[0]{\mathcal{H}}
\newcommand{\HH}[0]{\mathcal{H}}
\newcommand{\U}[0]{\mathcal{U}}
\newcommand{\W}[0]{\mathcal{W}}
\newcommand{\ff}[0]{\mathcal{F}}
\DeclareMathOperator*{\bfrp}{\ast}
\DeclareMathOperator{\st}{star}
\DeclareMathOperator{\ext}{ext}
\DeclareMathOperator{\Fill}{Fill}
\DeclareMathOperator{\Th}{Th}
\DeclareMathOperator{\lk}{link}
\newcommand{\link}[1]{\lk(#1)}
\DeclareMathOperator{\Core}{Core}
\DeclareMathOperator{\minCore}{MinCore}
\DeclareMathOperator{\ECore}{ECore}
\DeclareMathOperator{\supp}{supp}
\def\acts{\mathrel{\reflectbox{$\righttoleftarrow$}}}
\newenvironment{subproof}[1][\proofname]{%
	\begin{proof}[#1]%
	}{%
	\end{proof}%
}
\newcommand{\env}[1]{\supp(#1)}
\begin{document}
\title{On the elementary theory of graph products of groups}
\author{Montserrat Casals-Ruiz\footnote{Ikerbasque - Basque Foundation for Science and Matematika Saila,  UPV/EHU,  Sarriena s/n, 48940, Leioa - Bizkaia, Spain; Contact: \texttt{montsecasals@gmail.com}},  \ 
	Ilya Kazachkov \footnote{Ikerbasque - Basque Foundation for Science and Matematika Saila,  UPV/EHU,  Sarriena s/n, 48940, Leioa - Bizkaia, Spain; Contact: \texttt{ilya.kazachkov@gmail.com}}, \ Javier de la Nuez Gonz\'alez\footnote{Departamento de Matem\'aticas, Universidad Aut\'onoma de Madrid - Madrid, Spain; Contact: \texttt{jnuezgonzalez@gmail.com}\newline
		The authors were supported by the Basque Government Grant IT974-16 and the Spanish Government grants PID2019-107444GA-I00 and MTM2017-86802-P, partly with FEDER funds. The third author was also supported by the Spanish Government grant MTM2017-82690-P, partly 
		with FEDER funds. \newline
		Keywords: elementary equivalence, right-angled Artin group, graph product of groups.\newline
		AMS subject classification:  20E08, 20E06, 20F70, 03C07.
	}
}
\bibliographystyle{plain}

	\theoremstyle{definition}
	\newtheorem{definition}[thm]{Definition}
	\maketitle
	\begin{abstract}
		In this paper we study the elementary theory of graph products of groups and show that under natural conditions on the vertex groups we can recover (the core of) the underlying  graph and the associated vertex groups. More precisely, we require the vertex groups to satisfy a non-generic almost positive sentence, a condition which generalizes a range of natural ``non-freeness conditions" such as the satisfaction of a group law, having nontrivial center or being boundedly simple. 
		
		As a corollary, we determine an invariant of the elementary theory of a right-angled Artin group, the core of the defining graph, which we conjecture to determine the elementary class of the RAAG. We further combine our results with the results of Sela on free products of groups to describe all finitely generated groups elementarily equivalent to certain RAAGs. We also deduce rigidity results on the elementary classification of graph products of groups for specific types of vertex groups, such as finite, nilpotent or classical linear groups.
	\end{abstract}
	
	\tableofcontents
	\section{Introduction}
	
	Given a finite simplicial graph $\Gamma=(V(\Gamma),E(\Gamma))$ and a collection of groups $\{G_v \mid v\in \Gamma\}$ associated with the vertices of $\Gamma$, the graph product of the groups $G_v$, denoted by $\G=\G(\Gamma, \gvv)$ is defined to be the group $\G=F/R$ where $F$ is the free product of the groups $G_v$ and relations $R$ establish that elements of $G_u$ commute with elements of $G_v$ whenever $(u,v)\in E(\Gamma)$. This group construction is quite rich and many classical families of groups can be viewed as graph products, such as free groups, free abelian groups and right-angled Artin and Coxeter groups, among others.
	
	The aim of this paper is to study the elementary theory of graph products of groups and, more precisely, to explore how much of the underlying structure of the graph product, that is the graph $\Gamma$ and the associated vertex groups $G_v$, is remembered by the elementary theory of the group. Roughly speaking, we show that under some natural conditions on the graph (to be reduced) and the vertex groups (to satisfy a non-generic almost positive sentence), we can recover the optimal part of the graph (its core) and the associated vertex groups. More precisely, we prove the following:
	\begin{thm*}[Interpretation]
		Let $\G=\G(\Gamma, \gvv)$ be a graph product of groups. Suppose that $\Gamma$ is reduced and $G_v$ have non-generic almost positive theory for all $v\in \Gamma$.
		
		Then the core of $\Gamma$ and its associated groups, $\Core(\Gamma, \{G_v\}_{v\in \Core(\Gamma)})$, are interpretable in $\G$. Furthermore, the interpretation is uniform, in particular independent of the graph and the associated subgroups, in an appropriate subclass of graph products.
		
		In particular, if the vertex groups $G_v$, $v\in \Gamma \setminus \Core(\Gamma)$ are not elementarily equivalent to the infinite cyclic group, then $\Gamma$ and $G_v$, $v\in \Gamma$ are uniformly interpretable in $\G$.
	\end{thm*}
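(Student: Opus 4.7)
The plan is to interpret the vertex groups of $\Core(\Gamma)$ one at a time and then recover the core graph from the commutation relations between the interpreted subgroups. Throughout I will denote by $\varphi$ the fixed non-generic almost positive sentence that every vertex group satisfies; non-genericity means that $\varphi$ fails in a suitable ``free-like'' group, typically in a non-abelian free group. This $\varphi$ will serve as the first-order fingerprint that separates genuine vertex-group elements from elements arising by mixing across several vertex groups, where the relevant substructure behaves like a free product.

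The first step is to show that the set of \emph{vertex-type} elements --- those nontrivial $g\in\G$ with $|\supp(g)|=1$ --- is first-order definable up to conjugation. The idea is that if $g$ has support $\{v\}$, then the centralizer $C_\G(g)$ decomposes as a graph product over the star of $v$ with $G_v$ itself as a factor, and so pieces of $C_\G(g)$ inherit $\varphi$ from $G_v$. By contrast, if $\supp(g)$ contains two non-adjacent vertices, or if $g$ is genuinely ``mixed'' in the free-product sense, then the relevant piece of its centralizer embeds a non-abelian free group and therefore fails $\varphi$ (or a Boolean combination of $\varphi$ and its relativizations). This is precisely where non-genericity does the work: a generic sentence could never separate these two cases. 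The hypothesis that $\Gamma$ is reduced is used here to ensure that the star of a vertex does not swallow it up into a strictly larger definable piece.

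Second, I would introduce a definable equivalence relation on vertex-type elements identifying two elements when they lie in a common conjugate of a vertex group. The natural recipe is $g\sim h$ iff after conjugation they belong to the same maximal ``vertex parabolic'' of the centralizer structure, which can be phrased using centralizer equalities. The quotient produces definable subgroups $\h G_v$ in $\G$ each isomorphic to the corresponding $G_v$, with the group operation lifted from $\G$. The edge relation of $\Core(\Gamma)$ is then recovered by the rule: $u$ and $v$ are adjacent if and only if every element of the class of $u$ commutes with every element of the class of $v$. Non-core vertices --- those that are dominated in the reduced sense --- are a priori indistinguishable from the dominating vertex because their generators can be absorbed in the star of the dominator; however, if the associated $G_v$ is not elementarily equivalent to $\Z$, then $\varphi$ (or a strengthened sentence built from it) will still distinguish $G_v$ from the $\Z$-like slack present in the dominator, which gives the ``in particular'' clause. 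Uniformity is automatic once the formulas make no reference to $\Gamma$ or the specific $G_v$.

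The main obstacle I anticipate is the first step: designing the formula that isolates vertex-type elements uniformly across the class. Elements of $\G$ can have arbitrarily long normal forms, their centralizers can be parabolic-by-free in intricate ways, and centralizers of commutators or deep elements can mimic pieces of vertex groups on small subsets of parameters. The technical core of the argument will be a careful analysis of centralizers in graph products --- presumably using Morse-type normal forms and the structure of parabolic subgroups --- to certify that the Boolean combination of relativizations of $\varphi$ I choose holds on $C_\G(g)$ precisely when $g$ is vertex-type. Once this is achieved the remaining steps, while non-trivial, are more bookkeeping: equivalence relations from centralizer equality, commutation-induced edges, and extraction of the core by ignoring dominated vertices.
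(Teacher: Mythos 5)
Your step 1 --- the one you correctly identify as carrying all the weight --- rests on a dichotomy that is false in graph products, and the failure is not repairable within a one-variable, centralizer-based framework. By the description of centralizers (Lemma \ref{l: description of centralizers}), a singular element $g\in G_v$ that is not central in $G_v$ has $C_{\G}(g)=C_{G_v}(g)\times G_v^{\perp}$, which need not contain $G_v$ as a factor and need not satisfy $\varphi$ at all; while a \emph{non-singular} element $g$ with $|\ff(g)|=1$ has centralizer isomorphic to $\subg{g_0}\times\supp(g)^{\perp}$ for a root $g_0$ of $g$, i.e.\ infinite cyclic times a subgraph product. The latter has nontrivial center, hence satisfies a simple non-generic almost positive sentence, and is in general literally isomorphic to the centralizer of a generator of a singular cyclic vertex group with the same orthogonal complement; it certainly does not ``embed a non-abelian free group'' in any way that a relativization of $\varphi$ could detect. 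This is exactly the phenomenon behind $\mathbb{Z}^n\ast\mathbb{Z}\equiv\mathbb{Z}^n\ast F_n$, and it is why the theorem can only recover $\Core(\Gamma)$; your explanation of the core in terms of ``absorption into the star of a dominator'' misses that the real obstruction is the indistinguishability of singular cyclic vertex groups from maximal non-singular cyclic subgroups.

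The mechanism the paper actually uses is of a different nature and cannot be reconstructed from centralizers of single elements. One writes $\varphi$ as $\exists z\,\Theta(z)\neq 1\wedge\forall x\exists y\,\Sigma(x,y,z)=1$ and substitutes for the universally quantified $x$ a uniform finite family of small-cancellation words $w(z,v)$ (Corollary \ref{l: small cancellation multi}), obtaining a positive existential formula $\sigma(z,v)$ in a \emph{pair} of parameters. Non-genericity then enters via formal solutions relative to Diophantine conditions (Corollary \ref{c: formal solutions forall exists}): if $(a,b)$ satisfies $\sigma$ and $\subg{a,b}$ acted irreducibly on some factor where $\Theta(a)$ survives, the small-cancellation tuples would produce a formal solution forcing $\Theta(a)=1$, a contradiction. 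Hence the definable sets $Y_{a,b}$ consist of elements generating ``small'' subgroups, their maximal members are of the form $E_v\times E_v^{\perp}$, and a further recursive level-by-level analysis (Theorems \ref{p: main definability proposition} and \ref{thm: main isolation theorem}) is needed to peel off $E_v^{\perp}$ and isolate the vertex groups before one passes to the quotient by conjugation and reads off the extended core. Your later steps (conjugation equivalence, commutation for edges) do match the endgame of the paper, but without the small-cancellation/formal-solution engine the interpretation never gets off the ground.
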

	
	\begin{cor*} [Elementary rigidity]
		Let $\G=\G(\Gamma, \gvv)$ and $\G'=\G(\Gamma', \{H_w\})$ be graph products of groups. Suppose that $\Gamma, \Gamma'$ are reduced and $G_v, H_w$ have non-generic almost positive theory for all $v\in \Gamma$ and $w\in \Gamma'$.
		
		If $\G \equiv \G'$, then there is a graph isomorphism $f:\Core(\Gamma) \to \Core(\Gamma')$ and $G_v\equiv H_{f(v)}$.
	\end{cor*}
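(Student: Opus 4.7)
The plan is to derive the corollary directly from the Interpretation Theorem via the standard principle that uniform interpretations transfer elementary equivalence: if a single tuple of formulas interprets $\mathcal{M}$ in $\mathcal{N}$ and $\mathcal{M}'$ in $\mathcal{N}'$, then $\mathcal{N} \equiv \mathcal{N}'$ implies $\mathcal{M} \equiv \mathcal{M}'$. No group-theoretic argument beyond the Interpretation Theorem should be required.

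First, I would apply the Interpretation Theorem to both $\G$ and $\G'$. By the uniformity clause, the same defining formulas produce, when evaluated in $\G$, the labeled core $\Core(\Gamma, \{G_v\}_{v\in \Core(\Gamma)})$, and when evaluated in $\G'$, the labeled core $\Core(\Gamma', \{H_w\}_{w\in \Core(\Gamma')})$. I would view each as a single multi-sorted structure: a finite vertex sort with the edge relation, a sort for the disjoint union of the associated vertex groups equipped with the group operations, and a function $\pi$ recording which vertex each group element belongs to.

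From $\G \equiv \G'$ it then follows that the two interpreted multi-sorted structures are elementarily equivalent. The vertex-sort reduct is finite, and any two elementarily equivalent finite structures in a finite language are isomorphic, which yields a graph isomorphism $\Core(\Gamma) \to \Core(\Gamma')$. To upgrade this to label-preserving data, I would name the vertices of $\Core(\Gamma)$ by parameters $v_1, \ldots, v_n$ (possible because the sort is finite and its size is first-order definable) and, using elementary equivalence with these parameters, obtain matching parameters $w_1, \ldots, w_n$ in $\Core(\Gamma')$ whose induced assignment $f(v_i) := w_i$ is still a graph isomorphism. For every sentence $\varphi$ in the language of groups, transferring via $\pi$ gives $G_{v_i} \models \varphi \iff H_{w_i} \models \varphi$, i.e.\ $G_{v_i} \equiv H_{f(v_i)}$.

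The main potential obstacle is purely bookkeeping: checking that the Interpretation Theorem genuinely interprets the \emph{labeled} core as a single combined multi-sorted object, so that the transferred elementary equivalence simultaneously yields the graph isomorphism and the vertex-wise elementary equivalence, rather than two disconnected pieces of data that must be glued after the fact. Given that the theorem is stated precisely in terms of the labeled pair $\Core(\Gamma, \{G_v\}_{v\in \Core(\Gamma)})$, this gluing is automatic and the corollary follows.
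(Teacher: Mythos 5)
Your proposal is correct and follows essentially the same route as the paper: the paper's own proof (Theorem \ref{thm:elementary equivalence}) consists of observing that finiteness places both graph products in a common class $\G(\textbf{K}_{N},\textbf{C}^{r}_{\Phi})$ where the labelled core is uniformly interpretable (Corollary \ref{cor:main interpretation}), and then invoking exactly the transfer principle you describe. The only detail worth making explicit is the one the paper itself flags — that $\G$ and $\G'$ must be checked to lie in a \emph{common} uniform subclass (achieved by taking $\Phi$ to contain the finitely many sentences witnessed by all vertex groups of both and $N$ the larger clique bound) — but this is routine bookkeeping and your argument is otherwise complete.
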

	
	When one considers this type of questions, it is immediately apparent that in order to be able to recover at least part of the underlying structure of the graph product, some restrictions have to be imposed, especially on the class of groups. Indeed, if one restricts to the class of graph products of finite groups, then one can show, see \cite{CKR}, that the graph and the vertex groups can be fully recovered from the elementary theory of the group. On the other hand, all non-abelian free groups are elementarily equivalent and so in this case, the elementary theory does not remember the number of vertices of the defining graph (if we view free groups as graph products, where the underlying graph is edgeless and the vertex groups are infinite cyclic).
	
	The key difference that determines whether or not the underlying structure can be recovered is the fact that in a free product of finite groups, the set consisting of all vertex groups (and all their conjugates) is definable, since,  up to conjugacy, vertex groups is the set of elements of bounded exponent, while this is not the case  in the free group.
	
	Our approach is to determine conditions on the vertex groups that assure a phenomenon similar to the case of finite groups. However, when one tries to extend from finite to infinite groups, say to free abelian groups, one immediately runs into the difficulties with the infinite cyclic groups highlighted above  and arrives at the following dichotomy: if we consider free products of free abelian groups of rank at least $2$, then the elementary theory of the free product does remember the number of factors and the ranks of the vertex groups in the free product. However, if we allow for infinite cyclic groups, then, by a result of Sela, we cannot recover those factors as for instance $\mathbb Z^n \ast \mathbb Z \equiv \mathbb Z^n \ast F_n$ for all $n\ge 1$. This phenomenon shows that even if we consider vertex groups that satisfy strong ``non-freeness" conditions, we are only able to recover part of the graph $\Gamma$, called the core, see Definition \ref{defn:ecore}, which excludes some vertices whose associated groups are (elementary equivalent to the) infinite cyclic subgroup. (In particular, if none of the vertex groups are (elementary equivalent to the) infinite cyclic group, then we can interpret the graph $\Gamma$ and all the corresponding vertex groups.)
	
	The condition that we require on the vertex groups is to have a non-generic almost positive theory.
	
	The positive theory of a group, denoted by $\Th^+(G)$, is the fragment of the first order theory that contains the positive sentences satisfied by $G$. Recall that a positive sentence is one that admits prenex normal form of the following type:
	$$
	\forall x^{1}\exists y^{1}\forall x^{2}\dots\forall x^{m}\exists y^{m}\,\bvee{j=1}{\Sigma_{j}(x^{1},y^{1},x^{2}, \dots, x^{m},y^{m})=1}{k},
	$$
	where $x^i$ and $y^i$ are tuples of variables for all $1\leq i\leq m$ and $\Sigma_j$ is a conjunction of atomic terms (i.e. a conjunction of equations), for all $1\leq j \leq k$.
	
	We say that a group has nontrivial positive theory if it satisfies a positive sentence that it is not satisfied by a non-abelian free group. The class of groups with nontrivial positive theory is quite rich and contains, among others, the class of groups satisfying an identity, for instance nilpotent and solvable groups; groups with a verbal subgroup of finite width, for instance groups with finite commutator width such as profinite groups, some groups acting on rooted trees, such as Grigorchuck and Sidki groups, some groups of homeomorphisms and diffeomorphisms such as the Thompson's group, see Section \ref{sec: generalized positive theory} for other examples.
	
	The almost positive theory of a group, as the name suggests, is a fragment of the first-order theory that contains the positive theory. Roughly speaking, we consider sentences with some controlled inequalities (which interact well with the formal solutions of the positive part of the sentence), see Definition \ref{defn:generic ap}. At this point we will not discuss the precise definition in details, but the type of property that one can express with a non-generic almost positive sentence is, for instance, to have a nontrivial center: $\exists z \ne 1 \,  \forall x \, [x,z]=1$.
	
	\bigskip
	
	The second restriction that appears in the statement of our main result, refers to the graph - we require $\Gamma$ to be reduced. The reason behind this requirement comes from the fact that the decomposition of a group as a graph product may not be unique. Indeed, for instance, Baumslag provided an example of a nilpotent group $N$ which is both the direct product of $2$ and $k$ directly indecomposable subgroups. It follows from our results that, under the assumptions on the vertex groups, direct products are the only impediment for the rigidity of the graph product up to isomorphism. More precisely, we deduce the following:
	\begin{cor*}[Algebraic rigidity]
		Let $\G=\G(\Gamma, \gvv)$ and $\G'=\G(\Gamma', \{H_w\})$ be graph products of groups. Suppose that $\Gamma, \Gamma'$ are reduced and $G_v, H_w$ have non-generic almost positive theory for all $v\in \Gamma$ and $w\in \Gamma'$.
		
		If $\G \simeq \G'$, then there is a graph isomorphism $f:\Gamma \to \Gamma'$ and $G_v\simeq H_{f(v)}$.
	\end{cor*}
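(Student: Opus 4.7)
The plan is to derive the algebraic rigidity by combining the Elementary Rigidity Corollary with the extra strength of an actual group isomorphism, and then to pass from the core to the full graph via an algebraic uniqueness argument. Since $\G\simeq \G'$ implies $\G\equiv \G'$, the Elementary Rigidity Corollary already yields a graph isomorphism $f_0\colon \Core(\Gamma)\to \Core(\Gamma')$ together with elementary equivalences $G_v\equiv H_{f_0(v)}$ for each core vertex $v$. These elementary equivalences can be upgraded to genuine isomorphisms via the Interpretation Theorem: that theorem asserts that the structure $(\Core(\Gamma),\{G_v\}_{v\in \Core(\Gamma)})$ is interpretable in $\G$ by a scheme of formulas which is uniform across the relevant class of graph products, so the same scheme interprets $(\Core(\Gamma'),\{H_w\}_{w\in \Core(\Gamma')})$ in $\G'$. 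An isomorphism $\phi\colon \G\to \G'$ transports one interpreted structure onto the other, yielding both the graph isomorphism of cores and honest isomorphisms $G_v\simeq H_{f_0(v)}$ for every $v\in \Core(\Gamma)$.

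Next one must extend $f_0$ from the core to all of $\Gamma$. The vertices in $\Gamma\setminus \Core(\Gamma)$ have associated groups elementarily equivalent to $\Z$ and sit in positions that allow the elementary, but not algebraic, swap underlying the identification $\Z^n\ast \Z\equiv \Z^n\ast F_n$. The task is to show that under the reduced hypothesis on $\Gamma$ and the non-generic almost positive theory hypothesis on the vertex groups, the graph product decomposition of $\G$ is rigid up to graph isomorphism and isomorphism of vertex groups. Concretely, I would characterise each non-core vertex subgroup as a canonical subgroup of $\G$, for instance via maximality conditions inside parabolic or centraliser subgroups determined by adjacencies to the already identified core vertex subgroups, and then transport these canonical subgroups via $\phi$ to read off the remaining bijection of vertices and the corresponding isomorphisms $G_v\simeq H_{f(v)}$.

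The main obstacle is this second step. Since the core is constructed to capture precisely the elementary invariant of the graph product, the extension to the full graph cannot come from a first-order interpretation argument and must rely on a genuinely algebraic uniqueness theorem for graph product decompositions. The delicate point is to rule out reshufflings between a $\Z$-like vertex factor and an adjacent abelian-like parabolic without violating the reduced hypothesis; this is where the reduced condition on $\Gamma$ and the non-genericity of the almost positive theory of the vertex groups must be exploited in an essential way, because they are exactly what prevents the non-core factors from behaving like free subgroups in a free product of free groups.
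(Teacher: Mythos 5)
Your first step is sound and matches the paper: since $\G\simeq\G'$ forces $\G\equiv\G'$, the uniform interpretation of the core structure means the isomorphism carries the definable family of vertex subgroups of $\G$ onto that of $\G'$, which yields an induced graph isomorphism on the (minimal) cores together with honest isomorphisms, not merely elementary equivalences, of the corresponding vertex groups. The paper's proof (Corollary \ref{cor:rigidity isomoprhism}) does exactly this, using the uniform definability of $\minCore(\Gamma^e)$ and $\Gamma^c$ and composing with an inner automorphism so that the isomorphism respects the conjugacy classes of these subgroups.

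The gap is in your second step, and the approach you sketch there would fail. You propose to ``characterise each non-core vertex subgroup as a canonical subgroup of $\G$'' via maximality conditions in parabolics or centralisers. But the weak vertices carry groups elementarily equivalent to $\Z$, and the corresponding cyclic factors are \emph{not} canonical as individual subgroups: already in $F_2=\subg{a}\ast\subg{b}$ the decompositions $\subg{a}\ast\subg{b}$ and $\subg{a}\ast\subg{ab}$ give different vertex subgroups, and no maximality or centraliser condition singles out one basis. What an isomorphism does preserve is not the individual weak vertex subgroups but, for each equivalence class $[v]$ of weak vertices sharing a common link, the subgraph product $\mc{G}(\Gamma_{[v]}\cup\Gamma_{>[v]})$ and its normal subgroup $\mc{G}(\Gamma_{>[v]})$; the quotient is a free group whose rank equals the number of weak vertices in the class. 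The paper's argument proceeds by induction on the height of the links (so that $\mc{G}(\Gamma_{>[v]})$ has already been matched before the quotient is taken), matches these free quotients under the isomorphism, and concludes equality of ranks, which is what extends the bijection from $\minCore(\Gamma)$ to all of $\Gamma$. Without replacing your ``canonical subgroup'' step by this rank-counting argument (or an equivalent algebraic uniqueness statement), the extension from the core to the full graph is not established.
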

	
	We notice that one may be able to generalise the conditions on the vertex groups and still be able to interpret them inside the elementary theory of the graph product, but it is our belief that recovering the core of the graph is the best one can do. In this sense, our result is optimal.
	
	We then apply our main result to deduce rigidity results on the classification of some classes of graph products up to elementary equivalence, and to study the elementary classification of (certain) RAAGs.
	
	In the case of RAAGs, our main theorem shows that the core of the graph is an elementary invariant of the group. In this setting, we deduce the following
	\begin{cor*}
		Let $G=\G(\Gamma, \{ \mathbb Z\})$ and $G'=\G(\Gamma, \{ \mathbb Z\})$ be two right-angled Artin groups. If $G\equiv G'$, then the cores of $G$ and $G'$ are isomorphic.
	\end{cor*}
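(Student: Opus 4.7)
The plan is to apply the Elementary Rigidity Corollary above directly to the given RAAGs. The two hypotheses to verify are (i) that the defining graphs are reduced and (ii) that the vertex groups have non-generic almost positive theory. For (ii) the vertex group $\mathbb{Z}$ is abelian, so it satisfies the positive identity $[x,y]=1$, a positive sentence failing in every non-abelian free group; by the discussion preceding the main theorem, groups satisfying a non-trivial positive identity have non-generic almost positive theory, so (ii) is in hand.

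For (i) a small repackaging is needed because a RAAG defining graph with an ``adjacent twin'' class $\{v_1,\dots,v_n\}$ (distinct vertices sharing the same closed star) is not reduced. In that case $G$ can equally be presented as $\G(\bar{\Gamma}, \{\mathbb{Z}^{n_u}\})$, where $\bar{\Gamma}$ is obtained from $\Gamma$ by collapsing each such class to a single vertex $u$ carrying the vertex group $\mathbb{Z}^{n_u}$. The new graph $\bar{\Gamma}$ is reduced, and each $\mathbb{Z}^{n_u}$ is still abelian, so both hypotheses of the corollary continue to hold. Moreover, $\Core(\Gamma)$ is recoverable from the labelled graph $\Core(\bar{\Gamma})$ by blowing up each vertex labelled $\mathbb{Z}^{n_u}$ into a clique of size $n_u$. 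Apply this reduction to both $G$ and $G'$.

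The Elementary Rigidity Corollary then produces a graph isomorphism $f:\Core(\bar{\Gamma})\to\Core(\bar{\Gamma}')$ with $\mathbb{Z}^{n_v}\equiv \mathbb{Z}^{n_{f(v)}}$ for each core vertex $v$. Since $\mathbb{Z}^m\equiv \mathbb{Z}^n$ implies $m=n$ (the rank is first-order expressible, e.g.\ via the quotient by squares), the labels match, so blowing the vertices back up into cliques of the appropriate sizes promotes $f$ to the desired graph isomorphism $\Core(\Gamma)\cong \Core(\Gamma')$.

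The main obstacle I anticipate is not any single logical step but the combinatorial verification that the twin-collapsing operation $\Gamma \mapsto \bar{\Gamma}$ commutes with $\Core$ in the precise sense used in the paper, and that the resulting labelled graph faithfully records the clique-blow-up data needed to reconstruct $\Core(\Gamma)$. Once this is established, the conclusion is an immediate invocation of the preceding corollary.
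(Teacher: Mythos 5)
Your proposal is correct and follows essentially the same route as the paper: pass to the positive reduced graph by collapsing twin classes into vertices labelled by free abelian groups, invoke the elementary rigidity result for that reduced presentation, and then blow the cliques back up using the fact that the ranks are determined. The paper packages the blow-up step through its $\mathcal{UDF}$ machinery (Corollary \ref{cor:iso graph}) rather than the explicit observation that $\mathbb{Z}^m\equiv\mathbb{Z}^n$ forces $m=n$, but this is only a difference in presentation, not in substance.
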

	
	In fact, we conjecture that the core determines the elementary class of a RAAG, that is, we propose the following conjecture:
	
	\begin{conj}
		Let $G=\G(\Gamma, \{ \mathbb Z\})$ and $G'=\G(\Gamma, \{ \mathbb Z\})$ be two right-angled Artin groups. Then, $G\equiv G'$ if and only if the cores of $G$ and $G'$ are isomorphic.
	\end{conj}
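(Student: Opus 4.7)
The forward direction is immediate from the Elementary Rigidity Corollary: the infinite cyclic group $\mathbb{Z}$ has non-generic almost positive theory (witnessed, for instance, by $\exists z\neq 1\,\forall x\,[x,z]=1$), so the corollary applies and $G\equiv G'$ forces a graph isomorphism $\Core(\Gamma)\cong\Core(\Gamma')$ (the vertex-group matching condition $\mathbb{Z}\equiv\mathbb{Z}$ being automatic). The substantive content of the conjecture is therefore the converse, which is what I address below.

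My plan is to produce, for every RAAG $G=\G(\Gamma,\{\mathbb{Z}\})$, a canonical elementarily equivalent RAAG $G_{\Core(\Gamma)}$ depending only on the isomorphism type of $\Core(\Gamma)$. I would try to establish this in two steps. First, I would prove a \emph{move lemma}: removing (or, dually, attaching) a vertex $v\in\Gamma\setminus\Core(\Gamma)$ from a reduced graph does not change the elementary theory of the associated RAAG. By the definition of the core such a $v$ is algebraically dominated by another vertex $u$ whose star absorbs the link of $v$, and the simplest non-trivial instance of the lemma is precisely Sela's equivalence $\mathbb{Z}^n\ast\mathbb{Z}\equiv\mathbb{Z}^n\ast F_n$ referenced in the introduction. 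Iterated applications of the move lemma would reduce any $G$ to a RAAG supported on its core, and hence identify the elementary class of $G$ with that of $G_{\Core(\Gamma)}$.

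For the move lemma itself, the proof should follow the blueprint of Sela's work on the elementary theory of free products. Given a $\forall\exists$-sentence of the form $\forall\bar{x}\,\exists\bar{y}\,\Sigma(\bar{x},\bar{y})=1\wedge\Psi(\bar{x},\bar{y})\neq 1$ true in $G$, one would attach to it a finite collection of formal solutions and shortening quotients and show that the contribution of the extra vertex $v$ consists only of degrees of freedom orthogonal to the satisfaction of the sentence in $G_{\Core(\Gamma)}$. The machinery developed in the body of the present paper --- in particular the uniform interpretation of the core and its handling of ``cyclic'' vertices just outside it --- should supply the right language in which to phrase these formal solutions, so that the argument reduces to a statement about the combinatorics of the defining graph.

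The principal obstacle, as in any undertaking of this flavour, is the Makanin--Razborov and JSJ apparatus for graph products, which is only partially developed compared with the free-group case. One must in particular control how cyclic boundary vertices behave under shortening quotients, and this is precisely the subtlety that forced the ``core'' hypothesis into the main interpretation theorem. A lighter alternative worth trying first is a direct back-and-forth or ultrapower argument: if one can show that every definable subgroup of $G$ is, up to conjugation, a parabolic supported on $\Core(\Gamma)$, then the elementary equivalence $G\equiv G_{\Core(\Gamma)}$ may be accessible without invoking the full weight of Sela's limit-group theory; but such a classification of definable subgroups is itself a substantial theorem that would need to be established.
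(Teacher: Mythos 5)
You should first note that the statement you are proving is labelled a \emph{conjecture} in the paper: the authors establish only the ``only if'' direction (elementary equivalence implies isomorphic cores), which follows from their interpretation theorem exactly as you say, and they explicitly leave the converse open, verifying it only for Droms RAAGs (Theorem \ref{thm:eqdromsraags}) where the defining graph decomposes recursively into free products and direct products with $\mathbb{Z}$, so that Sela's free-product theory applies directly. Your treatment of the forward direction is correct and matches the paper. For what it is worth, the paper also records the weaker unconditional fact that isomorphic cores force the same universal and existential theories (Lemma \ref{lem:univequiv}).

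The converse half of your proposal is a research programme rather than a proof, and the gap is exactly where you locate it. Your ``move lemma'' --- that deleting a redundant vertex of a reduced graph preserves the elementary theory --- \emph{is} the content of the conjecture; it is not a reduction of it. The only instances currently known are those where the redundant vertex sits in a free-product decomposition (e.g.\ $F_{n+1}\equiv F_n$ and $\mathbb{Z}^n\ast\mathbb{Z}\equiv\mathbb{Z}^n\ast F_n$), and these all go through Sela's Diophantine-geometry machinery for free products. For a redundant vertex whose link is nonempty (say a leaf of a long path), the group does not split as a free product over the trivial group, and no Makanin--Razborov / shortening-quotient / formal-solution apparatus is available for the relevant splittings of graph products; you acknowledge this, but it means the argument cannot be completed with the tools in this paper or in the literature it cites. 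The alternative back-and-forth route via a classification of definable subgroups is likewise an unproved (and probably comparably hard) input. So the proposal correctly proves the direction the paper proves, and correctly identifies --- but does not close --- the open direction.
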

	
	Notice that if two RAAGs have isomorphic  cores, then they have the same universal and existential theories, see Lemma \ref{lem:univequiv}.
	
	Furthermore, combining our results with Sela's work on free products, \cite{sela2010diophantine}, we show that the conjecture holds for the class of Droms RAAGs, which is defined to be the class of RAAGs whose finitely generated subgroups are again RAAGs. In fact, we are able to describe all finitely generated groups elementarily equivalent to a given Droms RAAG. In Theorem \ref{thm:eqdromsraags}, we give a recursive description of an elementary graph tower over a Droms RAAG and show the following
	\begin{thm}
		Let $G=\G(\Gamma, \{ \mathbb Z\})$ be a Droms RAAG and $H$ be a finitely generated group. Then, $G \equiv H$ if and only if $H$ is an elementary graph tower over $G$.
	\end{thm}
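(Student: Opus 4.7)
The plan is to induct on the complexity of the Droms RAAG $G$, exploiting the well-known recursive description of Droms RAAGs as those obtained from $\mathbb{Z}$ by alternating free products and direct products with copies of $\mathbb{Z}$. The base case $G=\mathbb{Z}$ is handled by the fact that the only finitely generated group elementarily equivalent to $\mathbb{Z}$ is $\mathbb{Z}$ itself, which we identify with the trivial tower.

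For the backward implication, I would argue by induction on the tower height that every elementary floor preserves elementary equivalence, so that any elementary graph tower $H$ over $G$ satisfies $H\equiv G$. The two floor types should correspond exactly to the two Droms building moves: free-product floors, handled by Sela's theorem on elementary extensions of free products \cite{sela2010diophantine}, and direct-product-with-$\mathbb{Z}$ floors, handled by the elementary fact that $A\equiv B$ implies $A\times\mathbb{Z}\equiv B\times\mathbb{Z}$.

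For the forward implication, suppose $G\equiv H$ with $H$ finitely generated. First apply the Interpretation Theorem of this paper with vertex groups $\mathbb{Z}$ to recover $\Core(\Gamma)$ together with its $\mathbb{Z}$-vertex groups inside $H$; for a Droms graph, the vertices of $\Gamma\setminus\Core(\Gamma)$ are exactly the pendant or isolated ones contributing free $\mathbb{Z}$-factors in the Grushko decomposition of $G$. Hence $G=G_1\ast\cdots\ast G_k\ast F_n$ with each $G_i$ a freely indecomposable Droms RAAG of strictly smaller complexity. Sela's classification of finitely generated groups elementarily equivalent to a free product \cite{sela2010diophantine} then produces a matching decomposition of $H$ with $H_i\equiv G_i$ and the free part of $H$ realised as a free-product tower floor over $F_n$. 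For each freely indecomposable factor $G_i$, either $G_i=\mathbb{Z}$ (base case) or $G_i$ has nontrivial center generated by its cone vertices; in the latter case the center is a definable direct factor, so $G_i\cong Z(G_i)\times G_i'$ with $G_i'$ a smaller Droms RAAG, and a corresponding decomposition $H_i\cong C\times H_i'$ with $H_i'\equiv G_i'$ is extracted, so that the induction hypothesis applies.

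The main obstacle is fitting the two kinds of floors into one coherent notion of elementary graph tower and verifying that the recursive decomposition in the forward direction produces exactly such a tower. The delicate step is the extraction of the direct factor $H_i\cong C\times H_i'$ matching $G_i\cong Z(G_i)\times G_i'$: Sela's free-product classification and the direct-product-with-$\mathbb{Z}$ decomposition must interact cleanly through elementary equivalence, which requires using definability of the center together with the freeness of $Z(G_i)$. I expect the heaviest lifting to be managing this alternation between free-product and direct-product steps across the induction.
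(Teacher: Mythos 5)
Your proposal follows essentially the same route as the paper's proof of Theorem \ref{thm:eqdromsraags}: induction on the recursive Droms structure ($D_i$, $F_i$, $Z_i$), Sela's and Casals-Ruiz--Kazachkov's classification of finitely generated groups elementarily equivalent to a free product for the free-product floors, splitting off the center for the direct-product-with-$\mathbb{Z}$ floors, and the interpretation theorem of this paper to match the freely indecomposable factors. The one step you flag as delicate --- extracting $H_i\cong C\times H_i'$ with $C\equiv\mathbb{Z}^m$ and $H_i'\equiv G_i'$ from $H_i\equiv\mathbb{Z}^m\times G_i'$ --- is exactly what the paper imports as a citation (Corollary A of \cite{casals2010elements}), so your outline is complete modulo that known result.
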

	
	Finally, we mention some of the rigidity results that follow from our main result. First of all, we recover the classification of graph products of finite groups.
	
	\begin{cor}[cf. \cite{CKR}]
		Let $G=\mathcal G(\Gamma, \gvv)$ and $G'=\mathcal G (\Gamma', \hww)$ where $G_v, H_w$ are finite groups. Then
		$$
		G \equiv G' \Leftrightarrow G\simeq G'.
		$$
		In particular, two right-angled Coxeter groups are elementarily equivalent if and only if they are isomorphic.
	\end{cor}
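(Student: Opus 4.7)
The forward implication $G\simeq G'\Rightarrow G\equiv G'$ is trivial, so I focus on the converse. My plan is to reduce the statement to the Elementary Rigidity Corollary above and then upgrade its conclusion using the special features of finite groups.

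First, I would verify the hypotheses. Any finite group $F$ of exponent $n$ satisfies the positive identity $\forall x\,(x^{n}=1)$, which fails in every non-abelian free group, so $F$ has nontrivial positive and hence non-generic almost positive theory. Moreover, a finite group is not elementarily equivalent to $\mathbb{Z}$, so the ``In particular'' clause of the Interpretation Theorem applies: the whole graph, and not just its core, is recovered, and the conclusion of the Elementary Rigidity Corollary can therefore be strengthened from $\Core(\Gamma)\simeq\Core(\Gamma')$ to $\Gamma\simeq\Gamma'$.

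Next, I would reduce to the case in which $\Gamma$ and $\Gamma'$ are reduced. Any graph product of finite groups admits a reduced presentation, obtained by collapsing direct-factor subgraphs into single vertices; the resulting vertex groups remain finite, so the hypotheses continue to hold. With this pre-processing done, the Elementary Rigidity Corollary produces a graph isomorphism $f:\Gamma\to\Gamma'$ with $G_{v}\equiv H_{f(v)}$ for every $v$. To finish I would invoke the classical observation that finite groups in the (finite) language of groups are determined up to isomorphism by their first-order theory: the entire multiplication table is captured by a single sentence. Therefore $G_{v}\simeq H_{f(v)}$ for all $v$, and since the isomorphism class of a graph product over a reduced graph is determined by the graph together with the vertex groups, we conclude $\G\simeq\G'$. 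The right-angled Coxeter statement is then immediate, since any RACG is a graph product whose vertex groups are all copies of $\mathbb{Z}/2\mathbb{Z}$, which are finite.

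The only subtle point I anticipate is the reduction to a reduced graph: one must check that the amalgamation of direct-factor subgraphs into single vertices can be carried out so as to preserve the isomorphism type of the ambient group while keeping the vertex groups finite. Everything else is either a direct application of the Interpretation Theorem and the Elementary Rigidity Corollary, or the standard fact that elementary equivalence coincides with isomorphism for finite structures.
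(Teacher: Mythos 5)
Your overall route is the one the paper intends (no separate proof of this corollary is given in the text; it is meant to follow from the interpretation and rigidity results exactly as you describe): finite groups of exponent $n$ satisfy the nontrivial positive identity $\forall x\, x^{n}=1$, they are not elementarily equivalent to $\mathbb Z$, so no vertex is weak and the core is the entire reduced graph, and one concludes via first-order rigidity of finite structures. There is, however, one step that fails as stated. The rigidity results actually proved in the body (Theorem \ref{thm:elementary equivalence}, Corollary \ref{cor:rigidity isomoprhism}) require the graph to be \emph{positive reduced} in the sense of Definition \ref{defn:reduced graph}, not merely reduced in the sense of Definition \ref{defn:reducedgraph}, and the passage to a positive reduced presentation (Lemma \ref{lem: graph is positive reduced}) does \emph{not} keep the vertex groups finite: the collapsed subgraph $\Lambda$ is allowed to have factors $\Lambda_i$ consisting of two non-adjacent vertices labelled $\mathbb Z/2\mathbb Z$, whose graph product is the infinite dihedral group. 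Already for $G=\mathbb Z/2\mathbb Z\ast\mathbb Z/2\mathbb Z$ the positive reduced presentation is a single vertex labelled $D_{\infty}$, and in general the reduced vertex groups are direct products of finite groups with copies of $D_{\infty}$. Your final appeal to ``elementary equivalence coincides with isomorphism for finite structures'' therefore does not apply to these vertex groups.

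The gap is repairable but needs to be addressed explicitly. The collapsed vertex groups still satisfy a nontrivial positive sentence (they are finite-by-virtually-abelian and satisfy a group identity), so the elementary rigidity theorem still yields a graph isomorphism of the positive reduced graphs together with elementary equivalence of the corresponding vertex groups. One must then argue that two elementarily equivalent groups of the form $F\times D_{\infty}^{k}$ with $F$ finite are isomorphic; this can be done, for instance, along the lines of the proof of Theorem \ref{thm:eqdromsracgs}, where it is shown that a finitely generated group elementarily equivalent to $D_{\infty}$ is isomorphic to $D_{\infty}$, or by invoking Oger's theorem for finitely generated abelian-by-finite groups. With that supplement, undoing the collapse recovers an isomorphism of the original graph products and your argument closes.
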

	
	Combining our result with results on elementary classification of nilpotent and classical linear groups we deduce the following corollaries.
	
	\begin{cor*}[Rigidity of the class of graph products of (non-cyclic) finitely generated nilpotent groups]
		Let $G=\mathcal G(\Gamma, \gvv)$ and $G'=\mathcal G (\Gamma', \{H_w\})$, where $G_v, H_w$ are finitely generated nilpotent groups, but not the infinite cyclic group.  Then
		$$
		G \equiv H \Leftrightarrow \Gamma \simeq \Gamma' \hbox{ and } G_v \times \mathbb Z \simeq H_v \times \mathbb Z.
		$$
	\end{cor*}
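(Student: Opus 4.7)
The plan is to combine the Interpretation/Elementary Rigidity theorem already stated in the introduction with Oger's theorem on the elementary classification of finitely generated nilpotent groups, which asserts that for such groups $A,B$ one has $A\equiv B$ if and only if $A\times\mathbb Z\simeq B\times\mathbb Z$. Throughout I assume, as in the other rigidity corollaries, that the graph products are written in reduced form (which can always be arranged for finitely generated nilpotent vertex groups).

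First I verify the hypotheses of the Interpretation theorem. Every non-trivial finitely generated nilpotent group $N$ has non-trivial centre, hence satisfies the non-generic almost positive sentence $\exists z\,(z\neq 1 \wedge \forall x\,[x,z]=1)$ highlighted in the introduction. If in addition $N$ is not infinite cyclic, then $N$ is either non-abelian, has non-trivial torsion, or is free abelian of rank at least two, and in each case a standard first-order sentence separates $N$ from $\mathbb Z$. Thus both $\G$ and $\G'$ lie in the regime where the entire graph and all vertex groups, not just the core, are interpretable.

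The direction $\Rightarrow$ is now essentially immediate: assuming $\G\equiv\G'$, the Elementary Rigidity corollary yields a graph isomorphism $f:\Gamma\to\Gamma'$ with $G_v\equiv H_{f(v)}$ at every vertex, and Oger's theorem converts each of these vertex-wise equivalences into the desired isomorphism $G_v\times\mathbb Z\simeq H_{f(v)}\times\mathbb Z$.

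For the direction $\Leftarrow$, Oger's theorem run backwards turns $G_v\times\mathbb Z\simeq H_{f(v)}\times\mathbb Z$ into $G_v\equiv H_{f(v)}$, and the task is to lift these vertex-wise equivalences to $\G\equiv\G'$ over the common graph $\Gamma\simeq\Gamma'$. The natural route is via ultraproducts: by the Keisler--Shelah theorem, choose a non-principal ultrafilter $\mathcal U$ realising all equivalences as isomorphisms $G_v^{\mathcal U}\simeq H_{f(v)}^{\mathcal U}$ simultaneously, giving $\G(\Gamma,\{G_v^{\mathcal U}\})\simeq \G(\Gamma,\{H_{f(v)}^{\mathcal U}\})$, and combine this with the elementarity of the diagonal embedding $\G(\Gamma,\{G_v\})\hookrightarrow \G(\Gamma,\{G_v^{\mathcal U}\})$ (and its analogue for the $H_{f(v)}$). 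The main obstacle is precisely this last elementarity statement — it is the graph-product analogue of the Feferman--Vaught theorem (for direct products, the complete graph) and of Sela's preservation result for free products (the edgeless graph), and it requires a normal-form argument internal to graph products, presumably supplied elsewhere in the paper or imported from the literature.
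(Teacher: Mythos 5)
Your forward implication is exactly the paper's (essentially one-line) argument: the vertex groups have nontrivial centre, hence satisfy a simple non-generic almost positive sentence, so the Elementary Rigidity corollary produces a graph isomorphism $f\colon\Gamma\to\Gamma'$ with $G_v\equiv H_{f(v)}$, and Oger's theorem converts each vertex-wise equivalence into $G_v\times\mathbb Z\simeq H_{f(v)}\times\mathbb Z$. Your caveat about passing to reduced form is not cosmetic, by the way: the paper's own introduction recalls Baumslag's finitely generated nilpotent groups admitting direct decompositions with different numbers of indecomposable factors, so without the (positive) reducedness hypothesis on $\Gamma,\Gamma'$ the literal statement would fail (two complete graphs of different sizes can carry the same group). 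The statement is only meaningful, and only proved, for reduced defining graphs.

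The genuine issue is the converse, and here you should not soften your own diagnosis: the preservation statement you need — that $\Gamma\simeq\Gamma'$ together with $G_v\equiv H_{f(v)}$ for all $v$ implies $\mathcal G(\Gamma,\{G_v\})\equiv\mathcal G(\Gamma',\{H_w\})$ — is \emph{not} supplied elsewhere in the paper. It is precisely the generalization of Vaught's conjecture to graph products that the authors state explicitly as an open conjecture in the final section. Your ultraproduct reduction is the right way to see where the difficulty sits: Feferman--Vaught handles the complete graph and Sela's theorem the edgeless one, but the elementarity of the diagonal embedding $\mathcal G(\Gamma,\{G_v\})\hookrightarrow\mathcal G(\Gamma,\{G_v^{\mathcal U}\})$ for an arbitrary graph has no proof in this paper and, to my knowledge, none in the literature it cites. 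So the honest status of the corollary is that the paper's results establish only the implication from $G\equiv H$ to the graph isomorphism and the vertex-wise conditions; the reverse implication is conjectural, and your proposal (like the paper) does not close it.
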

	
	\begin{cor*}[Rigidity of the class of graph products of classic linear groups]
		Let $\textbf C_{L}$ be the class of classical linear groups {\rm(}$GL_n(R), SL_n(R), UT_n(R)$, $n\ge 2${\rm)}. Let $G=\mathcal G(\Gamma, \gvv)$ and $H=\mathcal G (\Delta, \{H_v\}_{v\in \Delta})$, where $G_v, H_w \in \textbf C_{L}$. Then
		\begin{gather}
			\begin{split}
				G \equiv H \Leftrightarrow & \,
				\phi:\Gamma \to \Delta \hbox{ is an isomorphism}, \\  & G_v,G_{\phi(v)} \hbox{ are linear groups of the same type,}\\ & \hbox{the same rank and over elementarily equivalent rings}, \\ & (\hbox{if  $G_v=SL_n(R)$, then $G_{\phi(v)}=SL_n(R')$} \hbox{and } R\equiv R').
			\end{split}
		\end{gather}
	\end{cor*}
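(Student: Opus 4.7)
My plan is to deduce the corollary by combining the Elementary Rigidity Corollary stated earlier in the introduction with the classical model-theoretic classification of linear groups (Mal'tsev, Beidar--Mikhalev, Bunina). The first task is to verify that every group in $\mathbf{C}_L$ has non-generic almost positive theory, so that the main theorem applies. The unitriangular groups $UT_n(R)$ are nilpotent and hence satisfy a nontrivial identity, which is a positive (and therefore non-generic almost positive) sentence. The groups $GL_n(R)$ and $SL_n(R)$ with $n\ge 2$ typically have nontrivial center (e.g.\ $-I$ when $\mathrm{char}\,R\neq 2$), giving a non-generic almost positive sentence of the form $\exists z\neq 1\,\forall x\,[x,z]=1$ as in the introduction; in the remaining boundary cases one uses instead the fact that these groups have bounded commutator width or are boundedly simple, which is cited in the introduction as another source of non-generic almost positive sentences. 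One may also assume without loss of generality that $\Gamma$ and $\Delta$ are reduced.

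For the ($\Rightarrow$) direction, assume $G\equiv H$. The Elementary Rigidity Corollary produces a graph isomorphism $f:\Core(\Gamma)\to\Core(\Delta)$ with $G_v\equiv H_{f(v)}$ for all $v\in\Core(\Gamma)$. Since no non-abelian group in $\mathbf{C}_L$ is elementarily equivalent to the infinite cyclic group (and the abelian boundary cases such as $UT_2(R)$ can be handled directly), the ``In particular'' clause of the Interpretation Theorem upgrades $f$ to a graph isomorphism $\Gamma\to\Delta$. From $G_v\equiv H_{f(v)}$, the classical model theory of linear groups then forces $G_v$ and $H_{f(v)}$ to be of the same type ($GL$, $SL$ or $UT$), the same rank, and to be defined over elementarily equivalent coordinate rings, giving the required conclusion.

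For the ($\Leftarrow$) direction, I would invoke the converse halves of the same classical classification: elementary equivalence of the coordinate rings together with matching type and rank implies elementary equivalence of the classical linear groups. A Feferman--Vaught-style argument, or equivalently an inductive Ehrenfeucht--Fra\"iss\'e game along the splitting of the graph product into iterated free and direct products, then shows that replacing each vertex group by an elementarily equivalent one preserves the elementary theory of the whole graph product, yielding $G\equiv H$.

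The main obstacle I anticipate is the uniform verification of the non-generic almost positive hypothesis for every ring $R$ and all three families simultaneously, in particular the degenerate cases (such as $SL_n$ over fields where both the center is trivial and bounded generation by elementary matrices is subtle, or rings of characteristic two); once these are systematically checked, the rest of the argument is essentially an orchestration of the main interpretation theorem with off-the-shelf model theory of classical linear groups.
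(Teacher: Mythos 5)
Your forward direction is sound and is essentially the paper's route: $UT_n(R)$ satisfies a group identity, and $GL_n(R)$, $SL_n(R)$ fall under the finite-commutator-width clause of the paper's list of groups with nontrivial positive theory (the paper cites Merzlyakov for linear algebraic groups), so the Interpretation/Elementary Rigidity theorems apply and the Mal'tsev--Belegradek-type classification of elementarily equivalent classical linear groups converts $G_v\equiv H_{f(v)}$ into the stated conclusion about type, rank and coordinate rings. The paper itself gives no more detail than this for that implication, and your verification of the hypotheses is, if anything, more explicit (note only that the paper's precise version of this result in the applications section restricts to fields and $n\ge 3$, and that recovering $\Gamma$ itself rather than a reduced quotient tacitly uses that the graphs are reduced or that the vertex groups are directly indecomposable).

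The gap is in the ($\Leftarrow$) direction. Your Feferman--Vaught/Ehrenfeucht--Fra\"iss\'e argument ``along the splitting of the graph product into iterated free and direct products'' presupposes that $\mathcal{G}(\Gamma,\gvv)$ is built from its vertex groups by iterating free and direct products. That fails for general $\Gamma$: as soon as $\Gamma$ contains an induced path on four vertices, the graph product is neither a nontrivial free product nor a nontrivial direct product, so the induction never starts. The statement you actually need --- that replacing each vertex group by an elementarily equivalent one preserves the elementary theory of the graph product --- is exactly the generalization of Vaught's conjecture to graph products, which this paper explicitly leaves open in its final section (Sela's theorem covers the free-product case and Feferman--Vaught the direct-product case, but nothing covers the mixed case). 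So your proof of ($\Leftarrow$) does not go through as written; to be fair, the paper offers no argument for that direction either, and measured against its own conjecture the ``if and only if'' in the statement is only fully established in the ($\Rightarrow$) direction.
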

	\section{Preliminaries}
	\subsection{Graph products}
	
	\subsubsection{Graphs}
	
	\newcommand{\reducible}[0]{decomposable }    
	By a \emph{graph} we mean a finite simplicial graph, that is an unlabeled, undirected, graph without loops (i.e.\ edges that begin and end at the same vertex) and multiple edges. The sets of vertices and edges of a graph $\Gamma$ are denoted by $V(\Gamma)$, $E(\Gamma)$, or simply $V$ and $E$, respectively. Sometimes we abuse the notation and write $v\in \Gamma$ instead of $v\in V(\Gamma)$. If $v,u\in V$ are connected by an edge, we write $(v,u)\in E$. An \emph{induced subgraph} $\Delta$ of $\Gamma$ is a graph such that $V(\Delta) \subset V(\Gamma)$ and $(v,u)\in E(\Delta)$ if and only if $(v,u)\in E(\Gamma)$ for all $u,v\in V(\Delta)$. If not stated otherwise, by a subgraph we mean an induced subgraph. 
	
	The set of maximal cliques of $\Gamma$ will be denoted by $\mathcal{K}(\Gamma)$, or just $\mathcal{K}$.
	Given a graph $\Gamma=(V,E)$, by the complement graph of $\Gamma$ we mean the graph $\bar{\Gamma}=(V,V^{2}\setminus (E\cup \{(v,v)\}_{v\in V}))$. 
	
	We say that a graph $\Gamma=(V,E)$ is \emph{\reducible} if it is a join, that is there is a partition of $V$ into two non-empty sets $V_{0}$ and $V_{1}$ such that there is an edge between every vertex of $V_{0}$ and every vertex of $V_{1}$. In this case, we write $\Gamma=\Gamma_{0}\oplus\Gamma_{1}$, where $\Gamma_{i}$ is the subgraph induced by $V_{i}$, $i=1,2$.
	
	We say that two subsets of vertices $W,W'$ are \emph{orthogonal} if they are disjoint and there is an edge between any $w\in W$ and $w'\in W'$. Given a vertex $v\in\Gamma$ (resp. subgraph $\Delta\subset\Gamma$)), we denote by $\link{v}$ (resp. $\link{\Delta}$) the subgraph spanned by all the vertices that are adjacent to $v$ (resp. adjacent to all the vertices in $\Delta$ and do not belong to $\Delta$). Note that $\link{v}$ (resp. $\link{\Delta}$) does not contain $v$ (resp. $\Delta$). We define the star of $v$ and denoted by $\st(v)$ (resp. the star of $\Delta$ denoted by $\st(\Delta)$) as $\st(v)=\link{v}\cup\{v\}$ (resp. $\st(\Delta)=\link{\Delta} \cup \Delta$).

	\begin{definition}[Property $AP_n$] \label{defn:propDn}
		We say that a graph $\Gamma$ \emph{the property} $AP_{n}$, for $n\in\mathbb{N}$, if the complement graph $\bar \Gamma$ does not contain paths of length $n$ as induced subgraphs. In other words: there do not exist vertices $v_{1},v_{2},v_{3}\cdots v_{n+1}$ such that  $(v_{i},v_{j}) \in E$ if and only if $|i-j|>1$.
	\end{definition}
	
	\begin{definition}[Property $K_n$] \label{defn:propKn}
		We say that a graph $\Gamma$ satisfies the property $K_{n}$, for $n\in\mathbb{N}$, if any complete subgraph has size at most $n$.
	\end{definition}
	\begin{remark}
		Propety $K_{n}$ implies property $AP_{2n}$ 
	\end{remark}
	
	\begin{definition}[Full graph morphism]\label{defn:full morphism}
		A full graph morphism $\phi: \Gamma \to \Delta$ is a map from $V(\Gamma)$ to $V(\Delta)$ such that either $\phi(u)=\phi(w)$ or $(v,w)\in E(\Gamma)$ if and only if $(\phi(v), \phi(w))\in E(\Delta)$. Equivalently, for all $v'\in \Delta$, for all $v_1,v_2 \in \phi^{-1}(v')$ and for all $u\notin \phi^{-1}(v')$ we have that $(v_1,u)\in E(\Gamma)$ if and only if $(v_2,u)\in E(\Gamma)$.
	\end{definition}

	\subsubsection{Graph products}
	\newcommand{\GP}[0]{\mc{G}(\Gamma,\gvv)}

	By a group-labelled graph we mean a pair $(\Gamma,\gvv)$ consisting of a graph $\Gamma$ and a collection of groups $\{G_v \mid v\in \Gamma\}$. The graph product associated to such group-labelled graph is denoted by $\mathcal{G}(\Gamma, \gvv)$, or simply $\mathcal{G}$. Recall that $\mathcal{G}$ is the quotient of the free product of the groups $G_{v}$ by the relations expressing that $g\in G_{u}$ and $h\in G_{v}$ commute whenever $(u,v)\in E(\Gamma)$. The graph $\Gamma$ is called the \emph{underlying graph} of $\mc{G}$.
	For a subgraph $\Delta$ of $\Gamma$ (or a subset of vertices of $V(\Gamma)$), the subgroup $\langle \{G_v \mid v \in \Delta \} \rangle$, which we denote by $\mc{G}(\Delta)$ is isomorphic to $\mathcal{G}(\Delta,\{G_{v}\}_{v\in \Delta})$. By convention, we set $\mc{G}(\emptyset)=1$.
	
	\begin{definition}[Basics definitions on graph products]\
		Let $\mathcal{G}=\GP$ be a graph product. By a \emph{subgraph product} we mean a conjugate of a group of the form $\mc{G}(\Delta)$ for some subgraph $\Delta$ of $\Gamma$.
		
		Given a subgroup $H\leq\mathcal{G}$, we say that $H$ is \emph{of smaller complexity} if it is a subgroup of some proper subgraph product of $\mc{G}$. We say that $H$ is \emph{singular} if it is a subgroup of some conjugate of $G_v$ for some $v\in \Gamma$. Similarly, we say that $g\in\mc{G}$ is singular if $\subg{g}$ is singular.
	\end{definition}

	We will need some basic properties of graph products. See \cite{Green} for proofs of the lemmas below.

	\begin{lemma}\label{l: prelims_conjugation_and_graphical_subgroups}
		Let $\Delta$ be a subgraph of $\Gamma$, and let $g\in \mc{G}$. Suppose that there exists a subgraph $\Theta$ of $\Gamma$ such that  $\mc{G}(\Delta )^g = \mc{G}(\Theta)$. Then  $\mc{G}(\Theta )^g =\mc{G}(\Delta)$ and $\Delta = \Theta$.
	\end{lemma}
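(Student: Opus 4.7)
The plan is to use the canonical retractions of $\mc{G}$ onto its subgraph products. Recall that for any induced subgraph $\Theta \subseteq \Gamma$ there is a well-defined retraction $\pi_\Theta : \mc{G} \twoheadrightarrow \mc{G}(\Theta)$ which is the identity on each $G_v$ with $v \in \Theta$ and which sends $G_u$ to $1$ for $u \notin \Theta$; this is well-defined because every commutation relation in the presentation of $\mc{G}$ is either preserved or has one of its two factors killed.

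First I would apply $\pi_\Theta$ term by term to the hypothesis $\mc{G}(\Delta)^g = \mc{G}(\Theta)$. Since $\pi_\Theta(\mc{G}(\Delta)) = \mc{G}(\Delta \cap \Theta)$ and $\pi_\Theta$ is the identity on $\mc{G}(\Theta)$, this yields
\[
\mc{G}(\Delta \cap \Theta)^{\pi_\Theta(g)} = \mc{G}(\Theta)
\]
inside $\mc{G}(\Theta)$. The target is then to deduce $\Delta \cap \Theta = \Theta$, i.e.\ $\Theta \subseteq \Delta$, which will follow from the fact that in a graph product (with nontrivial vertex groups) a proper subgraph product is a proper subgroup, so no conjugate of a strict subgraph product $\mc{G}(\Theta')$ with $\Theta' \subsetneq \Theta$ can equal all of $\mc{G}(\Theta)$. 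A symmetric move, applying $\pi_\Delta$ to the rewritten hypothesis $\mc{G}(\Theta)^{g^{-1}} = \mc{G}(\Delta)$, will yield $\Delta \subseteq \Theta$. The two inclusions together give $\Delta = \Theta$, and then the conclusion $\mc{G}(\Theta)^g = \mc{G}(\Delta)$ is automatic: the hypothesis itself says $g$ normalizes the common subgroup $\mc{G}(\Delta) = \mc{G}(\Theta)$.

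The main obstacle is really a background fact rather than the core argument: one needs $\mc{G}(\Theta') \subsetneq \mc{G}(\Theta)$ whenever $\Theta' \subsetneq \Theta$. This is a direct consequence of Green's normal form for graph products, and I would either cite it or give the one-line justification that a nontrivial $h \in G_v$ with $v \in \Theta \setminus \Theta'$ has normal form of length one supported on $\{v\}$ and hence lies outside $\mc{G}(\Theta')$. With that in hand, the whole proof is pure bookkeeping with the two retractions $\pi_\Theta$ and $\pi_\Delta$.
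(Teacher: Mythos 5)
Your argument is correct and complete. The paper itself gives no proof of this lemma, deferring to Green's thesis, where the corresponding statement is established by an induction on the (normal-form) length of the conjugating element $g$; your route via the two canonical retractions $\pi_\Theta$ and $\pi_\Delta$ is a genuinely different and cleaner mechanism. Each step checks out: since $\pi_\Theta$ is a homomorphism fixing $\mc{G}(\Theta)$ pointwise, applying it to the hypothesis does give $\mc{G}(\Delta\cap\Theta)^{\pi_\Theta(g)}=\mc{G}(\Theta)$ with $\pi_\Theta(g)\in\mc{G}(\Theta)$, and a subgroup conjugated by an element of the ambient group onto the whole group must already be the whole group, so $\mc{G}(\Delta\cap\Theta)=\mc{G}(\Theta)$ and hence $\Theta\subseteq\Delta$ by the properness fact you flag (which itself follows from a retraction onto a single missing vertex, so you never actually need the full normal form). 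The symmetric application of $\pi_\Delta$ gives $\Delta\subseteq\Theta$, and the final conclusion $\mc{G}(\Theta)^g=\mc{G}(\Delta)$ is then immediate since $g$ normalizes $\mc{G}(\Delta)=\mc{G}(\Theta)$. The only hypothesis you are silently using is that all vertex groups are nontrivial, which is the standing convention for graph products in this paper and in Green; it is worth stating, since the lemma is false without it.
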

	
	The following lemma deals with intersections of graphical subgroups of $\mc{G}$ and their conjugates.
		\begin{lemma}\label{l: intersection_subgroups}
		Let $\Delta_1$ and $\Delta_2$ be two subgraphs of $\Gamma$. Then, for all $g_1, g_2\in \mc{G}$, there exists some $g_{3}\in\mathcal{G}$ and some subgraph $\Delta_3$ of $\Delta_{1}\cap\Delta_{2}$ such that $\mc{G}(\Delta_1)^{g_1}\cap \mc{G}(\Delta_2)^{g_2}=\mc{G}(\Delta_{3})^{g_{3}}$. In case $g_1=g_2$ then $\D_3=\D_1\cap \D_2$.
	\end{lemma}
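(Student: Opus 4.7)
The plan is a normal-form argument, together with a reduction to the case of trivial conjugators. First, by conjugating both sides of the desired equality by $g_{1}^{-1}$, one reduces at once to the situation $g_{1}=1$, so that the problem becomes: given $h=g_2g_1^{-1}\in\mc{G}$, produce some $\Delta_3\sub \Delta_1\cap \Delta_2$ and some $g_3'$ with
\[
\mc{G}(\Delta_1)\cap \mc{G}(\Delta_2)^{h}\;=\;\mc{G}(\Delta_3)^{g_3'};
\]
conjugation by $g_1$ at the end recovers the original statement.

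Next I would establish the ``$g_1=g_2$'' assertion, which corresponds exactly to $h=1$, namely $\mc{G}(\Delta_1)\cap \mc{G}(\Delta_2)=\mc{G}(\Delta_1\cap\Delta_2)$. The inclusion $\supseteq$ is immediate. For the reverse inclusion I would fix $w$ in the intersection and invoke the normal form theorem for graph products: $w$ admits a reduced expression $w=s_1\cdots s_n$ with each $s_i$ in some $G_{v_i}$, and any two reduced expressions for $w$ use syllables drawn from the same multiset of vertex groups (reduced expressions differ only by swaps of commuting adjacent syllables). Since $w\in\mc{G}(\Delta_1)$ there is a reduced expression of $w$ supported on $\Delta_1$, and similarly for $\Delta_2$, so by this uniqueness each $v_i\in \Delta_1\cap \Delta_2$ and $w\in \mc{G}(\Delta_1\cap \Delta_2)$.

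For the general case I would induct on the normal-form length $|h|$. The base case $|h|=0$ is the previous paragraph. For the inductive step I would carry out two normalisations of $h$ (each reducing its length). First, if $h=k h'$ with $k\in \mc{G}(\Delta_2)$, then $\mc{G}(\Delta_2)^{h}=\mc{G}(\Delta_2)^{h'}$, so we can absorb left-syllables of $h$ lying in $\mc{G}(\Delta_2)$. Second, if $h=h' k$ with $k\in \mc{G}(\Delta_1)$, a direct calculation using $k\in \mc{G}(\Delta_1)$ gives
\[
\mc{G}(\Delta_1)\cap \mc{G}(\Delta_2)^{h}\;=\;k\bigl(\mc{G}(\Delta_1)\cap \mc{G}(\Delta_2)^{h'}\bigr)k^{-1},
\]
so the existence of a presentation of the desired form is preserved (the conjugator absorbs into $g_3'$). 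Applying these two moves exhaustively, I may assume $h$ is a shortest representative of the double coset $\mc{G}(\Delta_2)\,h\,\mc{G}(\Delta_1)$; equivalently, no left-most syllable of a reduced form of $h$ can be shuffled into $\mc{G}(\Delta_2)$, and symmetrically for $\mc{G}(\Delta_1)$ on the right.

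The main work, and what I expect to be the hardest step, is to show that for such a minimal $h\ne 1$ the intersection $\mc{G}(\Delta_1)\cap \mc{G}(\Delta_2)^{h}$ is itself singular in a controlled way, so that the induction can be closed. Concretely, I would peel off the leftmost syllable $s_1\in G_{v_1}$ of $h$; minimality forces $v_1\notin \Delta_2$ and, via Lemma \ref{l: prelims_conjugation_and_graphical_subgroups}, controls which vertices can be conjugated by $s_1$ into $\mc{G}(\Delta_2)$. A careful normal-form analysis then shows that any $w\in \mc{G}(\Delta_1)$ with $hwh^{-1}\in \mc{G}(\Delta_2)$ is actually supported on $\Delta_1\cap \st(v_1)$, reducing the problem to the subgraph product $\mc{G}(\Delta_1\cap \st(v_1))\cap \mc{G}(\Delta_2)^{h_0}$, where $h=s_1 h_0$ has strictly smaller length. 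The inductive hypothesis then supplies $\Delta_3\sub \bigl(\Delta_1\cap \st(v_1)\bigr)\cap \Delta_2\sub \Delta_1\cap \Delta_2$ and some $g_3''$, completing the inductive step. The delicate point throughout is the bookkeeping that guarantees $\Delta_3$ lies inside the intersection $\Delta_1\cap \Delta_2$ (rather than merely inside $\Delta_1$), which is precisely what the minimality of $h$ in its double coset is designed to enforce.
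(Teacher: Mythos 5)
The paper offers no proof of this lemma --- it is one of the ``basic properties of graph products'' quoted from \cite{Green} --- so your plan is to be measured against the standard argument, which it essentially reproduces: reduce to a single conjugator $h$, settle $h=1$ via uniqueness of supports of reduced expressions, and replace $h$ by a shortest element of the double coset $\mc{G}(\Delta_2)h\mc{G}(\Delta_1)$. Two comments on the step you defer. First, once $h$ is minimal no induction on syllable length is needed: for $w\in\mc{G}(\Delta_1)$ with $hwh^{-1}=u\in\mc{G}(\Delta_2)$, minimality forces $|hw|=|h|+|w|$ and $|uh|=|u|+|h|$, and comparing the two reduced expressions of $hw=uh$ (the leading syllable of $u$ cannot be a syllable of $h$ shuffled to the front, again by minimality, so it must be a syllable of $w$ commuting past all of $h$) yields $w=u$ and $\supp(w)\subseteq\Delta_1\cap\Delta_2\cap\lk(\supp(h))$; hence the intersection equals $\mc{G}(\Delta_1\cap\Delta_2\cap\lk(\supp(h)))$, which $h$ centralizes, and one may take $\Delta_3=\Delta_1\cap\Delta_2\cap\lk(\supp(h))$ and $g_3=g_1$. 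Second, your inductive reduction to $\mc{G}(\Delta_1\cap\st(v_1))\cap\mc{G}(\Delta_2)^{h_0}$ as stated only gives one inclusion --- an element $w$ of the smaller intersection need not satisfy $s_1(h_0wh_0^{-1})s_1^{-1}\in\mc{G}(\Delta_2)$ --- so if you keep the induction you must also establish the reverse containment; the one-step normal-form comparison above sidesteps this. With that comparison actually carried out (it is the crux, and you only assert it), the proof is correct.
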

	
	\begin{definition}[Factors and minimal subgraph of a subgroup]\label{d: minimal_graph_subg_and_factors}
		
		Given a subgraph product $G$ there is a well defined maximal collection $\ff(G)$ of subgraph products, called \emph{factors of $G$}, such that $G=\prod_{\hh\in\ff(G)}\hh$. Indeed, if $G=\mc{G}(\Delta)^h$ and $\Delta=\Delta_1 \oplus \Delta_2 \oplus \dots \oplus \Delta_k$ where $\Delta_i$ is not a join for all $i$, then $\ff(G)=\{ \mc{G}(\Delta_i)^h\}_{i=1, \dots, k}$.
		We note that the size of this collection is bounded by the clique number of $\Gamma$.
		
		Given a subgroup $H$ of $\mathcal{G}$ there is a unique minimal subgraph product $G$ such that $H\leq G$. We write $\env{H}$ to denote the subgraph product $G$ above and extend the notation of factors to general subgroups $H$ of $\G$ by defining $\ff(H)=\ff(\env{H})$. We call each element of $\ff(H)$ a \emph{factor} of $H$. Notice that in this case $H$ is not the direct product of its factors but rather a subgroup of it.
		
		Given a tuple $(h_{1},\cdots, h_{k})$ of elements of $\G$ we write $\ff(h_{1},\cdots, h_{k})$ as an abbreviation for $\ff(\subg{h_{1},\cdots h_{k}})$. 
		
		We say that $H$ is \emph{directly indecomposable} if $|\ff(H)|=1$.
	\end{definition}
	
	\begin{remark}\label{rem:factors}
		Note that if $H$ and $G$ are subgroups such that  $H<G$, then for all $K\in \ff(H)$, there is a unique $K' \in \ff(G)$ such that $K\leq K'$. More precisely, if $K\in \ff(H)$, $K' \in \ff(G)$ and $K \not \subseteq (K')^\perp$, then $K\leq K'$.
	\end{remark}
	
	\begin{definition}[Orthogonality]
		We say that two subgraph products $G_{1}=\mathcal{G}(\Delta)^{h}$ and $G_{2}=\mathcal{G}(\Delta')^{h'}$ are \emph{orthogonal} if there exists $g\in \mc{G}$ such that $G_1^g = \G(\Delta_1)$, $G_2^g =\G(\Delta_2)$, and $\Delta_1 \subseteq \link{\Delta_2}$ (and so $\Delta_2 \subseteq \link{\Delta_1}$). We denote this by $G_{1}\perp G_{2}$.
		
		It is easy to see that for any subgraph product $G=\mc{G}(\Delta)^h\leq\G$ there is a maximal subgraph product orthogonal to $G$, namely $\mc{G}(\link{\Delta})^h$. We denote this subgraph group by $G^{\perp}$. As above, for a general subgroup $H\leq\G$ we let
		$H^{\perp}=\env{H}^{\perp}$.
	\end{definition}

	\begin{example}
		Consider the following right-angled Artin group:
		\begin{figure}[H]
			\begin{center}
				\begin{tikzpicture}
					\draw (1,0) -- (7,0);
					\filldraw (1,0) circle (1pt)  node[align=left, left] {$a_1$};
					\filldraw (3,0) circle (1pt)  node[align=left, below] {$b$};
					\filldraw (5,0) circle (1pt)  node[align=left, below] {$c$};
					\filldraw (7,0) circle (1pt)  node[align=left, below] {$d$};
					\filldraw (1,1) circle (1pt)  node[align=left, left] {$a_2$};
					\filldraw (1,-1) circle (1pt)  node[align=left, left] {$a_3$};
					\draw (3,0) -- (1,1);
					\draw (1,0) -- (1,1);
					\draw (1,0) -- (1,-1);
					\draw (1,-1) -- (5,0);
				\end{tikzpicture}
			\end{center}
		\end{figure}

		Let $H= \langle (a_1a_2)^c \rangle$ and $G= \langle (a_1a_2)^c, a_3, b \rangle$, hence $H<G$. Then the smallest subgraph group that contains $H$ is $\supp(H)= \langle a_1, a_2 \rangle^c$; $H$ has two factors: $\ff(H)=\{ \langle a_1\rangle^c, \langle a_2 \rangle ^c\}$ and $H^\perp = \langle b^c \rangle=\langle b \rangle$.
		
		Similarly, $\supp(G)= \langle a_1, a_2, a_3, b \rangle^c= \langle a_1^c, a_2^c, a_3, b \rangle$, $\ff(G)=\{ \langle a_2, a_3\rangle^c, \langle a_1\rangle^c, \langle b \rangle\}$   and $G^\perp = \{1\}$.
	\end{example}
	
	Orthogonality plays an obvious role in the description of centralizers of elements in $\G$.

	\begin{lemma}[see \cite{Barkauskas}]
		\label{l: description of centralizers}Let $g\in\mc{G}$. Then the centralizer of $g$ in $\G$ is isomorphic to the direct product of following groups:
		\begin{itemize}
			\item $\subg{g}^{\perp}$
			\item $C_{H}(\pi^{H}(g))$ for each singular  $H\in\ff(g)$
			\item $\subg{\pi^{H}(g)}$ for each non-singular $H\in\ff(g)$.
		\end{itemize}
	\end{lemma}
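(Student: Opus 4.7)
The plan is to reduce to the unconjugated case, decompose $g$ along the join decomposition of its support, and verify the two inclusions $\supseteq$ and $\subseteq$ for the centralizer separately, invoking Servatius-type results for the non-singular factors.

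First, after conjugating $g$, I may assume $\supp(g) = \mc{G}(\Delta)$ with trivial conjugator. Write the unique join decomposition $\Delta = \Delta_1 \oplus \cdots \oplus \Delta_k$ into non-join pieces, so that $\mc{G}(\Delta) = \mc{G}(\Delta_1) \times \cdots \times \mc{G}(\Delta_k)$ and $g = g_1 \cdots g_k$ with $g_i = \pi^{\mc{G}(\Delta_i)}(g)$; by Definition \ref{d: minimal_graph_subg_and_factors}, $\ff(g) = \{\mc{G}(\Delta_i)\}_{i=1}^{k}$, and $\subg{g}^\perp = \mc{G}(\link{\Delta})$.

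For the inclusion $\supseteq$, every element of $\mc{G}(\link{\Delta})$ commutes with all of $\mc{G}(\Delta)$, hence with $g$. For each $i$, every element of $C_{\mc{G}(\Delta_i)}(g_i)$ (resp.\ of $\subg{g_i}$) commutes with $g_i$ by definition and with $g_j$ for $j \ne i$ because $\Delta_j \subseteq \link{\Delta_i}$ inside $\Delta$. These subgroups pairwise commute for the same orthogonality reason, so their product inside $C_{\mc{G}}(g)$ is direct.

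The substantive direction is $\subseteq$. Given $c \in C_{\mc{G}}(g)$, the key step is to show that $c$ splits, up to the orthogonal complement, along the join decomposition. Since $cgc^{-1} = g$ and $\supp(g) = \mc{G}(\Delta)$, Lemma \ref{l: prelims_conjugation_and_graphical_subgroups} forces $c \in N_{\mc{G}}(\mc{G}(\Delta))$; by the normalizer description in \cite{Green}, $N_{\mc{G}}(\mc{G}(\Delta)) = \mc{G}(\Delta) \cdot \mc{G}(\link{\Delta})$. Hence $c = c_\perp c_\Delta$ with $c_\perp \in \mc{G}(\link{\Delta})$ and $c_\Delta \in \mc{G}(\Delta)$, and necessarily $c_\Delta g c_\Delta^{-1} = g$. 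Further decomposing $c_\Delta = c_1 \cdots c_k$ along the direct product decomposition of $\mc{G}(\Delta)$, each $c_i \in \mc{G}(\Delta_i)$ must centralize $g_i$, placing $c_i \in C_{\mc{G}(\Delta_i)}(g_i)$.

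It remains to identify $C_{\mc{G}(\Delta_i)}(g_i)$. If $H = \mc{G}(\Delta_i)$ is singular, then $\Delta_i$ is a single vertex and this centralizer is, by definition, $C_H(\pi^H(g))$. If $H$ is non-singular, then $\Delta_i$ has at least two vertices and is not a join; invoking Servatius' centralizer theorem as generalized to graph products in \cite{Barkauskas}, the fact that $g_i$ has full support $\mc{G}(\Delta_i)$ forces $C_{\mc{G}(\Delta_i)}(g_i) = \subg{g_i}$ (understood up to passing to primitive roots when needed). The main obstacle is thus not the verification of commutation but the structural splitting in the $\subseteq$ direction, which rests on normal form machinery in graph products and on the Servatius-type result for non-join supports; both are available in the literature and cited in the statement.
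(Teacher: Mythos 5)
The paper offers no proof of this lemma: it is stated with the attribution ``see \cite{Barkauskas}'' and the surrounding text points to \cite{Green} for the basic facts, so there is nothing internal to compare your argument against. On its own terms your proof is correct and follows the standard route. The reduction to a trivial conjugator, the identification $\ff(g)=\{\mc{G}(\Delta_i)\}$ via the join decomposition of $\supp(g)$, and the easy inclusion $\supseteq$ are all fine. The $\subseteq$ direction is also sound: $\supp$ is conjugation-equivariant and unique, so any $c\in C(g)$ normalizes $\mc{G}(\Delta)$, and the normalizer identity $N_{\mc{G}}(\mc{G}(\Delta))=\mc{G}(\Delta)\cdot\mc{G}(\link{\Delta})=\mc{G}(\st(\Delta))$ together with the direct-product splitting $\mc{G}(\Delta)=\prod_i\mc{G}(\Delta_i)$ yields the componentwise decomposition of $c$. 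Two remarks. First, your final step for non-singular factors invokes precisely the Servatius--Barkauskas centralizer theorem for an element of full support in a directly indecomposable graph product; this is the substantive core of the cited result, so your argument is best read as a (correct) reduction of the general statement to that core case rather than an independent proof of it --- which is perfectly appropriate here, since the paper itself treats the whole lemma as a citation. Second, you rightly flag that for a non-singular factor the centralizer is $\subg{h_i}$ for the primitive root $h_i$ of $\pi^{H}(g)$ rather than $\subg{\pi^{H}(g)}$ itself; this imprecision is in the paper's statement, not in your argument, and your parenthetical caveat handles it correctly.
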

	
	\begin{definition}[Retractions]
		For each subgraph product of the form $G=\G(\Delta)$ there is a natural retraction $\pi^{G}: \mc{G}\onto G$ which sends to $1$ all elements of the vertex groups $G_v$ with $v \in \Gamma\setminus \Delta$. Given a subgraph product $G=\G(\Delta)^h$ there is also a retraction $\pi^{G}: \mc{G}\onto G$ where $\pi^G(x) = h^{-1}\pi^{\G(\Delta)}(x)h$ for all $x \in \mc{G}$, which is well-defined up to post-composition of an inner er automorphism of $G$. We will work as if one such representative has been fixed. This can be done in such a way that whenever a subgraph product is decomposable, the projections to sub-direct products of its set of factors are compatible in the sense that $\pi^{H_{1}}\circ\pi^{H_{2}}=\pi^{H_{1}}$ for two such sums $H_{1}\leq H_{2}$. 
	\end{definition}
	
	\begin{notation}
		We will often use the superscript $G$ to abbreviate the application of $\pi^{G}$ to elements, tuples and subgroups of $G$. That is, we will write $g^{G}$ instead of $\pi^{G}(g)$ and so on.
	\end{notation}
	
	\subsection{Logic}\label{s: logic}
	
	We assume that the reader is familiar with the basic notions of first-order logic, see \cite{Marker} as a basic reference. As it is customary in model theory, we will use unadorned lowercase letters to refer to either elements or tuples of elements of a structure or variables. In case a letter such as $x$ stands for a tuple of variables $(x_1,x_2,\dots, x_m)$, an expression such as $\forall x$ within a formula will serve as an abbreviation for $\forall x_1\,\dots, \forall x_m$.
	
	\newcommand{\ms}[3]{\mathcal{B}(#1,\{#2_{v}\})}
	
	\paragraph{\textbf{Languages.}} As usual we fix the language of groups to be  $(\cdot^{(2)},inv^{(1)},1^{(0)})$, where $\cdot$ stands for group multiplication, $inv$ for inversion of elements, and $1$  the identity element. The language of graphs is set to be $(E^{(2)})$, with $E$ standing for adjacency.
	
	Consider further the  language:
	$$
	\mathcal{L}_{B}=\{F^{(2)},\odot^{(3)},1^{(0)}\}.
	$$

	We will interpret any group-labelled graph  $(\Gamma,\gvv)$ as an $\mc{L}_{B}$-structure as follows. We convene that for any  $u,v\in \Gamma$ with $u\neq v$ one has $G_u\cap G_v=\{1\}$. Then we write $\mathcal{B}=\ms{\Gamma}{G}{V}$   to be the structure in the language $\mathcal{L}_{B}$ with universe $\bigcup_{v\in \Gamma} G_v$ and 
	\begin{align*}
		&F(\mc{B})= \bigcup_{(u,v)\in E(\Gamma)} (G_w\backslash \{1\}) \times (G_v\backslash \{1\}),\quad \quad &&\odot(\mc{B}) = \bigcup_{v\in \Gamma} \odot_v,\\
	\end{align*}
	where $\odot_v$ denotes the graph of the group operation of $G_v$. 
	
	\paragraph{\textbf{Uniform interpretability.}} For any class $\textbf C$ of graph products and any assignment to any $\G\in\textbf C$ of some
	family $\mathcal{A}(\G)\subseteq \mathcal{P}(\G)^{k}$ of tuples of subsets of $\G$, we say that $\mathcal{A}$ is \emph{uniformly definable in} $\textbf C$ if there exists a finite collection of $k$-tuples of formulae  $\{(\omega^i_{j}(x,y))_{j=1}^{k}\mid i\in I\}$ on a variable $x$ and a tuple of variables $y$, such that for all $\G \in \textbf C$ we have $\mathcal{A}(\G)=\{(\omega_{j}^{i}(\G,b))_{j=1}^{k}\,|\,b\in\G^{|y|},i\in I\}$, where for an $|y|+1$ variable formula $\theta(x,y)$ and $b\in \G^{|y|}$, we let $\theta(\G,b)$ be the set $\{g\in\G\,|\,\G\models \theta(g,b)\}$.

	Similarly, given an assignment of some structure $\mathcal{M}(\G)$ to each $\G\in\textbf C$ we say that
	$\mathcal{M}$ is \emph{uniformly interpretable} in $\textbf C$ if there is a collection of formulas in the language of groups
	yielding an interpretation of $\mathcal{M}(C)$ in $\G$ for any $\G\in\textbf C$.
	
	\begin{definition}[Classes of graph products] \label{d: K_N_and_C}
		
		We will denote by $\textbf{AP}_{n}$ and $\textbf{K}_{n}$ respectively the classes of graphs satisfying property $AP_{n}$ and $K_{n}$ respectively (see definitions \ref{defn:propDn} and \ref{defn:propKn}) and by $\textbf{F}_{n}$ the class of graphs with at most $n$ vertices.
		Given a family $\Phi$ of sentences in the language of groups we denote by $\textbf{C}_{\Phi}$  the class of groups which satisfy at least one of the sentences in $\Phi$.
		
		Fix some axiomatization $\{\phi_{j}\}_{j=1}^{\infty}$ of the theory of $\mathbb{Z}$  and $\{\phi_{j}'\}_{j=1}^{\infty}$ of the theory of the infinite dihedral group $D_{\infty}$ such that $\phi_{j}$ implies $\phi_{i}$ for $i<j$ and similarly for $\{\phi'_{j}\}$. Then we will write
		$\textbf{C}^{r}_{\Phi}$ to indicate the subclass of $\textbf{C}_{\Phi}$ consisting of groups which are either elementarily equivalent to $\mathbb{Z}$  (respectively $D_{\infty}$) or satisfy $\neg\phi_{r}$ (respectively $\neg\phi'_{r}$). 
		
		It will be useful to specify classes of graph products as follows: given a class  of graphs $\textbf{Ga}$ and a class of groups $\textbf{Go}$ we denote $\mc{G}(\textbf{Ga},\textbf{Go})$  the class of groups formed by all graph products of groups of the form $\mc{G}(\Gamma, \gvv)$ with $\Gamma\in \textbf{Ga}$ and $G_v\in \textbf{Go}$ for all $v\in \Gamma$.
	\end{definition}
	
	\subsection{Actions on trees}\label{sec:actions on trees}
	
	In this subsection we fix some terminology and provide some basic results regarding isometries and group actions on trees. The reader is referred to Chapter 3 of \cite{chiswell2001introduction} for a rigorous and detailed presentation of the subject at hand. Throughout the whole paper	$T$ denotes a simplicial tree with distance metric $d$.
	
	\begin{definition}[Basic definitions] \label{defn: basics actions on trees}
		An isometry $g$ of a simplicial tree $T$ that does not invert edges is either \emph{elliptic} if it fixes a vertex (also called \emph{point}), or \emph{hyperbolic} otherwise. In the first case there exists a unique maximal subtree $A(g)$ of $T$ that is fixed point-wise by $g$, and $g$ may be seen as `rotating' around $A(g)$. In the second case there exists an infinite line $A(g)$ that is preserved by $g$, and $g$ acts on $T$ as a translation along $A(g)$. Regardless of whether $g$ is elliptic or hyperbolic we call $A(g)$ the \emph{axis} of $g$. The \emph{translation length} of an isometry $g$, denoted $tl(g)$, is  defined as the infimum of the displacements by $g$ of a point, i.e.\ $tl(g)=\inf_{v\in T} d(v, gv)$.
		
		Given a hyperbolic element $h\in G$, denote the collection of all elements $g\in G$ preserving $A(h)$ set-wise by $E(h)$ and by $K(h)$ its point-wise stabilizer.
		We further partition $E(h)$ into the set $E^{+}(h)$ of all elements of $E(h)$ that preserve the orientation of $A(h)$ and the set $E^{-}(h)$ of those that swap it.
		
		Two isometries $g, h$ are said to intersect \emph{coherently} if $A(g) \cap A(h) \neq \emptyset$ and if both are hyperbolic then the translation direction of $g$ and $h$ coincides on $A(g) \cap A(h)$ (see page 100 in \cite{chiswell2001introduction}).
	\end{definition}

\begin{definition}[Weakly stable]
		We say that the hyperbolic element $h$ is \emph{weakly $\lambda$-stable} (or just \emph{weakly} \emph{stable}) if for all $g\in G$, if $|A(h)\cap gA(h)|> \lambda \cdot tl(h)$, then $g\in E(h)$.
		
		We say that the hyperbolic element $h$ is ($\lambda$-)\emph{stable} if it is weakly ($\lambda$-)stable and $K(h)=\{1\}$.
	\end{definition}
	
	\begin{remark}\label{r: stability and powers}
		The following observations will be used later on:
		\begin{itemize}
			\item If $\lambda < \mu$, then any $\lambda$-stable element is $\mu$-stable.
			
			\item  For any $n>0$, if a hyperbolic element $h$ is (weakly) $\lambda$-stable, then $h^{n}$ is (weakly) $\frac{\lambda}{n}$-stable.
			
			\item If $G$ acts on a simplicial tree $T$ with trivial edge stabilizers, then every hyperbolic element of $G$ is
			stable.
		\end{itemize}
	\end{remark}

	\begin{lemma}[see Lemma 2.11 in \cite{casals2019positive}]
		\label{l: forms of stability}Let $G$ be a group acting on a simplicial tree $T$. Consider the following three conditions, parametrized by $\lambda>0$ on a hyperbolic element $h$.
		\begin{enumerate}
			\item[$FS(\lambda)$] If an element fixes a subsegment of $A(h)$ of length greater than $\lambda tl(h)$, then it has to fix the whole $A(h)$.
			\item[$AI(\lambda)$] Given another hyperbolic element $g$ with $tl(g)\leq tl(h)$, then
			either $D(g,h)\leq\lambda tl(h)$ or $g\in E(h)$.
			\item[$WS(\lambda)$] $h$ is weakly $\lambda$-stable.
		\end{enumerate}
		The following implications hold:
		\begin{align*}
			FS(\lambda)&\Rightarrow AI(\lambda+2), WS(\lambda+2)\\
			WS(\lambda)&\Rightarrow FS(\lambda).
		\end{align*}
	\end{lemma}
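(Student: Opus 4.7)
The plan is to prove each of the three stated implications separately; the heart of the argument is the passage from $FS$ to $WS$, and the $AI$ implication then follows as a short reduction.

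\textbf{(1) $WS(\lambda) \Rightarrow FS(\lambda)$.} Suppose $g \in G$ fixes pointwise a subsegment $I \subseteq A(h)$ with $|I| > \lambda\cdot tl(h)$. Then $g(I)=I$ lies in both $A(h)$ and $gA(h)$, so $|A(h) \cap gA(h)| \geq |I| > \lambda\cdot tl(h)$; weak $\lambda$-stability yields $g \in E(h)$, so $g$ preserves $A(h)$ set-wise and acts on it as a translation, a reflection, or the identity. A non-trivial translation of a line has no fixed points and a reflection has exactly one, while $g$ fixes a segment of positive length, so $g$ must fix $A(h)$ pointwise.

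\textbf{(2) $FS(\lambda) \Rightarrow WS(\lambda+2)$.} Assume $I := A(h) \cap gA(h)$ has length $L > (\lambda+2)tl(h)$ (in fact, $L>(\lambda+1)tl(h)$ suffices). The conjugate $ghg^{-1}$ is hyperbolic with axis $gA(h)\supseteq I$ and translation length $tl(h)$, so on the segment $I$ both $h$ and $ghg^{-1}$ act as translations of magnitude $tl(h)$, either in the same direction or in opposite directions. In the coherent case, for every $x$ in the subsegment $J\subseteq I$ of length $L-tl(h)$ on which $h(x)$ remains in $I$, one has $h(x)=ghg^{-1}(x)$, i.e.\ the commutator $b=[h^{-1},g]=h^{-1}ghg^{-1}$ fixes $J$ pointwise. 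Since $|J|>\lambda\cdot tl(h)$, property $FS(\lambda)$ forces $b$ to fix all of $A(h)$, so $ghg^{-1}$ coincides with $h$ on $A(h)$; in particular $ghg^{-1}$ translates $A(h)$ non-trivially, forcing $A(h)\subseteq A(ghg^{-1})=gA(h)$, whence $gA(h)=A(h)$ and $g\in E(h)$. In the incoherent case, run the identical argument replacing $ghg^{-1}$ by $(ghg^{-1})^{-1}$ and $b$ by $b'=h^{-1}gh^{-1}g^{-1}$.

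\textbf{(3) $FS(\lambda) \Rightarrow AI(\lambda+2)$.} Let $g$ be hyperbolic with $tl(g)\leq tl(h)$ and $D(g,h)=|A(g)\cap A(h)|>(\lambda+2)tl(h)$, and set $I=A(g)\cap A(h)$. Because $g$ acts on $A(g)$ as a translation of magnitude $tl(g)$, the intersection $I\cap g(I)$ is a subsegment of $A(g)$ of length at least $|I|-tl(g)>(\lambda+1)tl(h)$. Every $x\in I\cap g(I)$ lies in $A(h)$ (as $x\in I$) and equals $g(y)$ for some $y\in I\subseteq A(h)$, so $x\in gA(h)$; therefore $|A(h)\cap gA(h)|>(\lambda+1)tl(h)$, and the sharpened form of (2) established above delivers $g\in E(h)$.

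\textbf{Main obstacle.} The delicate step is (2): one must keep track of the relative orientations of $h$ and $ghg^{-1}$ on the overlap $I$ in order to pick a commutator word that actually fixes a long subsegment of $A(h)$, and one must verify the length bound $|J|>\lambda\cdot tl(h)$ with enough slack that $FS(\lambda)$ applies. Once the commutator fixes $A(h)$ pointwise, the conclusion $g\in E(h)$ follows from the elementary observation that an isometry acting on a line by a non-trivial translation preserves that line set-wise.
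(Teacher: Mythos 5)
Your proof is correct. The paper does not prove this lemma itself (it cites Lemma 2.11 of \cite{casals2019positive}), but your argument is the standard one for statements of this type: the commutator $h^{-1}ghg^{-1}$ (or its orientation-reversed variant) fixes a long subsegment of the overlap, $FS(\lambda)$ promotes this to fixing all of $A(h)$, and the minimal-displacement characterization of the axis then forces $gA(h)=A(h)$; the reduction of $AI$ to the sharpened $WS(\lambda+1)$ via $|I\cap gI|\geq |I|-tl(g)$ is also sound, and you correctly note that the sharper constant is actually needed there.
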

	
	\begin{lemma}
		\label{l: one basepoint}Let $G$ be a group acting on a simplicial tree and assume we are given elements $c_{1},c_{2},\dots, c_{k}$ of
		$G$ and $K>0$ with the property that for all $1\leq j<j'\leq k$ we have $tl(c_{j}),tl(c_{j'}),tl(c_{j}c_{j'})\leq K$.
		Then there is a vertex $*\in T$ such that $d(c_{j}*,*)\leq 2K$ for all $1\leq j\leq k$.
	\end{lemma}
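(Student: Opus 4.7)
The plan is to locate $*$ as a vertex lying close to every characteristic subtree $A(c_j)$ (where $A(c_j)$ denotes the axis of $c_j$ if hyperbolic and the fixed subtree of $c_j$ if elliptic), and then invoke the standard displacement identity
$$
d(p, c_j p) \;=\; 2\, d(p, A(c_j)) + tl(c_j),\qquad p\in T.
$$
Combined with $tl(c_j)\leq K$, this identity tells me that any vertex $*$ with $d(*, A(c_j))\leq K/2$ for all $j$ automatically satisfies $d(*, c_j *)\leq 2K$. So the whole task reduces to finding a single vertex simultaneously close to all the $A(c_j)$.

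The first step is the pairwise distance bound $d(A(c_j), A(c_{j'}))\leq K/2$. Here I would use the classical dichotomy (see Chiswell, Ch.~3 of \cite{chiswell2001introduction}) which says that for any two isometries $g,h$ of a simplicial tree, either $A(g)\cap A(h)\neq\emptyset$, or else
$$
tl(gh)\;\geq\; 2\, d(A(g), A(h)).
$$
This has to be checked in each of the three cases (elliptic--elliptic, elliptic--hyperbolic, hyperbolic--hyperbolic); the last case uses the key tree-theoretic fact that the bridge between two disjoint axes is unique, so translations happen ``in parallel'' and cannot cancel out. Applied to $g=c_j$, $h=c_{j'}$ with the hypothesis $tl(c_j c_{j'})\leq K$, this yields the claimed bound in every case (the intersecting case being trivial since then $d=0$).

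The second step is to upgrade pairwise closeness to a common near-intersection via the Helly property of trees: any finite collection of pairwise intersecting subtrees has nonempty total intersection. Taking $B_j$ to be the closed $(K/2)$-neighborhood of $A(c_j)$, each $B_j$ is a subtree, and the midpoint of the bridge between $A(c_j)$ and $A(c_{j'})$ lies in $B_j\cap B_{j'}$, so they pairwise intersect. Helly supplies a point of $\bigcap_j B_j$; taking a nearby vertex (the tiny rounding error is swallowed by the factor of $2$ in the target $2K$) gives the required $*$.

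The main obstacle is the careful case analysis needed to verify the bound $tl(gh)\geq 2\,d(A(g),A(h))$ in all elliptic/hyperbolic combinations; once that is established, the displacement identity and Helly's property do the rest mechanically.
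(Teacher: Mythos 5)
The paper states Lemma \ref{l: one basepoint} without proof (it is a background fact of the same flavour as the neighbouring results imported from \cite{casals2019positive}), so there is no in-paper argument to compare against; your route — the displacement identity $d(p,gp)=2d(p,A(g))+tl(g)$, the Culler--Morgan--Paulin identity $tl(gh)=tl(g)+tl(h)+2d(A(g),A(h))$ for disjoint characteristic sets, and the Helly property for subtrees — is the standard one and is essentially correct.

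The one step that does not work as literally written is the final rounding. If $p\in\bigcap_j B_j$ is an interior point of an edge, moving to the nearest vertex $*$ only gives $d(*,c_j*)\le d(p,c_jp)+2d(p,*)\le 2K+1$, not $2K$; the error is not ``swallowed by the factor of $2$''. The repair is the integrality of the simplicial setting: translation lengths and the distances $d(A(c_j),A(c_{j'}))$ (bridges between vertex-spanned subtrees) are non-negative integers, so you may replace $K$ by $\lfloor K\rfloor$ and take $B_j$ to be the closed $\lfloor K/2\rfloor$-neighbourhood of $A(c_j)$. These are vertex-spanned subtrees which still pairwise intersect (an endpoint of each bridge witnesses this), and the intersection of pairwise-meeting vertex-spanned subtrees contains a vertex $*$, for which $d(*,c_j*)\le 2\lfloor K/2\rfloor+tl(c_j)\le 2K$. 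With that adjustment the proof is complete. You should also state explicitly that the $c_j$ do not invert edges (or pass to the barycentric subdivision), since the displacement identity for elliptic isometries uses the existence of a fixed subtree.
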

	
	\begin{cor}[{Lemma 2.8} of \cite{casals2019positive}]
		\label{c: irreducibility of powers}Let $p$ and $q$ be coprime integers and assume we are given a two-generated group $\subg{g,h}$ acting irreducibly
		on a simplicial tree $T$. Then there is $(s,t)\in\{p,q\}^{2}$ such that the subgroup $\subg{g^{s},h^{t}}$ also acts irreducibly on $T$.
	\end{cor}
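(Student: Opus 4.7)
The strategy is proof by contradiction: assume that $\subg{g^s, h^t}$ acts reducibly on $T$ for every $(s,t) \in \{p,q\}^2$, and derive a contradiction with the irreducibility of $\subg{g, h}$.

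The first step is to recall the standard trichotomy for actions on a simplicial tree (see Chapter 3 of \cite{chiswell2001introduction}): a reducible action either fixes a point, fixes an end of $T$, or preserves a bi-infinite line setwise. I would combine this with two elementary observations: if $g$ is hyperbolic, then for every nonzero integer $n$, the element $g^n$ is hyperbolic with $A(g^n)=A(g)$; and if $g$ is elliptic, then $g^n$ is elliptic with fixed subtree containing that of $g$. Finally, since $\gcd(p,q)=1$, Bézout's identity gives $\subg{g^p, g^q}=\subg{g}$ and $\subg{h^p, h^q}=\subg{h}$, so any invariant of $T$ preserved by both $\subg{g^p, \cdot}$ and $\subg{g^q, \cdot}$ is preserved by $\subg{g, \cdot}$.

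The next step is a case analysis on the elliptic/hyperbolic type of $g$ and $h$. When both are hyperbolic, the four subgroups $\subg{g^s, h^t}$ share the same pair of generating axes $A(g), A(h)$, so the reducibility data (a shared endpoint at infinity, or a preserved line) is determined by $A(g)$ and $A(h)$ alone and transfers verbatim to $\subg{g,h}$, contradicting irreducibility; coprimality of $p,q$ is not needed in this case. When exactly one of them, say $h$, is elliptic, the reducibility invariant of each $\subg{g^s, h^t}$ must be compatible with the axis $A(g^s)=A(g)$, so each of $h^p, h^q$ either fixes a point of $A(g)$ or preserves it setwise; Bézout then promotes this to $h$, and one again reaches a contradiction with irreducibility of $\subg{g,h}$.

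The remaining, and genuinely delicate, case is when both $g$ and $h$ are elliptic (so that irreducibility of $\subg{g,h}$ is witnessed by a hyperbolic product such as $gh$). Here the main obstacle is that powers of elliptics can strictly enlarge the fixed subtree, so a common fixed point of $g^s$ and $h^t$ need not descend to $g$ and $h$. The plan is to organize the four invariants (fixed points, ends, or lines) attached to the four subgroups and run a pigeonhole argument across the two choices $s\in\{p,q\}$ for each fixed $t$: either the invariants for $s=p$ and $s=q$ coincide, in which case Bézout upgrades the invariant to $\subg{g, h^t}$ and, by a second such step over $t$, to $\subg{g,h}$, contradicting irreducibility; or they differ, and using that $g^p$ and $g^q$ must simultaneously preserve both distinct invariants, one forces $g$ to be trivial on a large enough subtree to again recover an invariant of $\subg{g,h}$. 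This combinatorial juggling of invariants across the four coprime-power pairs is where I expect the real work to lie.
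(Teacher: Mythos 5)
First, note that the paper does not actually prove this statement: it is imported verbatim as Lemma~2.8 of \cite{casals2019positive}, so there is no in-paper proof to compare against and your argument has to stand on its own. Its skeleton is reasonable --- classify reducible actions as point-fixing, end-fixing or line-preserving, split by the elliptic/hyperbolic types of $g$ and $h$, and use B\'ezout to descend from coprime powers --- and the case where both generators are hyperbolic is handled completely and correctly (including the observation that coprimality is irrelevant there). But there are two genuine gaps. In the mixed case you record the wrong invariant: reducibility of $\subg{g^s,h^t}$ with $g^s$ hyperbolic means $h^t$ fixes an \emph{end} of $A(g)$ or preserves $A(g)$ setwise, not that it ``fixes a point of $A(g)$''. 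More seriously, the step ``B\'ezout then promotes this to $h$'' fails as stated when $h^p$ and $h^q$ carry invariants of different types, e.g.\ $h^p$ fixes the end $\epsilon_+$ of $A(g)$ while $h^q$ preserves $A(g)$ by swapping its two ends: no single subgroup condition is then satisfied by both powers, so B\'ezout gives nothing directly. (This configuration can be excluded --- $h^q$ also preserves the line $h^p(A(g))$, which shares the end $\epsilon_+$ with $A(g)$, and since $h^q$ swaps $\epsilon_{\pm}$ this forces $h^p(A(g))=A(g)$, after which both powers preserve $A(g)$ and B\'ezout applies to the stabilizer of that line --- but no such argument appears in your proposal.)

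The larger gap is the both-elliptic case, which you rightly identify as the crux and then leave as a ``plan'': you name the real obstruction (that $\mathrm{Fix}(u^n)$ can strictly contain $\mathrm{Fix}(u)$, so common fixed points of powers do not descend) but supply no mechanism to overcome it, only the expectation of ``combinatorial juggling''. A concrete input is needed here. For instance, in the sub-case where all four groups $\subg{g^s,h^t}$ fix points, the four subtrees $\mathrm{Fix}(g^p),\mathrm{Fix}(g^q),\mathrm{Fix}(h^p),\mathrm{Fix}(h^q)$ pairwise intersect (the cross pairs by hypothesis, the same-letter pairs because they contain $\mathrm{Fix}(g)$, resp.\ $\mathrm{Fix}(h)$), so the Helly property of subtrees gives a common fixed point, and only then does B\'ezout yield a global fixed point of $\subg{g,h}$; the remaining sub-cases, where some of the four groups preserve lines instead, still require separate treatment. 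Alternatively one can argue quantitatively via translation lengths using Lemma~\ref{l: one basepoint} (Serre's lemma), which immediately precedes the statement and is presumably why it is labelled a corollary. As written, your proposal establishes only the easiest of the three cases.
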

	
	\begin{lemma}
		\label{l: hyperbolic from stable} Let $G$ be a group acting on a simplicial tree $T$ and $g,h\in G$, where $h$ is $\frac{1}{3}$-stable, $g\nin E^{-}(h)$ and $3tl(g)\leq tl(h)$. Then $gh$ is hyperbolic.
	\end{lemma}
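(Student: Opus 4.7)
My plan is to do a case analysis according to whether $g$ preserves $A(h)$ setwise.

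\textbf{Case 1: $g \in E(h)$.} The hypothesis $g \notin E^{-}(h)$ forces $g \in E^{+}(h)$, so $g$ acts on the line $A(h)$ as an orientation-preserving isometry, which is necessarily a translation. If $g$ is elliptic, the projection to $A(h)$ of any point of $A(g)$ is fixed by $g|_{A(h)}$, so this translation is trivial and $g$ fixes $A(h)$ pointwise. Hence $gh$ acts on $A(h)$ exactly as $h$ does, giving $tl(gh) = tl(h) > 0$. If $g$ is hyperbolic, then $A(g) = A(h)$ and $g, h$ translate coherently, so $tl(gh) = tl(g) + tl(h) > 0$. In either subcase $gh$ is hyperbolic; notice that this case does not even require the bound $3\,tl(g) \leq tl(h)$.

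\textbf{Case 2: $g \notin E(h)$.} Weak $\tfrac{1}{3}$-stability of $h$ gives $|A(h) \cap gA(h)| \leq \tfrac{1}{3}tl(h)$. The axis overlap is then also bounded: if $g$ is hyperbolic, the identity $|A(h) \cap gA(h)| \geq |A(g) \cap A(h)| - tl(g)$ combined with $3\,tl(g) \leq tl(h)$ yields $|A(g) \cap A(h)| \leq \tfrac{2}{3}tl(h)$; if $g$ is elliptic the analogous bound on $|A(g) \cap A(h)|$ follows from $FS(\tfrac{1}{3})$, which is a consequence of $WS(\tfrac{1}{3})$ via Lemma \ref{l: forms of stability}.

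I would argue by contradiction. Suppose $gh$ is elliptic with fixed point $x$, so $g(hx) = x$. Picking $x$ to minimize $\ell = d(x, A(h))$ within the fixed subtree $A(gh)$, and letting $y = \pi_{A(h)}(x)$, consider the points $p_n = h^n y$ for large $n$. From the ellipticity of $gh$ and $x \in A(gh)$:
\begin{equation*}
d(p_n, gh \cdot p_n) \;=\; 2\, d(p_n, A(gh)) \;\leq\; 2\, d(p_n, x) \;=\; 2(n\, tl(h) + \ell).
\end{equation*}
On the other hand, since the axis overlap is at most $\tfrac{2}{3}tl(h)$, the distance $d(h^{n+1}y, A(g))$ grows linearly in $n$ at rate $tl(h)$. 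This, via the formula $d(q, g q) = 2\, d(q, A(g)) + tl(g)$ and the triangle inequality, yields a lower bound on $d(p_n, gh \cdot p_n)$ of the form $(2n+1)\, tl(h) + O(1)$, contradicting the above upper bound for large $n$.

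The main obstacle is the careful bookkeeping of constants in the two estimates to verify that the linear growth indeed dominates. The role of the hypothesis $g \notin E^{-}(h)$ is to rule out the configuration where $g$ acts as an orientation-reversing involution on a segment of $A(h)$ containing the midpoint of $[y, hy]$; in that configuration $gh$ could genuinely fix $x \in A(h)$ without any pathology, and the lower bound on $d(h^{n+1}y, A(g))$ breaks down because $A(g)$ could run too close to $A(h)$ on the relevant side. The quantitative hypothesis $3\,tl(g) \leq tl(h)$ supplies the slack ensuring that $|A(g) \cap A(h)|$ is strictly shorter than $tl(h)$, so that the linear displacement overwhelms the bounded displacement available to an elliptic isometry.
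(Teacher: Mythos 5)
Your Case 1 contains a repairable error, and your Case 2 contains the genuine gap. In Case 1, membership in $E^{+}(h)$ for a hyperbolic $g$ only says that $g$ restricts to a translation of $A(h)$; it does not say that $g$ and $h$ translate in the same direction. If $g$ translates $A(h)$ opposite to $h$, then $gh$ translates $A(h)$ by $tl(h)-tl(g)$, which is positive only because $3\,tl(g)\leq tl(h)$ — so, contrary to your remark, this case \emph{does} need the bound (take $g=h^{-1}\in E^{+}(h)$: then $gh=1$ is elliptic). This is fixable, but the claim of coherence as stated is false.

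Case 2 is where the proof actually fails. Your upper bound $d(p_n, gh\,p_n)\leq 2(n\,tl(h)+\ell)$ and your lower bound of the form $(2n+1)\,tl(h)+O(1)$ both have leading term $2n\,tl(h)$: the $n$-dependence cancels identically, so no choice of ``large $n$'' produces a contradiction. Indeed, for large $n$ both quantities can be computed exactly and each equals $2n\,tl(h)$ plus a constant; the entire content of the lemma sits in those constants, i.e., in a local analysis near $A(g)\cap A(h)$ and $A(h)\cap gA(h)$ — which is precisely the part you defer to ``bookkeeping'' and where the hypothesis $g\notin E^{-}(h)$ must actually be used (your Case 2 computation never uses it). The paper's argument is exactly that local analysis: take a vertex $*$ fixed by $gh$, so $g$ maps $*'=h*$ back to $*$, and decompose the single segment $[*,*']$ in two ways — once via $g$ (outer pieces swapped by $g$, middle piece of length $tl(g)$ on $A(g)$) and once via $h$ (outer pieces swapped by $h$, middle piece $J_1'\subset A(h)$ of length $tl(h)$). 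Since $3\,tl(g)\leq tl(h)$, a subsegment of $J_1'$ of length at least $\tfrac{1}{3}tl(h)$ lies in the outer $g$-piece and is sent by $g$ back into $J_1'$ with reversed orientation; weak $\tfrac{1}{3}$-stability then forces $g\in E^{-}(h)$, a contradiction. You would need to replace your asymptotic comparison with this (or an equivalent) local argument for the proof to close.
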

	\begin{proof}
		Let $A=A(g)$ and $A'=A(h)$. Suppose towards contradiction that $gh$ is elliptic. Let $*$ be a vertex fixed by $gh$ and let $*'=h\cdot*$. The segment $[*,*']$ decomposes in two different ways as a concatenation of (possibly degenerate) segments.
		Firstly, as $J_{0}\cup J_{1}\cup J_{2}$ where $J_{2}$ is mapped onto $J_{0}$ by the action of $g$ and $J_{1}\subset A$ has length $tl(g)$.
		Secondly, as $J'_{0}\cup J'_{1}\cup J'_{2}$, where $J'_{0}$ is mapped onto
		$J'_{2}$ by $h$ and $J'_{1}\subset A'$ has length $tl(h)$. It follows that a segment of length at least $\frac{1}{3}tl(h)$ of $J'_{1}$ is contained in $J_{1}$ and sent by $g$ into $J_{1}$ with a reversal of orientation. However, this means that $g\in E^{-}(h)$, against our hypothesis.
	\end{proof}
	
	We are interested in the different ways a group $G$ may act by isometries on a tree $T$. 
	
	\begin{definition}[Abelian, dihedral and irreducible actions]\label{d: types_of_group_actions}
		Let $G$ be a group acting on a tree $T$ by isometries. Such an action is called  \emph{abelian} if $tl(gh)\leq tl(g) + tl(h)$ for all $g,h\in G$. It is called \emph{dihedral} if it is not abelian and $tl(gh)\leq tl(g) + tl(h)$ for all hyperbolic pair of elements $g, h\in G$. Finally, the action is said to be \emph{irreducible} if it is neither abelian nor dihedral. We refer to \cite[Chapter 3]{chiswell2001introduction} for further details and alternative characterisations of these notions.
	\end{definition}
	
	We will also use the following notion, which we introduced in \cite{casals2019positive}.
	
	\begin{definition}[Acylindrical pair]\label{d: acylindrical_pair}
		Let $G$ be a group acting by isometries on a tree. We will call a pair of elements $g,h$ \emph{acylindrical} if
		\begin{itemize}
			\item $\subg{g,h}$ acts irreducibly on $T$,
			\item $|A(g)\cap A(h)|\le \frac{1}{2}\max\{tl(g),tl(h)\}$ and
			\item if  $g$ (resp. $h$) is hyperbolic, then $g$ (resp. $h$) is  weakly $\frac{1}{3}$-stable.
		\end{itemize}
	\end{definition}
	
	\section{Graph products, property $AP_{n}$ and actions on trees}\label{sec:factorstrees}
	
	In this section we study the action of graph products on trees and classify the type of actions of finitely generated subgroups. More precisely, we define a natural action of a graph product on a tree, see Definition \ref{defn:tree associated to a vertex} and show that given a finitely generated subgroup $H=\langle h_1, \dots, h_n\rangle$ that acts non-elliptically on the tree, the kernel of the action on a minimal subtree is precisely the pointwise stabilizer of a ball whose radius is determined by the property $AP_n$ (see Definition \ref{defn:propDn}) of the graph. Furthermore, if the kernel is nontrivial, the minimal subgraph product that contains the kernel and the subgroup $H$, see Definition \ref{d: minimal_graph_subg_and_factors}, has a nontrivial decomposition as a direct product, see Corollary \ref{c: raw acylindricity}. Using this result we are able to classify in Corollary \ref{c: types of actions} the action of a finitely generated subgroup $H$ of a graph product, namely, if the minimal subgraph product that contains $H$ is directly indecomposable and not a vertex group, then $H$ is either cyclic, dihedral, or it acts irreducibly on the tree.
	
	\bigskip
	
	The definition of the extension graph of a right-angled Artin group was introduced by Kim and Koberda in analogy to the curve graph of a mapping class group. Here we extend that definition to graph products.
	
	\begin{definition}[Extension graph]\label{d: extension_graph}
		
		Let $\mathcal{G}=\mathcal{G}(\Gamma,\gvv)$ be a graph product of groups.  The extension graph $\Gamma^{e}=(V^{e},E^{e})$ of $\G$ is the graph whose vertices are in bijective correspondence with the set of singular subgraph products  of $\G$, that is each $v^e\in \Gamma^e$ is in bijective correspondence with a subgroup of the form $G_u^h$, for some $u\in \Gamma$  and some $h\in \G$; the edge relationship is given by orthogonality: two vertices $v^e,u^e\in \Gamma^e$ are adjacent in $\Gamma^e$ if and only if their corresponding subgroups commute in $\mc{G}$.
		
		There is a natural surjective simplicial quotient map $\pi^{e}:\Gamma^{e}\twoheadrightarrow\Gamma$: if $v^e\in \Gamma^e$ has associated the group $G_u^h$, then $\pi^e(v^e)=u$.
		Given $v^e\in \Gamma^e$ we will use the notation $H_{v^e}$ to refer to the group associated to $v^e$, that is,
		$H_{v^e}=H_{\pi^{e}(v)}^{h}$ for some $h\in\G$.

	\end{definition}

	\begin{prop} \label{prop:extensionAP}
		If $\Gamma$ satisfies property $AP_{n}$, so does $\Gamma^{e}$.
	\end{prop}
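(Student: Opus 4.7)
I argue by contrapositive: from an induced anti-path of length $n$ in $\Gamma^{e}$, I produce one in $\Gamma$. Take distinct vertices $v_{1}^{e},\dots,v_{n+1}^{e}\in\Gamma^{e}$ with $(v_{i}^{e},v_{j}^{e})\in E^{e}$ if and only if $|i-j|>1$, and write $v_{i}^{e}=G_{u_{i}}^{h_{i}}$, so that $u_{i}=\pi^{e}(v_{i}^{e})$.

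The starting observation is that any two distinct commuting singular subgroups $G_{u}^{h},G_{u'}^{h'}$ must have $u\neq u'$ and $(u,u')\in E(\Gamma)$: from $G_{u}^{h}\subseteq C_{\mc{G}}(G_{u'}^{h'})\subseteq\mc{G}(\st(u'))^{h'}$, conjugating by $(h')^{-1}$ and applying Lemma~\ref{l: prelims_conjugation_and_graphical_subgroups} force $u\in\st(u')$, while the case $u=u'$ is ruled out since any conjugate of $G_{u}$ sitting in $\mc{G}(\st(u))=G_{u}\times\mc{G}(\link{u})$ must coincide with $G_{u}$. Applied to the non-consecutive pairs of our anti-path, this yields $u_{i}\neq u_{j}$ and $(u_{i},u_{j})\in E(\Gamma)$ whenever $|i-j|\geq 2$.

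If furthermore $u_{i}\neq u_{i+1}$ and $(u_{i},u_{i+1})\notin E(\Gamma)$ for every $i$, then $u_{1},\dots,u_{n+1}$ is itself an induced anti-path of length $n$ in $\bar\Gamma$, contradicting property $AP_{n}$ for $\Gamma$ and concluding the argument. Otherwise, some index $i$ satisfies $u_{i}=u_{i+1}$ or $(u_{i},u_{i+1})\in E(\Gamma)$ while $v_{i}^{e},v_{i+1}^{e}$ nonetheless do not commute; one checks that this forces $h_{i+1}h_{i}^{-1}\notin\mc{G}(\st(u_{i}))\cdot\mc{G}(\st(u_{i+1}))$, so its normal form must contain a vertex $w\notin\st(u_{i})\cup\st(u_{i+1})$, that is, a vertex distinct from and non-adjacent to both $u_{i}$ and $u_{i+1}$.

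To close the argument, I would substitute (or insert) $w$ at the problematic position and verify that the modified sequence still contains an induced anti-path of length $n$ in $\bar\Gamma$. The key additional input is that for every $j$ with both $|j-i|\geq 2$ and $|j-(i+1)|\geq 2$, chaining the commutations $v_{j}^{e}\leftrightarrow v_{i}^{e}$ and $v_{j}^{e}\leftrightarrow v_{i+1}^{e}$ factors $h_{i+1}h_{i}^{-1}$ as an element of $\mc{G}(\st(u_{i+1}))\cdot\mc{G}(\st(u_{j}))\cdot\mc{G}(\st(u_{i}))$; this forces $w\in\st(u_{j})$, so $w$ is adjacent in $\Gamma$ to every ``far'' $u_{j}$. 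Iterating the substitution across each bad consecutive position produces the desired anti-path, again contradicting $AP_{n}$ for $\Gamma$.

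The main obstacle I anticipate is controlling the adjacency of the inserted vertex $w$ with the ``near'' indices $u_{i-1}$ and $u_{i+2}$, which is not directly forced by the factorisation above; resolving this apparently requires a more global combinatorial argument exploiting the supports of the conjugators $h_{j}$ simultaneously at every problematic position.
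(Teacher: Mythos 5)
Your proposal attacks the statement directly from the algebraic definition of $\Gamma^{e}$, whereas the paper takes a completely different and much shorter route: it invokes Kim--Koberda's description of the extension graph as an increasing union $\Gamma^{e}=\bigcup_i\Gamma_i$, where each $\Gamma_i$ is obtained from $\Gamma_{i-1}$ by doubling along the star of a vertex, and then checks that a single doubling preserves $AP_n$ (because any $w\in\Gamma_{i-1}\setminus\st(v)$ and $w'\in\Gamma_{i-1}'\setminus\st(v)$ are automatically adjacent in the complement, so an anti-path crossing between the two copies cannot be longer than one already present in a copy). Your opening reduction is sound: the argument that two distinct commuting singular subgroups $G_u^h$, $G_{u'}^{h'}$ force $u\neq u'$ and $(u,u')\in E(\Gamma)$ is correct, and it does dispose of all non-consecutive pairs of the anti-path at once.

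However, the proposal has a genuine gap, which you yourself flag: the treatment of the consecutive positions where $u_i=u_{i+1}$ or $(u_i,u_{i+1})\in E(\Gamma)$ is not completed. Two concrete problems remain. First, the chaining-of-commutations argument only constrains the inserted vertex $w$ against indices $j$ with $|j-i|\geq 2$ \emph{and} $|j-(i+1)|\geq 2$, so nothing forces $w$ to be adjacent in $\Gamma$ to $u_{i-1}$ or $u_{i+2}$; without that, the modified sequence need not be an induced path in $\bar\Gamma$, and a "more global combinatorial argument" is promised but not supplied. Second, in the case $u_i=u_{i+1}$ the insertion produces a sequence with a repeated vertex, which is not a simple path, so it is not even clear what object the substitution is meant to produce; moreover, several consecutive positions may be problematic simultaneously, and the iterated insertions must be shown to be compatible with one another. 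As it stands the argument establishes only the "generic" case in which all consecutive projections are already distinct and non-adjacent; the remaining cases are exactly where the difficulty of the proposition lies, and they are what the paper's doubling argument is designed to handle.
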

	\begin{proof}
		By \cite[Lemma 3.1]{KimKoberda}, the graph $\Gamma^e$ is obtained from $\Gamma$ by a subsequent process of doubling along the stars of vertices. More precisely $\Gamma^e = \bigcup_{i\in \mathbb N}\Gamma_i$ where $\Gamma_i$ is obtained from $\Gamma_{i-1}$ by doubling over a star of a vertex $v\in \Gamma_{i-1}$, i.e. $\Gamma_i$ is obtained from two formal copies of $\Gamma_{i-1}$, say $\Gamma_{i-1}$ and $\Gamma_{i-1}'$, and identifying the $star(v)$ in both graphs. Notice that from the definition we have that for all $w \in \Gamma_{i-1}\smallsetminus star(v)$ and for all $w'\in \Gamma_{i-1}'\smallsetminus star(v)$, $(w,w')\notin E(\Gamma_i)$.
		
		Therefore, it suffices to show that if $\Gamma$ is a graph with property $AP_n$, $v\in \Gamma$ and $\Gamma_1$ is obtained from $\Gamma$ by doubling along the star of $v$, then $\Gamma_1 = \Gamma\cup_{\st(v)}\Gamma'$ also has property $AP_n$.
		
		Let $p$ be a maximal path in the complement graph $\bar\Gamma_1$. If $p$ is contained in $\bar \Gamma$ or $\bar\Gamma'$, then the statement follows by induction. Otherwise, the statement follows since, as we noticed above, for any $w\in \Gamma\smallsetminus star(v)$ and $w'\in \Gamma'\smallsetminus star(v)$, there exists an edge $(w,w')\in E(\bar \Gamma_1)$, i.e. the distance between $w$ and $w'$ is one in the complement graph $\Gamma_1$.
	\end{proof}
	
	\begin{lemma}
		\label{l: blob lemma}
		Let $\G=\G(\Gamma,\gvv)$ be a graph product and assume that $\Gamma$ satisfies 	property $AP_{n}$.
		Suppose that we are given a collection $\mathcal{A}$ of directly indecomposable subgraph products, none of which is contained in the other.
		
		\newcommand{\ef}[0]{E_{\mathcal{F}}}
		Let $\ef$ be the following edge relationship in $\mc{A}$: $(H,H')\in\ef$ if and only if
		$H$ and $H'$ are not orthogonal.
		Then the diameter of any $\ef$-connected component of $\mathcal{A}$ is bounded by $n$.
	\end{lemma}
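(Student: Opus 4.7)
The plan is to argue by contradiction: pass to the extension graph $\Gamma^{e}$, which also satisfies $AP_{n}$ by Proposition \ref{prop:extensionAP}, and produce in $\bar{\Gamma}^{e}$ an induced path of length $n$. Suppose some $H,H'\in\mathcal{A}$ are at $E_{\mathcal{F}}$-distance greater than $n$, and let $H_{0}=H, H_{1},\dots, H_{n+1}=H'$ be the first $n+2$ vertices of a geodesic between them. Since any sub-path of a geodesic is a geodesic, minimality forces $H_{i}\perp H_{j}$ whenever $|i-j|>1$ and $H_{i}\not\perp H_{i+1}$ for every $i$. Write $H_{i}=\mathcal{G}(\Delta_{i})^{g_{i}}$ and, for each $i$, let $S_{i}\subseteq V(\Gamma^{e})$ be the set of vertices $v^{e}$ whose associated singular subgraph product $H_{v^{e}}$ is contained in $H_{i}$.

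Three properties of the $S_{i}$ drive the argument. First, each $\bar{\Gamma}^{e}[S_{i}]$ is connected: direct indecomposability of $H_{i}$ means $\Delta_{i}$ is not a join, equivalently $\bar{\Delta}_{i}$ is connected, and the doubling description of $\Gamma^{e}$ used in the proof of Proposition \ref{prop:extensionAP} transfers this to connectedness of $\bar{\Gamma}^{e}[S_{i}]$ (doubling along the star of a vertex adds complete bipartite bridges to the complement graph, so it preserves complement-connectedness). Second, since commutation of subgraph products is equivalent to commutation of all their singular subgroups, $H_{i}\perp H_{j}$ translates to $S_{i}\perp S_{j}$ in $\Gamma^{e}$ (no $\bar{\Gamma}^{e}$-edges between them) whenever $|i-j|>1$, while $H_{i}\not\perp H_{i+1}$ produces at least one $\bar{\Gamma}^{e}$-edge between $S_{i}$ and $S_{i+1}$. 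Third, the $S_{i}$ are pairwise disjoint for $|i-j|>1$: any $v^{e}\in S_{i}\cap S_{j}$ corresponds to a singular subgroup $G_{u}^{h}\leq H_{i}\cap H_{j}$, and by orthogonality $H_{i}\cap H_{j}\leq Z(H_{i})$, which is trivial whenever $\Delta_{i}$ has at least two vertices, as can be seen by intersecting the centralizers $\mathcal{G}(\mathrm{star}(v))$ over all $v\in\Delta_{i}$ (the absence of a universal vertex in a non-join graph forces the intersection to be trivial); the remaining purely singular case can be absorbed by shrinking such an $S_{i}$ to a single vertex chosen in no other $S_{j}$, which exists because of the no-containment assumption on $\mathcal{A}$.

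With these three ingredients the induced subgraph $G=\bar{\Gamma}^{e}[\bigcup_{i}S_{i}]$ is connected; pick a shortest path $v_{0},\dots, v_{m}$ in $G$ from a vertex of $S_{0}$ to a vertex of $S_{n+1}$. As a geodesic it is an induced path of $G$, and since $G$ is itself induced in $\bar{\Gamma}^{e}$, it is an induced path of $\bar{\Gamma}^{e}$. Blob-disjointness assigns a unique index $i(k)$ with $v_{k}\in S_{i(k)}$, and the absence of $\bar{\Gamma}^{e}$-edges between non-consecutive blobs forces $|i(k+1)-i(k)|\leq 1$, so $m\geq |i(m)-i(0)|=n+1$. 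Truncating to $v_{0},\dots, v_{n}$ yields an induced path of length $n$ in $\bar{\Gamma}^{e}$, contradicting $AP_{n}$ for $\Gamma^{e}$. The most delicate step is the disjointness claim of the second paragraph; once it is established the counting argument is immediate.
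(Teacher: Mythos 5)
Your overall route is the paper's: pass to $\Gamma^{e}$ (which inherits $AP_{n}$ by Proposition \ref{prop:extensionAP}), replace each $H_{i}$ by the ``blob'' $S_{i}$ of extension-graph vertices it contains, use direct indecomposability for complement-connectedness inside each blob and orthogonality to forbid complement-edges between non-consecutive blobs, and extract a long path in $\overline{\Gamma^{e}}$. You are actually more careful than the paper on induced-ness (taking a geodesic inside the induced subgraph $G$ rather than a bare concatenation), which is an improvement.

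The gap is in the final count. You establish $S_{i}\cap S_{j}=\emptyset$ only for $|i-j|>1$, yet then assert that ``blob-disjointness assigns a unique index $i(k)$.'' Consecutive blobs are not orthogonal and can genuinely overlap: in $\Gamma$ with vertices $a,b,c$ and single edge $(a,b)$, the subgraph products $G_{a}\ast G_{c}$ and $G_{a}\ast G_{c}^{g}$ with $1\neq g\in G_{b}$ are distinct, directly indecomposable, neither contains the other, they are non-orthogonal, and both contain the vertex of $\Gamma^{e}$ corresponding to $G_{a}$. Without uniqueness, a path vertex may lie in $S_{i}\cap S_{i+1}$ while its neighbour lies in $S_{i+2}$, so the index can advance by two per edge; your estimate then only gives $m\geq (n+1)/2$, hence an induced path of length roughly $n/2$, which does not contradict $AP_{n}$ (short induced paths are compatible with $AP_{n}$) and only bounds the diameter by about $2n$ rather than $n$. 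To recover the stated bound you must either rule out consecutive overlaps or count differently, e.g.\ by fixing for each $i$ an explicit non-commuting witness pair $v\in S_{i}$, $v'\in S_{i+1}$ and joining successive witnesses by complement-paths inside each blob, as the paper does --- though the paper's own write-up is equally silent on why that concatenation is an induced path without repetitions, so on that score your version is no worse than the original.
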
 
	\begin{proof}
		Assume towards contradiction that there is an $E_{\mathcal{F}}$-connected component of diameter $n+1$. Then it is possible to find a sequence $G_{1},G_{2},\dots,G_{n+1}\in\mathcal{A}$ such that for any $1\leq i<j\leq n+1$ the groups $G_{i}$ and $G_{j}$ are orthogonal if and only if $j\ne i+1$. Therefore, for all $v,v'\in \Gamma^e$ such that $H_v < G_i$ and $H_{v'} < G_j$ for $j\ne i+1$ we have that $(v,v') \in E(\Gamma^e)$. Furthermore, as $G_i$ and $G_{i+1}$ are not orthogonal, there exists $H_v< G_i$ and $H_{v'}< G_{i+1}$ for some $v,v'\in \Gamma^e$ such that $(v,v')\notin E(\Gamma^e)$.
		
		Now, the fact that each $G_{i}$ is directly indecomposable implies that any to vertices $v,v'\in \Gamma^e$ such that $H_v, H_{v'}<G_i$ are connected in the complement $\overline{\Gamma^e}$ and all the subgroups associated to the vertices in the path are subgroups of $G_i$. 
		
		From the above observations, we deduce the existence of a sequence $v_{i,1},v_{i,2},\dots, v_{i,n_i+1}$ where the sequence $(v_{i,j})_{j=1}^{n_{i}}$, $H_{v_{i,j}} < G_i$, induces a path (so $v_{i, j+1}$ is adjacent to $v_{i,j}$ for all $j$) in the complement graph $\bar{\Gamma^{e}}$ and the end vertex $v_{i,n_i+1}$ either coincides or is joined by an edge of $\overline{\Gamma^e}$ to the initial vertex $v_{i+1 ,1}$. The concatenation of all paths in the sequence and additional edges connecting them yields a path in $\overline{\Gamma}^{e}$ of length at least $n+1$. Therefore $\Gamma^e$ would satisfy property $AP_{n+1}$ and so by Proposition \ref{prop:extensionAP}, so would $\Gamma$, contradicting our assumptions.
	\end{proof}
	
	\begin{obs}
		\label{o: generation}Given $C=\subg{c_{1},c_{2},\dots, c_{k}}\leq\G$, any $\HH\in\ff(C)$ is equal to $\G(\subg{\HH_{j}}_{j\in J})$ where $\{\HH_{j}\}_{j\in J}$ is the collection of all members of $\bigcup_{1\leq i\leq k}\ff(c_{i})$ contained in $\HH$.
	\end{obs}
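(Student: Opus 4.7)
The strategy is to first match each directly indecomposable factor of some $c_{i}$ with the unique factor of $C$ containing it, and then exploit the internal direct product decompositions of $\env{c_{i}}$ and $\env{C}$ together with the minimality of $\env{C}$ to recover $\HH$ from the factors it contains.

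For the first step, I plan to apply Remark \ref{rem:factors} to the inclusion $\subg{c_{i}}\leq C$. Each $\HH'\in\ff(c_{i})$ is directly indecomposable and satisfies $\HH' \leq \env{c_{i}}\leq \env{C}$, so the remark gives $\HH'\leq \HH$ for some (necessarily unique) $\HH\in\ff(C)$, unless $\HH'$ is orthogonal to every factor of $C$. The latter cannot occur, as then $\HH'\leq \env{C}\cap \env{C}^{\perp}$, which is trivial by Lemma \ref{l: intersection_subgroups} since the underlying subgraphs of $\env{C}$ and $\env{C}^{\perp}$ are disjoint. This uniquely assigns each $\HH'$ to an $\HH\in\ff(C)$, justifying the indexing $J=J(\HH)$ used in the statement.

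One inclusion is immediate: since every $\HH_{j}$ with $j\in J$ lies in $\HH$, we have $\env{\subg{\HH_{j}}_{j\in J}}\leq\HH$. For the reverse inclusion, I would use the decomposition $\env{c_{i}}=\prod_{\HH'\in \ff(c_{i})}\HH'$ to write each $c_{i}$ as a product of elements from its factors, and then apply the retraction $\pi^{\HH}$. Since $\pi^{\HH}$ restricts to the identity on factors $\HH'\leq \HH$ and is trivial on those contained in other factors of $\env{C}$ (by orthogonality), we obtain $\pi^{\HH}(c_{i})\in\subg{\HH_{j}}_{j\in J}$, and hence $\pi^{\HH}(C)\leq \subg{\HH_{j}}_{j\in J}$.

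The crux of the argument, and what I expect to be the main technical point, is showing that $\env{\pi^{\HH}(C)}=\HH$ via the minimality of $\env{C}$. Were $\env{\pi^{\HH}(C)}=\HH_{0}\subsetneq \HH$ a proper subgraph product, the product $\HH_{0}\cdot\prod_{\HH'\in\ff(C),\,\HH'\ne \HH}\HH'$ would still contain $C$ (each $c\in C$ decomposes as $\prod_{\HH}\pi^{\HH}(c)$ with $\pi^{\HH}(c)\in\HH_{0}$ and $\pi^{\HH'}(c)\in \HH'$ for $\HH' \ne \HH$). This product is itself a subgraph product: conjugating back to a standard representation, its underlying subgraph is the join of the subgraph underlying $\HH_{0}$ with those underlying the other factors, which are pairwise orthogonal. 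It is strictly contained in $\env{C}$ as soon as $\HH_{0}\subsetneq \HH$, contradicting the minimality of $\env{C}$. The chain $\HH=\env{\pi^{\HH}(C)}\leq \env{\subg{\HH_{j}}_{j\in J}}\leq \HH$ then yields equality throughout.
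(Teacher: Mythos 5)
The paper states this as an Observation and offers no proof, so there is nothing to compare against; your argument is correct and is essentially the canonical justification one would supply. The three ingredients — assigning each factor of each $c_i$ to a unique factor of $C$ via Remark \ref{rem:factors}, showing $\pi^{\HH}(C)\leq\subg{\HH_{j}}_{j\in J}$ by killing the components living in the other factors, and then using minimality of $\env{C}$ to force $\env{\pi^{\HH}(C)}=\HH$ — are exactly what is needed, and the sandwich $\HH=\env{\pi^{\HH}(C)}\leq\env{\subg{\HH_{j}}_{j\in J}}\leq\HH$ closes the argument. The only redundancy is in your first step: the first sentence of Remark \ref{rem:factors} already asserts existence and uniqueness of the containing factor, so the orthogonality exclusion you carry out is not strictly necessary, though it is harmless and correct.
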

	
	\begin{definition}[Tree associated to a vertex]\label{defn:tree associated to a vertex}
		
		Let $\Gamma$ be a graph. Given $v\in \Gamma$ such that $v^\perp\cup \{v\}\neq \Gamma$, the graph product $\mathcal{G}=\mathcal{G}(\Gamma,\gvv)$ decomposes as an amalgamated product:
		$$
		\left(G_{v}\times\mathcal{G}(v^\perp)\right)\frp_{\G(v^\perp)}\mathcal{G}(\Gamma \setminus\{v\}).
		$$
		We denote the Bass-Serre tree associated to the above decomposition as $T_{v}(\Gamma)$, or simply $T_v$ when no ambiguity ensues, and refer to it as the \emph{tree associated to the vertex $v$}.
	\end{definition}
	
	\begin{remark}\label{rem:ellTv}
		Observe that if $v^\perp\cup\{v\}\neq V(\Gamma)$, then any group elliptic in $T_{v}$ is of smaller complexity.
		
		Notice also that given any $\Delta\subset\Gamma$ and $v\in\Delta$, the action of $\G(\Delta)$ on $T_{v}(\Delta)$ is equivariantly isomorphic to the action
		of $\G(\Delta)$ on its minimal tree in $T_{v}(\Gamma)$.
	\end{remark}

	\begin{obs}
		\label{o: decomposibility}Given any decomposition of $\Gamma$ as $\Gamma_{0}\oplus\Gamma_{1}$ where $v\in \Gamma_{1}$, the subgroup
		$\G(\Gamma_{0})$ is in the Kernel of the action on $T_{v}$. For any $g\in\G$, the image of  $g$ by the projection $\pi^{\mc{G}(\Gamma_1)}$ acts
		in the same way as $g$ on $T_{v}$.
	\end{obs}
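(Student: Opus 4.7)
The plan is to reduce both statements to two easy facts: that $\G(\Gamma_0)$ is a normal subgroup of $\G$, and that it lies in the edge stabilizer of the distinguished edge of $T_{v}$. First I would observe that the join decomposition $\Gamma=\Gamma_0\oplus\Gamma_1$ forces every vertex of $\Gamma_0$ to be adjacent to every vertex of $\Gamma_1$ and, in particular, to $v\in\Gamma_1$; thus $\Gamma_0\subseteq v^\perp$. Consequently $\G(\Gamma_0)\leq\G(v^\perp)$, and the latter is precisely the amalgamating subgroup in the splitting $\G=(G_v\times\G(v^\perp))\ast_{\G(v^\perp)}\G(\Gamma\setminus\{v\})$, so it fixes pointwise the distinguished edge of $T_v$ connecting the two standard vertex stabilizers.

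Next I would use that the same join decomposition yields a direct product splitting $\G\cong\G(\Gamma_0)\times\G(\Gamma_1)$, hence $\G(\Gamma_0)$ is normal in $\G$. A normal subgroup $N$ that fixes a point $*$ fixes its entire $\G$-orbit, since $N=gNg^{-1}$ fixes $g\cdot *$ for any $g\in\G$. Applying this to both endpoints of the distinguished edge — whose $\G$-orbits together exhaust the vertex set of $T_v$, as the Bass-Serre tree of an amalgamated product has exactly two vertex orbits — shows that $\G(\Gamma_0)$ is in the kernel of the action on $T_v$.

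For the second assertion, the direct product decomposition lets me write any $g\in\G$ uniquely as $g=g_0g_1$ with $g_0\in\G(\Gamma_0)$ and $g_1=\pi^{\G(\Gamma_1)}(g)\in\G(\Gamma_1)$. Since $g_0$ acts trivially on $T_v$ by the first part, $g$ and $g_1$ act identically, as required. No real obstacle arises: the only subtlety worth flagging is that containment in the amalgamating subgroup by itself produces only two fixed vertices, so invoking the normality of $\G(\Gamma_0)$ — which comes for free from the join structure of $\Gamma$ — is what promotes this to a trivial action on all of $T_v$.
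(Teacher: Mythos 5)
Your proposal is correct and follows essentially the same route as the paper: note that the join decomposition places $\Gamma_0$ inside $v^\perp$, so $\G(\Gamma_0)$ stabilizes both orbit representatives of vertices in $T_v$, and then use normality of $\G(\Gamma_0)$ (coming from $\G\cong\G(\Gamma_0)\times\G(\Gamma_1)$) to promote this to membership in the kernel, with the second claim following immediately. The extra detail you supply about normal subgroups fixing whole orbits is exactly the implicit step in the paper's argument.
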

	\begin{proof}
		Since $V(\Gamma_0) \subset V(\Gamma)\setminus \{v\}$ and $V(\Gamma_0) \subseteq v^\perp$, we have that $\mc{G}(\Gamma_0)$ is contained in the stabilisers of the two vertices in $\mathcal G(\Gamma)\backslash T_v$. Since $\mc{G}(\Gamma_0)$ is normal in $\mc{G}(\Gamma)\simeq \G(\Gamma_0)\times \G(\Gamma_1)$, it follows that $\mc{G}(\Gamma_0)$ belongs to the stabiliser of all vertices in $T_v$ and so in the kernel of the action. The second claim is an immediate consequence of the first one.
	\end{proof}
	
	The main technical result of this section is a weak form of malnormality for canonical subgroups of graph products. We will start with an easy observation.
	\begin{lemma}\label{lem:aux1}
		Let $\Delta$ be an induced subgraph of $\Gamma$. Suppose we are given $C< \mathcal G(\Delta)$ and $g_{1},g_{2},\dots, g_{k}\in \mathcal G$. Then either $\subg{C,g_{1},g_{2},\dots, g_{k}}$ is of smaller complexity   or
		there is $1\leq i\leq k$ such that $C\cap C^{g_{i}}$ can be conjugated into $\G(\Delta\cap(\bigcap_{v\in \Gamma\setminus \Delta}v^\perp))$.
		
	\end{lemma}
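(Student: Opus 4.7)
My approach centres on the Bass--Serre trees $T_{v}$ of Definition \ref{defn:tree associated to a vertex} for $v\in\Gamma\setminus\Delta$. Write $\Delta_{0}:=\Delta\cap\bigcap_{v\in\Gamma\setminus\Delta}v^{\perp}$. If $\Delta=\Gamma$ the indexing set is empty and $\Delta_{0}=\Gamma$, so the conclusion is trivial; moreover, any $v\in\Gamma\setminus\Delta$ with $v^{\perp}\cup\{v\}=\Gamma$ satisfies $\Delta\subseteq v^{\perp}$ and hence contributes $\Delta$, a superset of $\Delta_{0}$, to the defining intersection, so such vertices may be discarded. Thus I may assume $\Delta\subsetneq\Gamma$ and every relevant $v$ has $T_{v}$ well defined.

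For such a $v$, the inclusion $C\leq\mathcal{G}(\Delta)\leq\mathcal{G}(\Gamma\setminus\{v\})$ shows that $C$ fixes the vertex $*_{v}\in T_{v}$ corresponding to the trivial coset of $\mathcal{G}(\Gamma\setminus\{v\})$. Since $\subg{C,g_{1},\dots,g_{k}}$ is not of smaller complexity, it cannot lie inside the proper subgraph product $\mathcal{G}(\Gamma\setminus\{v\})$, so some index $i=i(v)$ satisfies $g_{i(v)}\cdot *_{v}\neq *_{v}$. The intersection $C\cap C^{g_{i(v)}}$ then fixes both $*_{v}$ and $g_{i(v)}\cdot *_{v}$, hence fixes an edge of $T_{v}$; since edge stabilisers in $T_{v}$ are conjugates of $\mathcal{G}(v^{\perp})$, Lemma \ref{l: intersection_subgroups} combined with the further inclusion $C\cap C^{g_{i(v)}}\leq\mathcal{G}(\Delta)$ yields some $h_{v}\in\mathcal{G}$ with $C\cap C^{g_{i(v)}}\leq\mathcal{G}(\Delta\cap v^{\perp})^{h_{v}}$.

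To reach the single-index statement, the plan is to select $i$ so that $g_{i}$ moves $*_{v}$ simultaneously for every relevant $v\in\Gamma\setminus\Delta$. Once this is arranged, $C\cap C^{g_{i}}$ is contained in a conjugate of $\mathcal{G}(v^{\perp})$ for each $v$, and these conditions can be intersected inside the ambient $\mathcal{G}(\Delta)$ via iterated applications of Lemma \ref{l: intersection_subgroups}, landing in a conjugate of $\mathcal{G}(\Delta_{0})$. The existence of such an $i$ should follow from the non-smaller-complexity hypothesis together with the observation that the proper subgraph products $\mathcal{G}(\Gamma\setminus\{v\})$ cannot collectively absorb every $g_{j}$ in a compatible way. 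The main obstacle I anticipate is precisely this uniformisation step: turning per-vertex data $(i(v),h_{v})$ into a single pair $(i,h)$ working for every $v$ at once. The tree argument only yields local data, and aligning the conjugators $h_{v}$ across different trees is not automatic; I expect to resolve it either by induction on $|\Gamma\setminus\Delta|$, shrinking the complement one vertex at a time and applying the argument recursively, or by a careful replacement of $g_{i(v)}$ with a suitable product of the given $g_{j}$ so that the resulting element forces the desired uniform containment.
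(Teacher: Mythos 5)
Your argument up to the per-vertex stage coincides with the paper's proof: dispose of $\Delta=\Gamma$ trivially, observe that vertices $v$ with $v^\perp\cup\{v\}=\Gamma$ impose no constraint on $\Delta_0$, and for the remaining $v$ use the tree $T_v$ --- $C$ is elliptic at the vertex stabilised by $\G(\Gamma\setminus\{v\})$, the non-smaller-complexity hypothesis forces some $g_{i(v)}$ to move that vertex, $C\cap C^{g_{i(v)}}$ then fixes an edge and hence lies in a conjugate of $\G(v^\perp)$, and Lemma \ref{l: intersection_subgroups} places it in a conjugate of $\G(\Delta\cap v^\perp)$. This is exactly the route the paper takes.

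The obstacle you flag at the end, however, should not be attacked in the way you propose. A single index $i$ working for all $v$ simultaneously need not exist, and the conclusion as literally stated can in fact fail: take $\Gamma$ with vertices $a,b,v_1,v_2$ and edges $(a,b)$, $(a,v_2)$, $(b,v_1)$, let $\Delta=\{a,b\}$, $C=\G(\Delta)=G_a\times G_b$, and $g_i\in G_{v_i}$ nontrivial. Then $\subg{C,g_1,g_2}$ has full support, yet $C\cap C^{g_1}=G_b$ and $C\cap C^{g_2}=G_a$, and neither conjugates into $\G(\Delta\cap v_1^\perp\cap v_2^\perp)=\G(\emptyset)=\{1\}$. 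What the paper actually needs and uses (in Theorem \ref{l: conjugate intersection}, where the lemma is applied to $C_{i+1}=\bigcap_{h\in\mathcal{B}_{i+1}}C^{h}$, a set contained in every $C_i\cap C_i^{g_j}$) is the corresponding statement for the full intersection $\bigcap_{i=1}^{k}(C\cap C^{g_i})$. That version follows at once from the per-vertex data you have already established: the intersection lies in $\G(\Delta\cap v^\perp)^{h_v}$ for every relevant $v$, and an iterated application of Lemma \ref{l: intersection_subgroups} converts this family of conjugates into a single conjugate of $\G(\Delta\cap\bigcap_{v}v^\perp)$ --- which is precisely how the paper closes its proof. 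So replace the single-index goal by the intersection over all $i$; no uniformisation of the indices $i(v)$ or of the conjugators $h_v$ is needed beyond Lemma \ref{l: intersection_subgroups}.
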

	\begin{proof}
		Notice that if $\Delta= \Gamma$, then the second alternative holds. Let us assume that $\Delta \ne \Gamma$.
		
		If $v\in \Gamma\setminus \Delta$ and $\Gamma= v\cup v^\perp$, then, since $\G(\Gamma)= G_v\times \G(\Gamma\setminus \{v\})$, for all $1\le i\le k$ we have that $C^{g_i} = C^{h_i}$ for some $h_i\in \G(\Gamma\setminus \{v\})$. Therefore, we have that $C\cap C^{g_i}= C\cap C^{h_i} < \G(\Gamma\setminus \{v\})= \G(v^\perp)$.
		
		Let $v\in \Gamma\setminus \Delta$ be such that $\Gamma\ne v\cup v^\perp$. Consider the associated tree $T_{v}$. The group $C$ fixes a vertex $x$ of $T_{v}$, since $C\leq \mc{G}(\Delta) \leq \mc{G}(\Gamma\setminus\{v\}) $. If each of the $g_{i}$ fixes this vertex as well, then $\subg{C,g_{1},g_{2},\dots,g_{k}}$ is of smaller complexity  by Remark \ref{rem:ellTv}. If $g_{i}$ does not fix $x$ for some $i=1, \dots, k$, then $C\cap C^{g_{i}}$ must fix the segment $[x,g_{i}^{-1}x]$ that starts at $x$ and ends at $g_{i}^{-1}x$. In particular, this means that $C\cap C^{g_{i}}$ can be conjugated into the edge group $\G(v^\perp)$. 
		
		The result then follows from an iterated application of Lemma \ref{l: intersection_subgroups}.
	\end{proof}
	
	\begin{notation}[Positive ball]
		Given elements $g_{1},g_{2},\dots, g_{k}$ of a group, denote by $\mathcal{B}_{n}(g_{1}, \dots, g_k)$ the collection of all the elements of the form $g_{i_{1}}g_{i_{2}}\dots g_{i_{\ell}}$ with $0\leq \ell\leq n$ and $i_{j}\in\{1,2,\dots, k\}$ for all $1\leq j\leq \ell$.
	\end{notation}

	\begin{thm}
		\label{l: conjugate intersection}
		Let $\mathcal{G}=\mathcal{G}(\Gamma,\gvv)$ be a graph product with $\Gamma$ satisfying property $AP_n$. Let $g_{1},g_{2},\dots, g_{k}$ be elements of $\G$, let $C=\G(\Delta_{0})\leq\G$ be a subgraph product and let $E=\bigcap_{h\in\mathcal{B}_{n}(g_{1},g_{2},\dots, g_{k})}C^{h}$.
		
		Then, up to conjugation,  $\supp(\subg{E,g_{1},g_{2},\dots,g_{k}})$  is of the form $\G(\Delta)$ for some graph $\Delta$ which satisfies that
		$\Delta = \Delta_{1}\oplus\Delta_{2}$ for some graphs $\Delta_1$ and $\Delta_2$ and $E\leq\G(\Delta_{1})\leq C$.
	\end{thm}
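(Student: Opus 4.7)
The plan is an induction on the number of vertices of $\Gamma$, using Lemma \ref{lem:aux1} as the main reductive tool and the property $AP_n$ (together with the blob lemma, Lemma \ref{l: blob lemma}) to bound the depth of iteration. The base case, when $\Gamma$ is a single vertex or an edge, is verified directly: any intersection of conjugates of a subgraph product is either trivial or the whole group, and the claimed join decomposition is immediate.

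For the inductive step, I apply Lemma \ref{lem:aux1} to the data $(C, g_1, \ldots, g_k)$ and split into two cases. In the first case, $\langle C, g_1, \ldots, g_k \rangle$ has smaller complexity, so there is a proper subgraph $\Gamma' \subsetneq \Gamma$ with $\langle C, g_1, \ldots, g_k \rangle \leq \mathcal{G}(\Gamma')^h$ for some $h \in \mathcal{G}$. Since $AP_n$ is inherited by induced subgraphs, $\Gamma'$ still satisfies $AP_n$. Using Lemma \ref{l: intersection_subgroups}, I rewrite $C \cap \mathcal{G}(\Gamma')^h$ as $\mathcal{G}(\Delta_0')^{h'}$ for some $\Delta_0' \subseteq \Delta_0 \cap \Gamma'$, and then apply the inductive hypothesis inside $\mathcal{G}(\Gamma')$ to the conjugated data, whereby $E$ is trapped inside $\mathcal{G}(\Gamma')^h$.

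In the second case of Lemma \ref{lem:aux1}, there is an index $i$ such that $C \cap C^{g_i}$ is conjugable into $\mathcal{G}(\Delta_0^{(1)})$, where $\Delta_0^{(1)} := \Delta_0 \cap \bigcap_{v \in \Gamma \setminus \Delta_0} v^\perp \subsetneq \Delta_0$. The crucial observation is that every vertex of $\Delta_0^{(1)}$ is adjacent to every vertex of $\Gamma \setminus \Delta_0$, so $\mathcal{G}(\Delta_0^{(1)})$ is automatically orthogonal to $\mathcal{G}(\Gamma \setminus \Delta_0)$. I then iterate this reduction along elements of $\mathcal{B}_n(g_1, \ldots, g_k)$: passing to the new, shrunken $C$ (conjugated into $\mathcal{G}(\Delta_0^{(1)})$) and applying Lemma \ref{lem:aux1} again. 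Since $E$ is an intersection over the full length-$n$ ball, every iteration corresponds to appending one more generator, and after at most $n$ reductions I obtain that $E$ lies (after conjugation) in a subgraph product $\mathcal{G}(\Delta_1)$ with $\Delta_1 \subseteq \Delta_0$, and $\Delta_1$ is orthogonal to the subgraph $\Delta_2$ generated by the projections of the $g_i$'s outside $\Delta_1$. Setting $\Delta := \Delta_1 \oplus \Delta_2$ then yields $\supp(\langle E, g_1, \ldots, g_k \rangle) = \mathcal{G}(\Delta)$ with the required structure.

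The main obstacle is the combinatorial argument showing that the iteration terminates within $n$ steps with the correct orthogonality structure. If the shrinking chain $\Delta_0 \supsetneq \Delta_0^{(1)} \supsetneq \Delta_0^{(2)} \supsetneq \cdots$ were longer than $n$, then using the dual characterisation of the orthogonality operation in terms of adjacency in $\bar\Gamma$ (via Proposition \ref{prop:extensionAP} and Lemma \ref{l: blob lemma}) one would construct an induced path of length exceeding $n$ in $\bar\Gamma$, contradicting $AP_n$. A secondary technical hurdle is that the conjugating elements change at each step of the iteration, so one must carefully track the compatibility of successive conjugations, relying on Lemma \ref{l: intersection_subgroups} and Lemma \ref{l: prelims_conjugation_and_graphical_subgroups} to ensure that the resulting subgraphs and conjugators combine consistently into a single global conjugation witnessing the final join decomposition.
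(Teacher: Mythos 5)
Your proposal is correct and follows essentially the same route as the paper's proof: reduce to the case where $\supp(\subg{E,g_{1},\dots,g_{k}})$ is not of smaller complexity (you phrase this as an induction on $|V(\Gamma)|$, the paper as a WLOG normalisation), then iterate Lemma \ref{lem:aux1} to produce the descending chain $\Delta_{0}\supseteq\Delta_{0}^{(1)}\supseteq\cdots$ with $\Delta^{(i+1)}=\Delta^{(i)}\cap\bigcap_{v\in\Gamma\setminus\Delta^{(i)}}v^{\perp}$, observe that stabilisation of the chain forces the join decomposition $\Gamma=\Delta^{(i)}\oplus(\Delta^{(i)})^{\perp}$, and rule out strict descent for more than $n$ steps by extracting an induced path in $\bar\Gamma$ contradicting $AP_{n}$. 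The only cosmetic discrepancies are that the path-in-$\bar\Gamma$ argument is direct from the definition of the chain and does not actually need Proposition \ref{prop:extensionAP} or Lemma \ref{l: blob lemma}, which you cite.
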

	
	\begin{proof}
		
		Notice that we can assume that $\supp(\subg{E,g_{1},g_{2},\dots, g_{k}})$ is not of smaller complexity. Indeed, if $H=\supp(\subg{E,g_{1},g_{2},\dots, g_{k}})$ is of less complexity, we can consider the graph product $\mc{G}'=H$ and $C'=C \cap H$. In this case $E$ remains the same. The result for this setting implies the original statement and so without loss of generality we assume that $H$ is not of of smaller complexity  and so $\Gamma= \Delta$.
		
		Furthermore, we can assume that $\Delta_{0}\neq\Gamma$. Indeed if $\Delta_0= \Gamma$, then $E$ is $\mc{G}$ and $\Delta=\Delta_1=\Gamma$ and so $E=\mc{G}(\Gamma)=C$.
		
		Let $C_{i}=\bigcap_{h\in\mathcal{B}_{i}(\bar{g})}C^{h}$, so that $C_{0}=C$ and $C_{n}=E$. Assume by induction that $C_i < G(\Delta_i)$. By Lemma \ref{lem:aux1}, we have that $C_{i+1}$ can be conjugated into $\G(\Delta_{i+1})$ where 
		
		$$
		\Delta_{i+1}=\Delta_i \cap (\bigcap_{v\in \Delta_{i-1}\setminus\Delta_i}v^\perp) = \Delta_i \cap (\bigcap_{v\in \Gamma \setminus \Delta_i}v^\perp).
		$$
		
		If for some $i$ we have that $\Delta_{i+1} =\Delta_{i}$, then, from the definition of $\Delta_{i+1}$ we have that $\Gamma \cong \Delta_{i} \oplus \Delta_{i}^\perp$ with $E$ conjugate into $\G(\Delta_{i}) < \G(\Delta_0)$ and so the result holds.

		If for all $0\le i\le n$ we have that $\Delta_{i+1} \lneq \Delta_i$, then we obtain a proper descending chain of graphs $\Delta_i$ of length $n+1$. Hence, it is possible to choose a sequence of vertices
		$v_{0},v_{1},\dots, v_{n}$ of $\Gamma$ where $v_{0}\in\Gamma\setminus\Delta_{0}$ and  $v_{i}\in\Delta_{i}\setminus\Delta_{i-1}$ for $i\leq 1$.
		However, from the definition of the graphs $\Delta_i$, such a sequence is a path in the complement graph $\bar \Gamma$  and thus contradicts the fact that $\Gamma$ satisfies property $AP_{n}$.
	\end{proof}
	
	The previous theorem has strong implications on the way the graph product $\G$ acts on the simplicial trees $T_{v}$.
	
	\begin{cor}
		\label{c: raw acylindricity}Let $\G=\G(\Gamma,\gvv)$ be a graph product of groups satisfying property $AP_{n}$ and let $g_1, \dots, g_k\in \G$. Assume  we are given
		$v\in \Gamma$, $*$ a vertex of $T_{v}$ stabilized by a conjugate of $\G(V\setminus\{v\})$
		and let $E$ be the point-wise stabilizer of $\mathcal{B}_n(h_{1},h_{2},\dots, h_{k})\cdot\{*\}$.
		If we let $\Delta$ be a subgraph of $\Gamma$ such that $\supp(E,h_{1},h_{2},\dots, h_{k}))=\G(\Delta)^h$ for some $h\in \mc{G}$, then there exists a decomposition $\Delta=\Delta_{1}\oplus\Delta_{2}$ such that $E=\G(\Delta_{1})\leq Stab(*)$.
		In particular, $E$ fixes the whole tree spanned by the orbit of $*$ under the action by $\G(\Delta)$.
		
	\end{cor}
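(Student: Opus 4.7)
My plan is to deduce this corollary from Theorem \ref{l: conjugate intersection} applied to $C:=\G(\Gamma\setminus\{v\})$. Since every vertex stabilizer of the action of $\G$ on $T_{v}$ is a conjugate of $C$, and the statement is invariant under simultaneous conjugation of $*$, of the $h_{i}$ and of the resulting graph $\Delta$, I may normalize so that $Stab(*)=C$. Unpacking the definition,
\[
E \;=\; \bigcap_{w\in \mathcal{B}_{n}(h_{1},\dots,h_{k})} wCw^{-1},
\]
which does not quite match the form $\bigcap_{u}C^{u}$ used in Theorem \ref{l: conjugate intersection}. The fix is to pass to inverses: setting $g_{i}:=h_{i}^{-1}$, the obvious reverse-word bijection $(i_{1},\dots,i_{\ell})\mapsto(i_{\ell},\dots,i_{1})$ gives $\mathcal{B}_{n}(g_{1},\dots,g_{k})=\{w^{-1}\mid w\in \mathcal{B}_{n}(h_{1},\dots,h_{k})\}$, whence $E=\bigcap_{u\in \mathcal{B}_{n}(g_{1},\dots,g_{k})}C^{u}$ on the nose.

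Now I apply Theorem \ref{l: conjugate intersection} to $C$ and $g_{1},\dots,g_{k}$. Since $\langle E,g_{1},\dots,g_{k}\rangle=\langle E,h_{1},\dots,h_{k}\rangle$, the theorem produces (up to a further conjugation, which I absorb into the earlier normalization) the claimed $\supp(\langle E,h_{1},\dots,h_{k}\rangle)=\G(\Delta)$ with $\Delta=\Delta_{1}\oplus\Delta_{2}$ and $E\leq \G(\Delta_{1})\leq C$. It remains to upgrade $E\leq \G(\Delta_{1})$ to an equality. For $x\in \G(\Delta_{1})$ and $w\in \mathcal{B}_{n}(h_{1},\dots,h_{k})\subseteq \G(\Delta)=\G(\Delta_{1})\times \G(\Delta_{2})$, write $w=w_{1}w_{2}$ with $w_{j}\in \G(\Delta_{j})$; since $\G(\Delta_{1})$ and $\G(\Delta_{2})$ commute, $w^{-1}xw=w_{1}^{-1}xw_{1}\in \G(\Delta_{1})\leq C=Stab(*)$, so $x$ fixes $w\cdot *$ and hence $x\in E$.

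Finally, for the ``in particular'' claim, any $g=g^{1}g^{2}\in \G(\Delta_{1})\times \G(\Delta_{2})$ satisfies $g\cdot *=g^{2}\cdot *$, because $g^{1}\in C$ fixes $*$ and commutes with $g^{2}$. Then for any $x\in E=\G(\Delta_{1})$ one computes $x\cdot(g\cdot *)=xg^{2}\cdot *=g^{2}x\cdot *=g^{2}\cdot *=g\cdot *$, showing that $E$ fixes the orbit $\G(\Delta)\cdot\{*\}$ pointwise and therefore fixes its convex hull, which is precisely the minimal subtree spanned by the orbit. I do not anticipate a serious obstacle: the only care needed is the inversion trick to align the natural form of $E$ with the positive-ball hypothesis of Theorem \ref{l: conjugate intersection}, and the straightforward exploitation of the $\oplus$-decomposition of $\Delta$ to obtain the reverse inclusion $\G(\Delta_{1})\leq E$ and the fixation of the orbit.
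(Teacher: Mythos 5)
Your proof is correct and follows essentially the same route as the paper: the paper's own argument is simply to invoke Theorem \ref{l: conjugate intersection} with $C=Stab(*)$ and to derive the last claim from Observation \ref{o: decomposibility} applied to $\G(\Delta)=\G(\Delta_1)\oplus\G(\Delta_2)$, which is exactly the normality/commutation computation you carry out by hand. The details you supply beyond the paper's two-line proof --- the inversion trick reconciling $\bigcap_w wCw^{-1}$ with the positive-ball form $\bigcap_u C^u$, and the reverse inclusion $\G(\Delta_1)\leq E$ upgrading the theorem's $E\leq\G(\Delta_1)$ to the corollary's equality --- are genuine gaps in the paper's exposition that your argument correctly fills.
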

	\begin{proof}
		The result follows from Lemma \ref{l: conjugate intersection} by letting $C=Stab(*)$. The last claim follows from Observation \ref{o: decomposibility} applied to $\G(\Delta)$.
	\end{proof}
	
	\begin{cor}
		\label{c: stable axis}
		Assume that the graph $\Gamma$ satisfies property $AP_{n}$ and let $\G=\G(\Gamma,\gvv)$ be a graph product. Then for every $v\in \Gamma$, any element $h\in\G$ which is hyperbolic with respect to the action of $\mc{G}$ on $T_{v}$ is also weakly $(n+2)$-stable with respect to the action of $\mc{G}$ on $T_v$.
		
		If, additionally, $\subg{h}$ is not of smaller complexity and $\Gamma$ is directly indecomposable, then $h$ is $(n+3)$-stable with respect to the action of $\mc{G}$ on $T_v$.
	\end{cor}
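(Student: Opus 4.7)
The plan is to first establish the forward-stability property $FS(n)$ for the action of $h$ on $T_{v}$ and then invoke the implication $FS(n)\Rightarrow WS(n+2)$ of Lemma \ref{l: forms of stability} to obtain weak $(n+2)$-stability. For the refined statement we only need to show in addition that $K(h)=\{1\}$: weak $(n+3)$-stability follows automatically from weak $(n+2)$-stability by the monotonicity observation of Remark \ref{r: stability and powers}.

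To prove $FS(n)$, suppose $g\in\mc{G}$ pointwise fixes a subsegment $I\subset A(h)$ of length strictly greater than $n\cdot tl(h)$. Choose a vertex $*\in I$ stabilized by a conjugate of $\mc{G}(V\setminus\{v\})$ close enough to one endpoint of $I$ that the ball $\mathcal{B}_n(h)\cdot *=\{*,h*,\dots,h^n*\}$ still lies inside $I$; this is possible because the two types of vertices of $T_v$ alternate along edges of $A(h)$, so vertices of the prescribed type occur at most two edges apart, and the strict integer inequality $|I|\geq n\cdot tl(h)+1$ supplies the needed slack. Then $g$ lies in the pointwise stabilizer $E$ of $\mathcal{B}_n(h)\cdot *$. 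Corollary \ref{c: raw acylindricity}, applied with $k=1$ and $h_1=h$, yields a subgraph $\Delta\subset\Gamma$ with $\supp(\subg{E,h})=\mc{G}(\Delta)^{h_0}$ and a decomposition $\Delta=\Delta_1\oplus\Delta_2$ such that $E$ fixes the subtree spanned by the $\mc{G}(\Delta)^{h_0}$-orbit of $*$. Since $h\in\mc{G}(\Delta)^{h_0}$ acts on $T_v$ as a translation along $A(h)$, the $\subg{h}$-orbit of $*$ is contained in this orbit and spans all of $A(h)$. Hence $g$ fixes $A(h)$ pointwise, establishing $FS(n)$.

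For the refined statement, take $g\in K(h)$ and pick any $*\in A(h)$ of the prescribed type; as $g$ fixes $A(h)$ pointwise, it fixes $\mathcal{B}_n(h)\cdot *$ and therefore lies in the corresponding $E$. Applying Corollary \ref{c: raw acylindricity} once more we obtain $\supp(\subg{E,h})=\mc{G}(\Delta)^{h_0}$ with $\Delta=\Delta_1\oplus\Delta_2$ and $E=\mc{G}(\Delta_1)^{h_0}\leq \mathrm{Stab}(*)$. Now the hypothesis that $\subg{h}$ is not of smaller complexity forces $\mc{G}(\Delta)^{h_0}\supseteq\supp(\subg{h})=\mc{G}$, so $\Delta=\Gamma$. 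Since $\Gamma$ is directly indecomposable, either $\Delta_1=\emptyset$ or $\Delta_2=\emptyset$. The case $\Delta_2=\emptyset$ would yield $E=\mc{G}$, and together with $E\leq\mathrm{Stab}(*)$ this would force the hyperbolic element $h$ to fix $*$, which is impossible. Hence $\Delta_1=\emptyset$, $E=\{1\}$, and $g=1$, proving $K(h)=\{1\}$.

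The only genuinely delicate point is verifying the type restriction on $*$ in Corollary \ref{c: raw acylindricity}; beyond this combinatorial bookkeeping the argument is a direct bootstrap of that corollary via Lemma \ref{l: forms of stability}.
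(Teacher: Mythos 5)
Your proof is correct and follows essentially the same route as the paper: establish $FS(n)$ from Corollary \ref{c: raw acylindricity}, pass to $WS(n+2)$ via Lemma \ref{l: forms of stability}, and derive triviality of $K(h)$ from direct indecomposability of $\Gamma$ together with $\subg{h}$ not being of smaller complexity. The paper's own proof is a two-line sketch; you have supplied the details it omits (in particular the choice of a base point of the correct Bass--Serre type inside the fixed segment, and the case analysis on the decomposition $\Delta=\Delta_1\oplus\Delta_2$), and these details check out.
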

	\begin{proof}
		
		The first claim is  a direct consequence of Lemma \ref{l: forms of stability} via property $FS(n)$, which follows directly from Corollary \ref{c: raw acylindricity}.
		The second claim is a consequence of the fact that the kernel $K$ of the action on $A(h)$ of its set-wise stabilizer is trivial. Indeed, if $\subg{h}$ is assumed to be not of smaller complexity, then $K\neq\{1\}$ implies that the graph
		$\Gamma$ admits a nontrivial decomposition $\Gamma=\Gamma_1\oplus\Gamma_2$, contradicting the assumption.
	\end{proof}

	\begin{cor}
		\label{c: types of actions}
		Assume that the graph $\Gamma$ satisfies property $AP_{n}$ and let $\G=\G(\Gamma,\gvv)$ be a graph product. Let $H\leq\G$ be a finitely generated group and let $\mc G'=\mathcal G(\Delta)^h\in \ff(H)$ be a non-singular factor.
		
		Then one of the three situations occurs {\rm(}see {\rm Definition \ref{d: types_of_group_actions})}:
		\begin{enumerate}
			\item \label{sit_a}$\pi^{\mc G'}(H)\cong\Z$ and for all $v\in \Delta$, $\pi^{\mc G'}(H)$ acts on $T_v$ by translations on an axis;
			\item \label{sit_b}$\pi^{\mc G'}(H)\cong D_{\infty}$ and  for all $v\in \Delta$, $\pi^{\mc G'}(H)$ acts on $T_{v}$ dihedrally; or
			\item  \label{sit_c} for all $v\in \Delta$, $\pi^{\mc G'}(H)$ acts irreducibly on $T_{v}$.
			
		\end{enumerate}
	\end{cor}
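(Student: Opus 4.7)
The plan is to first replace $H$ with its projection $K := \pi^{\mc{G}'}(H)$, which is finitely generated and satisfies $\env{K} = \mc{G}'$ by the choice of $\mc{G}' \in \ff(H)$. After conjugating, I may assume $\mc{G}' = \G(\Delta)$; being a non-singular factor forces $|\Delta| \geq 2$ and $\Delta$ directly indecomposable. For each $v \in \Delta$, direct indecomposability rules out $\{v\} \cup v^\perp = V(\Delta)$, so $T_v = T_v(\Delta)$ is a well-defined Bass--Serre tree whose vertex stabilisers are conjugates of the proper subgraph products $\G(\{v\} \cup v^\perp)$ and $\G(\Delta \setminus \{v\})$. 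Note that $\Delta$ inherits property $AP_n$ from $\Gamma$ as an induced subgraph.

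The first step is to observe that $K$ cannot fix any vertex of $T_v$: if it did, its envelope would be contained in a conjugate of a proper subgraph product of $\G(\Delta)$, contradicting $\env{K} = \G(\Delta)$ via Lemma~\ref{l: prelims_conjugation_and_graphical_subgroups}. Hence the action of $K$ on $T_v$ is abelian, dihedral, or irreducible; in the first two cases it preserves a geodesic line $L \subseteq T_v$.

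The key technical step is to show that the pointwise stabiliser of $L$ inside $K$ is trivial. Fix generators $h_1, \dots, h_k$ of $K$ and choose a vertex $* \in L$ of type B, i.e., stabilised by a conjugate of $\G(\Delta \setminus \{v\})$; such a vertex exists because any geodesic line in a bipartite tree alternates vertex types. Since $K$ preserves $L$, the ball $\mathcal{B}_n(h_1, \dots, h_k) \cdot *$ lies inside $L$. Applying Corollary~\ref{c: raw acylindricity}, the pointwise stabiliser $E$ of this ball equals $\G(\Delta_1)$ for some decomposition $\Delta = \Delta_1 \oplus \Delta_2$. Since $E \leq \mathrm{Stab}(*) \leq \G(\Delta \setminus \{v\})^g$, we have $\Delta_1 \subseteq \Delta \setminus \{v\}$, so $v \in \Delta_2$; direct indecomposability of $\Delta$ then forces $\Delta_1 = \emptyset$, so $E = \{1\}$. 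The pointwise stabiliser of $L$ in $K$ is contained in $E$ and therefore trivial.

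It follows that $K$ embeds faithfully into $\mathrm{Isom}(L) \cong D_\infty$. If the action is abelian, the image lies in the translation subgroup $\cong \mathbb{Z}$ and $K \cong \mathbb{Z}$ (non-trivial since $K$ is not elliptic); if dihedral, the image is non-abelian and must therefore be all of $D_\infty$. Conversely, any non-elliptic action of $\mathbb{Z}$ on a tree is by translation along a unique axis (abelian), and any non-elliptic action of $D_\infty$ must preserve this axis setwise with some element acting by reflection (dihedral). Hence if $K$ is neither $\mathbb{Z}$ nor $D_\infty$ the action on every $T_v$ with $v \in \Delta$ must be irreducible, yielding case~(\ref{sit_c}). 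The main obstacle is the triviality of the pointwise stabiliser of $L$: it requires choosing the basepoint of the correct type so that Corollary~\ref{c: raw acylindricity} applies and using direct indecomposability of $\Delta$ to collapse the resulting join decomposition to a trivial one.
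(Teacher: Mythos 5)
Your overall strategy is close to the paper's: reduce to the directly indecomposable factor, use the triviality of pointwise stabilisers of large ball-orbits (you via Corollary \ref{c: raw acylindricity}, the paper via its consequence Corollary \ref{c: stable axis}) to kill the kernel of the action on an invariant line, and conclude that $\pi^{\mc G'}(H)$ embeds in $\mathrm{Isom}(L)\cong D_\infty$. Your packaging of the ``one vertex suffices'' step through the abstract isomorphism type of $K$ is clean and avoids the paper's appeal to the number of ends of $H$.

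However, there is a genuine gap at the step ``the action of $K$ on $T_v$ is abelian, dihedral, or irreducible; in the first two cases it preserves a geodesic line $L$.'' A non-elliptic \emph{abelian} action of a finitely generated group need not preserve a line: it may only fix a single end of the tree (the translation length function is then $|\phi|$ for a homomorphism $\phi\colon K\to\Z$, but hyperbolic elements can have distinct axes sharing only a ray towards the fixed end — think of $BS(1,2)$ on its Bass–Serre tree). In that situation there is no invariant line $L$, and your entire subsequent argument (pointwise stabiliser of $L$, faithful embedding into $\mathrm{Isom}(L)$) has nothing to act on. The paper rules this case out explicitly: by Corollary \ref{c: stable axis} every hyperbolic element of $T_v$ is weakly $(n+2)$-stable, so any two hyperbolic elements of $K$ whose axes overlap in a long segment (in particular, axes sharing a ray to the fixed end) must have coinciding axes; since $K$ is finitely generated and non-elliptic this forces a single common axis, i.e., an invariant line. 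You have the tools to close this gap (the same ball-stabiliser result you already invoke underlies Corollary \ref{c: stable axis}), but as written the end-fixing abelian case is unaddressed. A second, minor, imprecision: a non-abelian image in $\mathrm{Isom}(L)$ need not be \emph{all} of $D_\infty$, only isomorphic to $D_\infty$; the conclusion you need is unaffected.
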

	\begin{proof}
		Observe that, since $\mc{G}'\in \ff(H)$ is non-singular, from the definition of factor, see Definition \ref{d: minimal_graph_subg_and_factors}, we have that for all $v\in \Delta$, the action of $\pi^{\mc G'}(H)$ on $T_v$ is not elliptic. Since the statement is for trees $T_v$ for $v\in \Delta$ and the subgroup $\pi^{\mc G'}(H)$, we can assume without loss of generality that $\Delta=\Gamma$, $\Gamma$ is directly indecomposable (as $\Delta$ is a non-singular factor), $\pi^{\mc G'}(H)=H$ and $H$ is not of smaller complexity.
		
		In order to prove the statement, it is enough to show that one of the three alternatives holds for one vertex $v\in V$. Indeed,
		if the action of $H$ on $T_{v}$ is irreducible for some vertex $v$, then $H$ must have infinitely many ends and therefore cannot be isomorphic to either $\Z$ or $D_{\infty}$, and hence
		the action on $T_{v'}$ for any any other given $v'\in V$ must also be irreducible.
		
		Assume that the action of $H$ is not irreducible. As $H$ is not of smaller complexity in $T_v$, then $H$ has one or two ends. Corollary \ref{c: stable axis} implies that if the intersection of the axis of two hyperbolic elements has length larger than $(n+3)\min\{tl(g),tl(h)\}$, then the two axis must coincide. Since $H$ is finitely generated, the latter implies that $H$ cannot have just one end and so $H$ must preserve a bi-infinite line. Furthermore, since $\Gamma$ is directly indecomposable and $H$ is not of smaller complexity, we have by Corollary \ref{c: stable axis} that $H$ has a stable hyperbolic element and so the  action of $H$ on this line must have trivial kernel. Therefore, $H$ is either infinite cyclic or infinite dihedral for any $v\in V$.
	\end{proof}
	
	\begin{definition}[Linear, dihedral and irreducible actions on factors]\label{d: types_of_actions_on_subgraph_products}
		Let $H\leq\G$ and without loss of generality let $\G'=\G(\Delta)\in\mathcal{F}(H)$ be a non-singular factor. If the alternative (\ref{sit_a}) in Corollary \ref{c: types of actions} holds, then we say that $H$ is \emph{linear} in $\G'$, or that $H$ acts \emph{linearly} on $\mc{G}'$. If the alternative (\ref{sit_b}) holds, then we say that $H$ is \emph{dihedral} in $\G'$,  or that $H$ acts \emph{dihedrally} on $\mc{G}'$. If the alternative (\ref{sit_c}) holds,  then we say that $H$ is
		\emph{irreducible}, or that $H$ acts \emph{irreducibly} on $\mc{G}'$.
		We say that $H$ is \emph{small} if for any $\G'\in\mathcal{F}(H)$ either $\G'$ is singular or $H$ is linear or dihedral in $\G'$.
	\end{definition}
	
	\bigskip
	We finish this section with two observations about factors of subgroups and the relations between themselves.
	
	\begin{lemma}
		\label{l: linear expansions} Let $h,g\in\G$ and
		let $\hh\in\ff(h)$ be non-singular.   Let $\hh'\in\ff(g,h)$ be such that $\hh \leq \hh'$ {\rm(}such an $\hh'$ always exists due to {\rm Remark \ref{rem:factors})}. Suppose  $\subg{h,g}$ is small in $\hh'$. Then $\hh=\hh'$.
	\end{lemma}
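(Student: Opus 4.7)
The plan is to argue by contradiction: assume $\hh\subsetneq\hh'$ and derive a contradiction with the smallness of $\langle g,h\rangle$ in $\hh'$. After an overall conjugation, normalize so that $\hh'=\G(\Delta')$ and $\hh=\G(\Delta)$ are in standard form with $\Delta\subsetneq\Delta'$. Write $h=\prod_{i\in S}h_i$ with $h_i\in\mathcal{K}_i\in\ff(h)$ and index $j$ so that $\mathcal{K}_j=\hh$. By Remark \ref{rem:factors} each $\mathcal{K}_i$ is either contained in $\hh'$ or orthogonal to it; letting $S'\subseteq S$ be the indices with $\mathcal{K}_i\leq\hh'$, the retraction yields $\pi^{\hh'}(h)=\prod_{i\in S'}h_i$, since factors orthogonal to $\hh'$ lie in $\G(\link{\Delta'})$ and are killed. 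For each $i\in S'\setminus\{j\}$, $\mathcal{K}_i$ and $\mathcal{K}_j=\hh$ are distinct factors of $h$, hence orthogonal, so $\mathcal{K}_i\leq\G(\link{\Delta})$; combining this with $\mathcal{K}_i\leq\hh'$ and Lemma \ref{l: intersection_subgroups}, after conjugation within $\hh'$ the subgraph of $\mathcal{K}_i$ lies in $\link{\Delta}\cap\Delta'$.

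I next argue that $\pi^{\hh'}(h)$ acts hyperbolically on $T_v(\Delta')$ for every $v\in\Delta'$. Compatibility of projections gives $\pi^{\hh}(\pi^{\hh'}(h))=\pi^{\hh}(h)=h_j$, and Corollary \ref{c: types of actions} applied to $\langle h\rangle$ in the non-singular factor $\hh$ shows that $h_j$ acts by translation on $T_v(\Delta)$ for every $v\in\Delta$, so $h_j$ has infinite order and hence so does $\pi^{\hh'}(h)$. Applying Corollary \ref{c: types of actions} again, this time to $\langle g,h\rangle$ in the non-singular factor $\hh'$, the smallness hypothesis rules out the irreducible case, so $\pi^{\hh'}(\langle g,h\rangle)$ is isomorphic to $\Z$ or $D_{\infty}$ and acts linearly or dihedrally on every $T_v(\Delta')$; since $\pi^{\hh'}(h)$ has infinite order, it sits in the translation subgroup and is therefore hyperbolic on every $T_v(\Delta')$.

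The heart of the argument is then a direct hyperbolicity analysis of $\pi^{\hh'}(h)=\prod_{i\in S'}h_i$. Because the $h_i$'s pairwise commute, the product is hyperbolic on $T_v(\Delta')$ iff at least one $h_i$ is, which happens iff $i=j$ and $v\in\Delta$, or $\mathcal{K}_i$ is non-singular and $v\in\Delta_{\mathcal{K}_i}$ (singular factors contribute elements lying in a conjugate of a vertex group, which are always elliptic). Thus every $v\in\Delta'$ must lie in $\Delta$ or in some $\Delta_{\mathcal{K}_i}\subseteq\link{\Delta}$, so $\Delta'\subseteq\Delta\cup\link{\Delta}$, exhibiting $\Delta'=\Delta\oplus(\Delta'\cap\link{\Delta})$ as a join. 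Since $\hh'$ is directly indecomposable (being a factor) and $\Delta\ne\emptyset$ (since $\hh$ is non-singular), we are forced to have $\Delta'\cap\link{\Delta}=\emptyset$, whence $\Delta'=\Delta$, contradicting $\Delta\subsetneq\Delta'$. The main technical point to be careful about is the book-keeping of conjugators when locating each $\mathcal{K}_i$ inside $\hh'$; once $\hh'$ and $\hh$ are brought to a common standard form, the hyperbolic-versus-elliptic behavior of each $h_i$ on $T_v(\Delta')$ depends only on whether $v$ belongs to the (well-defined) subgraph of $\mathcal{K}_i$, and this conjugation-invariance is what lets the join argument close cleanly.
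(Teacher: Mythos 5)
Your argument is correct, but it is genuinely different from the one in the paper. The paper's proof is local: since $\hh\neq\hh'$ one can choose a single vertex tree $T_v(\Delta')$ with $v\in\Delta'\setminus\Delta$ in which $\hh$ is elliptic while $\pi^{\hh'}(\subg{g,h})$ is not; smallness then forces a linear or dihedral action on that one tree, and the dichotomy for $\pi^{\hh'}(h)$ (hyperbolic on the invariant line versus an involution/trivial) yields a contradiction either with the choice of $v$ or with non-singularity of $\hh$. You instead argue globally: you show $\pi^{\hh'}(h)$ has infinite order (via $\pi^{\hh}\circ\pi^{\hh'}=\pi^{\hh}$ and Corollary \ref{c: types of actions} applied to $\subg{h}$ in $\hh$), deduce from faithfulness of the linear/dihedral action that $\pi^{\hh'}(h)$ is hyperbolic in \emph{every} $T_v(\Delta')$, and then run a component-support analysis to conclude $\Delta'\subseteq\Delta\cup\link{\Delta}$, contradicting direct indecomposability of the factor $\hh'$. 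This is longer but arguably more robust: it sidesteps the paper's terse claims that the action ``cannot be cyclic'' and that hyperbolicity of $h$ on the line ``implies $\hh=\hh'$'', replacing them with the same join/indecomposability mechanism used in Corollary \ref{c: raw acylindricity}. One point you should make explicit: for commuting elements it is \emph{not} true in general that a product is hyperbolic iff some factor is (two commuting hyperbolics sharing an axis can cancel); what saves your step is that the supports $\Delta_{\mathcal{K}_i}$ are pairwise disjoint, so for each $v$ at most one $h_i$ can be hyperbolic in $T_v(\Delta')$, and the remaining elliptic commuting factors fix its axis pointwise. Your parenthetical characterization of when each $h_i$ is hyperbolic supplies exactly this, but the justification deserves a sentence.
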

	\begin{proof}
		If $\hh\neq\hh'$, then there is a vertex tree $T$ associated to $\hh'$ in which $\hh$ is elliptic and $\subg{g,h}$ is not. Indeed, otherwise, $\subg{g,h}$ would be elliptic in $T$ and thus of smaller complexity than $\hh'$ contradicting the definition of factor.
		
		Suppose that $\subg{g,h}$ has a small action on $T$. As we noticed, this action is not elliptic. Furthermore, it cannot be cyclic, otherwise $\ff(g,h)=\ff(h)$. Therefore, the action has to be dihedral and so $\subg{g,h}$ acts on a line in $T$. The element $h$ either fixes a vertex or acts hyperbolically on the line. The latter case, implies that $\hh=\hh'$. In the latter case, we have that the projection
		$h^{\hh'}$ of $h$ on $\hh'$ is an involution, which implies that $h^{\hh}$ is an involution as well and therefore $\hh$ is singular, contradicting the hypothesis of the statement.
	\end{proof}
	
	The next lemma relates the factors of an element and its powers.
	
	\begin{lemma}
		\label{l: powers} Let $\mathcal{G}=\mathcal{G}(\Gamma,\gvv)$ be a graph product and $p$ a non-zero integer.
		Then for any $g\in\G$ we have $\mathcal{F}(\subg{g^{p}})\subseteq\mathcal{F}(\subg{g})$ and
		$\mathcal{F}(\subg{g})\setminus\mathcal{F}(\subg{g^{p}})$ is the collection of all singular $\G'\in\mathcal{F}(\subg{g})$ such that the projection of
		$\subg{g}$ to $\mathcal{G}'$ has order $p$.
	\end{lemma}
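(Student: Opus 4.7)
The plan is to decompose $g$ along its factors and analyse the $p$-th power on each factor separately. Writing $\ff(\subg{g})=\{\G_1,\dots,\G_k\}$ so that $\supp(\subg{g})=\G_1\times\cdots\times\G_k$ is a direct product of pairwise commuting subgraph products, I set $g_i:=\pi^{\G_i}(g)\in\G_i$. Then $g_1,\dots,g_k$ pairwise commute with $g=g_1\cdots g_k$, and likewise $g^p=g_1^p\cdots g_k^p$. Inside the direct product $\G_1\times\cdots\times\G_k$ the minimal subgraph product containing $g^p$ is the join of the minimal subgraph products $\supp(\subg{g_i^p})$ inside each $\G_i$, so the problem reduces to identifying the latter.

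The main step is to show that whenever $\G_i=\G(\Delta_i)^{h}$ is non-singular (so $\Delta_i$ is directly indecomposable with $|\Delta_i|\geq 2$), one has $g_i^p\neq 1$ and $\supp(\subg{g_i^p})=\G_i$ for every $p\neq 0$. I would pick any $v\in\Delta_i$ and consider the action of $\G_i$ on the associated tree $T_v(\Delta_i)$ of Definition~\ref{defn:tree associated to a vertex}. If $g_i$ were elliptic on $T_v(\Delta_i)$ it would be conjugate into either $G_v\times\G(v^{\perp}\cap\Delta_i)$ or $\G(\Delta_i\setminus\{v\})$; the first option forces $\Delta_i=\{v\}\oplus(v^{\perp}\cap\Delta_i)$, contradicting direct indecomposability of $\Delta_i$, and the second contradicts $v\in\Delta_i=\supp(\subg{g_i})$. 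Hence $g_i$ is hyperbolic on $T_v(\Delta_i)$, and so is $g_i^p$ since translation lengths scale by $|p|$. In particular $g_i^p\neq 1$; moreover, if $\supp(\subg{g_i^p})$ missed some vertex $v'\in\Delta_i$, then $g_i^p$ would be conjugate into $\G(\Delta_i\setminus\{v'\})$, hence elliptic on $T_{v'}(\Delta_i)$, contradicting hyperbolicity. Therefore $\supp(\subg{g_i^p})=\G_i$.

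For a singular factor $\G_i=G_v^{h}$ the analysis is elementary: $g_i^p$ lies in the vertex group conjugate $G_v^{h}$, and either $g_i^p=1$ (which occurs precisely when the order of $\pi^{\G_i}(g)$ divides $p$, in which case $\G_i$ drops from $\ff(\subg{g^p})$), or $g_i^p\neq 1$ and $\supp(\subg{g_i^p})=G_v^{h}=\G_i$. Assembling both cases yields $\supp(\subg{g^p})=\prod_{\,i\,:\,g_i^p\neq 1}\G_i$, whose join decomposition is a sub-collection of that of $\supp(\subg{g})$. This immediately gives $\ff(\subg{g^p})\subseteq\ff(\subg{g})$, and shows that $\ff(\subg{g})\setminus\ff(\subg{g^p})$ consists exactly of the singular factors on which $g$ projects to an element of finite order dividing $p$, as claimed.

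The main obstacle is the non-singular case: one must prevent a non-trivial power of $g_i$ from collapsing onto a smaller subgraph product. The hyperbolicity argument above is what handles this, and it uses crucially that $\Delta_i$ is both directly indecomposable and of size at least two, so that $g_i$ cannot be absorbed into the star of any of its support vertices; once this is established, the remainder of the argument is direct-product bookkeeping.
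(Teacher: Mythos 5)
Your proof is correct and follows essentially the same route as the paper's: the non-singular factors survive taking $p$-th powers because the corresponding components act hyperbolically on every associated vertex tree $T_v$ (a property preserved under powers), while singular factors drop out exactly when the projection of $g$ has order dividing $p$. You supply the details the paper's terse argument omits (the ellipticity dichotomy ruling out that a component of $\supp(\subg{g})$ could be conjugated into a vertex group of the amalgam, via direct indecomposability and minimality of support), but the underlying mechanism is identical.
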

	\begin{proof}
		If $\G'=\mathcal{G}(\Delta)\in\mathcal{F}(\subg{g})$ is non-singular, then the component of $g$ in $\mathcal{F}$ is hyperbolic in $T_{v}$ with respect to any $v\in \Delta$. But then so is the component of $g^{p}$, which means that $\G'\in\mathcal{F}(\subg{g^{p}})$.
		If $\G'$ is singular, then $\G'\in\mathcal{F}(\subg{g^{p}})$ if and only if the projection of $g$ in $\G'$ does not have trivial $p$-th power.
	\end{proof}

	\section{Generalized positive theory}\label{sec: generalized positive theory}
	
	In this section we first recall the notion of the (nontrivial) positive theory of a group, discuss its connection with formal solutions, and provide some examples of groups with nontrivial positive theory. We then introduce a larger fragment of a first-order theory, which we call the almost positive theory where one allows for some controlled inequalities in the sentences, see Definition \ref{defn:almost positive}. As in the case of positives formulae, we consider almost positive sentences that do not admit a (relative) formal solution and call them non-generic almost positive sentences, see Definition \ref{defn:generic ap}.
	
	\subsection{Positive theory and formal solutions}

	Roughly speaking, a positive formula in the language of groups is a formula in the first-order language of groups that does not use negations. More precisely,
	
	\begin{definition}[Positive theory]\label{defn:positive theory}
		A \emph{positive formula} $\phi(u_{1},u_{2},\cdots, u_{m})$ in the language of groups is equivalent (in the common theory of all groups) to one of the form:
		\begin{align*}
			\phi(u_1, \dots, u_m)\cong \forall x^{1}\exists y^{1}\forall x^{2}\dots\forall x^{m}\exists y^{m}\ \bigvee_{j=1}^{m} \Sigma_{j}(x^{1},\dots, x^{r},y^{1},\dots, y^{r},u_{1},u_{2},\dots, u_{m})=1,
		\end{align*} 
		where $x^i$ and $y^i$ are tuples of variables for all $i$; and where  $\Sigma_{j}=1$ is a system of equations for all $j=1, \dots, m$, i.e.\ $\Sigma_j=1$ is a formula of the form $$
		w_1(x^{1},\ldots, x^{r},y^{1},\ldots, y^{r},u_{1},u_{2},\ldots, u_{m})=1 \wedge  \ldots \wedge w_k(x^{1},\ldots, x^{r},y^{1},\ldots, y^{r},u_{1},u_{2},\ldots, u_{m})=1,
		$$
		where $k\geq 1$ and each $w_i(x^{1},\dots, x^{r},y^{1},\dots, y^{r},u_{1},u_{2},\dots, u_{m})$ is a term in the language of groups.
		
		The \emph{positive theory} is the fragment of the first-order theory in the language of groups  defined by the set of positive sentences (i.e.\ positive formulas without free variables) in the language of groups. Given a group $G$, we denote by $\Th^+(G)$ the set of positive sentences that are true in $G$, and we call $\Th^+(G)$ the \emph{positive theory of $G$}.
	\end{definition}
	
	In \cite{Makanin}, Makanin studied the positive theory of non-abelian free groups, and described the positive theory of such a group in terms of so-called formal solutions. More precisely, he proved the following
	
	\begin{thm}\label{thm:Makanin}Let $F$ be a non-abelian free group. Then,
		\begin{align*}
			F \models \forall x^{1}\exists y^{1}\forall x^{2}\dots\forall x^{m}\exists y^{m}\,\bvee{j=1}{\Sigma_{j}(x^{1},y^{1},x^{2}, \dots, x^{m},y^{m})=1}{k},
		\end{align*}
		where $x^i$, and $y^i$ are tuples of variables for all $1\leq i\leq m$ and $\Sigma_j=1$ is a system of equations for all $1\leq j \leq k$, if and only if there exist a tuple $\alpha=(\alpha^{1},\dots,\alpha^{m})$ where
		$\alpha^{\ell}\in\F(x^{1},x^{2},\dots, x^\ell)^{|y^{\ell}|}$ for all $1 \leq \ell \leq m$ such that for some $1\leq j\leq k$ all words in the tuple
		\begin{align*}
			\Sigma_{j}(x^{1},\alpha^{1}(x^{1}),x^{2},\alpha^{2}(x^{1},x^{2}), \dots,\alpha^{m}(x^{1},x^{2}, \dots, x^{m}))
		\end{align*}
		are trivial as elements in the free group $F(x^1, \dots, x^m)$.
	\end{thm}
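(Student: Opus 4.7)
The plan is to prove the two directions separately. The backward direction (existence of formal solutions $\Rightarrow$ truth in $F$) is a routine substitution argument: given $\alpha=(\alpha^{1},\dots,\alpha^{m})$ with $\alpha^{\ell}\in F(x^{1},\dots,x^{\ell})^{|y^{\ell}|}$ such that $\Sigma_{j}(x^{1},\alpha^{1},x^{2},\alpha^{2},\dots,x^{m},\alpha^{m})$ is trivial as a tuple of words in $F(x^{1},\dots,x^{m})$, and given any universal instantiation $a^{1},\dots,a^{m}\in F$, set $b^{\ell}:=\alpha^{\ell}(a^{1},\dots,a^{\ell})$. Applying the evaluation homomorphism $F(x^{1},\dots,x^{m})\to F$ sending $x^{\ell}\mapsto a^{\ell}$ to the trivial word yields $\Sigma_{j}(a^{1},b^{1},\dots,a^{m},b^{m})=1$ in $F$, which witnesses the existential block of the sentence. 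Because the $\alpha^{\ell}$ depend only on $x^{1},\dots,x^{\ell}$, this respects the quantifier alternation.

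For the forward direction I would proceed by induction on the number of alternations $m$. The base case $m=1$ is Merzlyakov's theorem on positive $\forall\exists$-sentences: assume $F\models\forall x^{1}\exists y^{1}\bigvee_{j}\Sigma_{j}(x^{1},y^{1})=1$. Pick a ``generic'' test tuple for $x^{1}$, namely the free generators of an auxiliary free group $F'=F\ast F(\bar x^{1})$, and consider a sequence of retractions $r_{n}\colon F'\to F$ obtained by sending $\bar x^{1}$ to longer and longer ``generic'' words in $F$ (for instance, by a test sequence of Dehn twists or by a small-cancellation sequence). For each $n$, by hypothesis there is a disjunct $j_{n}$ and a witness $y^{1}_{n}\in F$ with $\Sigma_{j_{n}}(r_{n}(\bar x^{1}),y^{1}_{n})=1$. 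Passing to a subsequence where $j_{n}$ is constant equal to $j$, one extracts a limit homomorphism $\bar\alpha\colon F(\bar x^{1},y^{1})/\langle\!\langle\Sigma_{j}\rangle\!\rangle\to F'$ which restricts to the identity on $F(\bar x^{1})$; but the subgroup generated by the image of $\bar x^{1}$ in $F'$ is a free factor, so by co-Hopfianity / the Makanin--Razborov analysis the image of $y^{1}$ must lie in $F(\bar x^{1})$, giving a word $\alpha^{1}(x^{1})$ such that $\Sigma_{j}(x^{1},\alpha^{1}(x^{1}))=1$ identically. For the inductive step, apply the base case with $F$ replaced by $F(x^{1},\dots,x^{m-1})$ (which is itself a non-abelian free group) to the innermost block $\forall x^{m}\exists y^{m}$, obtaining $\alpha^{m}(x^{1},\dots,x^{m})$; substituting this back turns the sentence into a positive $(m-1)$-alternation sentence true in $F$, and the inductive hypothesis produces $\alpha^{1},\dots,\alpha^{m-1}$.

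The main obstacle is the extraction of the formal word $\alpha^{\ell}$ in the base/inductive step. A priori the witnesses $y^{1}_{n}$ live in $F$, not in $F(x^{1})$, and there is no reason they should vary ``polynomially'' in $x^{1}$; only a compactness argument in the space of marked groups, together with the fact that the limiting quotient of $F(x^{1},y^{1})$ modulo $\Sigma_{j}=1$ must be free over $x^{1}$, forces $y^{1}$ to be a word in $x^{1}$. In Makanin's original approach this step is replaced by algorithmic elimination for equations over free groups; in the modern approach it is the structure theory of limit groups. Care is also needed with the quantifier alternation in the inductive step, which requires that the formal solution $\alpha^{m}$ be constructed uniformly across values of $x^{1},\dots,x^{m-1}$, and this uniformity is exactly what the inductive formulation over $F(x^{1},\dots,x^{m-1})$ provides.
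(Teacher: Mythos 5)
The paper does not prove this statement: it is quoted verbatim as Makanin's theorem (with reference to \cite{Makanin}) and used as a black box, so there is no in-paper proof to compare against. Judged on its own terms, your backward direction is correct and complete, and your base case $m=1$ is essentially the standard modern proof of Merzlyakov's theorem via test sequences and limit groups (the appeal to ``co-Hopfianity'' is not quite the right mechanism --- free groups are not co-Hopfian; what one actually uses is that the test sequence forces the limit group of the markings $(r_n(\bar x^1), y^1_n)$ to be free on $\bar x^1$ --- but the idea is the right one).

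The genuine gap is in the inductive step. You propose to ``apply the base case with $F$ replaced by $F(x^1,\dots,x^{m-1})$ to the innermost block $\forall x^m\exists y^m$.'' To even state that instance of the base case you must fill the $y^1,\dots,y^{m-1}$ slots of $\Sigma_j$ with specific elements of $F(x^1,\dots,x^{m-1})$, i.e.\ with the formal solutions $\alpha^1,\dots,\alpha^{m-1}$ --- which are exactly what the induction has not yet produced. The argument is circular as written. Peeling from the outside instead does not help either: the outermost block is $\forall x^1\exists y^1\,\theta(x^1,y^1)$ where $\theta$ still contains quantifiers, and Merzlyakov's theorem (your base case) only applies when the matrix is quantifier-free. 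The multi-alternation statement is not a formal consequence of the two-quantifier one; it requires either Makanin's elimination process or an iterated/diagonal test-sequence construction in which the test values of $x^{\ell}$ are chosen generically \emph{relative to} the already-selected values and witnesses for $x^1,y^1,\dots,x^{\ell-1},y^{\ell-1}$, with the limit analysis performed on the whole tower at once. Your closing remark about ``uniformity across values of $x^1,\dots,x^{m-1}$'' correctly identifies where the difficulty lives, but the proposed induction does not supply it.
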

	
	The tuple $\alpha$ is called a \emph{formal solution} as it provides a ``witness" to the validity of the sentence in the free group. As a consequence of Makanin's results one has that all the non-abelian free groups have the same positive theory.
	
	Notice that if a group $G$ satisfies a positive sentence, so does any quotient of $G$. From this remark it follows that any positive sentence satisfied by a non-abelian free group is satisfied by any group, that is, the common positive theory of non-abelian free groups is contained in the positive theory of any group. We say that a group $G$ has \emph{nontrivial} positive theory if its positive theory strictly contains the positive theory of a non-abelian free group, that is, $G$ satisfies a positive sentence that  is not satisfied by a non-abelian free group.
	
	In \cite[Corollary 8.1]{casals2019positive}, we studied groups that have the same positive theory as a free group. More precisely we showed that non-virtually  solvable  fundamental  groups of closed,  orientable,  irreducible 3-manifolds; groups acting acylindrically on a tree;  non-solvable generalised Baumslag-Solitar groups;  (almost all)  non-solvable  one-relator groups;  and  graph  products of groups  whose underlying graph is not complete have the same positive theory as a non-abelian free group.
	
	In this paper, we are interested in the other side of the spectrum, that is, in the class of groups that do not have the same positive theory as a free group, or, in other words, groups that have nontrivial positive theory.

	\begin{lemma}\label{ex:groups_non_trivial_positive_theory}
		The positive theory of the following groups is different from the positive theory of a non-abelian free group:
		\begin{enumerate}
			\item
			All verbal groups, that is, groups that satisfy an identity {\rm(}equivalently, groups that belong to a proper group variety{\rm)}. In particular, all finite, abelian, metabelian, nilpotent, solvable, Burnside, Engel, etc.\  groups.

			\item Groups with finite commutator width, that is, groups that satisfy $$
			\forall x_1, \dots, x_{2N+2} \exists y_1, \dots, y_{2N} \, \prod_{i=1, \dots, N+1}[x_{2i-1},x_{2i}]= \prod_{i=1, \dots, N}[y_{2i-1},y_{2i}].
			$$
			
			For instance:
			
			\begin{itemize}
				\item Some branch groups, such as the Grigorchuk groups, see {\rm\cite{LMU}}, and the Gupta-Sidki groups, see {\rm\cite{Groth}}.
				
				\item Finitely generated pro-$p$ groups and profinite groups, see {\rm\cite{Nikolov-Segal}}.
				
				\item Linear algebraic groups, see {\rm\cite{Merz67}}.
				
				\item Thompson's group, see {\rm\cite{CFP}}.
				
				\item Groups with finitely many conjugacy classes, see the examples in {\rm\cite{Osin10}} and {\rm\cite{Minasyan07}}.
			\end{itemize}
			
			\item More generally, groups  $G$ for which there exists a word $w(x_1, \dots, x_n) \in F(x_1, \dots, x_n)$, such that the verbal subgroup $w(G)=\langle \{ w(g_1, \dots, g_n)\mid  g_1, \dots, g_n \in G\}\rangle$ has finite width, i.e.\ there exists an $N$ such that each element in $w(G)$ is the product of at most $N$ elements from  $\{ w(g_1, \dots, g_n)^\varepsilon \mid  g_1, \dots, g_n \in G,\ \varepsilon=\pm 1\}$. For instance, if $w=x^2$, then the symmetric group $S_X$ where $|X|=\aleph_0$ has $w$-verbal width $2$ {\rm(}every element is the product of two squares{\rm)}.
			
			A group $G$ is said to be \emph{verbally elliptic} if $w(G)$ has finite width for any word $w$. The following are examples of verbally elliptic groups. In particular, the following have nontrivial positive theory: finitely generated virtually abelian-by-nilpotent groups, see {\rm\cite{SegalWords}} and references there;  finitely generated solvable groups of class 3, see {\rm\cite{Rhemtulla}}, in particular, metabelian groups; virtually solvable minimax groups; and virtually polycyclic groups, see {\rm\cite{SegalWords}}, where we refer the reader for further examples of verbally elliptic groups.
			
			\item  Boundedly simple groups, i.e. groups for which there is some $m>0$ such that any element can be expressed as a product of at most $m$ conjugates of any given nontrivial element and its inverse, as expressed by:
			$$
			\begin{array}{l}
				\forall x\forall y \,\,x=1\vee \\
				\exists z_{1},z_{2}\dots z_{m}\exists u_{1},u_{2}\dots u_{m}\,\,\bigwedge_{i=1}^{m}(u_{i}=1\vee u_{i}=x\vee u_{i}=x^{-1})\wedge y=u_{1}^{z_{1}}u_{2}^{z_{2}}\cdots u_{m}^{z_{m}}.
			\end{array}
			$$
			The class of boundedly simple groups includes many groups of automorphisms of homogeneous structures.
			\item Given an infinite family of groups satisfying the same nontrivial positive sentence, then the direct product and the direct limit of these groups  satisfies the same nontrivial positive sentence. For instance,  any product of finite simple groups {\rm(}since all elements in a simple group are commutators {\rm \cite{OreConj})}.
			
			\item The class of groups with nontrivial positive theory is closed under taking extensions, see {\rm Lemma \ref{lem:extensionpreservation}}. More precisely, if $N,Q$ are groups with nontrivial positive theory and $G$ is an extension of $N$ by $Q$, then $G$ has nontrivial positive theory. In particular, any extension involving any of the groups mentioned above has nontrivial positive theory, for instance, the lamplighter group.
			
		\end{enumerate}
	\end{lemma}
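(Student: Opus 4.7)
The plan is to verify each item by exhibiting an explicit positive sentence satisfied by the groups in the class and then observing that the sentence is not satisfied by any non-abelian free group. Since positive sentences are preserved under surjective homomorphisms, the common positive theory of all non-abelian free groups is automatically contained in $\Th^{+}(G)$ for every group $G$, so the only thing to check in each case is strict containment.

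Items (1), (2) and (4) are essentially immediate given the sentences written in the statement. For (1), if $G$ satisfies the identity $w(\bar x)=1$, then $\forall \bar x\, w(\bar x)=1$ is a positive sentence in $\Th^{+}(G)$, and by the classical fact that non-abelian free groups satisfy no nontrivial law this sentence is not in $\Th^{+}(\F)$. For (2), the displayed commutator-width sentence is already in prenex positive form, and non-abelian free groups have infinite commutator width, so it fails in $\F$ for every $N$. For (4), the displayed sentence is a disjunction of systems of equations, hence positive by Definition \ref{defn:positive theory}, and fails in any non-abelian free group because such groups have proper nontrivial normal subgroups and thus are not boundedly simple.

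For item (3), if $w(G)$ has width $N$ then every element that is a product of $N+1$ values $w(\bar y_{i})$ equals a product of at most $N$ signed values $w(\bar z_{j})^{\pm 1}$. This reformulation becomes a positive sentence by replacing each ``$\pm 1$'' sign and each ``include-or-omit'' choice with a finite disjunction over systems of equations, so that only conjunctions of equations occur under the outer disjunction. The resulting sentence fails in non-abelian free groups by the classical fact that any nontrivial verbal subgroup of $\F$ has infinite width, and holds in the listed classes via the references cited in the statement. Item (5) is immediate: a positive $\forall\exists$-alternating sentence that holds in each $G_{i}$ holds in $\prod_{i} G_{i}$ by choosing witnesses componentwise, and the same argument applies to a direct limit since every tuple from the limit lifts to some factor, where witnesses can be found and then transported to the limit. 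Item (6) is deferred to Lemma \ref{lem:extensionpreservation}.

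The only non-routine point is the reformulation in (3), where bounded verbal width must be converted into a sentence containing no inequalities and no explicit sign parameter; all other items reduce either to rewriting the defining property of the class as a positive sentence or to recording a classical fact about non-abelian free groups (absence of nontrivial identities, infinite commutator width, infinite verbal width for nontrivial words, failure of bounded simplicity).
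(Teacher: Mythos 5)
Your proposal follows the approach the paper intends: the lemma is stated without a proof, the witnessing positive sentences being displayed in the statement itself and the non-free-group facts (no laws, infinite commutator and verbal width, failure of bounded simplicity) being delegated to the cited references, exactly as you do. Items (1)--(4) and (6) are fine as you argue them; the sign-elimination in (3) via a disjunction over the finitely many choices of $\varepsilon_i$ is the right move, since each disjunct remains a conjunction of equations in the language with inversion.

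The one step that needs more care is item (5). A positive sentence whose matrix is a genuine disjunction of systems of equations is \emph{not} preserved under direct products by ``choosing witnesses componentwise'': different components may require different disjuncts, and the componentwise witness then satisfies no single $\Sigma_j$ in the product (compare $\forall x\,(x^2=1\vee x^3=1)$, which holds in $\mathbb{Z}/2$ and $\mathbb{Z}/3$ but fails in their product). To repair this you should first invoke Theorem \ref{t: quantifier reduction}, which replaces the given nontrivial positive sentence by a nontrivial positive $\forall\exists$-sentence whose matrix is a \emph{single} system of equations and which is implied by the original in every group; for that normal form the componentwise (and, for direct limits, the lift-and-push-forward) argument is valid. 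With that adjustment the proof of (5) is complete, and the rest of your argument stands as written.
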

	
	\subsection{Almost positive theory}
	
	In this section we introduce a type of sentences which we call \emph{non-generic almost positive}. This fragment of the elementary theory contains the nontrivial positive sentences. While it is not as classical and well-known as the positive theory,  it is relevant for our results, as we aim at proving that if in a graph product of groups the vertices satisfy a simple non-generic almost positive sentence, then the (core of the) graph structure is interpretable in the group structure.
	
	Let us now introduce this fragment of the theory.
	
	\medskip
	
	\begin{definition}[Almost positive sentence and  almost positive sentence]\label{defn:almost positive}
		
		We say that a sentence is \emph{almost positive} if it is of the form:
		\begin{align*}
			\phi\equiv\exists z\,(\Theta(z)\neq 1\wedge\psi(z)),
		\end{align*}
		where $\psi(z)$ is a positive formula whose free variables are  $z$ and $\Theta(z)\neq 1$ is a nontrivial system of inequalities, i.e.\ $\Theta(z)\neq 1$ is a formula of the form $w_1(z)\neq 1 \wedge \dots \wedge w_n(z)\neq 1$ where $w_i$ is a word from $\mbb{F}(z)$ for all $i=1, \dots, n$.
		
		Such a sentence is said to be \emph{simple} if $\Theta\neq 1$ is of the form $w(z) \neq 1$ for some $w\in \mbb{F}(z)$.
	\end{definition}
	
	These more general sentences allow to express properties such as having nontrivial center or being boundedly generated.
	
	\begin{example}\
		\begin{itemize}
			\item The property of having nontrivial center can be axiomatized using an almost positive sentence:
			\begin{align*}
				\exists z\,\,z\neq 1 \wedge \forall x\,\,[x,z]=1
			\end{align*}
			
			\item A group $G$ is said to be \emph{boundedly centraliser generated} if there exist  $n\geq 1$ and $z_1, \dots, z_n \in G\setminus\{1\}$ such that $G=C(z_1) \cdots C(z_n)$. This property can be axiomatized by the following almost positive sentence:
			\begin{align*}
				\exists z_{1}\exists z_{2}\cdots\exists z_{k}\, &(\bigwedge_{i=1}^{k}z_{i}\neq 1)\wedge\\
				&\wedge(\forall x\exists y_{1}\exists y_{2}\cdots\exists y_{k}\,(\bigwedge_{i=1}^{k}[y_{i},z_{i}]=1)\wedge x=y_{1}y_{2}\cdots y_{k})
			\end{align*}
		\end{itemize}
	\end{example}
	
	As we discussed above, any positive sentence satisfied by a nonabelian free group is also satisfied by any group and for this reason, we call these positive sentences trivial. Notice that this fact is no longer true for almost positive sentences - there are almost positive sentences satisfied by a nonabelian free group which are not satisfied by some other group. For instance, the almost positive sentence $\exists z_1, z_2 \,\, [z_1, z_2]\ne 1 \, \forall x \exists y \, [x,y]=1$ is satisfied by a nonabelian free group but not by an abelian group.
	
	Our objective is to determine a set of sentences with the property that if the vertex groups of a graph product satisfy one of them, then one can interpret the underlying graph and the associated vertex group in the graph product. We will show that nontrivial positive sentences have this property. On the other hand, the fact that a vertex group in a graph product satisfies an almost positive sentence that is not satisfied by a nonabelian free group will not suffice in order to be able to interpret the vertex group. For instance, if a vertex group is of the form $G_v= \mathbb Z_p \ast \mathbb Z$,  where $\mathbb Z_p$ is a cyclic group of order $p$, then $G_v$ satisfies a nontrivial almost positive sentence, namely $\exists x \ x\ne 1 \wedge \ x^p =1$. However, one should not expect to be able to interpret the underlying graph and the corresponding vertex groups. Indeed, consider $G= G_v \ast \mathbb Z_q= (\mathbb Z_p \ast \mathbb Z) \ast \mathbb Z_q$ and $G'=\mathbb Z_p \ast \mathbb Z_q$. Then, it follows from a result of Sela, see \cite{sela2010diophantine}, that $G$ and $G'$ are elementarily equivalent graph products, but $G_v$ is not elementarily equivalent to $\mathbb Z_p$ or $\mathbb Z_q$. This example motivates the need to restrict the type of almost positive sentences in order to be able to interpret the vertex groups that satisfy them in a graph product. In some sense, one would like to extend the notion of ``nontriviality" to the set of almost positive sentences.
	
	A characterization of trivial positive sentences is obtained via formal solutions: a positive sentence is trivial if and only if it admits a formal solution. Following this characterization, we introduce the notion of a non-generic almost positive sentence - a generalisation of the notion of nontrivial positive sentence. Roughly speaking, an almost positive sentence is generic if there exists a (relative) formal solution that witnesses both the inequalities as well as the positive part of the sentence.
	
	In order to provide the necessary definitions  we first need to recall the notion of formal solution relative to a Diophantine condition.
	
	\begin{definition}[Formal solution relative to a Diophantine condition]\label{defn:formal solution}
		
		Let $\psi(z)$ be a positive formula in the language of groups \emph{without constants}\footnote{all formulas in this paper use no constants --- except the identity element}:
		\begin{align*}
			\psi(z)\equiv\forall x^{1}\exists y^{1}\forall x^{2}\dots\forall x^{m}\exists y^{m}\,\bvee{j=1}{\Sigma_{j}(z,x^{1},y^{1},x^{2}, \dots, x^{m},y^{m})=1}{k},
		\end{align*}
		where $z$, $x^i$, and $y^i$ are tuples of variables for all $1\leq i\leq m$, and $\Sigma_j=1$ is system of equations  for all $1\leq j \leq k$ (so $\Sigma_j$ is a conjunction of words).  Let $\phi(z)$ be a a positive existential formula, which we call \emph{Diophantine condition}, of the form $\exists v \Pi(v,z)=1$ for some system of equations $\Pi(v,z)=1$.
		
		Intuitively, a formal solution of $\psi(z)$ relative to $\phi(z)$ is a substitution of the $y$'s by words on $v$, $z$, and the $x$'s, so that under this substitution some  word in $\Sigma_j$ (for all $j=1, \dots, k$) is trivial if all words in $\Pi$ are.
		
		Formally, by \emph{formal solution} to $\psi(z)$ \emph{relative to} $\phi(z)$ we mean a tuple $\alpha=(\alpha^{1},\dots,\alpha^{m})$ where
		$\alpha^{l}\in\F(v,z,x^{1},x^{2},\dots, x^l)^{|y^{l}|}$ for all $1 \leq l \leq m$ such that for some $1\leq j\leq k$ all words in the tuple
		\begin{align*}
			\Sigma_{j}(z,x^{1},\alpha^{1}(v,z,x^{1}),x^{2},\alpha^{2}(v,z,x^{1},x^{2}), \dots,\alpha^{m}(v,z,x^{1},x^{2}, \dots, x^{m}))
		\end{align*}
		are in the normal closure of the words of $\Pi(v,z)$ in
		$\F(v,z)\frp\F(x^{1},x^{2},\cdots, x^{m})$.
		
		One can understand formal solutions in terms of homomorphisms between  groups.
		Consider the groups given by the following presentations:
		\begin{align*}
			G_{\Pi}(v,z)=&\subg{v,z\,|\,\Pi(v,z)=1} \\
			G_{\Sigma_{j}}(z)=&\subg{v,z,x^{1},y^{1},x^{2},y^{2},\dots, x^{m},y^{m}\,|\,\Sigma_{j}(z,x^{1},y^{1},x^{2},y^{2},\dots, x^{m},y^{m})=1}
		\end{align*}
	\end{definition}

	Formal solutions relative to $\phi$ are equivalent to homomorphisms $f:G_{\Sigma}\to G_\Pi *\subg{x^1,x^2, \dots, x^m}$ that restrict to the identity on each of the elements named by a variable from the tuple $z$ or from any of the tuples $x^{1}, \dots, x^m$, with the property that $f(y^{l})$ is contained in the group generated by $v,z,x^{1},x^{2}, \dots, x^{l}$, for all $1\leq l \leq m$.
	
	Notice that the existence of such $\alpha$ implies that $G\models\psi(a)$ for any group $G$ and any tuple $a\in G^{|z|}$ such that $G\models\phi(a)$. The classical notion of formal solution  \cite{merzlyakov1966positive,sela2006DiophantineII} is recovered by taking $\Pi = 1$, in which case we will speak of $\alpha$ simply as a \emph{formal solution}.
	
	We will denote the collection of all variables in the tuples $x^1, \dots, x^{m}$ simply by $x$. A similar notation will be used for $y^1, \dots, y^m$ and $y$.
	
	\medskip
	
	We now introduce the definition of non-generic almost positive sentence.
	
	\begin{definition}[Non-generic almost positive sentence]\label{defn:generic ap}
		Let
		$$
		\psi(z)\equiv \forall x^1, \exists y^1, \dots,\forall x^m, \exists y^m \, \bvee{j=1}{\Sigma_{j}(z,x^{1},y^{1},x^{2}, \dots, x^{m},y^{m})=1}{k}
		$$
		be a positive formula, and let $\phi$ be an almost positive sentence of the form $\exists z\,(\Theta(z)\neq 1\wedge\psi(z))$, where $\Theta(z)\neq 1$ is a system of inequalities. We say that $\phi$ is \emph{non-generic} if for any  Diophantine condition $\exists v\,\,\Pi(v,z)=1$ and any formal solution relative to $\exists v\,\,\Pi(v,z)=1$, there exists some word in $\Theta(z)$  which belongs to the normal closure of $\Pi(v,z)$ in $\F(v,z)$, or, equivalently, one of the words in $\Theta(z)$ is the trivial element in the group $\F(v,z)/ \langle \langle \Pi(v,z) \rangle\rangle$.
	\end{definition}
	
	As we mentioned in the previous section, see Theorem \ref{thm:Makanin}, a positive sentence $\psi$ is nontrivial if there does not exist a formal solution to it.
	
	\begin{remark}
		\label{r: positive_is_generic} The class of almost positive sentences generalizes that of positive sentences in the sense that every positive sentence $\phi$ can be rewritten as an equivalent almost positive sentence by adding $\exists z\,z\ne 1\wedge \phi$, where $z$ does not appear in $\phi$. {\rm(}Notice that the equivalence between the positive and the almost positive sentence holds in any group besides the trivial group{\rm)}. In this way the notion of simple non-generic almost positive sentence generalizes that of nontrivial positive sentence, since any formal solution for $\phi$ would automatically be a formal solution relative to $z$ for an empty diophantine condition.
	\end{remark}
	
	\begin{example}
		Let $\phi$ be the sentence that asserts that a group has nontrivial center, that is,
		$$
		\exists z \ z\ne 1 \  \forall x \ [x,z]=1.
		$$
		Then $\phi$ is a non-generic almost positive sentence. Indeed, suppose that the formula $\psi(z)\equiv \forall x \ [x,z]=1$ has a formal solution relative to $\exists v \mid \Pi(v,z)=1$, that is, $[x,z]$ belongs to the normal closure of $\Pi(z,v)$ in   $\mbb{F}(v,z, x)$. Hence $[x,z]$ projects to the identity element in the group $\mbb{F}(v,z,x)/\langle \langle \Pi(z,v)\rangle \rangle \cong \mbb{F}(x) * \mbb{F}(v,z)/\langle \langle \Pi(z,v)\rangle \rangle.$ This can only occur if $z$ is  trivial element in $\mbb{F}(v,z)/\langle \langle \Pi(z,v)\rangle \rangle$ {\rm(}by basic properties of free products{\rm)}. Therefore, $\phi$ is non-generic almost positive.
		
		On the other hand, the sentence
		$$
		\exists z \ z^p\neq 1 \ \forall x \exists y \ [xy^{-1},z]=1
		$$
		is almost positive but generic. Indeed, $y=x$ is a formal solution to the formula $\forall x \exists y \ [xy^{-1},z]=1$ {\rm(}relative to the Diophantine condition $1=1${\rm)}. However, the word $z^p$ is not trivial in $\F(v, z)$ and so the sentence is generic.
	\end{example}
	
	As noticed in Remark \ref{r: positive_is_generic} any positive sentence can be regarded as simple non-generic almost positive. Hence the class of groups satisfying a non-generic almost positive sentence contains all the examples given in Lemma \ref{ex:groups_non_trivial_positive_theory}. Furthermore, it also contains the following groups.
	
	\begin{lemma}
		The class of groups satisfying a non-generic almost positive sentence  contains the following families of groups:
		\begin{itemize}
			\item Groups with nontrivial center. Note that, in particular, the group $\mathbb Z \times F$ is large  {\rm(}hence it has trivial positive theory{\rm)}, but it satisfies a simple non-generic almost positive sentence.
			\item Groups with finite centraliser width, i.e.\ groups satisfying
			$$
			\exists x_1, \dots, x_n \ \forall y \ \exists z_1, \dots, z_n, \ x_i\ne 1 \wedge z_i \in C(x_i) \wedge y=z_1 \cdots z_n
			$$
			Note that this class includes boundedly generated groups as well as bounded abelian generated groups.
		\end{itemize}
	\end{lemma}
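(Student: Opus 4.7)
The first bullet requires essentially no new work: the sentence axiomatising nontrivial centre, $\exists z\,(z\neq 1 \wedge \forall x\,[x,z]=1)$, was already shown to be simple non-generic in the example preceding the lemma. The observation that $\mathbb{Z}\times F$ is large yet satisfies this sentence (take $z$ a generator of the $\mathbb{Z}$ factor) is immediate. So the real content is the second bullet.

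For the second bullet, let $\phi$ be the almost positive sentence
\[
\phi \equiv \exists x_{1}\cdots \exists x_{n}\Bigl(\bigwedge_{i=1}^{n} x_{i}\neq 1\Bigr)\wedge \forall y\,\exists z_{1}\cdots \exists z_{n}\,\Bigl(\bigwedge_{i=1}^{n}[z_{i},x_{i}]=1\Bigr)\wedge y = z_{1}\cdots z_{n}.
\]
In the notation of Definition \ref{defn:generic ap}, the free-variable tuple is $z=(x_{1},\ldots,x_{n})$, the inequality system is $\Theta(z)=\bigwedge x_{i}\neq 1$, and the positive part $\psi(z)$ has a single alternation with $x^{1}=y$ and $y^{1}=(z_{1},\ldots,z_{n})$. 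Fix an arbitrary Diophantine condition $\exists v\,\Pi(v,z)=1$ and a formal solution $\alpha=(\alpha_{1}(v,z,y),\ldots,\alpha_{n}(v,z,y))\in \F(v,z,y)^{n}$ of $\psi(z)$ relative to it. By definition, each $[\alpha_{i},x_{i}]$ and also $y\cdot(\alpha_{1}\cdots\alpha_{n})^{-1}$ lies in the normal closure of $\Pi(v,z)$ inside $\F(v,z)*\F(y)$. The plan is to show that under this hypothesis some $x_{i}$ must be trivial in $G_{\Pi}:=\F(v,z)/\langle\!\langle \Pi\rangle\!\rangle$.

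Work in the quotient $H := G_{\Pi} * \F(y)$, where the above relations become the genuine identities $[\alpha_{i},x_{i}]=1$ and $y=\alpha_{1}\cdots\alpha_{n}$. Suppose towards a contradiction that every $x_{i}$ is nontrivial in $G_{\Pi}$. Since $G_{\Pi}$ is a free factor of $H$ and each $x_{i}$ is a nontrivial element of that factor (which, being of syllable length one, has cyclically reduced length one in $H$), the standard description of centralisers in a free product yields $C_{H}(x_{i}) = C_{G_{\Pi}}(x_{i}) \subseteq G_{\Pi}$. Therefore $\alpha_{i}\in G_{\Pi}$ for every $i$, forcing $y=\alpha_{1}\cdots\alpha_{n}\in G_{\Pi}$. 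This contradicts the fact that $y$ freely generates a free factor of $H$ disjoint from $G_{\Pi}$ (in the degenerate case $G_{\Pi}=\{1\}$, the conclusion $x_{i}=1$ in $G_{\Pi}$ holds trivially). Hence some $x_{i}$ is trivial in $G_{\Pi}$, verifying non-genericity.

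The one nonroutine ingredient is the centraliser computation in a free product; all the rest is a direct unpacking of Definition \ref{defn:generic ap}. The final remark about boundedly generated and bounded abelian generated groups is immediate because $\langle g_{i}\rangle \leq C(g_{i})$, so any factorisation $G=\langle g_{1}\rangle\cdots\langle g_{n}\rangle$ witnesses $G=C(g_{1})\cdots C(g_{n})$ with all $g_{i}\neq 1$.
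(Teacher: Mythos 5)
Your proof is correct, and it uses exactly the technique the paper itself deploys: the paper gives no explicit proof of this lemma, relying on the preceding example (the nontrivial-centre case, handled by a free-product normal-form argument) and leaving the centraliser-width case implicit. Your second bullet supplies the missing detail in the intended way --- passing to $G_{\Pi}\ast\F(y)$ and using that the centraliser of a nontrivial element of a free factor lies in that factor, so that all $\alpha_i$, and hence $y$, would land in $G_{\Pi}$ unless some $x_i$ dies there --- which is precisely the argument of the paper's example extended to $n$ inequalities.
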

	
	The class of groups with nontrivial positive theory is closed under taking extensions.
	It turns out that this is also the case when replacing nontrivial positive theory by non-generic almost positive theory:
	
	\begin{thm}[see Theorem 6.7 in \cite{casals2019positive}]\label{lem:extensionpreservation}
		Let $\mathcal C$ be the class of groups which satisfy a non-generic almost positive sentence. Then $\mathcal C$ is closed under extensions. That is: if $E$ is a group, $K,Q\in \mc C$,  $K \triangleleft E$ and $E/K \simeq Q$, then $E\in \mc C$. In particular, the direct product of two groups in $\mathcal C$ is also in $\mathcal C$.
	\end{thm}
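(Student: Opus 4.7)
The plan is to construct, from witnessing non-generic almost positive sentences
$\phi_K\equiv\exists z_K(\Theta_K(z_K)\neq 1\wedge\psi_K(z_K))$ for $K$ and
$\phi_Q\equiv\exists z_Q(\Theta_Q(z_Q)\neq 1\wedge\psi_Q(z_Q))$ for $Q$, an analogous
non-generic almost positive sentence $\phi_E$ satisfied by $E$. The final ``in
particular'' clause is then immediate by specializing to the split extension
$E=K\times Q$.

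The key construction is to combine $\psi_K$ and $\psi_Q$ by \emph{nesting} the positive
systems. For clarity consider the one-alternation case
$\psi_K(z_K)\equiv\forall x_K\exists y_K\,\Sigma_K(z_K,x_K,y_K)=1$ and analogously for
$\psi_Q$; the general case is handled by iterating the same pattern through the full
quantifier blocks. Set
\[
\psi_E(z_K,z_Q)\equiv\forall x\,\exists y_Q\,\exists y_K\;\Sigma_K\bigl(z_K,\,\Sigma_Q(z_Q,x,y_Q),\,y_K\bigr)=1
\]
and
\[
\phi_E\equiv\exists z_K\exists z_Q\,\bigl(\Theta_K(z_K)\neq 1\,\wedge\,\psi_E(z_K,z_Q)\bigr).
\]
To verify $E\models\phi_E$, pick $z_K\in K$ witnessing $\phi_K$ inside $K$ and any lift
$z_Q\in E$ of a witness of $\phi_Q$ in $Q\cong E/K$. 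For every $x\in E$, the validity of
$\psi_Q$ in $Q$ yields $y_Q\in E$ with $\Sigma_Q(z_Q,x,y_Q)\in K$, and feeding this
element into $\psi_K(z_K)$ inside $K$ produces the required $y_K$. The inequality
$\Theta_K(z_K)\neq 1$ transfers from $K$ to $E$ because $K\le E$.

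The substantive step is showing that $\phi_E$ is non-generic. Suppose a formal solution
of $\psi_E$ relative to a Diophantine condition $\exists v\,\Pi(v,z_K,z_Q)=1$ is given by
words $\alpha_{y_Q},\alpha_{y_K}\in\F(v,z_K,z_Q,x)$ making
$\Sigma_K(z_K,\Sigma_Q(z_Q,x,\alpha_{y_Q}),\alpha_{y_K})$ trivial in
$\F(v,z_K,z_Q,x)/\langle\langle\Pi\rangle\rangle$. The aim is to manufacture from this
data a formal solution of $\psi_K(z_K)$ relative to an enlarged Diophantine condition
$\Pi_K(v',z_K)$, in which $z_Q,x$ and a fresh letter $x_K$ have been promoted to $v$-type
variables and the defining equation $x_K=\Sigma_Q(z_Q,x,\alpha_{y_Q})$ has been
adjoined. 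Under this identification, $\alpha_{y_K}$ viewed as a word in $v',z_K,x_K$
becomes a valid formal solution of $\psi_K$ rel $\Pi_K$, so the non-genericity of
$\phi_K$ forces some word of $\Theta_K(z_K)$ to vanish in
$\F(v',z_K)/\langle\langle\Pi_K\rangle\rangle$. Because $\Pi_K$ just defines $x_K$ as a
word in the remaining letters and $\Theta_K(z_K)$ involves only $z_K$, the quotient
$\F(v',z_K)/\langle\langle\Pi_K\rangle\rangle$ is canonically identified with
$\F(v,z_K,z_Q,x)/\langle\langle\Pi\rangle\rangle$, and the retraction $x\mapsto 1$
descends the triviality of that word to $\F(v,z_K,z_Q)/\langle\langle\Pi\rangle\rangle$,
contradicting the hypothesis that the initial formal solution witnessed
$\Theta_K(z_K)\neq 1$.

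The main obstacle is the bookkeeping in this retraction step: one has to verify that
adjoining the defining equation for $x_K$ does not introduce spurious collapse of
$\Theta_K(z_K)$ in the smaller Diophantine quotient and that the retraction $x\mapsto 1$
is well-defined on the normal closure of $\Pi$ in the free product
$\F(v,z_K,z_Q)\ast\F(x)$. A secondary difficulty is extending the one-alternation
construction of $\psi_E$ to positive sentences with arbitrarily many alternations, which
requires a layered iteration in which each universal block of $\psi_Q$ supplies
existentials pushing $\Sigma_Q$ into $K$, and these are then threaded into the
corresponding universal blocks of $\psi_K$; once this layered scheme is set up the
non-genericity argument above applies verbatim.
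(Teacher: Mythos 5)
There is a genuine gap, and it sits exactly at the step you yourself flag as substantive: the non-genericity of $\phi_E$. (For calibration: the paper does not prove this statement internally but cites Theorem 6.7 of \cite{casals2019positive}; its closest in-paper analogue, Lemma \ref{l: almost positive direct sums}, treats only direct products and already requires a delicate Bass--Serre argument on two tree actions rather than a formal substitution.) The first problem is that the sentence $\phi_E$ you construct discards $\Theta_Q$ entirely, and as a result it is in general \emph{generic}. Take $K=Q$ with nontrivial center and $\phi_K=\phi_Q\equiv\exists z\,(z\neq 1\wedge\forall x\,[x,z]=1)$. Your $\psi_E$ becomes $\forall x\,[[x,z_Q],z_K]=1$, so $\phi_E\equiv\exists z_K\exists z_Q\,(z_K\neq 1\wedge\forall x\,[[x,z_Q],z_K]=1)$. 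The Diophantine condition $z_Q=1$ admits the (empty) formal solution of $\psi_E$, since $[[x,z_Q],z_K]$ maps to the identity in $\bigl(\F(z_K,z_Q)/\langle\langle z_Q\rangle\rangle\bigr)\ast\F(x)$, while $z_K$ survives in $\F(z_K,z_Q)/\langle\langle z_Q\rangle\rangle$, so no word of $\Theta_K$ collapses; indeed a non-abelian free group satisfies $\phi_E$ by taking $z_Q=1$. Any repair must retain $\Theta_Q(z_Q)\neq 1$ in the inequality part, and the non-genericity proof must then genuinely use the non-genericity of $\phi_Q$, which your argument never invokes.

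The second problem is that, even after that repair, the reduction to the non-genericity of $\phi_K$ does not go through as described. By Definition \ref{defn:formal solution}, a formal solution of $\psi_K$ relative to a Diophantine condition must trivialize $\Sigma_K(z_K,x_K,\alpha)$ with the universal variable $x_K$ remaining a \emph{free} letter of the free factor $\F(x_K)$ in $\F(v',z_K)\ast\F(x_K)$; the Diophantine condition is only permitted to constrain $v'$ and $z_K$. Adjoining the equation $x_K=\Sigma_Q(z_Q,x,\alpha_{y_Q})$ constrains $x_K$ to a specific element of the quotient, so what your data actually provides is the triviality of $\Sigma_K$ at the single substitution instance $x_K=W(x):=\Sigma_Q(z_Q,x,\alpha_{y_Q})$, not a formal solution of $\forall x_K\exists y_K\,\Sigma_K=1$. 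Since $W(x)$ need not be a basis of a free factor complementing $\F(v,z_K,z_Q)/\langle\langle\Pi\rangle\rangle$ (it can be a proper power, a commutator, or fail to involve $x$ at all), the hypothesis of non-genericity of $\phi_K$ simply does not apply to it. Upgrading such a substitution instance to a genuine formal solution --- or circumventing formal solutions altogether via actions on trees, as in Lemma \ref{l: almost positive direct sums} --- is precisely the hard content of the cited theorem, and it is missing here.
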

	
	Finally, the following result states that if a group satisfies a non-generic almost positive sentence, then it satisfies one of a simpler form whose positive part only has one alternation of quantifiers. Moreover, this simpler formula uses single system of equations, instead of a disjunction of systems of equations.  Recall that an $\forall\exists$-formula is a first-oder formula of the form $\forall x \exists y \phi(x,y,w)$, where $x,y,w$ are tuples of variables, and $\phi$ is a quantifier-free formula.
	
	\begin{thm}[Quantifier Reduction, Theorem 6.5 and Lemma 6.4 in \cite{casals2019positive}]
		\label{t: quantifier reduction} Given a non-generic almost positive sentence $\psi\equiv\exists z\,\Pi(z)\neq 1\wedge\phi(z)$, where $\phi(z)$ is a positive formula and $\Pi(z)\neq 1$ is a system of inequalities, one can effectively describe a non-generic almost positive sentence
		$\psi'\equiv\exists z\,\Pi(z)\neq 1\wedge\phi'(z)$ where $\phi'(z)$ is a positive $\forall\exists$-formula of the form:
		$$
		\forall x \ \exists y \ \Sigma(z,x,y)=1,
		$$
		where $\Sigma=1$ is a system of equations, with the property that the sentence $\forall z \ \phi(z) \rightarrow \phi'(z)$ holds in any  group. In particular,  the sentence $\psi \rightarrow \psi'$ holds in any group, and if a group has nontrivial positive theory,  then it must satisfy some nontrivial positive $\forall\exists$-sentence.
	\end{thm}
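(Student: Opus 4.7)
The strategy is to carry out two successive simplifications of the positive part $\phi(z)$, tracking the non-genericity constraint at each stage: first reducing to a single alternation of quantifiers, then eliminating the disjunction of systems.

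For \emph{quantifier compression}, observe that any positive prenex formula $\forall x^{1}\exists y^{1}\cdots\forall x^{m}\exists y^{m}\chi(z,\bar{x},\bar{y})$ is implied in every group by its flattened form $\forall(x^{1},\dots,x^{m})\,\exists(y^{1},\dots,y^{m})\,\chi$, by combining Skolem witnesses. This yields $\phi_{1}(z)\equiv \forall x\,\exists y\,\bigvee_{j=1}^{k}\Sigma_{j}(z,x,y)=1$ with $\phi\to \phi_{1}$ holding universally. The reduction preserves non-genericity trivially: every formal solution to $\phi_1$ relative to a Diophantine condition $\Pi'$ restricts to one for $\phi$, so the non-genericity of $\psi$ lifts to that of $\exists z\,\Pi\ne 1\wedge\phi_{1}$.

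For \emph{disjunction elimination}, I would use the elementary observation that in any group $(w_{1}=1)\vee(w_{2}=1)$ implies $\exists t\,[w_{1},w_{2}^{t}]=1$ (either disjunct trivializes the commutator for any $t$). Iterating this over all pairs of words $(w_{j_{1},i_{1}},w_{j_{2},i_{2}})$ coming from distinct disjuncts and introducing a fresh existential conjugator $t_{j_1,i_1,j_2,i_2}$ for each pair produces a single conjunctive system $\Sigma(z,x,y,\bar{t})=1$. Absorbing the new existentials into $y$ gives $\phi'(z)\equiv\forall x\,\exists y\,\Sigma(z,x,y)=1$ with $\phi_{1}\to\phi'$, and hence $\phi\to\phi'$, valid in every group.

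The main obstacle is \emph{preservation of non-genericity} in this second step. Suppose for contradiction that $\psi'\equiv\exists z\,\Pi\ne 1\wedge\phi'$ is generic, witnessed by some Diophantine condition $\Pi'(v,z)=1$ and a formal solution $\beta$ to $\phi'$ relative to $\Pi'$ with no word of $\Pi$ trivial modulo $\Pi'$. The plan is to extract from $\beta$ a formal solution to some disjunct $\Sigma_{j_{0}}=1$ relative to a modest enlargement $\Pi''$ of $\Pi'$ that still avoids killing $\Pi$, contradicting non-genericity of the original $\psi$. The analysis takes place in the ambient free product $F(v,z)/\langle\!\langle\Pi'\rangle\!\rangle \ast F(x)$, where commutation is rigid in the sense that commuting elements must lie either in a common cyclic subgroup or in a common conjugate of a free factor. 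Combined with the combinatorics of the commutator equations in $\Sigma$, this rigidity should pin down a single dominant index $j_{0}$ for which all $w_{j_{0},i}$ can be forced trivial after adjoining finitely many relations. Making this extraction effective and, critically, verifying that the enlargement $\Pi''$ never annihilates $\Pi$ is the bulk of the technical work, and is what will dictate the precise shape of the encoding fixed in the second step.
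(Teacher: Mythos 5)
You should note first that the paper does not actually prove this statement: it is imported verbatim from \cite{casals2019positive} (Lemma 6.4 and Theorem 6.5 there), so there is no in-paper proof to match; I am judging your argument on its own. The first step already has a genuine gap. Your claim that ``every formal solution to $\phi_1$ relative to a Diophantine condition restricts to one for $\phi$'' is false, because formal solutions must respect the dependency order of the quantifier prefix: the witness $\alpha^{\ell}$ for $y^{\ell}$ is required to lie in $\F(v,z,x^{1},\dots,x^{\ell})$, whereas a formal solution to the flattened $\forall x\,\exists y$ form may use the entire tuple $x$ in every component. Concretely, take $\phi\equiv\forall x_{1}\exists y_{1}\forall x_{2}\exists y_{2}\,(y_{1}=x_{2})$. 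It admits no formal solution relative to any Diophantine condition $\exists v\,\Pi'(v,z)=1$, since $x_{2}$ does not lie in the subgroup generated by $G_{\Pi'}$ and $x_{1}$ inside $G_{\Pi'}\ast\F(x_{1},x_{2})$; hence $\exists z\,(z\neq 1\wedge\phi)$ is non-generic. Its flattening $\forall x_{1}\forall x_{2}\exists y_{1}\exists y_{2}\,(y_{1}=x_{2})$ has the formal solution $\alpha^{1}=x_{2}$ relative to the empty condition, so the flattened almost positive sentence is generic. Thus naive flattening, while indeed implied by $\phi$ in every group, destroys non-genericity, and your proof of the theorem collapses at this point. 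The reduction that actually works is of Merzlyakov type: one substitutes each universal block $x^{\ell}$ by a tuple of words $w^{\ell}(p,q)$ in a single fresh universal pair $(p,q)$ (this still yields an implied $\forall\exists$-sentence, because each inner universal may be specialized to $w^{\ell}(p,q)$ after the preceding existentials have been chosen), and one chooses the $w^{\ell}$ with small-cancellation/independence properties so that any formal solution of the new sentence can be unwound into a formal solution of the original one that does respect the dependency order.

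The second step is also not a proof. The implication $(w_{1}=1)\vee(w_{2}=1)\Rightarrow\exists t\,[w_{1},w_{2}^{t}]=1$ is correct, but in the ambient groups $G_{\Pi'}\ast\F(x)$ the commutator condition is far weaker than the disjunction: two nontrivial elements can perfectly well commute after conjugation (e.g.\ both proper powers of a common element, or both lying in a conjugate of $G_{\Pi'}$ with the conjugator absorbed into $t$), so a formal solution of the encoded system need not single out a disjunct $\Sigma_{j_{0}}$ that becomes trivial. You acknowledge that verifying preservation of non-genericity here is ``the bulk of the technical work'' and leave it as a plan; since that verification is the entire content of the step, nothing is established. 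As it stands, neither half of the argument yields the theorem.
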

	
	\section{Small-cancellation and formal solutions}
	\label{sec:small cancellation and formal solutions}
	
	The goal of this section is to relate the existence of small-cancellation elements in a group with the existence of formal solutions. We begin by recalling the definition of small cancellation elements.
	
	\begin{definition}\label{d: small cancellation}
		\label{d: small cancellation section3}
		Fix an action of a group $G$ on a simplicial tree $T$. Consider a tuple of elements $a=(a_{1},\cdots, a_{m})\in G^{m}$.
		Given $N>0$, we say that $a$ is \emph{weakly $N$-small cancellation} (in $T$) if the following holds for some base point $*\in VT$:
		
		\begin{enumerate}[label=(\alph*)]
			\item \label{SCA} $\dis{a_{i}}>N$ for all $1\le i \le m$,
			\item \label{SCB} $\dis{a_{i}}\leq\frac{N+1}{N}\min_{1\le r \le m} tl(a_{r})$ for all $1\le i\le m$,  and
			\item \label{SCC}
			For all $g\in G$ and $1\leq i,j\leq m$, if $|[*,a_{i}*]\cap g[*,a_{j}*]|\geq\frac{1}{N}\min_{1\le r \le m} tl(a_{r})$, then $i=j$ and $g$ acts like the identity on  $[*,a_{i}*]\cap g[*,a_{i}*]$.
		\end{enumerate}
		Given $c_{1},c_{2},\cdots, c_{k}\in G$, we say that a tuple $a$ is weakly $N$-small cancellation \emph{over} $c_{1},c_{2},\cdots, c_{k}$ if $*$ can be chosen in such a way that
		$\min_{1\le r\le m}\dis{a_{r}}>N \cdot \dis{c_{j}}$, for all $1\leq j\leq k$.
		
		We say that $(a_{1},a_{2},\dots, a_{m})$ is \emph{$N$-small cancellation} (over $c$)
		if it is weakly small cancellation (over $c$) and if it further satisfies the following strengthening of Item \ref{SCC}:
		
		\begin{itemize}
			\item [($c'$)] for all $g\in G$ and $1\leq i,j\leq m$, if $|[*,a_{i}*]\cap g[*,a_{j}*]|\geq\frac{1}{N} \min_{1\le r \le m} tl(a_{r})$, then $i=j$ and $g=1$
		\end{itemize}
		The point $*$ is said to be a \emph{witness} of the tuple $a$ being (weakly or not) $N$-small cancellation.
	\end{definition}
	
	In \cite{casals2019positive} we established a relation between small cancellation tuples and formal solutions to positive formulas. More precisely, we showed that given a positive $\forall \exists$-formula $\psi(w)$ with free variables $w$, there exist formal solutions (relative to a finite set of Diophantine conditions), see Definition \ref{defn:formal solution}, such that if a group $G$ acting on a tree with small cancellation elements satisfies the sentence $\psi(c)$, for some $c\in G^{|w|}$, then the group satisfies one of the Diophantine conditions and so the formula $\psi$ admits a formal solution relative to this Diophantine condition. As a corollary, we deduced that groups acting on trees and having small cancellation elements with respect to this action have trivial positive theory. The following corollary is a special case of Corollary 3.15 in \cite{casals2019positive}.
	
	\begin{cor}
		[Special case of Corollary 3.15 in \cite{casals2019positive}]
		\label{c: formal solutions forall exists}
		Given a positive formula of the form
		\begin{align*}
			\psi(z)\equiv \forall x \exists y \,\,\Sigma(z,x,y)=1,
		\end{align*}
		where $z$, $x$ and $y$ are {\rm(}finite{\rm)} tuples of variables, there is some $N>0$ and a finite collection $\mathcal{D}$ of Diophantine conditions on free variables $z$ with the following properties:
		\begin{enumerate}[label=(\roman*)]
			\item For each $\Delta\in \mathcal{D}$ there exists a formal solution $\alpha_{\Delta}$ to $\psi(z)$ relative to $\Delta$.
			\item \label{item witness_2} For any action of a group $G$ on a tree $T$ and tuples $c\in G^{|z|}$, $a\in G^{|x|}$ and $b\in G^{|y|}$ such that
			\begin{enumerate}[label=(\alph*)]
				\item $a$ is $N$-small cancellation over $c$ and
				\item $\Sigma(c,a,b)=1$,
			\end{enumerate}
			there is a Diophantine condition $\exists v\,\xi(z,v)=1$ in $\mathcal{D}$ and $d\subset G$ such that $\xi(c,d)=1$ holds in $G$.
		\end{enumerate}
	\end{cor}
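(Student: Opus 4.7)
The plan is a Merzlyakov-style argument adapted to tree actions, in the spirit of \cite{casals2019positive}. Given $\psi(z)\equiv \forall x\,\exists y\,\Sigma(z,x,y)=1$, the aim is to classify, uniformly in $G$, $T$, $c$, $a$ and $b$, all instances $(c,a,b)$ satisfying $\Sigma(c,a,b)=1$ with $a$ being $N$-small cancellation over $c$, into finitely many \emph{combinatorial types}. Each type will supply one Diophantine condition to populate $\mc{D}$ together with a formal solution witnessing item (i).

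First, fix such a triple $(c,a,b)$ and a basepoint $*$ witnessing the small cancellation property. The last clause in Definition \ref{d: small cancellation} makes the segments $[*,a_i*]$ behave like letters of a free alphabet: an overlap between two translates $g_1[*,a_i*]$ and $g_2[*,a_j*]$ of length at least $\frac{1}{N}\min_r tl(a_r)$ forces $i=j$ and $g_1$ to act like $g_2$ on the overlap. I would then decompose each path $[*,b_j*]$ as an alternating concatenation of ``matching arcs'' --- subsegments of translates $g[*,a_i^{\pm 1}*]$ long enough for the rigidity above to apply --- and ``short arcs'' whose length is uniformly bounded in terms of $N$ and the displacements $\dis{c_k}$. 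The system $\Sigma=1$ has bounded total syllable length in the letters of $z$, $x$, $y$, and for this word equation to evaluate trivially in $T$ the cancellation bookkeeping between syllables allows only a bounded number of matching arcs per $b_j$. Hence each $b_j$ admits an expression $\beta_j(z,x,v)$ of bounded combinatorial complexity, where the tuple of new variables $v$ parametrises the short arcs together with the conjugators realising each matching arc as a translate of some $[*,a_i^{\pm 1}*]$. Since the complexity bound depends only on $\Sigma$ and $N$, only finitely many such schemas $\beta$ arise.

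Fixing one schema $\beta(z,x,v)$, the word $\Sigma(z,x,\beta(z,x,v))$ need not be trivial in $F(z,x,v)$, but its failure is measured precisely by the residual identifications forced among the short arcs once the $a_i$-matched pieces cancel pairwise. Reading those identifications off the tree yields a system $\xi(z,v)=1$ satisfied by $(c,d)\in G^{|z|+|v|}$, where $d$ encodes the short-arc data. Declaring $\Delta \equiv \exists v\, \xi(z,v)=1$, the element $\Sigma(z,x,\beta(z,x,v))$ lies by construction in the normal closure of $\xi(z,v)$ inside $F(z,v)\ast F(x)$, so $\alpha_\Delta:=\beta$ is the required formal solution to $\psi$ relative to $\Delta$. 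Letting $\mc{D}$ be the finite collection of $\Delta$'s obtained as the schema varies over the finitely many combinatorial types gives both item (i) and item (ii), with $N$ chosen at the outset large enough compared to the syllable lengths of $\Sigma$.

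The main obstacle is the uniform combinatorial bookkeeping: one must verify that a single threshold $N$, depending only on $\Sigma$, forces \emph{every} solution $b$ in \emph{every} tree action of \emph{every} group $G$ to factor through one of finitely many schemas. This is the tree-theoretic analogue of the central step in Merzlyakov's theorem, and calibrating $N$ against the total length of the relations in $\Sigma$ so that matching arcs cannot ``collide'' with themselves or with short arcs in unintended ways is the technical heart of the argument.
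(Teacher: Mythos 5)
Your outline is essentially the argument behind the cited result: the paper does not reprove this corollary but imports it from \cite{casals2019positive}, and the proof there is precisely this Merzlyakov-style analysis over trees --- using the rigidity clause of Definition \ref{d: small cancellation} to pair off long translates of the segments $[*,a_i*]$ inside the closed loop traced by each relation of $\Sigma(c,a,b)=1$, packaging the residual short pieces and conjugators into fresh variables $v$, and letting the finitely many combinatorial schemas supply both the Diophantine conditions $\xi(z,v)=1$ and the formal solutions $\alpha_\Delta=\beta$. The uniform bookkeeping you flag at the end (calibrating $N$ against the length of $\Sigma$ so that matched arcs cannot interact in unintended ways, and checking that the residual identifications involve only $z$ and $v$ so that the identity holds formally in $F(z,v)\ast F(x)$ modulo $\xi$) is indeed where all the work lies in the source, but your decomposition and extraction scheme match it.
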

	
	\section{Non-compatible pairs and small-cancellation}\label{sec: from non-compatible to small cancellation}

	The main goal of the present section is to prove Corollaries \ref{c: small cancellation} and \ref{l: small cancellation multi}, which provide, given a finitely generated subgroup of a graph product, a uniform method for obtaining a finite set of tuple of elements in the subgroup such that, for any factor where the subgroup acts irreducibly, at least one tuple is small-cancellation when projected onto that factor.

	We refer to Section \ref{sec:actions on trees} for a discussion on notation and basic notions regarding groups acting on trees.
	
	We recall some lemmas from \cite{casals2019positive}:
	\begin{lemma}[see Lemma 4.7 in \cite{casals2019positive}]
		\label{l: multiple conjugation} Let $G$ be a group acting on a real tree $T$ and let $g,h\in G$ be an acylindrical pair {\rm(}see {\rm Definition \ref{d: acylindrical_pair})}. Consider the words $v(x,y)=x^{y^{x}}$ and $w^{cn}(x,y)=y^{v(x,y)}x$ {\rm(}where `cn' stands for `conjugation'{\rm)}.  Then the element $w^{cn}(g,h)$ is hyperbolic with $tl(w^{cn}(g,h))\geq\max\{tl(g),tl(h)\}$.  Moreover, $g \notin E(w^{cn}(g,h))$ and $A(g)$ and $A(w^{cn}(g,h))$ intersect coherently in case $g$ is hyperbolic.
	\end{lemma}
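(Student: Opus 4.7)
The plan is to exhibit a point $*\in T$ and a geodesic segment along which $w^{cn}(g,h)$ acts as a translation of length at least $\max\{tl(g),tl(h)\}$. Write $v=v(g,h)=g^{h^{g}}$ and $w=w^{cn}(g,h)=h^{v}g$. Note that $v$ is conjugate to $g$, so $tl(v)=tl(g)$ and $A(v)=(h^{g})^{-1}A(g)$; likewise $A(h^{v})=v^{-1}A(h)$ and $tl(h^{v})=tl(h)$. The overall strategy is to use the acylindricity of the pair $(g,h)$ to locate these four axes $A(g),A(h),A(v),A(h^{v})$ in the tree in such a way that the product $h^{v}\cdot g$ translates a carefully chosen basepoint along a geodesic of the required length, and so cannot be elliptic.

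First I would pick a base vertex $*$ in $A(g)\cap A(h)$ if this intersection is non-empty, or the midpoint of the bridge between $A(g)$ and $A(h)$ otherwise. In both cases, the hypothesis $|A(g)\cap A(h)|\leq \tfrac{1}{2}\max\{tl(g),tl(h)\}$ together with weak $\tfrac{1}{3}$-stability of $g$ and $h$ force that for any overlap of translates of these axes longer than $\tfrac{1}{3}\max\{tl(g),tl(h)\}$, the corresponding elements must preserve those axes set-wise. Next I would trace where $*$ is sent by successive conjugations: the conjugator $h^{g}=g^{-1}hg$ translates an initial segment of $A(g)$ onto a segment whose position relative to $A(g)$ is controlled by the overlap hypothesis, and so $A(v)=(h^{g})^{-1}A(g)$ is a translate of $A(g)$ that is displaced off $A(g)$ by a definite amount (here we need that $g$ and $h$ generate irreducibly, so $h\notin E(g)$). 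A parallel computation shows $A(h^{v})=v^{-1}A(h)$ has been pushed away from $A(g)$ as well. Lemma~\ref{l: hyperbolic from stable} and the standard criteria from Chiswell~\cite{chiswell2001introduction} for the product of two hyperbolic isometries then apply: the axes $A(h^{v})$ and $A(g)$ either meet coherently in a short arc (shorter than $\tfrac{1}{3}\min\{tl(g),tl(h)\}$, thanks to weak stability) or fail to meet at all, and in either case their product is hyperbolic with translation length at least $tl(h^{v})+tl(g)-2\cdot|A(h^{v})\cap A(g)|\geq \max\{tl(g),tl(h)\}$.

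The main obstacle will be verifying carefully that $g$ and $w^{cn}(g,h)$ do not share a common axis, i.e.\ $g\notin E(w)$. If $g$ were in $E(w)$, then the portion of $A(w)$ inside $A(g)$ would have to be a $g$-invariant subsegment of length comparable to $tl(w)\geq tl(g)$, whence (by weak $\tfrac{1}{3}$-stability of $g$ and the analysis above) one would deduce that $h^{v}\in E(g)$, contradicting the irreducibility of $\langle g,h\rangle$. The coherence of intersection when $g$ is hyperbolic follows from the orientation-preserving way in which $g$ was composed on the right of $h^{v}$ in the construction of $w^{cn}$: both axes are oriented by their respective translations and the placement of $*$ forces the translation directions to agree on the overlap. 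The elliptic case for $g$ (where only hyperbolicity of $w$ and the translation length bound are at stake) follows the same line but is easier, since there is no axis to be coherent with; the only subtlety is that one must then take $*$ to be a vertex fixed by $g$, which is provided by the acylindricity assumption through Lemma~\ref{l: one basepoint}.
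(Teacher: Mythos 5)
The paper does not actually prove this lemma --- it is quoted from Lemma~4.7 of \cite{casals2019positive} --- so there is no in-paper argument to compare against; judging your proposal on its own terms, the high-level architecture (conjugate $h$ by $v=g^{h^{g}}$ to relocate its axis, then apply the standard product-of-isometries estimates for trees, using weak $\tfrac13$-stability to control overlaps) is the right one, but the decisive steps are asserted rather than proved.

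The central gap is the claim that $A(h^{v})$ and $A(g)$ ``either meet coherently in a short arc (shorter than $\tfrac13\min\{tl(g),tl(h)\}$, thanks to weak stability) or fail to meet at all.'' Weak $\tfrac13$-stability of $g$ (resp.\ $h$) only controls overlaps of $A(g)$ (resp.\ $A(h)$) with its \emph{own} translates, whereas $A(h^{v})=v^{-1}A(h)$ with $v^{-1}=g^{-1}h^{-1}g^{-1}hg$, so bounding $|v^{-1}A(h)\cap A(g)|$ requires explicitly tracking this translate through the successive conjugations --- this is precisely why the word $w^{cn}$ has its particular form, and it is the entire quantitative content of the lemma. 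Without it, neither the lower bound $tl(w^{cn}(g,h))\geq\max\{tl(g),tl(h)\}$ (your formula $tl(h^{v})+tl(g)-2|A(h^{v})\cap A(g)|$ needs $2|A(h^{v})\cap A(g)|\leq\min\{tl(g),tl(h)\}$) nor the coherence statement is established. A second, smaller gap: to rule out $g\in E(w)$ you argue that one would deduce $h^{v}\in E(g)$, ``contradicting the irreducibility of $\langle g,h\rangle$.'' But $h^{v}\in E(g)$ only says that the proper subgroup $\langle g,h^{v}\rangle$ preserves the line $A(g)$; this is compatible with $\langle g,h\rangle$ itself acting irreducibly, so an additional argument (again resting on the unproved location of $A(h^{v})$) is needed. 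Finally, the elliptic and mixed cases (one or both of $g,h$ elliptic, where $\max\{tl(g),tl(h)\}$ may vanish and the relevant formula involves fixed-point sets rather than axes) are not really ``the same line but easier'' --- they need their own case analysis, and Lemma~\ref{l: one basepoint} is not what supplies a fixed vertex for an elliptic $g$.
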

	
	Roughly speaking, the next key proposition combines the previous results to show that given a pair of \emph{stable} elements that act irreducibly on a tree and an arbitrary tuple $c$, we can uniformly obtain a tuple of small-cancellation elements \emph{over} $c$.
	
	\begin{prop}[Proposition 4.8 in \cite{casals2019positive}]
		\label{l: from good pair to small cancellation}
		
		For all $N\geq 1$ and $m\geq 1$ there exists an $m$-tuple of words $w^{sc,N}\in\F(x,y)^{m}$ with the following property {\rm(}`sc' stands for `small cancellation'{\rm)}. Suppose we are given an action of a group $G$ on a tree $T$, a hyperbolic element $h\in G$, and another element $g\in G$ such that:
		\begin{enumerate}
			\item $h$ is hyperbolic and stable {\rm(}respectively, weakly stable{\rm)},
			\item $g\notin E(h)$,
			\item \label{it: no cancellation}$A(g)$ and $A(h)$ intersect, $g$ does not invert a subsegment of $A(h)$ and if $g$ is hyperbolic, then $A(g)$ and $A(h)$ intersect coherently,
			\item $tl(g)\leq tl(h)$.
		\end{enumerate}
		Then $w(g,h)^{sc,N}$ is $N$-small cancellation {\rm(}respectively, weakly $N$-small cancellation{\rm)} as witnessed by any point in $A(g)\cap A(h)$, and for all $1\leq i\leq m$ we have
		$$
		tl(w_{i}^{sc}(g,h))\geq N \max\{tl(g),tl(h)\}=N tl(h).
		$$
	\end{prop}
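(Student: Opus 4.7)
The plan is to define the $m$-tuple $w^{sc,N}$ as monomials in $x$ and $y$ of the shape $y^{a_i(N,m)}\, x\, y^{b_i(N,m)}$, where $a_i + b_i = K$ for a common large integer $K = K(N,m)$ and the split $(a_i, b_i)$ is chosen distinctively for each $i$ so that the ``turning point'' along $A(h)$ where the action of $g$ takes effect lies at a different distance from the base point $*$ for each index. The intuition is that high powers of $h$ are, by Remark \ref{r: stability and powers}, much more stable than $h$ itself relative to their translation length, so making $K$ large yields increasing geometric control while only the finitely-adjustable positional parameter $(a_i,b_i)$ is used to distinguish the $w_i$.

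First I would verify hyperbolicity and the size conditions \ref{SCA}--\ref{SCB}. Since $g \notin E(h)$ we have $g \notin E^{-}(h)$, so Lemma \ref{l: hyperbolic from stable}, applied to a suitable factorisation of $w_i(g,h)$ and using that $tl(g) \leq tl(h)$ is negligible compared to $tl(h^{\lfloor K/3 \rfloor})$ once $K$ is large, shows that each $w_i(g,h)$ is hyperbolic. A direct geometric reading of the geodesic $[*, w_i(g,h)*]$ emanating from $* \in A(g)\cap A(h)$ shows $tl(w_i(g,h)) = K \cdot tl(h) + O(tl(h))$; this gives condition \ref{SCA} for $K$ large, and, since all $m$ of the $w_i(g,h)$ have comparable translation length, also condition \ref{SCB} as the ratio $\max_i tl(w_i) / \min_i tl(w_i)$ is squeezed toward $1$.

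The main obstacle is condition \ref{SCC} on overlaps. The geodesic $[*, w_i(g,h)*]$ decomposes as an initial $A(h)$-segment of length $a_i \cdot tl(h)$, a short arc of length $O(tl(g))$ introduced by the action of $g$ (which by hypothesis neither inverts $A(h)$ nor, when hyperbolic, crosses it incoherently), and a final $A(h)$-translate of length $b_i\cdot tl(h)$; the location of the unique turning point rigidly encodes $i$. Suppose $|[*, w_i *] \cap \gamma [*, w_j *]| \geq \tfrac{1}{N}\min_r tl(w_r) \geq c \cdot K \cdot tl(h)$ for some constant $c = c(N) > 0$. For $K$ large, this overlap must contain a subsegment of length greater than $\lambda \cdot tl(h^K)$ (for the stability constant $\lambda$ of $h$) lying inside a common $A(h)$-translate appearing in each geodesic; stability of $h^K$ together with property $FS(\lambda)$ of Lemma \ref{l: forms of stability} forces $\gamma$ to preserve that translate of $A(h)$, so $\gamma \in E(h)$, and in the genuinely stable case additionally $\gamma \in K(h) = \{1\}$, so $\gamma = 1$. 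For the weakly stable case one only concludes that $\gamma$ acts as the identity on the overlap.

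The delicate bookkeeping is to ensure that no $\gamma \in E(h)$ can shift the turning-point pattern of $w_j$ so as to match that of $w_i$ unless $i = j$. This is arranged by choosing the splits $(a_i,b_i)$ so that the turning-point distances from $*$ are pairwise separated by more than the allowable overlap slack $\tfrac{1}{N}\min_r tl(w_r)$, which is possible whenever $K$ is chosen large enough in terms of $m$ and $N$. Once the turning-point signatures are forced to coincide, the hypothesis $g \notin E(h)$ implies that the corresponding $g$-excursions off $A(h)$ can align only if $\gamma$ further fixes the branching direction, yielding the small-cancellation conclusion in the stable case (and its weak analogue in the weakly stable case) and completing the plan.
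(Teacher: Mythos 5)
Your construction fails at condition \ref{SCC}. With a single $x$-syllable, $w_i(g,h)=h^{a_i}gh^{b_i}$, the segment $[*,w_i(g,h)*]$ contains a straight run along $A(h)$ of length at least $\max\{a_i,b_i\}\cdot tl(h)\geq \tfrac{K}{2}tl(h)$ (up to an $O(tl(h))$ correction at the turn). Now take $\gamma=h$ and $i=j$: the intersection $[*,w_i*]\cap h[*,w_i*]$ contains a subsegment of that run of length about $(\tfrac{K}{2}-O(1))\,tl(h)$, which exceeds $\tfrac{1}{N}\min_r tl(w_r)\approx\tfrac{K}{N}tl(h)$ for every $N\geq 3$; yet $h\neq 1$ and $h$ translates, rather than fixes, the overlap. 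So your tuple is not even weakly $N$-small cancellation, no matter how the splits $(a_i,b_i)$ are chosen. The turning-point bookkeeping only controls overlaps that actually see a turning point; it cannot control self-overlaps of the long straight $A(h)$-runs under elements of $E(h)$, nor overlaps matching the initial run of $w_i$ against the final run of $w_j$. The remedy, and what the cited proof actually does, is to use words with many $x$-syllables, $w_i=\prod_{k}xy^{n_{i,k}}$, so that every maximal straight run in $[*,w_i*]$ has length well below $\tfrac{1}{N}\min_r tl(w_r)$ (forcing any long overlap to traverse several turning points), and to choose the exponent sequences $(n_{i,k})_k$ pairwise non-matching and aperiodic, so that matching several consecutive gap lengths pins down $i=j$ and the shift, whence $\gamma$ stabilizes $A(h)$ pointwise on the overlap and, in the stable case, $\gamma\in K(h)=\{1\}$.

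A secondary but real error: you conclude ``$\gamma\in E(h)$, and in the genuinely stable case additionally $\gamma\in K(h)=\{1\}$.'' This inference is false; $E(h)$ contains $\langle h\rangle$ and is never contained in $K(h)$. Stability only kills elements that are already forced to act as the identity on a long subsegment of $A(h)$ (via $FS(\lambda)$ and $K(h)=\{1\}$); getting to that point is exactly the combinatorial content that the aperiodic multi-syllable pattern supplies and that a single turning point does not. Note also that, for the tuple of words to exist uniformly, the stability constant must be fixed in advance (in this paper ``stable'' means $\tfrac13$-stable); your $K=K(N,m,\lambda)$ would otherwise depend on the group action.
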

	
	\begin{lemma}[see Lemma 4.6 in \cite{casals2019positive}]
		\label{l: acylindrical pairs} Let $G$ act on a tree $T$ and take two elements $g,h\in G$ such that $\subg{g,h}$ acts irreducibly on $T$. Suppose that either $g$ is elliptic or is the $m$-th power of a weakly $\lambda$-stable element, where $m\geq 10\max\{\lambda,1\}$. Make the same assumption on $h$. Then the pair $\{g,h\}$ is acylindrical.
	\end{lemma}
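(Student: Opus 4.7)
The plan is to verify directly each of the three clauses of Definition \ref{d: acylindrical_pair}. The irreducibility clause is precisely the hypothesis, so only the weak $\tfrac13$-stability of any hyperbolic member of the pair and the intersection bound $|A(g)\cap A(h)|\le\tfrac12\max\{tl(g),tl(h)\}$ need attention.

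For the stability clause, if $g=g_0^m$ is hyperbolic then the second bullet of Remark \ref{r: stability and powers} says $g$ is weakly $\tfrac{\lambda}{m}$-stable, and the hypothesis $m\ge 10\max\{\lambda,1\}$ gives $\tfrac{\lambda}{m}\le\tfrac{1}{10}$. The first bullet of the same remark upgrades this to weak $\tfrac13$-stability, and the argument for $h$ is identical.

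The intersection bound is the heart of the proof. I would argue by contradiction, assuming (after possibly swapping names) that $tl(g)\le tl(h)$ and $|A(g)\cap A(h)|>\tfrac12 tl(h)$, and splitting into cases. If both $g$ and $h$ are elliptic then $tl(h)=0$, so any point of $A(g)\cap A(h)$ is a common fixed point for $\subg{g,h}$, forcing an action with a fixed vertex and hence an abelian one, contradicting irreducibility. If $g$ is elliptic while $h=h_0^m$ is hyperbolic, then $g$ fixes a subsegment of $A(h)=A(h_0)$ of length greater than $\tfrac{m}{2}tl(h_0)>\lambda\,tl(h_0)$. The implication $WS(\lambda)\Rightarrow FS(\lambda)$ of Lemma \ref{l: forms of stability} then forces $g$ to fix all of $A(h_0)$, so $\subg{g,h}$ preserves the line $A(h_0)$ and its action is not irreducible.

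The remaining case has both elements hyperbolic; write $g=g_0^{m_g}$ and $h=h_0^{m_h}$, so $A(g)=A(g_0)$ and $A(h)=A(h_0)$, and set $I:=A(g_0)\cap A(h_0)$. By hypothesis $|I|>\tfrac{m_h}{2}tl(h_0)$. Because $h_0$ translates $A(h_0)$ by $tl(h_0)$, the segment $I\cap h_0 I$, which sits inside $A(g_0)\cap h_0 A(g_0)$, has length strictly greater than $(\tfrac{m_h}{2}-1)tl(h_0)$. Using $m_g\,tl(g_0)=tl(g)\le tl(h)=m_h\,tl(h_0)$ and $\lambda_g\le\tfrac{m_g}{10}$, a short calculation shows that this lower bound exceeds $\lambda_g\,tl(g_0)$ provided $m_h\ge 10$, which holds by hypothesis. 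Weak $\lambda_g$-stability of $g_0$ then yields $h_0\in E(g_0)$, so $A(g_0)$ is preserved by $\subg{g_0,h_0}$ and hence by $\subg{g,h}$, contradicting irreducibility one last time. The one point requiring care is this final numerical margin: a full translation length $tl(h_0)$ is lost in passing from $I$ to $I\cap h_0 I$, and it must be recovered from the slack $m\ge 10\max\{\lambda,1\}$ built into the hypothesis.
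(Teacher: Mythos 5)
Your argument is correct. Note that the paper itself gives no proof of this statement --- it is imported verbatim as Lemma 4.6 of \cite{casals2019positive} --- so there is no in-text argument to compare against; your write-up is a sound self-contained reconstruction. The three clauses of Definition \ref{d: acylindrical_pair} are handled exactly as one would expect: the stability clause via Remark \ref{r: stability and powers}, and the intersection bound by contradiction through the three elliptic/hyperbolic cases, each ending with $\subg{g,h}$ preserving a point or a line and hence acting non-irreducibly. The numerics in the hardest case check out: with $tl(g)\le tl(h)$ one has $\lambda_g\,tl(g_0)\le \tfrac{m_g}{10}tl(g_0)=\tfrac{1}{10}tl(g)\le\tfrac{1}{10}m_h\,tl(h_0)$, while $|I\cap h_0I|>(\tfrac{m_h}{2}-1)tl(h_0)\ge\tfrac{m_h}{10}tl(h_0)$ since $m_h\ge 10$, so weak $\lambda_g$-stability of $g_0$ indeed forces $h_0\in E(g_0)$. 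The only cosmetic remark is that in the elliptic--elliptic case you should observe that violating the bound means the fixed trees share at least an edge (a single common vertex already suffices for the contradiction), and that a subgroup preserving a line acts abelianly or dihedrally, never irreducibly; both are immediate.
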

	
	\begin{prop}
		\label{l: general small cancellation} Let $k,N,m,\lambda,d$ be positive integers, and let $x,y,z_{1},z_{2},\dots,z_{k}$ be variables. Then there is a finite collection $\mathcal{W}\subseteq\F(x,y,z_{1},z_{2},\dots,z_{k})^{m}$ of $m$-tuples of words in the normal closure in $\F(x,y,z_{1},z_{2},\dots,z_{k})$ of
		$\langle x,y \rangle$ with the following property.
		
		Suppose we are given a group $G$ and a collection $\mathcal{T}$ of simplicial $G$-trees such that
		any element hyperbolic in some $T\in\mathcal{T}$ is weakly $\lambda$-stable and any element hyperbolic in every $T\in\mathcal{T}$ is $\lambda$-stable with respect to the action on any such $T$. Let $(a,b)\in G^{2},c\in G^{k}$ be such that $\subg{a,b}$ acts irreducibly on each $T\in\mathcal{T}$ and both $a$ and $b$ have roots of order $\geq 10\lambda$.
		
		Then for every $T\in\mathcal{T}$ there is $u\in\mathcal{W}$ such that the tuple $u(a,b,c)$ is $N$-small cancellation over $c$.
		
		Furthermore, we can assume that for all $u \in \mathcal W$, all components of $u$ belong to any prescribed verbal subgroup of $\F(x,y)$.
		
		In case $k=0$ we can take $\mathcal{W}$ to be a singleton.
	\end{prop}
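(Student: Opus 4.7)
The strategy is to reduce to Proposition \ref{l: from good pair to small cancellation} by constructing from $(a,b)$ (and possibly $c$) a suitable hyperbolic pair $(g,h)$ on each $T\in\mathcal T$, while ensuring that the witness point for the resulting small-cancellation tuple is well-positioned relative to the $c_j$'s so that the ``over $c$'' clause holds.

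First, since $\subg{a,b}$ acts irreducibly on each $T\in\mathcal T$ and $a,b$ are $p$-th powers of weakly $\lambda$-stable elements with $p\geq 10\lambda$, Lemma \ref{l: acylindrical pairs} says that $(a,b)$ is an acylindrical pair in each such $T$. Then I would apply Lemma \ref{l: multiple conjugation} to obtain the hyperbolic element $h_0=w^{cn}(a,b)$ with $a\notin E(h_0)$ and with $A(a)\cap A(h_0)$ nonempty and coherently oriented when $a$ is hyperbolic. The stability hypotheses transfer to any sufficiently high power $h_0^R$, which keeps the same axis. For $R$ depending only on $\lambda$ and $N$, the pair $(a,h_0^R)$ satisfies the hypotheses of Proposition \ref{l: from good pair to small cancellation}, producing a tuple $w^{sc,N}(a,h_0^R)$ that is $N$-small cancellation on $T$ with translation lengths $\geq NR\cdot tl(h_0)$ and witness in $A(a)\cap A(h_0)$.

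The main obstacle is the ``over $c$'' requirement: we need a witness $*$ with $N\cdot d(*,c_j*)<\min_i tl(u_i(a,b,c))$ for all $j$. Since the basic tuple depends only on $(a,b)$, the displacements $d(*,c_j*)$ are uncontrolled — a $c_j$ with small translation length but axis far from $*$ can dominate. To handle this uniformly, I would enlarge $\mathcal W$ to a finite family of ``absorption patterns'' for the $c_j$'s: for each word $\zeta\in\F(z_1,\dots,z_k)$ of length bounded in terms of $k,\lambda,N,m$, consider the tuple $w^{sc,N}\bigl(a^{\zeta},(\zeta\, h_0\,\zeta^{-1})^R\bigr)\in\F(x,y,z)^m$, still in the normal closure of $\subg{x,y}$. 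Its evaluation at $(a,b,c)$ uses the conjugate $\zeta(c)\,h_0(a,b)^R\,\zeta(c)^{-1}$, whose axis is $\zeta(c)\cdot A(h_0)$. In each tree $T$, by an argument in the spirit of Lemma \ref{l: one basepoint} applied to $c$, at least one $\zeta$ in the family translates the axis into a region where the $c_j$'s all have displacement comparable to $tl(c_j)$, and a pigeonhole on the finitely many combinatorial ``footprints'' of the $c_j$-axes on $A(h_0)$ ensures that $N$-small cancellation over $c$ is achieved for one such $\zeta$. The hard part of the proof is precisely the verification that boundedly many $\zeta$'s suffice and that the translation length produced, together with the stability inherited from $h_0^R$, is sufficient to dominate the controlled displacement of the $c_j$'s at the new witness.

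For the ``furthermore'' clause, verbal subgroups of $\F(x,y)$ are fully invariant, so one may replace each component $u_i$ of the resulting tuple by $\phi(u_i)$ for a substitution $\phi$ with image in the prescribed verbal subgroup (e.g.\ taking high powers or iterated commutators), adjusting $R$ and $N$ to absorb the length and stability change; the small-cancellation conditions \ref{SCA}--\ref{SCC} are preserved provided the verbal substitution does not shrink translation lengths below the required thresholds, which is standard for nontrivial verbal subgroups when one is allowed to increase the exponents. Finally, for $k=0$ there is no $c$ to absorb and no $\zeta$ needed, so $\mathcal W$ reduces to the singleton $\{w^{sc,N}(x,y^R)\}$ arising directly from Proposition \ref{l: from good pair to small cancellation}.
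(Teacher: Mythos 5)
Your opening steps match the paper's: Lemma \ref{l: acylindrical pairs} makes $(a,b)$ an acylindrical pair on each $T\in\mathcal{T}$, Lemma \ref{l: multiple conjugation} produces the hyperbolic $w^{cn}(a,b)$, and Proposition \ref{l: from good pair to small cancellation} then yields an $N$-small cancellation tuple on each tree. The divergence — and the gap — is in how you handle the ``over $c$'' clause. Your absorption mechanism builds the new tuples as $\zeta(c)$-conjugates of words in $a$ and $h_0$ alone. Conjugation preserves translation length, so every component of every tuple in your enlarged $\mathcal{W}$ has translation length bounded by a function of $tl(a)$, $tl(b)$ and the (bounded) word lengths, independently of $c$. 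But Definition \ref{d: small cancellation} requires a witness $*$ with $\min_r d(*,u_r*) > N\, d(*,c_j*)$, and $d(*,c_j*)\geq tl(c_j)$ for every choice of $*$. If some $c_j$ is hyperbolic with $tl(c_j)$ much larger than $tl(a)$ and $tl(b)$, no repositioning of the witness point can rescue the inequality: the failure is not about where the $c_j$-axes sit relative to $A(h_0)$, it is that the translation lengths of your tuple simply do not grow with $c$. The pigeonhole on ``footprints'' cannot repair this.

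The paper avoids this by using words that involve the variables $z_j$ non-trivially (not merely as conjugators): the blackbox pair $(v_1^{N,k},v_2^{N,k})$ from \cite[Theorem 5.3]{casals2019positive} is constructed so that $tl(v_i^{N,k}(a,b,c))\geq N\,d(c_j*,*)$ at a suitable witness, and the final family $\mathcal{W}$ additionally contains tuples $w^{sc,N}(\alpha(v_i^{N,k},z_j),\alpha((v_i^{N,k})^2,z_j))$ with $\alpha(x,y)=xx^{y}$; an element like $\alpha(v_i(a,b,c),c_j)=v_i\cdot v_i^{c_j}$ has translation length that genuinely grows with the displacement of $c_j$. To fix your argument you would need words of this multiplicative type rather than conjugates. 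Your treatment of the verbal-subgroup clause is also looser than the paper's (which exploits that $\langle w_1^{N,0}(a,b),w_2^{N,0}(a,b)\rangle$ acts freely on each $T$ to substitute words from the verbal subgroup and re-apply the small-cancellation construction), but that part is repairable in the spirit you indicate; the ``over $c$'' step is the substantive defect.
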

	\begin{proof}
		The proof is essentially the same as that of \cite[Theorem 5.3]{casals2019positive}, with an additional argument to ensure the last condition. We will recall the general structure of the proof and point out the necessary adjustments.
		
		We know that $a$ and $b$ admit roots  $a_{0}$ and $b_{0}$ of order $s,t\geq 10\lambda$, respectively. Note that $a$ is hyperbolic if and only if $a_0$ is hyperbolic, hence by
		hypothesis, either $a_0$ is  elliptic or $\lambda$-stable.  The same considerations apply for $b_0$. Thus the hypotheses of Lemma \ref{l: acylindrical pairs} apply to $(a,b)$ and so $(a,b)$ is an acylindrical pair (see Definition \ref{d: acylindrical_pair}) with respect to every $T\in\mathcal{T}$.
		
		Let $w^{cn}$ be the word given by Lemma \ref{l: multiple conjugation}. Now by this  same lemma applied to the pair $(a,b)$ we have that, with respect to the action on any $T\in\mathcal{T}$, the element $w^{cn}(a,b)$ is hyperbolic with $tl(w^{cn}(a,b))\geq tl(a),tl(b)$; that $a \notin E(w^{cn}(a,b))$; and that the axis $A(a)$ of $a$ intersects with the axis $A(w^{cn}(a,b))$ of $w^{cn}(a,b)$ coherently in case $a$ is hyperbolic.
		
		Let $d_1 = w^{cn}(a,b)$. In particular, $\langle d_1^{10\lambda}, a \rangle$ acts irreducibly on each
		$T\subset\mathcal{T}$. Therefore if we define $d_2=w^{cn}(d_1^{10\lambda},a)$, then another application of Lemma \ref{l: multiple conjugation} to the pair $(d_1^{10\lambda},a)$  yields that, with respect to the action on each $T\subset\mathcal{T}$, both $d_1$ and $d_2$ are hyperbolic with $tl(d_2)\geq tl(d_1)$, $d_1\nin A(d_2)$ and $A(d_1)$ and $A(d_2)$ intersect coherently.
		
		The next step in the proof of \cite[Theorem 5.3]{casals2019positive} is the following statement:
		\begin{claim}
			\label{f: blackbox 5.3}
			Fix $\lambda>0$, let $N>3$, $k\geq 0$, and let $x,y,z_1, \dots, z_k$ be variables. Then  there exists a pair of words $(v_{1}^{k,N},v_{2}^{k,N})\in\F(x,y,z_{1},z_{2},\dots,z_{k})^{2}$ in the normal closure of $\langle x,y\rangle$ with the following property.
			Let $G$ be a group acting on a tree $T$ in which any hyperbolic element is $\lambda$-stable. Let $(a,b,c)$ with $a,b\in G,\ c \in G^k$ be such that $a,b$ are hyperbolic, $A(a)$ and $A(b)$  intersect coherently, and $tl(a)\leq tl(b)$. Then the pair $(v_{1}^{N,k}(a,b,c),v_{2}^{N,k}(a,b,c))$ is $N$-small cancellation, and there exists a vertex $*\in T$ such that for any $i\in\{1,2\}$ and any $j\in\{1,\dots,k\}$ we have
			$$
			tl(v^{N,k}_{i}(a,b,c))\geq N d(c_{j} *,*)
			$$
		\end{claim}
		We claim that in our context, the pair $(v_{1}^{N,k},v_{2}^{N,k})$ given in Claim \ref{f: blackbox 5.3} verify the same conclusion with respect to the action of $G$ on every tree $T\in\mathcal{T}$, that is  for every two elements $d_1, d_2\in G$, $c\in G^k$ such that $d_1, d_2$ act hyperbolic with respect to the action on each $T\in\mc{T}$ , and $A(d_1) \cap A(d_2)$ intersect coherently, the pair $(v_{1}^{N,k}(d_{1},d_{2},c), v_{2}^{N,k}(d_{1},d_{2},c))$ is $N$-small cancellation, and for each tree $T\in \mathcal{T}$, there is a base point with respect to which $tl(v^{N,k}_{i}(d_1,d_2,c))\geq N d(c_{j} *,*)$ for any $i\in \{1,2\}$ and any $j\in \{1, \dots, k\}$.
		
		Indeed, taking $d_1, d_2$ as above, $d_{1}$ and $d_{2}$ are $\lambda$-stable (not just weakly-stable) because we have shown them to be hyperbolic with respect to the action on every $T\in\mathcal{T}$ and so by assumption they are stable. From that point on we simply follow the proof of  Claim \ref{f: blackbox 5.3}
		in each $T\in\mathcal{T}$ separately. For a fixed $T\in \mc{T}$, every time such a proof  requires that a given hyperbolic element $g$ constructed from $d_{1},d_{2},c$ is $\lambda$-stable, this holds in the present context  as well, since the previous step of the present proof  shows that the element $g$ is hyperbolic in every $T\in\mathcal{T}$ and therefore $\lambda$-stable in each $T\in\mathcal{T}$ by assumption. This proves our claim. 
		
		Summarising, at this stage given $N>0$ and $k\geq 0$
		if we let 
		$$
		w_{i}^{N,k}(x,y,z_1, \dots, z_k)=v_{i}^{N,k}(w^{cn}(x,y),w^{cn}(w^{cn}(x,y),y),z_1, \dots, z_k)\quad (i=1,2),
		$$  then
		for any $a,b\in G$ and $c\in G^k$ as in the assumptions of Proposition \ref{l: general small cancellation}, and any action on any tree $T\in\mathcal{T}$, we have that $(w_{1}^{N,k}(a,b,c),w_{2}^{N,k}(a,b,c))$ is $N$-small cancellation and there exists $*\in V(T)$ such that $tl(w_{i}^{N,k}(a,b,c))\geq Nd(c_{l}*,*)$ for any $1\leq l\leq k$.  
		
		\begin{claim}
			We can assume here that the words $w^{N,k}_{i}$ belong to  any prescribed nontrivial verbal subgroup $H$ of $\F(x,y)<\F(x,y,z)$.
		\end{claim}
		\begin{proof}
			Fix $k$ and $N$ sufficiently large. The pairs $w^{N,k}_{i}$ are constructed by induction on $k$ by successive composition on the left, so it suffices to show that one may assume $w^{N,0}_{i}\in H$. And indeed,
			since the group generated by $w^{N,0}_{1}(a,b)$ and $w^{N,0}_{2}(a,b)$ acts freely (i.e.\ the stabiliser of  any vertex or of any edge is  trivial)  on every $T\in\mathcal{T}$, given any two fixed different words $u_{1}(x,y),u_{2}(x,y)\in H$, if we let $u'_{i}(x,y)=u_{i}(w_1^{N,0}(x,y), w_2^{N,0}(x,y))^{10\lambda}$, then the subgroup generated by the pair $(u'_{1}(a,b),u_{2}'(a,b))$ also acts freely on every tree $T\in\mathcal{T}$, since such a subgroup is contained in $\langle w^{N,0}_{1}(a,b), w^{N,0}_{2}(a,b)\rangle$. The conclusions of the first part of the proof then imply that if we let $u''_{i}=w^{N,0}_{i}(u'_{1}(x,y),u'_{2}(x,y))$ for $i=1,2$, then $(u'_{i}(a,b))_{i=1,2}$ acts as an $N$-small cancellation pair on every tree $T\in\mathcal{T}$ and thus we can take $(u'_{i})_{i=1,2}\in H^{2}$ in place of $(w^{ N,k}_{i})_{i=1,2}$.
		\end{proof}

		Let $v^{N,k}=(v_{1},v_{2})$, let $w^{sc,N}$ be the $m$-tuple of words provided by Lemma
		\ref{l: from good pair to small cancellation}, let
		$\alpha(x,y)=xx^{y}$, and let $\mc{W}$ be
		the following family of $m$-tuples of words:
		\begin{align*}
			\left\{w^{sc,3N}(v^{N,k})\right\} \cup \left\{w^{sc, N}\left(\alpha(v^{N,k}_{i},z_{j}),\alpha((v^{N,k}_{i})^{2},z_{j})\right)\mid 1\le i\le 2, 1\le j \le k\} \right\}.
		\end{align*}
		
		Notice that  $\alpha((v^{M,k}_{i})^{2},z_{j})(a,b,c)$ is $\lambda$-stable due to the fact that $(v^{M,k}_{i})^{2},z_{j}$ is $3$-small cancellation.
		
		The proof of Theorem 5.3 in \cite{casals2019positive} concludes by showing that $\mc{W}$
		satisfies the conclusion of  Theorem  5.3 in \cite{casals2019positive}  for $N\geq 3$ large enough.  The argument is a case-by-case analysis of the action of the components of the tuple  $u^{N,k}(a,b,c)$ (following the notation in the proof of Theorem  5.3 in \cite{casals2019positive})  on $T$, using the properties of the word $w^{sc,N}$ provided by Proposition \ref{l: from good pair to small cancellation}.  The same arguments apply separately to the action of $G$ on each $T\in\mathcal{T}$.
	\end{proof}

	The main Corollary of Proposition \ref{l: general small cancellation} is the following result.  Recall the definition of $\ff(H)$ in Definition \ref{d: minimal_graph_subg_and_factors}.
	\begin{cor}
		\label{c: small cancellation}  For any $m,k,n,N\in\N$ there exist a collection $\mathcal{S}=\mathcal{
			S}(n,k,m,N)$  of $m$-tuples of words in $\F(x,x',y_{1},y_{2},\dots, y_{k})^{m}$ such that the following holds.  Suppose that we are given a graph product $\mathcal{G}=\G(\Gamma,\gvv)$ with $\Gamma$ satisfying condition $AP_{n}$, and  elements $a,b, c_{1},c_{2},\dots, c_{k}\in G$. Assume furthermore that for any $d\in\{a,b\}$ the element $d$ has an $m$-th root  for some $m\geq 10(n+3)$.
		
		For any factor $\G'=\G(\Delta)\in\mathcal{F}(\subg{a,b})$ such that $ \subg{a,b}$  is irreducible in $\G'$ {\rm(}see {\rm Definition \ref{d: types_of_actions_on_subgraph_products})}  and for each tree $T_v$  ($v\in \Delta$), there is at least one $w\in \mathcal{S}$ such that, for the canonical projection $\pi_{\G'}: \G \to \G'$, the tuple $\pi_{\G'}(w(a,b,c))$ is $N$-small cancellation over $(\pi_{\G'}(c_1), \dots, \pi_{\G'}(c_k))$ with respect to the action of $\G'$ on $T_v$.

		Furthermore,  we can assume that all components of $w$ belong to any prescribed verbal subgroup of $\mathbb F(x,x',y_1, \dots, y_k)$. In case $k=0$, one can assume that $\mathcal{S}$ is a singleton and the conclusion then holds for all vertex trees $T_v$ associated with $\G'$.
	\end{cor}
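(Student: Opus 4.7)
The plan is to reduce the statement to a direct application of Proposition \ref{l: general small cancellation}, with $G = \G'$ and with the collection of trees $\mathcal{T} = \{T_{v}(\Delta) \mid v \in \Delta\}$. After conjugating, we may assume $\G'=\G(\Delta)$, and by Remark \ref{rem:ellTv} the action of $\G'$ on $T_{v}(\Delta)$ is equivariantly isomorphic to its action on its minimal subtree inside $T_{v}(\Gamma)$. By Observation \ref{o: decomposibility}, replacing elements of $\G$ by their projections via $\pi^{\G'}$ does not change the action on any $T_v$ with $v\in \Delta$, so we work throughout with $\pi^{\G'}(a), \pi^{\G'}(b)$ and $\pi^{\G'}(c_1),\dots,\pi^{\G'}(c_k)$ in $\G'$.

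The heart of the argument is to verify the stability and root hypotheses of Proposition \ref{l: general small cancellation} with $\lambda = n+3$. Since $\G'$ is a factor of $\subg{a,b}$, the graph $\Delta$ is directly indecomposable, so Corollary \ref{c: stable axis} applies: any element of $\G'$ hyperbolic in some $T_v$ is weakly $(n+2)$-stable (hence weakly $(n+3)$-stable), while an element hyperbolic in every $T_v$ for $v\in \Delta$ has support equal to $\Delta$ (i.e., is not of smaller complexity in $\G'$) and is therefore $(n+3)$-stable. Irreducibility of $\pi^{\G'}(\subg{a,b})$ on each $T_v$ is exactly the content of Corollary \ref{c: types of actions} applied to the non-singular factor $\G'$. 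Finally, an $m$-th root of $d \in \{a,b\}$ in $\G$ maps under the homomorphism $\pi^{\G'}$ to an $m$-th root of $\pi^{\G'}(d)$ in $\G'$, and the hypothesis $m\ge 10(n+3)=10\lambda$ matches the root-order requirement of Proposition \ref{l: general small cancellation}.

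Proposition \ref{l: general small cancellation} then furnishes a finite collection $\mathcal{W}$ of $m$-tuples of words (in variables $x,y,z_1,\dots,z_k$, in any prescribed verbal subgroup) with the property that for each $T_v\in \mathcal{T}$ some $u\in \mathcal{W}$ makes the tuple $u(\pi^{\G'}(a),\pi^{\G'}(b),\pi^{\G'}(c))$ an $N$-small cancellation tuple over $\pi^{\G'}(c)$ on $T_v$. Since $\pi^{\G'}$ is a homomorphism, $u(\pi^{\G'}(a),\pi^{\G'}(b),\pi^{\G'}(c)) = \pi^{\G'}(u(a,b,c))$, which is the form demanded by the statement. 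Renaming the variables $(x,y,z_1,\dots,z_k)$ to $(x,x',y_1,\dots,y_k)$ yields the family $\mathcal{S}=\mathcal{S}(n,k,m,N)$, and the verbal-subgroup clause together with the ``$\mathcal{W}$ is a singleton when $k=0$'' clause transfer verbatim from Proposition \ref{l: general small cancellation}.

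The main conceptual obstacle is not combinatorial but rather fitting the hypotheses to Corollary \ref{c: stable axis}: one must observe that the irreducibility of $\subg{a,b}$ in the factor $\G'$ forces $\Delta$ to be directly indecomposable, which is precisely what unlocks the stronger $(n+3)$-stability for the generators of $\pi^{\G'}(\subg{a,b})$ (after taking $m$-th roots). Once this is in place, the proof reduces to a translation of language between the ``arbitrary collection of $G$-trees'' formalism of Proposition \ref{l: general small cancellation} and the collection $\{T_v\}_{v\in \Delta}$ of vertex trees naturally attached to the factor $\G'$.
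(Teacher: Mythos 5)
Your proposal is correct and follows essentially the same route as the paper's proof: both reduce the statement to Proposition \ref{l: general small cancellation} with $\lambda=n+3$, using Corollary \ref{c: stable axis} to verify the stability hypotheses for the collection of vertex trees of the directly indecomposable factor $\G'$, and then transfer the conclusion back through the retraction $\pi_{\G'}$. The extra details you supply (irreducibility on each $T_v$ via Corollary \ref{c: types of actions}, roots mapping to roots, and the homomorphism identity $u(\pi^{\G'}(a),\pi^{\G'}(b),\pi^{\G'}(c))=\pi^{\G'}(u(a,b,c))$) are all implicit in the paper's argument.
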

	\begin{proof}
		We are going to show that the statement is a consequence of Corollary \ref{c: stable axis} and Proposition \ref{l: general small cancellation} with $\lambda=n+3$. Indeed, let $\G'=\G(\Delta) \in\mathcal{F}(\subg{a,b})$ be such that $\subg{a,b}$ is irreducible in $\G'$. By definition, the graph $\Delta$ is directly indecomposable and $\subg{a,b}$ is not of smaller complexity in $\G'$.
		
		Let $\pi_{\G'}: \G \to \G'$ be the canonical projection  and consider the tuple $\pi_{\G'}(c)= (\pi_{\G'}(c_1), \dots, \pi_{\G'}(c_k))  \in \G'$. By virtue of Corollary \ref{c: stable axis}, the collection of trees $\mathcal{T}'=\{T_{v}'\}_{v\in \Delta}$ associated to $\G'$ satisfies all the properties required by the statement of Proposition \ref{l: general small cancellation} with $\lambda=n+3$.
		
		Furthermore, the subgroup $\subg{a,b}$ acts irreducibly on each tree $T_v'$ and by assumption, $a$ and $b$ have $m$-roots with $m \ge 10(n+3)$. Hence, from Proposition \ref{l: general small cancellation}, we deduce that $\pi_{\G'}(w(a,b,c))$ is $N$-small cancellation over the tuple $\pi_{\G'}(c)$ with respect to the action of $\G'$ on $T_v'$, for some $w\in \mathcal{S}$. The last  part of the statement is due to the last conclusions of Proposition \ref{l: general small cancellation}.
	\end{proof}

	In Corollary \ref{c: small cancellation}, one deduces a collection of tuples of words such that at least one tuple is small cancellation given a 2-generated subgroup $K$ and an irreducible action on one of the factors $\mc{H}(K)$ (see Definition \ref{d: types_of_actions_on_subgraph_products}).
	
	We now address the general situation for arbitrary $k$-generated subgroups. More precisely, our goal is to prove the following
	
	\begin{cor*}[See Corollary \ref{l: small cancellation multi}]
		For any $m,k,n,N\in\N$ there exists a finite collection $\mathcal{W}$ of $m$-tuples $w\in \mathbb F(y_{1},\cdots, y_{k})^m$ such that the following holds.	
		
		Suppose we are given any graph product $\mathcal{G}=\G(\Gamma,\gvv)$ such that $\Gamma$ satisfies property $AP_n$. Let $c_{1},c_{2}\cdots c_{k}\in\G$ and denote by $C$ the subgroup $\subg{c_{1},c_{2},\cdots, c_{k}}$.
		
		Then for any $\G'=\G(\Delta) \in\ff(C)$ in which $C$ is irreducible, there is some $u\in\mathcal{W}$ such that the projection of $u(c)$ onto $\G'$ is $N$-small cancellation over the projection of $c$ in $\G'$ with respect to one of the associated $\G'$-trees.
	\end{cor*}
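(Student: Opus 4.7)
The plan is to reduce the problem to Corollary~\ref{c: small cancellation} by extracting, from the generating tuple of $C$, a pair of elements acting irreducibly on some associated $\G'$-tree, where this pair comes from a finite uniform family of word-pairs in the $y_i$'s. I work via the projection onto the target factor: let $\pi^{\G'}:\G\to\G'$ be the retraction onto $\G'=\G(\Delta)$, set $c'_i=\pi^{\G'}(c_i)$ and $C'=\pi^{\G'}(C)$. Since $\G'\in\ff(C)$ and $C$ is irreducible on $\G'$, the graph $\Delta$ is directly indecomposable with $|V(\Delta)|\geq 2$, inherits property $AP_n$ from $\Gamma$, and $C'$ acts irreducibly on every $\G'$-tree $T_v$ with $v\in\Delta$.

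First, I establish a finite family $\mathcal{P}_0\subset\F(y_1,\ldots,y_k)^2$ of pairs of words of length bounded by some $L=L(k,n)$ with the property that, in any configuration as above, at least one $(u_0,v_0)\in\mathcal{P}_0$ yields $\subg{u_0(c'),v_0(c')}$ acting irreducibly on some $T_v$ with $v\in\Delta$. This is a uniform version of the classical fact that irreducibility of a finitely generated action on a tree is witnessed by a pair of hyperbolic elements with disjoint axes drawn from bounded-length products of generators; the bound $L(k,n)$ combines the acylindricity provided by Corollary~\ref{c: stable axis} with the $k$-generation. Next, fix coprime integers $M_1,M_2\geq 10(n+3)$ and replace each pair $(u_0,v_0)\in\mathcal{P}_0$ by the four pairs $(u_0^s,v_0^t)$ with $(s,t)\in\{M_1,M_2\}^2$; call the resulting finite family $\mathcal{P}_1$. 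By Corollary~\ref{c: irreducibility of powers}, at least one such pair in $\mathcal{P}_1$ still acts irreducibly on $T_v$, and its components carry the $m$-th roots of order $\geq 10(n+3)$ required by the hypothesis of Corollary~\ref{c: small cancellation}.

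For each pair $(u,v)\in\mathcal{P}_1$, apply Corollary~\ref{c: small cancellation} inside the graph product $\G'=\G(\Delta)$ with $a=u(c')$, $b=v(c')$ and parameters $(m,k,n,N)$. The hypothesis that $\G'\in\ff_{\G'}(\subg{a,b})$ with $\subg{a,b}$ irreducible in $\G'$ is automatic in the relevant case: if $\subg{a,b}$ acts irreducibly on some $T_v$, it cannot be elliptic there, so Remark~\ref{rem:ellTv} forces $\subg{a,b}$ not to be of smaller complexity in $\G'$; hence $\supp(\subg{a,b})=\G'$ inside $\G'$, and direct indecomposability of $\Delta$ yields $\ff_{\G'}(\subg{a,b})=\{\G'\}$. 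Corollary~\ref{c: small cancellation} then produces a finite family $\mathcal{S}(u,v)\subset\F(x,x',y_1,\ldots,y_k)^m$ of $m$-tuples; substituting $x=u$ and $x'=v$ gives $m$-tuples in $\F(y_1,\ldots,y_k)^m$. The desired collection $\mathcal{W}$ is the finite union of these tuples over all $(u,v)\in\mathcal{P}_1$, and the small-cancellation conclusion follows by combining the preceding steps with the identity $\pi^{\G'}(w(u(c),v(c),c))=w(u(c'),v(c'),c')$ valid since $\pi^{\G'}$ is a homomorphism fixing $\G'$ pointwise.

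The principal obstacle is the first step, namely the existence of a uniform bound $L(k,n)$ on the length of word-pairs needed to witness irreducibility across all graph products satisfying $AP_n$. This requires combining the acylindricity of $\G'$-actions (obtained from $AP_n$ through Corollary~\ref{c: stable axis}) with a combinatorial analysis of short products of generators to locate two hyperbolic elements with disjoint axes on $T_v$, uniformly in the specific graph product and tuple $c$.
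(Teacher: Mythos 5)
Your overall architecture (reduce to Corollary \ref{c: small cancellation} by manufacturing a suitable pair from a uniform finite family of words) matches the paper's, but the step you defer as ``the principal obstacle'' is precisely where the paper's entire technical effort lies, and your substitute argument for it contains an error. The paper does not find the irreducible pair via bounded-length products of the generators; it first proves Theorem \ref{cl: basic claim}, which produces a \emph{single} word $u$ (built by iterating the operators $\Lambda_{j}(v)=w^{*}(v^{p},y_{j})$ from Notation \ref{notation}, with the iteration length controlled by $AP_n$ through Lemma \ref{l: blob lemma}, Lemma \ref{l: merging} and Corollary \ref{c: reaching via a subsequence}) such that $\G'\in\ff(u(c))$ --- i.e.\ the support of $\subg{u(c)}$ genuinely contains $\Delta$ as a direct factor. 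Only then does it pair $u(c)$ with some $c_i$ not preserving its axis and invoke Corollary \ref{c: small cancellation}. Your family $\mathcal{P}_0$ of bounded-length pairs is asserted, not constructed, and it is not a routine uniform statement: the words ultimately needed are nested compositions of the small-cancellation words, not short products of the $c_i$.

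More concretely, your justification that the hypothesis of Corollary \ref{c: small cancellation} is ``automatic'' is false. Remark \ref{rem:ellTv} gives the implication elliptic in $T_v$ $\Rightarrow$ of smaller complexity; it does not give the converse. A subgroup can act irreducibly on some $T_v$ with $v\in\Delta$ and still be of smaller complexity in $\G'$: take $\G'=F_3=\subg{a,b,c}$ (three isolated vertices) and the pair $(a,b)$, which acts irreducibly on $T_a$ yet satisfies $\supp(\subg{a,b})=\G(\{a,b\})\subsetneq\G'$. In that situation $\G'\notin\ff(\subg{u_0(c'),v_0(c')})$, so Corollary \ref{c: small cancellation} only yields small cancellation with respect to the trees of the smaller factor actually supported by your pair, and ``over the projection of $c$'' in the wrong factor --- not the conclusion required for $\G'$. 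Repairing this forces you to guarantee that the pair's support fills all of $\Delta$, which is exactly the content of Theorem \ref{cl: basic claim} that your proposal omits.
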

	
	Note that this corollary generalizes the previous Corollary \ref{c: small cancellation}.

	The key technical result for proving this, namely Theorem \ref{cl: basic claim}, gives a uniform way to find a collection of words $\mathcal U$ such that for each collection of elements $c_1, \dots, c_k$, and each factor $\G'$ of   $\mc{H}(\subg{c_1, \dots, c_k})$ such that $\subg{c_1, \dots, c_k}$  is irreducible in $\G'$, we have that $\G'$ is also a factor of the cyclic subgroup generated by a word $u(c)$, for $u\in \mathcal U$.
	
	Before stating and proving Theorem \ref{cl: basic claim}, we begin with some remarks and technical lemmas.
	
	\begin{lemma}\label{l: non-ellipticity}
		Suppose we are given a graph product $\mathcal{G}=\G(\Gamma,\gvv)$, $C=\subg{c_1,\dots, c_k}\leq\G$ and $\HH\in\ff(C)$ such that $C$ is irreducible in $\HH$. Then there is a word $v\in B_{2}(y_{1},y_{2},\dots, y_{k})$ such that some non-singular component from $\ff(v(c))$ is contained in $\HH$.
	\end{lemma}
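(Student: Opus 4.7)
The plan is to find a word $w \in B_2(y_1,\ldots, y_k)$ such that $\pi^\HH(w(c))$ acts hyperbolically on some vertex tree associated with $\HH$, and then extract a non-singular factor of $w(c)$ lying inside $\HH$.

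First, after conjugation I assume $\HH = \G(\Delta)$ with $\Delta$ directly indecomposable. Since $C$ is irreducible in $\HH$, Corollary \ref{c: types of actions} gives that $\pi^\HH(C) = \subg{\pi^\HH(c_1),\ldots,\pi^\HH(c_k)}$ acts irreducibly, and in particular without a global fixed vertex, on every tree $T_u$ with $u\in \Delta$. Fix one such $u$. The fixed-point set of an elliptic isometry of a tree is a subtree, finitely many pairwise-intersecting subtrees of a tree share a common point (Helly's property for trees), and two elliptic isometries have intersecting fixed subtrees iff their product is elliptic. Hence the failure of $\pi^\HH(c_1),\ldots, \pi^\HH(c_k)$ to have a common fixed vertex forces either some $\pi^\HH(c_i)$ or some $\pi^\HH(c_ic_j)$ to be hyperbolic on $T_u$. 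Take $w$ to be the corresponding word of length at most two.

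Next, I would decompose $\env{w(c)} = \prod_i K_i$ with $K_i \in \ff(w(c))$ and write $w(c) = \prod_i (w(c))_{K_i}$ accordingly. Since $\subg{w(c)} \leq C$, Remark \ref{rem:factors} places each $K_i$ either inside $\HH$ or inside $\HH^\perp$ (the distinct members of $\ff(C)$ being pairwise orthogonal factors of $\env{C}$). Consequently
\[
\pi^\HH(w(c)) = \prod_{K_i \leq \HH} (w(c))_{K_i},
\]
a product of pairwise commuting elements of $\HH$. If every such $K_i$ were singular, then each $(w(c))_{K_i}$ would lie in a conjugate of a vertex group of $\HH$ and hence act elliptically on $T_u$. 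Commuting elliptic isometries of a tree share a fixed vertex (each preserves the nonempty fixed subtree of the other), so their product $\pi^\HH(w(c))$ would be elliptic on $T_u$, contradicting the choice of $w$. Therefore some $K_i \leq \HH$ must be non-singular, providing the required component of $\ff(w(c))$.

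The main obstacle is the first step: bounding the length of the word by two. This rests on the Serre--Helly criterion for simplicial trees together with the standard characterisation of intersecting fixed subtrees via ellipticity of products. The remainder is routine bookkeeping involving orthogonality of factors, the direct-product description of $\env{\cdot}$, and the fact that conjugates of vertex groups act elliptically on every vertex tree.
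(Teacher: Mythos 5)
Your proof is correct and follows essentially the same route as the paper's: locate a positive word of length at most two whose projection to $\HH$ is hyperbolic on some vertex tree (the paper asserts the existence of a suitable pair $c_i,c_j$ directly, where you justify it via Serre's lemma / the Helly property), and then conclude that $w(c)$ must have a non-singular factor inside $\HH$ because a product of commuting elliptic components cannot be hyperbolic. Your write-up is in fact somewhat more explicit than the paper's, both in sourcing the hyperbolic element and in the bookkeeping with \cref{rem:factors} that passes from factors of $\pi^{\HH}(w(c))$ to factors of $w(c)$ itself.
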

	\begin{proof}
		Let $\pi: \G \to \HH$ be the canonical projection from $\G$ onto $\HH$. Since $C$ is irreducible in $\HH$, there exist $c_i$ and $c_j$ in $C$ such that $\pi(c_i)\ne 1$, $\pi(c_j)\ne 1$ and $\pi(c_j) \notin E(\pi(c_i))$.
		
		If $\pi(c_i)$ (resp. $\pi(c_j)$) is not elliptic, then some member of $\ff(\pi(c_i))$ is non-singular and contained in $\HH$ so it suffices to take $v=y_i$ (resp. $v=y_j$). If both $\pi(c_i)$ and $\pi(c_j)$ are elliptic, since $\pi(c_j) \notin E(\pi(c_i))$, we have that $\pi(c_ic_j)$ is hyperbolic and so $\ff(\pi(c_i), \pi(c_j))$ is non-singular and it is contained in $\HH$. Hence, in this case, we can take $v=y_iy_j$.
	\end{proof}
	
	\begin{notation}\label{notation}
		Let $w$ be the word given by {\rm Corollary \ref{c: small cancellation}} for $k=0$, $m=1$, $n, N\in \mathbb N$, which can be assumed to belong to the verbal subgroup $[[\F,\F],[\F,\F]]$, where $\F=\F(x,y)$. By {\rm Corollary \ref{c: small cancellation}}, the word $w$ satisfies that for any pair $a_{1},a_{2}\in \G$ where each $a_{i}$ is an $r$-th power for some $r\geq 10(n+3)$ and for any $\HH\in\ff(a_{1},a_{2})$ such that $\subg{a_{1},a_{2}}$ is irreducible in $\HH$, we have that $w(a_1, a_2)$ is $N$-small-cancellation with respect to the action of $\HH$ on each tree associated to $\HH$. In particular, $\HH \in \ff(w(a_1,a_2))$ and $w(a_1, a_2)$ is hyperbolic with respect to the action on each tree associated to $\HH$, hence by {\rm Corollary \ref{c: stable axis}},  $w(a_1, a_2)$ is $(n+2)$-stable for this collection of trees. Since $w(a_1, a_2)$ is $N$-small cancellation, we have that $(n+2)tl(a_1)< tl(w(a_1,a_2)$ with respect to the action of $\HH$ on each tree associated to $\HH$.
		It follows from {\rm Lemma \ref{l: hyperbolic from stable}}, that the word $w^*(x,y)=xw(x,y)\in x[[\F,\F],[\F,\F]]$ satisfies that $w^*(a_1,a_2)$ is hyperbolic in each tree associated to $\HH$ and so
		\begin{equation}\label{eq:factor}
			\HH\in\ff(w^*(a_{1},a_{2})).
		\end{equation}
		
		Let $c=(c_1, \dots, c_k) \in \G^k$ be such that each $c_i$ has an $r$-th power for some $r\geq 10(n+3)$.
		
		Given a word $v\in \mathbb F(y_{1},y_{2},\dots, y_{k})$, we will write  $\ff_{v}$ instead of $\ff(v(c))$ and $\ff_{i}$ in place of $\ff_{c_{i}}$. For every $1\leq j\leq k$ we use the expression $\Lambda_{j}(v)$ to denote the word $w^{*}(v^{p},y_{j})$.
		
		Given a sequence $\bar{i}=(i_{1},i_{2},\dots, i_{r})\in\{1,2,\dots,k\}^{r}$, we will abbreviate $$\Lambda_{i_{1}}(\Lambda_{i_{2}}(\dots(\Lambda_{i_{r}}(v))\dots))$$ simply by $\Lambda_{\bar{i}}(v)$.
		
		Given a set of words $\mathcal{V}$ we let $\Lambda_{\bar{i}}(\mathcal{V})=\{\Lambda_{\bar{i}}(v)\,|\,v\in\mathcal{V}\}$.
	\end{notation}
	
	\begin{definition}[Domination]
		\label{d: domination}Given a collection $\ff$ of subgroups of $\G$ and $H$ a subgroup of $G$  we say that $H\prec\ff$ if there exists $K\in\ff$ such that $H\leq K$.
	\end{definition}

	\begin{lemma}
		\label{l: merging}
		In the notation of {\rm Notation \ref{notation}}, let $\G'\in\ff_{v}$. If $\G'$ is not singular, then some $\G''\in\ff_{\Lambda_{i}(v)}$ contains $\G'$ together with all components of $\ff_{i}$ non-orthogonal to $\G'$.
	\end{lemma}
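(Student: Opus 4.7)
The plan is to take as candidate $\G''$ the unique factor $\HH\in\ff(v(c)^p,c_i)$ containing $\G'$ and to verify both requirements for this choice.

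First, since $\G'$ is non-singular, Lemma \ref{l: powers} yields $\G'\in\ff(v(c)^p)$, and Remark \ref{rem:factors} produces a unique $\HH\in\ff(v(c)^p,c_i)$ with $\G'\leq\HH$; this $\HH$ is non-singular because it contains $\G'$. For each $K\in\ff(c_i)$ a further application of Remark \ref{rem:factors} gives a unique $K'\in\ff(v(c)^p,c_i)$ with $K\leq K'$. If $K$ is not orthogonal to $\G'$ but $K'\neq\HH$, then $K'\perp\HH$ (distinct factors of a subgraph product are pairwise orthogonal), forcing $K\perp\G'$, a contradiction. Hence $K\leq\HH$, so $\HH$ contains $\G'$ together with every $\ff_i$-component non-orthogonal to $\G'$.

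It remains to prove $\HH\in\ff(w^*(v(c)^p,c_i))$. Setting $a_1=\pi^{\HH}(v(c)^p)$ and $a_2=\pi^{\HH}(c_i)$, we have $\pi^{\HH}(w^*(v(c)^p,c_i))=w^*(a_1,a_2)$, and since $\HH$ is directly indecomposable and $\supp(\subg{a_1,a_2})=\HH$ (otherwise $\HH$ would fail to be a factor of $\subg{v(c)^p,c_i}$), the task reduces to $\supp(w^*(a_1,a_2))=\HH$. Applying Corollary \ref{c: types of actions} to $\subg{v(c)}$ with the non-singular factor $\G'$ gives $\pi^{\G'}(\subg{v(c)})\cong\mathbb{Z}$, so $\pi^{\HH}(v(c))$ and hence $a_1=\pi^{\HH}(v(c))^p$ have infinite order. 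Applying the same corollary to $\subg{v(c)^p,c_i}$ with $\HH$ yields three cases. In the abelian case (a), $\subg{a_1,a_2}\cong\mathbb{Z}$ acts by translations on each tree $T_v$ of $\HH$, so $a_1$ is hyperbolic there; in the dihedral case (b), $\subg{a_1,a_2}\cong D_\infty$ and the infinite-order element $a_1$ must lie in the translation subgroup of $D_\infty$ (reflections have order two), so it again acts hyperbolically on every $T_v$. In both cases $\subg{a_1,a_2}$ has derived length at most two, so the doubly-nested commutator $w\in [[\F,\F],[\F,\F]]$ evaluates to $w(a_1,a_2)=1$, giving $w^*(a_1,a_2)=a_1$; and since $\HH$ is directly indecomposable and non-singular, its defining graph has no universal vertex, forcing $v\in\supp(a_1)$ for every $v\in V(\HH)$ and therefore $\supp(a_1)=\HH$. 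In the irreducible case (c), $a_1$ is a $p$-th power and $a_2$ an $r$-th power by the standing assumption on $c$ in Notation \ref{notation}, so the construction of $w$ together with equation \eqref{eq:factor} delivers $\HH\in\ff(w^*(a_1,a_2))$ directly.

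The principal difficulty is case (b): one must rule out that $a_1$ is a reflection in $D_\infty$, since such a reflection is a torsion element of $\HH$ supported in a single vertex group and would make $\supp(w^*(a_1,a_2))$ strictly smaller than $\HH$. The decisive step is the transfer of non-singularity of $\G'$ into infinite order of $\pi^{\HH}(v(c))$ through Corollary \ref{c: types of actions}, which combined with the classification of elements of $D_\infty$ closes the case.
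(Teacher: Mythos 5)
Your proof is correct and follows the paper's strategy: pass to the factor $\HH$ of $\subg{v(c)^p,c_i}$ containing $\G'$, absorb the non-orthogonal components of $\ff_i$ via Remark \ref{rem:factors}, handle the irreducible case by equation \eqref{eq:factor}, and handle the small case by observing that $w\in[[\F,\F],[\F,\F]]$ dies in the metabelian quotient so that $w^*$ reduces to $v(c)^p$. The only divergence is in the small case: the paper cites Lemma \ref{l: linear expansions} to conclude $\G'=\HH$ outright, whereas you inline that lemma's argument by checking that $a_1=\pi^{\HH}(v(c)^p)$ is hyperbolic on every vertex tree of $\HH$ (using the infinite order of $\pi^{\G'}(v(c))$ to exclude the reflection alternative in $D_\infty$) — the same pivot the paper's lemma rests on. One phrasing caveat: indecomposability of $\HH$ (absence of a universal vertex) only guarantees that the trees $T_v$ exist; what actually forces $v\in\supp(a_1)$ is the hyperbolicity of $a_1$ in each $T_v$, which you establish in the preceding sentences, so the argument stands.
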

	\begin{proof}
		By Remark \ref{rem:factors}, let $\HH\in\ff(v(c),c_i))$ be such that $\G'\leq\HH$.
		
		Assume that $\subg{v(c),c_{i}}$ is not irreducible in $\HH$. Since $\G'$ is not singular, it follows by Corollary \ref{c: types of actions} that the action of $\subg{v(c),c_i}$ on any vertex tree associated to $\HH$ is either cyclic or dihedral. In particular, $\pi_{\hh}(\subg{v(c),c_{i}})$ is also either infinite cyclic or infinite dihedral.
		
		Since $w^{*}\in x[[\ff,\ff],[\ff,\ff]]$ and both cyclic and dihedral subgroups are metabelian, we have that
		$\pi_{\hh}(w^{*}(v(c)^{p},c_{i}))=\pi_{\hh}(v(c)^{p})$. Since $\G'$ is non-singular, that is $\subg{v(c)}$ is infinite cyclic in $\G'$, it follows that $\G'\prec\ff_{\Lambda_{i}(v)}$. On the other hand, non-singularity of $\G'\in \ff_v$ implies $\G'=\HH$ by Lemma \ref{l: linear expansions}. Putting these two facts together we conclude that $\G'\in \ff_{\Lambda_i(v)}$.
		
		If $\subg{v(c)^p,c_{i}}$ is irreducible on $\HH$, since by assumption $c_i$ have sufficiently large roots, then $\HH\in\ff_{\Lambda_{i}(v)}$ by Equation \ref{eq:factor}. In this case, if $\HH'\in\ff_{i}$ is not orthogonal to $\HH$, then by Remark \ref{rem:factors} we have that $\HH' < \HH$.
	\end{proof}

	\begin{cor}
		\label{c: reaching via a subsequence}Let $v$ be a word in $y_{1},y_{2},\dots, y_{k}$ and $\bar{j}=(j_{1},j_{2},\dots, j_{r})\in\{1,2,\dots,k\}^{r}$ be a (ordered) subsequence of some sequence $\bar{j'}=(j'_{1},j'_{2},\dots, j'_{s})\in\{1,2,\dots, k\}^{s}$. Suppose that there is some non-singular $\HH\in\ff_{v}$ and components $\HH_{i}\in\ff_{j_{i}}$ for $1\leq i\leq r$ such that $\HH$ is non-orthogonal to $\HH_{1}$ and $\HH_{i}$ is non-orthogonal to $\HH_{i+1}$ for $1\leq i\leq r-1$.
		
		Then $\subg{\HH,\HH_{i}}_{1\leq i\leq r}\prec\ff_{\Lambda_{\bar{j'}}(v)}$.
	\end{cor}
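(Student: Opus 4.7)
The plan is to iterate Lemma \ref{l: merging} along the nested composition $\Lambda_{\bar{j'}}(v)=\Lambda_{j'_{1}}(\Lambda_{j'_{2}}(\cdots\Lambda_{j'_{s}}(v)\cdots))$, maintaining a single non-singular factor that starts as $\HH$ and, as the iteration unwraps the nesting from inside out, incorporates each $\HH_{a}$ at precisely the step where the corresponding $\Lambda_{j_{a}}$ is applied.

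Set up the intermediate words $u_{s}=v$ and $u_{\ell-1}=\Lambda_{j'_{\ell}}(u_{\ell})$ for $\ell=s,s-1,\dots,1$, so that $u_{0}=\Lambda_{\bar{j'}}(v)$, and fix positions $t_{1},t_{2},\dots,t_{r}\in\{1,\dots,s\}$ with $j_{a}=j'_{t_{a}}$ realising $\bar{j}$ as a subsequence of $\bar{j'}$. By descending induction on $\ell$, construct a non-singular factor $\mathcal{K}_{\ell}\in\ff_{u_{\ell}}$ satisfying the invariant that $\mathcal{K}_{\ell}$ contains $\HH$ together with each $\HH_{a}$ whose position has already been passed. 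In the base case $\mathcal{K}_{s}=\HH$, which is non-singular by hypothesis and vacuously satisfies the invariant. For the inductive step, apply Lemma \ref{l: merging} with $\G'=\mathcal{K}_{\ell}$, $v$ replaced by $u_{\ell}$, and $i=j'_{\ell}$: this yields $\mathcal{K}_{\ell-1}\in\ff_{u_{\ell-1}}$ containing $\mathcal{K}_{\ell}$ together with every component of $\ff_{j'_{\ell}}$ non-orthogonal to $\mathcal{K}_{\ell}$, and $\mathcal{K}_{\ell-1}$ remains non-singular because it contains the non-singular $\mathcal{K}_{\ell}$.

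The key observation is that when $\ell=t_{a}$ for some (necessarily unique) $a$, the component $\HH_{a}\in\ff_{j_{a}}=\ff_{j'_{\ell}}$ gets absorbed into $\mathcal{K}_{\ell-1}$. Indeed, the chain hypothesis together with the inductive invariant guarantees that some chain neighbour of $\HH_{a}$ (either $\HH$ itself, or a previously-absorbed $\HH_{b}$) already lies in $\mathcal{K}_{\ell}$, and since that neighbour is non-orthogonal to $\HH_{a}$, so is $\mathcal{K}_{\ell}$; hence Lemma \ref{l: merging} absorbs $\HH_{a}$ into $\mathcal{K}_{\ell-1}$. Iterating down to $\ell=0$ produces a non-singular $\mathcal{K}_{0}\in\ff_{\Lambda_{\bar{j'}}(v)}$ containing $\HH$ and every $\HH_{a}$, giving $\subg{\HH,\HH_{i}}_{1\leq i\leq r}\leq\mathcal{K}_{0}\prec\ff_{\Lambda_{\bar{j'}}(v)}$, as required.

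The main bookkeeping issue requiring care is to match the direction of the chain $\HH\!-\!\HH_{1}\!-\!\cdots\!-\!\HH_{r}$ with the order in which the $\HH_{a}$'s are reached along the iteration (that order being determined by the decreasing sequence of values of $\ell$ at which each $t_{a}$ is hit): the chain-neighbour that one uses to transfer non-orthogonality to $\HH_{a}$ must itself have been absorbed into $\mathcal{K}_{\ell}$ at an earlier step, which dictates a compatible ordering between the subsequence embedding $t_{1},\dots,t_{r}$ and the non-orthogonality chain. Once this compatibility is verified, the induction runs cleanly and the remainder of the argument is a matter of recording how Lemma \ref{l: merging} propagates the invariant at each layer of $\Lambda$.
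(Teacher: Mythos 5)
Your overall strategy --- a single descending induction through the layers of $\Lambda_{\bar{j'}}$, maintaining a growing non-singular factor and invoking Lemma \ref{l: merging} at each layer --- is a plausible reorganization, but the ``bookkeeping issue'' you flag at the end is not a formality: it is exactly where the argument breaks, and you do not resolve it. With the stated conventions, $\Lambda_{\bar{j'}}(v)=\Lambda_{j'_{1}}(\cdots\Lambda_{j'_{s}}(v)\cdots)$ is unwrapped from the inside out, so your iteration visits the matched positions in the order $t_{r},t_{r-1},\dots,t_{1}$ (an ordered subsequence has $t_{1}<t_{2}<\dots<t_{r}$, and larger $\ell$ means more deeply nested, hence reached earlier). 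The non-orthogonality chain, however, runs $\HH - \HH_{1}-\cdots-\HH_{r}$, so the only component whose absorption is licensed by the presence of $\HH$ alone is $\HH_{1}$; each $\HH_{a}$ with $a\geq 2$ can only be absorbed after $\HH_{a-1}$. At the first matched layer $\ell=t_{r}$ your invariant supplies only $\HH$ (no $\HH_{b}$ has yet been absorbed), and $\HH$ is not assumed non-orthogonal to $\HH_{r}$; so your ``key observation'' that some chain neighbour of $\HH_{a}$ already lies in $\mathcal{K}_{\ell}$ fails for $a=r$ whenever $r\geq 2$. Writing ``once this compatibility is verified, the induction runs cleanly'' leaves the proof incomplete precisely at its crux, because under the natural reading the required compatibility does not hold.

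The paper's proof is structured so as not to tie the absorption of $\HH_{a}$ to the specific layer $t_{a}$. It first proves, by the inside-out induction you describe, only the persistence of $\HH$ itself: $\HH\prec\ff_{\Lambda_{(j'_{i},\dots,j'_{s})}(v)}$ for every $i$. It then runs a separate induction on $r$ at the final level: assuming $\subg{\HH,\HH_{1},\dots,\HH_{r-1}}$ lies in a single factor $\G''$ of $\ff_{\Lambda_{\bar{j'}}(v)}$, the non-orthogonality of $\HH_{r}$ to $\HH_{r-1}\leq\G''$ together with Remark \ref{rem:factors} places $\HH_{r}$ inside $\G''$ as well. If you wish to keep your single-pass formulation, you must either (i) show that each $\HH_{a}$ can be absorbed at \emph{some} layer carrying the index $j_{a}$ that occurs, in application order, after the layer absorbing $\HH_{a-1}$ --- which amounts to requiring $(j_{1},\dots,j_{r})$ to be a subsequence of the reversed sequence $(j'_{s},\dots,j'_{1})$, a different hypothesis from the one stated --- or (ii) decouple the two inductions as the paper does.
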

	\begin{proof}
		First, we claim that $\HH \prec \ff_{\Lambda_{(j_i', \dots,j_s')}(v)}$, for all $i=1, \dots, s'$. Indeed, if $s=1$, then it follows from Lemma \ref{l: merging}. Assume by induction that $\HH \prec \ff_{\Lambda_{(j_{i+1}', \dots,j_s')}(v)}$, that is $\HH< \G'\in \ff_{\Lambda_{(j_{i+1}', \dots,j_s')}(v)}$. Since $\HH$ is non-singular, so is $\G'$. Then if $v'=\Lambda_{(j_{i+1}', \dots,j_s')}(v)$, then by Lemma \ref{l: merging} we have that $\G' \prec \ff_{\Lambda_{j_i'}(v')}= \ff_{\Lambda_{(j_i', \dots,j_s')}(v)}$ and so $\HH \prec \ff_{\Lambda_{(j_i', \dots,j_s')}(v)}$.
		
		We now prove the corollary by induction on $r$.
		
		Let $r=1$. Without loss of generality, we can assume that $j_{1}=j'_{l}$. Since $\HH \prec \ff_{\Lambda_{(j_1', \dots,j_s')}(v)}$, there exists $\G'' \in \ff_{\Lambda_{(j_i', \dots,j_s')}(v)}$ such that $\HH<\G''$. Since $\HH_1$ is not orthogonal to $\HH$, neither is to $\G''$ and so by Remark \ref{rem:factors}, we have that $\HH_1<\G''$.
		
		Assume by induction that $\subg{\HH, \HH_1, \dots, \HH_{r-1}} <\G'' \in \ff_{\Lambda_{(\bar{j'})}(v)}$. By assumption $H_r$ is not orthogonal to $H_{r-1}$ and so it is not orthogonal to $\G''$. By Remark \ref{rem:factors}, it follows that $H_r< \G''$ and so the statement follows.
	\end{proof}
	
	We are now ready to state the main technical result.
	
	\begin{thm}
		\label{cl: basic claim}
		For any $k,n \in\N$ there is a finite collection $\mathcal{U}$ of words $u\in \mathbb F(y_1, \dots, y_k)$ such that  the following holds. Suppose we are given a graph product $\mathcal{G}=\G(\Gamma,\gvv)$ such that $\Gamma$ satisfies property $AP_{n}$ and elements $c_{1},c_{2},\cdots, c_{k}\in\G$ such that each $c_i$ is an $r$-th power  for some $r\geq 10(n+4)$; denote by $C$ the subgroup $\subg{c_{1},c_{2},\cdots,c_{k}}$.
		
		For all $\G' \in\ff(C)$ such that $C$ acts irreducibly on $\G'$, there is a word $u\in \mathcal U$ such that $\G'\in \ff(u(c))$.
	\end{thm}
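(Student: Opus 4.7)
The plan is to construct $\mathcal{U}$ explicitly by applying a bounded number of the merging operations $\Lambda_i$ from Notation \ref{notation} to the finitely many base words in $B_{2}(y_{1},\dots,y_{k})$. Concretely, I would set
\[
\mathcal{U}\;=\;\{\,\Lambda_{\bar{j'}}(v)\mid v\in B_{2}(y_{1},\dots,y_{k}),\ \bar{j'}\in\{1,\dots,k\}^{s}\,\}
\]
for a length $s=s(n,k)$, and I claim that $s=nk$ works. The hypothesis $r\geq 10(n+4)$ ensures that the roots of $v(c)$ remain large enough for Notation \ref{notation} to apply repeatedly along the $\Lambda$-iteration.

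The base step is immediate from Lemma \ref{l: non-ellipticity}: for any factor $\mathcal{G}'\in\mathcal{F}(C)$ on which $C$ acts irreducibly, some $v\in B_{2}(y_{1},\dots,y_{k})$ admits a non-singular factor $\mathcal{H}\in\mathcal{F}_{v}$ with $\mathcal{H}\leq\mathcal{G}'$. By Observation \ref{o: generation}, $\mathcal{G}'$ equals the subgraph product generated by the family $\{\mathcal{H}_{j}\}_{j\in J}$ of all factors of the $c_{i}$'s contained in $\mathcal{G}'$. Since $\mathcal{G}'$ is directly indecomposable, the family $\{\mathcal{H}\}\cup\{\mathcal{H}_{j}\}_{j\in J}$ is connected in the non-orthogonality graph on directly indecomposable subgraph products, and by Lemma \ref{l: blob lemma} the diameter of this component is at most $n$. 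Since one application of $\Lambda_{i}$ absorbs, in a single shot, every factor of $c_{i}$ non-orthogonal to the current factor (by Lemma \ref{l: merging}), performing $n$ full rounds of $\Lambda_{1},\Lambda_{2},\dots,\Lambda_{k}$ suffices to reach every $\mathcal{H}_{j}$. This fixes $s=nk$ and yields the finite collection $\mathcal{U}$.

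By iterated application of Lemma \ref{l: merging} (formalised in Corollary \ref{c: reaching via a subsequence}), some factor $\mathcal{G}''\in\mathcal{F}(\Lambda_{\bar{j'}}(v)(c))$ contains the subgraph product $\langle\mathcal{H},\{\mathcal{H}_{j}\}_{j\in J}\rangle=\mathcal{G}'$. To finish the proof, it remains to establish the reverse containment $\mathcal{G}''\leq\mathcal{G}'$. Since $\Lambda_{\bar{j'}}(v)(c)\in C$, we have $\mathcal{G}''\leq\supp(C)$; by Remark \ref{rem:factors}, the directly indecomposable subgraph product $\mathcal{G}''$ is contained in a unique factor of $C$. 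Because $\mathcal{G}''$ meets $\mathcal{G}'$ non-trivially (it contains $\mathcal{H}$) while distinct factors of $C$ are pairwise orthogonal, that unique factor must be $\mathcal{G}'$. Hence $\mathcal{G}''=\mathcal{G}'$, which proves $\mathcal{G}'\in\mathcal{F}(u(c))$ for $u=\Lambda_{\bar{j'}}(v)\in\mathcal{U}$.

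The main obstacle is verifying that the inductive $\Lambda$-step behaves as intended at each stage: one needs the current evaluation $\Lambda_{(j_{1}',\dots,j_{l}')}(v)(c)$ to retain a non-singular factor inside $\mathcal{G}'$, and the small-cancellation word $w^{*}$ defining $\Lambda_{i}$ must interact with the previous evaluation so that non-orthogonal components of $c_{j_{l+1}'}$ are genuinely merged into the current factor rather than yielding a cyclic or dihedral action that would leave the factor unchanged (cf.\ Lemma \ref{l: linear expansions} and the irreducibility case in the proof of Lemma \ref{l: merging}). The choices built into Notation \ref{notation}, namely $w\in[[\mathbb{F},\mathbb{F}],[\mathbb{F},\mathbb{F}]]$ and $w^{*}=x\,w$, combined with the root-order hypothesis $r\geq 10(n+4)$, are precisely what keep this inductive step available throughout all $nk$ merges.
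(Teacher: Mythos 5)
Your proposal is correct and follows essentially the same route as the paper: the base step via Lemma \ref{l: non-ellipticity}, iterated merging with the $\Lambda_i$ operators controlled by Lemma \ref{l: merging} and Corollary \ref{c: reaching via a subsequence}, and the bound of $n$ rounds of $(1,\dots,k)$ coming from Lemma \ref{l: blob lemma} together with Observation \ref{o: generation}. The only cosmetic difference is that the paper takes $\mathcal{U}=\Lambda_\tau(B_2)$ for the single sequence $\tau$ of length $nk$ (using that $\tau$ contains every sequence in $\{1,\dots,k\}^n$ as a subsequence), whereas you include all sequences of length $nk$; and you spell out the reverse containment $\mathcal{G}''\leq\mathcal{G}'$ via Remark \ref{rem:factors}, which the paper leaves implicit.
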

	\begin{proof}
		Let $\tau$ be the result of concatenating $n$ copies of the sequence $(1,2,\dots, k)$ and let $\Lambda_\tau$ be as in Notation \ref{notation}.
		
		We show that the collection of words:
		$$
		\mathcal{U}=\Lambda_{\tau}(B_{2}(z)),
		$$
		where $B_2(z)$ is the set of words of length 2 in variables $z$, satisfies the conditions of the statement, that is for any $\G' \in\ff(C)$ such that $C$ acts irreducibly on $\G'$, there is some $v\in B_{2}(z)$ such that $\G'\in\ff_{\Lambda_{\tau}(v)}$.
		
		By Lemma \ref{l: non-ellipticity}, there exists $v\in B_2(y_1, \dots, y_k)$ such that some non-singular $D\in\ff(v(c))$ is contained in $\G'$.
		
		Let $\partial_{i}(D)$ (resp. $\partial(D)$) be the smallest sub-graph product containing $D$ and all factors of $\ff(c_{i})$ (resp. $\bigcup_{1 \leq i\leq k}\ff(c_{i})$) not orthogonal to $D$. Given $\bar{i}=(i_{1},i_{2},\dots, i_{r})\in\{1,2,\dots,k\}^{r}$, let $\partial_{\bar{i}}(D)=\partial_{i_{r}}(\partial_{i_{r-1}}(\dots\partial_{i_{1}}(D)))$. Notice that if $D$ is a directly indecomposable subgraph product, then so is $\partial_{i}(D)$ (resp. $\partial(D)$ for any $i\in\{1,2,\dots, m\}$. Hence, the same statements hold true for any $\partial_{\bar{i}}$ and any $\partial^{k}$.
		
		It follows from Remark \ref{rem:factors} that $\G(\partial^{(r)}(D))=\G(\bigcup_{\bar{i}\in\{1,2,\dots,k\}}\partial_{\bar{i}}(D))$. By Lemma \ref{l: blob lemma} and Observation \ref{o: generation}, the subgroup $\G(\partial^{(n)}(D))$ coincides with the unique $\HH\in\ff$ containing $D$.
		
		On the other hand, $\tau$ contains any tuple $\bar{i}\in\{1,2,\dots,k\}^{n}$ as a subsequence. By Corollary \ref{c: reaching via a subsequence}, since $D$ is a non-singular factor of $\mathcal{F}(\subg{v(c)})=\ff_{v}$ contained in $\HH$, we have that for any such $\bar{i}$ the subgroup $\partial_{\bar{i}}(D)$ is contained in some component of $\ff_{\Lambda_{\tau}(v)}$. By the discussion in the previous paragraph, the latter must coincides with the unique $\G'\in\ff(C)$ containing $D$. Hence, $\G' \in \ff_{\Lambda_{\tau}(v)}$.
	\end{proof}
	
	\begin{cor}\label{l: small cancellation multi}
		For any $m,k,n,N\in\N$ there exists a finite collection $\mathcal{W}$ of $m$-tuples $w\in \mathbb F(y_{1},\dots, y_{k})^m$ such that the following holds.	
		
		Suppose we are given any graph product $\mathcal{G}=\GP$ such that $\Gamma$  satisfies property $AP_{n}$. Let $c_{1},c_{2},\dots, c_{k}\in\G$ and denote by $C$ the subgroup $\subg{c_{1},c_{2},\cdots,c_{k}}$.
		
		Then for any $\G'=\G(\Delta) \in\ff(C)$ such that $C$ is irreducible in $\G'$ and for any tree $T_v$ associated to  $\mc{G}'=\mc{G}(\Delta)=\mc{G}(\Delta, \{G_v\}_{v\in \Delta})$, there is some $u\in\mathcal{W}$ such that  $\pi^{\G'}(u(c))$  is $N$-small cancellation over  $\pi^{\G'}(c)$ with respect to the action of $\G'$ on $T_v$.
	\end{cor}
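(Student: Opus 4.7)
The strategy is to combine Theorem \ref{cl: basic claim} with Corollary \ref{c: small cancellation}. Set $M=10(n+4)$ to meet all root hypotheses. First, apply Theorem \ref{cl: basic claim} to the tuple $(c_1^M,\dots,c_k^M)$, each element of which is trivially an $M$-th power, to obtain a finite collection $\mathcal{U}$ of words in $y_1,\dots,y_k$ such that for every factor of $\subg{c_1^M,\dots,c_k^M}$ on which this subgroup acts irreducibly, some $u\in\mathcal{U}$ realizes it as $\mathcal{H}\in\ff(u(c^M))$. A short argument then transfers irreducibility from $\subg{c_i^M}$ back to $C$: if $C$ acts irreducibly on $\G'\in\ff(C)$, the same holds on a corresponding factor of $\subg{c_i^M}$, modulo adjustments handling torsion.

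Given such $\G'$ and $u\in\mathcal{U}$ with $\G'\in\ff(u(c^M))$, the element $\pi^{\G'}(u(c^M))$ is hyperbolic in every tree $T_v$ associated to $\G'$. Since $\pi^{\G'}(C)$ acts irreducibly on $T_v$, not every $\pi^{\G'}(c_j)$ can belong to $E(\pi^{\G'}(u(c^M)))$, so pick $j$ outside this stabilizer; then $\subg{\pi^{\G'}(u(c^M)),\pi^{\G'}(c_j^M)}$ acts irreducibly on $T_v$, provided $\pi^{\G'}(c_j^M)\neq 1$. Now apply Corollary \ref{c: small cancellation} with $a=u(c^M)$, $b=c_j^M$ and parameter tuple $(c_1,\dots,c_k)$: both $a$ and $b$ are $M$-th powers, so its hypotheses are met, yielding a finite collection $\mathcal{S}$ of $m$-tuples in $\F(x,x',y_1,\dots,y_k)^m$ such that for each $\G'$ and $T_v$ some $w\in\mathcal{S}$ makes $\pi^{\G'}(w(a,b,c))$ an $N$-small cancellation tuple over $\pi^{\G'}(c)$. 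The final $\mathcal{W}$ is the finite collection of $m$-tuples $w(u(y^M),y_j^M,y_1,\dots,y_k)$ over $u\in\mathcal{U}$, $j\in\{1,\dots,k\}$, $w\in\mathcal{S}$.

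The main obstacle is the possible collapse $\pi^{\G'}(c_j^M)=1$ when $\pi^{\G'}(c_j)$ is torsion with order dividing $M$: this threatens both the irreducibility transfer in the first paragraph and the choice of the pair's second element. I expect this to be resolved by enlarging $\mathcal{W}$ to also allow a second coordinate of the form $(c_jc_{j'})^M$ for various pairs $(j,j')$; since $\pi^{\G'}(C)$ being irreducible forbids all of its nontrivial elements from simultaneously lying in the axis stabilizer $E(\pi^{\G'}(u(c^M)))$, and since one cannot have every product $c_jc_{j'}$ project to a torsion element annihilated by $M$-th powering, some such combination is guaranteed to produce a non-torsion element outside that stabilizer. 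Once this case analysis is in place, the rest follows by a direct appeal to Corollary \ref{c: small cancellation}.
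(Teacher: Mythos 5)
Your overall architecture matches the paper's: use Theorem \ref{cl: basic claim} to produce words $u$ with $\G'\in\ff(u(c))$, pick some $c_j$ not preserving the axis of $u(c)$ so that the pair generates an irreducible subgroup of $\G'$, and feed this pair into Corollary \ref{c: small cancellation}. The gap is in how you arrange the root hypotheses, and it is not cosmetic. Replacing $c$ by $(c_1^M,\dots,c_k^M)$ with $M=10(n+4)$ changes the subgroup: $\subg{c_1^M,\dots,c_k^M}$ can be much smaller than $C$, and in the case where every $c_i$ has order $2$ (graph products of $\Z/2\Z$ are squarely within the scope of the statement) the tuple $(c_1^M,\dots,c_k^M)$ is trivial, since $M$ is even. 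Theorem \ref{cl: basic claim} applied to that tuple then returns nothing, and no ``irreducibility transfer'' can recover the factors of $C$. Your proposed patch --- allowing $(c_jc_{j'})^M$ as the second coordinate of the small-cancellation pair --- only repairs the choice of $b$; it does not repair the input to Theorem \ref{cl: basic claim}, which is where the words $u$ come from.

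The paper's reduction sidesteps the transfer problem entirely: fix two \emph{coprime} integers $p,q\geq 10(n+4)$ and replace the $k$-tuple $c$ by the $2k$-tuple $(c_1^p,c_1^q,\dots,c_k^p,c_k^q)$. Since $\gcd(p,q)=1$ one has $\subg{c_i^p,c_i^q}=\subg{c_i}$, so the new tuple generates exactly $C$ --- same factors, same irreducibility --- while every entry is a proper $p$-th or $q$-th power; the statement for the $2k$-tuple yields the statement for $c$ by substituting $z_i\mapsto(y_i^p,y_i^q)$ in the words. After this reduction the entries of $c$ are themselves high powers, so they can be used directly both in Theorem \ref{cl: basic claim} and as the second element of the pair, and the torsion collapse you worry about in your last paragraph never arises. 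With this coprime-powers trick in place of your single-power substitution, your argument goes through; without it, the first step fails on Coxeter-type inputs.
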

	\begin{proof}
		Let us first notice that, without loss of generality we can assume that all the entries of the tuple $c$ are $r$-th powers for some $r\geq 10(n+4)$. Indeed, let us fix two integers, $p,q$ which are coprime and greater than $10(n+4)$. Assume that the Lemma holds for a $2k$-tuple $c'$ where each component is an $r$-th power and so there is a family $\mathcal{W}$ of $2k$-tuples that satisfies the statement of the Lemma, then the family $\mathcal{W}^*=\{w(z_{1}^{p},z_1^{q},z_{2}^{p},z_{2}^{q},\dots, z_{k}^{q})\}$ is a family of words that satisfies the statement of the Lemma for a general $k$-tuple $c$.
		
		\newcommand{\wm}[0]{w}
		Let $\W_{0}=\mathcal{W}_0(m,k,n,N) \subseteq\mathbb F(x,x',y_1, \dots, y_k)$ be the finite collection of words provided by Corollary \ref{c: small cancellation}.
		Consider the collection:
		\begin{align*}
			\mathcal{W}=\{w^q(u,z_{i},z_{1},z_{2},\dots, z_{k})\,|\,1\leq i\leq k,u\in\U,w\in\W_{0}\}.
		\end{align*}
		
		By our initial remark, we can assume that each $c_i$ in the tuple $c=(c_{1},c_{2},\dots, c_{k})$ is an $r$-th power. Let
		$\G'\in\ff(C)$ be such that $C$ is irreducible in $\G'$. By Theorem \ref{cl: basic claim}, there exists $u\in\U$ such that
		$\G'\in\ff(u(c))$.
		
		By Remark \ref{rem:factors}, since $\G'\in \ff(u(c)) \cap \ff(c)$, we have that $\G'\in \ff(u(c), c_i)$, for all $i=1, \dots,k$.
		
		Let $\pi_{\G'}:\G \to \G'$ be the canonical epimorphism and let $T_v$ be one of the $\G'$-trees associated with $\G'$. Since $C$ is irreducible in $\G'$, it follows that for some $1\leq i\leq k$ the element $\pi_{\G'}(c_i)$ does not preserve the axis of $u(c)$ on $T_v$. Hence we have that $\G'\in \ff(u(c), c_i)$ and $\subg{u(c), c_i}$ acts irreducibly on $\G'$.  Therefore, it follows from Corollary \ref{c: small cancellation} that the tuple $\pi_{\G'}(w(u(c),c_{i},c_{1},c_{2},\dots, c_{k}))$ is $N$-small cancellation over $\pi_{\G'}(c_{1},c_{2},\dots, c_{k})$ with respect to the action on $T_v$. This proves the Corollary.
	\end{proof}
	
	\section{Graphs}
	
	In this section we work in the category of group-labelled graphs $(\Gamma, \gvv)$. When the labels play no role or are clear from the context, we refer to the pair of the form $(\Gamma, \gvv)$ by its first component.
	
	One of the main goals of this paper is to interpret the underlying structure of a graph product of groups (in a certain class of graph products). This problem naturally brings to the question of which particular underlying graph product structures we can recovered with the elementary theory and whether or not the underlying structure is unique.
	
	In general, the graph product decomposition of a group is far from being unique even if we require that the vertex groups satisfy a simple non-generic almost positive sentence. Indeed, since the class of groups with nontrivial positive theory is closed under taking direct products, if a graph contains a subgraph such that the subgroup that it defines is a direct product of singular and dihedral groups that can be sent to a vertex by a full morphism, see Definition \ref{defn:full morphism}, then we can collapse this subgraph to a vertex keeping the structure of a graph product and the property that the vertex groups have nontrivial positive theory. Similarly, the almost positive theory is preserved under direct product with any other group (not necessarily satisfying a non-generic almost positive sentence). In that case, we can collapse the joins that can be sent by full morphisms to a vertex and one of the factors is singular or dihedral.
	
	Furthermore, the direct product decomposition of a group is not always captured by the elementary theory, and in fact, not even by the isomorphism class of the group. Indeed, Baumslag described examples of finitely generated nilpotent groups that admit direct product decompositions with different number of directly indecomposable factors, that is $N \simeq N_1 \times N_2 \simeq N_1'\times \dots \times N_k$, for $k >2$ and $N_i, N_j'$ directly indecomposable for $i=1,2$ and $j=1, \dots, k$. These examples show that one cannot expect to recover any or all the graph product decompositions of a group and it hints that the best we can aim is at recovering the ``smallest" graph product decomposition that the group admits keeping the properties on the vertex groups, namely satisfying a simple non-generic almost positive sentence.
	
	In this section we address this matter by introducing different notions of reduced graphs. Roughly speaking, the almost positive reduced graph gives the ``smallest" graph product decomposition of $\mc{G}$ while preserving the property that its vertex groups satisfy a simple non-generic almost positive sentence. More precisely, we show that the graph product $\mc{G}$ can be given a structure of graph product $\mc{G}(\Gamma', \{G_v'\})$, where $\Gamma'$ is in the class of almost positive reduced graphs and all vertex groups $G_v'$ satisfy a simple non-generic almost positive sentence. Similarly, we introduce the notion of positively reduced graph as and show that the positively reduced graph is the ``smallest" graph product decomposition of $\mc{G}$ while preserving the property that its vertex groups satisfy a nontrivial positive sentence. We show in Corollary
	\ref{cor:rigidity isomoprhism} that this minimal graph product decomposition of a group is unique and so an isomorphism invariant in the class. More precisely, if $\G \simeq \G(\Delta, \{G_v'\}_{v\in \Delta})\simeq \G(\Gamma, \gvv)$ and $G_v,H_v$ have nontrivial (almost) positive theory and $\Delta,\Gamma$ are (almost) reduced graphs, then $\Delta$ and $\Gamma$ are isomorphic and this isomorphism extends to an isomorphism of the corresponding vertex groups.
	
	Even when we consider graph product decompositions of groups where the graph $\Gamma$ is (almost) positive reduced, we are not going to recover the whole underlying structure induced by $\Gamma$, only a part of it, that is the underlying structure induced by the subgraph $\ECore(\Gamma)$, see Definition \ref{defn: graph I(Gamma)}. The main reason why we cannot recover the whole underlying structure is due to the fact that if some vertices of the graph, which we call weak vertices, have associated groups elementarily equivalent to $\mathbb Z$, then these subgroups (and their conjugates) cannot be distinguished from some cyclic subgroups generated by hyperbolic elements in the graph product. One can already this this phenomenon in the case of non-abelian free groups, for which the underlying graph cannot be fully interpreted as they are all elementarily equivalent. In order to define (and later to interpret) the graph $\ECore(\Gamma)$, in this section, we first introduce (and later interpret) some other graphs, namely the $d$ and $c$-completions of $\Gamma$, see Definitions \ref{defn:D-completion} and \ref{defn:Gamma^c}.

	\subsection{Reduced graphs and positive theory}
	
	Let us begin with some observations about the relation between graph epimorphisms and the induced epimomorphism of graph products.
	
	Let $\Gamma,\Delta$ be graphs and let $\phi:\Gamma\to \Delta$ be a graph epimorphism. Let $\mc{G}(\Gamma, \gvv)$ be a graph product of groups. Then $\phi$ induces an epimorphism $\phi^*$ of graph products: $\phi^*:\mc{G}(\Gamma, \gvv)\to \mc{G}(\Delta, \{G_{v'}\}_{v\in \Delta})$ where $G_{v'}$ is isomorphic to the subgroup of $\mc{G}(\Gamma, \gvv)$ generated by vertex groups $G_v$, where $v\in \phi^{-1}(v')$.
	
	We next see that that if the epimorphism between the graphs is full, then the epimorphism of graph products is in fact an isomorphism.
	
	\begin{lemma}\label{lem: full is iso}
		Let $\phi: \Gamma \to \Delta$ be an epimorphism of graphs, see {\rm Definition \ref{defn:full morphism}}. Let $\mc{G}(\Gamma, \gvv)$ be a graph product of groups. Then, $\phi$ is full if and only if the epimorphism $\phi$ induces an isomorphism $\phi^*$ of graph products: $\phi^*:\mc{G}(\Gamma, \gvv)\to \mc{G}(\Delta, \{G_{v'}'\}_{v'\in \Delta})$ where $G_{v'}'$ is isomorphic to the subgroup of $\mc{G}(\Gamma, \gvv)$ generated by vertex groups $G_v$, where $v\in \phi^{-1}(v')$.
	\end{lemma}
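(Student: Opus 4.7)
The plan is to establish both directions by building a candidate inverse to $\phi^*$. First observe that since $\phi$ is a graph morphism, whenever $(v,w)\in E(\Gamma)$ with $\phi(v)\neq\phi(w)$ we have $(\phi(v),\phi(w))\in E(\Delta)$, so the defining commutation relations of $\mc{G}(\Gamma, \gvv)$ are already satisfied in $\mc{G}(\Delta, \{G_{v'}'\}_{v'\in\Delta})$ (when $\phi(v)=\phi(w)$ the relation holds inside $G_{\phi(v)}'$ by construction). Hence $\phi^*$ is a well-defined group homomorphism, and it is surjective because the vertex groups $G_{v'}'$ generate the codomain and each lies in the image by construction.

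For the sufficiency ($\phi$ full implies $\phi^*$ isomorphism), I would construct the inverse explicitly. Each $G_{v'}'$ sits inside $\mc{G}(\Gamma, \gvv)$ by definition, so the family of inclusions extends to a homomorphism $\psi:\mc{G}(\Delta,\{G_{v'}'\}_{v'\in\Delta})\to \mc{G}(\Gamma, \gvv)$ provided that for every $(v',w')\in E(\Delta)$ the subgroups $G_{v'}'$ and $G_{w'}'$ commute inside $\mc{G}(\Gamma, \gvv)$. It suffices to show $[G_v,G_w]=1$ for all $v\in\phi^{-1}(v')$ and $w\in\phi^{-1}(w')$, i.e.\ $(v,w)\in E(\Gamma)$. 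This is precisely the content of fullness combined with $(\phi(v),\phi(w))\in E(\Delta)$. The compositions $\psi\circ\phi^*$ and $\phi^*\circ\psi$ act as the identity on the generating sets of their respective domains, so both coincide with the identity, and $\phi^*$ is an isomorphism.

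For the converse, I argue by contrapositive: suppose $\phi$ is not full. Since $\phi$ is a graph morphism, the only possible failure of fullness is the existence of $v,w\in\Gamma$ with $\phi(v)\neq\phi(w)$, $(\phi(v),\phi(w))\in E(\Delta)$ but $(v,w)\notin E(\Gamma)$; the other failure direction is forbidden by the morphism condition. Choosing nontrivial $g\in G_v$ and $h\in G_w$, the normal form theorem for graph products guarantees $[g,h]\neq 1$ in $\mc{G}(\Gamma,\gvv)$, whereas in the codomain $g\in G_{\phi(v)}'$ and $h\in G_{\phi(w)}'$ lie in vertex groups of adjacent vertices of $\Delta$ and therefore commute. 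Consequently $\phi^*([g,h])=1$ while $[g,h]\neq 1$, contradicting injectivity.

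The only technical input beyond bookkeeping is the normal form theorem for graph products, ensuring that elements supported at non-adjacent vertices cannot commute; the rest is a direct verification through the universal property of the graph product presentation. The step I expect to require slightly more care is exhibiting the inverse $\psi$ in the sufficiency direction, specifically tracing how fullness is exactly the hypothesis needed to ensure all commutation relations of the target graph product hold in $\mc{G}(\Gamma,\gvv)$.
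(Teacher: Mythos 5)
Your proof is correct and follows essentially the same route as the paper: both directions hinge on the observation that fullness is exactly the condition making the adjacency (hence commutation) relations of $\Gamma$ and $\Delta$ correspond, and the converse is handled identically by exhibiting a non-edge $(v,w)$ whose images become adjacent, so that $G_v$ and $G_w$ generate a free product upstream but commute downstream. Your explicit construction of the inverse $\psi$ via the universal property is a slightly more careful way of establishing injectivity than the paper's direct appeal to "no further relations" between the presentations, but it is the same underlying argument.
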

	\begin{proof}
		Let us first assume that $\phi$ is full. By definition of $\phi^*$, we have that $\phi^*$ is injective on every $G_v$, $v\in \Gamma$ and maps it into $G_{v'}'$, where $v'=\phi(v)$, $v'\in \Delta$. Furthermore, by definition, $G_{v'}'$ is the graph product generated by the vertices $\phi^{-1}(v')$.
		
		We need to show that there are no further relation between the vertex groups that are sent to different vertices. By the defining relations in graph products, the latter is equivalent to show that if $\phi(v) \ne \phi(w)$, then $(v,w)\in E(\Gamma)$ if and only if $(\phi(v), \phi(w))\in E(\Delta)$.
		
		Since, $\phi$ is an epimorphism, if $(v,w)\in E(\Gamma)$ then $(\phi(v), \phi(w))\in E(\Delta)$. Conversely, if $(\phi(v), \phi(w))\in E(\Delta)$, then as the homomorphism if full we have that $(v,w)\in E(\Gamma)$.
		
		Therefore, $\phi^*$ is injective and as it is surjective, it is an isomorphism.
		
		Conversely, if $\phi$ is not full, it follows from the definition, that there exist $v,w\in \Gamma$ such that, $\phi(v)\ne \phi(w)$, $(v,w)\notin E(\Gamma)$ and $(\phi(v), \phi(w))\in E(\Delta)$. From the defining relations in graph products, we have that $\subg{G_v, G_w}\simeq G_v \ast G_w$ and $\subg{\phi^*(G_v), \phi^*(G_w)}\simeq \phi^*(G_v)\times \phi^*(G_w)$ and so $\phi^*$ is not an isomorphism.
	\end{proof}
	
	\begin{definition}[Reduced graph] \label{defn:reducedgraph}
		A graph $\Gamma$ is \emph{reduced} if for all $v, v'\in \Gamma$ we have that $\st(v) \ne \st(v')$.
	\end{definition}
	
	\begin{definition}[Positive reduced graph] \label{defn:reduced graph}
		We say that a group-labelled graph $(\Gamma,\gvv)$ is \emph{positive reduced} if there are no induced subgraphs $(\Lambda, \{G_v\}_{v\in \Lambda})$ of $(\Gamma, \gvv)$  such that
		\begin{itemize}
			\item  $\Lambda$ can be written as $\Lambda = \Lambda_1 \times \dots \times \Lambda_r$ where $r\geq 1$, $|V(\Lambda)|>1$, and for all $i$, $\mc{G}(\Lambda_i, \{G_v\}_{v\in \Lambda_i})$ is either a singular subgroup or a non-singular dihedral group of $\GP$ (i.e. either $|V(\Lambda_i)|=1$ of $\Lambda_i$ consists of two disconnected vertices $u_{i_1}, u_{i_2}$ and $G_{u_{i_1}}\cong \mbb{Z}/2\mbb{Z}\cong G_{u_{i_2}}$).
			\item for all $v_1, v_2\in \Lambda$ we have that  $\st(v_1)\cap \Gamma\smallsetminus \Lambda=\st(v_2)\cap \Gamma\smallsetminus \Lambda$ (equivalently, $\st(v) < \Lambda \cup \link{\Lambda}$ for all $v\in \Lambda$).
		\end{itemize}
	\end{definition}

	\begin{lemma}[Positive graphs] \label{lem: graph is positive reduced}
		Let $\mc{G}=\GP$ be a graph product such that $\Gamma$ is finite and $G_v$ satisfy a nontrivial positive sentence for all $v\in \Gamma$. Then $\mc{G}$ is a graph product $\mc{G}(\Gamma', \{G_v'\})$ where $\Gamma'$ is a positive reduced graph and $G_v'$ satisfies a nontrivial positive sentence for all $v\in \Gamma'$. Furthermore, the positive reduce graph $\Gamma'$ is unique up to isomorphism.
	\end{lemma}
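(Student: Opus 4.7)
The plan is to establish existence by an iterative collapse procedure that terminates because $\Gamma$ is finite, and uniqueness through a canonicity argument identifying the positive reduced form as a group-theoretic invariant.

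For existence, I would argue as follows. If $\Gamma$ is not positive reduced, pick a witnessing induced subgraph $\Lambda = \Lambda_1 \oplus \cdots \oplus \Lambda_r$ as in Definition \ref{defn:reduced graph}, with $|V(\Lambda)|>1$, each $\mc{G}(\Lambda_i)$ either singular or isomorphic to the infinite dihedral group $D_\infty$, and $\st(v) \subseteq \Lambda \cup \link{\Lambda}$ for every $v \in \Lambda$. Define a new graph $\Gamma''$ by identifying all vertices of $\Lambda$ to a single vertex $v_\Lambda$, and give it the vertex group $G_{v_\Lambda}' = \mc{G}(\Lambda) = \mc{G}(\Lambda_1) \times \cdots \times \mc{G}(\Lambda_r)$ (a genuine direct product because $\Lambda$ is a join), keeping all other vertex groups as they were. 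The common-star condition guarantees that the quotient map $\phi : \Gamma \to \Gamma''$ is a \emph{full} graph morphism, so Lemma \ref{lem: full is iso} identifies $\mc{G}(\Gamma, \gvv)$ with $\mc{G}(\Gamma'', \{G_w'\})$. The new vertex group $G_{v_\Lambda}'$ satisfies a nontrivial positive sentence because each factor does (singular factors by hypothesis; each $D_\infty$ satisfies the identity $[x^2,y^2]=1$) and because the class of groups with nontrivial positive theory is closed under direct products by Theorem \ref{lem:extensionpreservation}. Since each step strictly decreases the vertex count, after finitely many iterations we reach a positive reduced graph $\Gamma'$.

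For uniqueness, which is the main obstacle, suppose that $\mc{G}$ admits two positive reduced graph product representations $\mc{G}(\Gamma', \{G_v'\}) \cong \mc{G} \cong \mc{G}(\Gamma'', \{G_v''\})$ with every vertex group satisfying a nontrivial positive sentence. The strategy is to characterise the vertex subgroups of a positive reduced representation intrinsically inside $\mc{G}$, so that any isomorphism must match them up. Concretely, one uses the factor machinery of Definition \ref{d: minimal_graph_subg_and_factors}: in a positive reduced representation every vertex group is already a maximal subgroup that cannot be ``enlarged by a collapse'' while preserving the nontrivial positive theory of vertex groups and without violating the star condition. This maximality, together with the observation that commutation of (conjugates of) such canonical subgroups detects edges of the underlying graph, yields a bijection between $V(\Gamma')$ and $V(\Gamma'')$ that preserves adjacency and restricts to isomorphisms of the corresponding vertex groups.

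The delicate point, and what I expect to be the main difficulty, is that internal direct product decompositions of individual vertex groups are not unique in general (Baumslag's examples), so the matching must be carried out at the level of the subgroups $G_v'$ themselves rather than their internal factorisations; one must check that the ``maximal collapsible $\Lambda$'' is insensitive to such ambiguities, which amounts to showing that the candidate canonical subgroups detect only the outer join structure coming from $\Gamma'$ and not any further hidden direct factorisations of the vertex groups. Once this is in place, uniqueness up to isomorphism of group-labelled graphs follows formally.
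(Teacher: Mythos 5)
Your existence argument is correct and is essentially the paper's: collapse a (maximal) witnessing join $\Lambda$ to a single vertex, observe that the quotient map is full so that Lemma \ref{lem: full is iso} gives an isomorphism of graph products, note that the new vertex group is a direct product of groups with nontrivial positive theory and hence itself has nontrivial positive theory, and induct on the number of vertices.

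The uniqueness half has a genuine gap, and in fact you are aiming at the wrong statement. The lemma only asserts that the positive reduced graph obtained from the \emph{given} labelled graph $(\Gamma,\gvv)$ by the collapse procedure is well defined, i.e.\ independent of the order in which maximal collapsible subgraphs are collapsed. The paper proves this by a local confluence argument carried out entirely inside $\Gamma$: if $\Lambda$ and $\Lambda'$ are two maximal collapsible subgraphs, either they are disjoint (and the two collapses commute), or they overlap, in which case one shows that their singular factors coincide and that their dihedral factors can only differ by sharing involutions in controlled edgeless configurations, so that collapsing in either order produces isomorphic labelled graphs. What you propose instead is to show that the positive reduced representation is an invariant of the abstract group $\mc{G}$ by characterising the vertex subgroups ``intrinsically.'' That is a far stronger claim --- it is essentially Corollary \ref{cor:rigidity isomoprhism}, whose proof relies on the interpretability machinery developed much later in the paper, which in turn uses the present lemma to reduce to positive reduced representations in the first place. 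So your route is circular within the paper's architecture and, taken on its own, is not actually carried out: the claimed maximality characterisation of the collapsible $\Lambda$ is never made precise, and the delicate point you flag yourself (non-uniqueness of direct product decompositions, Baumslag's examples) is exactly the obstruction you would need to overcome and do not. The fix is to abandon the group-theoretic characterisation and argue combinatorially about how two overlapping maximal collapsible subgraphs of the fixed $\Gamma$ can interact.
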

	\begin{proof}
		Suppose that $\Gamma$ is not positive reduced.
		
		Using the notation of Definition \ref{defn:reduced graph}, let $(\Lambda, \{G_v\}_{v\in \Lambda})$ be a maximal (by inclusion) induced subgraph of $(\Gamma, \gvv)$ that prevents $\mc{G}$ from being positive reduced.
		
		Let $\Gamma''$ be the graph obtained from $\Gamma$ by collapsing $\Lambda$ into a single vertex $v_\Lambda$. For all $v,w\notin \Lambda$, there is an edge $(v,w)$ in $\Gamma''$ if and only if $(v,w)$ is an edge of $\Gamma$. Set $(v_\lambda, w)$ be an edge of $\Gamma''$ if and only if $(v,w)$ is an edge of $\Gamma$, where $v\in \Lambda$ and $w\notin \Lambda$. Note that the edge set of $\Gamma''$ is well-defined since for any $v_1,v_2\in \Lambda$ we have $(v_1,w)\in E(\Gamma)$ if and only if $(v_2,w)\in E(\Gamma)$.
		
		The canonical projection  $\phi:\Gamma\to \Gamma''$ is a full epimorphism, hence it follows from Lemma \ref{lem: full is iso} that  $\phi^*$ induces an isomorphism of graph products $\phi^*:\mathcal G(\Gamma, \gvv)\to \mathcal G(\Gamma'', \{G_{v''}\}_{v\in \Gamma''})$ where $G_{v''}$ is generated by the vertex groups $G_v$ for $v\in \phi^{-1}(v'')$.
		
		Furthermore, since by definition $\Lambda$ is a join, we have that $\mc{G}(\Lambda)$ is a direct product of groups with nontrivial positive theory. It follows from Lemma \ref{lem:extensionpreservation} that all vertex groups of the graph product $\mc{G}(\Gamma'', \{G_{v''}\}_{v\in \Gamma''})$ satisfy some nontrivial positive sentences.
		
		Since the graph $\Gamma$ is finite, the number of vertices of $\Gamma''$ is less than that of $\Gamma$, and so we can recursively apply the above procedure a finite number of times and obtain a graph $\Gamma'$ which is positive reduced and such that all the vertex groups of the graph product $\mc{G}(\Gamma', \{G_v'\}_{v\in \Gamma'})$ satisfy a nontrivial positive sentence. This concludes with the existence of a positive reduced graph.
		
		Let us now show that the positive reduced graph is unique up to isomorphism. Let $\Lambda, \Lambda' < \Gamma$ be two maximal induced subgraphs of $(\Gamma, \gvv)$ that prevent $\mc{G}$ from being positive reduced. If $\Lambda$ and $\Lambda'$ are disjoint, then independently of the order in which we collapse $\Lambda$ and $\Lambda'$, we obtain the same graph.
		
		Let us assume that $\Lambda \cap \Lambda' = \Delta \neq \emptyset$. Let
		$$
		\mc{G}(\Lambda)=G_{v_1}\times \dots\times G_{v_k}\times D_\infty^{i_1}\times \dots \times D_\infty^{i_l}; \quad
		\mc{G}(\Lambda')=G_{v_1'}\times \dots\times G_{v_m'}\times D_\infty^{i_1'}\times \dots \times D_\infty^{i_n'}.
		$$
		We claim that in this case
		\begin{itemize}
			\item  $\{v_1,\dots, v_k\}=\{v_1', \dots, v_m'\}$ and
			\item for any $i_j'$ either there exists $i_p$ so that $D_\infty^{i_j'}=D_\infty^{i_p}$ or $D_\infty^{i_j'}=G_{w_1}*G_{w_2}$, where $G_{w_1}\simeq G_{w_2}\simeq\BZ_2$ and $D_\infty^{i_p}=G_{w_2}*G_{w_3}$ and $G_{w_3}\simeq \BZ_2$, moreover the subgraph spanned by $w_1, w_2$ and $w_3$ is edgeless or $D_\infty^{i_p}=G_{w_3}*G_{w_4}$, where $G_{w_3}\simeq G_{w_4}\simeq\BZ_2$, moreover the subgraph spanned by $w_1, w_2, w_3$ and $w_4$ is edgeless.
		\end{itemize}
		Indeed, the first statement follows since $\Lambda \cap \Lambda' = \Delta \neq \emptyset$ and $\Lambda, \Lambda'$ are maximal.
		
		Suppose that for a group $D_\infty^{i_j'}=G_{w_1}*G_{w_2}$ (and $D_\infty^{i_p}=G_{w_3}*G_{w_4}$ for all $p$),  neither of the conclusions of the second statement hold. If $[D_\infty^{i_j'},D_\infty^{i_p}]=1$ for all $p$, then we have a contradiction with the maximality of $\Lambda$. Hence, there exists $p$ so that $[D_\infty^{i_j'},D_\infty^{i_p}]\neq 1$. In this case either $D_\infty^{i_j'}=G_{w_1}*G_{w_2}$ and $D_\infty^{i_p}=G_{w_2}*G_{w_3}$ or $D_\infty^{i_j'}=G_{w_1}*G_{w_2}$ and $D_\infty^{i_p}=G_{w_3}*G_{w_4}$. In the former case, there are no edges $(w_1,w_2)$ or $(w_2,w_3)$; if $[G_{w_1}, G_{w_3}]=1$, then the links of $w_1$ and $w_2$ are different which is a contradiction and hence the graph spanned by $w_1, w_2$ and $w_3$ is edgeless. The other case is similar.
	\end{proof}

	The following corollary follows from the discussion above and Theorem \ref{t: quantifier reduction}.
	\begin{cor}    \label{c: collapsed positive}
		Let $\mc{G}=\mc{G}(\Gamma, \gvv)$ be a graph product of groups such that $\Gamma$ is finite and $G_v$ satisfies a nontrivial positive sentence for all $v\in \Gamma$. Then $\mc{G}$ is a graph product $\mc{G}(\Gamma', \{G_v'\})$ where $\Gamma'$ is a positive reduced graph and each $G_v'$ satisfies a nontrivial positive $\forall\exists$-sentence.
	\end{cor}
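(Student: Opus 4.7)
The plan is to simply combine the two main tools that have just been developed. First, I would invoke Lemma \ref{lem: graph is positive reduced} to rewrite $\mc{G}$ as a graph product $\mc{G}(\Gamma', \{G_v'\})$ over a positive reduced graph $\Gamma'$, with the property that every new vertex group $G_v'$ still satisfies some nontrivial positive sentence. Recall that this is possible because each $G_v'$ is, by construction, a direct product of old vertex groups $G_w$ (for $w$ in the collapsed block) together with some infinite dihedral factors coming from pairs of $\mathbb{Z}/2\mathbb{Z}$'s; and the class of groups with nontrivial positive theory is closed under direct products by Lemma \ref{lem:extensionpreservation} (actually its consequence for direct products).

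The second step is then to upgrade the nontrivial positive sentence satisfied by each $G_v'$ to a nontrivial positive $\forall\exists$-sentence. This is precisely what Theorem \ref{t: quantifier reduction} gives us: starting from any nontrivial positive sentence satisfied by a group, one obtains a nontrivial positive $\forall\exists$-sentence (in fact with a single system of equations rather than a disjunction of systems) which is still satisfied by that group. Applying this theorem individually to each of the finitely many vertex groups $G_v'$ of $\Gamma'$ yields, for each $v \in \Gamma'$, a nontrivial positive $\forall\exists$-sentence valid in $G_v'$.

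There is no real obstacle here; the statement is essentially the concatenation of Lemma \ref{lem: graph is positive reduced} and Theorem \ref{t: quantifier reduction}, and the proof occupies at most a few lines. The only slightly subtle point to check, if one wished to be thorough, is that Theorem \ref{t: quantifier reduction} indeed applies to the \emph{new} vertex groups $G_v'$, which are themselves direct products; but this is immediate since those groups satisfy a nontrivial positive sentence by the first step, and the quantifier reduction theorem is stated for any group satisfying a nontrivial positive sentence, irrespective of its internal structure.
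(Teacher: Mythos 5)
Your proposal is correct and follows exactly the paper's route: the paper derives this corollary directly from Lemma \ref{lem: graph is positive reduced} (existence of the positive reduced decomposition with vertex groups retaining nontrivial positive theory) together with the final clause of Theorem \ref{t: quantifier reduction} (any group with nontrivial positive theory satisfies a nontrivial positive $\forall\exists$-sentence). Nothing is missing.
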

	
	In the next lemma we show that in fact the positive reduced graph is optimal in the sense that if we further consider a graph product decomposition of the group whose underlying graph is smaller than the positive reduced one, then there is at least one vertex group with trivial positive theory.
	
	\begin{lemma}[The reduced graph is optimal]
		Let $\mc{G}=\mc{G}(\Gamma, \gvv)$ be a graph product where $\Gamma$ is positive reduced and $G_v$ satisfy a nontrivial positive sentence for all $v\in \Gamma$.
		
		Let $\phi:\Gamma \to \Delta$ be a proper full epimorphism of graphs. Let $\phi^*:\mathcal G(\Gamma, \gvv) \to \mathcal G(\Delta, \{G_{v'}\}_{v\in \Delta})$ be the induced epimorphism of groups where $G_{v'}$ is the subgroup of $\mc{G}$ generated by $G_v$ where $v\in \phi^{-1}(v')$, for all $v'\in \Gamma'$. Notice that by {\rm Lemma \ref{lem: full is iso}}, $\phi^*$ is an isomorphism.
		
		Then there exists a vertex $v'\in \Delta$ such that $G_{v'}$ has trivial positive theory.
	\end{lemma}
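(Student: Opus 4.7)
The plan is to argue by contraposition: I will assume that every $G_{v'}$ for $v'\in\Delta$ satisfies a nontrivial positive sentence, and then show that $\Gamma$ cannot be positive reduced, contradicting the hypothesis. Since $\phi$ is a proper full epimorphism, there must exist a vertex $v'\in\Delta$ such that $\Lambda:=\phi^{-1}(v')$ contains at least two vertices; fix one such $v'$ and $\Lambda$. Fullness of $\phi$ immediately gives that $\st(v_1)\cap(\Gamma\setminus\Lambda)=\st(v_2)\cap(\Gamma\setminus\Lambda)$ for any two $v_1,v_2\in\Lambda$, so $\Lambda$ already satisfies the external-star condition in Definition \ref{defn:reduced graph}. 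Moreover, under the identification provided by the isomorphism $\phi^*$ (see Lemma \ref{lem: full is iso}), the vertex group $G_{v'}$ is canonically the subgraph product $\mc{G}(\Lambda,\{G_v\}_{v\in\Lambda})$, so by our assumption this subgraph product satisfies a nontrivial positive sentence.

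The core of the proof is to show that this last fact forces $\Lambda$ to decompose as a join $\Lambda_1\oplus\dots\oplus\Lambda_r$ where each factor $\Lambda_i$ is either a single vertex (contributing a singular piece) or a pair of non-adjacent vertices both labelled by $\mbb{Z}/2\mbb{Z}$ (contributing a non-singular $D_\infty$ piece). Once we have this, $\Lambda$ is precisely a forbidden induced subgraph in the sense of Definition \ref{defn:reduced graph}, contradicting positive reducedness of $\Gamma$.

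To establish the structural consequence, I would decompose $\Lambda$ iteratively into its join components: write $\Lambda=\Lambda_1\oplus\dots\oplus\Lambda_r$ with each $\Lambda_i$ directly indecomposable, so that $\mc{G}(\Lambda)\cong\prod_i\mc{G}(\Lambda_i)$. Since a direct product of groups has nontrivial positive theory if and only if every factor does (one direction being trivial and the other following from the extension-stability recalled for positive theory, cf.\ Lemma \ref{ex:groups_non_trivial_positive_theory} item (6) / Theorem \ref{lem:extensionpreservation} applied in the easier positive-only setting), each $\mc{G}(\Lambda_i)$ itself must have nontrivial positive theory. For a directly indecomposable $\Lambda_i$ with at least two vertices, Corollary 8.1 of \cite{casals2019positive} applies as soon as its underlying graph is not complete, and indeed a directly indecomposable graph with more than one vertex cannot be complete; hence $\mc{G}(\Lambda_i)$ would have the same positive theory as a non-abelian free group unless it is one of the explicit exceptional configurations ruled out by that corollary. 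The only way to reconcile this with $\mc{G}(\Lambda_i)$ having nontrivial positive theory is that $\Lambda_i$ consists of exactly two non-adjacent vertices each carrying $\mbb{Z}/2\mbb{Z}$, giving $\mc{G}(\Lambda_i)\cong D_\infty$.

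The main obstacle is the last structural step, namely pinning down exactly which directly indecomposable multi-vertex configurations escape the conclusion of \cite[Cor.~8.1]{casals2019positive}: one must check case by case that the only way for a graph product on a directly indecomposable graph with $|V|\ge 2$ and vertex groups of nontrivial positive theory to still have nontrivial positive theory is the $\mbb{Z}/2\ast\mbb{Z}/2$ case; this is precisely why the definition of positive reduced singles out dihedral pieces alongside singular ones. Everything else in the argument is bookkeeping (choosing $v'$, invoking fullness to get the external-star condition, and unpacking $\phi^*$) together with the direct-product closure of nontrivial positive theory.
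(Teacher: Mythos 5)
Your proposal is correct and is essentially the paper's argument run in the contrapositive direction: both proofs pass to $\Lambda=\phi^{-1}(v')$ with at least two vertices, use fullness for the external-star condition, decompose $\Lambda$ into join factors, and invoke the fact (via \cite[Theorem 7.9 / Corollary 8.1]{casals2019positive}) that a directly indecomposable multi-vertex factor other than $\mbb{Z}/2\ast\mbb{Z}/2$ acts irreducibly with stable hyperbolic elements and hence has trivial positive theory. The paper states this directly (positive reducedness forces some non-singular, non-dihedral join factor, whose trivial positive theory passes to the product since factors are retracts), while you derive the forbidden subgraph of Definition \ref{defn:reduced graph} from the assumption that all $G_{v'}$ have nontrivial positive theory; the two are logically equivalent.
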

	\begin{proof}
		Let $v'$ be a vertex in $\Delta$ such that $\phi^{-1}(v')$ has more than one vertex. As $\phi$ is a proper epimorphism, such a vertex $v'$ exists. We claim that $G_{v'}$ has trivial positive theory.
		
		Consider the direct product decomposition of $G_{v'}$ (it may be trivial). As the graph $\Gamma$ is positive reduced it follows that one of the components of $G_{v'}$ has an irreducible action on a tree admitting stable hyperbolic elements so $G_{v'}$ has trivial positive theory, see \cite[Theorem 7.9]{casals2019positive}.
	\end{proof}
	
	We now observe that the positive reduced graph is, in some sense, the maximal graph product structure than one can recover. Note however, that we will show that if we require further restrictions on the singular vertex groups, then one may recover the reduced graph.
	
	\begin{example}
		Let $\mc{G}=\mc{G}(\Gamma, \gvv)$ be a graph product, where $\Gamma$ is the graph with two vertices $v$ and $w$ and one edge $(v,w)$. Let $G_v\simeq G_w \simeq \mathbb Q$. As $\mc{G}=\mathbb Q \times \mathbb Q \equiv \mathbb Q$, the basic underlying group-labelled graph $(\Gamma, \gvv)$ is not interpretable in $\mc{G}$ (as it should be interpretable in $\mathbb Q$).
	\end{example}
	
	The next lemma generalises the above example.
	
	\begin{lemma}[The positive reduced graph is optimal]
		Let $\mc{G}=\mc{G}(\Gamma, \gvv)$ be a graph product, where $\Gamma$ is positive reduced and $G_v$ satisfy a nontrivial positive sentence for all $v\in \Gamma$.
		
		Then the $\minCore(\Gamma,\gvv)$ and the vertex groups $G_v$, $v\in \minCore(\Gamma,\gvv)$ are interpretable in $\mc{G}$.
		
		Furthermore, there exists $\mc{G}=\mc{G}(\Gamma, \gvv)$ in the class for which $\Gamma$ is the maximal underlying structure that can be interpreted, that is, if $\pi:\Gamma' \to \Gamma$ is a full epimorphism of graphs, then the underlying structure associated to $\Gamma'$ cannot be uniformly interpreted, that is, there exists a graph product $\mc{G}'=\mc{G}(\Gamma', G_{v'})$ and a vertex $v'\in \minCore(\Gamma')$ so that $G_{v'}$ is not interpretable in $\mc{G}'$.
	\end{lemma}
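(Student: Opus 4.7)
The plan is to address the two claims separately. For the first claim (interpretability of $\minCore(\Gamma, \gvv)$ and its vertex groups), I would reduce to the main Interpretation theorem stated in the Introduction. By Remark \ref{r: positive_is_generic}, nontrivial positive theory is a special case of non-generic almost positive theory, so the vertex groups satisfy the hypothesis of that theorem. A positive reduced graph may further be collapsed to a reduced graph (in the sense of Definition \ref{defn:reducedgraph}) by identifying vertices with identical stars, producing the required input. Applying the Interpretation theorem to the collapsed graph interprets the core and its vertex groups, from which one reads off the interpretation of $\minCore(\Gamma, \gvv)$ by tracking which vertices of $\Gamma$ were identified.

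For the second (optimality) claim, I would construct an explicit example exploiting the phenomenon, already noted in the preceding example, that $\mathbb{Q} \times \mathbb{Q} \equiv \mathbb{Q}$ as abelian groups. Take $\Gamma$ to be the one-vertex graph with $G_v = \mathbb{Q}$; this is vacuously positive reduced, and $\mathbb{Q}$ satisfies the nontrivial positive sentence $\forall x\,\forall y\,[x, y] = 1$. Let $\Gamma'$ be the two-vertex graph with a single edge and both vertex groups equal to $\mathbb{Q}$. The collapse $\pi: \Gamma' \to \Gamma$ is a proper full epimorphism, and by Lemma \ref{lem: full is iso} the induced map gives $\mc{G}(\Gamma', \gvv') \cong \mathbb{Q} \times \mathbb{Q}$. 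Both vertices of $\Gamma'$ lie in $\minCore(\Gamma')$ since $\mathbb{Q}$ is not elementarily equivalent to $\mathbb{Z}$.

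Finally, to show that $G_{v_1}$ is not interpretable in $\mc{G}' = \mathbb{Q} \times \mathbb{Q}$: since $\mc{G}' \equiv \mathbb{Q}$ as abelian groups, any interpretation in $\mc{G}'$ transfers to an interpretation in $\mathbb{Q}$, but $\mathbb{Q}$ is too homogeneous to support an interpretation picking out a proper ``vertex subgroup''. Concretely, in a saturated elementary extension of $\mathbb{Q}$, the only subgroups definable with parameters are the trivial subgroup and the whole group, so no interpretation of a proper nontrivial $\mathbb{Q}$-like substructure arises. The main technical obstacle will be pinning down the exact notion of interpretation used in the statement---uniform across the class, with parameters in a specific model, or ``canonical'' in the sense of recovering $G_{v_1}$ as a subgroup of $\mc{G}'$---and ensuring the homogeneity argument applies to the right notion; passing to a saturated elementary extension and matching types over parameters is the standard tool for this.
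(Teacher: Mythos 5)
Your first claim and your choice of example coincide with the paper's: the interpretability of $\minCore(\Gamma,\gvv)$ is simply deferred to the machinery of Subsection \ref{sec: levels}, and the optimality witness is exactly the paper's --- two vertices with the same star, both labelled $\mathbb{Q}$, collapsing onto a single vertex labelled $\mathbb{Q}\times\mathbb{Q}$.

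The gap is in your final step. The statement concerns \emph{uniform} interpretability, and the argument the paper intends is the transfer along elementary equivalence: the one-vertex graph product $\mathcal{G}(\Gamma,\{\mathbb{Q}\})=\mathbb{Q}$ is itself a member of the class, so a single tuple of formulas purporting to interpret the structure associated to $\Gamma'$ would, applied to the elementarily equivalent groups $\mathbb{Q}$ and $\mathbb{Q}\times\mathbb{Q}$, produce elementarily equivalent interpreted structures; but a one-vertex and a two-vertex group-labelled graph are non-isomorphic finite structures, hence not elementarily equivalent. Your concrete argument instead tries to rule out the interpretation inside the single model $\mathbb{Q}\times\mathbb{Q}$ by observing that a saturated model of $\Th(\mathbb{Q})$ has no proper nontrivial definable subgroups. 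That only excludes interpretations which realize $G_{v'}$ as a definable \emph{subgroup}; a general interpretation is a definable quotient of a definable subset of some finite power, and strong minimality of $\Th(\mathbb{Q})$ in one variable says nothing directly about such subquotients (for instance, $\mathbb{Q}^{2n}$ modulo a definable subgroup is again a $\mathbb{Q}$-vector space, and deciding which dimensions can arise requires a separate analysis). You flag this issue yourself but do not resolve it; the resolution is the uniformity argument above, which sidesteps any analysis of definable sets in $\mathbb{Q}\times\mathbb{Q}$.
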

	\begin{proof}
		The interpretability of the $\minCore(\Gamma,,\{G_{v}\}_{v\in \Gamma)}$ is proven in Subsection \ref{sec: levels}.
		
		Consider an epimorphism $\pi$ which identifies two vertices with the same star. Consider the graph product of groups with underlying graph $\Gamma'$ and all vertex groups are $\mathbb Q$. By our results we know that $\mathbb Q\times \mathbb Q$ is interpretable in $\mc{G}$. However, since $\mathbb Q$ is elementarily equivalent to $\mathbb Q\times \mathbb Q$, we have that $G_{v'}=\mathbb Q$ is not interpretable in $\mc{G}$.
	\end{proof}
	
	\subsection{Almost reduced graphs and non-generic almost positive theory}
	The next definition is similar to the previous one, with the only difference being that now we ask that there are no induced subgraphs $\Lambda = \Lambda_1 \times \dots \times \Lambda_r$ where \emph{one} (instead of all) factors $\mc{G}(\Lambda_i)$ is singular or non-singular infinite dihedral.
	
	\begin{definition}[Almost positive reduced graphs]\label{defn:almost reduced}
		Let $(\Gamma, \gvv)$ be a group-labelled graph. We say that $(\Gamma, \{G_v\}_{v\in \Gamma}\})$  is \emph{almost positive reduced}   if there are no induced subgraphs $(\Lambda, \{G_v\}_{v\in \Lambda})$ of  $(\Gamma, \gvv)$  such that
		\begin{itemize}
			\item  $\Lambda$ can be written as $\Lambda = \Lambda_1 \times \dots \times \Lambda_r$ where $r\geq 1$, $|V(\Lambda)|>1$ and, for \emph{some} $i$, $\mc{G}(\Lambda_i,\{G_v\}_{v\in \Lambda_i})$ is either a singular or non-singular dihedral subgroup of $\mc{G}(\Gamma, \{G_v\}_{v\in \Gamma}\}$ (i.e. either $|V(\Lambda_i)|=1$ or $\Lambda_i$ consists of two disconnected vertices $u_{i1}, u_{i_2}$ and $G_{u_{i1}}\cong \mbb{Z}/2\mbb{Z}\cong G_{u_{i2}}$.
			\item for all $v_1, v_2\in \Lambda$ we have that and $\st(v_1)\cap \Gamma\smallsetminus \Lambda=\st(v_2)\cap \Gamma\smallsetminus \Lambda$ (equivalently, $\st(v) < \Lambda \cup \link{\Lambda}$ for all $v\in \Lambda$).
		\end{itemize}
	\end{definition}

	\begin{remark}
		Notice that any pair  $(\Gamma, \{G_v\}_{v\in \Gamma}\})$  that is almost positive reduced is positive reduced and in both cases, the graph $\Gamma$ is reduced.
	\end{remark}
	
	We want to show that given a graph product with finite underlying graph $\Gamma$, one can obtain an almost positive reduced graph by identifying vertices and correspondingly changing the vertex groups, but preserving the fact that vertex groups satisfy a simple non-generic almost positive sentence.
	
	In the case of nontrivial positive theory, the key fact that allows to go to a positive reduced graph is the fact that the nontrivial positive theory is closed under direct products.
	
	For non-generic almost positive theory, a stronger preservation property holds: the direct product of groups satisfies a simple non-generic almost positive sentence if one of the factors also does.
	
	\begin{lemma}
		\label{l: almost positive direct sums}
		Let $G, H$ be groups and let $\phi$ be a simple non-generic almost positive sentence $\phi$ such that $G \models \phi$, then there exists a simple non-generic almost positive sentence $\phi'$ so that $G\times H \models \phi'$.
	\end{lemma}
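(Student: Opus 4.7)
By Theorem \ref{t: quantifier reduction}, I may reduce to
\[
\phi \equiv \exists z\,\bigl(w(z)\neq 1 \wedge \forall x\,\exists y\,\Sigma(z,x,y)=1\bigr),
\]
where $z=(z_1,\dots,z_\ell)$, $x=(x_1,\dots,x_n)$, $y$ is a tuple of variables, $w\in\F(z)$ is a single word and $\Sigma=1$ is a system of equations. Fix a witness $a\in G^{\ell}$. The candidate $\phi'$ I propose for $G\times H$ is
\[
\phi'\equiv \exists z\,\Bigl(w(z)\neq 1 \wedge \forall x\,\exists c\,\exists y\,\bigwedge_{i,j}[c_j,z_i]=1 \wedge \bigwedge_j[c_j,x_j]=1 \wedge \Sigma(z,xc^{-1},y)=1\Bigr),
\]
where $c=(c_1,\dots,c_n)$ is a fresh tuple and $xc^{-1}=(x_1c_1^{-1},\dots,x_nc_n^{-1})$. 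Since the equations form a single conjunction and the inequality remains the single word $w(z)\neq 1$, the sentence $\phi'$ is simple almost positive.

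To verify $G\times H\models \phi'$ I take the witness $z_i=(a_i,1_H)$, so that $w(z)=(w(a),1_H)\neq 1$. For arbitrary $x_j=(x_{j,G},x_{j,H})$ I set $c_j=(1_G,x_{j,H})$: then $c_j$ commutes with each $z_i$ and with $x_j$, and $x_jc_j^{-1}=(x_{j,G},1_H)$. A witness $y_G$ of $\psi(a)$ in $G$ against the tuple $(x_{1,G},\dots,x_{n,G})$ lifts to $y=(y_{G,k},1_H)_k$ in $G\times H$, and then $\Sigma(z,xc^{-1},y)=1$, using that $\Sigma$ (being a system of words without constants) vanishes whenever all of its arguments equal $1$. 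This part of the argument is routine.

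The main obstacle is establishing the non-genericity of $\phi'$. Suppose $(\alpha_c,\alpha_y)$ is a formal solution to the positive part of $\phi'$ relative to a Diophantine condition $\exists v\,\Pi(v,z)=1$, so the equations hold in the free product $G_\Pi * F$, where $G_\Pi=\F(v,z)/\langle\langle\Pi\rangle\rangle$ and $F=\F(x_1,\dots,x_n)$. If every $z_i$ is trivial in $G_\Pi$, then $w(z)=1$ in $G_\Pi$ (as $w$ has no constants) and we are done. Otherwise some $z_{i_0}\neq 1$ and in particular $G_\Pi\neq 1$, so $F$ is malnormal in $G_\Pi * F$; hence $C_{G_\Pi*F}(x_j)=\langle x_j\rangle$, forcing $\alpha_{c_j}=x_j^{k_j}$ for some $k_j\in\Z$. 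A normal-form argument in the free product then shows that $[x_j^{k_j},z_{i_0}]=1$ with $z_{i_0}\neq 1$ implies $k_j=0$, so $\alpha_{c_j}=1$ for every $j$. Consequently $\alpha_y$ is itself a formal solution for $\psi(z)$ relative to $\Pi$, and the non-genericity of $\phi$ yields $w(z)=1$ in $G_\Pi$. The extra equations $[c_j,x_j]=1$ in $\phi'$ are exactly what force $\alpha_c$ to collapse, reducing the non-genericity of $\phi'$ to that of $\phi$.
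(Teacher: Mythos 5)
Your proof is correct, but it takes a genuinely different route from the paper's. The paper splits each universal variable as $x_j=x^0_jx^1_j$ with two existential tuples, introduces an extra \emph{universally} quantified variable $u$, and imposes the twisted commutations $[(x^0_k)^u,x^1_j]=1$ and $[\Theta(z)^u,x^1_j]=1$; its non-genericity analysis then runs through two Bass--Serre actions of $L=K\frp\F(x,u)$ on trees (with $u$ and the $x_j$ as Bass--Serre elements) and an auxiliary observation about commuting conjugates, concluding that either $x^0=x$ or $x^1=x$ uniformly in $j$, each branch giving a contradiction. You instead introduce a single existential correction tuple $c$ constrained by $[c_j,x_j]=1$ and $[c_j,z_i]=1$ and substitute $x_jc_j^{-1}$ into $\Sigma$; the analysis of a formal solution then reduces to two elementary free-product facts: the centralizer of a basis element $x_j$ in $G_\Pi\frp\F(x)$ is $\subg{x_j}$, and nontrivial elements of distinct free factors never commute. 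This pins $\alpha_{c_j}$ to $x_j^{k_j}$ and then kills it once some $z_{i_0}$ survives in $G_\Pi$, after which $\alpha_y$ is a formal solution to $\psi$ relative to $\Pi$ and the non-genericity of $\phi$ finishes the argument (and the degenerate case where all $z_i$ die in $G_\Pi$ trivializes $w(z)$ directly, since $w$ has no constants). Your formula buys a shorter argument with no auxiliary universal variable and no tree machinery; the paper's version, at the cost of the extra variable $u$ and the two tree actions, keeps the positive part closer in shape to the original $\psi$ (the same $\Sigma(x^0,y,z)$ with a genuine decomposition of $x$), which is perhaps more transparent as to why validity holds in $G\times H$. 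Both establish validity in $G\times H$ the same way, by taking the correction to be the $H$-component of $x$.
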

	\begin{proof}
		By Theorem \ref{t: quantifier reduction} we may assume that
		$$
		\phi\equiv\exists z\,\Theta(z)\neq 1\wedge\psi(z),
		$$
		where the formula $\psi(z)$ is of the form $\forall x\exists y\,\Sigma(x,y,z)=1$ (note that $x,y$ are in general tuples of variables). Take now fresh tuples of variables $x^{0}$, $x^{1}$ with $|x^{0}|=|x^{1}|=|x|$ and let $\phi'\equiv\exists z\,\Theta(z)\neq 1\wedge\psi'(z)$, where $\psi'(z)$ is the positive formula:
		\begin{align*}
			\forall x\forall u\exists x^{0}\exists x^{1}\exists y \,\,\,\Sigma(x^{0},y,z)&=1
			\wedge\bigwedge_{1\leq j\leq |x|}(\bigwedge_{1\leq k\leq |x|}[(x^{0}_{k})^{u},x^{1}_{j}]=1\wedge[\Theta(z)^{u},x^{1}_{j}]=1\wedge x_{j}=x^{0}_{j}x^{1}_{j}).
		\end{align*}
		
		We show first that $\phi'$ is valid in any group of the form $G\times H$, where $G\models \phi$. It suffices to show that given $c\subseteq G\leq G\times H$ such that $G\models\psi(c)$ we have $G\times H\models \psi'(c)$.
		Indeed, given any $a\in G\times H^{|x|}$ let $a^{0}$ be the projection of $a$ to $G$ and $a^{1}$ the projection of $a$ to $H$.
		Clearly, there exists $b\in G^{|y|}$ such that $\Sigma(a^{0},b,c)=1$, since $G\models\psi(c)$. Since $[g^{f},h]=1$ for any $f\in G\times H$, $g\in G$ and $h\in H$, this suffices to witness $G\times H\models\psi'(c)$.
		
		We are left to show that $\phi'$ is non-generic, i.e. that for any formal solution of $\psi'(z)$ relative to a Diophantine condition $\exists v\,\xi(z,v)$ on $z$ implies the failure of the inequality $\Theta(z)\neq 1$. This amounts to showing the impossibility of the existence of a free product $L=K\frp\F(x,u)$ and and interpretations in $L$ of the tuples of variables $z,x^{0},x^{1},y$, which abusing the notation we will denote using the same letters,
		such that $z\subseteq K$ satisfies $\Theta(z)\neq 1$ and the equations in the quantifier-free part of $\psi'(z)$ are satisfied in $L$.
		
		Assume the situation above was possible. We will consider two actions of $L$ on trees with trivial edge stabilizers: $L\acts T$ in which $u$ and every $x_{j}$ are Bass-Serre elements (i.e. viewing $L$ as multiple HHN extension) and $L\acts T'$ in which elliptic subgroups are conjugate to either
		$K,\subg{x}$ or $\subg{u}$ (i.e. viewing $L$ as a free product). We will need the following observation:
		\begin{obs}
			\label{o: commuting conjugates} In the above notation, let $g_{1},g_{2}\in L$ be such that $[g_{1}^{u},g_{2}]=1$. Then at least one of the following conditions holds:
			\begin{enumerate}
				\item $g_{1}=1$ or $g_{2}=1$
				\item \label{alternative 2}  $g_{1}g_{2}$ is hyperbolic with respect to $T'$
				\item \label{alternative 3} $g_{1},g_{2}\in\subg{u}$
			\end{enumerate}
		\end{obs}
		\begin{subproof}
			Assume that $g_1$ and $g_2$ are nontrivial.
			Note that two elliptic elements with disjoint fixed point sets can never commute. Since edge stabilizers in $T$ and $T'$ are trivial, no elliptic element can commute with a hyperbolic element.
			
			So if $g_2$ is elliptic in $T$, then $g_1^u$ must stabilize the exact same vertex $v$ as $g_2$ does. Since $u$ is a Bass-Serre element, $g_1$ stabilises $v'=u^{-1}\cdot v$. Hence $g_1$ and $g_2$ stabilise different vertices in $T$, that is, they belong to different conjugates of $K$. Therefore, $g_1$ and $g_2$ also stabilise different vertices in $T'$ and so it follows that $g_1g_2$ is hyperbolic with respect to the action of $L$ in $T'$. Assume now that $g_2$ is hyperbolic in $T$ and so its centraliser is cyclic. Then, either $g_1^u=g_1$ and so we have the third alternative, or $g_1$ and $g_2$ are hyperbolic elements that stabilise different axis in $T$, which implies that $g_1g_2$ is hyperbolic in $T$ and is not a power of a conjugate of a Bass-Serre element, so hyperbolic in $T'$.
		\end{subproof}
		
		We start by using the previous observation to prove the following Lemma:
		\begin{lemma} \label{lem:aux}
			Either for any $1\leq j\leq |x|$ we have $x^{0}_{j}=x_{j}$ {\rm(}and thus $x^{1}_{j}=1${\rm)} or for any $1\leq j\leq |x|$ we have $x^{1}_{j}=x_{j}$ {\rm(}and thus $x^{0}_{j}=1${\rm)}.
		\end{lemma}
		\begin{subproof}
			We apply Observation \ref{o: commuting conjugates} to the pair $x^{0}_{j},x^{1}_{j}$. Alternatives \ref{alternative 2} and \ref{alternative 3} are clearly ruled out by the fact that $x^{0}_{j}x^{1}_{j}=x_{j}$ and $x$ is elliptic with respect to the action of $L$ on $T'$.
			Hence for each $1\leq j\leq |x|$ either $x^{0}_{j}=x_{j}$ or $x^{1}_{j}=x_{j}$.
			We are left to show that which of the alternatives hold does not depend on $j$, i.e. that either $x=x^0$ or $x=x^1$.
			Assume that this is not the case, then without loss of generality we have $x^{0}_{j}=x_{j}$ and $x^{1}_{k}=x_{k}$ for some $j,k$. Since $x$ is elliptic in the action of $G$ on $T'$, so are $x^0_j$ and $x^1_k$ and they fix the same vertex as does $x$. Since  $[(x^{0}_{j})^{u},x^{1}_{k}]=1$ and $x_{j}x_{k}$ is neither hyperbolic in $T'$ nor lies in $\subg{u}$ we arrive to contradiction with Observation \ref{o: commuting conjugates}.
		\end{subproof}
		
		We are now in position to finish the proof and show that $\phi'$ is non-generic or, equivalently, that there cannot exist witnesses $z,x^0,x^1,y$ in $L$ such that $z\subseteq K$ satisfies $\Theta(z)\neq 1$ and the equations in the quantifier-free part of $\psi'(z)$ are satisfied. Indeed, assume that $\Theta(z)\neq 1$ in $L$ and $\Sigma(x^{0},y,z)=1$. By Lemma \ref{lem:aux}, we have that either $x=x^0$ or $x=x^1$. In the former case, our data would give us a formal solution to $\psi(z)$ relative to a Diophantine condition compatible with $\Theta(z)\neq 1$, contradicting the assumption that $\phi$ is non-generic. In the latter case, we have that $x^{1}_{j}=x_{j}$ and so $[\Theta(z),x^{1}_{j}]\neq 1$ in $L$ contradicting the fact that $L$ satisfies $\phi'$.
	\end{proof}
	
	\begin{remark}
		Notice that even if $\phi$ in {\rm Lemma \ref{l: almost positive direct sums}} is a positive sentence {\rm(}which is a particular case of simple non-generic almost positive sentences{\rm)}, the sentence $\phi'$ is simple non-generic almost positive. If $G$ has nontrivial positive theory, $G\times H$ may have trivial positive theory. Indeed, $\mathbb Z$ has nontrivial positive theory, however $\mathbb Z \times F_2$ has trivial positive theory but it satisfies a non-generic almost positive sentence, for instance, it has nontrivial center.
	\end{remark}
	
	\begin{lemma}[Almost positive reduced graphs]
		Let $\mc{G}=\mc{G}(\Gamma, \gvv)$ be a graph product such that $\Gamma$ is finite and $G_v$ satisfies a simple non-generic almost positive sentence for all $v\in \Gamma$. Then $\mc{G}$ is a graph product $\mc{G}(\Gamma', \{G_v'\})$ where $\Gamma'$ is an almost positive reduced graph and $G_v'$ satisfies a simple non-generic almost positive sentence for all $v\in \Gamma'$.
	\end{lemma}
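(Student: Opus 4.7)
The plan is to mimic the proof of the earlier lemma on positive-reduced graphs (Lemma \ref{lem: graph is positive reduced}), substituting the preservation result for non-generic almost positive theory (Lemma \ref{l: almost positive direct sums}) for the closure of nontrivial positive theory under direct products. I would proceed by induction on $|V(\Gamma)|$. If $(\Gamma,\gvv)$ is already almost positive reduced, there is nothing to do. Otherwise, by Definition \ref{defn:almost reduced} there is an induced subgraph $\Lambda=\Lambda_1\oplus\dots\oplus\Lambda_r$ of $\Gamma$ with $|V(\Lambda)|>1$, such that for all $v_1,v_2\in\Lambda$ one has $\st(v_1)\cap(\Gamma\setminus\Lambda)=\st(v_2)\cap(\Gamma\setminus\Lambda)$, and such that some factor $\mc{G}(\Lambda_i)$ is either a singular subgroup (i.e.\ $|V(\Lambda_i)|=1$) or a non-singular infinite dihedral subgroup.

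Next I would collapse $\Lambda$ to a single vertex $v_\Lambda$, obtaining a graph $\Gamma''$ together with the canonical surjection $\phi:\Gamma\to\Gamma''$. The equality of the external stars of the vertices of $\Lambda$ guarantees that $\phi$ is a full graph epimorphism in the sense of Definition \ref{defn:full morphism}, so by Lemma \ref{lem: full is iso} the induced homomorphism $\phi^*:\mc{G}(\Gamma,\gvv)\to\mc{G}(\Gamma'',\{G_{v''}\}_{v''\in\Gamma''})$ is an isomorphism of graph products, where $G_{v_\Lambda}'=\mc{G}(\Lambda)$ and $G_{v''}'=G_{\phi^{-1}(v'')}$ for $v''\neq v_\Lambda$.

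The key step is to verify that $G_{v_\Lambda}'=\mc{G}(\Lambda)=\mc{G}(\Lambda_i)\times\mc{G}(\bigoplus_{j\neq i}\Lambda_j)$ satisfies a simple non-generic almost positive sentence. If $\Lambda_i$ is singular, then $\mc{G}(\Lambda_i)$ is one of the original vertex groups and satisfies such a sentence by hypothesis. If $\Lambda_i$ yields a non-singular infinite dihedral subgroup $D_\infty$, then $D_\infty$ satisfies the nontrivial positive sentence $\forall x\,\forall y\,[x^2,y^2]=1$ (its subgroup of squares is abelian of index two), and by Remark \ref{r: positive_is_generic} this may be rewritten as a simple non-generic almost positive sentence. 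In either case, applying Lemma \ref{l: almost positive direct sums} to the direct product decomposition above yields that $\mc{G}(\Lambda)$ satisfies a simple non-generic almost positive sentence. The remaining vertex groups of $\mc{G}(\Gamma'',\{G_{v''}'\})$ are unchanged, so they satisfy a simple non-generic almost positive sentence by hypothesis.

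Since $|V(\Gamma'')|<|V(\Gamma)|$, the induction hypothesis applies to $(\Gamma'',\{G_{v''}'\})$, producing the desired graph product $\mc{G}(\Gamma',\{G_v'\})$ with $\Gamma'$ almost positive reduced and every vertex group satisfying a simple non-generic almost positive sentence. The main obstacle is really just bookkeeping: identifying the correct direct factor on which to invoke Lemma \ref{l: almost positive direct sums}; note that, unlike in the positive case, we only need \emph{one} direct factor to satisfy the relevant sentence in order to transfer the property to the product, which is precisely what the definition of almost positive reduced graph supplies.
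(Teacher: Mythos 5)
Your proposal is correct and follows essentially the same route as the paper: the paper's proof of this lemma simply states that it is analogous to the positive-reduced case (Lemma \ref{lem: graph is positive reduced}), i.e.\ collapse a witnessing join $\Lambda$ via a full epimorphism and invoke Lemma \ref{l: almost positive direct sums} in place of closure under direct products, which is exactly what you do. Your explicit verification that the non-singular dihedral factor satisfies the nontrivial positive sentence $\forall x\forall y\,[x^{2},y^{2}]=1$ is a nice touch that the paper leaves implicit.
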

	\begin{proof}
		The proof is analogous to the proof of Lemma \ref{lem: graph is positive reduced}.
	\end{proof}
	
	\smallskip
	
	The following corollary follows from the discussion above and Theorem \ref{t: quantifier reduction}.
	\begin{cor}
		\label{c: collapsed almost positive}
		Let $\mc{G}=\mc{G}(\Gamma, \gvv)$ be a graph product of groups such that $\Gamma$ is finite and $G_v$ satisfies a simple non-generic almost positive sentence for all $v\in \Gamma$. Then $\mc{G}$ is a graph product $\mc{G}(\Gamma', \{G_v'\})$, where $\Gamma'$ is an almost positive reduced graph and each $G_v'$ satisfies a simple non-generic almost positive sentence whose positive part is an $\forall\exists$-formula.
	\end{cor}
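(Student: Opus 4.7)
The plan is to combine the preceding Lemma on almost positive reduced graphs with the Quantifier Reduction Theorem (Theorem~\ref{t: quantifier reduction}). Each of these results gives half of the conclusion, and they are independent of each other, so the argument is essentially a two-step assembly.

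First, I invoke the Lemma on almost positive reduced graphs, which guarantees the existence of a graph product decomposition $\mc{G} \simeq \mc{G}(\Gamma',\{G_v'\}_{v\in \Gamma'})$ with $\Gamma'$ almost positive reduced and each $G_v'$ satisfying some simple non-generic almost positive sentence $\phi_v$. This step is already done; I only need to observe that finiteness of $\Gamma$ ensures termination of the collapsing procedure, so $\Gamma'$ is finite as well, and therefore $\{G_v' \mid v\in \Gamma'\}$ is a finite collection of groups.

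Second, for each vertex $v\in \Gamma'$, the group $G_v'$ satisfies a simple non-generic almost positive sentence $\phi_v \equiv \exists z\,\Theta_v(z)\neq 1 \wedge \psi_v(z)$, where $\psi_v(z)$ is a positive formula. Applying Theorem~\ref{t: quantifier reduction} directly to $\phi_v$, I obtain a new simple non-generic almost positive sentence $\phi_v' \equiv \exists z\,\Theta_v(z)\neq 1 \wedge \psi_v'(z)$, where $\psi_v'(z)$ is a positive $\forall\exists$-formula $\forall x\,\exists y\,\Sigma_v(z,x,y)=1$, and such that $\phi_v \to \phi_v'$ holds in every group. Since $G_v' \models \phi_v$, we conclude $G_v' \models \phi_v'$, which is the desired sentence of the required form.

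Both steps are essentially bookkeeping applications of results already established in the excerpt, so I do not anticipate any serious obstacle. The only subtle point worth emphasising is that the quantifier reduction must be applied \emph{after} the graph collapsing procedure: collapsing via the preceding lemma may replace a vertex group with a direct product of groups satisfying almost positive sentences (cf.\ Lemma~\ref{l: almost positive direct sums}), so the sentences witnessing non-generic almost positivity of the new vertex groups $G_v'$ need not be in $\forall\exists$-form even if the original $G_v$ were. Performing the reduction at the level of $G_v'$ rather than $G_v$ ensures that the positive part has the desired quantifier structure in the final statement.
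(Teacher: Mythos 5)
Your proposal is correct and matches the paper's argument exactly: the paper derives this corollary precisely by combining the preceding lemma on almost positive reduced graphs with Theorem \ref{t: quantifier reduction}, noting that the quantifier reduction preserves the (single) inequality and that the implication $\phi_v\rightarrow\phi_v'$ holds in every group. Your remark about applying the reduction after collapsing is a sensible clarification, but no further justification is needed.
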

	
	In the next lemma we show that in fact the definition of almost positive reduced graph is optimal in the sense that if we consider a graph product decomposition of the group whose underlying graph is a proper quotient of an almost positive reduced one, then there is at least one vertex group that does not satisfy a simple non-generic almost positive sentence.
	
	\begin{lemma}[The almost positive reduced graph is optimal]
		Let $\mc{G}=\mc{G}(\Gamma, \gvv)$ be a graph product where $\Gamma$ is almost positive reduced and $G_v$ satisfy a simple non-generic almost positive sentence for all $v\in \Gamma$.
		
		Let $\phi:\Gamma \to \Delta$ be a proper full epimorphism of graphs. Let $\phi^*:\mathcal G(\Gamma, \gvv) \to \mathcal G(\Delta, \{G_{v'}\}_{v\in \Delta})$ be the induced epimorphism of groups where $G_{v'}$ is the subgroup of $\mc{G}$ generated by $G_v$ where $v\in \phi^{-1}(v')$, for all $v'\in \Gamma'$. Notice that by {\rm Lemma \ref{lem: full is iso}}, $\phi^*$ is an isomorphism.
		
		Then there exists a vertex $v'\in \Delta$ such that $G_{v'}$ does not satisfy a simple non-generic almost positive sentence.
	\end{lemma}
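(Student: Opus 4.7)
The plan is to exploit almost-positive reducedness of $\Gamma$ to realize $G_{v'}$ as a direct product of graph products each of which admits an irreducible action on a tree with stable hyperbolic elements, and then to run the small-cancellation and formal-solution machinery of Sections \ref{sec:small cancellation and formal solutions} and \ref{sec: from non-compatible to small cancellation} on a coordinate projection of any putative witness in order to contradict non-genericity.

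\textbf{Structural step.} Since $\phi$ is proper and full, I pick $v'\in\Delta$ with $\Lambda:=\phi^{-1}(v')$ of cardinality greater than one. Fullness of $\phi$ (Definition \ref{defn:full morphism}) yields $\st(v_1)\cap(\Gamma\setminus\Lambda)=\st(v_2)\cap(\Gamma\setminus\Lambda)$ for all $v_1,v_2\in\Lambda$, which is exactly the star condition of Definition \ref{defn:almost reduced}. Write $\Lambda=\Lambda_1\oplus\cdots\oplus\Lambda_r$ for the maximal join decomposition, so each $\Lambda_i$ is itself directly indecomposable. Almost-positive reducedness of $\Gamma$ then rules out that any $\Lambda_i$ be a single vertex or a disjoint pair of $\Z/2\Z$-labelled vertices; hence each $\Lambda_i$ has at least two vertices and $\mc{G}(\Lambda_i)$ is isomorphic neither to $\Z$ nor to $D_\infty$. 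Fixing $n$ so that the finite graph $\Gamma$ satisfies $AP_n$, Corollaries \ref{c: types of actions} and \ref{c: stable axis} then imply that for any $w_i\in\Lambda_i$ with $\{w_i\}\cup w_i^\perp\neq\Lambda_i$, the group $\mc{G}(\Lambda_i)$ acts irreducibly on $T_{w_i}(\Lambda_i)$, and this action admits weakly $(n+3)$-stable hyperbolic elements.

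\textbf{Localization of the sentence.} Suppose towards contradiction that $G_{v'}$ satisfies a simple non-generic almost positive sentence $\varphi\equiv\exists z\,(w(z)\neq 1\wedge\psi(z))$; by Theorem \ref{t: quantifier reduction} I may take $\psi(z)\equiv\forall x\exists y\,\Sigma(z,x,y)=1$. Choose a witness $c=(c_1,\dots,c_r)\in G_{v'}^{|z|}$, decomposed coordinate-wise along the direct product $G_{v'}=\mc{G}(\Lambda_1)\times\cdots\times\mc{G}(\Lambda_r)$, and pick $i_0$ with $w(c_{i_0})\neq 1$ in $\mc{G}(\Lambda_{i_0})$. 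Since the coordinate projection $\pi:G_{v'}\onto\mc{G}(\Lambda_{i_0})$ is a retraction and $\psi$ is positive, we have $\mc{G}(\Lambda_{i_0})\models\psi(c_{i_0})$.

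\textbf{Formal solutions and contradiction.} Let $N$ and a finite collection $\mathcal{D}$ of Diophantine conditions on $z$ be those produced by Corollary \ref{c: formal solutions forall exists} for $\psi$. Enlarge $c_{i_0}$ to a finite tuple $\bar c$ generating $\mc{G}(\Lambda_{i_0})$, so that $\langle\bar c\rangle$ acts irreducibly on $T_{w_{i_0}}(\Lambda_{i_0})$. Corollary \ref{l: small cancellation multi} then furnishes a word $u$ such that $a:=u(\bar c)$ is $N$-small cancellation over $c_{i_0}$ with respect to that action. Instantiating $\psi(c_{i_0})$ at $x=a$ supplies $b\in\mc{G}(\Lambda_{i_0})^{|y|}$ with $\Sigma(c_{i_0},a,b)=1$; Corollary \ref{c: formal solutions forall exists} then extracts a Diophantine condition $\exists v\,\xi(z,v)=1$ in $\mathcal{D}$, an element $d\in\mc{G}(\Lambda_{i_0})^{|v|}$ with $\xi(c_{i_0},d)=1$, and a formal solution to $\psi(z)$ relative to $\xi(z,v)=1$. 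Non-genericity of $\varphi$ then forces $w(z)$ to lie in the normal closure of $\xi(z,v)$ in $\F(z,v)$; evaluating at $z\mapsto c_{i_0},v\mapsto d$ gives $w(c_{i_0})=1$ in $\mc{G}(\Lambda_{i_0})$, contradicting the choice of $i_0$.

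\textbf{Main obstacle.} The delicate point is the chaining of Corollaries \ref{l: small cancellation multi} and \ref{c: formal solutions forall exists} in the last step: one must ensure that the enlarged tuple $\bar c$ makes the small-cancellation output compatible with $c_{i_0}$ playing the role of the parameter tuple $z$, and that the Diophantine witness $d$ actually lives inside $\mc{G}(\Lambda_{i_0})$, since it is this last feature that lets non-genericity deliver the equality $w(c_{i_0})=1$. The structural step and the localization to a single direct factor are otherwise routine consequences of the results of Section \ref{sec:factorstrees} together with the almost-positive-reduced hypothesis.
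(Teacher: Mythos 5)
Your argument is correct and follows essentially the same route as the paper: the structural step (fullness gives the star condition, almost positive reducedness forces every factor $\mc{G}(\Lambda_i)$ to be non-singular and non-dihedral, hence irreducible on its vertex trees) is identical, and your unfolding of the small-cancellation/formal-solution machinery is exactly the content of Lemma \ref{l: basic generic almost positive} (invoked in the paper through Lemma \ref{l: simple parameter case}), which the paper simply cites at that point. The only imprecision is the phrase ``a finite tuple $\bar c$ generating $\mc{G}(\Lambda_{i_0})$'' --- the vertex groups need not be finitely generated, but it suffices to take a finite tuple containing $c_{i_0}$ whose support is all of $\Lambda_{i_0}$ and which acts irreducibly, which is all that Corollary \ref{l: small cancellation multi} requires.
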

	\begin{proof}
		Let $v'$ be a vertex in $\Gamma$ such that $\pi^{-1}(v')$ is more than one vertex. As $\phi$ is a proper epimorphism, such a vertex $v'$ exists. We claim that $G_{v'}$ does not satisfy a simple non-generic almost positive sentence.
		
		Consider the direct product decomposition of $G_{v'}$ (it may be trivial). As the graph $\Gamma$ is almost positive reduced it follows that none of the factors of $G_{v'}$ is either singular or infinite dihedral in $\Gamma$, that is, all the components of $G_{v'}$ admit irreducible actions on the associated vertex trees. Therefore, by Lemma \ref{l: simple parameter case} we have that $G_{v'}$ cannot satisfy a non-generic almost positive sentence.
	\end{proof}
	
	\subsection{Cores, completions and extensions}

	\begin{definition}[Weak vertices]\label{defn:weak vertices}
		Let $(\Gamma,\gvv)$ be a group-labelled graph. Let $v$ and $w$ be two vertices of $\Gamma$. We set $v\le w$ if and only if $\st(v)\subseteq \st(w)$. A vertex $w$ is \emph{minimal} if there are no vertices $v$ so that $v\lneq w$.
		
		A vertex $v$ in the label graph $(\Gamma,\gvv)$ is called \emph{weak} if the following two conditions hold:
		\begin{itemize}
			\item there exist vertices $\{w_1,\dots,w_k\}$  such that $(v,w_i)\notin E(\Gamma)$ and $\cap_{i=1,\dots,k}\st(w_i) = \link{v}$. In this case, we say that $W=\{w_1, \dots, w_k\}$ is  \emph{a witness} for the weak vertex $v$;
			\item the group $G_v$ associate to $v$ is elementarily equivalent to $\mathbb Z$.
		\end{itemize}
	\end{definition}
	
	Notice the witness of a weak vertex group may not be unique. In fact, if $W$ is a witness for $v$, then so is $W\cup \{v\}$.
	
	\begin{definition}[Minimal Core Graph] \label{denf: core graph}
		
		Let $(\Gamma,\gvv)$ be a group-labelled graph. \emph{The minimal core} of $(\Gamma, \{G_v\}_{v\in \Gamma})$, denoted by $\minCore(\Gamma,  \{G_v\}_{v\in \Gamma})$ is the group-labelled graph defined as follows: the underlying graph $\minCore(\Gamma)$ is the induced subgraph of $\Gamma$ spanned by the non-weak vertices of $\Gamma$; and the vertex groups are the following ones: for all $v\in \minCore(\Gamma, \gvv)$ such that $G_v\not\equiv D_\infty$, we assign $G_v$ to the vertex $v$; and for all $v\in \minCore(\Gamma, \gvv)$ such that $G_v\equiv D_\infty$, we associate $D_\infty$ to $v$.
		
		Since the vertex groups of the group-labelled graph $\minCore(\Gamma,\gvv)$ are determined by $\gvv$ (although they are not induced), we abuse the notation and omit the explicit reference to them.
	\end{definition}
	
	\begin{example}
		Consider the group-labelled graph $(\Gamma, \gvv)$.
		
		\begin{itemize}
			\item If the graph $\Gamma$ is edgeless, the minimal core graph contains precisely the vertices whose associated groups are not elementarily equivalent to $\mathbb Z$. In particular, for the graph associated to a free group the core is trivial and for the one associated to a free product of free abelian groups of rank more than 1, the minimal core is the whole graph.
			
			\item In a complete graph, all vertices are minimal but not weak so the minimal core coincides with the graph.
			
			\item In a tree with associated vertex groups $\mathbb Z$, leaves are precisely the weak vertices and so the minimal core are the vertices that are not leaves, i.e. vertices with degree more than 1.
			
			\item In a cycle of length more than 4, all vertices are not weak and so the minimal core is the cycle itself. In the cycle of length 4, if the vertex groups are $\mathbb Z$, then all vertices are weak and so the minimal core is trivial.
			
			\item In {\rm Figure \ref{fig:expl}}, we mark minimal and weak vertices in two graphs with associated vertex groups $\mathbb Z$.
		\end{itemize}
		
		\tikzset{node distance=2.5cm, 
			every state/.style={minimum size=1pt, scale=0.3, 
				semithick,
				fill=black},
			initial text={}, 
			double distance=2pt, 
			every edge/.style={ 
				draw,  
				auto}}
		\begin{figure}[!ht] \label{fig:expl}
			\begin{center}
				\begin{tikzpicture}
					\footnotesize
					\node[state, color=red] (1) at (0,0.66) {};  \filldraw (0,0.66) node[align=right,right] {$a_1$};
					\node[state, color=red] (2) at (0,0) {};  \filldraw (0,0) node[align=right,below] {$a_2$};
					\node[state] (3) at (1,0) {};  \filldraw (1,0) node[align=left,above] {$b$};
					\node[state] (4) at (2,0) {};  \filldraw (2,0) node[align=left,above] {$c$};
					\node[state] (5) at (3,0) {};  \filldraw (3,0) node[align=left,above] {$d$};
					\node[state] (6) at (4,0) {};  \filldraw (4,0) node[align=left,above] {$e$};
					\node[state, color=red] (7) at (5,0) {};  \filldraw (5,0) node[align=left,above] {$f$};
					\node[state, color=blue] (8) at (2.5,0.5) {};  \filldraw (2.5,0.5) node[align=right, right] {$x$};
					\draw
					(1) edge (3)
					(3) edge (4)
					(4) edge (5)
					(5) edge (6)
					(6) edge (7)
					(2) edge (3)
					(4) edge (8)
					(5) edge (8);
				\end{tikzpicture}
				
				\vspace{0.5cm}
				\begin{tikzpicture}
					\footnotesize
					\node[state, color=red] (1) at (0,0.66) {};  \filldraw (0,0.66) node[align=right,right] {$a_1$};
					\node[state, color=red] (2) at (0,0) {};  \filldraw (0,0) node[align=right,below] {$a_2$};
					\node[state] (3) at (1,0) {};  \filldraw (1,0) node[align=left,above] {$b$};
					\node[state] (4) at (2,0) {};  \filldraw (2,0) node[align=left,above] {$c$};
					\node[state] (5) at (3,0) {};  \filldraw (3,0) node[align=left,above] {$d$};
					\node[state] (6) at (4,0) {};  \filldraw (4,0) node[align=left,above] {$e$};
					\node[state, color=red] (7) at (5,0) {};  \filldraw (5,0) node[align=left,above] {$f$};
					\node[state, color=red] (8) at (2.5,0.5) {};  \filldraw (2.5,0.5) node[align=right,right] {$x$};
					\node[state, color=red] (9) at (2.5,-0.5) {};  \filldraw (2.5,-0.5) node[align=right,right] {$y$};
					
					\draw
					(1) edge (3)
					(3) edge (4)
					(4) edge (5)
					(5) edge (6)
					(6) edge (7)
					(2) edge (3)
					(4) edge (8)
					(5) edge (8)
					(4) edge (9)
					(5) edge (9);
				\end{tikzpicture}
				\caption{\small Weak vertices in the graphs are marked red; minimal non-weak vertices are marked blue.} \label{fig:expls}
			\end{center}
		\end{figure}
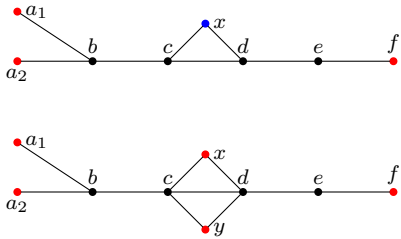
	\end{example}
	\begin{remark}
		Any weak vertex of $(\Gamma, \gvv)$ is necessarily minimal with respect to $\leq$. Indeed, suppose that $W$ is a witness for a weak vertex $v$ and that $v$ is not minimal. Since $v$ is not minimal, there exists a vertex $u\neq v$ such that $\st(u)\subsetneq \st(v)$. By definition, $u\in \link{v}= \st(v)\cap \cap_{w\in W} \st(w)$. This implies that for all $w\in W$ we have that either $u=w$ or $(u,w)\in E(\Gamma)$. The first case cannot occur because $u\in \link{v}$ but $(w, v)\not\in E(\Gamma)$ for all $w\in W$. So assume the second alternative, that is $(u,w)\in E(\Gamma)$ for all $w\in W$. Note that if $v\in W$, then there exists $w\in W$ such that $v\ne w$. If $(u,w)\in E(\Gamma)$, then $w\in \link{u}$ and since $\st(u)\subseteq \st(v)$, we have $(v,w)\in E(\Gamma)$ contradicting the definition of the witness $W$.
	\end{remark}
	
	\begin{remark}
		Notice that if in a graph $\Gamma$ there are two vertices $v$ and $v'$ with the same star, then these vertices are identified in the reduced graph. In particular, in a reduced graph all vertices are different in the partial order.
		
		Notice that if two weak vertices are equal in the partial order $\leq$, their image in the positive reduced graph becomes non-weak. Indeed, by definition the groups associated to the weak vertex groups are elementarily equivalent to $\mathbb Z$ and so the group associated to the vertex group {\rm(}corresponding to the collapsed weak vertices{\rm)} would be elementarily equivalent to $\mathbb Z^n$, where $n>1$ is the number of weak vertices collapsed, and so the group would not be elementarily equivalent to $\mathbb Z$.
	\end{remark}

	\begin{definition}[Core of a graph] \label{defn:ecore}
		Let $(\Gamma, \gvv)$ be a group-labelled graph. A vertex $v\in \Gamma$ is called \emph{redundant} if it is weak and there exist vertices $\{w_1,\dots,w_k\}$ such that $v\ne w_i$, $i=1,\dots, k$ and $\cap_{i=1,\dots,k}\st(w_i) = \link{v}$.

		We define the \emph{group-labelled core of} $\Gamma$, denoted by $\Core(\Gamma, \gvv)$, as follows: \emph{the core of} $\Gamma$, denoted by $\Core(\Gamma)$, is a maximal induced subgraph of $\Gamma$  without redundant vertices. Notice that by definition, $\minCore(\Gamma)\le \Core(\Gamma)$. The graph $\Core(\Gamma)$ is well-defined up to isomorphism, see Lemma \ref{lem:ecore}.

		The vertex groups associated to the vertices of $\Core(\Gamma)$ are defined as follows: for all $v\in \minCore(\Gamma, \gvv)$, we associate the corresponding $G_v$ in $\minCore(\Gamma, \gvv)$; for all weak vertex $v\in \Gamma$, $v\notin \minCore(\Gamma)$, we assign to it the infinite cyclic group $\mathbb Z$. (Notice that as the vertex is weak, the associated group in $\Gamma$ is elementarily equivalent to $\mathbb Z$).
		
		As we did with the $\minCore(\Gamma, \gvv)$, since the vertex groups of the group-labelled graph $\Core(\Gamma,\gvv)$ are determined by $\gvv$, we abuse the notation and omit the explicit reference to them.
	\end{definition}

	\begin{example}
		\,
		Consider $(\Gamma, \gvv)$ where $G_v\simeq \mathbb Z$.
		\begin{itemize}
			\item Let $\Gamma$ be the edgeless graph with $k$ vertices. The core is an edgeless graph with 2 vertices. Indeed, the minimal core of $\Gamma$ is trivial as all vertices are weak. The vertices in the graph with 2 vertices are not redundant; on the other hand, the graph with 3 vertices has one redundant vertex.
			
			\item More generally, let $\Gamma$ be the disjoint union of graphs $\Gamma_i$. If there are at least two $\Gamma_i$ with more than one vertex, then all the isolated vertices are redundant. If there is only one component with more than one vertex, then at most one isolated vertex is non redundant. By the previous example, if all the $\Gamma_i$ are isolated vertices, then the core graph is precisely 2 isolated vertices.
			
			\item Let $\Gamma$ be a graph with diameter greater than 2 and let $\Gamma'$ be the disjoint union of $\Gamma$ with a finite set of isolated vertices. Then the isolated vertices are redundant in $\Gamma'$.
			
			\item In all cycles, the core is the cycle itself. Indeed, if the cycle is of length more than 4, then the minimal core (and so the core) is the cycle. If the cycle is of length less than 4, then it is complete and the minimal core is the cycle. If the cycle is of length 4, then all the vertices are weak but non-redundant, so the minimal core is trivial but the core is the cycle.
			
			\item If $\Gamma$ is a tree of diameter more than 2, then the core is obtained from the minimal core by joining each leaf of the minimal core with a new vertex, which is a leaf of $\Gamma$. In particular, the core of a path is the path. If $\Gamma$ has diameter 2, then the minimal core is a unique vertex and the core is the path with 3 vertices.
		\end{itemize}
	\end{example}
	
	\begin{lemma}\label{lem:ecore}
		The core of a positive reduced graph exists and it is unique up to isomorphism.
	\end{lemma}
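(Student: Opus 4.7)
For existence, since $\Gamma$ is finite I build $\Core(\Gamma)$ by a greedy procedure: starting from $\minCore(\Gamma)$, iteratively add a weak vertex $v \in V(\Gamma) \setminus V(\minCore(\Gamma))$ to the current induced subgraph as long as doing so does not introduce a redundant vertex. By definition a redundant vertex must be weak, so $\minCore(\Gamma)$ stays inside the subgraph throughout. The procedure terminates after finitely many steps at an induced subgraph $\Gamma^\ast \supseteq \minCore(\Gamma)$ that is maximal with the property of having no redundant vertex; we designate $\Core(\Gamma) := \Gamma^\ast$.

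For uniqueness, let $\Gamma_1, \Gamma_2$ be two such maximal induced subgraphs containing $\minCore(\Gamma)$ without redundant vertices. Every vertex of $\Gamma_i \setminus \minCore(\Gamma)$ is a weak vertex of $\Gamma$ and, by the label assignment on the core, carries the group $\mathbb{Z}$. If two weak vertices $u, v$ satisfy $\link_\Gamma(u) = \link_\Gamma(v)$, then they have identical neighborhoods in $\Gamma$ (hence in any induced subgraph), so any bijection between the weak-vertex sets of $\Gamma_1$ and $\Gamma_2$ that preserves $\link_\Gamma(\cdot)$ extends the identity on $\minCore(\Gamma)$ to a group-labelled graph isomorphism $\Gamma_1 \to \Gamma_2$. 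Thus it suffices to show that, for each subset $L \subseteq V(\Gamma)$ arising as the link of a weak vertex, both $\Gamma_1$ and $\Gamma_2$ contain the same number of weak vertices $v$ with $\link_\Gamma(v) = L$.

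I would establish this equality by induction on $|V(\Gamma_1) \triangle V(\Gamma_2)|$ via an exchange step. If the symmetric difference is nonempty, pick $v \in V(\Gamma_1) \setminus V(\Gamma_2)$; by maximality of $\Gamma_2$, the enlargement $\Gamma_2 \cup \{v\}$ contains a redundant weak vertex $r$. Comparing the redundancy witness of $r$ in $\Gamma_2 \cup \{v\}$ to the hypothesis that $v$ is not redundant in $\Gamma_1$, and invoking the positive-reducedness of $\Gamma$, one locates a weak vertex $u \in V(\Gamma_2) \setminus V(\Gamma_1)$ with $\link_\Gamma(u) = \link_\Gamma(v)$. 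The subgraph $\Gamma_1' := (\Gamma_1 \setminus \{v\}) \cup \{u\}$ is then again a maximal induced subgraph containing $\minCore(\Gamma)$ with no redundant vertex, since swapping two vertices with identical neighborhoods in $\Gamma$ preserves the induced-subgraph structure up to an isomorphism of group-labelled graphs, and moreover $|V(\Gamma_1') \triangle V(\Gamma_2)| = |V(\Gamma_1) \triangle V(\Gamma_2)| - 2$, so the induction closes.

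\textbf{Main obstacle.} The delicate point is the exchange step itself, namely the assertion that a $u \in V(\Gamma_2) \setminus V(\Gamma_1)$ with $\link_\Gamma(u) = \link_\Gamma(v)$ must always exist. Here the positive-reducedness hypothesis is essential: it rules out degenerate ``join-type'' configurations that would otherwise allow $v$ to contribute to the redundancy of $r$ in $\Gamma_2 \cup \{v\}$ in a way not mirrored by any vertex of $\Gamma_2$ sharing $v$'s link, thereby obstructing the swap.
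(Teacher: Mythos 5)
Your high-level architecture (build the core greedily, then prove uniqueness by an exchange argument that swaps vertices with equal links) is a reasonable reorganization of the paper's top-down confluence argument, and the reduction of uniqueness to the exchange step is correctly set up: two weak vertices with the same link in $\Gamma$ are non-adjacent and have identical neighbourhoods, so transposing them is an automorphism of $\Gamma$ carrying one maximal redundancy-free subgraph to another. However, the proof has a genuine gap: the exchange step itself — the claim that for $v\in V(\Gamma_1)\setminus V(\Gamma_2)$ there exists $u\in V(\Gamma_2)\setminus V(\Gamma_1)$ with $\lk_\Gamma(u)=\lk_\Gamma(v)$ — is exactly where all the mathematical content of the lemma lives, and you assert it rather than prove it (you even label it ``the main obstacle'' without resolving it). The paper's proof consists precisely of this missing analysis: writing down the witness sets $W$ and $W'$ for the two interfering redundant vertices, observing that non-redundancy of $v$ after removing $v'$ forces $v'\in W$ and hence $\lk_\Gamma(v)\subseteq\st_\Gamma(v')$, splitting into the cases $(v,v')\notin E(\Gamma)$ and $(v,v')\in E(\Gamma)$, deducing $\lk(v)=\lk(v')$ in the first case and $\st(v)=\st(v')$ in the second, and finally invoking reducedness of the graph to exclude the second case. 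Note in particular that reducedness (no two vertices with equal stars), not merely positive-reducedness, is what kills the adjacent case; your sketch gestures at ``positive-reducedness rules out degenerate join-type configurations'' but never identifies this dichotomy.

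There is also a secondary issue you should be aware of in the existence half: your greedy procedure terminates at a subgraph to which no single vertex can be added without creating a redundant vertex, but since ``has no redundant vertex'' is not a hereditary property of induced subgraphs, this is not obviously the same as being maximal by inclusion among redundancy-free subgraphs, and your induction on the symmetric difference silently uses greedy maximality of $\Gamma_2$ (namely, that $\Gamma_2\cup\{v\}$ must contain a redundant vertex). Moreover, the redundant vertex $r$ of $\Gamma_2\cup\{v\}$ need not be $v$ itself, and when $r\ne v$ you must still rule out $r\in V(\Gamma_1)\cap V(\Gamma_2)$ before you can conclude that the vertex with $v$'s link lies in $V(\Gamma_2)\setminus V(\Gamma_1)$; neither point is addressed. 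As written, the proposal is an outline whose only nontrivial step is left open.
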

	\begin{proof}
		We show that one can remove redundant vertices one-by-one and that the graph we obtained is unique (up to isomorphism) independent of the order we remove the redundant vertices.
		
		It suffices to show that given any two vertices $v$ and $v'$ which are redundant in $\Gamma$, then we can either remove both $v$ and $v'$ or if removing  $v$ makes $v'$ not redundant, then the converse also holds and the resulting graphs (removing $v$ and removing $v'$) are isomorphic.
		
		If $v$ is redundant in $\Gamma'=\Gamma \smallsetminus \{v'\}$, then the statement is clear. So, suppose that  $v$ is not redundant in $\Gamma'$.

		By definition, since redundant vertices are weak, there exists $w_1',\dots, w_l'$ and $w_1,\dots, w_k\in\Gamma$ so that
		$$
		\lk_\Gamma(v')=\bigcap_{i=1,\dots,l}\st_{\Gamma}(w_i')=\hbox{ and }
		\lk_\Gamma(v)=\bigcap_{i=1,\dots,k}\st_{\Gamma}(w_i).
		$$
		Since $v$ is not redundant in $\Gamma'$, we have that $w_i=v'$ for some $i=1,\dots, k$. Therefore, $\lk_\Gamma(v)\subseteq \st_\Gamma(v')$.
		
		We consider two cases. Firstly, suppose that $(v,v')\notin E(\Gamma)$. Then,
		$$
		\lk_\Gamma(v)=\bigcap_{i=1,\dots,l}\st_{\Gamma}(w_i')\cap \bigcap_{i=1,\dots,k}\st_{\Gamma}(w_i).
		$$
		If $w_j'\neq v$ for all $j=1,\dots, l$, then
		$$
		\lk_{\Gamma'}(v)=\bigcap_{i=1,\dots,l}\st_{\Gamma'}(w_i')\cap \bigcap_{i=1,\dots,k}\st_{\Gamma'}(w_i),
		$$
		which contradicts our assumption of $v$ being redundant, hence this case is impossible and $w_j'=v$ for some $j$.
		
		We now have $\lk_\Gamma(v)=\bigcap_{i=1,\dots,k}\st_{\Gamma}(w_i)\cap \st_\Gamma(v')$ and $\lk_\Gamma(v')=\bigcap_{i=1,\dots,l}\st_{\Gamma}(w_i')\cap \st_{\Gamma}(v)$. Therefore,  $\link{v'}= \link{v}$ and there exists an automorphism of $\Gamma$ sending $v$ to $v'$.
		
		Suppose that $(v,v')\in E(\Gamma)$. Then,
		$$
		\link{v}= \st_\Gamma(v')\cap \bigcap_{i=1,\dots,k}\st_{\Gamma}(w_i)=(\bigcap_{i=1,\dots,l}\st_{\Gamma}(w_i')\cup\{v'\}) \cap \bigcap_{i=1,\dots,k}\st_{\Gamma}(w_i).
		$$
		If $w_j'\neq v$ for all $j=1,\dots, l$, then $\lk_{\Gamma'}(v)=\bigcap_{i=1,\dots,l}\st_{\Gamma'}(w_i')\cap \bigcap_{i=1,\dots,k}\st_{\Gamma'}(w_i)$,
		which contradicts our assumption of $v$ being redundant, hence this case is impossible and $w_j'=v$ for some $j$.
		
		We now have
		the    $$
		\lk_\Gamma(v)=\bigcap_{i=1,\dots,k}\st_{\Gamma}(w_i)\cap (\lk_{\Gamma}(v') \cup \{v'\})=\bigcap_{i=1,\dots,k}\st_{\Gamma}(w_i)\cap (\bigcap_{i=1,\dots,l}\st_{\Gamma}(w_i') \cup \{v'\})
		$$ and
		$$
		\lk_\Gamma(v')=\bigcap_{i=1,\dots,l}\st_{\Gamma}(w_i')\cap (\bigcap_{i=1,\dots,k}\st_{\Gamma}(w_i) \cup \{v\}).
		$$
		It follows that $\st(v)=\st(v')$. We note that in this case there exists an automorphism of $\Gamma$ sending $v$ to $v'$, but, in our setting, $\st(v)=\st(v')$ is impossible, since we assume that the graph $\Gamma$ is reduced.
	\end{proof}
	
	The following lemma follows directly from definitions.
	\begin{lemma}
		Let $(\Gamma, \gvv)$ be a group-labelled graph and let $\Core(\Gamma, \gvv)$ be the group-labelled core of $\Gamma$. Then, $(\Gamma, \gvv)$ and $\Core(\Gamma, \gvv)$ have the same minimal core and the same core, that is $\minCore(\Gamma, \gvv)=\minCore(\Core(\Gamma), \gvv)$ and  $\Core(\Core(\Gamma),\gvv)=\Core(\Gamma, \gvv)$.
		
	\end{lemma}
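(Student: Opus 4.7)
The second equality $\Core(\Core(\Gamma),\gvv) = \Core(\Gamma,\gvv)$ is immediate from the construction: by definition $\Core(\Gamma,\gvv)$ is a maximal induced subgraph of $\Gamma$ containing no redundant vertex, so the $\Core$ operation applied to it removes nothing. The vertex labelling is also preserved, since a vertex in $\minCore(\Gamma)$ already carries its original group $G_v$ and a weak-but-not-redundant vertex already carries the label $\mathbb{Z}$; in particular, applying the $\Core$ rule to $\Core(\Gamma,\gvv)$ reproduces the same assignment.

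For the first equality, my plan is to proceed by induction along a removal sequence $\Gamma = \Gamma_0 \supset \Gamma_1 \supset \cdots \supset \Gamma_n = \Core(\Gamma)$ where each $\Gamma_{i+1} = \Gamma_i \setminus \{u_i\}$ for some $u_i$ redundant in $\Gamma_i$; such a sequence exists by the construction of $\Core$, and its endpoint is independent of choices by Lemma \ref{lem:ecore}. Since the redundant vertex $u_i$ is in particular weak in $\Gamma_i$ and so lies outside $\minCore(\Gamma_i)$, the inductive step reduces to showing that for every $v \ne u_i$, $v$ is weak in $\Gamma_i$ if and only if $v$ is weak in $\Gamma_{i+1}$. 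Granted this equivalence, the induced subgraphs on non-weak vertices of $\Gamma_i$ and of $\Gamma_{i+1}$ coincide (with matching labels, since the group condition only uses $G_v \equiv \mathbb{Z}$), and iteration gives $\minCore(\Gamma,\gvv) = \minCore(\Core(\Gamma),\gvv)$.

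The forward direction of the single-vertex equivalence is a substitution argument using the redundancy witness $W_{u_i} \subseteq V(\Gamma_{i+1})$ of $u_i$. Given a witness $W$ for $v$ in $\Gamma_i$: if $u_i \notin W$ then $W$ restricts to a witness in $\Gamma_{i+1}$. If $u_i \in W$, replace $u_i$ by $W_{u_i}$ to form $W' = (W \setminus \{u_i\}) \cup W_{u_i}$; the verification rests on the identity $\st(u_i) = \link(u_i) \cup \{u_i\}$ combined with the observation that some $w_0 \in W \setminus \{u_i\}$ is non-adjacent to $u_i$ (otherwise $u_i$ would be adjacent to every element of $W\setminus\{u_i\}$, forcing $u_i \in \bigcap_{W\setminus\{u_i\}} \st(w)$ and thus $u_i \in \link(v)$, contradicting $(v,u_i) \notin E$), so that the substitution of $\bigcap_{W_{u_i}} \st(w) = \link(u_i)$ for $\st(u_i)$ preserves the intersection. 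The reverse direction is symmetric: given a witness $W'$ in $\Gamma_{i+1}$, one augments $W'$ with appropriate elements of $W_{u_i}$, in such a way that $u_i$ is included in, or excluded from, $\bigcap_{W^*} \st_{\Gamma_i}(w)$ according to whether $(v,u_i)$ is an edge of $\Gamma_i$.

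The main obstacle is precisely this bookkeeping in the replacement step: one must verify that the modified intersection of stars lands on the link, neither more nor less. The iterative one-vertex-at-a-time framework is what makes this tractable, since each replacement is controlled by a single redundancy witness lying in the already-reduced graph $\Gamma_{i+1}$. Once this weakness-preservation lemma is proved at the level of a single removal, both equalities in the statement follow at once.
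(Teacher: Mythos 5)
Your reduction to a single removal of a redundant vertex, together with the claim that weakness of every surviving vertex is preserved, is the right framework, and your forward direction is essentially sound: the intersection computation for $W'=(W\setminus\{u_i\})\cup W_{u_i}$ is correct. You do, however, omit one check: the elements of the redundancy witness $W_{u_i}$ need not be non-adjacent to $v$, which the definition of a weak-vertex witness requires. This is repairable: any $x\in W'\cap \lk(v)$ satisfies $\lk(v)\subseteq\st(x)$ and $v\in\st(x)$, hence $\st(v)\subseteq\st(x)$, so replacing every such $x$ by $v$ itself leaves the intersection equal to $\lk(v)$ and produces a legitimate witness.

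The genuine gap is in the reverse direction, which is where the whole content of the lemma sits and which is not ``symmetric'' at all. When $(v,u_i)\in E(\Gamma_i)$ you need $u_i\in\bigcap_{w\in W^{*}}\st_{\Gamma_i}(w)$, i.e.\ \emph{every} element of the witness must be adjacent to $u_i$; but augmenting $W'$ can only shrink an intersection of stars, so if even one $w\in W'$ is non-adjacent to $u_i$, no enlargement of $W'$ will put $u_i$ back in. What must actually be proved is that every $w\in W'$ is \emph{forced} to be adjacent to $u_i$. This does follow from redundancy of $u_i$, but only if the redundancy witness $R$ consists of non-neighbours of $u_i$: then each $x\in R$ has $v\in\st(x)$ and $x\neq v$ (as $x\notin\st(u_i)\ni v$), so $R\subseteq\lk_{\Gamma_{i+1}}(v)\subseteq\bigcap_{w\in W'}\st(w)$; if some $w\in W'$ had $(w,u_i)\notin E$, then $w\notin\lk(u_i)=\bigcap_{x\in R}\st(x)$ yields $x_0\in R$ with $w\notin\st(x_0)$, contradicting $x_0\in\st(w)$. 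This step genuinely depends on reading the redundancy witness in Definition \ref{defn:ecore} as a weak-witness avoiding $u_i$ (the definition literally only requires $w_i\neq u_i$). Under the literal reading the single-removal claim fails: take the $4$-cycle on $v,p,c,q$ (edges $vp,pc,cq,qv$) with a pendant vertex $u$ attached to $v$ and all groups $\mathbb Z$. Then $\st(v)\cap\st(p)\cap\st(q)=\{v\}=\lk(u)$, so $\{v,p,q\}$ is a redundancy witness for $u$ in the literal sense (and $u$ is weak via $\{u,p\}$), yet $v$ is not weak in $\Gamma$ (its only non-neighbour is $c$ and $u\notin\st(c)$) while it becomes weak in $\Gamma\setminus\{u\}$; so $\minCore$ would not be preserved. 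Your write-up needs to fix the convention and replace the ``augmentation'' step by the forced-adjacency argument; as it stands the key case is not established.
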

	
	\begin{definition}[$d$-completion]\label{defn:D-completion}
		Let $\G(\Gamma, \gvv)$ be a graph product. We define the \emph{$d$-completion} of $\G$ as the group-labelled graph $(\Gamma^d, \{E_v\}_{v\in \Gamma^d})$ where:
		\begin{itemize}
			\item The set of vertices  $V(\Gamma^d)$ is in bijective correspondence with the set consisting of the following sets:
			\begin{itemize}
				\item singular subgroups $H_v$ for $v\in \Gamma^e$;
				
				\item The set $D_{\max}$ of maximal (by inclusion) non-singular dihedral and infinite cyclic subgroups  $H$ of $\G$ such that $|\ff(H)|=1$; more precisely, let
				\begin{align*}
					D=&\{ \subg{g} \mid g\in \G, g \notin \bigcup_{v\in \Gamma^e} H_v, \  |\ff(g)|=1\} \cup \\ &\cup\{ \subg{g,h}\mid g^2=h^2=1,\ [g,h]\ne 1,\ |\ff(g,h)|=1,\ \subg{g,h} \not< \bigcup_{v\in \Gamma^e}H_v\},
				\end{align*}
				then  $D_{\max}$ is the subset of maximal (by inclusion) sets from $D$.
				
				\item the set $C_2$ of singular involutions dominated by a dihedral group, that is, $C_2$ is the set of $g\in \G$ such that
				\begin{itemize}
					\item (involutions) $g\ne 1$ and $g^2=1$;
					\item (singular) $|\ff(g)|=1$;
					\item (contained and dominated by a dihedral group) if $g\in H_v$ for some $v\in \Gamma^e$, then either $H_v \simeq \subg{g}$ and there exists $E_w \in D_{\max}$ such that $C(g) < E_w \times E_w^\perp$; or $H_v \equiv D_\infty$.
				\end{itemize}
				
			\end{itemize}  Given $v\in \Gamma^d$, we denote by $E_v$ the singular, dihedral or cyclic (of order 2 or infinite) subgroup associated to $v$.
			
			\item $E(\Gamma^d)$ are determined by subgroup commutation: $(v,v')\in E(\Gamma^d)$ if and only if $[E_v, E_{v'}]=1$.
		\end{itemize}
	\end{definition}
	
	\begin{lemma}
		If $\Gamma$ satisfies the property $AP_n$, so does the graph $\Gamma^d$.
	\end{lemma}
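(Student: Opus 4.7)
My plan is to use Proposition \ref{prop:extensionAP}, which guarantees $\Gamma^e$ satisfies $AP_n$, and to translate any purported induced path of length $n$ in $\overline{\Gamma^d}$ into an induced path of length at least $n$ in $\overline{\Gamma^e}$, yielding a contradiction.

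Every vertex $v\in V(\Gamma^d)$ has $|\ff(E_v)|=1$ by the three clauses defining $\Gamma^d$, so $\supp(E_v)=\G(\Delta_v)^{g_v}$ for some directly indecomposable induced subgraph $\Delta_v$ of $\Gamma$. I would attach to $v$ its footprint $V_v=\{w^{g_v}\mid w\in\Delta_v\}\subseteq V(\Gamma^e)$; this is a single vertex when $v$ is singular or when $v\in C_2$, and the vertex set of a directly indecomposable induced subgraph of $\Gamma^e$ of size at least $2$ when $v\in D_{\max}$, in which case the induced subgraph of $\overline{\Gamma^e}$ on $V_v$ is connected.

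Assuming for contradiction that $\overline{\Gamma^d}$ contains an induced path $v_0,v_1,\dots,v_n$, so that $[E_{v_i},E_{v_j}]=1$ if and only if $|i-j|>1$, I analyze commutation in two directions. For each consecutive pair, non-commutation of $E_{v_i}$ and $E_{v_{i+1}}$ prevents their supports from being orthogonal (orthogonal subgraph products commute element-wise), and this non-orthogonality supplies vertices $u_i^-\in V_{v_i}$ and $u_i^+\in V_{v_{i+1}}$ that are non-adjacent in $\Gamma^e$. For each non-consecutive pair, the commutation $[E_{v_i},E_{v_j}]=1$, together with the centralizer description in Lemma \ref{l: description of centralizers} and the maximality and domination clauses built into $D_{\max}$ and $C_2$, forces $\supp(E_{v_i})\perp\supp(E_{v_j})$, so every pair in $V_{v_i}\times V_{v_j}$ is adjacent in $\Gamma^e$.

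I would then assemble an induced path in $\overline{\Gamma^e}$ by concatenating, for each $i$, a geodesic in the induced subgraph of $\overline{\Gamma^e}$ on $V_{v_i}$ joining $u_{i-1}^+$ to $u_i^-$ (available by connectedness) with the edge $u_i^-u_i^+$. The resulting walk has length at least $n$; potential chords are excluded by the orthogonality of $V_{v_i}$ and $V_{v_j}$ for $|i-j|>1$ and by choosing geodesics inside each $V_{v_i}$. This yields an induced path of length at least $n$ in $\overline{\Gamma^e}$, contradicting the conclusion of Proposition \ref{prop:extensionAP}. The main obstacle will be the non-consecutive case, where I must show that commutation of $E_{v_i}$ and $E_{v_j}$ with $|i-j|>1$ really forces orthogonality of their directly indecomposable supports rather than allowing one to sit inside the other: the maximality in the definition of $D_{\max}$ and the condition $C(g)<E_w\times E_w^\perp$ imposed on $C_2$-vertices are precisely what preclude such hidden containments, but converting them into the required orthogonality statement is the delicate technical step.
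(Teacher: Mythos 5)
Your proposal is correct and follows essentially the same route as the paper's proof: both reduce to Proposition \ref{prop:extensionAP} (that $\Gamma^e$ inherits $AP_n$) and convert a long induced path in $\overline{\Gamma^d}$ into one in $\overline{\Gamma^e}$ by expanding each vertex into its support, threading a path through the (connected) complement of the directly indecomposable support for $D_{\max}$-vertices, collapsing $C_2$-vertices to their ambient singular vertex, and using that $\st_{\Gamma^d}(v)=\bigcap_{v'\in\supp(E_v)}\st_{\Gamma^d}(v')$ to control chords. The only difference is organizational --- the paper replaces one non-$\Gamma^e$ vertex at a time by induction, whereas you perform the substitution globally --- and the ``delicate step'' you flag is exactly the star-intersection identity the paper invokes.
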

	\begin{proof}
		By Proposition \ref{prop:extensionAP}, the graph $\Gamma^e$ has property $AP_n$.
		
		Let $p$ be an induced path in the complement graph of $\Gamma^d$. Our goal is to show that $p$ has length bounded by $n$. We prove it by induction on the number of vertices $v\in \Gamma^d\setminus 
		\Gamma^e$ in the path $p$. If there are none, then $p$ is a path in the complement of $\Gamma^e$ and since $\Gamma^e$ satisfies property $AP_n$, the length of the path is bounded by $n$.
		
		Let $v$ be a vertex of $p$ so that $v\in \Gamma^d\smallsetminus \Gamma^e$. 
		
		Assume first that $E_v$ is a group of order $2$. From the definition of the $d$-completion, $E_v$ is singular and so $E_v < H_{v'}$ for some $v'\in \Gamma^e$. In this case, we can be replace the vertex $v$ in $p$ by the vertex $v'$ from $\Gamma^e$. The resulting path $p'$ still defines an induced path in the complement graph of $\Gamma^d$, has the same length as $p$, but contains strictly less vertices from $\Gamma^d\setminus \Gamma^e$ and so by induction, we conclude that its length is bounded by $n$.
		
		Suppose now that $E_v$ corresponds to either a maximal non-singular cyclic or dihedral subgroup of $\G(\Gamma)$. Let $\G(\Delta)^g=\supp(E_v)$. Since $\Gamma^d$ is invariant up to conjugation, we can assume without loss of generality that $g=1$, that is $\supp(E_v)=\G(\Delta)$ for some $\Delta < \Gamma$. 
		
		We claim that there exist vertices $v_1,\dots, v_k \in \Delta$, which define a path in the complement of $\Gamma$ and so that the induced subgraph in the complement of graph of $\Gamma^d$ defined by the vertices $(V(p)\setminus \{v\}) \cup \{v_1,\dots, v_k\}$ is a path. Thus, we obtain a path $q$ whose length is bigger than or equal to that of $p$ and with less vertices in $\Gamma^d\setminus \Gamma^e$ than $p$. Therefore, by induction, we deduce that the length of $q$ (and so the length of $p$) is bounded by $n$.
		
		Let us prove the claim. Observe that if $v'\in \Delta$ and $w\in \Gamma^d$ are so that $(v',w)\notin E(\Gamma^d)$, then $(v,w)\notin E(\Gamma^d)$ as $\supp(E_v)=\G(\Delta)$ and $star_{\Gamma^d}(v) = \bigcap\limits_{v'\in \Delta} star_{\Gamma^d}(v')$. Let $u_1$ and $u_2$ be the two vertices of $p$ (only $u_1$ if $v$ is a leaf of $p$) connected to $v$, that is $(v,u_1), (v,u_2)\notin E(\Gamma^d)$. Since $(v,u_1), (v,u_2)\notin E(\Gamma^d)$, there exist $v_1, v_k\in \Delta$ (not necessarily different) such that $(u_1,v_1), (u_2,v_k)\notin E(\Gamma^d)$.  Since $|\ff(g)|=1$, the complement of $\Delta$ is connected and so it follows that there exist vertices $v_2, \dots, v_{k-1}\in \Delta$ so that $v_1,\dots, v_k$ define a path in the complement of $\Delta$ and so in the complement of $\Gamma$. Finally, since for all $u$ in $p$, $u\ne u_1, u_2$ we have that $(u,v)\in E(\Gamma^d)$, by the description of the $star(v)$ it follows that $(u,v_i)\in E(\Gamma^d)$ for all $i=1, \dots, k$. Therefore, the induced subgraph defined by $V(p)\setminus \{v\} \cup \{v_1, \dots, v_k\}$ is a path in the complement of $\Gamma^d$ that has less vertices in $\Gamma^d\setminus \Gamma^e$ than $p$ and so the Claim is proven.
	\end{proof}
	
	\begin{definition}[Partial preorder in $\Gamma^d$] \label{d:d-order}
		We define a partial preorder in $\Gamma^d$, denoted by $\preceq^d$ as follows: $v \preceq^d v'$ if and only if $E_v \times E_v^\perp \subseteq E_{v'}\times E_{v'}^\perp$. We write $v \prec^d v'$ if $E_v \times E_v^\perp \subsetneq E_{v'}\times E_{v'}^\perp$ and $v=^d v'$ if $E_v \times E_v^\perp = E_{v'}\times E_{v'}^\perp$.
	\end{definition}
	
	\begin{example}
		Let $\G$ be the right-angled Coxeter group defined by a pentagon with generators $a_i$, $i=0, \dots, 4$ and relations $[a_i,a_{i+1}]=1 \, (mod \, 5)$ . Then $\subg{a_0,a_2}$ is a maximal non-singular dihedral subgroup and $|\ff(a_0,a_2)|=1$, hence it corresponds to a vertex $v\in \Gamma^d$. Let $v'$ be the vertex associated to the singular vertex group generated by $a_1$. Then $v' =^d v$.
	\end{example}

	\begin{remark}\label{rem:properties Gammad}\,
		
		\begin{itemize}
			
			\item  Notice that the extension graph $\Gamma^e$ is an induced subgraph of $\Gamma^d$.
			
			\item For all $v\in \Gamma^d$, we have that $E_v^\perp$ is a subgraph product, i.e. if $E_{v'} < E_v^\perp$, then either $v'\in \Gamma^e$ or $E_v$ is a dihedral group of the from $G_w \ast G_{w'}$ where $w,w'\in \Gamma^e$.
			
			\item if $v' \preceq^d v$ for some $v,v'\in \Gamma^e$, and $E_v$ is infinite cyclic and $E_{v'}$ is singular, then there exists $w\in \Gamma^e$ such that $E_w$ is singular and $v' \prec^d w$. Indeed, if $E_v\times E_v^\perp < E_{v'} \times E_{v'}^\perp$ and $E_{v}$ is a non-singular infinite cyclic subgroup, then $E_{v'}\times E_{v'}^\perp < E_{v}^\perp$. Let $\{\G(\Delta)^h\} = \ff(E_{v})$. Then it suffices to take as $w$ any vertex in $\Delta^h < \Gamma^e$ since $E_{v}^\perp < E_w^\perp$ and so $E_{v'}\times E_{v'}^\perp < E_w \times E_w^\perp$.
			
			\item if $v' \preceq^d v$, $E_v$ is dihedral and $E_{v'}$ is singular, then either $E_{v'}<E_v$ or there exists $w\in \Gamma^e$ such that $E_w$ is singular and $v' \prec^d w$.
		\end{itemize}
	\end{remark}

	\begin{definition}
		Let $\mathcal G=\mathcal G(\Gamma, \gvv)$ be a graph product. A vertex $v\in \Gamma^d$ is called $\preceq^d$-maximal if it is maximal in the preorder $\preceq^d$, that is, there is no $w\in \Gamma^d$ such that $v \prec^d w$.
	\end{definition}
	
	We next introduce an induced subgraph of $\Gamma^d$, the $c$-completion of $\Gamma$.
	
	\begin{definition}[$c$-completion]\label{defn:Gamma^c}
		Let $\Gamma^c=(V^c,E^c)$ be the induced subgraph of $\Gamma^d$ that contains $\Gamma^e$ and the vertices $v\in \Gamma^d$ such that $E_v$ is either infinite non-singular cyclic or cyclic of order 2.
	\end{definition}
	
	Recall that our goal is to find an elementary invariant of an appropriate class of graph product, the $\Core(\Gamma, \gvv)$. However, in order to interpret $\Core(\Gamma, \gvv)$ as a group-labelled graph, we actually interpret a larger graph, the extended core, that we introduce in the next definition.
	
	Roughly speaking, the extended core is a graph that can be obtained from the core by doing two operations; firstly, we replace each singular vertex group elementarily equivalent to an infinite dihedral group by a dihedral group. Secondly, we  consider the different equivalence classes of weak vertex groups and non-singular cyclic subgroups (from vertices in $\Gamma^c$) that have the same link up to conjugacy. If the equivalence class is not represented in the core of $\Gamma$ by a singular vertex group, we add a vertex for that equivalence class and connect it to the common orthogonal of all the cyclic subgroups in the class.
	
	For instance, if we consider $(\Gamma, \{\mathbb Z\}_{v\in \Gamma})$ where $\Gamma$ is the path with 4 vertices (which we denote by $a,b,c,d$ and $a$ and $d$ are the leaves), then the core of $\Gamma$ is precisely $\Gamma$. The equivalent classes of cyclic subgroups (either singular or corresponding to vertices in $\Gamma^c$) are classified by their orthogonal (and up to conjugacy) as follows: the class whose orthogonal is generated by either $b$ or $c$ or it is trivial. The class whose orthogonal is $b$ is represented by the weak vertex groups generated by $a$ in $\Core(\Gamma, \gvv)$; similarly, the class whose orthogonal is generated by $c$ is represented by the weak vertex generated by $d$. However, the class of cyclic subgroups with trivial orthogonal is not represented by any singular vertex group in $\Core(\Gamma, \gvv)$. Therefore, we add a vertex to $\Core(\Gamma, \gvv)$ which is isolated (as the orthogonal of the class is trivial) and assign as a vertex group and infinite cyclic group.
	
	Although the extend core could be described as above, we decided to define it in a somehow more convoluted way that follows the steps used for its interpretation in the class of graph products we consider.
	
	The interpretation of the extended core follows this steps. First we show that the (labelled) graph $\Gamma^c$ is interpretable; we then consider the quotient of $\Gamma^c$ by two equivalence relations, one defined by conjugation and the second one on (groups elementarily equivalent to) infinite cyclic subgroups that have the same orthogonal. This may produce a quotient that is too small - indeed if we consider the free group of rank $n>1$, all cyclic subgroup have trivial orthogonal so they all belong to the same equivalence class. In this case, the quotient would be a unique vertex group while the core contains two vertices. This phenomenon occurs when in the graph there are vertices which although being weak (and so they do not belong to the $\minCore$), they are not redundant (so they stay in the core). We will identify this equivalence classes using the first-order theory and for them we will add two vertices representing this class. However, for other classes, for instance the class of cyclic subgroup with trivial orthogonal in the example of the path above, if they have a witness for the weak vertices that belongs to the $\minCore$ of $\Gamma$, then one only adds one vertex to represent this class.
	
	There is yet another technical problem apparent in the interpretation - to distinguish using first-order sentences the  singular dihedral groups from the non-singular ones (for instance, the ones defined by two singular involutions that belong to vertex groups that do not commute). In order to interpret the singular dihedral groups, we use properties of their involutions. Roughly speaking, this brings us to have to make visible the involutions that belong to singular vertex groups (which makes the graph $I(\Gamma)$ in the definition non-reduced as some of them could be collapse to a dihedral group) in order to detect them. This is the reason why we introduce the set $C_2$ in the definition of $\Gamma^d$. At this point one may wonder why, when we reduce the graphs, we collapse some vertex groups of order 2 into a dihedral group in the first place. The reason is that if a singular vertex group is elementarily equivalent to an infinite dihedral group, its involutions will satisfy the same properties as the ones from a singular dihedral group but the group may not split as a free product. For instance, the free product of the ultraproduct of $D_\infty$ (which is freely indecomposable) and a cyclic group of order 2 is elementarily equivalent by Sela to the free product of three groups of order 2, so the underlying structure that one can expect to recover for the elementary class is a group-labelled graph with two vertices, not three, so one does need to reduce the graph in order to be able to interpret it. We now turn to the formal definition of the extended core.
	
	\begin{definition}[The Extended Core]\label{defn: graph I(Gamma)}
		
		Let $\mc{G}=\mc{G}(\Gamma, \gvv)$ be a graph product and let $\Gamma^c$ be as in Definition \ref{defn:Gamma^c}. We define the following equivalence relation on vertices of $\Gamma^c$: $v\sim_1 w$ if and only if there exists $g\in \mathcal G$ such that $E_v^g = E_w$.
		
		Notice that by definition, the equivalence relation induces a relation in $\Gamma^e$, that is if $v\sim_1 w$ and $v\in \Gamma^e$, then $w\in \Gamma^e$. The quotient of $\Gamma^e$ by $\sim_1$ is precisely $\Gamma$.
		
		Let $\Lambda$ be the quotient of $\Gamma^c$ by $\sim_1$, that is $\pi:\Gamma^c \to \Lambda$ and $\pi(v)=\pi(v')$ if and only if $v\sim_1 v'$. As we noticed above, $\Gamma < \Lambda$. By definition, the vertices in $\Lambda$ that do not belong to $\Gamma$ have associated vertex groups which are either non-singular cyclic (as singular groups are identified with vertices of $\Gamma^e$) or singular cyclic groups of order 2. To each vertex $v\in \Gamma <\Lambda$, we assign the corresponding $G_v$ and denote it by $\bar E_v$. To each vertex $v\in \Lambda \setminus \Gamma$, we assign the group $\bar E_v = E_{v'}$, where $v'\in \Gamma^c$ such that $\pi(v')=v$ and $\ff(E_{v'}) < \mathcal G(\Delta)$ for some $\Delta< \Gamma$ minimal (notice that $\Delta$ is well defined).
		
		We further define an equivalence relation on the graph $\Lambda$ as follows: $v\sim_2 w$ if and only if $v,w \notin \minCore(\Gamma)$, $|\bar E_v|, |\bar E_w|>2$ and $\link{v}=\link{w}$. Notice that, by definition, the relation $\sim_2$ only identifies vertices whose associated group is elementarily equivalent to an infinite cyclic group.
		
		Let $[v]$ be an equivalence class for $v\in \Lambda \setminus \minCore(\Gamma)$ such that $\bar E_v$ is elementarily equivalent to an infinite cyclic group. If there exists $v'\in [v]$ such that $v'$ is weak and there is a witness $W$ for $v'$ such that for all $w\in W$, $\link{v'} \subsetneq \link{w}$, then we set $|[v]|=1$. Otherwise, we set $|[v]|=2$.
		
		Let $\Lambda_0 < \Lambda$ be the induced subgraph of $\Lambda$ defined by all the vertex groups such that $\bar E_v$ is not elementarily equivalent to $D_\infty$.
		
		We define a quotient $I(\Gamma,\gvv)$ of $\Lambda_0$ as follows: $V(I(\Gamma,\gvv))$ is the union of the following sets of vertices:
		
		\begin{itemize}
			\item $V(\minCore(\Gamma,\gvv))$
			\item $\bigcup_{v\notin \minCore(\Gamma,\gvv), |\bar E_v|=2} \{v\}$
			\item $\bigcup_{v\notin \minCore(\Gamma,\gvv), \bar E_v \equiv \mathbb Z, |[v]|=1} \{v\}$
			\item $\bigcup_{v\notin \minCore(\Gamma,\gvv),\bar E_v \equiv \mathbb Z |[v]|=2} (\{v_1\} \cup \{v_2\})$.
		\end{itemize}
		
		The edges are naturally induced by the quotient map.
		
		We define the \emph{extended core} of $\Gamma$, denoted by $\ECore(\Gamma)$, as the positive reduced graph obtained from $I(\Gamma, \gvv)$.
		
		We defined the \emph{labelled extended core} of $\Gamma$, denoted by $\ECore(\Gamma, \gvv)$ as follows: the underlying graph is $\ECore(\Gamma)$; for all $v\in \minCore(\Gamma, \gvv) < \ECore(\Gamma)$, see Remark \ref{rem:mincore subgroup of ecore}, we associate the same group as in $\minCore(\Gamma, \gvv)$; for all $v\in \ECore(\Gamma)$ such that $|\bar E_v|=2$, we associate $\bar E_v$; otherwise, we associate the group $\mathbb Z$.
	\end{definition}
	
	As we did we the other cores, we omit the explicit mention to the corresponding vertex groups as they are determined by those of $\Gamma$.
	
	\begin{remark}[$\minCore(\Gamma, \gvv) < \ECore(\Gamma, \gvv)$]\label{rem:mincore subgroup of ecore}
		
		Notice that as extended core is an induced subgraph of a quotient of $\Gamma^c$, if a vertex $v$ has an associated group elementarily equivalent to $D_\infty$ (but not isomorphic to $D_\infty$), then $v\in \Gamma^e$. From the definition of $\Gamma^d$, see the set $C_2$, we have that for each involution $g$ that belongs to a singular group $G_v$ elementarily equivalent to a dihedral group, there is a vertex $w_g\in \Gamma^c$ such that $E_{w_g}=\subg{g}$. Then, up to conjugacy, there are precisely two vertices $w_e$ and $w_{e'}$ in $\Lambda$ associated to involutions of $G_v$. Since $E_{w_e}^\perp = E_{w_{e'}}^\perp=E_v^\perp$, the vertices $E_{w_e}$ and $E_{w_{e'}}$ can be collapse to a vertex $w_{e,e'}$ in the positive reduction of $I(\Gamma)$ to the extended core $\ECore(\Gamma)$. The vertex $w_{e,e'}$ has as an associated group $D_\infty$.
		
		Then we can define a map $f:\minCore(\Gamma) \to \ECore(\Gamma)$ as follows: for all $v\in \Lambda_0$, we set $f(v)$ to be the image of $v$ in the quotient $\ECore(\Gamma)$; for all $v\in \Lambda\setminus \Lambda_0$, as $G_v \equiv D_\infty$, we set $f(v)$ to be the vertex $w_{e,e'}$ defined above. It follows from the construction that $f$ induces a graph isomorphism and $\bar E_{f(v)} = \bar E_v$ if $E_v\not\equiv D_\infty$ and $\bar E_{f(v)}= D_\infty$ if $\bar E_v \equiv D_\infty$.
	\end{remark}
	
	\begin{example}
		We describe the graph $(\Gamma, \{\mathbb Z\}_{v\in \Gamma})$, the core $\Core(\Gamma,\{\mathbb Z\}_{v\in \Gamma})$ and the extended core $\ECore(\Gamma,\{\mathbb Z\}_{v\in \Gamma})$.
		
		\tikzset{node distance=2.5cm, 
			every state/.style={minimum size=1pt, scale=0.3, 
				semithick,
				fill=black},
			initial text={}, 
			double distance=2pt, 
			every edge/.style={ 
				draw,  
				auto}}
		\begin{figure}[!ht]
			\begin{center}
				\begin{tikzpicture}
					\footnotesize
					\node[state] (1) at (0,0.5) {};  \filldraw (0,0.66) node[align=right,right] {$a_1$};
					\node[state] (2) at (0,0) {};  \filldraw (0,0) node[align=right,below] {$a_2$};
					\node[state] (3) at (1,0) {};  \filldraw (1,0) node[align=left,above] {$b$};
					\node[state] (4) at (2,0) {};  \filldraw (2,0) node[align=left,above] {$c$};
					\node[state] (5) at (3,0) {};  \filldraw (3,0) node[align=left,above] {$d$};
					\node[state] (6) at (4,0) {};  \filldraw (4,0) node[align=left,above] {$e$};
					\node[state] (7) at (5,0) {};  \filldraw (5,0) node[align=left,above] {$f$};
					\node[state] (8) at (2.5,0.5) {};  \filldraw (2.5,0.5) node[align=right, right] {$x$};
					\draw
					(1) edge (3)
					(3) edge (4)
					(4) edge (5)
					(5) edge (6)
					(6) edge (7)
					(2) edge (3)
					(4) edge (8)
					(5) edge (8);
				\end{tikzpicture}
				\caption{\small Graph $\Gamma$}
				
				\vspace{0.5cm}
				\begin{tikzpicture}
					\footnotesize
					\node[state] (3) at (1,0) {};  \filldraw (1,0) node[align=left,above] {$b$};
					\node[state] (4) at (2,0) {};  \filldraw (2,0) node[align=left,above] {$c$};
					\node[state] (5) at (3,0) {};  \filldraw (3,0) node[align=left,above] {$d$};
					\node[state] (6) at (4,0) {};  \filldraw (4,0) node[align=left,above] {$e$};
					\node[state] (8) at (2.5,0.5) {};  \filldraw (2.5,0.5) node[align=right, right] {$x$};
					\draw
					(3) edge (4)
					(4) edge (5)
					(5) edge (6)
					(4) edge (8)
					(5) edge (8);
				\end{tikzpicture}
				\caption{\small Graph $\minCore(\Gamma, \gvv)$}

				\vspace{0.5cm}
				\begin{tikzpicture}
					\footnotesize
					\node[state] (2) at (0,0) {};  \filldraw (0,0) node[align=left,above] {$a_2$};
					\node[state] (3) at (1,0) {};  \filldraw (1,0) node[align=left,above] {$b$};
					\node[state] (4) at (2,0) {};  \filldraw (2,0) node[align=left,above] {$c$};
					\node[state] (5) at (3,0) {};  \filldraw (3,0) node[align=left,above] {$d$};
					\node[state] (6) at (4,0) {};  \filldraw (4,0) node[align=left,above] {$e$};
					\node[state] (7) at (5,0) {};  \filldraw (5,0) node[align=left,above] {$f$};
					\node[state] (8) at (2.5,0.5) {};  \filldraw (2.5,0.5) node[align=right, right] {$x$};
					\draw
					(3) edge (4)
					(4) edge (5)
					(5) edge (6)
					(6) edge (7)
					(2) edge (3)
					(4) edge (8)
					(5) edge (8);
				\end{tikzpicture}
				\caption{\small Graph $\Core(\Gamma, \gvv)$}
				
				\vspace{0.5cm}
				\begin{tikzpicture}
					\footnotesize
					\node[state] (9) at (2,-0.66) {};  \filldraw (2,-0.66) node[align=right,right] {$bdx$};
					\node[state] (10) at (3,-0.66) {};  \filldraw (3,-0.66) node[align=right,right] {$cxe$};
					\node[state] (12) at (6,0) {};  \filldraw (6,0) node[align=left,above] {$a_2bcxdef$};
					
					\node[state] (2) at (0,0) {};  \filldraw (0,0) node[align=left,above] {$a_2$};
					\node[state] (3) at (1,0) {};  \filldraw (1,0) node[align=left,above] {$b$};
					\node[state] (4) at (2,0) {};  \filldraw (2,0) node[align=left,above] {$c$};
					\node[state] (5) at (3,0) {};  \filldraw (3,0) node[align=left,above] {$d$};
					\node[state] (6) at (4,0) {};  \filldraw (4,0) node[align=left,above] {$e$};
					\node[state] (7) at (5,0) {};  \filldraw (5,0) node[align=left,above] {$f$};
					\node[state] (8) at (2.5,0.5) {};  \filldraw (2.5,0.5) node[align=right, right] {$x$};
					\draw
					(3) edge (4)
					(4) edge (5)
					(5) edge (6)
					(6) edge (7)
					(2) edge (3)
					(4) edge (8)
					(5) edge (8)
					(9) edge (4)
					(10) edge (5);
				\end{tikzpicture}
				\caption{\small Graph $I(\Gamma)$}
			\end{center}
		\end{figure}
	\end{example}
	
	\begin{lemma}\label{lem:ECore and I(Gmma)}
		Let $\mc{G}=\mc{G}(\Gamma, \gvv)$. Then we have that $\ECore(\Gamma)$ is finite,
		$$
		\Core(\Gamma,\gvv) < \ECore(\Gamma,\gvv),
		$$
		$$\minCore(\ECore(\Gamma),\gvv)=\minCore(\Gamma,\gvv)$$
		$$\Core(\ECore(\Gamma),\gvv)=\Core(\Gamma,\gvv).$$
	\end{lemma}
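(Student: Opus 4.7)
The plan is to trace the construction chain $\Gamma \leadsto \Gamma^c \leadsto \Lambda \leadsto I(\Gamma,\gvv) \leadsto \ECore(\Gamma)$ from Definition \ref{defn: graph I(Gamma)} and verify each assertion by a case analysis on vertex type (singular versus non-singular, weak versus non-weak, infinite cyclic versus involution). For finiteness, I decompose $V(I(\Gamma,\gvv))$ into the four subsets of the defining union. The first equals $V(\minCore(\Gamma,\gvv)) \subseteq V(\Gamma)$, hence is finite. The subset of singular-involution vertices with $|\bar E_v|=2$ outside $\minCore$ is controlled by the conjugacy classes of involutions in the finitely many vertex groups of $\Gamma$. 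For the remaining subsets, representatives are organised by $\sim_2$, which identifies vertices having the same link in $\Lambda$; each such link is essentially determined by a subgraph of the finite graph $\minCore(\Gamma)$, so only finitely many classes appear. Since $\ECore(\Gamma)$ is a positive reduction of $I(\Gamma,\gvv)$ and positive reduction cannot increase the vertex count, it too is finite.

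For $\Core(\Gamma,\gvv) < \ECore(\Gamma,\gvv)$, I construct an embedding of labelled graphs: the identity on $V(\minCore(\Gamma))$ (which injects into $V(I(\Gamma,\gvv))$ by the first clause of the defining union and survives positive reduction), and sending a weak non-redundant $v \in V(\Core(\Gamma)) \setminus V(\minCore(\Gamma))$ to the vertex $v_1$ associated to its class in $\Lambda$. The crux is that non-redundancy of $v$ forces $|[v]|=2$: otherwise there would exist $v' \in [v]$ and a witness $W$ for $v'$ with $\link{v'} \subsetneq \link{w}$ for all $w \in W$; but $v \sim_2 v'$ gives $\link{v} = \link{v'}$, so $W$ would also witness the weakness of $v$, with $v \notin W$ (for if $v \in W$ then $\link{v} \subsetneq \link{v}$, impossible), contradicting the non-redundancy of $v$ in $\Gamma$. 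Edge preservation follows because adjacency in $\Core(\Gamma)$ corresponds to commutation of the associated subgroups, a property that transfers across the $\sim_1$ and $\sim_2$ identifications; and vertex groups match by the definitions, since weak non-redundant vertices carry $\mathbb Z$ in both labellings while vertices of $\minCore(\Gamma)$ carry their given group or $D_\infty$ in both.

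For $\minCore(\ECore(\Gamma),\gvv) = \minCore(\Gamma,\gvv)$ and $\Core(\ECore(\Gamma),\gvv) = \Core(\Gamma,\gvv)$, the vertices in $V(\ECore(\Gamma)) \setminus V(\minCore(\Gamma))$ come in three types: dihedral (elementarily equivalent to $D_\infty$, obtained by positive reduction of pairs of singular involutions arising from the $C_2$ piece of $\Gamma^d$), isolated singular involutions of order $2$, and infinite cyclic. The cyclic-type vertices are weak in $\ECore(\Gamma)$ via witnesses transported from $\Gamma$, so they drop out of $\minCore(\ECore(\Gamma))$, yielding the first equality. For the second equality, the auxiliary vertex $v_2$ in every $|[v]|=2$ class is redundant in $\ECore(\Gamma)$ (a witness is obtained by combining $v_1$ with the original witness of $v$ in $\Gamma$), while $v_1$ itself remains non-redundant by the argument of the previous paragraph. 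The main obstacle is here: one must verify that passing from $\Gamma$ to the larger graph $\ECore(\Gamma)$ does not turn any element of $\minCore(\Gamma)$ into a newly weak vertex, nor turn any non-redundant element of $\Core(\Gamma)$ into a newly redundant one; this reduces to showing that the added cyclic and involution vertices cannot by themselves supply witnesses destroying the non-weak or non-redundant status of any original vertex of $\Core(\Gamma)$.
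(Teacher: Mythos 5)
Your finiteness argument and your identification of the crux for the containment $\Core(\Gamma,\gvv) < \ECore(\Gamma,\gvv)$ (non-redundancy of a weak vertex forces $|[v]|=2$) both match the paper's proof in substance. Two small defects there: your map sends every weak non-redundant vertex of a class to the same vertex $v_1$, so it is not injective when two non-redundant weak vertices of $\Gamma$ share a link (e.g.\ the edgeless graph on two vertices); you must use both $v_1$ and $v_2$, and you must also check that no class contains three such vertices --- the paper does this by observing that a pair of weak vertices with a common link witnesses the redundancy of any third.

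The genuine gap is in the two equalities, and you name it yourself without closing it: you write that the argument ``reduces to showing that the added cyclic and involution vertices cannot by themselves supply witnesses destroying the non-weak or non-redundant status of any original vertex,'' and stop. That reduction is the substantive content of the lemma. The paper closes it with a support-substitution computation: for a subgraph $\Delta$ of $\Gamma$ let $\ext(\Delta)$ be the set of vertices of $\ECore(\Gamma,\gvv)$ lying in $\Delta$ or corresponding to a cyclic subgroup of $\G(\Delta)$; then $\lk_{\ECore(\Gamma,\gvv)}(v)=\ext(\lk_{\Gamma}(v))$, the same holds for stars, and $\st_{\ECore(\Gamma,\gvv)}(w)\cap\Gamma=\bigcap_{u\in\supp(w)}\st_{\Gamma}(u)$ for each added vertex $w$. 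Hence a witness $W$ in $\ECore(\Gamma,\gvv)$ for $v\in\Gamma$ converts into a witness $W'\subseteq V(\Gamma)$ by replacing each added vertex with its support, and conversely witnesses in $\Gamma$ push forward via $\ext$; without this, neither direction of the transfer of weakness and redundancy is established. Separately, your claim that the auxiliary vertex $v_2$ of every $|[v]|=2$ class is redundant in $\ECore(\Gamma)$ is false and would contradict the very equality you are proving: for $\Gamma$ the edgeless graph on three vertices labelled by $\Z$, $\ECore(\Gamma)$ is the edgeless graph on $\{v_1,v_2\}$, neither vertex is redundant there, and $\Core(\ECore(\Gamma),\gvv)$ must have two vertices, as $\Core(\Gamma,\gvv)$ does. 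The witness you propose (``$v_1$ combined with the original witness of $v$ in $\Gamma$'') does not even exist in $\ECore(\Gamma)$, because the redundant vertices of $\Gamma$ are collapsed into the class representatives rather than carried along into $\ECore(\Gamma)$.
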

	\begin{proof}
		
		First observe that by the definition of the set of involutions $C_2$ in Definition \ref{defn:D-completion}, involutions in $C_2$ are precisely conjugates of singular vertex groups of order 2 or belong to singular vertex groups elementarily equivalent to a infinite dihedral group. Therefore, the quotient of $C_2$ by the action by conjugation is finite.
		
		We show that the number of different links of vertices in $\Lambda$ is finite. This then would imply that $I(\Gamma,\gvv)$ is finite.
		
		From the construction, we have that $\Gamma <\Lambda$. As $\Gamma$ is finite, the number of different proper subgraphs is also finite. For each subgraph $\Delta< \Gamma$, let $V_{\Delta} =\{ v\in \Lambda \mid \bar E_v \in \mathcal G(\Delta)\}$. Notice that since edges correspond to commutation, we have that $\Delta \subset \link{v}$ if and only if $V_{\Delta} \subset \link{v}$. Therefore, for all $v\in \Lambda$, let $\Delta$ be the maximal subgraph of $\Gamma$ such that $\Delta \subset \link{v}$. Then $\link{v} = V_{\Delta}$. Therefore, the number of different links is bounded by the number of proper subgraphs of $\Gamma$ and so in particular it is finite.
		
		From Remark \ref{rem:mincore subgroup of ecore}, we have that $\minCore(\Gamma,\gvv) < \ECore(\Gamma,\gvv)$.
		
		Let $v, v'\in \Core(\Gamma,\gvv) \setminus \minCore(\Gamma,\gvv)$ such that $\link{v}=\link{v'}$. We claim that then for all witness $W$ of $v$ in $\Gamma$, there exists $w\in W$ such that $\link{v}=\link{w}$ (and so in particular, $w$ is weak). Indeed, otherwise, all vertices $v''$ such that $\link{v''}=\link{v}$ (and so in particular $v'$) would be redundant as $v''\notin W$ and $\cap_{w\in W} \st(w)=\link{v}=\link{v''}$. Furthermore, there cannot be three weak vertices with the same link, since a pair of them would witness the fact that the third is redundant.
		
		Therefore, we have that for the class $[v]$ as above, from the definition of $\ECore(\Gamma,\gvv)$ there are two vertices $v_1$ and $v_2$ corresponding to $[v]$ and so $\Core(\Gamma,\gvv) < \ECore\Gamma,\gvv)$.
		
		Finally, we are left to show that $\minCore(\ECore(\Gamma,\gvv))=\minCore(\Gamma,\gvv)$ and $\Core(\ECore(\Gamma,\gvv))=\Core(\Gamma,\gvv)$.
		
		We claim that any vertex $v\in \Gamma$ is weak and redundant in $\Gamma$ if and only if it is weak and redundant respectively in $\ECore(\Gamma,\gvv)$. Indeed, given a subgraph $\Delta$ of $\Gamma$ let $ext(\Delta)$ be the set of vertices of $\ECore(\Gamma,\gvv)$ that are either in $\Delta$ or correspond to a cyclic group $\G(\Delta)$. Then $\lk_{\ECore(\Gamma,\gvv)}(v)=ext(\lk_{\Gamma}(v))$ for any such $v$ and the same is true if we replace $\lk$ by star. It follows that if $W\subset V(\Gamma)$ witnesses weakness or redundancy of $v$ in $\Gamma$ it will also witness weakness or redundancy of $v$ in $\ECore(\Gamma,\gvv)$, as then
		\begin{align*}
			\bigcap_{w\in W}\st_{\ECore(\Gamma,\gvv)}(w)=\bigcap_{w\in W}\ext(\st_{\Gamma}(w))=\\=\ext(\bigcap_{w\in W}\st_{\Gamma}(w))
			=\ext(\lk_{\Gamma}(v))=\lk_{\ECore(\Gamma,\gvv)}(v).
		\end{align*}
		On the other hand, assume that a vertex $v\in \Gamma$ is weak in $\ECore(\Gamma)$, i.e.
		$$
		\bigcap_{w\in W}\st_{\ECore(\Gamma,\gvv)}(w)=\lk_{\ECore(\Gamma,\gvv)}(v)
		$$
		for some set of vertices $W$ of $\ECore(\Gamma)$ and some $v\in \Gamma$. Let $W'\subseteq V(\Gamma)$ be the union of the vertices already in $W$ or belonging to some subgraph $\supp(w)$ corresponding to a vertex of $w\in W$ not in $\Gamma$.
		
		\newcommand{\ec}[0]{\ECore(\Gamma,\gvv)}
		Notice that for $w\in W\setminus V(\Gamma)$ we have $\bigcap_{u\in \supp(w)}\st_{\Gamma}(u)=\lk_{\Gamma}(\supp(w))$, since the graph spanned by $\supp(w)$ is idecomposable and hence
		$$
		\bigcap_{u\in \supp(w)}\st_{\Gamma}(u)=\lk_{\ec}(w)\cap\Gamma=\st_{\ec}(w)\cap\Gamma.
		$$
		It follows that
		\begin{gather}\notag
			\begin{split}
				\lk_{\Gamma}(v)=&\lk_{\ec}(v)\cap V(\Gamma)=\bigcap_{w\in W}(\st_{\ec}(w)\cap V(\Gamma))=\\
				=&(\bigcap_{w\in W\cap V(\Gamma)}\st_{\Gamma}(w))\cap (\bigcap_{u\in \supp(w)}\st_{\Gamma}(u))=\bigcap_{w\in W'}\st_{\ec}(w)
			\end{split}
		\end{gather}
		
		and so $v$ is weak in $\ECore(\Gamma, \gvv)$. This settles the claim.
		
		It follows that
		$$
		\minCore(\Gamma,\gvv)=\minCore(\ECore(\Gamma,\gvv))\cap \Gamma
		$$
		and
		$$
		\Core(\Gamma,\gvv)=\Core(\ECore(\Gamma,\gvv))\cap \Gamma.
		$$
		
		Furthermore, every vertex of $\ECore(\Gamma,\gvv)$ either corresponds to a vertex of $\Gamma$ or to a non-singular cyclic subgroup. As we have seen above, the link of the latter vertices is the intersection of the links of vertices in some subgraph $\Delta$ of $\Gamma$, hence, by definition any such vertex is weak and redundant, hence does not belong to the minimal core $\minCore(\ECore(\Gamma,\gvv))$ or $\Core(\Gamma,\gvv)$ and the statement follows.
	\end{proof}
	
	\section{Defining relative maximal sets of the form $E_{v}\times E_{v}^{\perp}$} \label{sec:8}
	
	Let $\mc{G}=\mc{G}(\Gamma, \gvv)$ be a graph product and suppose that $\Gamma$ satisfies property $AP_{n}$ and is almost positive reduced (resp.\ positive reduced) and for all $v\in \Gamma$, $G_v$ satisfies a simple non-generic almost positive sentence (resp.\ nontrivial positive sentence).
	
	In this section we describe a family of definable sets where each member is of the form $E_v \times E_v^\perp$ for (almost all) $v\in \Gamma^d$.
	
	In the next section, we use these definable sets to show that $\Gamma^d$ and the corresponding subgroups $E_v$ are uniformly interpretable in $\mc{G}$, and that so is the graph $\Core(\Gamma)$.
	
	Next we provide an overview of the arguments developed in this section.

	\paragraph{Strategy.}
	Let us discuss the strategy in the simplest case, when the vertex groups of the graph product satisfy a nontrivial positive sentence and there is no 2-torsion in the vertex groups. We will later indicate the technicalities involved when considering more general sentences and in the presence of 2-torsion elements.
	
	Firstly, as we have proven in Theorem \ref{lem:extensionpreservation}, the class of groups with nontrivial positive theory is closed under taking extensions. This allows us to reduce to the case when all the vertex groups satisfy a common nontrivial positive sentence $\phi$ which is also satisfied by all metabelian groups.
	
	By our previous results, one can assume without loss of generality that this nontrivial positive sentence $\phi$ satisfied by all the vertex groups is of the form
	$$
	\phi \equiv \forall x\exists y\,\,\theta(x,y,z)=1,
	$$
	where $\theta$ is a system of equations.
	
	In Section \ref{sec: from non-compatible to small cancellation} we proved that, roughly speaking, (inside the class of graphs with property $AP_n$) one can uniformly describe a set of words $\mathcal W$ such that, if a pair of elements $a,b$ is so that $\subg{a,b}$ is irreducible in some  $H \in \mc{F}(\subg{a,b})$, then some of the words from $\mc{W}$ evaluated on $a,b$ are $N$-small cancellation in $H$.
	
	Using this set of words, we consider an existential positive formula $\sigma(z,v)$ obtained from $\phi$ by replacing the universal quantifier by the words in $\mathcal W$:
	$$
	\sigma(z,v)\equiv
	\bigwedge_{w\in \mathcal W}\exists y \,\,\theta (w(z,v),y,z).
	$$
	The key observation is that given an element $a$ in a graph product $\mc G$, the set $Y_a = \{h\in \mc{G}\mid \mc{G} \models \sigma(a, h)\}$ consists of elements $h$ such that $\langle a, h\rangle$ is small in each  $H\in \mc{F}(\langle a, h\rangle)$. Indeed, if  $\langle a, h\rangle$  is irreducible in some  $H\in \mc{F}(\langle a, h\rangle)$, then from Corollary \ref{l: small cancellation multi}, we deduce that some of the tuples in $\mathcal W$ are small-cancellation when evaluated on $a,h$ and then, using Section \ref{sec:small cancellation and formal solutions}, we deduce the existence of formal solutions for the positive sentence $\phi$, contradicting the assumption that the positive sentence is nontrivial (i.e. it does not admit formal solutions).
	
	Furthermore, if we consider maximal (by inclusion) proper sets among  $\{Y_a \mid a\in \mc{G}\}$, one can show that such sets can be defined in $\mc{G}$ via first order formulas (with some parameters). For such a $Y_a$ there exists $a'$ such that $Y_a=Y_{a'}$ and $|\ff(a')|=1$. If $|\ff(a)|=1$ and $Y_a$ is maximal, then $Y_a$ is of the form $E_v \times E_v^\perp$, where $v\in \Gamma^c$ and $a\in E_v$. In particular, in this case $Y_a$ is a subgroup. Moreover, if the graph $\Gamma$ is positive reduced and $v \in \Gamma^c$ is maximal, then the set $E_v\times E_v^\perp$ is precisely a maximal $Y_a$ for any $a\in E_v$.
	
	Denote the set of maximal sets among $\{Y_a \mid a\in \mc{G}\}$ by $M_1$. Once we have shown that the  sets in  $\mathcal M_1$ are uniformly definable in $\mc{G}$ (with parameters), we can consider the  sets from $\{Y_a \mid a\in \mc{G}\}$ which are maximal relative to $\mathcal M_1$, that is the set $\mathcal M_2$ of  sets from $Y_a$  that are properly contained in a  set from $\mathcal M_1$, and are maximal with this property. Again, we see that the sets from $\mathcal M_2$ are uniformly definable in $\mc{G}$ (with parameters). Proceeding recursively and repeating the above strategy, we obtain sets $\mathcal M_i$, $i\geq 1$. We see that if $\Gamma^e$ satisfies the property $AP_n$, then the union of the sets $\mathcal M=\cup_{i=1}^{n} \mathcal M_i$ consists on all the sets of the form $E_v\times E_v^\perp$ for all $v\in \Gamma^e$, and some sets of the form $K\times K^\perp$ where $K$ is a maximal cyclic subgroup.
	
	\medskip
	
	Let us now consider the case when the vertex groups satisfy a simple non-generic almost positive sentence, but there are no elements of order $2$ in the graph product.
	
	The class of groups which satisfy a non-generic almost positive sentence is also closed under extensions. However, this is not true if we insist on the sentence to be simple, that is, with only one inequality. This is the reason why we cannot assume that all vertex groups satisfy a common simple non-generic almost positive sentence (as we did in the previous case). We thus consider definable sets for a finite set of simple non-generic almost positive sentences $\phi_i$, $i=1, \dots, n$ such that each vertex group satisfies at least one of them.
	
	By Theorem \ref{t: quantifier reduction}, we can assume that each vertex group satisfies a simple non-generic almost positive sentence of the form
	$$
	\exists z \Theta(z)\ne 1 \, \forall x\exists y\,\,\theta(x,y,z)=1.
	$$
	where $\theta$ is a system of equations. As in the positive case,  we consider the existential positive formula
	$$
	\sigma(z,v)\equiv
	\bigwedge_{w\in \mathcal W}\exists y \,\,\theta (w(z,v),y,z).
	$$
	where $\mc{W}$ is the set of  of words given  in Corollary \ref{l: small cancellation multi}.
	Given $a\in \mc G$, the set $Y_a= \{h\in \mc{G}\mid \mc{G}\models \sigma(a, h)\}$   consists of elements $h$ such that for each  $H\in \ff(a,h)$  such that  $\pi^H(\Theta(a))\neq 1$, we have that $\langle a,h\rangle$ is small in $H$. Notice that in the nontrivial positive case, we were able to assure that the action  is small in all $H\in \mc{F}(\subg{a, h})$, while in this case, this can be assured only for those $H$ where $\Theta(a)$ projects onto a nontrivial element.

	Again, let $\mathcal M_1$ be the set of maximal  (by inclusion)  sets among $Y_a$.  As before, one can show that the sets from $\mathcal M_1$ can be uniformly defined in $\mc{G}$ using parameters $a\in \mc G$ such that
	\begin{itemize}
		\item  $|\ff(a)|=1$; and
		\item $\Theta(a)\neq 1$ in the unique component in $\ff(a)$.
	\end{itemize}
	Under these conditions,  $Y_a$ is again of the form $E_v \times E_v^\perp$, where $v\in \Gamma^c$ and some $a\in E_v$. In particular, such $Y_a$ is a subgroup.
	
	At this point one can already see the technicalities that arise when allowing more than one inequality, that is instead of considering groups which satisfy a simple non-generic almost positive theory, considering groups that satisfy a non-generic almost positive theory. Suppose for simplicity, that $\Theta(z)\neq 1$ consists of two inequalities, say $\Theta_1(z)\ne 1 \cap \Theta_2(z)\ne 1$. If we consider a witness $a$ of the system of inequalities, that is $\Theta_i(a)\ne 1$ for both $i=1,2$,  it could happen that $\ff(a)=\{H_1,H_2\}$ and the projection of $\Theta_1(a)$ in $H_1$ is nontrivial while the projection of $\Theta_2(a)$ is trivial, and vice-versa, the projection on $H_2$ of $\Theta_1(a)$ is trivial and the projection of $\Theta_2(a)$ on $H_2$ is nontrivial.  In this case, the set $Y_a=\{h\in \mc{G}\mid \mc{G} \models \sigma(a,h)\}$   consist of elements $h$ such that $\subg{a,h}$ is small in either $H_1$ or $H_2$. In particular, if $H_1$ and $H_2$ are singular, then $(H_1\times H_1^\perp) \cup (H_2 \times H_2^\perp) \subseteq Y_a$. However, $Y_a$ is not necessarily a subgroup: if there exist $g_i \in H_i^\perp$, $i=1,2$ such that $[g_1, g_2]\ne 1$, then $g_1g_2\notin Y_a$. In this case, the  sets $Y_a$ which are maximal may not include the sets of the form $E_v\times E_v^\perp$ for maximal vertices $v \in \Gamma^e$. 
	
	Similarly as in the case of nontrivial positive theory, if the graph $\Gamma$ is almost positive reduced and $v \in
	\Gamma^c$ is maximal (by  inclusion of stars), then the set $E_v\times E_v^\perp$ is a maximal definable set $Y_a$. However, in the case of nontrivial positive theory, any parameter $a\in E_v$ defines the set $E_v\times E_v^\perp$ while in the case of the non-generic almost positive theory, we know that if $E_v$ is a vertex that satisfies the sentence $\phi_i$ for some $i\in \{1, \dots, n\}$, then there exists some $a\in E_v$ which satisfies the inequality $\Theta(z)\neq 1$, and so $Y_a$ is of the form $E_v\times E_v^\perp$, but in general, for an arbitrary $a\in E_v$ we only  have that $Y_a < E_v\times E_v^{\perp}$.
	
	\bigskip
	
	Let us now examine the consequences of allowing $2$-torsion (i.e.\ the existence of elements of order $2$) in the vertex groups.
	
	The main difference is that, without 2-torsion, one parameter $a$ is enough to define the sets $Y_a$, while under the presence of 2-torsion, one needs to consider a tuple of parameters in order to define sets of the form $E_v\times E_v^\perp$. Indeed, if one tries to reproduce the argument described in the case without 2-torsion, even if one restricts to parameters $a$ with one factor, the set $Y_a$ would not necessarily be of the form $E_v\times E_v^\perp$ and in fact, $Y_a$ may not even be a subgroup. Let us consider a particular case to see this: assume that $a,h_1$ and $h_2$ are pairwise non-commuting  involutions (i.e.\ elements of order $2$), each one belonging to a vertex group, such that $\subg{a,h_1,h_2}$ is not singular, then $h_1, h_2\in Y_a$ because $\langle a, h_i\rangle$ is the infinite dihedral group for $i=1,2$, however $h_1h_2\notin Y_a$ since $\langle h_1h_2, a\rangle$ is irreducible in the unique factor $H\in \ff{a, h_1 h_2}$. In fact, one can show that if $a \in G_v$ is an involution, then $Y_a$ is the (possibly infinite) union of the subgroup $G_v\times G_v^\perp$ and the subgroups of the form $K \times K^\perp$ where $K$ is an infinite
	non-singular dihedral group containing $a$.

	In order to address the fact that $Y_a$ is not a subgroup of the form $E_v\times E_v^\perp$, we consider a second parameter $b\in Y_a$ and, roughly speaking, we consider the definable set $Z_{a,b}$ consisting of elements $h$ that belong to the  definable sets of the form $Y_{w(a,b)}$, where $w\in \mathcal W$ are  the words from Corollary \ref{l: small cancellation multi} (which are used to obtain small-cancellation tuples). As we see in Lemma  \ref{l: simple parameter case},  the words $w(a,b)$ assure that, if the subgroup $\langle a,b\rangle$ is not of exponent 2, one of the words $w(a,b)$ does not have order 2 and $Y_{a,b}$ is then a subgroup of the form $E_v\times E_v^\perp$ for some $v\in \Gamma^d$.
	
	More precisely, we show in Lemma \ref{l: first sieve lemma} that for any any  vertex $v\in \Gamma^d$  which is $\prec^d$-maximal and the associated subgroup $E_v$ is not abelian and satisfies $\phi_i$ for some $i=1, \dots, n$, the subgroup $E_v\times E_v^\perp$ is a definable set of type $Y_{a,b}$, where $a,b\in E_v$ and $[a,b]\ne 1$. The case when the vertex $v\in \Gamma^d$ is $\prec^d$-maximal and $E_v$ is abelian is addressed in Lemma \ref{l: second sieve lemma}, where we consider the set $\mathcal X$ of centralisers of elements $C(g)$ which are maximal among $\{Y_{a,b}\mid a,b\in \mc{G}\}\cup \{C(h)\mid h\in \mc{G}\}$. We show that in this case, for $v\in \Gamma^d$, such that $v$ is $\prec^d$-maximal and $E_v$ abelian, the subgroup $E_v\times E_v^\perp$ belongs to $\mathcal X$ and it is definable,  since it coincides with the centralizer $C(a)$ for any $a\in E_v$. As a result we obtain  a family of definable sets of the form $E_v\times E_v^\perp$ where $v\in \Gamma^d$ is $\prec^d$-maximal. This family contains all subgroups of the form $E_v\times E_v^\perp$ where $v$ is  a $\prec^d$-maximal vertex.
	
	We would like to proceed as in the case of nontrivial positive sentences and prove that there is a family of definable sets of the form $E_v\times E_v^\perp$ for all $v\in \Gamma^d$ which are only properly contained in a $\prec^d$-maximal vertex group, that is, vertices of second $\prec^d$-level.
	
	As we mentioned above, if $E_v$ satisfies a nontrivial positive sentence, then any choice of parameters in $E_v$ defines $E_v\times E_v^\perp$. However this is not the case for almost positive sentences. For instance, the formula may just define the centraliser of an element. If $E_v$ has nontrivial center and we take an element of the center as a parameter, then we define the whole $E_v\times E_v^\perp$. However, if we take  a non-central element as a parameter, then we define its centraliser which is properly contained in $E_v\times E_v^\perp$. This adds definable sets which are not of the form $E_v\times E_v^\perp$ and, a priori, it could produce infinite chains of properly contained definable sets inside $E_v\times E_v^\perp$.
	
	To avoid this, one would like to assure that the set of parameters in the second round are not taken inside the sets $E_v$ for $\prec^d$-maximal $v\in \Gamma^d$. However at this point, only the sets of the form $E_v\times E_v^\perp$ for $\prec$-maximal $v\in \Gamma^d$ are definable. In Section \ref{s: interpretation_of_base_structure_(first)}, we deal with this and show that given the family of sets of the form $E_v\times E_v^\perp$ for $\prec^d$-maximal $v\in \Gamma^d$, there is a family of definable sets of the form $E_v$ for $\prec^d$-maximal $v\in \Gamma^d$. We denote this family of definable sets $\mathcal A$. We are now ready to proceed and consider definable sets which are relatively maximal and whose defining parameters are not in any $E_v\in \mathcal A$. This will allow us to show that these sets are again of the form $E_v\times E_v^\perp$ for those $v\in \Gamma^d$ which are second to maximal in $\prec^d$. However, we have lost some vertices on the way: suppose that $E_v=G_u \ast G_{u'}$ is dihedral, where $v$ is $\prec^d$-maximal and $u,u'\in \Gamma^e \subset \Gamma^d$. Assume further that $\link{u} < \link{v}$ in $\Gamma^d$. Then we have that $G_u \times G_u^\perp \subsetneq E_v\times E_v^\perp$ (so $u$ is a vertex of the second level), but in order to define the set $G_u\times G_u^\perp$ we should take parameters in $G_u$, but as $G_u < E_v \in \mathcal A$, we do not allow taking parameters in $E_v$ (to avoid having definable sets which are not of the form we want). In any case, we proceed without these vertices and we show in Section \ref{s: interpretation_of_base_structure_(first)} how to recover them.
	
	So the main result, Theorem \ref{p: main definability proposition}, is stated in a way that can be used recursively to define the vertices on all $\prec^d$-levels. One should think of the family of definable sets $\mathcal A$ in the statement as the set of vertices $E_v$, $v\in \Gamma^d$ of $\prec^d$-level $r-1$. Given this set, we can define relative maximality and avoid some parameters to define the sets $E_v\times E_v^\perp$ of $\prec^d$-level $r$.
	
	\bigskip
	
	We now proceed to formalizing the strategy described above. In the following lemma we introduce the existential positive formula $\sigma_r(z,v)$ obtained from $\phi$ by replacing the universal quantifier by the words in $\mathcal W$ mentioned previously.  We show that if a graph product in our class witnesses the formula $\sigma_r(z,u)$ in $(c,d)$, then either $c$ does not satisfy the inequality in $\phi$, i.e.\ $\Theta(c)=1$, or the subgroup generated by $c$ and $d$ has a small action on each $H \in \mc{F}(\langle c, d\rangle)$, where $\Theta(\pi^H(c))\neq 1$.

	\begin{lemma}
		\label{l: basic generic almost positive}Let $z$ be a tuple of variables and assume we are given a simple non-generic almost positive sentence
		\begin{align*}
			\phi\equiv\exists z\, \Theta(z)\neq 1\wedge \psi(z)
		\end{align*}
		with $\psi(z)$ positive. Then for any $r>0$ there is a positive existential formula $\sigma_{r}(z,u)$ with $|u|=r$ such that
		any group $G$ satisfies $\forall z (\psi(z)\rightarrow \forall v \,\,\sigma_r(z,v))$ {\rm(}that is for any group $G$ and any tuple $c$, $\forall v \sigma_r(c,v)$ is true in $G$ if $ \psi(c)$ is true in $G${\rm)},   and
		for any graph product $\mathcal{G}=\mathcal{G}(\Gamma,\{G_{v}\}_{v\in \Gamma})$  where $\Gamma$ has property $AP_{n}$,
		the following holds.

		Let $c\in\G^{|z|}, d\in\G^{r}$, and suppose that $\G\models\sigma_r(c,d)$. Then  for all $H\in\ff(c,d)$, either $\pi^{H}(\Theta(c))=1$ or $\pi^{H}(\subg{c,d})$  is not irreducible in $H$.
		
		In the particular case where $\phi$ is a nontrivial positive formula we obtain that
		for any $d\in \mc{G}^r$, if $\G\models\sigma_{r}(d)$ {\rm(}in this case $z$ is an empty tuple of variables{\rm)}  then for all $H\in \ff(d)$, $\subg{d}$ is not irreducible in $H$.
	\end{lemma}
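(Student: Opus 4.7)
The plan is to apply Theorem \ref{t: quantifier reduction} to replace $\psi(z)$ by a positive $\forall\exists$-formula $\psi'(z)\equiv\forall x\exists y\,\theta(z,x,y)=1$ with $\theta$ a system of equations, the implication $\psi(z)\to\psi'(z)$ holding in every group and $\phi'\equiv\exists z\,\Theta(z)\neq 1\wedge\psi'(z)$ still being non-generic. Fix $N$ large enough for Corollary \ref{c: formal solutions forall exists} applied to $\psi'(z)$, and invoke Corollary \ref{l: small cancellation multi} with $k=|z|+r$ and $m=|x|$ to obtain a finite collection $\mathcal{W}\subset\F(z,u)^{m}$ enjoying the uniform small-cancellation property therein for every graph product whose underlying graph satisfies $AP_n$. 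Then set
$$
\sigma_r(z,u)\equiv\bigwedge_{w\in\mathcal{W}}\exists y\,\theta(z,w(z,u),y)=1,
$$
which is positive existential. The implication $\psi(z)\to\forall u\,\sigma_r(z,u)$ in every group is immediate, since in any group satisfying $\psi(c)$, specialising the universal variable of $\psi'(c)$ to $w(c,d)$ for each $w\in\mathcal{W}$ and $d\in G^r$ produces the required witnesses.

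For the main direction, suppose $\mathcal{G}\models\sigma_r(c,d)$ in a graph product with $\Gamma\in\textbf{AP}_n$, fix $H\in\ff(c,d)$, and assume for contradiction that $\pi^H(\langle c,d\rangle)$ is irreducible in $H$ while $\pi^H(\Theta(c))\neq 1$. By Corollary \ref{l: small cancellation multi}, some $w\in\mathcal{W}$ makes $\pi^H(w(c,d))$ an $N$-small cancellation tuple over $\pi^H(c,d)$ with respect to some vertex tree $T_v$ of $H$. Pick $b\in\mathcal{G}^{|y|}$ with $\theta(c,w(c,d),b)=1$ from $\sigma_r(c,d)$, so that the projections satisfy $\theta(\pi^H(c),\pi^H(w(c,d)),\pi^H(b))=1$ in $H$. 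Corollary \ref{c: formal solutions forall exists} applied to the action of $H$ on $T_v$ then produces a Diophantine condition $\exists v\,\xi(z,v)=1$ from its finite collection $\mathcal{D}$, a tuple $\bar d\in H^{|v|}$ with $\xi(\pi^H(c),\bar d)=1$, and a formal solution of $\psi'(z)$ relative to $\exists v\,\xi(z,v)=1$. Non-genericity of $\phi'$ then forces some component of $\Theta(z)$ to lie in the normal closure of $\xi$ in $\F(v,z)$; evaluating at $z\mapsto\pi^H(c)$, $v\mapsto\bar d$ gives $\pi^H(\Theta(c))=1$ in $H$, the sought contradiction.

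The nontrivial positive special case is obtained by running the same argument with $z$ empty and the empty Diophantine condition: Corollary \ref{c: formal solutions forall exists} then yields a genuine formal solution to $\phi$, contradicting its nontriviality directly. The main technical point is aligning the constants: the output $N$ of Corollary \ref{c: formal solutions forall exists} for $\psi'$ is used as the input $N$ of Corollary \ref{l: small cancellation multi} when building $\mathcal{W}$, and both the derivation of $\psi'$ and the extraction of the formal solution preserve non-genericity by Theorem \ref{t: quantifier reduction} and the definition of non-genericity respectively.
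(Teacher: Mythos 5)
Your proposal is correct and follows essentially the same route as the paper's proof: quantifier reduction to a $\forall\exists$ positive part, the same formula $\sigma_r$ built from the word family of Corollary \ref{l: small cancellation multi} with the constant $N$ supplied by Corollary \ref{c: formal solutions forall exists}, and the same use of non-genericity to kill $\Theta(\pi^H(c))$ once a Diophantine condition is produced in the factor $H$. The only (cosmetic) divergence is that you take $m=|x|$ for the tuple size, which is in fact the logically correct choice (the paper's text says $|y|$, an apparent typo), and you phrase the main step as a contradiction rather than a direct implication.
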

	\begin{proof}
		By Theorem \ref{t: quantifier reduction} we can assume that the positive formula $\psi(z)$ is of the form:
		\begin{align*}
			\forall x\exists y\,\,\theta(x,y,z)=1
		\end{align*}
		where $\theta(x,y,z)=1$ is a system of equations.
		
		Let $N>0$ be the integer provided by applying Corollary \ref{c: formal solutions forall exists} to the formula $\psi(z)$ and let $\mathcal W$ be the set of tuples of words provided by Corollary \ref{l: small cancellation multi} where the number of variables $k$ is $|z|+r$ and the size $m$ of each tuple is $|y|$.
		
		Let us see that the formula
		\begin{align*}
			\sigma_{r}(z,v)\equiv
			\bigwedge_{w\in \mathcal W}\exists y \,\,\theta (w(z,v),y,z)=1
		\end{align*}
		works as intended. It is clear that if $G$ is a group, $c\in G^{|z|}$, and $G\models \forall v\ \sigma_r(c, v)$, then $\psi(c)$ is true in $G$ (this is because the set of terms  $\{\theta(w(c,d), y, c)\mid d\in G^{|v|}\}$ is a subset of the set of terms $\{\theta(d, y, c)\mid d\in G^{|x|}\}$).
		
		Fix now $c\in\G^{|z|},           d\in\G^{r}$ and $H\in \ff(c,d)$, and suppose $\mc{G}\models \sigma_r(c,d)$. We assume that $\subg{c,d}$ is irreducible in $H$, and we will show that in this case $\Theta(\pi^H(c))=1$.

		From Corollary \ref{l: small cancellation multi} we have that, for some $w\in\mc{W}$, $\pi^H(w(c,d))$ is an $N$-small cancellation $m$-tuple with respect to the action of $H$ on any of the associated vertex trees of $H$.
		
		The first part of the conclusion of Corollary \ref{c: formal solutions forall exists} provides a set of Diophantine conditions $\mc{D}$ and a formal solution $\alpha_D$ for each $D\in \mc{D}$. Since $\phi$ is non-generic, we have that $\Theta(z)$ belongs to the normal closure of $\Pi(v,z)$ in $\mbb{F}(v,z)$ for all Diophantine condition $\exists v \ \Pi(v,z)=1$ in $\mc{D}$.

		Then there exists  $b\in\G^{|y|}$  such that $\theta(w(c,d),b,c)=1$ for all $w\in\mathcal{W}$. The second part of the conclusion of Corollary \ref{c: formal solutions forall exists} now ensures that there exists a Diophantine condition $\exists v \ \Pi(v, z)=1$ in $\mc{D}$ and $d\in H^{|v|}$ such that $\Pi(d, \pi^H(c))=1$. Since $\Theta(z)$ belongs to the normal closure of $\Pi(v,z)$ in $\mbb{F}(v,z)$, we obtain $\Theta(\pi^H(c))=1$, as required.
		
		In the case of a nontrivial positive formula $\phi$ one may take $z=\emptyset$  and the result follows from a simplified version of the same argument. In this case the existence of any formal solution for $\phi$ suffices to prove the result.
	\end{proof}
	
	In the next theorem we prove that the converse of the previous Lemma \ref{l: basic generic almost positive} holds in case $\phi$ is a nontrivial positive sentence. Namely, we prove that if $\subg{c,d}$ is small  in all  $H\in \ff(c, d)$, then $\sigma_r(c,d)$ is true in the graph product. Therefore, in the nontrivial positive case, the set defined by $\sigma_r(x,y)$ is precisely the set of tuples $(c,d)$ such that $\subg{c,d}$ is small in each  $H\in \ff(c, d)$ (recall that by convention, we view positive sentences as almost positives ones by add the inequality $\exists z \, z\ne 1$, see Remark \ref{r: positive_is_generic}).
	
	\begin{thm}\label{thm:definitionComp}
		Fix some positive sentence $\phi$ not admitting a formal solution, that is $\F\models\neg\phi$, where $\F$ is any nonabelian free group.
		
		Let $\mathcal{G}=\GP$ be a graph product where $\Gamma$ has property $AP_{n}$, and for all $v\in \Gamma$ the group $G_{v}$ satisfies $\phi$.
		
		Then there is an existential positive formula on two variables $\phi_{n}(x,y)$ such that a pair of elements $(g,h)\in\mathcal{G}^{2}$ satisfies $\phi_{n}$ if and only if $\subg{g,h}$ is small  in $H$, for all $H\in \ff(c, d)$.
		
	\end{thm}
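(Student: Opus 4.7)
The plan is to take $\phi_n(x,y)$ to be the existential positive formula $\sigma_2(x,y)$ furnished by Lemma \ref{l: basic generic almost positive}, applied to a strengthening of $\phi$ whose positive part is satisfied by all cyclic and dihedral groups. The forward direction is then immediate from that lemma, while the converse is proved by solving $\theta$ componentwise in the direct product $\supp(\subg{g,h})=\prod_{H\in\ff(\subg{g,h})}H$.

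First I would replace $\phi$ by the positive sentence $\phi'':=\phi\vee\phi_{\mathrm{met}}$, where $\phi_{\mathrm{met}}\equiv\forall x_1,\ldots,x_4\,[[x_1,x_2],[x_3,x_4]]=1$ is the metabelian law. The sentence $\phi''$ is still positive, is satisfied by every $G_v$ and by every metabelian group, and fails in any nonabelian free group (since neither $\phi$ nor $\phi_{\mathrm{met}}$ holds there). Viewing $\phi''$ as a non-generic almost positive sentence via Remark \ref{r: positive_is_generic} and applying Theorem \ref{t: quantifier reduction}, I obtain a nontrivial positive $\forall\exists$-sentence $\tilde\phi\equiv\forall x\exists y\,\theta(x,y)=1$, a single system of equations implied by $\phi''$ in every group; in particular $\tilde\phi$ still holds in every $G_v$ and in every metabelian group, and is itself not admitting a formal solution. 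Applying Lemma \ref{l: basic generic almost positive} to $\tilde\phi$ with $r=2$ and empty tuple $z$ yields the existential positive formula
$$
\sigma_2(x,y)\;\equiv\;\bigwedge_{w\in\mathcal{W}}\exists y'\,\theta(w(x,y),y')=1,
$$
where $\mathcal{W}$ is the finite collection of tuples of words provided by Corollary \ref{l: small cancellation multi}, and I take $\phi_n(x,y):=\sigma_2(x,y)$.

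The forward direction is the conclusion of Lemma \ref{l: basic generic almost positive}: if $\G\models\phi_n(g,h)$, then $\subg{g,h}$ is not irreducible in any $H\in\ff(\subg{g,h})$, which by Corollary \ref{c: types of actions} means $\subg{g,h}$ is small in each such $H$. For the converse, suppose $\subg{g,h}$ is small in every $H\in\ff(\subg{g,h})$ and fix $w\in\mathcal{W}$. Since $\supp(\subg{g,h})=\prod_{H}H$ is a direct product over $H\in\ff(\subg{g,h})$, it suffices to produce, in each factor $H$, an element $y'_H\in H$ with $\theta(\pi^H(w(g,h)),y'_H)=1$ and then set $y':=\prod_H y'_H\in\supp(\subg{g,h})\leq\G$: the resulting equation then holds componentwise in $\prod_H H$ and therefore in $\G$. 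If $H$ is singular then $H$ is a conjugate of some $G_v$ and hence satisfies $\tilde\phi$, so the required $y'_H$ exists in $H$. If $H$ is non-singular then by smallness $\pi^H(\subg{g,h})$ is infinite cyclic or infinite dihedral, hence metabelian, hence satisfies $\tilde\phi$, so $y'_H$ can be chosen in $\pi^H(\subg{g,h})\leq H$.

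The main conceptual obstacle is the converse direction: positive sentences do not generally pass to subgroups, so the original $\phi$ need not be satisfied by the cyclic or dihedral images $\pi^H(\subg{g,h})$ attached to non-singular factors. The preliminary strengthening by $\phi_{\mathrm{met}}$ repairs this uniformly across all non-singular factors while preserving nontriviality, and the singular case is already covered by $G_v\models\phi\Rightarrow G_v\models\tilde\phi$; after these reductions, the direct product structure of $\supp(\subg{g,h})$ together with the single-equation form of $\tilde\phi$ makes the componentwise solution-assembly go through.
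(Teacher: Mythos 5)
Your proposal is correct and follows essentially the same route as the paper: the paper likewise first arranges (via closure of nontrivial positive theory under extensions/direct products) that $\phi$ may be assumed to hold in all metabelian groups, reduces to an $\forall\exists$ form, takes $\phi_n=\sigma_2$ from Lemma \ref{l: basic generic almost positive}, gets the forward direction from that lemma, and proves the converse by observing that $\subg{g,h}$ sits inside a direct product of conjugates of vertex groups and cyclic/dihedral groups which models $\phi$, so the existential formula $\sigma_2(g,h)$ lifts to $\G$. Your explicit disjunction with the metabelian law and the componentwise assembly of witnesses are just slightly more hands-on versions of the same two steps.
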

	
	\begin{proof}
		By Lemma \ref{t: quantifier reduction}, we can assume that $\phi$ is of the form:
		\begin{align*}
			\forall x\exists y \,\,\Sigma(x,y)=1.
		\end{align*}
		Furthermore, by Remark \ref{lem:extensionpreservation}, we may assume $\phi$ is satisfied by any metabelian group as well.
		Consider the formula $\sigma_{2}(x,y)$ provided by Lemma \ref{l: basic generic almost positive} (for the value of $n$ of condition $AP_n$). We claim that
		$\G\models\sigma_{2}(g,h)$ for $h,g\in\G$ if and only if $\subg{g,h}$ is small in $K$ for all $K \in \ff( c, d)$.
		
		If $\subg{g,h}$ is irreducible  in $K$ for some $K\in \ff(g,h)$, then $\G\models\neg\sigma_{2}(g,h)$ by Lemma \ref{l: basic generic almost positive}.
		
		On the other hand we know that $\phi\rightarrow\forall x\forall y\sigma_{2}(x,y)$ is valid in any group due to Lemma \ref{l: basic generic almost positive}.
		If $\subg{g,h}$ is small in $H$ for all  $H \in \ff( g, h)$, then for any $H\in\ff(g,h)$ we have the following alternative: either $H$ is singular, or $H$ is non-singular and $\pi^H(\subg{g,h})$ is cyclic or dihedral.
		
		It follows that $\subg{g,h}$ is a subgroup of a group $M$ which is the direct sum of finitely many  conjugates of vertex groups $G_{v}$ and a  group $K$ which is the direct sum of finitely many dihedral and cyclic groups. By assumption, $G_v$ and metabelian groups satisfy $\phi$, and so does their direct product. Therefore, $M\models\phi$ and hence
		$M\models\forall x\forall y\,\sigma_{2}(x,y)$ and thus $M\models\sigma_{2}(g,h)$. Since $\sigma_{2}(x,y)$ is existential we conclude that $\G\models\sigma_{2}(g,h)$.
		
		We thus have that $\subg{g,h}$ cannot be irreducible in $H$ for any $H\in \ff(g,h)$, hence by definition it is small.
	\end{proof}
	
	We are now ready to state the main result of this section. As we discussed at the beginning of the section, this theorem is intended to be used recursively to define families of sets that contain all subgroups of the form $E_v\times E_v^\perp$ for $v \in \Gamma^d$ of certain height in the preorder $\prec^d$. One can think that in the first round, the set $\mathcal A$ is empty and the set $\mathcal Z^j$ contains $E_v\times E_v^\perp$ for all maximal vertices $v\in \Gamma^d$ that satisfy a given non-generic almost positive sentence $\phi^j$. In Section \ref{s: interpretation_of_base_structure_(first)}, given this family of definable sets, we show that the corresponding groups $E_v$ are also definable (provided the vertex $v$ is non-weak), call this set $\mathcal A_1$. At step $k$, we will have a set $\mathcal A$ of definable sets (which are direct sums of) $E_v$ for $v\in \Gamma^d$ of height at most $k-1$ in the preorder $\prec^d$. In this case, the next theorem applied on $\mathcal A$ provides again a family of definable sets that contains $E_v\times E_v^\perp$ for all vertex $v\in \Gamma^d$ with $v$ of height $k$ in the preorder $\prec^d$, that is vertices that are maximal relative to the family $\mathcal A$.
	
	\newcommand{\Y}[0]{\mathcal{Y}}
	\newcommand{\XX}[0]{\mathcal{X}}
	\begin{thm}
		\label{p: main definability proposition}
		Let $n>0$, let $\Phi$ be a finite collection of simple non-generic almost positive formulas, and let $\textbf C=\G(\mathbf{K}_{n},\mathbf{C}_{\Phi})$.
		
		Suppose we are given a finite collection of formulas $\psi^{i}(x,y)$, $1\leq i\leq s$, with $|x|=1$, such that for  each $\G\in\textbf C$ the set \begin{equation}\label{e: def_mcA}
			\mathcal{A}=\left\{\{g\in \G\mid \G\models\psi^i(g,h)\}\mid h\in \G^{|y|}; i= 1, \dots, s\right\}
		\end{equation}
		is a collection of subgroups of $\G$ with the following properties:
		\begin{enumerate}
			\item  \label{A-property conjugation} $\mathcal{A}$ is closed under conjugation in $\G$: if $H\in \G$ and $g\in \G$, then $H^g\in \mc{A}$.
			\item \label{A-property factor closure} If $A\in \mc{A}$ and $ K \leq A$ is a subgroup, then $H\in\mathcal{A}$  for any $H\in\ff(K)$.
			\item \label{A-property vertex closure} If $A\in \mc{A}$, $v\in \Gamma^d$, and $E_{v}\cap A\neq \{1\}$, then one of the following holds:
			\begin{itemize}
				\item $|E_v|=2$, there exists $A'\in \mc{A}$ such that $ E_{v'}\subseteq A'$, $E_{v'}$ is dihedral, and   $E_v \leq E_{v'}$;
				\item $E_{v}\subseteq A'$ for some $A'\in\mathcal{A}$.
			\end{itemize}
		\end{enumerate}
		Then there exists an integer $r$ and  $|\Phi|$ definable in $\mc{G}$  sets $U^{j}\subseteq\G^{r}$, $1\leq j\leq |\Phi|$, so that for each $1\leq j \leq |\Phi|$ there is a collection of sets $\mc{Z}^j = \{Z_{(a,b)}^j \mid (a,b)\in U^j\}$ with the following properties.
		
		Let  $\mathcal{G}=\GP\in \G(\mathbf{K}_{n},\mathbf{C}_{\Phi})$. Then for any $1\leq j\leq |\Phi|$  and any $Z \in \mathcal{Z}^{j}$, there is $(a,b) \in U^j$ such that\begin{itemize}
			\item  $(a,b) \not\subseteq A$ for all $A\in \mathcal A$,
			\item $Z^{j}_{a,b}=E_{v}\times E_{v}^{\perp}$ for some $v\in \Gamma^d$, and with $(a,b)\subseteq E_{v}$.\footnote{In general, for a fixed $j$, the sets $Z_{a,b}^j$ are not  in one-to-one correspondence with the tuples  $(a,b) \in U^j$.} In this case, $(a,b)\subseteq E_{v}$ is of the {\rm(}possibly non-unique{\rm)} form:
			\begin{itemize}
				\item $b=1$ and $a=(g,1, 1, \dots, 1)$ with $g\in Z(E_{v})\setminus\{1\}$; or
				\item $\subg{a,b}$ is not abelian.
			\end{itemize}
		\end{itemize}
		Moreover, for any $1\leq j \leq |\Phi|$,  any $(a,b)\in U^j$, and any $v\in \Gamma^{d}$, we have that $Z_{a,b}^{j}=E_{v}\times E_{v}^{\perp}$ if and only if  $v$ satisfies the following two conditions:
		\begin{itemize}
			\item {\rm(}relative maximality with respect to $\mathcal A${\rm)} for all $v'\in \Gamma^d$ such that $v \preceq^d v'$, we have that $E_{v'}\subseteq A$ for some $A\in \mathcal A$;
			\item {\rm(}order $2$ and not dihedral dominated{\rm)} if $|E_v|=2$, then there is no $v'\in \Gamma^d$ such that $v\preceq^d v'$, $E_v\leq E_{v'}$, $E_{v'}$ is dihedral, and $E_{v'}\subseteq A$ for some $A\in \mathcal A$.
		\end{itemize}
	\end{thm}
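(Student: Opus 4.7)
The plan is to combine the positive existential formula produced by Lemma \ref{l: basic generic almost positive} with the small-cancellation words of Corollary \ref{l: small cancellation multi} to carve out, for each $\phi^j = \exists z\,(\Theta^j(z) \neq 1 \wedge \psi^j(z))$ in $\Phi$, a uniformly definable candidate family. First I would apply Theorem \ref{t: quantifier reduction} to normalize each $\psi^j$ to a positive $\forall\exists$-formula of the form $\forall x \exists y\, \theta^j(x, y, z) = 1$, then invoke Lemma \ref{l: basic generic almost positive} to produce a positive existential formula $\sigma^j(z, u)$ such that in any group $\psi^j(c) \to \forall u\, \sigma^j(c, u)$, and such that in $\mathcal{G}$ whenever $\sigma^j(c, d)$ holds then for every $H \in \mathcal{F}(c, d)$ either $\pi^H(\Theta^j(c)) = 1$ or $\pi^H(\langle c, d \rangle)$ is not irreducible in $H$. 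Pair this with a finite family $\mathcal{W}^j$ obtained from Corollary \ref{l: small cancellation multi} with $N$ large enough that any irreducible action of the subgroup generated by its arguments forces some $w \in \mathcal{W}^j$ to project to an $N$-small cancellation tuple. Then define
\[
Z^j_{a,b} = \bigl\{h \in \mathcal{G} : \mathcal{G} \models \sigma^j(w(a,b), h) \text{ for every } w \in \mathcal{W}^j\bigr\},
\]
and let $U^j$ consist of pairs $(a, b)$ such that (i) $\{a, b\} \not\subseteq A$ for any $A \in \mathcal{A}$ (first-order via the $\psi^i$'s defining $\mathcal{A}$); (ii) $\Theta^j(w(a, b)) \neq 1$ for some $w \in \mathcal{W}^j$; and (iii) $Z^j_{a, b}$ is maximal under inclusion among $Z^j_{a', b'}$ with parameters satisfying (i) and (ii).

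For the containment $Z^j_{a,b} \subseteq E_v \times E_v^\perp$: given $h \in Z^j_{a,b}$, pick by (ii) a word $w$ with $\Theta^j(w(a,b)) \neq 1$. If $\langle a, b, h \rangle$ acted irreducibly on some $H \in \mathcal{F}(a, b, h)$ in which $\Theta^j(w(a,b))$ projects nontrivially, Corollary \ref{l: small cancellation multi} would provide a small-cancellation configuration contradicting $\sigma^j(w(a,b), h)$ via Lemma \ref{l: basic generic almost positive}. Combining this smallness across all factors with the factor-closure property (2) of $\mathcal{A}$ constrains $h$ to lie in $E_v \times E_v^\perp$ for a single vertex $v \in \Gamma^d$ determined by $(a, b)$. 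For the reverse inclusion $E_v \times E_v^\perp \subseteq Z^j_{a,b}$, a direct-product argument in the style of Lemma \ref{l: almost positive direct sums} and the proof of Theorem \ref{thm:definitionComp} produces a subgroup $M \subseteq \mathcal{G}$ containing $\langle w(a,b), h \rangle$ which decomposes as a direct sum of vertex groups and of cyclic or dihedral factors; invoking Lemma \ref{l: basic generic almost positive}(a) inside $M$ yields $M \models \sigma^j(w(a,b), h)$, and since $\sigma^j$ is existential positive this lifts to $\mathcal{G}$.

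The correspondence between admissible vertices $v$ and tuples in $U^j$ is governed by properties (1)--(3) of $\mathcal{A}$: conjugation- and factor-closure translate condition (i) into the relative maximality of $v$ with respect to $\mathcal{A}$, while the dihedral clause in property (3) matches the \emph{order $2$ and not dihedral dominated} criterion. The admissible forms of $(a, b)$ arise from the maximality analysis: when $E_v$ is abelian a single central parameter already witnesses $\psi^j$ throughout $E_v \times E_v^\perp$, giving the case $b = 1$, $a = (g, 1, \dots, 1)$ with $g \in Z(E_v) \setminus \{1\}$; when $E_v$ is non-abelian we must have $[a,b] \neq 1$ so that some $w(a, b)$ escapes the centre of $E_v$ and realises $\Theta^j(w(a, b)) \neq 1$ as required by (ii), preventing collapse into a smaller centralising set.

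The principal obstacle is the case analysis forcing a maximal $Z^j_{a, b}$ to equal $E_v \times E_v^\perp$ for a single vertex $v \in \Gamma^d$ rather than some proper direct-product combination distributed across independent factors, and the simultaneous matching of the resulting $v$ with both bulleted conditions of the theorem. This will require exploiting the factor-closure property of $\mathcal{A}$ to absorb components that would violate relative maximality, together with the vertex-closure property in its full dihedral form to treat involutions correctly. The situation when $|E_v| = 2$ is particularly delicate because a single involution typically belongs to several maximal dihedral subgroups, so only the second bulleted condition (not being dihedral-dominated by some $E_{v'} \in \mathcal{A}$) rules out absorbing such a $v$ into a larger structure already visible in $\mathcal{A}$, and matching this exactly with condition (i) of $U^j$ via the fine structure of $\Gamma^d$ is where the bulk of the technical work lies.
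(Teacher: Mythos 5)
Your overall architecture matches the paper's: normalize each $\phi^j$ via Theorem \ref{t: quantifier reduction}, build a positive existential formula from Lemma \ref{l: basic generic almost positive} together with the words of Corollary \ref{l: small cancellation multi}, and take maximal definable sets relative to $\mathcal{A}$. However, there are three genuine gaps. First, your reverse inclusion $E_{v}\times E_{v}^{\perp}\subseteq Z^{j}_{a,b}$ does not go through as written for $j\geq 1$. The direct-product argument of Theorem \ref{thm:definitionComp} works only for genuinely positive sentences, because there one embeds $\langle w(a,b),h\rangle$ into a direct sum $M$ of vertex groups and metabelian groups all of which satisfy the common positive sentence; an almost positive sentence is not inherited by such an $M$ (the inequality witness need not survive), so $M\models\sigma^{j}$ cannot be concluded. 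The paper circumvents this by replacing $\sigma^{j}_{2}$ with a formula $\tilde{\sigma}^{j}(z,w,u)$ that explicitly decomposes $u=u_{1}u_{0}$ with $u_{1}$ commuting with the parameters: it is this decomposition that forces $(Y^{j}_{a,b}\cap H_{v})\times H_{v}^{\perp}\subseteq Y^{j}_{a,b}$, i.e.\ it manufactures the $E_{v}^{\perp}$ factor by hand rather than deriving it from the formula. Without something like this your set need not contain $E_{v}^{\perp}$ at all.

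Second, your admissibility condition (i), that $(a,b)$ not lie in any single $A\in\mathcal{A}$, is too weak to drive the recursion. The paper instead uses a formula $\xi$ expressing ``trivial modulo $\mathcal{A}$'' componentwise (a product of at most $m$ commuting elements each in some member of $\mathcal{A}$, $m$ the clique bound) and guards the definition with $\neg\xi([\Theta^{j}(a),b])$. This is what guarantees the existence of a single non-abelian factor $\hh\in\ff(a,b)$ lying outside $\mathcal{A}$ on which $\Theta^{j}$ projects nontrivially, which in turn (via the reduction to one-block parameters, your ``single vertex $v$'' step) pins $Z^{j}_{a,b}$ to exactly one $E_{v}\times E_{v}^{\perp}$ at the correct level of $\preceq^{d}$. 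With only your condition (i) the maximality analysis can select sets attached to vertices already dominated by $\mathcal{A}$, or fail to exclude parameters whose relevant component sits inside $\mathcal{A}$. Third, the abelian case is asserted but not constructed: when $E_{v}$ is abelian the small-cancellation machinery gives nothing (there is no irreducible action to exploit), and the paper must introduce a second definable family $\mathcal{X}(\mathcal{A})$ of centralizers $C(g)$ with $\neg\xi(g)$ and take maximal members of the union $\mathcal{X}_{\max}(\mathcal{A})\cup\mathcal{Y}_{\max}(\mathcal{A})$; your single family $\{Z^{j}_{a,b}\}$ with $b=1$ does not by itself define $E_{v}\times E_{v}^{\perp}=C(g)$ for central $g$.
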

	
	\begin{proof}
		By \cite[Theorem 6.5]{casals2019positive} we may assume that $\Phi=\{\phi^{i}\}_{i=0}^{|\Phi|}$, is a set of simple non-generic almost positive formulas of the form
		$$
		\phi^{i}\equiv\Theta(z^{i})\neq 1\wedge\psi^{i}(z^{i}),
		$$
		where $\Theta(z^{i})\neq 1$ consists of a single inequality;
		$\psi^{i}(z^{i})\equiv \forall x^i\exists y^i \,\,\Sigma^i(x^i,y^i, z^i)=1$ is a $\forall\exists$-positive formula,
		for $ 1\leq i\leq |\Phi|$, and
		$\phi^{0}$ is a nontrivial positive formula which we may and do assume to be satisfied by any metabelian group.
		
		By adding dummy variables if necessary, we may assume that $z^{i}=z$ for all $1\leq i\leq |\Phi|$.
		
		For each $0\leq i\leq |\Phi|$, we let
		$$
		\sigma^{i}_{2}(z,v)\equiv
		\bigwedge_{w\in \mathcal W}\exists y \,\,\Sigma^i (w(z,v),y,z) \quad \hbox{(where $|v|=2$)}
		$$
		be the positive existential formula on $|z|+2$ free variables provided by Lemma \ref{l: basic generic almost positive} given the sentence $\phi^i$   (for  same value of $n$ given in the statement of the present theorem).
		
		For each $1\leq i\leq |\Phi|$ we define the following formula on the tuple of variables $z$ and the single variables $w$ and $u$:
		$$
		\tilde{\sigma}^{i}(z,w,u)\,\,\equiv\,\,\exists u_{1}\exists u_{0}\,\,\left[ u=u_{1}u_{0}\wedge \sigma^{i}_{2}(z,w,u_0)\wedge[w,u_{1}]=1\wedge\bigwedge_{l=1}^{|z|}[z_{i},u_{1}]=1 \right],
		$$
		where $u_0, u_1$ are single variables, and we  view $(w,u_0)$ as the variable $v=(v_{1},v_{2})$.
		
		Let also $\xi(x)$ be the formula on one variable $x$ expressing the fact that there exists elements $y_1, \dots, y_m$ such that $x=y_{1}y_{2}\dots y_{m}$, where $m$ is the bound of the size of the maximal clique of $\Gamma$,  $[y_{j},y_{\ell}]=1$ for all $1\leq j\neq \ell \leq m$, and for all $1\leq j \leq m$ there exists $A_j\in \mc{A}$ such that $y_{j}\in A_{j}$.  Informally, the formula $\xi(x)$ is intended to express the statement ``trivial modulo $\mathcal A$''. Note that such a formula can be written in first-order logic because of the definability and properties of the set $\mc{A}$ defined in Equation \eqref{e: def_mcA}.
		
		Finally, let:
		$$
		\tau^{j}(z,w,u)\,\,\equiv\,\neg\xi([\Theta(z),w])\wedge \tilde{\sigma}^{j}(z,w,u)
		$$
		and
		$$
		\tau^{0}(z,w,u)\equiv \bigvee_{i=1}^{|z|}\neg\xi(z_{i})\wedge \sigma_2^{0}(z,w,u).
		$$
		For the record, we highlight the following simple observation, which is a consequence of the second  closure property of $\mathcal A$ (as listed in the statement of Theorem \ref{p: main definability proposition}):
		\begin{obs}
			\label{o: meaning of xi} Given $\G\in\textbf C$ and $g\in\G$ we have $\G\models\xi(g)$ if and only if for all $\hh\in \ff(g)$, $\pi^{\hh}(g) \in A$ for some $A\in\mathcal{A}$.
		\end{obs}
		
		Recall that from Lemma \ref{l: basic generic almost positive} we have that $\forall z\,(\psi^{j}(z)\rightarrow\forall u\,\sigma^{j}_{1}(z,u))$ holds in the theory of groups.
		
		\begin{definition}[Definable set $Y_{a,b}^j$]\label{defn: Y}
			For $a\in\G^{|z|}$ and $b\in \G$ and $0\leq j\leq |\Phi|$, we define
			$$
			Y^{j}_{a,b}=\tau^{j}(a,b,\G) = \{g\in \G\mid \G \models \tau^j(a,b,g)\} \subseteq \G.
			$$
			For $0\leq j\leq |\Phi|$ let $V_{j}$ be the collection of vertices $v\in \Gamma^e$ such that $H_v\models\phi^{j}$. Finally, define
			$$
			\mathcal{Y} =\left\{Y^{j}_{a,b}\mid 0\leq j\leq |\Phi|,\  a\in\G^{|z|},b\in\G \right\}.
			$$
		\end{definition}
		
		In the next lemma we describe some properties of the definable sets $Y_{a,b}^j$ and we show that, in some cases, they are subgroups.
		
		\begin{lemma}
			\label{l: simple parameter case}
			Let $(a,b)\in \G^{|z|+1}$ and $0\leq j \leq |\Phi|$   be such that  $\Theta^{j}(a)\neq 1$ if $j\neq 0$. Then the following hold:
			\begin{enumerate}
				\item \label{assertion 0} Suppose $Y^{j}_{a,b}\neq\emptyset$. Then if $j=0$, we have that $\subg{a,b}$ is not irreducible in any $H\in \ff(a,b)$.
				
				If $j\neq 0$, we have that $\subg{a,g}$ is not irreducible in any $H\in \ff(a,b)$ such that  $\pi^H(\Theta^{j}(a))\neq 1$.
				\item \label{assertion 1} If  $\subg{a,b} \leq E_v$ for some $v\in \Gamma^d$, then $Y^{j}_{a,b}\subseteq E_{v}\times E_{v}^{\perp}$.
				
				\item \label{assertion 2} 
				Assume that $(a,b)\in H_{v}^{|z|+1}$ for some $v\in V_{j}$ {\rm(}see {\rm Definition \ref{defn: Y})}. Then, if $H_{v}\models \forall u\,\tau^{j}(a,b,u)$, we have that $H_{v}\times H_{v}^{\perp}\subseteq Y^{j}_{a,b}$.
				
				In particular, if $j=0$ or if $j\geq 1$ and $H_{v}\models\Theta(a)\neq 1\wedge\psi(a)$, we have that $H_{v}\models \forall u\,\tau(a,b,u)$ and so the latter statement holds in this case.
			\end{enumerate}
		\end{lemma}
		
		\begin{subproof}
			Assertions \ref{assertion 1} and \ref{assertion 0} follow from the definition of $\tau^{j}(z,z',u)$ and the properties of $\sigma^{j}_{r}$ given by Lemma \ref{l: basic generic almost positive}.
			
			Let us now check Assertion \ref{assertion 2}. Since $\tilde{\sigma}^{j}(z,z',u)$ is an existential formula we have that $H_{v} \models\tilde{\sigma}^{j}(a,b,k)$ implies $\G\models\tilde{\sigma}^{j}(a,b,h)$ for any $h\in H_{v}$. Since $H_{v}$ is a retract of $\G$ and $\tilde{\sigma}^{j}$ is positive, the opposite implication is valid as well: $H_{v}\models\tilde{\sigma}^{j}(a,b,h)$ only if  $\G\models\tilde{\sigma}^{j}(a,b,h)$. To conclude, it suffices to notice that by definition $\tilde{\sigma}^{j}(a,b,h)$ implies $\tilde{\sigma}^{j}(a,b,hh')$ for any $h'\in C(a,b)$, so that $(Y^{j}_{a,b}\cap H_{v})\times H_{v}^{\perp}\subseteq Y^{j}_{a,b}$ if $j\geq 1$, while the property follows from Theorem \ref{thm:definitionComp} in the case $j=0$.
		\end{subproof}
		
		\begin{obs}
			\label{o: passing to components} Let $a\in\G^{m}$, $d\in\G$ and let $H\in\ff(a)$ be such that $\pi^{H}(\subg{a})$ is not of order $2$ and is not trivial, and $\subg{a,d}$ is not irreducible in the unique $K\in\ff(a,d)$ containing $H$. Then $H=K$.
		\end{obs}

		\begin{subproof}
			Since the action of $\pi^K(\langle a,d\rangle)$ in $K$ is not irreducible, this subgroup is either singular, cyclic or dihedral. In the first two alternatives, nontriviality of $\pi^H(\subg{a})$  alone implies $H=K$. In the third alternative, $H\neq K$ can only take place if $\pi^{K}(\langle a \rangle)$ has order two, which is ruled out by our assumption on $\pi^H(a)$.
		\end{subproof}
		
		The next lemma describes when the parameters define maximal definable sets. Without 2-torsion, it would state that one can assume the parameters to have one factor.
		
		\begin{lemma}
			\label{l: reducing to one block}Fix $0\leq j\leq |\Phi|$ and $(a,b)\in\G^{|z|+1}$, and let $H\in\ff(a,b)$ be such that $\pi^H(\subg{a,b})$ is non-abelian, and in case $j\geq 1$ we have $\pi^H(\Theta(a)^{j})\neq 1$.
			Then $Y^{j}_{a,b}\subseteq Y^{j}_{\pi^H(a),\pi^H(b)}$.
			
		\end{lemma}
		\begin{subproof}
			Let $d\in Y_{a,b}^j$. From the definition of $Y_{a,b}^j$, see Definition \ref{defn: Y}, we have that $d=d_{0}d_{1}$ where
			$\sigma_{2}^{j}(a,b,d_{0})$ holds in $\G$ and $[d_{1},a_{\ell}]=1$ for any $1\leq \ell \leq |z|$ and $[d_1,b]=1$. Recall that 
			$$
			\sigma^{j}_{2}(z,v)\equiv
			\bigwedge_{w\in \mathcal W}\exists y \,\,\Sigma^j (w(z,v),y,z)=1.
			$$
			
			Let $\ff'=\ff(a,b,d)$ and let $K$ be the unique $K\in\ff'$ that contains $H$.
			
			By assumption $\pi^H(\subg{a,b})$ is non-abelian, and in particular it is not of order 2. Furthermore, since the inequality $\pi^H(\Theta^{j}(a))\neq 1$ is satisfied, it follow from Lemma \ref{l: simple parameter case} that $\subg{a,b,d_0}$ is not irreducible in $K$. Hence, we can apply Observation \ref{o: passing to components} to $(a,b),d_{0}$ and we get that $\pi^H(\subg{a,b}) = \pi^K(\subg{a,b})$ and so $\pi^H(a)=\pi^{K}(a)$ and $\pi^H(b)=\pi^K(b)$.
			We set $a^0=\pi^K(a)$ and $b^0=\pi^K(b)$. Let $c^0=\pi^K(d)$ and $$c^{1}= \pi^{\left(\prod_{M\in \ff'\setminus K}M\right)}(d),$$ so that $d_{0}=c^{0}c^{1}$. Furthermore, we have that $[c^{1},a^0_{\ell}]=1$ for all $1\leq \ell \leq |a|$ as well as $[c^{1},b^{0}]=1$. Since $\sigma_2^{j}(a,b,d_0)$ holds, there exists $e\in\G^{|t|}$ such that $\Sigma^{j}(w(a,b,d),e,a)=1$. Therefore we have
			\begin{gather}\notag
				\begin{split}
					1=\pi^K(\Sigma^{j}(w(a,b,d),e,a))=&\Sigma^{j}(w(\pi^K(a),\pi^K(b),\pi^K(d)),\pi^K(e),\pi^K(a))=\\
					=&\Sigma^{j}(w(a^0,b^0,c^0),\pi^K(e),a^0)
				\end{split}
			\end{gather}
			and so $\sigma_2^{j}(a^0,b^0,c^0)$ holds in $\mathcal G$. Then taking $c_{0}$ as $u_{0}$ and $c_{1}d_{1}$ as $u_{1}$, they witness the fact that $d\in Y_{a^{0},b^{0}}$.
		\end{subproof}
		
		In the next lemma, we do the first sieve and show that we can define the sets $E_v\times E_v^\perp$ which are maximal relative to $\mathcal A$ and are not abelian. We deal with the abelian case in a subsequent lemma.

		\begin{lemma}
			\label{l: first sieve lemma}
			Let $\mathcal{Y}_{\max}(\mathcal A)$ be the family of maximal non-empty members of $\Y$ such that $(a,b) \in \G^{|z|+1}$ and $(a,b) \not\subset A$ for all $A\in \mathcal A$. Then any $Y\in \mathcal Y_{\max}(\mathcal A)$ is of the form $Y=Z^{j}_{a,b}$ and $Z_{a,b}^{j}=E_{v}\times E_{v}^{\perp}$ with $(a,b)\subseteq E_{v}$, $v\in \Gamma^d$ and $E_v$ is either singular or dihedral.
			
			Moreover, $Z_{a,b}^{j}=E_{v}\times E_{v}^{\perp} \in \mathcal Y_{\max}(\mathcal A)$ if and only if $v\in \Gamma^{d}$ satisfies that for all $v'\in \Gamma^d$ such that $v \prec^d v'$, then $E_{v'}\subset A$ for some $A\in \mathcal A$ and $E_v$ has trivial center if $j\ge 1$ and it is non-abelian if $j=0$ in which case the defining parameters $(a,b)$  can be chosen so that $\subg{a,b} \subset E_v$ is non-abelian.
		\end{lemma}

		\begin{subproof}
			We start by noticing that whenever $Y^{j}_{a,b}\neq\emptyset$, by Lemma \ref{l: simple parameter case}, there cannot be any $\hh\in\ff(a,b)$ such that $\Theta^{j}(a)^{\hh}\neq 1$ and the action of $\subg{a,b}$ on $\hh$ is irreducible. Also by part \ref{assertion 1} of Lemma \ref{l: simple parameter case}, if $|\ff(a,b)|=1$, then $\subg{a,b}\leq E_{v}$ for some $v\in \Gamma^d$ and $Y_{a,b}\subseteq E_{v}\times E_{v}^{\perp}$.
			
			Assume that $v\in V_{j}$ for $j\geq 1$ and $H_v \cap A = 1$ for all $A\in \mathcal A$ and for all $v'\in \Gamma^d$ such that $v \prec^d v'$, we have that $E_{v'} \subset A$ for some $A\in \mathcal A$. Then there is $a\in H_v^{|z|}$ such that $H_{v}\models \Theta^{j}(a)\neq 1\wedge\psi^{j}(a)$. If $H_v$ has trivial center, then as $\Theta^j(a)\ne 1$, there must exist $b\in H_{v}$ such that $[b,\Theta^j(a)]\neq 1$. In particular, as $H_v \notin \mathcal A$, we have that $\neg\xi(a)$ and $\neg\xi([\Theta(a),b])$. The observation above and part (\ref{assertion 2}) of Lemma \ref{l: simple parameter case} then implies that $Y_{a,b}^j=H_{v}\times H_{v}^{\perp}$.
			If $v\in V_{0}$ and satisfies the maximality condition, then as soon as we can find $a\in H_{v}^{|z|},b\in H_{v}$ where $b$ does not commute with $a$
			it will follow that $H_{v}\times H_{v}^{\perp}=Y_{a,b}$ in exactly the same way.
			
			Assume now that we are given $Y=Y^{j}_{a,b}\in\Y_{\max}$ (we may assume $Y^{j}_{a,b}\neq\emptyset$).
			By Observation \ref{o: meaning of xi}, if $j\geq 1$ there is some component of the element $[\Theta^{j}(a),b]$ which lies outside every $A\in\mathcal{A}$. In particular $[\Theta^{j}(a),b]\ne 1$ and so $\subg{a,b}$ is non-abelian. Hence, there is a non-abelian component $\hh$ of $\subg{a,b}$ such that $\hh \not\in \mathcal A$ and $\Theta^{j}(a^{\hh})\neq 1$. If $j=0$ another application of the Observation yields some component $\hh\in\ff(a,b)$ not contained in
			$A$ for any $A\in\mathcal{A}$.
			
			By Lemma \ref{l: reducing to one block}, we have that  $Y_{a,b} \subset Y_{a^{\hh},b^{\hh}}$. Therefore, as $(a^\hh, b^\hh) \subset \hh$ and $(a^\hh, b^\hh) \not\subset \mathcal A$, from maximality we can assume that $|\ff(a,b)|=1$ and $\ff(a,b)$ is not in $\mathcal A$.
			
			In fact, we know that $\subg{a^{\hh},b^{\hh}}$ is contained in some $E_v$ for $v\in \Gamma^d$ and $E_v$ is either singular or dihedral, as by assumption $\subg{a,b}<E_v$ is non-abelian. We know that $Y_{a,b}^{j}\subseteq E_v\times E_{v}^{\perp}$. Since in the singular case, we assume that $G_{v}$ has trivial center, it follows from Lemma \ref{l: simple parameter case} that in fact $E_v\times E_v^{\perp}\in\mathcal{Y}$, so any $Y\in\mathcal{Y}_{\max}$ is of this form.
			
			To conclude, assume that $v\in \Gamma^d$ satisfies that for all $v'\in \Gamma^d$ such that $v\prec^d v'$, then $E_{v'}\subset A$ and $E_v$ has trivial center. As $E_v$ is non-abelian and so in particular $|E_v|\ne 2$, by the properties of the family $\mathcal A$, we have that if $E_v \cap A \ne 1$, then $E_v \in \mathcal A$. As by assumption $E_v\not\in \mathcal A$, as we argued above, we have that $E_v\times E_v^\perp=Y_{a,b}$ where $\subg{a,b} < E_v$ and $\subg{a,b}\not\subset A$ for all $A\in \mathcal A$, therefore the defining parameters of $Y_{a,b}$ satisfy the conditions of $\mathcal Y_{\max}$. The maximality of $Y_{a,b}$ follows from the definition of $\prec^d$ and the condition that for all $v'\in \Gamma^d$ such that $v\prec^d v'$, then $E_{v'}\subset A$. Therefore $E_v\times E_v^\perp \in \mathcal Y_{\max}$.
		\end{subproof}
		
		In the next lemma we show that for the abelian vertices which are maximal relative to $\mathcal A$, the subgroup $E_v\times E_v^\perp$ is also definable unless $E_v$ is of order 2 and it is lost by the restriction on the choice of parameters, that is $E_v < E_{v'}$ where $E_{v'}$ is dihedral and $E_{v'}\subset A$ for some $A\in \mathcal A$.
		\begin{lemma}
			\label{l: second sieve lemma} Let $\mathcal{X}(\mathcal A)=\{C(g)\,|\,\G\models \neg\xi(g)\}$ and let $\mathcal{X}_{\max}(\mathcal A)$ be the collection of all
			members of $\XX(\mathcal A)$ that are not strictly contained in any member of $\mathcal{Y}_{\max}(\mathcal A)\cup\mathcal{X}(\mathcal A)$. Then any $X\in\XX(\mathcal A)$ is of the form $E_v \times E_{v}^{\perp}$ for some $v\in \Gamma^d$.
			
			Moreover, $E_v \times E_{v}^{\perp}\in \XX(\mathcal A)$ if and only if  $v\in \Gamma^{d}$ satisfies the following two conditions:
			\begin{itemize}
				\item {\rm(}relative maximality with respect to $\mathcal A${\rm)} for all $v'\in \Gamma^d$ such that $v \prec^d v'$, then $E_{v'}\subset A$ for some $A\in \mathcal A$;
				\item $E_v$ has nontrivial center if $j\ge 1$ or it is abelian if $j=0$ and if $|E_v|=2$, then there is no $v'\in \Gamma^d$ such that $v \prec^d v'$, $E_v < E_{v'}$ and $E_{v'}\subset A$ for some $A\in \mathcal A$.
			\end{itemize}
			
			In this case, we can view $X = Z_{a,b}$ where $b=1$ and $a=(g,\bar 1)\in Z(E_v)\setminus \{1\}$.
		\end{lemma}
		\begin{subproof}
			Clearly for any $X=C(g)\in\XX_{\max}(\mathcal A)$ we may assume that $|\ff(g)|=1$ and does not belong to any $A\in\mathcal{A}$. If $g$ is non-singular, then by Lemma \ref{l: description of centralizers} and the definition of the $d$-completion, we have that $C(g)=E_v\times E_v^\perp$ for some $v\in \Gamma^d$.
			
			It is also easy to see that if $G_{v}$ has nontrivial center, then $G_{v}\times G_{v}^{\perp}=C(g)\in\XX(\mathcal A)$ for $g\in Z(G_{v})\setminus\{1\}$. Furthermore, if in the latter case $v$ satisfies the two conditions of the statement, then the relative maximality assures that $X=G_{v}\times G_{v}^{\perp}$ is not properly contained in any $X'\in\XX(\mathcal A)$ and, as $\mathcal A$ is closed under singular vertices (which do not belong to dihedral groups, i.e. singular vertices of order 2 dominated by a dihedral group), the second condition assures that we can take the parameter $g\in G_v$ and $g\notin A$ for all $A\in \mathcal A$ and so $C(g)=G_v \times G_v^\perp \in \XX_{\max}(\mathcal A)$.
			
			To finish it suffices to show that if $g\in G_{v}$ is not central in $G_{v}$, then there exists $Z\in\XX(\mathcal A)\cup\Y(\mathcal A)$ strictly containing $C(g)$. If $Z(G_{v})\neq \{1\}$, then clearly $C(h)\supsetneq C(g)$ for $h\in Z(G_{v})\setminus\{1\}$ and $C(h)\in \XX(\mathcal A)$. Otherwise, $G_{v}\times G_{v}^{\perp}\in\Y$ by Lemma \ref{l: first sieve lemma} and this set properly contains $C(g)$.
		\end{subproof}
		
		The proof of the proposition ends with the observation that the family $\mathcal{Z}(\mathcal A)=\XX_{\max}(\mathcal A)\cup\Y_{\max}(\mathcal A)$ satisfies the required conditions by virtue of Lemmas \ref{l: first sieve lemma} and \ref{l: second sieve lemma}.
	\end{proof}

	\begin{notation}
		We denote by $U^j(\mathcal A) \subset \G^{|z|+1}$, the definable set of parameters $(a,b)\in  \G^{|z|+1}$ such that $Y=Z_{a,b}^j\in \mathcal Z(\mathcal A)$. When $\mathcal A$ is clear from the context, we sometimes omit it and just denote the definable set of parameters by $U^j$.
	\end{notation}

	\begin{remark}
		Notice that if one considers free products of groups in the class $\G(\mathbf{K}_{n},\mathbf{C}_{\Phi})$ without two torsion, as by definition all singular vertex groups are maximal and $E_v^\perp=1$ for all $v\in \Gamma^d$, we deduce as a corollary of \rm{Theorem \ref{p: main definability proposition}}, the definability of the graph $\Gamma^c=\Gamma^d$ (and almost immediately the interpretability of the core).
	\end{remark}

	\section{Interpretability of the underlying group-labelled graph}\label{s: interpretation_of_base_structure_(first)}

	\subsection{Decomposition and main result}\label{s: proof_of_interpretability}
	
	\newcommand{\ZZ}[1]{Z_{#1}}
	\newcommand{\Zpar}[1]{K_{#1}}
	\newcommand{\Zort}[1]{K_{#1}^{\perp}}

	The main goal of this section is to show that the (extended core of the) base structure is uniformly interpretable in an adequately large class $\textbf C$ of graph products.
	
	The proof has two main steps. The first step is to show that the group-labelled graph $\Gamma^d$ is uniformly interpretable in the
	class. This proof is inductive in nature: at the first step, one considers the definable sets provided by Theorem \ref{p: main definability proposition} (in the case $\mathcal A =\emptyset$), which are of the from $E_v\times E_v^\perp$ for $\prec^d$-maximal vertices $v$ of $\Gamma^d$, and then Theorem \ref{thm: main isolation theorem} isolates the vertex groups (modulo some central involutions). We set this family to be $\mathcal A_1$. In the next step, we consider again the definable sets provided by Theorem \ref{p: main definability proposition} but relative to the set $\mathcal A_1$. In this case, we obtain definable sets of the form $E_v\times E_v^\perp$ but now for vertices $v$ of $\prec^d$-level 2, that is, maximal relative to $\mathcal A_1$. In this case, given this new family of definable sets, Theorem \ref{thm: main isolation theorem} shows (roughly speaking) that $E_v \times\prod_{v \prec^d w} E_w$ is definable (modulo some central involutions). Proceeding this way level by level, we show that there is a family of definable sets of the form $E_v \times \prod_{v \prec^d w} E_w$. We finally use this family to interpret the graph $\Gamma^d$ and the associated vertex groups. For this step, as one can see in the strategy, Theorem \ref{thm: main isolation theorem} is the main technical result that encapsulates the induction step in the procedure, that is it takes some definable sets of the form $E_v\times E_v^\perp$ and isolates $E_v$ (relative to vertices that dominate $v$ in the graph $\Gamma^d$ and some central involutions).
	
	The second step is to show that (the extended core of) $\Gamma$ and the associated vertex groups are uniformly interpretable in the class $\textbf C$. In this case, the main difficulty is to distinguish using first-order sentences the singular infinite cyclic and dihedral groups from the non-singular ones. We show that we can always distinguish between singular and non-singular dihedral groups but in the case of infinite cyclic groups, we were not able to (and conjecturally one cannot) distinguish the infinite cyclic subgroup corresponding to singular weak vertex groups from the non-singular cyclic subgroups. For this reason, one can only interpret the core of the graph $\Gamma$ and not recover the whole graph $\Gamma$. 
	
	\bigskip
	
	We will use the set of sets $\mc{Z} =\{Z_{a,b}^j\mid (a,b)\in U^j, j=1,\dots, |\Phi|\}$ from Theorem \ref{p: main definability proposition}. By Theorem  \ref{p: main definability proposition} we have that each $Z_{a,b}\in \mc{Z}$ has the form $E_v \times E_v^\perp$ for some $v\in \Gamma^d$.
	
	In the next definition, we describe a decomposition of a set of the form $Z=E_v \times E_v^\perp$ in to three pieces: the subgroup $H$ with the property that all the parameters in $H^{|z|+1} \cap U^j$ define the same set $Z$, the subgroup $U$ such that all the parameters in $U^{|z|+1} \cap U^j$ define sets that strictly contain $Z$, and the subgroup $L$ such that the parameters in $L^{|z|+1} \cap U^j$ define sets which do not contain $Z$, that is, they are either strictly smaller than $Z$ or do not contain $Z$ and contain elements outside $Z$.
	
	\begin{definition} [Level decomposition of sets]\label{defn:decomposition definable}
		Let $Z= E_v\times E_v^\perp$.
		\begin{itemize}
			\item Let $\Delta_H < \Gamma^d$ be the induced subgraph with vertex set
			$$
			\{ v'\in \Gamma^d \mid  Z=E_{v'}\times E_{v'}^\perp\}=\{v'\in \Gamma^d\mid v =^d v'\}.
			$$
			Notice that $\Delta_H$ is a clique. Indeed, for all $v,v'\in \Delta_H$, $v\ne v'$ we have by definition that $E_v\times E_v^\perp=E_{v'} \times E_{v'}^\perp$. It follows that $E_v < E_{v'}^\perp$ and so $(v,v')\in E(\Gamma^d)$.
			
			Let $H=\prod\limits_{v'\in \Delta_H} E_{v'}$. It follows from the definition that $Z = H \times H^\perp$ and $H^\perp$ is a subgraph product of $\G$.
			
			\item Let $\Delta_{U} < \Gamma^d$ be the induced subgraph with vertex set
			$$
			\{ v'\in \Gamma^d \mid  E_{v'} \leq Z \lneq E_{v'}\times E_{v'}^\perp\}=\{v'\in \Gamma^d\mid  v\precneq ^d v'\}.
			$$
			Notice that $\Delta_{U}$ is a clique.
			
			Let $U = \prod\limits_{v'\in \Delta_{U}} E_{v'}$. By definition, $U < H^\perp$ and so $U$ is a subgraph product of $\G$.
			
			\item Let $\Delta_L < \Gamma^d$  be the induced subgraph with vertex set
			$$
			\{ v'\in \Gamma^d \mid  Z \nleq E_{v'}\times E_{v'}^\perp\}=\{v'\in \Gamma^d\mid  v \npreceq ^d v '\}.
			$$
			Let $L = \langle E_{v'} \mid v'\in \Delta_L\rangle$. By definition, $L < H^\perp$ and so $L$ is in fact a subgraph product of $\G$.
		\end{itemize}
		
		We have that $Z=H \times H^\perp$ and $H^\perp = U\times L$ and so $Z$ admits the decomposition $Z=H \times U\times L$ which we call the \emph{level decomposition of} $Z$. Sometimes if we want to stress that the factors are from the decomposition of $Z$ we write $H(Z), U(Z)$ and $L(Z)$.
	\end{definition}
	
	\begin{obs} \label{Obs: K structure}
		Assume that $\Gamma$ is reduced. Let $v\in \Gamma^d$ and $Z=E_v\times E_v^\perp$. Then we have the following properties.
		
		\begin{itemize}
			\item At most one $E_v < H$  is either singular, non-singular cyclic, or dihedral  not of the form $G_{v}*G_{w}$ for some $v,w\in \Gamma^e$. Indeed, suppose that $E_v$ is as above. By definition of $H$, we have that $Z=E_v\times E_v^\perp$. Then for any other $E_w < H$, we have that $E_w < E_v^\perp$ which is a subgraph product and so by the second item in {\rm Remark \ref{rem:properties Gammad}}, we have that $E_w \in \Gamma^e$ or $E_w =G_w \ast G_{w'}$ for $w,w'\in \Gamma^e$.
			
			Furthermore, if there are two singular factors in $H$, say $G_v < H$ and $G_{v'}< H$ for $v,v'\in \Gamma^e$, from the definition of $H$ we have that $G_v\times G_v^\perp = G_{v'} \times G_{v'}^\perp$ and so $\st(v)=\st(v')$ contradicting that $\Gamma$ is reduced.
			
			\item If some $E_v< H$ is non-singular cyclic or it is dihedral and not of the form $G_{w}* G_{w'}$ for $w,w'\in
			\Gamma^e$, then $\hh^{\perp}\not\subseteq Z$ for any $\hh\in\ff(L)$. Indeed, let $\G(\Delta_v)^h$ be the subgraph product of $\G$ that contains $E_v$. Notice that $E_v \lneq \G(\Delta_v)^h$. Then $\G(\Delta_v)^h < \mathcal H^\perp$ for all $\hh\in\ff(L)$ but $\G(\Delta_v)^h \not\subset E_v \times E_v^\perp$.
			
			\item if $\hh \in \ff(L)$, then $\hh$ has an irreducible action on the corresponding tree. Indeed, by the definition of $L$, no factor $\hh$ can be singular or dihedral as otherwise, if $\hh=E_v$ for some $v\in \Gamma^d$, then $Z < E_v\times E_v^\perp$ contradicting the definition of $L$.
		\end{itemize}
	\end{obs}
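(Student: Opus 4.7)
My plan is to verify the three claims independently, relying on the level decomposition $Z=H\times U\times L$ introduced in Definition 9.1 together with the structural properties of $\Gamma^d$ collected in Remark 7.14 (describing $E_v^\perp$ as a subgraph product for each $v\in\Gamma^d$, and the structure of vertices $v'\preceq^d v$ with $E_v$ non-singular).

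For the first item I would argue as follows. By definition of $H$, every factor $E_{v'}\le H$ satisfies $E_{v'}\times E_{v'}^\perp=Z$, so $E_{v'}<E_v^\perp$ whenever $v\ne v'$ are both in $\Delta_H$. If $E_v$ is singular, non-singular cyclic, or dihedral not of the form $G_w*G_{w'}$, then $E_v^\perp$ is a genuine subgraph product and the second item of Remark 7.14 forces any other factor $E_w\le H$ to come from a vertex of $\Gamma^e$ or from a dihedral subgroup $G_{w}*G_{w'}$ with $w,w'\in\Gamma^e$. The only obstacle is ruling out two distinct singular factors $G_w,G_{w'}\le H$; here the equality $G_w\times G_w^\perp=G_{w'}\times G_{w'}^\perp$ would translate into $\st(w)=\st(w')$ in $\Gamma$, contradicting that $\Gamma$ is reduced.

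For the second item I take $E_v\le H$ non-singular cyclic or atypically dihedral, set $\G(\Delta_v)^h=\supp(E_v)$, and note that by the definition of support $E_v\lneq\G(\Delta_v)^h$. For any $\hh\in\ff(L)$ we have $\hh\le L\le H^\perp$ and so $\hh\perp E_v$; by the definition of orthogonality this gives $\hh<\supp(E_v)^\perp$, equivalently $\supp(E_v)<\hh^\perp$. On the other hand, since $E_v\lneq \supp(E_v)$ and the extra components of $\supp(E_v)$ do not lie in $E_v^\perp$, we conclude $\supp(E_v)\not\subseteq E_v\times E_v^\perp=Z$, and hence $\hh^\perp\not\subseteq Z$.

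For the third item I would argue by contradiction. A factor $\hh\in\ff(L)$ with non-irreducible action on its associated vertex tree must be singular or dihedral (by Corollary 3.15 or by the definition of factor together with Corollary 3.6), and hence coincides with some $E_w$ for $w\in\Gamma^d$. The inclusion $\hh\le L\le H^\perp$ then yields $E_w\perp H$, and in particular $E_v\le E_w^\perp$. Combined with $E_v^\perp\le E_w\times E_w^\perp$ (which follows because every singular or dihedral factor orthogonal to $E_v$ is contained in either $E_w$ or $E_w^\perp$ by the subgraph-product structure and the decomposition of $E_v^\perp$) this gives $Z=E_v\times E_v^\perp\le E_w\times E_w^\perp$, i.e.\ $v\preceq^d w$, contradicting $w\in\Delta_L$. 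The delicate step I expect to require most care is precisely this last inclusion $E_v^\perp\le E_w\times E_w^\perp$, which needs the careful bookkeeping of the subgraph-product decomposition of $E_v^\perp$ afforded by Remark 7.14 and Lemma 3.3.
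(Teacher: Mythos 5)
Your proposal is correct and follows essentially the same argument as the paper: the first two items reproduce the paper's justification verbatim (reduction via Remark \ref{rem:properties Gammad}, the $\st(v)=\st(v')$ contradiction with $\Gamma$ reduced, and the observation that $\supp(E_v)\le\hh^\perp$ while $\supp(E_v)\not\subseteq Z$), and the third item reaches the same contradiction $v\preceq^d w$ with $w\in\Delta_L$. For that third item, the step you flag as delicate ($E_v^\perp\le E_w\times E_w^\perp$) can be bypassed by working with the level decomposition directly: since $E_w=\hh\in\ff(L)$, one has $L\le E_w\times E_w^\perp$ immediately, and $H\times U\le L^{\perp}\le E_w^{\perp}$, so $Z=H\times U\times L\le E_w\times E_w^{\perp}$ without any analysis of the factor structure of $E_v^{\perp}$.
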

	
	Given a set $Z=E_v \times E_v^\perp$, the factor $H(Z)$ is the direct sum of $E_w$ such that $E_v\times E_v^\perp = E_w\times E_w^\perp$. The next theorem states that given a collection of definable pairs of subgroups $(U(Z),Z)$, we can also define the collection $E_w\times U(Z) (\times CI)$, where the central involutions $CI$ of $Z$ only appear when $E_w$ is a dihedral group. Roughly speaking, the theorem tells us that we can isolate the subgroups $E_v$ modulo the definable set $U$ which ``dominates" $E_v$ (that is, the parameters in $U^{|z|+1} \cap U^j$ define sets $Z'$ that properly contain $E_v\times E_v^\perp$).
	
	\begin{thm}
		\label{thm: main isolation theorem} Let $\textbf C\subseteq\mc{G}(\textbf{K}_N, \textbf{C}_{\Phi})$ {\rm(}see {\rm Definition \ref{d: K_N_and_C})} where $N>0$. Assume one of the following:
		\begin{itemize}
			\item $\Phi$ consists only of nontrivial positive sentences and $\Gamma$ is positive reduced for any $\GP$ in $\textbf C$.
			\item $\Phi$ consists of simple non-generic almost positive sentences and $\Gamma$ is almost positive reduced for any $\GP$ in $\textbf C$.
		\end{itemize}
		
		Assume we are given a collection of pairs of sets $(U,Z)$ uniformly definable in $\textbf C$, where $Z = E_v\times E_v^\perp$ for some $v\in \Gamma^d$ and $U=U(Z)$ is the factor in the level decomposition of $Z= H \times U\times L$, see {\rm Definition \ref{defn:decomposition definable}}.
		
		Then the family consisting of the following sets is uniformly definable in $\textbf C$:
		\begin{itemize}
			\item $U(Z)\times E_v$, where $v\in \Gamma^c$ and $Z=E_v\times E_v^\perp$ and
			\item $(U(Z)\times E_v) \cdot CI(Z)$, where $v\in \Gamma^d \setminus \Gamma^c$, $Z = E_v\times E_v^\perp$ and $CI(Z)$ is the group generated by all the central involutions of $Z$.
		\end{itemize}
		
		What is more, it splits into two uniformly definable families: one which contains all sets of the first type and another one which contains all sets of the second type for which $CI(Z)$ is not contained in $U(Z)$ {\rm(}not necessarily only those{\rm)}.
	\end{thm}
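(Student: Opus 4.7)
The plan is to produce, for each pair $(U,Z)$ in the given family (coming with parameters $(a,b)\subseteq E_v$ from Theorem \ref{p: main definability proposition}), a first-order formula with parameters that defines $U\cdot E_v$ when $v\in \Gamma^c$, and $(U\cdot E_v)\cdot CI(Z)$ when $v\in \Gamma^d\setminus \Gamma^c$, as a subset of $Z$. The main tools are the level decomposition $Z = H(Z)\cdot U(Z)\cdot L(Z)$ of Definition \ref{defn:decomposition definable} and the structural constraints on $H$ and $L$ recorded in Observation \ref{Obs: K structure}.

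The first step is a uniform first-order characterization of the subgroup $H\cdot U\leq Z$ consisting of elements with trivial $L$-component. Since every $\mathcal{H}\in\mathcal{F}(L)$ acts irreducibly on its vertex trees (third item of Observation \ref{Obs: K structure}), any element of $Z$ with a nontrivial $L$-component, together with $a$ and $b$, produces a small-cancellation configuration of the form provided by Corollary \ref{l: small cancellation multi}. Re-running the formal-solution argument of Lemma \ref{l: basic generic almost positive} then lets me express ``$g$ has trivial $L$-component'' by a first-order formula in the variables $g,a,b,Z,U$ and the definable family $\mathcal{A}$, essentially by ruling out the existence of witnesses of small-cancellation type inside $Z$ that are not already captured by $\mathcal{A}$.

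Within $H\cdot U$, I would isolate $U\cdot E_v$ by using the parameters $(a,b)$ together with the fact (first item of Observation \ref{Obs: K structure}) that at most one factor of $H$ is singular, non-singular cyclic, or dihedral not of the form $G_u*G_{u'}$. Since $\langle a,b\rangle\leq E_v$ and the remaining factors of $H$ are standard dihedrals whose generating involutions already belong to vertices of $\Delta_U\cup\Delta_H$, membership in $U\cdot E_v$ becomes a centralizer-profile condition for $g$ relative to $a,b$ inside $Z$. For $v\in \Gamma^d\setminus \Gamma^c$ this profile cannot distinguish an involution $r\in E_v$ from $rc$ with $c$ a central involution of $Z$, so the defined set is only $(U\cdot E_v)\cdot CI(Z)$, exactly as claimed. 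For the split into the two subfamilies, I would test whether the defined set contains two non-commuting elements whose squares lie in $U(Z)$; this is a first-order test characterizing dihedralness of $E_v$ modulo $U$, and it may overshoot on the second subfamily as permitted by the statement.

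The main obstacle I anticipate is the uniform first-order recognition of $L$-triviality without direct access to $L$. Overcoming it requires a careful recursive use of the small-cancellation/acylindricity machinery of Sections \ref{sec:factorstrees}--\ref{sec: from non-compatible to small cancellation}: the closure properties of $\mathcal{A}$ listed in the hypotheses ensure that any factor of $L$ not already absorbed into $\mathcal{A}$ is detected by such a witness. A secondary delicate point is the clean separation of $E_v$ from the standard dihedral factors of $H$ (whose generating involutions sit in $\Delta_U$ but which themselves sit in $\Delta_H$ alongside $E_v$); this has to be arranged by making the centralizer-profile condition depend specifically on $(a,b)$ rather than on arbitrary involutions of $H\cdot U$.
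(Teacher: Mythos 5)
Your overall outline (kill the $L$-part, then isolate $E_v$ inside $H\cdot U$, accept a $CI(Z)$-ambiguity for dihedral vertices, and finish with a first-order dihedralness test) matches the shape of the paper's argument, but two of your key steps have genuine gaps.

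First, the claim that you can express ``$g$ has trivial $L$-component'' by re-running the formal-solution argument does not go through in the almost positive case. The implication furnished by Lemma \ref{l: basic generic almost positive} is one-directional for $j\geq 1$: failure of $\sigma^j$ requires a parameter tuple witnessing the inequality $\Theta^j$ inside the relevant factor, and a factor $\mathcal{H}\in\ff(L)$ need not contain any such witness. This is exactly why the paper splits $L$ as $L_0\times L_1$ and, in Lemma \ref{l: removing components}, removes only $L_1$ (the factors containing an element whose centralizer is not contained in $Z$) --- and it does so by a purely centralizer-theoretic device: a definable set $A_0$ of elements with maximal centralizer that commute with something outside $Z$, followed by the intersection $Z\cap\bigcap_{g\in A_0}C(g)$, using $\bigcap_{h\in\Fill(\mathcal{H})}C(h)=\mathcal{H}^{\perp}$. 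The residual $L_0$ survives all the way to Lemma \ref{l: full isolation}, where it is shown to be trivial precisely in the case $\mathcal{K}_e\neq\emptyset$ (using the reducedness of $\Gamma$), and the case distinction is then made first-order by quantifying over parameter tuples taken \emph{inside} $T'(Z)$, not over the original $(a,b)\subseteq E_v$. Your proposal, which only ever uses the parameters $(a,b)\subseteq E_v$ (whose projections to $L$ are trivial), cannot see $L_0$ at all.

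Second, the separation of $E_v$ from the other factors of $H$ is asserted as a ``centralizer-profile condition'' but the actual mechanism is missing. The paper first defines the set $I(T)$ of involutions with maximal centralizer commuting with an element outside $Z$ (Lemma \ref{lem: I(t)}), uses the anti-commutation sets $H(g)=\{h\mid h^g=h^{-1}\}$ and $D(g)=gH(g)\cup H(g)$ to carve out each dihedral factor $K\in\mathcal{K}_d$ together with $CI(Z)$ (Lemma \ref{l: isolating dihedrals}), and then intersects $T$ with the centralizers of all elements of all $D(g)$ to remove these factors. What remains could a priori still contain several factors; the hypothesis that $\Gamma$ is (almost) positive reduced is what forces $|\mathcal{K}_e|\leq 1$, via a case analysis on conjugated dihedrals of the form $G_{v_1}\ast G_{v_2}^{h}$. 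Your write-up never invokes the reducedness of $\Gamma$, yet without it the set you define need not be of the form $U(Z)\times E_v$ at all. These are not presentational issues: without the $L_0$/$L_1$ split and without the reducedness-based bound on the number of remaining factors, the proposed formulas do not define the asserted sets.
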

	
	\begin{obs}\label{obs:involutions}
		The subgroup of central involutions $CI$ of $Z$ is precisely the central involutions of the {\rm(}unique{\rm)} singular vertex in $H$ {\rm(}if exists{\rm)} and the central involutions of $U(Z)$. Hence, have that $CI = \prod\limits_{\substack{w\in\Gamma^{e}\\ v\preceq^{d} w}}CI(E_{w})$.
	\end{obs}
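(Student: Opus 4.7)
The plan is to exploit the direct product decomposition $Z=H\times U\times L$ from Definition \ref{defn:decomposition definable}. Since $Z$ is an internal direct product, the center splits as $Z(Z)=Z(H)\times Z(U)\times Z(L)$, and consequently the subgroup of central involutions splits as
\[
CI(Z)=CI(H)\cdot CI(U)\cdot CI(L).
\]
The proof then reduces to identifying each of these three contributions using Observation \ref{Obs: K structure}.

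For the $L$-factor, I would argue that $CI(L)=1$. By the third bullet of Observation \ref{Obs: K structure}, every $\mathcal{H}\in\ff(L)$ acts irreducibly on its associated vertex trees; in particular $\mathcal{H}$ is a directly indecomposable subgraph product $\mathcal{G}(\Delta_\mathcal{H})^h$ with $|\Delta_\mathcal{H}|>1$ and $\Delta_\mathcal{H}$ not a join, so $\Delta_\mathcal{H}$ has no universal vertex and the standard formula for the center of a graph product yields $Z(\mathcal{H})=1$. Since $L\leq \env{L}=\prod_{\mathcal{H}\in\ff(L)}\mathcal{H}$ and the minimality of the envelope forces the projection $\pi^{\mathcal{H}}\colon L\to\mathcal{H}$ to have full support in $\mathcal{H}$, an element $z\in Z(L)$ projects into the centralizer of $\pi^{\mathcal{H}}(L)$ in $\mathcal{H}$, which is trivial by the irreducibility of the action. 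Hence $Z(L)=1$ and in particular $CI(L)=1$.

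For $H$ and $U$ the analysis is parallel. By the first bullet of Observation \ref{Obs: K structure}, at most one factor $E_{v'}\subseteq H$ is singular, non-singular cyclic, or dihedral not of the form $G_u\ast G_{u'}$; every other factor of $H$ is an infinite dihedral group $G_u\ast G_{u'}$ with $u,u'\in\Gamma^e$ of order two, which has trivial center. Among the remaining possibilities, a non-singular cyclic factor is infinite cyclic and hence torsion-free, and a non-singular dihedral factor again has trivial center, so neither contributes central involutions. Therefore $CI(H)$ is either trivial or equals $CI(E_w)$ for the unique $w\in\Delta_H\cap\Gamma^e$ (equivalently $w=^d v$). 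The identical analysis applied to $U=\prod_{v'\in\Delta_U}E_{v'}$ gives $CI(U)=\prod_{w\in\Delta_U\cap\Gamma^e}CI(E_w)$.

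To conclude, $\{w\in\Gamma^e:v\preceq^d w\}$ is precisely the disjoint union $(\Delta_H\cap\Gamma^e)\cup(\Delta_U\cap\Gamma^e)$, so combining the three contributions yields the product formula in the statement. The main subtlety lies in the $L$-component: one needs the full vanishing $Z(L)=1$ rather than merely the absence of central involutions, and this is secured by combining the irreducibility of each factor $\mathcal{H}\in\ff(L)$ with the fact that $L$ projects with full support onto every such $\mathcal{H}$.
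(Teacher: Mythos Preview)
The paper records this as an observation without supplying a proof, so there is nothing to compare against; your argument via the level decomposition $Z=H\times U\times L$ is the natural one and is essentially correct.

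One point deserves a little more care. When you write ``the identical analysis applied to $U$'', the first bullet of Observation~\ref{Obs: K structure} does \emph{not} transfer verbatim: that bullet uses reducedness of $\Gamma$ to force at most one singular factor in $H$, whereas $U$ may well contain several singular factors $G_{w}$ with $v\prec^{d}w$. What does transfer is the factor-by-factor case analysis: each $E_{v'}$ with $v'\in\Delta_{U}$ is singular, infinite cyclic, or infinite dihedral, and only the singular ones can contribute central involutions. Since the paper asserts that $U$ is itself a subgraph product, one can also argue directly that $CI(U)$ is the product of $CI(G_{w})$ over the vertices $w$ universal in the underlying graph of $U$, and then check that these are exactly the $w\in\Gamma^{e}$ with $v\prec^{d}w$; this sidesteps any worry about overlaps among the $E_{v'}$. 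Either way your conclusion stands.

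A small stylistic remark on the $L$-piece: because $L$ is declared to be a subgraph product in Definition~\ref{defn:decomposition definable}, one has $L=\prod_{\mathcal{H}\in\ff(L)}\mathcal{H}$ on the nose, so the ``full support'' argument is unnecessary; it suffices to note that each $\mathcal{H}$ is a directly indecomposable non-singular subgraph product and hence has trivial centre.
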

	
	\subsection{Isolating vertex groups: proof of Theorem \ref{thm: main isolation theorem}}
	\label{ssct: isolating}
	
	Given a definable collection of  pair $(U(Z), Z)$, our goal is to show that the collection $E_w\times U(Z)$ is also definable for all $E_w$ such that $E_w\times E_w^\perp = Z$. We prove this in several lemmas that progressively reduce the $Z$ to smaller subgroups until we will reach the desired  subgroup $E_w \times U(Z)$.
	
	In the next lemma, we show that in the nontrivial positive case, we can remove the subgroup $L(Z)$, that is, if the collection of pairs $(U(Z), Z)$ is definable, so is the collection $H(Z) \times U(Z)$. The difference between the nontrivial positive and the simple non-generic almost positive case is due to the fact that, in the nontrivial positive case, all parameters in the vertex group define the same definable set while this fact does not hold in the simple non-generic almost positive case. The main observation is that in the subgraph product $L(Z)$ there are no central vertices and so for each $E_u < L$, there exists $E_{u'}<L$ such that $E_{u'} \not< E_u \times E_u^\perp$. So if one considers the intersection of all the definable sets with parameters in $Z$, one defines the collection of subgroups $H(Z) \times U(Z)$. If we use this argument in the simple non-generic almost positive case, since parameters in a vertex groups $E_w$ could define a subgroup properly contained in $E_w\times E_w^\perp$ we would only obtain a proper subgroup of $H(Z) \times U(Z)$.
	
	\begin{lemma}
		\label{l:definability positive}
		In the setting of {\rm Theorem \ref{thm: main isolation theorem}}, assume that $\Phi$ consists only of nontrivial positive sentences and $\Gamma$ is positive reduced for any $\G(\Gamma,\gvv)$ in $\textbf C$.
		Then there is a a formula with an associated subdivision of its free variables which in each $\G\in\textbf C$ defines the family of sets consisting of $U(Z)\times H(Z)$ for any $Z\in\mathcal{Z}$.
	\end{lemma}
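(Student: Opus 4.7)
The plan is to exhibit a uniform first-order formula $\varphi(x,\bar q)$, where $\bar q$ denotes the parameters defining the pair $(U(Z),Z)$, given by
$$\varphi(x,\bar q)\;\equiv\; x\in Z\;\wedge\;\forall y\bigl(y\in Z\;\rightarrow\;[x,y]\in U(Z)\bigr),$$
and to show that $\varphi(\G,\bar q)=H(Z)\times U(Z)$ holds for every $\G\in\textbf{C}$ and every $Z\in\mathcal{Z}$. Since by hypothesis the family of pairs $(U(Z),Z)$ is uniformly definable, this gives the desired uniform definition of the family $\{H(Z)\times U(Z)\mid Z\in\mathcal{Z}\}$.

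First, I would exploit the internal direct product decomposition $Z=H(Z)\times U(Z)\times L(Z)$ from Definition \ref{defn:decomposition definable}. Every $g\in Z$ factors uniquely as $g=g_H g_U g_L$ in the three pairwise commuting direct factors, and for any two $g,h\in Z$ the commutator collapses to $[g,h]=[g_L,h_L]\in L(Z)$. Since $L(Z)\cap U(Z)=\{1\}$, the universal clause $[g,h]\in U(Z)$ for all $h\in Z$ is equivalent to $[g_L,h_L]=1$ for every $h_L\in L(Z)$, i.e.\ to $g_L\in Z(L(Z))$. The reverse inclusion $H(Z)\times U(Z)\subseteq\varphi(\G,\bar q)$ is automatic, since those elements have $g_L=1$. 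Hence the whole statement reduces to showing that $Z(L(Z))=\{1\}$.

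The key technical step, which I expect to be the only delicate point, is establishing the triviality of $Z(L(Z))$. By the third bullet of Observation \ref{Obs: K structure}, each factor $\mc{H}\in\ff(L(Z))$ acts irreducibly on its associated vertex trees, so by Corollary \ref{c: types of actions} it is neither singular, cyclic, nor dihedral. Writing $\mc{H}=\mc{G}(\Delta')^{h}$ with $\Delta'$ directly indecomposable (by the definition of a factor) and $|\Delta'|\geq 2$, the centre of such a graph product is the direct product of the centres $Z(G_v)$ over vertices $v\in\Delta'$ adjacent to every other vertex of $\Delta'$. Direct indecomposability together with $|\Delta'|\geq 2$ rules out the existence of any such $v$, since it would yield a nontrivial join decomposition $\Delta'=\{v\}\oplus(\Delta'\setminus\{v\})$. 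Therefore $Z(\mc{H})=\{1\}$ for each factor, and $Z(L(Z))=\prod_{\mc{H}\in\ff(L(Z))}Z(\mc{H})=\{1\}$, which forces $g_L=1$ and completes the argument.
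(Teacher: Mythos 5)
Your proposed formula does not define the right set, and the error is localized in the commutator computation. In the internal direct product $Z=H(Z)\times U(Z)\times L(Z)$ the commutator of $g=g_Hg_Ug_L$ and $h=h_Hh_Uh_L$ decomposes componentwise as $[g,h]=[g_H,h_H]\,[g_U,h_U]\,[g_L,h_L]$; it does not collapse to $[g_L,h_L]$. Since $U(Z)$ is a direct factor, the clause ``$[g,h]\in U(Z)$ for all $h\in Z$'' forces \emph{both} $[g_L,h_L]=1$ for all $h_L\in L(Z)$ \emph{and} $[g_H,h_H]=1$ for all $h_H\in H(Z)$. So your formula defines $Z(H(Z))\times U(Z)$, not $H(Z)\times U(Z)$. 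These coincide only when $H(Z)$ is abelian, which fails in typical cases: by Observation \ref{Obs: K structure} the factors of $H(Z)$ other than possibly one are infinite dihedral groups $G_v\ast G_w$, which are non-abelian with trivial center, and the remaining singular factor $G_v$ may itself be a centerless group satisfying a nontrivial positive sentence (e.g.\ the lamplighter group, which is metabelian). In such cases your formula returns a proper subgroup of the target, possibly just $U(Z)$. Your argument that $Z(L(Z))=\{1\}$ (via direct indecomposability of the factors of $L$) is fine, but it only handles half of what the universal clause imposes.

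The paper avoids this by not using centralizer-type conditions at all: it defines $P(Z)=\bigcap_{(a',b')\subseteq Z}Z_{a',b'}$, the intersection of all members of the definable family whose parameters lie in $Z$. Parameters in $H(Z)\times U(Z)$ give sets containing $Z$, parameters in $L(Z)$ give sets containing $L(Z)^{\perp}=H(Z)\times U(Z)$, so $H(Z)\times U(Z)\subseteq P(Z)$; conversely, for any $g$ with nontrivial component in some irreducible factor $\hh\in\ff(L)$ one finds a stable hyperbolic $h\in\hh$ with $g\notin E(h)$, and the set $Z_{a',b'}$ with parameters in $\subg{h}$ excludes $g$. If you want to salvage a centralizer-style formula, you would need an additional clause that recovers the non-central part of $H(Z)$, which is essentially what forces the paper's different route.
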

	\begin{proof}
		Define $P(Z_{a,b})$ to be the
		$$
		P(Z_{a,b})=\bigcap\limits_{a',b'\subset Z_{a,b}} Z_{a',b'}.
		$$
		
		Let $Z\in \mathcal Z$, then from the level decomposition of $Z$, see Definition \ref{defn:decomposition definable}, we have that $Z= H\times U \times L$. From Observation \ref{Obs: K structure}, we have that $H$ has at most one component $E_v$ which is either singular, infinite cyclic or dihedral not of the form $G_v\ast G_w$; the other components are dihedral groups of the form $G_v\ast G_w$. We claim that $P(Z_{a,b})= H \times U$.
		
		Let us first show that $H\times U < Z_{a',b'}$ for all $a', b' \subset Z$. From the description of parameters, see Theorem \ref{p: main definability proposition}, we can assume that $|\ff(a',b')|=1$ and so $\langle a',b' \rangle$ is a subgroup of one of $U, H, L$. If $\langle a',b' \rangle \leq U \times H$, then from the properties of $H$ and $U$, see Definition \ref{defn:decomposition definable}, it follows that $Z < Z_{a',b'}$. If $\langle a',b' \rangle \leq L$, then $H \times U = L^\perp < Z_{a',b'}$. Therefore, we have that $H \times U< P(Z_{a,b})$.
		
		The equality is obtained as follows. By the description of parameters, see Theorem \ref{p: main definability proposition}, there are $(a,b) \subset Z$ such that $Z=Z_{a,b}$. If $g \in P(Z)$, as $(a,b)\subset Z$, we have that $g\in Z_{a,b}=Z$. Hence $g=g_0g_1$ for some elements $g_0\in U\times H$ and $g_1 \in L$. Since $U\times H\leq P(Z)$ and $P(Z)$ is a subgroup, we have that $g_1 \in P(Z)$. We want to show that, in fact, $g_1=1$. Indeed, assume towards contradiction that $1\ne g_1 \in L$ and that $g_1^{\hh}\ne 1$ for $\hh \in \ff(L)$. From Observation \ref{Obs: K structure}, we have that $\hh$ cannot be singular or dihedral, i.e. $\hh$ admits an irreducible action on a tree $T_v$. From Corollary \ref{c: stable axis}, there is a hyperbolic element which is not of smaller complexity in $\hh$, it is $(n+3)$-stable (with respect to the action of $\hh$ on $T_v$) and $g_1$ does not belong to the stabiliser $E(h)$ of $h$. In particular, $g_1^{\hh}$ and $h$ have an irreducible action on $T_v$. Then $g_1 \notin Z_{a',b'}$ for $(a',b') \subset Z \cap \subg{h}$ and so $g_1 \notin T(Z)$. Therefore, $P(Z)=H\times U$.
		\end{proof}
	
	We have define $L$ to be the subgroup whose parameters in $L^{|z|+1} \cap U^j$ define sets which do not contain $Z$. We consider $L_0$ to the direct sum of factors of $L$ with the property that all the parameters in $L_0$ define sets which are strictly contained in $Z$; its complements in $L$, called $L_1$ is the direct sum of factors that contain at least one parameter such that the definable set does not contain $Z$ but contains an element from outside of $Z$. In the next lemma we show that given a definable collection of pairs $(U(Z), Z)$, then the collection $H\times U \times L_0$ is also definable.
	
	\begin{lemma}\label{l: removing components}
		For any $Z\in\mathcal{Z}$ consider the decomposition given in {\rm Definition \ref{defn:decomposition definable}} and let $L_{0}(Z)$ be the the subgroup of $L$ generated by the collection $\ff_{0}$ of all those $\hh\in\ff(L)$ for which $C(g)\subseteq Z$ for any $g\in\hh$
		and $T(Z)=U(Z)\times H(Z)\times L_{0}(Z)$. Then the family
		$\{T(Z)\}_{Z\in\mathcal{Z}}$ is uniformly definable in $\textbf C$.
	\end{lemma}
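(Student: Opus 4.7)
The plan is to describe $T(Z)$ first-order as the centralizer in $Z$ of a uniformly definable family of ``test elements'' that detect exactly those factors of $L(Z)$ whose orthogonal fails to sit inside $Z$. The crucial structural input is item~(3) of Observation \ref{Obs: K structure}: each $\hh\in\ff(L)$ acts irreducibly on the associated vertex tree, so the underlying graph of $\hh$ is indecomposable with no universal vertex, giving $Z(\hh)=\{1\}$.

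The first step would be to establish the centralizer identity
\[
T(Z) \;=\; Z\cap C_{\G}\!\!\left(\bigcup\nolimits_{\hh\in\ff(L)\setminus\ff_{0}}\hh\right).
\]
The inclusion $\subseteq$ is immediate from the direct product $Z=U\times H\times L$ and the further splitting $L=L_{0}\times (L\setminus L_{0})$: by construction $U\cdot H$ commutes with all of $L$, and $L_{0}$ commutes with its complementary factor inside $L$. For $\supseteq$, decompose $g\in Z$ as $g_{U}g_{H}g_{0}g_{1}$ with $g_{0}\in L_{0}$ and $g_{1}\in L\setminus L_{0}=\prod_{\hh\in\ff(L)\setminus\ff_{0}}\hh$; the centralizing assumption then forces each $\hh$-component of $g_{1}$ to lie in $Z(\hh)=\{1\}$.

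Next I would isolate the set $\bigcup_{\hh\in\ff(L)\setminus\ff_{0}}\hh$ in a definable way. Using that $U(Z)$ is already part of the definable data, I propose
\[
V(Z) \;=\; \{h\in Z \mid |\ff(h)|=1,\ h\notin U(Z),\ C_{\G}(h)\not\subseteq Z\}.
\]
Lemma \ref{l: description of centralizers} together with a case analysis on the unique factor $\hh'$ of $h$ shows that $V(Z)$ is the union, over $\hh\in\ff(L)\setminus\ff_{0}$, of the full-support elements of $\hh$: factors in $\Delta_{H}$ yield $C(h)\subseteq Z$; factors in $\Delta_{U}$ are excluded by $h\notin U(Z)$; factors $\hh\in\ff_{0}$ yield $C(h)=\hh^{\perp}\cdot C_{\hh}(h)\subseteq Z$; and only the factors $\hh\in\ff(L)\setminus\ff_{0}$ survive. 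To close the loop, the elements $h\in\hh$ with $\supp(\subg{h})=\supp(\hh)$ generate $\hh$ (any $g\in\hh$ is the product of two full-support elements $(gh^{*})$ and $(h^{*})^{-1}$), so the centralizer in $\hh$ of $V(Z)\cap\hh$ is $Z(\hh)=\{1\}$; hence the componentwise commutation forced by $[g,h]=1$ for all $h\in V(Z)$ propagates and gives $g_{1}=1$. Setting
\[
T(Z) \;=\; \{g\in Z \mid \forall h\,(h\in V(Z) \Rightarrow [g,h]=1)\}
\]
then provides the required uniform definition.

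The main obstacle is the definability of the predicate ``$|\ff(h)|=1$''. This amounts to expressing that $\supp(\subg{h})$ is directly indecomposable, i.e.\ that $h$ admits no nontrivial decomposition $h=h_{1}h_{2}$ with $[h_{1},h_{2}]=1$ and $\subg{h_{1}},\subg{h_{2}}$ lying in orthogonal subgraph products. I plan to encode orthogonality through a joint centralizer condition against the already-definable family $\mathcal{Z}$ together with the pairs $(U,Z)$: two elements lie in orthogonal subgraph products when, up to conjugation, one is contained in a member of $\mathcal{Z}$ whose perpendicular contains the other. The failure of such a decomposition then becomes a universal first-order statement, delivering the definability of $V(Z)$ and hence the uniform definability of the family $\{T(Z)\}_{Z\in\mathcal{Z}}$.
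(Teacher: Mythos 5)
Your overall architecture --- realizing $T(Z)$ as the centralizer inside $Z$ of a definable test set concentrated on the factors in $\ff(L)\setminus\ff_{0}$ --- is the same as the paper's, but two of your steps fail as stated. First, the case analysis behind your identification of $V(Z)$ is wrong for dihedral factors of $H$. If $K=G_{u}\ast G_{w}\in\ff(H)$ is dihedral, a generating involution $h$ of $G_{u}$ satisfies $|\ff(h)|=1$, lies in $Z\setminus U(Z)$, and has $C(h)=G_{u}\times\G(\link{u})$, whereas $Z=K\times\G(\link{u}\cap\link{w})$; whenever $\link{u}\not\subseteq\st(w)$ this gives $C(h)\not\subseteq Z$, so $h\in V(Z)$. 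Your final clause then demands that every $g\in T(Z)$ commute with $h$, which excludes the other involution of $K$ and hence cuts the defined set strictly below $U\times H\times L_{0}$. This is exactly the phenomenon the paper quarantines afterwards via $I(T)$ and $\mathcal{K}_{d}$ (Lemma \ref{lem: I(t)}); its own proof of the present lemma keeps such $h$ out of the test set by requiring that $g^{2}$ be non-almost-central modulo $U$, a condition with no counterpart in your $V(Z)$.

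Second, your reduction rests on the predicate $|\ff(h)|=1$ being definable, and the sketch you give does not deliver this. At this stage of the induction the only definable subgraph products available are the sets $E_{w}\times E_{w}^{\perp}$ for $w$ of the current level or above (together with the $U_{w}$); a nontrivial direct decomposition of $\supp(\subg{h})$ for $h$ lying deep inside $L$ is witnessed by orthogonal subgraph products that need not occur in any of these families, so ``up to conjugation one element is contained in a member of $\mathcal{Z}$ whose perpendicular contains the other'' neither implies nor is implied by orthogonality of supports. The paper never defines the number of factors of an element; it substitutes the manifestly first-order condition that $C(g)U$ be maximal among centralizers of non-central elements of $Z$, and only a posteriori deduces that any $g$ satisfying its three conditions has a unique non-central component, which moreover cannot lie in $H$ or $L_{0}$. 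To repair your argument you would need either a genuine definition of $|\ff(h)|=1$ or a maximality surrogate of this kind, together with a clause excluding the involutions of the dihedral factors of $H$.
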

	\begin{proof}
		We will omit the explicit dependency on $Z$ from the notation.
		
		Let $\ff_{1}=\ff(L)\setminus\ff_{0}$ and let $L_{1}=\prod_{\hh\in\ff_{1}}\hh$. Let $A_0$ be the set of $g\in Z\setminus U$ such that
		\begin{enumerate}
			\item \label{condition 2A0} $g^2$ (and so $g$) is not almost central in $Z$ relative to $U$: $\exists h\in Z\,\, (g^2)^hU\ne (g^2)^{\pm 1}U$,
			\item \label{condition 3A0} $g$ has maximal centralizer in $Z$ relative to $U$ and within the class of those elements which are not central in $Z$: $\neg\exists h\in Z$ such that $C(g)U \subsetneq C(h)U\not\supseteq Z$
			\item \label{condition 4A0} either $\exists h'\notin Z$, $[g,h']=1$ or $\exists h\in Z$, $\exists h' \notin Z$, $[g,h]\notin U$ and $[h,h']=1$.
		\end{enumerate}
		
		We start by observing that $A_0 \subset Z(T) \times L_{1}$, where $Z(T)$ is the center of $T$. Take $a\in A_{0}$ and let $a_{0}=\pi_{T}(a)$ and $a_{1}=\pi_{L_{1}}(a)$. We claim that $a_{1}\neq 1$, or equivalently, that $a\nin T$. Suppose towards contradiction that $a\in T$.
		
		We first show that $a$ has a unique non-central component. Let $\ff_{nz}$ be the collection of all $\hh\in\ff(H\times L_{0})$ such that $a^{\hh}$ is not central in $Z$ (equivalently, the corresponding component). By condition \ref{condition 2A0}, we have that $a$ is not almost central so this forces $\ff_{nz}$ to be non-empty. Let us now see that there is precisely one component, i.e. $|\ff_{nz}|=1$. Indeed, Condition \ref{condition 3A0} implies that the centralizer of each $a^{\hh}$ for $\hh\in\ff_{nz}$ coincides with that of $a$, but, if there is more than one component, this can only happens if each such $a^{\hh}$ is central in the corresponding factor, against the definition of $\ff_{nz}$. Therefore, $a$ has a unique non-central component, which it must belong to some $K\in \ff(H)$ or to some $\mathcal H\in \ff(L_0)$.
		
		Let $a=a' c$, where $a'$ is the non-central component of $a$ and $c$ is a central element in $Z$, i.e. $c$ is the product of the central components of $a$. Let us see that $a'\notin E_v < H$.
		
		Assume first that $E_v$ is singular, that is $E_v=G_v$ (if it exists). As $Z=G_v\times G_v^\perp$, we have that $C(a)=C(a') < Z$ and so $a$ does not commute with any $h$ such that $h\notin Z$. Moreover, any $h\in Z$ that does not commute with $a$ modulo $U$ must have some component in $G_{v}$. This implies that $\subg{h}^{\perp}\subseteq G_{v}^{\perp}\subseteq Z$. Since $C(h)$ is the product of a subgroup of $Z$ (generated by singular vertex groups containing the components of $h$ or the cyclic subgroups they generate) and $\subg{h}^{\perp}$ it follows that $C(h)\leq Z$ and thus $a$ cannot fulfill Condition \ref{condition 4A0} of the statement either. Therefore $a' \notin G_v$.
		
		Let us now see that $a' \notin E_v$ for any non-singular cyclic or dihedral $E_v < H$. Indeed, it follows from the Condition \ref{condition 2A0}, as $a^2$ does not belong to $Z(E_v)U$ so $a'$ cannot be an involution, nor cyclic and nor almost central.
		
		Similarly, we have that $a' \notin \mathcal H$, for all $\mathcal H\in \ff(L_0)$. From the definition of $L_0$, we have that $C(a)<Z$. If $h$ does not commute with $a$, $h$ has a component in $\mathcal H$. Arguing as above, we have that $C(h) < C(h^{\mathcal H}) < Z$ and so $a$ does not satisfies Condition \ref{condition 4A0} in the statement.
		
		We have ruled out $a\in T$. It is now easy to see that $a_{0}\in Z(T)$, as otherwise it would violate Condition \ref{condition 3A0} as $C(a)\subsetneq C(a_{1})$ and no centralizer of an element in $L_{1}$ can contain $Z$.
		
		Let
		$$
		X = Z\cap \bigcap\limits_{g\in A_0}C(g).
		$$
		We claim that $X=T$.
		
		As $A_0 \subset Z(T) \times L_1$, it follows that $T\leq C(g)$ for any $g\in A_{0}$, so clearly $T\leq X$.
		From Observation \ref{Obs: K structure} we have that $\hh\in\ff(L_{1})$ admits an irreducible action on the corresponding tree.

		Let $\Fill(\hh)=\{g\in\hh\,|\,\ff(g)=\{\hh\}\}$. Notice that $\Fill(\hh)$ is the set of elements in $\hh$ that are hyperbolic in all the trees associated to $\hh$. It is easy to see that $\Fill(\hh)\subseteq A_{0}$. Pick $g\in \Fill(\hh)$. Conditions \ref{condition 2A0} is clear as $g$ is not almost central in $\hh$. The description of centralizers in graph products (restricted to $\hh$) implies that $g$ satisfies Condition \ref{condition 3A0} as well: if $g_{0}\in Z$ is such that    $C(g)\subseteq C(g_{0})$ then $g_{0}^{\hh}\in C(h)\cap\hh=\subg{g}$ and $C(g)\neq C(h_{0})$ implies that    $g_{0}^{\hh}=1$ and hence $Z\leq C(g_{0})$.
		
		Now, by definition of $L_{1}$ there exists $G_{v}\leq\hh$ such that $ G_{v}^{\perp}\not\leq Z$ and any $h\in G_{v}\setminus\{1\}$ will satisfy $[h,g]\notin U$ and $C(h)\not\subseteq Z$ and thus witnesses that $g$ satisfies Condition \ref{condition 4A0} as well.
		
		Since $\bigcap_{h\in \Fill(\hh)}C(h)=\hh^{\perp}$, it follows that
		$$
		X_{1}=Z\cap \bigcap_{g\in A_{0}}C(g)\subseteq Z\cap\bigcap_{\hh\in\ff(L_{1})}\bigcap_{h\in \Fill(\hh)} C(h)\subseteq Z\cap\bigcap_{\hh\in\ff(L_{1})}\hh^{\perp}=T
		$$
		and we are done.
	\end{proof}
	
	Recall that our goal is to isolate the factors of $H$ (modulo the vertex groups that dominate it), that is to define $K \times U(Z)$ where $K\in \ff(H)$. Our next lemma is directed to isolate the dihedral components that have an involution which commutes with an element outside of $Z$. In the next lemma, we show that this set of involutions, called $I(T)$ is definable (modulo $U$).

	\begin{lemma}\label{lem: I(t)}
		Let $Z \in \mathcal Z$ and let $T$ be the definable set $U\times H \times L_0$ from {\rm Lemma \ref{l: removing components}} in the almost positive case, and the set $U\times H$ from {\rm Lemma \ref{l:definability positive}} in the positive case.
		
		Define $I(T)$ to be the set of involutions with maximal centraliser and commuting with an element outside of $Z$, that is
		$$
		I(T)= \{ g\in T\setminus U\, \mid\, g^{2}=1\wedge \neg\exists h\in T \,\, C(g) \subsetneq  C(h) \ne \mathcal G, \, \exists h\notin Z \, [g,h]=1\}.
		$$
		
		\begin{itemize}
			\item Then $I(T)\subset\bigcup_{K\in\mathcal{K}_{d}}KU$, where $\mathcal K_d$ is the collection of all $K\in \ff(H)$ which are dihedral such that $I(T) \cap K \ne \emptyset$.
			
			\item Let $K\in \ff(H)$ be a dihedral subgroup generated by two involutions $e$ and $e'$. If $e\notin K\cap I(T)$, then $e\in G_v$ for some $v\in \Gamma^e$ and $G_v^\perp < Z$.
		\end{itemize}
	\end{lemma}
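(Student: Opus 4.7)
The plan for both parts is to exploit the direct product decomposition $T=U\times H\times L_0$ (with $L_0$ absent in the positive case) together with the definition of $I(T)$, in particular the ``commutes with something outside $Z$'' and ``maximal centralizer in $T$'' conditions. For $g\in I(T)$, write $g=g_Ug_Hg_{L_0}$; the relation $g^2=1$ forces each component to be an involution because these three subgroups lie in commuting direct factors.

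\smallskip

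The first step is to show $g_{L_0}=1$. If not, pick $\mathcal{H}\in\ff(L_0)$ with $g_{\mathcal{H}}\neq 1$. By the defining property of $L_0$ (Lemma \ref{l: removing components}), $C_{\mathcal{G}}(h)\subseteq Z$ for every nontrivial $h\in\mathcal{H}$, so $C(g_{\mathcal{H}})\subseteq Z$, and hence $C(g)\subseteq C(g_{\mathcal{H}})\subseteq Z$. This contradicts the existence of $h\notin Z$ with $[g,h]=1$. Next, I would decompose $g_H=\prod_{K\in\ff(H)}g_K$ and argue that at most one $g_K$ is nontrivial. If two factors $g_{K_1},g_{K_2}$ were nontrivial, writing $g=g_{K_1}g'$ with $g'$ commuting with $g_{K_1}$, I would exhibit an element $h$ inside $C(g_{K_1})\setminus C(g')$ lying in $T$, contradicting the maximality clause of $I(T)$. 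Such an $h$ can be found in the orthogonal of the vertex subgroup containing $g_{K_1}$, using that $\Gamma$ is (almost) positive reduced, so that distinct factors of $H$ have links in $\Gamma^d$ that cannot all coincide with that of $\langle g'\rangle$. Finally, to conclude $g\in KU$ with $K$ dihedral, I rule out the non-dihedral options in Observation \ref{Obs: K structure}: non-singular cyclic factors contain no involutions, and for the single distinguished (possibly singular) factor $K_0$, if $g_{K_0}\in G_v$ is a singular involution then one exhibits a dihedral $K=G_v\ast G_w\in\ff(H)$ containing $G_v$, again using the reduced structure, so that $g\in KU$ with $K$ dihedral, and $I(T)\cap K\neq\emptyset$ is witnessed by $g$ itself.

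\smallskip

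For the second part, let $K=\langle e,e'\rangle\in\ff(H)$ be dihedral and suppose $e\notin K\cap I(T)$. Since $e\in K\subseteq H\subseteq T$, $e^2=1$, and $e\notin U$ (as $K$ is a direct factor of $H$ disjoint from $U$ in the level decomposition), the only clauses of the definition of $I(T)$ that can fail are maximality of $C(e)$ in $T$ and commutation with an element outside $Z$. I would verify that if $e$ is not singular, i.e.~$e$ is a conjugate in $K$ of a vertex involution and $e$ itself does not lie in any vertex group, then $C(e)$ equals a conjugate of $G_u\times G_u^{\perp}$ whose orthogonal part automatically escapes $Z$, and maximality follows from the description of centralizers in Lemma \ref{l: description of centralizers}; hence $e\in I(T)$, contradicting the assumption. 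Therefore $e\in G_v$ for some $v\in\Gamma^e$. Moreover, if $G_v^{\perp}\not<Z$ fails to hold strictly, i.e.~$G_v^{\perp}\subseteq Z$, then $C(e)=G_v\times G_v^{\perp}\subseteq Z$, contradicting the outside-$Z$ clause; so $G_v^{\perp}<Z$ must hold.

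\smallskip

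The main obstacle will be the uniqueness-of-factor step in part 1: carefully exhibiting $h\in C(g_{K_1})\setminus C(g')$ inside $T$. This requires juggling the global graph structure of $\Gamma^d$, the fact that the level decomposition makes the links of vertices in $H$ distinct (by reducedness), and the description of centralizers in graph products, while also accommodating the case where $g_U\neq 1$ so that $g'\neq 1$ even when $g_H$ has a single nontrivial component.
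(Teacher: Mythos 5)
Your first bullet follows the paper's route in outline (kill the $L_0$-components using $C(h)\subseteq Z$ for $h\in L_0$, kill multiple $H$-components via the maximality clause), but the way you dispose of a component in the distinguished factor $K_0$ is wrong. If $K_0=G_v$ is the singular factor of $H$, there is no dihedral $K=G_v\ast G_w\in\ff(H)$ ``containing $G_v$'': the members of $\ff(H)$ are the factors of a direct product decomposition of $H$ and cannot contain one another. What actually happens is that such a $g$ is excluded from $I(T)$ altogether: since $K_0\in\ff(H)$ gives $Z=K_0\times K_0^{\perp}$, a nontrivial component $g_{K_0}\in G_v$ forces $C(g)\subseteq C(g_{K_0})\subseteq C_{G_v}(g_{K_0})\times G_v^{\perp}\subseteq Z$, so the ``commutes with something outside $Z$'' clause fails. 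You need this exclusion, not a relabelling of the factor as dihedral.

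The second bullet is where the real gap lies. Its content is that a non-singular generator $e$ of a dihedral $K\in\ff(H)$ automatically belongs to $I(T)$. A non-singular involution is not ``a conjugate of a vertex involution with centralizer a conjugate of $G_u\times G_u^{\perp}$''; it is a product $e=e_1\cdots e_r$ of $r>1$ pairwise-commuting singular involutions, with centralizer $\langle e\rangle^{\perp}\times\prod_i C(e_i)$. The entire point is to produce an element outside $Z$ commuting with $e$, and the candidates are the $e_i$. The paper shows $e_1\notin Z$ by observing that otherwise $ee'$ and $e_1e'$ would be non-commuting infinite-order elements of the subgroup $Z$, forcing $e,e',e_1$ into a single non-dihedral factor of $Z$ and contradicting the assumption that $K$ is a dihedral member of $\ff(H)$; then $[e,e_1]=1$ with $e_1\notin Z$ witnesses the outside-$Z$ clause. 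Nothing in your sketch supplies this (or any) reason why ``the orthogonal part automatically escapes $Z$''. Finally, your derivation of $G_v^{\perp}<Z$ is logically inverted: you assume $G_v^{\perp}\subseteq Z$ (which is the conclusion you want), derive $C(e)\subseteq Z$, and call this a contradiction with the outside-$Z$ clause --- but $e\notin I(T)$, so that clause is not required to hold for $e$. The correct direction is: if $G_v^{\perp}\not\leq Z$, then some $h\in G_v^{\perp}\setminus Z$ commutes with $e$, which together with the remaining clauses would place $e$ in $I(T)$, a contradiction.
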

	
	\begin{proof}
		
		From the decomposition of the sets $Z$, see Definition \ref{defn:decomposition definable}, we have that if $K\in \ff(H)$, then $Z = K \times K^\perp$. Furthermore, from the definition of $\mathcal K_d$, we have that $I(T)\cap H \subset \mathcal K_d$. From the definition of $L_0$ we have that $C(h)\leq Z$ for all $h\in L_0$. It follows that if $g\in I(T)$, then $g$ does not have any component in $L_0$ and in $K\notin K_d$, that is, $I(T)\subseteq U\times\prod_{K\in\mathcal{K}_{d}}K$. Furthermore, the maximality condition for the centraliser of $g\in I(T)$ implies that $I(T)\subset\bigcup_{K\in\mathcal{K}_{d}}KU$.

		Assume that $K= \langle e\rangle \ast \langle e'\rangle$, $e=e_1 \dots e_r$ and $e'=e_1'\cdots e'_{r'}$ where $e_i, e_j'$ are singular involutions and $\{e_i\}$ (resp. $\{e_j\}$) pairwise commute and for each $e_i$, $i=1, \dots,r$, there $e_j'$, $j=1, \dots, r'$ such that $[e_i,e_j]\ne 1$. Suppose that $r>1$. Then $e_1 \notin Z$, otherwise, as $Z$ is a subgroup we would have that $ee',e_1e' \in Z$ and as $[e_1,e']\ne 1$, we would have that $ee'$ and $e_1e'$ are infinite order elements that do not commute so $e,e',e_1$ belong to the same component of $Z$ which is not dihedral. This contradicts that $K$ is a component of $Z$. Therefore, $e_1 \notin Z$  and $[e,e_1]=1$ so $e\in I(T)$. Therefore, if $e\notin I(T)$, we have that $r=1$ and so $e$ is singular. From the definition of $I(T)$ we have that $G_v^\perp < Z$.
	\end{proof}

	In the next lemma we isolate the dihedral factors of $H$ with an involution in $I(T)$.
	
	\begin{lemma}\label{l: isolating dihedrals}
		For any $Z\in \mathcal{Z}$, let $\mathcal{K}_{d}(Z) \subset \ff(H)$ be as in {\rm Lemma \ref{lem: I(t)}}, and let $CIT(Z)$ be the subgroup of central involutions of $T$. Then the family
		$$
		\bigcup_{Z\in\mathcal{Z}}\{U(Z)\cdot CIT(Z)\cdot K\}_{K\in\mathcal{K}_{d}(Z)}
		$$
		is uniformly definable in $\textbf C$.
	\end{lemma}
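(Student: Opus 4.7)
The strategy is to define each subgroup $U(Z)\cdot CIT(Z)\cdot K$ (for $K\in\mathcal{K}_{d}(Z)$) by a uniform first-order formula taking as parameters a suitable pair of involutions $(e_{1},e_{2})\in I(T)\times I(T)$ whose projections onto the dihedral factor $K$ are distinct. By Lemma \ref{lem: I(t)}, $I(T)\subseteq \bigcup_{K\in\mathcal{K}_{d}(Z)} K\cdot U$, and any two involutions in different factors of $H$ commute with each other, since distinct factors of $H$ commute and $U$ is contained in $H^{\perp}$. Consequently, the relation $e\sim e'\Leftrightarrow [e,e']\notin U\cdot CIT$ on $I(T)$ forces both arguments into the same coset $K\cdot U$; moreover, two involutions in $K\cdot U$ satisfy $[e,e']\notin U\cdot CIT$ precisely when their projections to $K$ are distinct, since $D_{\infty}$ has trivial center and any two distinct involutions in $D_{\infty}$ fail to commute. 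Thus each such pair uniquely identifies a factor $K\in\mathcal{K}_{d}(Z)$.

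\textbf{The defining formula.} Fixing such a pair $(e_{1},e_{2})$, which pins down a unique $K\in\mathcal{K}_{d}(Z)$, I would define a candidate set by a formula of the form
$$
\Phi(x;e_{1},e_{2})\ \equiv\ x\in T\ \wedge\ \forall e\in I(T)\,\bigl(\,[e,e_{1}]\in U\cdot CIT\,\wedge\,[e,e_{2}]\in U\cdot CIT\,\Rightarrow\,[x,e]\in U\cdot CIT\,\bigr),
$$
supplemented by auxiliary conditions (for instance $x^{2}\in U\cdot K$, expressible via the definable sets built in Lemmas \ref{l:definability positive} and \ref{l: removing components}) to ensure that the projection of $x$ onto each factor of $T$ outside $U\cdot K$ is an \emph{involution} in the center of that factor, rather than merely a central element. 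The involutions $e\in I(T)$ satisfying the hypothesis of the implication are precisely those which, after projection to $H$, land outside $K$ --- either in some other $K'\in\mathcal{F}(H)$, $K'\neq K$, or with a component in $L_{0}$; indeed, an involution $e\in K\cdot U$ commuting modulo $U\cdot CIT$ with both $e_{1}$ and $e_{2}$ would, by triviality of the center of $D_{\infty}$, have to project trivially to $K$, and would therefore belong to $U$, violating the condition $e\in I(T)\subseteq T\setminus U$.

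\textbf{Verification and obstacle.} The inclusion $U\cdot CIT\cdot K\subseteq\Phi(\cdot;e_{1},e_{2})$ is immediate from the internal direct product decomposition $T=U\times K\times \prod_{K'\in\mathcal{F}(H)\setminus\{K\}}K'\times L_{0}$: any element of $U\cdot CIT\cdot K$ commutes with everything lying in the factors orthogonal to $K$. The reverse inclusion is the main obstacle. One must show that any $g\in T$ whose projection to some $K'\neq K$ or to $L_{0}$ is noncentral (or is a non-involution central element) fails $\Phi$, by producing a ``detector'' involution $e\in I(T)$ lying in the relevant non-$K$ coset for which $[g,e]\notin U\cdot CIT$. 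For dihedral factors $K'\in\mathcal{K}_{d}(Z)$ this follows directly from the definition of $\mathcal{K}_{d}(Z)$ and Lemma \ref{lem: I(t)}; for singular or non-dihedral factors of $H$, and for the irreducible factors $\hh\in\mathcal{F}(L_{0})$ --- which, by Observation \ref{Obs: K structure}, admit irreducible actions and hence abundant stable hyperbolic elements via Corollary \ref{c: stable axis} --- one invokes the structural results of Sections \ref{sec:factorstrees} and \ref{sec: from non-compatible to small cancellation} to extract elements with the required non-commutation behavior, while the exceptional singular case $G_{v}^{\perp}\subseteq Z$ is handled precisely by the second bullet of Lemma \ref{lem: I(t)}. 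Finally, the split of the family into two subfamilies will follow by attaching to $\Phi$ a first-order side condition detecting whether $CIT(Z)$ contributes elements beyond $U(Z)$, which is expressible since both $U(Z)$ and $CIT(Z)$ are already definable at this stage.
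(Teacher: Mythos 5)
Your approach is genuinely different from the paper's. The paper uses a \emph{single} parameter $g\in I(T)$ and the inversion set $H(g)=\{h\in T\mid h^{g}=h^{-1}\}$, setting $D(g)=gH(g)\cup H(g)$; since $g$ lies in $KU$ for a unique dihedral $K\in\mathcal{K}_{d}(Z)$, conjugation by $g$ inverts exactly the translation part of $K$ and one gets $D(g)\cdot U=K\cdot CIT(Z)\cdot U$ directly, with no quantification over detector involutions at all. Your scheme of parametrizing $K$ by a non-commuting pair $(e_{1},e_{2})\in I(T)^{2}$ does correctly single out the factor $K$ (distinct factors commute, and two involutions of $K\cdot U$ fail to commute modulo $U\cdot CIT$ iff their $K$-projections are distinct), so that part is sound.

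However, the reverse inclusion of your formula $\Phi$ has a genuine gap that your sketch does not close. By Lemma \ref{lem: I(t)} (and the definition of $L_{0}$), every $e\in I(T)$ has all of its non-$U$ components inside the factors of $\mathcal{K}_{d}(Z)$: no element of $I(T)$ has a component in the singular factor $K_{0}$, in a dihedral factor outside $\mathcal{K}_{d}(Z)$, or in $L_{0}$, because the centralizers of such components are contained in $Z$ and so cannot witness the condition $\exists h\notin Z\ [e,h]=1$. Consequently an element $g\in T$ with a noncentral component in $K_{0}$ or in some $\hh\in\ff(L_{0})$ commutes modulo $U\cdot CIT$ with \emph{every} detector $e\in I(T)$, hence satisfies $\Phi(\,\cdot\,;e_{1},e_{2})$, and your defined set is strictly larger than $U\cdot CIT\cdot K$ whenever $\mathcal{K}_{e}=\ff(H)\setminus\mathcal{K}_{d}$ or $L_{0}$ is nontrivial (these cases do occur; they are exactly what Lemma \ref{l: full isolation} is designed to handle afterwards). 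Your proposed remedy --- extracting non-commuting elements from the irreducible factors via the small-cancellation machinery --- produces elements that are \emph{not} in $I(T)$ and therefore cannot be substituted for $e$ in the formula as written; you would have to replace $I(T)$ by a larger definable detector set and re-prove its structure. The auxiliary condition ``$x^{2}\in U\cdot K$'' is also circular, since $U\cdot K$ is precisely the set being defined and is not among the sets provided by Lemmas \ref{l:definability positive} and \ref{l: removing components} (which give $U\times H$ and $U\times H\times L_{0}$). The paper's inversion-set construction avoids all of this because the condition $h^{g}=h^{-1}$ itself forces the components of $h$ outside $K\cdot U$ to be involutions fixed by conjugation, i.e.\ to contribute only to $CIT(Z)$ modulo $U$.
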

	
	Notice that by Observation \ref{obs:involutions} we have that $CIT(Z)$ is precisely $CI(Z)$, that is is the product of central involutions of the unique singular $G_v < H$ if it exists and the central involutions of $U(Z)$.
	\begin{proof}
		
		From Lemma \ref{lem: I(t)}, we have that $I(T)\subset\bigcup_{K\in\mathcal{K}_d}KU$.
		
		For all $g\in I(T)$ let $H(g)=\{ h\in T \mid h^g=h^{-1}\}$ and $D(g)=gH(g)\cup H(g)$.
		Notice that since $g\in KU$ for some $K\in\ff(H)$, for all $h\in H(g)$ we have that $gU,hU$ generate a dihedral in $KU/U\cong K$. Furthermore, from the definition, we have that $D(g) < K\cdot CIT(Z) \cdot U$ and $D(g)/U \simeq K\cdot CIT(Z)$. As $U$ and $D(g)$ are definable definable, so is $D(g) \cdot U = K\cdot CIT(Z) \cdot U$. Notice that $CIT(Z)$ is contained in the direct sum of $U$ with the singular $K\in\mathcal{K}$ (if it exists), since no other factors of $T$ can contain central involutions.
		
		It follows that the collection
		$\{U\cdot K\cdot CIT(Z)\,:\,Z\in\mathcal{Z},K\in\mathcal{K}_d(Z)\}$
		is uniformly definable in $\textbf C$.
	\end{proof}

	In the next lemma we isolate the rest of the factors of $H$.
	\begin{lemma}
		\label{l: full isolation} For any $Z\in\mathcal{Z}$, the family $\{U\cdot K\}_{Z\in\mathcal{Z}, K\in \ff(H)\setminus \mathcal K_d}$ is uniformly definable in $\textbf C$.
	\end{lemma}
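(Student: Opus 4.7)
Having defined $T(Z)$, $U(Z)$ and the dihedral family $\{U(Z)\cdot CIT(Z)\cdot K\}_{K\in\mathcal{K}_{d}(Z)}$ in Lemmas \ref{l:definability positive}, \ref{l: removing components} and \ref{l: isolating dihedrals}, it remains to isolate, for each $Z\in\mathcal{Z}$ and each $K\in\ff(H(Z))\setminus\mathcal{K}_{d}(Z)$, the set $U(Z)\cdot K$. By Observation \ref{Obs: K structure} together with the second bullet of Lemma \ref{lem: I(t)}, these remaining factors come in two types: at most one \emph{non-dihedral} factor (a singular vertex group, a non-singular cyclic subgroup, or a maximal non-split dihedral), and a possibly nonempty collection of \emph{trapped split-dihedral} factors $K=G_{v}\ast G_{w}$ with $G_{v}^{\perp}\cup G_{w}^{\perp}\subseteq Z$.

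The strategy is to characterize $U(Z)\cdot K$ via centralizer formulas inside the already-definable set $T(Z)$, exploiting both the defining property $K\times K^{\perp}=Z$ and the description of centralizers from Lemma \ref{l: description of centralizers}. Concretely, for a non-dihedral factor $K$, a witness $g\in T$ with $\ff(g)=\{K\}$ yields $C_{T}(g)=C_{K}(g)\cdot U\cdot\prod_{K'\ne K}K'\cdot L_{0}$; the dihedral summands are already singled out by the family $\{U\cdot CIT\cdot K_{d}\}_{K_{d}\in\mathcal{K}_{d}}$, while by Observation \ref{Obs: K structure} at most one other non-dihedral factor exists, which we peel off by iteration on the at-most-two non-dihedral pieces. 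To distinguish factors of $H$ from factors of $L_{0}$ we use the first-order condition $\langle g\rangle\cdot C_{\mathcal{G}}(g)=Z$, which characterizes factors of $H$ and fails for every $\hh\in\ff(L_{0})$ since then $C_{\mathcal{G}}(\hh)\subsetneq Z$.

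For a trapped dihedral $K=G_{v}\ast G_{w}$, the involutions $e_{v},e_{w}$ do not commute with anything outside $Z$ and hence escape the criterion of Lemma \ref{l: isolating dihedrals}. My plan is to single out the hyperbolic translation $g:=e_{v}e_{w}\in K$ as parameter: it is non-torsion modulo $U$, its $\mathcal{G}$-centralizer equals $\langle g\rangle\cdot K^{\perp}$, and combined with any involution $e\in T\setminus U$ satisfying $ege=g^{-1}$ it generates $U\cdot K$. The subgroup $K^{\perp}\cap T=U\cdot\prod_{K'\ne K}K'\cdot L_{0}$ can be defined as $C_{T}(g)\cap C_{T}(e)$ (because $Z(K)=1$), and $U\cdot K$ is then the centralizer in $T$ of a suitable finite set of ``generic'' elements of $K^{\perp}\cap T$, quotiented by the centers of the other summands, which are themselves either singular (already definable via the hypothesis on $\mathcal{A}$) or are already-defined dihedral pieces.

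The main obstacle is the uniformity of this procedure across $\textbf{C}$ and in particular the trapped case (ii): we must certify by first-order means that the witnesses $g,e$ sit in a genuine trapped dihedral factor of $H$ rather than in $L_{0}$ or in some $K_{d}\in\mathcal{K}_{d}$. For this one combines (a) the ``factor-of-$H$'' equation $\langle g\rangle\cdot C_{\mathcal{G}}(g)=Z$, (b) the exclusion of $g,e$ from the already-defined dihedral family, and (c) the clique bound $N$ on $\Gamma$, which uniformly bounds $|\ff(H(Z))|$ and therefore the number of formulas needed. Putting the non-dihedral and trapped-dihedral constructions together produces, via a finite Boolean combination parametrized by the choice of witness pair, a uniformly definable family in $\textbf{C}$ whose members are exactly the sets $U(Z)\cdot K$ for $Z\in\mathcal{Z}$ and $K\in\ff(H(Z))\setminus\mathcal{K}_{d}(Z)$.
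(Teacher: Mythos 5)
Your proposal misses the structural fact that carries the paper's proof: because $\Gamma$ is (almost) positive reduced, the set $\mathcal K_e=\ff(H)\setminus\mathcal K_d$ has \emph{at most one} element. The paper proves this by a case analysis (two trapped split dihedrals $G_{v_1}\ast G_{v_2}$ and $G_{v_3}\ast G_{v_4}$ with all orthogonals inside $Z$ would force the subgraph spanned by $v_1,\dots,v_4$ to violate the reduction condition on $\Gamma$, and a non-split factor $K_0$ coexisting with a trapped split dihedral would contradict the definition of $I(T)$), and it simultaneously shows $L_0=1$ whenever $\mathcal K_e\neq\emptyset$. Once this is in place, $U\cdot K$ is simply $T'(Z)=T(Z)\cap\bigcap_{g\in I(T)}\bigcap_{h\in D(g)}C(h)$, an intersection of already-definable sets, and the only remaining task is to distinguish by a formula the case $\mathcal K_e=\{K\}$ from the case $\mathcal K_e=\emptyset$ (where $T'(Z)=U\times L_0$); the paper does this using the families $Y^j_{a,b}$ and the irreducibility of the factors of $L_0$. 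Your plan instead tries to isolate each member of a ``possibly nonempty collection of trapped split-dihedral factors'' one at a time — a harder problem than the one that actually arises, and one you would not be able to solve with the tools you invoke.

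Concretely, two steps of your plan fail. First, the test $\subg{g}\cdot C_{\G}(g)=Z$ does not characterize factors of $H$: for $g=e_v e_w$ in a dihedral factor $K$ one has $C_{\G}(g)=\subg{g}\times K^{\perp}$, so $\subg{g}\cdot C_{\G}(g)=\subg{g}\times K^{\perp}$ has index $2$ in $Z=K\times K^{\perp}$, and for $g$ in a singular factor $G_v$ the product $\subg{g}\cdot C_{G_v}(g)$ need not equal $G_v$ either; so the condition fails on exactly the witnesses you want it to accept. Second, recovering $U\cdot K$ from $K^{\perp}\cap T$ by ``the centralizer of a suitable finite set of generic elements, quotiented by the centers of the other summands'' is not a first-order construction as stated, and you give no way to certify in a formula that the resulting set is $U\cdot K$ rather than something larger. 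Finally, you do not address the case $\mathcal K_e=\emptyset$ at all, which is precisely where the candidate set $T'(Z)$ is \emph{not} of the required form and must be excluded by a definable criterion.
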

	
	\begin{proof}
		For $Z\in\mathcal{Z}$, let $T$ be the definable set $U\times H \times L_0$ from Lemma \ref{l: removing components} in the almost positive case, and the set $U\times H$ from Lemma \ref{l:definability positive} in the positive case.
		
		Define
		$$
		T'(Z)=U(Z)\times\prod\limits_{K\in\ff(H)\setminus \mathcal K_d} \times L_{0}(Z).
		$$
		
		Notice that $T'(Z)$ is precisely $T'(Z)=T(Z)\cap\bigcap\limits_{g\in I(T)}\bigcap\limits_{h\in D(g)} C(h)$, since $D(g)$ are dihedral (and so do not contain any central element).
		
		Let $\mathcal K_e= \ff(H)\setminus \mathcal K_d$.
		By Observation \ref{Obs: K structure}, we have that the number of singular, cyclic and dihedral components which are not of the form $G_v\ast G_{v'}$, for some $v,v'\in \Gamma^e$ in $\ff(H)$ is at most one. Let $K_0$ be this component if it exists. Furthermore, from Lemma \ref{lem: I(t)}, we have that if $K\in \mathcal K_e\setminus \{K_0\}$, then $K$ is dihedral of the form $G_v\ast G_{v'}$ and $G_v^\perp, G_{v'}^\perp < Z$.
		
		We next claim that if the graph $\Gamma$ is positive reduced, then $|\mathcal K_e|=1$. Indeed, suppose towards contradiction that there is $K \ne K'\in \mathcal K_e$. Suppose first that $K'=K_0$ is a non-singular cyclic subgroup or dihedral group generated by singular involutions but not of the form $G_v \ast G_{v'}$, for $v,v'\in \Gamma^e$. Then $\G(\supp(K_0))\not< Z$ but $\G(\supp(K_0)) < K^\perp=G_v\ast G_{v'}$ and so $\G(\supp(K_0))< G_v^\perp$ and by the definition of the set $I(T)$, see Lemma \ref{lem: I(t)} we have that $K\in \mathcal K_d$. Therefore, if $K_0$ is cyclic or dihedral not of the form $G_v\ast G_{v'}$ we have that $\mathcal K_e=\{K_0\}$. Suppose now that there is $K,K'\in \mathcal K_e$, $K$ dihedral and $K'$ dihedral (resp. singular). From Lemma \ref{lem: I(t)}, we have that $K,K'\in \mathcal K_e$ is of the form $K=G_{v_1}^{h_1}\ast G_{v_2}^{h_2}$, $K'=G_{v_3}^{h_3}\ast G_{v_4}^{h_4}$ where $\langle e_i\rangle=G_{v_i}^{h_i}$, $v_i\in \Gamma$, $h_i\in \G$,  and $(G_{v_i}^{h_i})^\perp < Z$, $1\le i\le 4$ (resp. $K'=G_{v_3}^{h_3}$ and $(G_{v_3}^{h_3})^\perp < Z$). By conjugating by $h_1^{-1}$ if necessary, we can assume that $h_1=1$. By the description of centraliser, we have that $h_2\in {(K^\perp)}^\perp$. If $h_2$ cannot be taken to be $1$ (i.e. $h_2\notin C(G_{v_1})\cdot C(G_{v_2})$), then, on the one hand we have that $h_2 \notin G_{v_2}\times G_{v_2}^\perp$ and so $h_2\notin Z$; and on the other hand, as $K'< K^\perp$ and (there is some) \footnote{the point is that the conjugator is not unique. I am thinking of the "shortest"}$h_2\in (K^\perp)^\perp$, we have that $h_2\in (K')^\perp$ and so $h_2\in (G_{v_3}^{h_3})^\perp$. But then $G_{v_3}^\perp \not< Z$-contradicting our assumptions. Therefore, if $h_2$ is nontrivial, in particular if $K$ is generated by two involutions that are conjugate, then $\mathcal K_e=\{K_0\}$. Assume now that $h_2=1$ and so in particular $v_1\ne v_2$. By a symmetric argument we can assume that $K= G_{v_1} \ast G_{v_2}$ and $K'=(G_{v_3}\ast G_{v_4})^{h}$, $v_i\in \Gamma$, $h\in \G$ (resp. $G_{v_3}^{h}$). Similarly, if $h\ne 1$, then we have that $h\in G_{v_1}^\perp$ but $h\notin G_{v_3}^\perp$ contradicting our assumptions. Hence $h=1$ but in this case, we have that $v_i\in \Gamma$ and the subgraph induced by the $v_i$, $i=1, \dots, 4$ contradicts the fact that $\Gamma$ is positive reduced. Therefore, we have shown that $|\mathcal K_e|=1$.
		
		Furthermore, in the nontrivial positive case we have that $L_0=1$. In the almost positive, it follows from Observation \ref{Obs: K structure}, that if $K_0$ is cyclic or dihedral not of the form $G_v\ast G_{v'}$ then $L_0=1$. If $K$ is (up to conjugacy) of the form $G_v\ast G_{v'}^h$, $v\in \Gamma$, $h\in \G$, $h\notin C(G_v)\cdot C(G_{v'})$, then $L_0=1$ as $h\in \mathcal \hh^\perp$ for all $\hh\in \ff(L_0)$ but $h\notin Z$. If $K_0$ is singular or $K$ is dihedral of the form $(G_v \ast G_{v'})^h$, $v,v'\in \Gamma$, $h\in \G$, then if the graph is almost positive reduced, we have that $L_0=1$.
		
		Therefore, there are two cases:
		\begin{enumerate}
			\item \label{second case}  either $\mathcal{K}_{e} = \{K\}$ and $L_0=1$; in this case, $T'(Z)=K \times U(Z)$; or
			
			\item \label{first case} $\mathcal{K}_{sc}=\emptyset$ and so in this case we have that $T'(Z)=U(Z)\times L_{0}(Z)$.
		\end{enumerate}
		
		We claim that there is a formula on the parameters defining $Z$ (which can be proved without underscoring those parameters) that determines which of the two cases above takes place.
		
		Let $Y_{a,b}^{j}$, $0\leq j\leq r$ be the definable families from Section 7, see Definition \ref{defn: Y}.
		
		In case \ref{second case} above, there is $0\leq j\leq r$ and a tuple $(a,b)\subseteq T'(Z)$ such that either  $(a,b)\nsubset U$ and $\Theta^{j}(a)\notin U(Z)$ if $j\geq 1$ and $Z \subseteq Y^{j}_{a,b}$. Indeed, by Lemma \ref{l: simple parameter case}, it suffices to take the $j$ corresponding to the sentence $\phi^{j}$ satisfied by $K\in \mathcal K_e$ in case $K$ is singular or $j=0$ in case $K$ is cyclic or dihedral, as then $Z=K \times K^\perp< Y_{a,b}^j$.
		
		We claim that such $j$ and associated $(a,b)$ cannot exist in Case \ref{first case}. Indeed if $\Theta^{j}(a)^{\hh}\neq 1$ for some $\hh\in\ff(L_{0})$, as $\hh$ is irreducible, there must be some $h\in\hh$ for which $\subg{a^{\hh},h}$ is irreducible.
		Recall that for any tuple $(a,b)$ and any $c\in Y_{a,b}^{j}$ there is no component $\hh\in\ff(a,b,c)$ such that $\subg{a,b,c}^{\hh}$ is irreducible and $\Theta^{j}(a^{\hh})\neq 1$ holds. It follows that $h\nin Y^{j}_{a,b}$ and so $Z\not< Y_{a,b}^j$. Therefore, the family of $Z\in \mathcal Z$ such that $\mathcal K_e$ is non-empty is definable.
	\end{proof}
	
	\begin{proof}[Proof of Theorem \ref{thm: main isolation theorem}]
		
		The proof of Theorem \ref{thm: main isolation theorem} follows from Lemma \ref{l: isolating dihedrals} and Lemma \ref{l: full isolation}.
	\end{proof}
	
	\subsection{Level by level analysis, recognizing vertex groups}\label{sec: levels}
	\newcommand{\tl}[0]{\tilde{\mathcal{L}}^{d}}
	
	The goal of this section is to show that different graphs, the $d$-graph $\Gamma^d$ and the core of $\Gamma$, are uniformly interpretable inside an appropriate class of graph products. In order to achieve this, we use the families of definable sets obtained in Theorem \ref{p: main definability proposition} and in Theorem \ref{thm: main isolation theorem}. The definable sets in Theorem \ref{thm: main isolation theorem} are given in terms of the Decomposition \ref{defn:decomposition definable}.
	
	In the notation of the level decomposition, given some $v\in\Gamma^{d}$ we define
	$$
	U_{v}=\prod\limits_{\substack{w\in\Gamma^{d}\\ v\precneq^{d} w}}E_{w}, \quad
	\bar{U}_{v}=U_{v}\cdot \prod\limits_{\substack{w\in\Gamma^{d}\\ v\simeq^{d} w}}CI(E_{w})
	$$
	where $CI(H)$ denotes the central involutions of a (sub)group $H$.
	
	In this notation, the definable sets provided by Theorem \ref{thm: main isolation theorem} are of the form $E_v \times U_v$ for $v\in \Gamma^d$.
	
	\begin{definition}\label{defn:small d-graph}
		Let $\G=\G(\Gamma, \gvv)$ be a graph product and let $(\Gamma^{d},\prec^{d})$ be as defined in Definition \ref{defn:D-completion} and Definition \ref{d:d-order}. We will denote by $\tilde\Gamma^d$ the subgraph of $\Gamma^d$ spanned by the vertices $E_v$ such that either $|E_v|\ne 2$ or $|E_v|=2$ and there is not $v'\in \Gamma^d$ such that $E_{v'}$ is dihedral, $E_{v}\subseteq E_{v'}$ and $v \prec^d v'$.
		
		We say
		that $v\in \tilde{\Gamma}^{d}$ is of level $r$ if $k=r$ is the maximum length of an ascending chain $v=v_{k}\prec^{d}v_{k-1}\dots\prec^{d} v_{1}$ in $\tilde{\Gamma}^{d}$ we denote by $\tilde{\mathcal{L}}_{r}$ the collection of all $v\in \tilde{\Gamma}^{d}$ of level $r$, by $\tilde{\mathcal{L}}_{\leq r}$ the collection of all vertices of level at most $r$ and so on.
	\end{definition}

	\begin{definition}\label{dfn: class section 9}
		For the remainder of the section, we let $\textbf C\subseteq\mc{G}(\textbf{K}_N, \textbf{C}_{\Phi}^{r})$ {\rm(}see {\rm Definition \ref{d: K_N_and_C})}, where $N,r>0$ and either:
		\begin{itemize}
			\item $\Phi$ consists only of nontrivial positive sentences and $\Gamma$ is positive reduced for any $\G(\Gamma,\gvv)$ in $\textbf C$.
			\item $\Phi$ consists simple non-generic almost positive sentences and $\Gamma$ is almost positive reduced for any $\G(\Gamma,\gvv)$ in $\textbf C$.
		\end{itemize}
	\end{definition}
	
	\begin{thm}
		\label{thm: almost interpretation of d-graph}
		Let $\textbf C$ be as above. Then there is a finite union of definable families whose interpretation in any $\G\in\textbf C$ coincides with any fixed families of triples of subgroups of $\G$ from the two forms below: {\rm(}possibly with repetitions{\rm)}:
		
		$$
		\mathcal{A}=\{(U_{v},E_{v}\times \bar{U}_{v},E_{v}\times E_{v}^{\perp})\}_{v\in \tilde{\Gamma}^{d}}, \quad
		\mathcal{A}'=\{(U_{v},E_{v}\times \bar{U}_{v},E_{v}\times E_{v}^{\perp})\}_{v\in \Gamma^{d}}.
		$$
	\end{thm}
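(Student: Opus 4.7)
The plan is to proceed by induction on the $\prec^d$-level in $\tilde{\Gamma}^d$, alternating Theorem \ref{p: main definability proposition} and Theorem \ref{thm: main isolation theorem} to peel off one layer at a time. Property $K_N$ of $\Gamma$ implies that $\Gamma^d$ satisfies $AP_{2N}$ by the preceding lemma on the $d$-completion; hence the number of $\prec^d$-levels of $\tilde{\Gamma}^d$ is bounded uniformly in $\textbf{C}$, and the induction terminates in a bounded number of steps. Once the family indexed by $\tilde{\Gamma}^d$ has been produced, the remaining triples indexed by $\Gamma^d\setminus\tilde{\Gamma}^d$ are read off from the dihedral subgroups already isolated.

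At the base step I apply Theorem \ref{p: main definability proposition} with $\mathcal{A}=\emptyset$. The relative maximality condition is vacuous, while the "order $2$ and not dihedral dominated" condition is precisely what excludes the vertices of $\Gamma^d\setminus\tilde{\Gamma}^d$; the resulting sets $Z$ enumerate $\{E_v\times E_v^\perp\}_{v\in\tilde{\mathcal L}_1}$. For these $\prec^d$-maximal $v$ one has $U_v=\{1\}$, and feeding the pairs $(\{1\},Z)$ into Theorem \ref{thm: main isolation theorem} yields the subgroups $E_v\cdot CI(Z)$, split into the two subfamilies for $v\in\Gamma^c$ and $v\in\Gamma^d\setminus\Gamma^c$. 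Using Observation \ref{obs:involutions} together with Observation \ref{Obs: K structure}, which bounds the components of $H(Z)$ that can carry central involutions and exploits the reducedness of $\Gamma$, one checks that $E_v\cdot CI(Z)=E_v\cdot\bar U_v$ in this case, delivering the level-$1$ triples $(U_v,E_v\cdot\bar U_v,E_v\cdot E_v^\perp)$.

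Assume after $r$ stages a uniformly definable family $\mathcal A^{(r)}$ has been built, containing $E_v\cdot\bar U_v$, $E_v\cdot E_v^\perp$ and $U_v$ for every $v\in\tilde{\mathcal L}_{\leq r}$. I verify that $\mathcal A^{(r)}$ satisfies the three closure properties demanded by Theorem \ref{p: main definability proposition}: conjugation closure is immediate because defining parameters may themselves be conjugated; closure under factors of subgroups follows from Lemma \ref{l: intersection_subgroups} together with the fact that each member of $\mathcal A^{(r)}$ is a product of conjugates of subgraph products and subgroups of central involutions; and the third property --- that $E_v\cap A\neq\{1\}$ forces $E_v\subseteq A'$ except in the case $|E_v|=2$ with $E_v$ dominated by a dihedral already in $\mathcal A^{(r)}$ --- matches exactly the inductive description of $\tilde{\Gamma}^d$. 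Applying Theorem \ref{p: main definability proposition} to $\mathcal A^{(r)}$ produces the sets $\{E_v\times E_v^\perp\}_{v\in\tilde{\mathcal L}_{r+1}}$; the associated $U(Z)$ from Definition \ref{defn:decomposition definable} is definable as the intersection of $Z$ with the product of members of $\mathcal A^{(r)}$ lying inside $Z$, and the possibly larger subgroup $U_v=\prod_{w\in\tilde{\mathcal L}_{\leq r},\,v\prec^d w}E_w$ is likewise definable as a product of members of $\mathcal A^{(r)}$. Feeding $(U(Z),Z)$ into Theorem \ref{thm: main isolation theorem} and multiplying the output by $U_v$ then produces $E_v\cdot\bar U_v$, closing the induction.

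To extend to the family $\mathcal A'$ indexed by the full $\Gamma^d$, the missing vertices $v\in\Gamma^d\setminus\tilde{\Gamma}^d$ are by definition singular order-$2$ vertices $E_v=G_w^g$ contained in a dihedral $E_{v'}$ with $v\prec^d v'$; by Lemma \ref{l: isolating dihedrals} each such dihedral is already available inside $\mathcal A$ as a definable subgroup, together with its collection of involutions. Since every element of finite order in a graph product is $\G$-conjugate into a vertex group, every involution $e$ inside such a dihedral $E_{v'}$ is singular, so the triple $(U_{v_e},E_{v_e}\cdot\bar U_{v_e},E_{v_e}\cdot E_{v_e}^\perp)$ can be reconstructed directly from $e$: the third component is $C_\G(e)$; the first component $U_{v_e}$ is the product of those members of $\mathcal A$ whose associated vertex is strictly $\prec^d$-above $v_e$ (a first-order condition using the already-constructed family); and the middle component is the product of these with $\subg{e}$. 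The main obstacle throughout is the verification of the third closure property of Theorem \ref{p: main definability proposition} at each inductive stage, since the dichotomy between generic vertices and order-$2$ vertices dominated by a dihedral must be propagated carefully; once that is in place, the bounded chain length in $\Gamma^d\in\mathbf{AP}_{2N}$ guarantees that the whole procedure runs uniformly in $\textbf C$.
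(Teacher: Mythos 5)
Your main induction is essentially the paper's own argument: build $\mathcal A_r$ level by level, feeding $\mathcal A_{r-1}$ into Theorem \ref{p: main definability proposition} to get the sets $E_v\times E_v^\perp$ for $v$ of level $r$, recovering $U(Z)$ as a bounded product of members of $\mathcal A_{r-1}$, and then applying Theorem \ref{thm: main isolation theorem}; the base case and the termination bound coming from property $K_N$ are also as in the paper. That part is fine.

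The gap is in your final step, where you pass from $\mathcal A$ to $\mathcal A'$. You justify reading off the vertices of $\Gamma^d\setminus\tilde\Gamma^d$ from the involutions of the already-isolated dihedrals by asserting that ``every element of finite order in a graph product is $\G$-conjugate into a vertex group, [so] every involution $e$ inside such a dihedral $E_{v'}$ is singular.'' Both claims are false: torsion elements are only conjugate into clique subgraph products, and a product $e=e_1e_2$ of commuting involutions from adjacent vertex groups is a non-singular involution. Indeed, the non-singular dihedrals $E_{v'}\in D_{\max}$ are precisely generated by such involutions, and Lemma \ref{lem: I(t)} is built around the fact that a generator $e=e_1\cdots e_r$ of a dihedral factor $K$ is singular only when $r=1$, which is detected by the first-order condition $e\notin I(T)$. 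Without that filtering your construction attaches triples to non-singular involutions, which do not correspond to vertices of $\Gamma^d$ (they fail $|\ff(g)|=1$ in the definition of $C_2$), so the resulting family is strictly larger than $\mathcal A'$ and the theorem's ``coincides with'' fails. You also omit the second source of vertices in $\Gamma^d\setminus\tilde\Gamma^d$: involutions lying in a \emph{singular} vertex group $H_v$ that is elementarily equivalent to $D_\infty$ without being one of the isolated non-singular dihedrals; these are not involutions ``inside a dihedral already available in $\mathcal A$'' in your sense. The paper's Lemma \ref{lem: definability of Gamma^d} handles both issues at once via the definable set $DI$, whose two conditions (membership in some $E_w\times\bar U_w$ minus $I(T(E_w\times E_w^\perp))$, and the requirement that every class containing $g$ nontrivially be dihedral or elementarily dihedral modulo $U$) are exactly the first-order surrogates for ``$g$ generates a singular order-two vertex of $\Gamma^c$ dominated by a dihedral.'' Your proposal needs this (or an equivalent) filtering step to be complete.
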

	\begin{proof}
		\newcommand{\A}[0]{\mathcal{A}}
		\newcommand{\B}[0]{\mathcal{B}}
		
		For every $1\leq r\leq n$ (replace $n$ by the exact number of levels of $\Gamma$) let:
		$$
		\A_{r}=\{(U_{v},E_{v}\times \bar{U}_{v},E_{v} \times E_{v}^{\perp})\}_{v\in \tl_{\leq r}}
		$$
		Given $1\leq l\leq 3$ we will denote by $\A_{r}^{l}$ the family of $l$-th coordinates of members of $\A_{r}$.
		
		We show that the family $\mathcal{A}_{r}$ is uniformly definable in $\textbf C$ by induction on $r$.
		At every step we also show that the family
		$$
		\mathcal{B}=\{(U_{v},E_{v}\times \bar{U}_{v},E_{v}\times E_{v}^{\perp})\,|\,v\in\Gamma^{e},|G_{v}|=2,G_{v}\leq E_{w}, w\in\tl\}
		$$ is uniformly definable in $\textbf C$. We prove this in Lemma \ref{lem: definability of Gamma^d} below.
		
		For $r=N$ we have $\A_{r}=\A$; from the definition of the set of vertices of $\tl$ we have that $\A'=\A\cup\B$. Notice that the index at which the sequence $(\mathcal{A}_{r})_{r\geq 1}$ stabilizes might depend on the particular graph product under consideration.
		
		The general strategy for the induction step is as follows. Assuming the property has been shown for $\A_{r-1}$, we apply Theorem \ref{p: main definability proposition} to obtain a family of definable sets that contains the sets of the form $E_v\times E_v^\perp$ for vertices $v\in \tl_{r}$. We then use the tools from Section \ref{ssct: isolating} to obtain subgroups of this collection and filter them in an uniform way, from which the uniform definability of $\A_{r}\setminus\A_{r-1}$ and hence $\A_{r}$ follows.
		
		To get the induction started we set $\A_{0}^{1}=\{\{1\}\}$.
		
		Assume now the result has been proven for $\A_{r-1}$.
		
		We apply Theorem \ref{p: main definability proposition} to the family $\bigcup_{A\in\A_{r-1}^{1}}\ff(A)$, yielding a finite union $\mathcal{Z}_{r}$ of definable families such that any graph product $\mathcal{G}=\mathcal{G}(\Gamma,\{G_{v}\}_{v\in V})\in\textbf C$ interprets $\mathcal{Z}_{r}$ in such a way that any $Z \in \mathcal{Z}_{r}$ is of the form $E_{v}\times E_{v}^{\perp}$ where $v\in \Gamma^d$ and moreover, given $v\in V^{d}$ we have $E_{v}\times E_{v}^{\perp} \in \mathcal Z_{r}$ precisely when the following two conditions are satisfied:
		\begin{itemize}
			\item (relative maximality with respect to $\mathcal A^{1}_{r-1}$) for all $v'\in \Gamma^d$ such that $v \prec^d v'$, then $E_{v'}\subset A$ for some $A\in \mathcal A^{1}_{r-1}$;
			\item (order 2 and not dihedral dominated): if moreover $|E_v|=2$, then there is no $v'\in \Gamma^d$ such that $v\prec^d v'$, $E_v \subset E_{v'}$  ($E_{v'}$ is dihedral) and $E_{v'}\subset A$ for some $A\in \mathcal A^{1}_{r-1}$.
		\end{itemize}
		
		We claim that $Z\in \mathcal Z_{r}$ if and only if $v\in\tl_{r}$.  Indeed, from Theorem \ref{p: main definability proposition} we can assume that $|E_v|>2$. If $v\in \Gamma^d$ has level $r$, then from the induction assumption, it satisfies the relative maximality with respect to $\mathcal A_{r-1}^1$ and so $E_v\times E_v^\perp\in \mathcal Z_{r}$. Conversely, if $E_v\times E_v^\perp \in \mathcal Z_{r}$, it must have level $r$ as otherwise, if the level is less than $r$ by the induction assumption $E_v \subset A$ for some $A\in \mathcal A_{r-1}^1$ and if the level is more than $r$, then $v\prec^d v'$ where the level of $v'$ is $s \ge r$ and by the induction hypothesis $E_{v'}$ is not contained in $A\in \mathcal A_{r-1}^1$ and so it does not satisfy the first condition.
		
		We claim that the family $\mathcal{W}_{r}=\{(U(Z),Z)\}_{Z\in \mathcal Z_{r}}$ is uniformly definable in $\textbf C$. For each $Z\in \mathcal Z_r$, we have that $Z=E_v\times E_v^\perp$ and $$
		U(Z)=U_{v}=\prod\limits_{\substack{w\in\Gamma^{d}\\ v\precneq^{d} w}}E_{w}.
		$$ For all $w\in \Gamma^d$ such that $v\precneq^{d} w$, we have that $(U_w, E_w \times \bar{U}_w, E_w\times E_w^\perp)\in \mathcal A_{r-1}$. Furthermore, we have that $E_w\times \bar{U}_w \leq U(Z)$, since any component of an element in $\bar{U}_{w}$ is contained in some $E_{u}$ with $w\preceq^{d}u$, including in the case of a central involution in $\bar{U}_{w}\setminus U_{w}$. Since the number of summands in $U(Z)$ is bounded by $N$, we can write $U(Z)$ as the collection of all $g$ that can be written as a product of $h_{1},\dots, h_{N}$ where for each $i$  there is $(U_{w_i},E_{w_i}\times \bar U_{w_i}, E_{w_i} \times E_{w_i}^\perp)\in\mathcal{A}_{r-1}$ such that
		$Z\lneq E_{w_i} \times E_{w_i}^\perp$ and $h_{i}\in E_{w_i}\times \bar U_{w_i}$.
		
		We can now apply Theorem \ref{thm: main isolation theorem} to the family  $\mathcal W_r$. Given $Z=E_{v}\times E_{v}^{\perp}\in\mathcal Z_r$, consider the level decomposition of $Z = H\times U \times L$ as in Definition \ref{defn:decomposition definable}. By Theorem \ref{thm: main isolation theorem} we have that the family $\mathcal{D}^2$ of sets consisting of:
		
		\begin{itemize}
			\item $U(Z)\times K$ for any $Z\in\mathcal{Z}$ and $K\in\ff(H)\setminus \mathcal K_d(Z)$
			\item $(U(Z)\times K)\cdot CI(Z)$ for any $Z\in\mathcal{Z}$ and $K\in \mathcal K_d(Z)$, where $CI$ is the subgroup whose nontrivial elements consist of all the central involutions of the unique singular $K_{0}\in\mathcal{K}$
		\end{itemize}
		
		is uniformly definable in $\textbf C$. Notice that from the definition of $H$ we have that $K=E_w\in \ff(H)$ has level equal to $r$ and so the set $\mathcal D^2$ is of the form $E_w\times \bar U_w$ where $w$ has level $r$. It follows that the set $\mathcal D$ defined as the set of tuples $\{ U_w, E_w\times \bar U_w, E_w\times E_w^\perp\}$ where $w\in \Gamma^d$ has $d$-level $r$ and if $|E_w|=2$, then there is no $v'\in \Gamma^d$, $w\prec^d v'$ and $E_w<E_{v'}$ is uniformly definable in $\textbf C$.
		
		Then $\A_{r-1}\cup\mathcal{D}$ is precisely the set of tuples $\{ U_v, E_v\times \bar U_v, E_v\times E_v^\perp\}$ where $v\in \Gamma^d$ has $d$-level $s\leq r$ and if $|E_v|=2$, then there is no $v'\in \Gamma^d$, $v\prec^d v'$ and $E_v<E_{v'}$. Hence $\A_{r}=\A_{r-1}\cup\mathcal{D}$ and thus the former is uniformly definable in $\textbf C$, which completes the induction step.

		\begin{remark}\label{rem:two definable families}
			Notice that from {\rm Theorem \ref{thm: main isolation theorem}}, we deduce that not only $\mathcal A$ is definable but in fact the following two families are definable:
			$$
			\mathcal A_c=\{( U_v, E_v\times U_v, E_v\times E_v^\perp)\}_{v\in \tilde{\Gamma}^{d} \setminus \{v\mid E_v\in \mathcal K_d(E_v\times E_v^\perp)\}}
			$$
			and
			$$
			\mathcal A_d=\{( U_v, E_v\times \bar U_v, E_v\times E_v^\perp)\}_{v\in \tilde{\Gamma}^{d} \mid \{E_v\in \mathcal K_d(E_v\times E_v^\perp)\}}.
			$$
		\end{remark}
		
		In order to finish the proof of Theorem \ref{thm: almost interpretation of d-graph}, we need to define the vertex groups of $\Gamma^e$ of order 2 that are dominated by a dihedral subgroup. Recall that we ``lost" these groups in the level by level analysis since in Theorem \ref{p: main definability proposition}, we do not allow to take parameters in vertex groups that have been defined in previous levels. In order to do so, one needs to distinguish the involution that defines a singular vertex groups from other involutions (i.e. involutions that are not non-singular o belong to vertex groups of cardinality greater than 2). We prove this in the following Lemma:
		
		\begin{lemma}\label{lem: definability of Gamma^d}
			The set $\mathcal{B}=\{G_{v}\times U_{v}\,|\,v\in\Gamma^{c},|G_{v}|=2, G_v< E_w,  v \prec^d w, \hbox{ for some } w\in \tl\}$ is uniformly definable in $\textbf C$.
		\end{lemma}
		\begin{subproof}
			Let $DI$ be the set defined as follows: for all $g\in DI$, $g$ is an involution that satisfies the following conditions:
			
			\begin{itemize}
				
				\item (Singular) there exists $A\in \mathcal A$, $A= (U_w, E_w\times \bar U_w, E_w \times E_w^\perp)$ such that $g\in E_w \times \bar U_w$, $g\notin \bar U_w$ and $g\notin I(T(E_w\times E_w^\perp))$, see Lemma \ref{lem: I(t)};
				
				\item (it belongs to dihedrals) for all $A'\in \mathcal A$, $A'= (U_v, E_v\times \bar U_v, E_v \times E_v^\perp)$, if $g\in E_v \times \bar U_v$ and $g\notin U_v$, then either $A'\in \mathcal A_d$ (and so in particular $E_v$ is non-singular dihedral) or $A'\in \mathcal A_c$ and $E_v\times U_v/U_v$ is elementarily equivalent to an infinite dihedral group, see Remark \ref{rem:two definable families}.
			\end{itemize}
			
			Notice that from the definability of the set $\mathcal A$, one deduces the definability of $DI$.
			
			Let $g\in DI$. We show that $g$ is singular and there exists $v\in \Gamma^c$ such that $G_v=\subg{g}$ and $G_v\times U_v \in \mathcal B$.
			
			If $g$ is not singular, then by Lemma \ref{lem: I(t)} we have that $g\in I(T(E_w \times E_w^\perp))$ thus $g$ fails the first condition (singular). Therefore, if $g\in DI$, $g$ is singular.
			
			Suppose that $g\in G_v$ for some $v\in \Gamma^e$ and $|G_v|=2$. Then from the first property, there exists $w \in \tl$ such that $g\in U_w\times E_w$, $g\notin U_w$ and $g\notin I(T(E_w\times E_w^\perp))$. Since $g$ is singular and $g\notin U_w$, we have that $g\in E_w$. From the definition of $I(T(E_w\times E_w^\perp))$, we have that $C(g) < E_w\times E_w^\perp$ and so $v \prec^d w$.
			
			Assume that $g\in G_w$ for some $w\in \Gamma^e$ and $|G_w|>2$. Then from Theorem \ref{p: main definability proposition} we have that $G_w \times G_w^\perp \in \mathcal Z_{l}$ for some $1\leq l\leq n$ and thus $(U_w, U_w \times G_w, G_w\times G_w^\perp) \in \mathcal {A}_c$. Then from the second condition we have that $G_w$ is elementarily equivalent to $D_\infty$. As we have proven above, it follows from the first condition of the set $DI$ that $C(g) < E_w\times E_w^\perp$ and so $v \prec^d w$ for some $w\in \tl$. Furthermore, from the later and the definition of $\Gamma^d$, there exists $v\in \Gamma^c$ such that $G_v=\subg{g}$ and so $G_v < G_w$.
			
			Therefore, if $g\in DI$, we have that $G_v=\subg{g}$ for some $v\in \Gamma^c$ and $G_v\times U_v \in \mathcal B$.
			
			Conversely, if $G_v\times U_v\in \mathcal B$, then $G_v=\subg{g}$ and $g\in DI$.
			
			In order to conclude the proof, we show that the set $\mathcal B$ is definable. For all $g\in DI$, let $C(g)=G_v\times G_v^\perp$. Notice that since $G_v$ is abelian, $C(g) \in \mathcal Z$ and so we can consider the level decomposition of $C(g)$, see Definition \ref{defn:decomposition definable}. Then, from Lemma \ref{l: full isolation} applied to $C(g)=G_v\times G_v^\perp$, we have that the  $G_v\times U(G_v)$ are uniformly definable in $\textbf C$. From this and the definability of the set $DI$ we derive the definability of $\mathcal B$.
		\end{subproof}
	\end{proof}

	\begin{remark}
		Notice that in the previous result the class $\textbf{C}_{\Phi}^{r}$ from {\rm Definition} \ref{dfn: class section 9} can be replaced by $\textbf{C}_{\Phi}$.
	\end{remark}
	
	\begin{cor}
		The group-labelled graph $(\Gamma^d, \{E_v\}_{v\in V\Gamma^d})/\G$ is uniformly definable in $\textbf C$.
	\end{cor}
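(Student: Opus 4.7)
This corollary is essentially a repackaging of Theorem~\ref{thm: almost interpretation of d-graph}. The plan is to use the uniformly definable family
$$
\mathcal{A}' = \{(U_v,\, E_v \times \bar U_v,\, E_v \times E_v^\perp)\}_{v \in V\Gamma^{d}}
$$
provided by that theorem to assemble the interpretation of the group-labelled graph $(\Gamma^d, \{E_v\})/\G$ as a multi-sorted structure.

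First, I would interpret the vertex set of $\Gamma^d/\G$ as the quotient of the definable set of parameters indexing $\mathcal{A}'$ by the equivalence relation of producing $\G$-conjugate triples. Since conjugation of a triple of subgroups is existentially definable (being a single existential $\exists g$ together with the equalities $U_v^g = U_{v'}$, etc.), this equivalence is first-order, and the quotient is a legitimate imaginary sort. By construction, two triples represent the same class precisely when their indexing vertices lie in the same $\G$-orbit of $V\Gamma^d$.

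Second, for each vertex class $[v]$ with representative triple $(U_v,\, E_v\times\bar U_v,\, E_v\times E_v^\perp)$, I would interpret the associated vertex group as the quotient $(E_v\times\bar U_v)/\bar U_v$, which is canonically isomorphic to $E_v$: indeed $U_v \leq E_v^\perp$ by construction, and the elements of $\bar U_v \setminus U_v$ are central involutions of vertex groups $E_w$ with $w\simeq^d v$ and $w\neq v$, which themselves lie in $E_v^\perp$; hence $\bar U_v \leq E_v^\perp$ and $E_v \cap \bar U_v \subseteq E_v\cap E_v^\perp = \{1\}$. The subgroup $\bar U_v$ is itself first-order definable from $\mathcal{A}'$, being generated by $U_v$ together with the central involutions contributed by the other triples in $\mathcal{A}'$ sharing the same third coordinate $E_v \times E_v^\perp$. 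The group operation on $(E_v\times\bar U_v)/\bar U_v$ is then inherited from that of $\G$. Finally, the edge relation between vertex classes $[v]$ and $[v']$ in the quotient graph is interpreted as the condition that there exists $g\in\G$ with $[E_v, E_{v'}^g] = 1$ (equivalently, $E_{v'}^g \leq E_v^\perp$); this is first-order expressible from $\mathcal{A}'$ and descends to a well-defined binary relation on $V\Gamma^d/\G$ because $\G$-conjugation preserves adjacency, matching the natural edge relation of $\Gamma^d/\G$.

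The one step requiring any real attention is the definable extraction of $\bar U_v$ from a representative triple, and this in turn reduces to identifying, inside the definable subgroup $E_v\times E_v^\perp$, precisely those central involutions that come from triples in $\mathcal{A}'$ with the same third coordinate. All the substantive work has been done by Theorem~\ref{thm: almost interpretation of d-graph}; the rest is routine manipulation of quotients by definable equivalence relations.
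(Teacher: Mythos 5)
Your overall plan --- read the interpretation off the uniformly definable family $\mathcal{A}'$ of Theorem \ref{thm: almost interpretation of d-graph} --- is the paper's, but two of your steps diverge from what the paper actually does, and one of them skips the only point that requires an argument.

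First, the paper does not pass to the quotient by $\G$-conjugacy here: the vertex set of the interpreted graph is the set of distinct triples $(U_v, E_v\times U_v, E_v\times E_v^\perp)$ themselves, one for each $v\in\Gamma^d$, so conjugate subgroups $E_v$ and $E_v^g$ give distinct vertices, exactly as in $\Gamma^d$ (whose vertices are by definition the subgroups $G_u^h$, the maximal non-singular dihedral and cyclic subgroups, etc., not their conjugacy classes). Collapsing by conjugation as you do produces a different, finite graph; that quotient is performed later, in Corollary \ref{cor:main interpretation}, where the relation $\sim_1$ is introduced to reach $\Lambda$ and the cores, not in this corollary.

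Second, and more substantively, your edge relation $\exists g\,[E_v,E_{v'}^g]=1$ quantifies over the subgroup $E_v$ itself, which is not one of the definable sets: only $U_v$, the middle coordinate, and $E_v\times E_v^\perp$ are available. The entire content of the paper's proof is the observation that commutation of $E_v$ with $E_w$ can nevertheless be detected from the definable data, namely that $[E_v,E_w]=1$ if and only if $[E_v\times U_v,\,E_w\times U_w]\subseteq U_v\times U_w$, which is checked using the level decomposition (every factor of $U_v$ is some $E_u$ with $v\prec^d u$, so one compares factors of $E_v\times U_v$ and $E_w\times U_w$ pairwise). Asserting that the relation is ``first-order expressible from $\mathcal{A}'$'' without this reduction omits precisely the step the corollary exists to justify. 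A smaller issue of the same kind: your isomorphism $(E_v\times\bar U_v)/\bar U_v\cong E_v$ needs $E_v\cap\bar U_v=\{1\}$, and your justification silently excludes $w=v$ from the product $\prod_{v\simeq^{d} w}CI(E_w)$ in the definition of $\bar U_v$. That exclusion is harmless only because the second-type sets of Theorem \ref{thm: main isolation theorem} occur for dihedral $E_v$, which are centerless, while for singular $v\in\Gamma^c$ the definable middle coordinate is $E_v\times U_v$ with no added involutions --- which is why the paper's proof works with $E_v\times U_v/U_v$ rather than with $\bar U_v$.
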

	\begin{proof}
		In order to interpret the graph $\Gamma^d$ it suffices to take as set of vertices the (different) sets $(U_v, E_v\times U_v, E_v\times E_v^\perp)$ from the family $\mathcal A'$ from Theorem \ref{thm: almost interpretation of d-graph} and as a set of edges the pairs of different vertices $((U_v, E_v\times U_v, E_v\times E_v^\perp), (U_w, E_w\times U_w, E_w\times E_w^\perp))$ such that $E_v$ and $E_w$ commute modulo $U_v\times U_w$, that is $[E_v\times U_v, E_w \times U_w] \subset U_v \times U_w$. Notice that by the properties of $U(Z)$ we have that $E_v$ and $E_w$ commute if and only if for all $K\in \ff(U_v\times E_v)$ and all $K'\in \ff(U_w\times E_w)$ either $K=K'$ or $K$ and $K'$ commute and the later is equivalent to $[E_v\times U_v, E_w \times U_w] \subset U_v \times U_w$, which is interpretable. Finally, to each vertex $(U_v, E_v\times U_v, E_v\times E_v^\perp)$, we associate the group $E_v\times U_v / U_v \simeq E_v$.
	\end{proof}

	\begin{lemma}\label{lem:definability of Gamma^c}
		The family of pairs of sets
		$$
		\mathcal{D}_c=\{(U_{v},E_{v}\times U_{v})\}_{v\in V^{c}}
		$$
		is uniformly definable  in $\textbf C$.
	\end{lemma}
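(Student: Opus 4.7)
The plan is to exploit the partition $\Gamma^{c} = (\Gamma^{c} \cap \tilde{\Gamma}^{d}) \sqcup (\Gamma^{c} \setminus \tilde{\Gamma}^{d})$ and to handle each piece with a definability result already established in this section. Concretely, the vertices in $\Gamma^{c} \cap \tilde{\Gamma}^{d}$ will be provided by the refined subfamily $\mathcal{A}_{c}$ of Remark \ref{rem:two definable families}, while the remaining vertices---singular of order two and dominated by a dihedral---will come from Lemma \ref{lem: definability of Gamma^d}.

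For the piece inside $\tilde{\Gamma}^{d}$, the main task is to check that the index set of $\mathcal{A}_{c}$, namely $\{v\in \tilde{\Gamma}^{d} : E_{v}\notin \mathcal{K}_{d}(E_{v}\times E_{v}^{\perp})\}$, coincides with $\tilde{\Gamma}^{d} \cap \Gamma^{c}$. Non-singular dihedral vertices $E_{v} = G_{u}\ast G_{u'}$ will clearly belong to $\mathcal{K}_{d}$, since their generating involutions sit in singular subgroups whose centralizers strictly contain $E_{v}^{\perp}$, so they witness membership in $I(T)$. The potentially delicate check is that singular vertex groups $H_{u}$ which happen to be elementarily equivalent to $D_{\infty}$ are not also placed into $\mathcal{K}_{d}$: but for any involution $g\in H_{u}$ one has $C_{\mathcal{G}}(g)\subseteq H_{u}\times H_{u}^{\perp} = E_{v}\times E_{v}^{\perp}$, so $g$ cannot commute with anything outside $E_{v}\times E_{v}^{\perp}$ and hence fails the defining property of $I(T)$ in Lemma \ref{lem: I(t)}. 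Once this verification is in place, I simply discard the third coordinate from the triples in $\mathcal{A}_{c}$ to obtain pairs $(U_{v}, E_{v}\times U_{v})$ for this piece.

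For the piece $\Gamma^{c} \setminus \tilde{\Gamma}^{d}$, these vertices are precisely those with $|E_{v}|=2$ and dominated by a dihedral, which is exactly the index set of the family $\mathcal{B}$ of Lemma \ref{lem: definability of Gamma^d}. Inspecting the proof of that lemma, from a witness $g\in DI$ one recovers the centralizer $C(g) = G_{v}\times G_{v}^{\perp}$ as a definable set and applies Lemma \ref{l: full isolation} to its level decomposition, producing both $U_{v}$ and $G_{v}\times U_{v}$ simultaneously as uniformly definable subgroups. Taking the union with the pairs from the previous paragraph yields $\mathcal{D}_{c}$ as a uniformly definable family. I do not expect any real obstacle here: the one point that must be confirmed carefully is the characterization of $\mathcal{K}_{d}$ in the singular-dihedral case described above, after which the rest is bookkeeping built on the definability of $\mathcal{A}_{c}$ and the constructions already present in the proof of Lemma \ref{lem: definability of Gamma^d}.
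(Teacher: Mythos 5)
Your route is genuinely different from the paper's: the paper simply takes the family $\mathcal{A}'$ from Theorem \ref{thm: almost interpretation of d-graph}, observes that $\{v\in\Gamma^{d}\mid E_{v}\cong D_{\infty}\}$ is uniformly definable (using the uniform bound on the class $\textbf{C}_{\Phi}^{r}$), and subtracts the corresponding subfamily $\mathcal{A}_d'$, whereas you reassemble $\mathcal{D}_c$ from the already-definable pieces $\mathcal{A}_c$ (Remark \ref{rem:two definable families}) and $\mathcal{B}$ (Lemma \ref{lem: definability of Gamma^d}). Your partition of $\Gamma^{c}$ along $\tilde{\Gamma}^{d}$ is correct, the identification of $\Gamma^{c}\setminus\tilde{\Gamma}^{d}$ with the index set of $\mathcal{B}$ is correct, and your treatment of singular vertex groups elementarily equivalent to $D_{\infty}$ is careful and right (for $g\in H_{u}$ singular one indeed has $C(g)\le H_{u}\times H_{u}^{\perp}$, so condition (iii) of $I(T)$ fails).

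There is, however, a gap in the step you dismiss as clear: that every non-singular dihedral $E_{v}$ belongs to $\mathcal{K}_{d}(E_{v}\times E_{v}^{\perp})$. Your justification --- that the generating involutions lie in singular subgroups ``whose centralizers strictly contain $E_{v}^{\perp}$'' --- does not establish the condition actually required for membership in $I(T)$, namely that some involution of $E_{v}$ commutes with an element \emph{outside} $Z=E_{v}\times E_{v}^{\perp}$; the containment $C(e)\supsetneq E_{v}^{\perp}$ is automatic and compatible with $C(e)\subseteq Z$. Indeed, the second bullet of Lemma \ref{lem: I(t)} explicitly allows a generating involution $e$ with $\langle e\rangle=G_{w}$ singular and $G_{w}^{\perp}<Z$, which fails the test. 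If \emph{both} generators $e,e'$ of $E_{v}=G_{w}\ast G_{w'}$ were of this form one would have $\lk(w)=\lk(w')$ with $(w,w')\notin E(\Gamma)$ and $G_{w}\cong G_{w'}\cong\mathbb{Z}/2\mathbb{Z}$, and then $E_{v}\notin\mathcal{K}_{d}$, breaking your claimed identity between the index set of $\mathcal{A}_{c}$ and $\tilde{\Gamma}^{d}\cap\Gamma^{c}$. This configuration is excluded precisely by the hypothesis that $\Gamma$ is (almost) positive reduced (take $\Lambda=\{w,w'\}$ in Definition \ref{defn:reduced graph} or \ref{defn:almost reduced}); the analogous case analysis for conjugated generators and for involutions that are products of commuting singular involutions is carried out in the proofs of Lemmas \ref{lem: I(t)} and \ref{l: full isolation}. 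So the step is true in the class $\textbf C$, but only via the reducedness hypothesis, which your argument never invokes.
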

	
	\begin{proof}
		By Theorem \ref{thm: almost interpretation of d-graph} we know that the family of triples of sets:
		$$
		\mathcal{A}'=\{(U_{v},E_{v}\times \bar U_{v},E_{v}\times E_{v}^{\perp})\}_{v\in V^{d}}
		$$
		is uniformly definable in $\textbf C$ as a union of finitely many definable families.
		
		Notice that our assumption that finite vertex groups are uniformly bounded for $\G\in\mc{C}$ implies that the set $V^{d}_{dyh}=\{v\in V^{d}\,|\,E_{v}\cong D_{\infty}\}$ is uniformly definable in $\textbf C$ and so is the set $\mathcal{A}_d'=\{(U_{v},E_{v}\times \bar U_{v},E_{v}\times E_{v}^{\perp})\}_{v\in \Gamma^d_{dyh}}$.
		
		Therefore, $\mathcal D_c = \mathcal A \setminus \mathcal A_d'$ is uniformly definable in $\textbf C$. Notice that for $(U_v, E_v\times \bar U_v, E_v\times E_v^\perp)\in \mathcal D_c$, we have that $U_v=\bar U_v$, see Remark \ref{rem:two definable families}.
	\end{proof}
	
	\begin{lemma}\label{lem: definability of Gamma^e}
		The family of pairs of sets
		$$
		\mathcal{D}=\{(U_{v},G_{v}\times U_{v})\}_{v\in \minCore^e \cup \{v\in \Gamma^c| |E_v|=2\}}
		$$
		is uniformly definable in $\textbf C$.
	\end{lemma}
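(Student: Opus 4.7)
My plan is to refine the family $\mathcal{D}_c=\{(U_v,E_v\times U_v)\}_{v\in\Gamma^c}$ provided by Lemma \ref{lem:definability of Gamma^c} by imposing, first-order, the additional restriction that $v\in \minCore^e\cup\{v\in\Gamma^c:|E_v|=2\}$. I will split this condition into three mutually exclusive uniformly definable pieces, according to the order of $E_v$ and whether $E_v\equiv\mathbb{Z}$.

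The first piece, $|E_v|=2$, is immediate and first-order: $(E_v\times U_v)/U_v$ has exactly two elements. The second piece, $|E_v|>2$ and $E_v\not\equiv\mathbb{Z}$, is first-order inside $\textbf{C}=\mc{G}(\textbf{K}_N,\textbf{C}_{\Phi}^r)$ because the class $\textbf{C}_{\Phi}^r$ (Definition \ref{d: K_N_and_C}) fixes an axiomatization $\{\phi_j\}_j$ of $\mathrm{Th}(\mathbb{Z})$ and assumes that each vertex group either satisfies the whole theory or violates $\phi_r$. Since any non-singular cyclic subgroup in $\Gamma^c\setminus\Gamma^e$ is infinite cyclic, $E_v\not\equiv\mathbb{Z}$ forces $v\in\Gamma^e$, and weakness of a singular vertex requires $G_v\equiv\mathbb{Z}$, so every such $v$ belongs to $\minCore^e$.

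The delicate piece is $|E_v|>2$ together with $E_v\equiv\mathbb{Z}$, where $v$ can be singular non-weak (to be kept), singular weak, or non-singular cyclic (both to be excluded). I plan to characterize $v\in\minCore^e$ in this range as those $v$ admitting no ``generalized weakness witness'': no tuple $w_1,\ldots,w_k\in\Gamma^c$ with $[E_v,E_{w_i}]\neq 1$ for all $i$ and $\bigcap_i(E_{w_i}\cdot E_{w_i}^\perp)=E_v^\perp$. For non-singular cyclic $v=\subg{g}$ with $\supp(E_v)=\G(\Delta)^h$, the indecomposability of $\Delta$ together with $|\Delta|\geq 2$ gives an immediate witness $W=\{G_u^h:u\in\Delta\}$, because $\bigcap_{u\in\Delta}(G_u\cdot G_u^\perp)=\G(\Delta)^\perp=E_v^\perp$ and each $G_u$ with $u\in\Delta$ fails to commute with $E_v$. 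For singular weak $v=G_u$ the standard combinatorial weakness of $u$ in $\Gamma$ lifts directly to such a witness. Since $|\Gamma|\leq N$ and the equation $\bigcap_i(E_{w_i}\cdot E_{w_i}^\perp)=E_v^\perp$ is an equation between subgraph products, the size of $W$ can be bounded uniformly, so the whole condition becomes first-order expressible by bounded existential quantification over tuples parametrised by $\mathcal{D}_c$.

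The main obstacle I foresee is verifying rigorously that, for $v=G_u$ with $u\in\minCore(\Gamma)$, no mixed witness $W\subseteq\Gamma^c$, including ones containing non-singular cyclic elements, can satisfy the condition. The plan is to exploit the fact that the subgroup equation is fully determined by the underlying subsets of $\Gamma$ up to conjugation: singular $w_i$ with $E_{w_i}=G_{u_i}^{h_i}$ contribute stars $\st(u_i)$ to the intersection, while non-singular cyclic $w_i$ with $\supp(E_{w_i})=\G(\Delta_i)^{h_i}$ contribute only links $\link{\Delta_i}$; the equation thus reduces to a combinatorial identity on subsets of $\Gamma$ whose solvability is equivalent to the standard combinatorial weakness of $u$ in $\Gamma$, which fails by hypothesis. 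Combining the three pieces then yields $\mathcal{D}$.
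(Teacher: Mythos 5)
Your high-level plan — refine $\mathcal{D}_c$ from Lemma \ref{lem:definability of Gamma^c} by a first-order condition that, among the vertices with $E_v\equiv\mathbb{Z}$ and $|E_v|>2$, separates singular non-weak vertices from weak singular and non-singular cyclic ones — is the same as the paper's. But the implementation of the key separating condition diverges from the paper's and, as sketched, has genuine gaps.

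First, your ``generalized weakness witness'' is phrased in terms of $E_{w_i}$, $E_{w_i}^\perp$, $E_v^\perp$ and $[E_v,E_{w_i}]\neq 1$. None of these is among the uniformly definable data: what is available is only the triples $(U_w,\,E_w\times U_w,\,E_w\times E_w^{\perp})$, and recovering $E_w$ itself is exactly the problem the whole section is fighting (for weak vertices it is in general impossible). The paper's ``orthogonality condition'' is engineered precisely around this: all four clauses are containments among the definable sets, and clause (4), which morally says $(E_v\times U_v)^{\perp}\leq E_w\times E_w^{\perp}$, is rendered as ``for any $g\in E_v\times E_v^{\perp}$ there is $h\in E_v\times U_v$ with $gh\in E_w\times E_w^{\perp}$''. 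You would need to perform the same translation, and it is not automatic. Second, your witness is a tuple whose length you propose to bound by $|\Gamma|$; but the class is $\mc{G}(\textbf{K}_N,\textbf{C}_\Phi^r)$, which bounds clique size, not the number of vertices, so tuple-length cannot be bounded uniformly (already your witness for a non-singular cyclic $v$ has length $|\supp(E_v)|$, which is unbounded). The paper avoids both problems by using a \emph{single} witness: $v$ is weak if and only if there is a single $w\neq v$ with $(v,w)\notin E(\Gamma)$ and $\link{v}\subseteq\link{w}$ (any element $\neq v$ of a tuple witness works), and correspondingly the group-theoretic condition is a single containment $E_v^{\perp}\leq E_w^{\perp}$ rather than an equality of an intersection.

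Third, and most seriously, your backward direction (a non-weak singular $v$ admits no witness) rests on the claim that $\bigcap_i(E_{w_i}\cdot E_{w_i}^{\perp})=E_v^{\perp}$ ``reduces to a combinatorial identity on subsets of $\Gamma$''. It does not: by Lemma \ref{l: intersection_subgroups}, an intersection $\bigcap_i\G(\st(u_i))^{h_i}$ of \emph{conjugates} of subgraph products is a conjugate of $\G(\Delta_3)$ for some $\Delta_3\subseteq\bigcap_i\st(u_i)$, and $\Delta_3$ may be a proper subgraph; so equality of the group intersection with $\G(\link{v})$ does not give $\bigcap_i\st(u_i)=\link{v}$, and the standard weakness of $v$ does not follow. (Your forward direction also has a small snag: some weak vertices only admit combinatorial witnesses containing $v$ itself, which your clause $[E_v,E_{w_i}]\neq 1$ excludes; a substitute must be exhibited.) The paper's containment formulation makes the backward direction a two-line argument: $E_w\nleq E_v\times E_v^{\perp}$ forces $(v,w)\notin E$, and $E_v^{\perp}\leq E_w^{\perp}$ then forces $\st(v)\cap\st(w)=\link{v}$, i.e.\ weakness of $v$ — no control of conjugated intersections is needed.
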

	\begin{proof}
		\newcommand{\C}[0]{\textbf C}
		
		By Lemma \ref{lem:definability of Gamma^c} we know that the family of tuples of sets:
		$$
		\mathcal{D}_c=\{(U_{v},E_{v}\times U_{v})\}_{v\in V^{c}}
		$$
		is uniformly definable in $\textbf C$ as a union of finitely many definable families.
		
		All we need to show is that there is some formula that on any $\G\in\textbf C$ detects whether a parameter in each of such families corresponds to a triple $\Lambda_{v}=(U_{v},U_{v}\times E_{v})$ where either $v\in V^{e}\subseteq V^{c}$ and $v$ is not weak or $|E_v|=2$. As we will only use the information on the triple $\Lambda_{v}$ to achieve this, the aforementioned parameters can be avoided in the discussion that follows. In fact, by working on $\Th(\G)^{eq}$ one could think of the imaginaries $v\in V^{d}$ (equivalence classes of the disjoint union of the finitely many families of original parameters) as the actual parameters in the formulas giving the families.
		
		To begin with, notice that our assumption that finite vertex groups are uniformly bounded for $\G\in\C$ implies that the set $V^{c}_{cyc}=\{v\in V^{c}\,|\,E_{v}\equiv\Z\}$ is uniformly definable in $\textbf C$.
		
		We say that $v\in V^{c}_{cyc}$ satisfies the orthogonality condition if there is $w\in V^{d}$ such that
		\begin{enumerate}
			\item \label{cond1}$E_{w}\times U_{w}\nleq E_{v}\times E_{v}^{\perp}$
			\item \label{cond2}$E_{v}\times E_{v}^{\perp}\nleq E_{w}\times E_{w}^{\perp}$
			\item \label{cond3}$U_{v} < E_{w}\times E_{w}^{\perp}$
			\item \label{cond4}$E_{v}\times U_{v})^{\perp}\leq E_{w}\times E_{w}^{\perp}$ or equivalently: for any $g\in E_{v}\times E_{v}^{\perp}=(E_{v}\times U_{v})\times(E_{v}\times U_{v})^{\perp}$, there exists $h\in E_{v}\times U_{v}$ such that $gh \in E_{w}\times E_{w}^{\perp}$.
		\end{enumerate}
		Let $V^{c}_{ort}$ the collection of vertices in $V^{c}_{cyc}$ satisfying the orthogonality condition. Since the condition can be expressed in a first order way in terms of $\Lambda_{v}$ and $\Lambda_{w}$ it follows that the family
		$$
		\{(U_{v},E_{v}\times U_{v})\}_{v\in V^{c}\setminus (V^{c}_{ort}\cup V^{c}_{dyh})}
		$$
		is uniformly definable in $\textbf C$.
		
		Let us show that $v\in V^{c}_{cyc}\setminus V_{ort}^c$ if and only if
		$v\in V_{\minCore}^e$, i.e. $E_{v}$ is singular, and $v$ is not weak. Notice that in this case we would deduce that $V^{d}\setminus (V^{c}_{ort}\cup V^{c}_{dyh})=V^{e}_{\minCore}\cup \{ v\in \Gamma^c\mid |E_v|=2\}$.
		
		We start by noting that condition \ref{cond4} for a witness of the orthogonality condition can be replaced by the following two:
		\begin{enumerate}[label=(\alph*)]
			\item \label{condd1}$E_{w}\nleq E_{v}\times E_{v}^{\perp}$; and
			
			\item \label{condd2}$E_{v}^{\perp}\leq E_{w}^{\perp}$, or equivalently $E_{v}^{\perp}\perp E_{w}$.
		\end{enumerate}
		
		Conditions \ref{condd1} and \ref{condd2} imply the orthogonality condition. Indeed, Condition \ref{condd1} trivially implies  Condition  \ref{cond1}, while \ref{condd2} trivially implies \ref{cond3} and \ref{cond4}. On the other hand, if Condition \ref{cond2} does not hold, then \ref{condd2} implies that either $E_{v}\subseteq E_{w}$, which implies $E_{v}=E_{w}$, contradicting \ref{condd1} or $E_{v}\leq E_{w}^{\perp}$ or equivalently $E_{w}\leq E_{v}^{\perp}$ which also contradicts \ref{condd1}.
		
		For the opposite implication take $w$ witnessing the orthogonality condition for $v$. If condition \ref{condd1} fails, then as $|\ff(E_{w})|=1$ either $E_{w}\leq E_{v}$, which implies $E_{v}=E_{w}$ contradicting \ref{cond1}, or $E_{w}\leq E_{v}^{\perp}$. In the latter case there are two options. One is that $E_{w}\leq U_{v}$ and thus $E_{w}\times U_{w}\leq U_{v}$ by the transitivity of $\prec^{d}$, contradicting \ref{cond1}. The other is that $E_{w}\leq (E_{v}\times U_{v})^{\perp}$ which implies $E_{v}\times U_{v}\leq E_{w}^{\perp}$ which together with condition \ref{cond4} yields
		$$
		E_{v}\times E_{v}^{\perp}=(E_{v}\times U_{v})\times (E_{v}\times U_{v})^{\perp}\leq E_{w}\times E_{w}^{\perp}
		$$
		contradicting \ref{cond2}.
		Finally, \ref{cond3} and \ref{cond4} clearly imply \ref{condd2}.

		Assume $E_{v}$ is non-singular cyclic; by definition $\ff(E_{v})=\hh$ for some subgraph product $\hh$.
		Take any $w\in V^{e}$ such that $G_{w}=E_{w}\leq \hh$. Since $g\in G_{w}\setminus 1$  does not commute with
		$E_{v}$, clearly $G_{w}\nin E_{v}\times E_{v}^{\perp}$. However $E_{v}^{\perp}=\hh^{\perp}\leq G_{w}^{\perp}$. It follows that in this case $v\in V_{ort}^c$.
		
		Likewise, if $v\in V^{e}$ is weak, then by definition there is $w\in V^{e}$ such that $v,w\nin E^{e}$, but
		$\link{v}\subseteq \link{w}$, from which it follows that $v$ satisfies the orthogonality condition, that is $v\in V_{ort}^c$.
		
		To conclude, assume now for the sake of contradiction that some non weak $v\in V^{e}\cap V^{c}_{cyc}$ satisfies the orthogonality condition. Let $w\in\Gamma^{c}$ be a witness for this. From Condition \ref{condd1}, we have that $E_w\nleq E_v\times E_v^\perp$ and so $(v,w)\notin E(\Gamma)$. From Condition \ref{condd2}, $E_v^\perp < E_W^\perp$ and so $\st(v)\cap \st(w)=E_v^\perp=\link{v}$ and so $v$ is a weak vertex.
	\end{proof}
	
	\begin{cor}\label{cor:main interpretation}
		Let $\mc{G}=\mc{G}(\Gamma, \gvv)\in \mc{G}(\textbf{K}_N, \textbf{C})$. Then the group-labelled graphs $\minCore(\Gamma, \gvv)$ and $\ECore(\Gamma, \gvv)$ are interpretable in $\mathcal G$. This interpretation is uniform in the class $\mc{G}(\textbf{K}_N, \textbf{C})$.
	\end{cor}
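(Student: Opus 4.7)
The strategy is to leverage the uniformly definable families of subgroup pairs produced in Lemmas \ref{lem: definability of Gamma^e} and \ref{lem:definability of Gamma^c}, together with the equivalence relations of conjugation, in order to build the vertex sets and edge relations of $\minCore(\Gamma,\gvv)$ and $\ECore(\Gamma,\gvv)$ in $\mc{G}^{\mathrm{eq}}$. The key observation is that conjugation is an internally first-order equivalence relation on the parameters defining the pairs $(U_v, G_v\times U_v)$, and that by Lemma \ref{l: prelims_conjugation_and_graphical_subgroups} distinct conjugacy classes of such pairs correspond bijectively to distinct vertices of $\Gamma^e$ (and their singular counterparts in $\Gamma^c$). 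Commutation of the associated subgroups modulo $U_v\times U_{v'}$ will encode the edges.

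For $\minCore(\Gamma,\gvv)$, I would start with the family $\mathcal{D}$ from Lemma \ref{lem: definability of Gamma^e} and cut it down to the subfamily corresponding to non-weak singular vertices: the extra condition ``$|E_v|=2$ and $v\prec^{d}w$ for some dihedral $E_w$'' is first-order expressible over the already-definable family $\mathcal{A}'$ of Theorem \ref{thm: almost interpretation of d-graph}, so we can discard such parameters. Next, I quotient by $(U_v,G_v\times U_v)\sim(U_{v'},G_{v'}\times U_{v'})$ iff some $g\in\mc{G}$ conjugates one triple into the other, and define edges between equivalence classes by $[G_v\times U_v,\, G_{v'}\times U_{v'}]\subseteq U_v\cdot U_{v'}$. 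The vertex group at $[v]$ is then recovered as the quotient $(G_v\times U_v)/U_v$, which is canonically isomorphic to $G_v$. This yields a uniform interpretation of $\minCore(\Gamma,\gvv)$.

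For $\ECore(\Gamma,\gvv)$, I would follow Definition \ref{defn: graph I(Gamma)} step by step, interpreting each piece. Starting from $\mathcal{D}_c$ and quotienting by conjugation gives an interpretation of $\Lambda=\Gamma^c/{\sim_1}$ with its associated groups $\bar E_v=(E_v\times U_v)/U_v$. Membership in $\minCore$ is already interpretable (from the previous paragraph), and the conditions ``$\bar E_v\equiv\Z$'' and ``$|\bar E_v|=2$'' are first-order via our standing axiom schemes for $\Z$ and the uniform bound on finite vertex groups. The relation $\sim_2$, namely having the same link, is first-order in the definable graph structure; the multiplicity function $|[v]|\in\{1,2\}$ translates to the first-order condition expressing the existence of a representative $v'\in[v]$ together with a witness $W\subseteq V(\Gamma)$ of weakness satisfying $\link{v'}\subsetneq\link{w}$ for every $w\in W$, which can be quantified over the already interpreted vertex set. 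Thus we interpret $I(\Gamma,\gvv)$ by duplicating vertex classes of multiplicity $2$, deleting vertices with $\bar E_v\equiv D_\infty$, and then pass to its positive reduction, which is a finitary combinatorial operation applied uniformly since the clique size of $\Gamma^d$ is bounded by Lemma \ref{l: blob lemma} and the hypothesis $\Gamma\in\textbf{K}_N$.

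The main obstacle is to verify that the multiplicity rule defining whether a $\sim_2$-class contributes one or two vertices to $\ECore(\Gamma,\gvv)$ can genuinely be phrased by a single first-order formula that is uniform across the whole class $\mc{G}(\textbf K_N,\textbf C)$; this reduces to checking that ``being a witness for weakness'' is first-order over the interpreted graph $(\Gamma^e, \{G_v\})$ together with the minimal core, which in turn follows because both the link of a vertex and the intersections of stars of finitely many vertices are interpretable, and the number of vertices needed in a witness is bounded by the clique number of $\Gamma$. Once this is settled, uniformity in $N$ and $\textbf C$ is automatic, since every existential quantifier introduced by the construction ranges over tuples of length bounded in terms of $N$ alone.
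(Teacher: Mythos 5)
Your proposal is correct and follows essentially the same route as the paper's proof: both start from the definable families $\mathcal{D}_c$ and $\mathcal{D}$ of Lemmas \ref{lem:definability of Gamma^c} and \ref{lem: definability of Gamma^e}, quotient by conjugation to interpret $\Lambda$, define edges by commutation modulo $U_v\cdot U_{v'}$, recover vertex groups as $(E_v\times U_v)/U_v$, and then handle $\sim_2$, the weakness-witness condition and the passage to $I(\Gamma,\gvv)$ and its positive reduction exactly as in Definition \ref{defn: graph I(Gamma)}. No substantive difference.
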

	
	\begin{proof}
		
		Consider the (different) sets from the family $\mathcal D_c$ from Lemma \ref{lem:definability of Gamma^c}. Define the following equivalence relation:
		
		$$
		(U_v, E_v \times U_v) \sim (U_w, E_w \times U_w)
		$$
		
		if and only if there exists $g\in \mathcal G$ such that $(E_v \times U_v)^g = E_w \times U_w$. We denote the equivalence class as $[(U_v, E_v \times U_v)]$. Since conjugation respects the preorder $\prec^d$ and for all $v'\in \Gamma^e$ such that $G_{v'}<U_v$ we have that $v \prec^d v'$, we have that $(E_v \times U_v)^g = E_w \times U_w$ if and only if $E_v^g=E_w$.
		
		We set the set of vertices of the graph $\Lambda$ to be the different definable sets from $\mathcal D_c$ modulo the equivalence relation, that is $\{ [(U_v, E_v \times U_v)]\}$.
		
		The set of edges are defined as follows: $([(U_v, E_v \times U_v)], [(U_w, E_w \times U_w)]$ is an edge if and only if there is $(U_{v'}, E_{v'} \times U_{v'})\in [(U_v, E_v \times U_v)]$ and $(U_{w'}, E_{w'} \times U_{w'})\in [(U_w, E_w \times U_w)]$ such that $[E_{v'} \times U_{v'},E_{w'}\times U_{w'}] < U_{v'}\cdot U_{w'}$. Notice that the latter implies that $[E_{v'}, E_{w'}]=1$ and so from the definition of $U_{v'}$ and $U_{w'}$ we have that each factor of $U_{v'}$ and each factor of $U_{w'}$ are coincide or commute. In particular, the subgroup generated by $U_{v'}$ and $U_{w'}$ is a direct sum of singular vertex groups.
		
		Furthermore, for each vertex $[(U_v, E_v \times U_v)]$, we associate the group $E_v \times U_v / U_v$ if $E_v \times U_v / U_v\not\equiv \mathbb Z, D_\infty$; the group $\mathbb Z$ if $E_v \times U_v / U_v\equiv \mathbb Z$ and the group $D_\infty$ if $E_v \times U_v / U_v\equiv D_\infty$. Notice that for each $v\in \Lambda$ the vertex group is uniformly interpretable as so they are $E_v\times U_v$ and $U_v$.
		
		This proves that the group-labelled graph $(\Lambda, \{E_v\}_{v\in \Lambda})$ is uniformly interpretable. Let $(\Lambda_0, \{E_v\}_{v\in \Lambda_0})$ be the induced group-labelled subgraph of $(\Lambda, \{E_v\}_{v\in \Lambda})$ spanned by the vertices such that $E_v\not\equiv D_\infty$. Notice that in the class of groups $\textbf C_{\Phi}^r$, the latter condition is first-order definable. Then $(\Lambda_0, \{E_v\}_{v\in \Lambda_0})$ is uniformly interpretable.
		
		By Lemma \ref{lem: definability of Gamma^e}, the family $\mathcal D$ is uniformly definable in $\mathcal D_c$, and so the group-labelled graph $\minCore(\Gamma, \gvv)$ is uniformly interpretable in $(\Lambda, \{E_v\}_{v\in \Lambda_0})$: the vertices of $\minCore(\Gamma)$ correspond to vertices $[(U_v, E_v \times U_v)]$ such that $(U_v, E_v \times U_v) \in \mathcal D$.
		
		Finally, since the relation $\sim_2$ is defined in terms of links of vertices in $\Lambda_0$ and $\Lambda_0$ is uniformly interpretable, so is $\sim_2$. Furthermore, since the existence of a witness in $\minCore(\Gamma, \gvv)$ for a weak vertex is also definable in first-order, we have that the group-labelled graph $I(\Gamma, \gvv)$ and so the extended core $\ECore(\Gamma, \gvv)$ are uniformly interpretable in $\mathcal G$ inside the class $\textbf C$.
	\end{proof}
	
	\section{Applications}
	
	\subsection{Elementary equivalence of graph products of groups}
	
	In Section \ref{s: interpretation_of_base_structure_(first)} we proved the main interpretability result, namely, we showed that  given a graph product $\mc{G}=\mc{G}(\Gamma, \gvv)\in \mc{G}(\textbf{K}_N, \textbf{C})$, then the group-labelled graphs $\ECore(\Gamma,\gvv)$ and $\Core(\Gamma,\gvv)$  are uniformly interpretable in some given class $\textbf C$ of graph products, see Corollary \ref{cor:main interpretation}. The next theorem is an immediate consequence of this result:
	
	\begin{thm}\label{thm:elementary equivalence}
		Let $\mc{G}(\Gamma, \gvv)$ and $\mc{G}(\Delta,\{G_v'\}_{v\in \Delta})$ be two elementarily equivalent graph products of groups, where $\Gamma$ and $\Delta$ are finite and either:
		\begin{itemize}
			\item $\Gamma$,$\Delta$ are almost positive reduced graphs and each $G_{v}$ and $H_{u}$ satisfies a simple non-generic almost positive sentence
			\item $\Gamma$,$\Delta$ are positive reduced graphs and each $G_{v}$ and $G_{v}'$ satisfies a nontrivial positive sentence
		\end{itemize}
		Then there exists a graph isomorphism $f: \ECore(\Gamma,\gvv) \to \ECore(\Delta,\hww)$ such that $G_v$ is elementarily equivalent to $G_{f(v)}'$ for all $v\in \ECore(\Gamma)$ restricting to an isomorphism between representatives of $\Core(\Gamma,\gvv)$ and $\Core(\Delta,\{G_v'\}_{v\in \Delta})$ (as group-labelled graph ).
	\end{thm}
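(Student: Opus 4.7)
The proof is a direct consequence of the main interpretability result (Corollary \ref{cor:main interpretation}) combined with standard model-theoretic principles on uniform interpretation. The plan is as follows.

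First, I would fix a class $\textbf C \subseteq \mc{G}(\textbf{K}_N, \textbf{C}_\Phi)$ containing both $\mc{G}(\Gamma, \gvv)$ and $\mc{G}(\Delta, \{G_v'\})$; such a class exists because the two graphs are finite (so have bounded clique size) and the finitely many vertex groups jointly satisfy some finite subset $\Phi$ of the admissible sentences. Corollary \ref{cor:main interpretation} then provides a uniform interpretation, in the language of groups, of the group-labelled graph $\ECore(\Gamma, \gvv)$ (viewed as an $\mathcal{L}_B$-structure) in $\mc{G}(\Gamma, \gvv)$, and the same formulas simultaneously interpret $\ECore(\Delta, \{G_v'\})$ in $\mc{G}(\Delta, \{G_v'\})$.

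Since a uniform interpretation preserves elementary equivalence between ambient and interpreted structures, the hypothesis $\mc{G}(\Gamma, \gvv) \equiv \mc{G}(\Delta, \{G_v'\})$ immediately yields
\[
\ECore(\Gamma, \gvv) \equiv_{\mathcal{L}_B} \ECore(\Delta, \{G_v'\}).
\]
By Lemma \ref{lem:ECore and I(Gmma)}, both underlying graphs are finite. In an $\mathcal{L}_B$-structure built from a group-labelled graph with finitely many vertices, the partition of the universe into vertex groups is first-order definable through $\odot$, and the induced adjacency pattern together with the number of classes is captured by a single $\mathcal{L}_B$-sentence. Elementary equivalence therefore forces a bijection $f\colon V(\ECore(\Gamma))\to V(\ECore(\Delta))$ preserving adjacency, and specialising to each class yields $G_v \equiv G'_{f(v)}$ for every $v \in \ECore(\Gamma)$.

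For the final clause, observe that within the $\mathcal{L}_B$-structure of $\ECore(\Gamma, \gvv)$ the subset of vertices belonging to (a representative of) $\Core(\Gamma, \gvv)$ is first-order definable: the properties of being weak, redundant or minimal (Definitions \ref{defn:weak vertices} and \ref{defn:ecore}) involve only the finite link--star combinatorics of the underlying graph together with the elementary equivalence of certain vertex-group labels with $\mathbb{Z}$, all of which is expressible in $\mathcal{L}_B$. Therefore $f$ restricts to an isomorphism between representatives of $\Core(\Gamma, \gvv)$ and $\Core(\Delta, \{G_v'\})$ as group-labelled graphs. The only substantive ingredient is Corollary \ref{cor:main interpretation}; no further obstacle is expected, as the present theorem is its clean model-theoretic corollary.
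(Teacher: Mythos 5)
Your proposal is correct and follows essentially the same route as the paper: the paper's own proof is exactly the two-line observation that finiteness of the graphs places both groups in a common class $\G(\textbf{K}_{N},\textbf{C}^{r}_{\Phi})$ and then invokes Corollary \ref{cor:main interpretation}. You have merely spelled out the standard model-theoretic steps the paper leaves implicit (transfer of elementary equivalence along a uniform interpretation, recovery of a finite labelled graph up to isomorphism from the elementary theory of its $\mathcal{L}_B$-structure, and first-order recoverability of the core inside the extended core via Lemma \ref{lem:ECore and I(Gmma)}), which is a faithful expansion rather than a different argument.
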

	\begin{proof}
		Finitness of the graphs implies that both graph products belong to some common class $\G(\textbf{K}_{N},\textbf{C}^{r}_{\Phi})$. The result then follows from Corollary \ref{cor:main interpretation}. 
	\end{proof}
	
	\begin{remark}
		We conjecture that the conditions on the graphs, namely that they be {\rm(}almost{\rm)} positive reduced are necessary.
		
		Let $\Gamma$ be a reduced graph that is not almost positive reduced and let $\Gamma'$ be an almost positive reduced quotient of $\Gamma$ obtained from $\Gamma$ by collapsing $w \cup S$ to a vertex $w'$, where $S$ is a graph with more than one vertex and it is not a join.
		
		Let $\mc{G}=\mc{G}(\Gamma, \gvv)$, where $G_v\simeq \mathbb{Z}$.
		
		Define $\mc{G}'=\mc{G}(\Gamma', \{H_v'\})$ where $H_v'=G_v'$ for all $v \notin S\cup \{w\}$ and  $H_{w'}'= \langle G_w'\rangle \times \langle \mathcal G(S), x,y \mid [x,y]=[w_1,w_2]\rangle$, where $w_i$ are $N$-small cancellation. Note that since $S$ is not a join (and it has more than one vertex) $\mathcal G(S)$ is a directly indecomposable RAAG and so it contains small cancellation elements, see, for instance, {\rm Corollary \ref{c: small cancellation}}.
		
		On the one hand, we conjecture that $\mc{G} \equiv \mc{G}'$. On the other hand, $H_{w'}'$ decomposes as a direct product $G_w' \times \langle \mathcal G(S),x,y \mid [x,y]=[w_1,w_2]\rangle$ and $\langle \mathcal G(S),x,y \mid [x,y]=[w_1,w_2]\rangle$ is directly indecomposable and does not admit a nontrivial decomposition as a graph product. Therefore, one cannot expect to interpret the core of $\Gamma$ in general terms.
	\end{remark}
	
	Combining the results above we deduce the following:
	
	\begin{conj}
		We conjecture that the converse also holds:
		
		Let $\mc{G}(\Gamma, \gvv)$ and $\mc{G}(\Delta, \{G_v'\}_{v\in \Delta})$ be graph products such that $\Gamma$ and $\Delta$ are finite and $G_v$ and $G_v'$ satisfy a simple non-generic almost positive sentence. 
		
		If there is an isomorphism $f:\Core(\Gamma) \to \Core(\Delta)$ and $G_v\equiv G_{f(v)}'$, then $\mc{G}(\Gamma, \gvv) \equiv \mc{G}(\Delta, \{G_v'\}_{v\in \Delta})$.
		
		In particular, $\mathbb{G}(\Gamma, \gvv)$ is elementarily equivalent to $\mathbb G(\Core(\Gamma, \gvv))$.
	\end{conj}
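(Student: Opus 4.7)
The plan is to establish the conjecture by reducing it to two essentially independent statements: a Feferman--Vaught--type preservation principle under elementary equivalence of vertex groups, and a core reduction asserting that adjoining or removing weak redundant vertices does not alter the elementary theory. Together with Corollary \ref{cor:main interpretation} (which gives the converse direction), these would yield the full biconditional. Concretely, the second half of the conjecture (elementary equivalence of $\mc{G}(\Gamma,\gvv)$ and $\mc{G}(\Core(\Gamma,\gvv))$) is in fact equivalent to the first half, since any isomorphism $f:\Core(\Gamma)\to\Core(\Delta)$ with $G_{v}\equiv G'_{f(v)}$ would allow one to conclude through the chain $\mc{G}(\Gamma,\gvv)\equiv \mc{G}(\Core(\Gamma,\gvv))\equiv \mc{G}(\Core(\Delta,\{G'_v\}))\equiv \mc{G}(\Delta,\{G'_v\})$.

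The first step I would take is to prove that if $G_{v}\equiv G'_{v}$ for every $v\in\Gamma$, then $\mc{G}(\Gamma,\gvv)\equiv \mc{G}(\Gamma,\{G'_{v}\})$. I would approach this by establishing a Feferman--Vaught transfer along the graph product construction: since graph products are obtained from vertex groups by a uniform quotient of the free product and admit a canonical normal form (Green's theorem), one can try to rewrite each first-order formula on the graph product in a form that localizes quantifiers onto individual vertex groups, inducting on the complexity of tuples measured via their support in $\Gamma^{e}$. Alternatively, one could combine an ultraproduct argument with the natural inclusions $\prod_{\mathcal{U}} \mc{G}(\Gamma,\gvv)\hookrightarrow \mc{G}(\Gamma,\{\prod_\mathcal{U} G_v\})$, using the retractions $\pi^{\mc{G}(\Delta)}$ and the interpretability results of Section \ref{s: interpretation_of_base_structure_(first)} to show the inclusion is elementary.

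The second step, which I expect to be the hard one, is to show that $\mc{G}(\Gamma,\gvv)\equiv \mc{G}(\Gamma\setminus\{v\},\gvv|_{\Gamma\setminus\{v\}})$ whenever $v$ is a weak redundant vertex of $\Gamma$. The crucial structural observation is that, by definition of redundancy, there exist $w_{1},\dots,w_{k}\in\Gamma\setminus\{v\}$ with $\link{v}=\bigcap_{i}\st(w_{i})$, so the centralizer of $\mc{G}(\link{v})$ in $\mc{G}(\Gamma\setminus\{v\})$ contains elements that play first-orderwise the role of a generator of $G_{v}\equiv\mathbb{Z}$. The paradigm is Sela's theorem $\mathbb{Z}^{n}\ast F_{k}\equiv \mathbb{Z}^{n}\ast\mathbb{Z}$ \cite{sela2010diophantine}, which captures exactly this phenomenon at a leaf of a tree graph. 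To promote this to the level of graph products, one would need to develop an analogue of the JSJ decomposition and of Sela's shortening machinery relative to the subgraph product structure, and then show that any finite system of equations and inequations satisfied in $\mc{G}(\Gamma,\gvv)$ admits a ``formal solution'' in $\mc{G}(\Gamma\setminus\{v\},\gvv|_{\Gamma\setminus\{v\}})$ produced by substituting a carefully chosen hyperbolic element for the generator of $G_{v}$.

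The principal obstacle is precisely this second step: it requires a genuine extension of Sela's tower theory from free products to graph products, together with a classification of the shortening quotients over a graph product. The paper already accomplishes this in the Droms RAAG case by directly invoking Sela's free product results along a tree-like decomposition (see Theorem \ref{thm:eqdromsraags}), so a plausible roadmap is to iterate the argument: construct the amalgamation decomposition $\mc{G}(\Gamma)\cong\bigl(G_{v}\times\mc{G}(v^{\perp})\bigr)\ast_{\mc{G}(v^{\perp})}\mc{G}(\Gamma\setminus\{v\})$, verify that the edge group $\mc{G}(v^{\perp})$ is elementarily embedded in $\mc{G}(\Gamma\setminus\{v\})$, and then apply a generalized Sela JSJ argument on the Bass--Serre tree $T_{v}$ to obtain an elementary graph tower. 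The outcome should be a complete description, generalizing Theorem \ref{thm:eqdromsraags}, of all groups elementarily equivalent to a given $\mc{G}(\Gamma,\gvv)$ as \emph{elementary graph towers} over $\mc{G}(\Core(\Gamma,\gvv))$, from which the conjecture follows immediately.
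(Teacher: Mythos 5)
This statement is labelled as a conjecture in the paper and the authors do not prove it; your text is likewise a roadmap rather than a proof, and as it stands it contains two genuine gaps, each of which is itself an open problem that the paper explicitly flags as such.

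Your first step --- that $G_v\equiv G'_v$ for all $v$ implies $\mc{G}(\Gamma,\gvv)\equiv\mc{G}(\Gamma,\{G'_v\})$ --- is precisely the generalization of Vaught's conjecture to graph products, which the authors point out in the final section is known only for free products, by Sela's work in \cite{sela2010diophantine}. Neither of your two suggested mechanisms closes this. The Feferman--Vaught theorem and its standard extensions apply to direct and generalized products, where every element has a canonical tuple of coordinates and quantification decomposes coordinatewise; in a graph product an element is an arbitrary alternating word in the vertex groups, its length is unbounded, and there is no known syntactic procedure that localizes a quantifier over $\mc{G}$ onto the vertex groups (if there were, it would already prove Vaught's conjecture for free products without Sela's machinery). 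The ultraproduct route fares no better: the inclusion of $\mc{G}(\Gamma,\{\prod_{\mathcal U}G_v\})$ into $\prod_{\mathcal U}\mc{G}(\Gamma,\gvv)$ is far from surjective (the ultraproduct contains nonstandard words of infinite support), and showing it is elementary is exactly the content of the problem, not a tool for solving it. The interpretability results of Section \ref{s: interpretation_of_base_structure_(first)} recover the base structure \emph{from} the theory of $\mc{G}$; they do not let you reconstruct the theory of $\mc{G}$ from the theories of the vertex groups.

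Your second step --- removing a weak redundant vertex without changing the theory --- you correctly identify as the principal obstacle, but identifying it does not discharge it. Verifying that $\mc{G}(v^\perp)$ is elementarily embedded in $\mc{G}(\Gamma\setminus\{v\})$, classifying shortening quotients over a graph product, and producing formal solutions relative to the subgraph product structure together amount to transporting the entire Sela programme from free products to graph products; the paper only carries this out for Droms RAAGs (Theorem \ref{thm:eqdromsraags}), where the recursive structure reduces everything to genuine free products and direct products with $\mathbb Z$, so that Sela's existing theorems apply verbatim. For a general $\Gamma$ (already for $\Gamma$ a path of length $4$) no such reduction is available. So the proposal is a reasonable research plan, consistent with how the authors themselves frame the problem, but it is not a proof: both of its pillars are unproved, and the first is not even addressed by a workable argument.
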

	
	Notice that if the conjecture holds, then one would deduce that $\mathbb G(\Core(\Gamma, \gvv))$ is the smallest graph product {\rm(}with vertex groups satisfying a simple non-generic positive sentence{\rm)} in the elementary class.
	
	\subsubsection{Further conditions of the vertex groups}
	
	Consider the class $\mathcal{UDF}$ of groups which have unique direct product decomposition: if $G \simeq G_1 \times \cdots \times G_k \simeq H_1 \times\dots\times H_m$, then $k = m$ and $G_i \simeq H_i$ (after a possible re-ordering of factors).
	
	This allows us to ``reverse'' the process of going to the (almost) positive reduced graph and recover the extended core of a maximal graph $\Gamma$ for which $\mathcal G$ is a graph product, that is, we define the graph $\Gamma$ and associated vertex groups as follows: if $\Delta$ is an (almost) positive reduced graph, for each vertex $v\in \Delta$, we consider the decomposition of $G_v$ as direct product of $k_v$ factors $G_v=G_{v_1} \times \dots \times G_{v_{k_v}}$, the set of vertices of $\Gamma$ is defined as the union $\bigcup\limits_{v\in \Delta} \{v_1, \dots, v_{k_v}\}$; the vertices $v_1, \dots, k_v$ are a clique in $\Gamma$ and there is an edge $(v_i, w)$ in $\Gamma$ if and only if there is an edge $(v,w)$ in $\Delta$; we assign to each vertex $v_i$ in $\Gamma$ the group $G_{v_{i}}$ $i=1,\dots, k_v$. It is easy to check that the map $\phi:\Gamma \to \Delta$ that sends $v_1, \dots, v_{k_v}$ to $v$ is a full epimorphism and so the corresponding graph products are isomorphic, see Lemma \ref{lem: full is iso}. Therefore, we have the following
	
	\begin{lemma}
		Let $\mathcal G=\mc{G}(\Delta, \{G_v'\}_{v\in \Delta})$ be a graph product where $G_v' \in \mathcal{UDF}$. Then $\mathcal G=\mc{G}(\Gamma, \{D_v\}_{v\in \Gamma})$ is a graph product where $D_v$ are directly indecomposable. Furthermore, $\Gamma$ is maximal and unique with the above property.
	\end{lemma}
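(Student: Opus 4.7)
The plan is to promote the construction already sketched in the paragraph preceding the lemma into a proof, and to derive uniqueness and maximality from the $\mathcal{UDF}$ hypothesis combined with Lemma~\ref{lem: full is iso}.

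For existence, I would proceed as indicated in the preamble to the lemma. For each $v\in V(\Delta)$, invoke $G_v'\in\mathcal{UDF}$ to write $G_v'\simeq G_{v_1}\times\cdots\times G_{v_{k_v}}$ as an essentially unique direct product of directly indecomposable factors. Build $\Gamma$ by replacing each $v$ with a clique on the new vertices $v_1,\ldots,v_{k_v}$ and declaring $(v_i,w_j)\in E(\Gamma)$ iff either $v=w$ (clique edge) or $(v,w)\in E(\Delta)$. Setting $D_{v_i}:=G_{v_i}$, the projection $\phi:\Gamma\to\Delta$ with $\phi(v_i)=v$ is by construction a full graph epimorphism in the sense of Definition~\ref{defn:full morphism}: whenever $\phi(v_i)\neq \phi(w_j)$, edges lift and descend symmetrically through $\phi$. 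Lemma~\ref{lem: full is iso} then yields an isomorphism of graph products $\mc{G}(\Gamma,\{D_v\})\simeq \mc{G}(\Delta,\{G_v'\})=\mathcal G$, and each $D_v$ is directly indecomposable by construction.

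For uniqueness and maximality, suppose $\mathcal G\simeq \mc{G}(\Gamma'',\{D_v''\}_{v\in\Gamma''})$ is a second such presentation with $D_v''$ directly indecomposable. I would form the \emph{clique-collapse} $\Delta''$ of $\Gamma''$: partition $V(\Gamma'')$ by declaring $v\sim w$ when $\mathrm{star}(v)=\mathrm{star}(w)$ (so that $v\sim w$ forces the edge $(v,w)$ and the block of $\sim$ is a clique), and collapse each equivalence class to a single vertex whose associated group is the direct product of the corresponding $D_v''$. The quotient map $\Gamma''\twoheadrightarrow \Delta''$ is a full morphism, so by Lemma~\ref{lem: full is iso} it yields another graph product presentation of $\mathcal G$ with vertex groups that are finite direct products of indecomposables, hence lie in $\mathcal{UDF}$ (closure of $\mathcal{UDF}$ under finite direct products is immediate from the definition). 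The key matching step is then to argue $\Delta''\simeq \Delta$ with matching vertex groups; applying $\mathcal{UDF}$ vertex-by-vertex to this matching pins down $\Gamma''\simeq\Gamma$. Maximality is then automatic, since any further refinement of $(\Gamma,\{D_v\})$ as a graph product would require splitting some $D_v$ as a nontrivial direct product, contradicting indecomposability.

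The main obstacle I anticipate is the matching step $\Delta''\simeq\Delta$: verifying that the clique-collapse of an arbitrary indecomposable presentation really does recover $(\Delta,\{G_v'\})$ up to isomorphism, rather than some alternative join-reduced presentation. This looks like a Krull–Schmidt-type rigidity statement for graph products of $\mathcal{UDF}$-groups. The cleanest route I see is to observe that maximal cliques of any directly-indecomposable graph product presentation detect the maximal direct-product subgroups of $\mathcal G$ containing a given vertex group (up to conjugacy), and then to use the defining property of $\mathcal{UDF}$ to promote an isomorphism of these clique-subgroups to the desired identification of $\Delta''$ with $\Delta$.
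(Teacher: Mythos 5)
Your existence argument coincides exactly with the paper's: the lemma is stated without a proof environment, and the paper's entire justification is the construction in the paragraph immediately preceding it (expand each $v\in\Delta$ into a clique on $k_v$ vertices labelled by the directly indecomposable factors of $G_v'$, observe that the collapse map $\Gamma\to\Delta$ is a full epimorphism, and invoke Lemma~\ref{lem: full is iso}). That part of your proposal is correct and identical in approach.

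For uniqueness and maximality the paper supplies no argument at all, and the gap you flag in your own sketch is genuine. The matching step $\Delta''\simeq\Delta$ is precisely a Krull--Schmidt statement for graph product decompositions, and it does not follow from $\mathcal{UDF}$ alone: membership in $\mathcal{UDF}$ controls direct product decompositions of the individual vertex groups, not graph product decompositions of the ambient group $\mathcal G$. The paper's own mechanism for matching two reduced presentations of the same group is Corollary~\ref{cor:rigidity isomoprhism}, which requires the vertex groups to satisfy nontrivial (almost) positive sentences --- a hypothesis absent from this lemma and only reinstated in the subsequent Corollary~\ref{cor:iso graph}. (A secondary soft spot: your claim that $\mathcal{UDF}$ is closed under finite direct products ``immediately from the definition'' is not immediate, since an alternative decomposition of $G_1\times G_2$ into indecomposables need not restrict to decompositions of $G_1$ and $G_2$.) The reading under which the lemma is unproblematic, and almost certainly the intended one, is uniqueness relative to the given presentation $(\Delta,\{G_v'\})$: since each $G_v'\in\mathcal{UDF}$ has an essentially unique decomposition into directly indecomposable factors, the clique replacing $v$ and its labels are determined up to isomorphism, and no further refinement is possible because the $D_v$ are indecomposable. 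If you want the stronger, presentation-independent uniqueness your proposal aims at, you must either import the rigidity corollary (and hence its positive-theory hypotheses) or prove the Krull--Schmidt matching directly, which your sketch does not do.
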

	
	In this context, we obtain the following result:
	
	\begin{cor}\label{cor:iso graph}
		
		Let $\mathcal G=\mc{G}(\Gamma, \{D_v\})$ be a graph product where $\Gamma$ is finite, $D_v$ satisfies a simple non-generic almost positive sentence and $D_v\in \mathcal{UDF}$ is directly indecomposable. Then $\Core(\Gamma)$ is interpretable in $\mathcal G$ and if a graph product $\mathcal G'=\mc{G}(\Delta', \gvv)$ in the class is elementarily equivalent to $\mathcal G$, then $\mathcal G'=\mc{G}(\Gamma; D_v')$, there is an isomorphism  $\phi: \Core(\Gamma) \to \Core(\Gamma')$ and $D_v\equiv D_{\phi(v)}'$.
	\end{cor}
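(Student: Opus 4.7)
The plan is to combine the uniqueness of direct product decompositions provided by $\mathcal{UDF}$ with the main interpretability result of Corollary \ref{cor:main interpretation}, applied after passing to the almost positive reduction.

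First I would establish interpretability of $\Core(\Gamma)$ in $\mathcal G$. Using Corollary \ref{c: collapsed almost positive}, rewrite $\mathcal G$ as $\mc{G}(\tilde\Gamma,\{\tilde D_v\})$, where $\tilde\Gamma$ is almost positive reduced and each $\tilde D_v$ still satisfies a simple non-generic almost positive sentence. By construction, each $\tilde D_v$ is a direct product of some collection of the original, directly indecomposable groups $D_w$; since all $D_w$ lie in $\mathcal{UDF}$, so does $\tilde D_v$, and the decomposition $\tilde D_v = \prod_{w} D_w$ is the unique decomposition of $\tilde D_v$ into directly indecomposable factors. Corollary \ref{cor:main interpretation} then uniformly interprets $\Core(\tilde\Gamma,\{\tilde D_v\})$ inside $\mathcal G$. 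To recover $\Core(\Gamma)$, I would refine each vertex $v$ of $\Core(\tilde\Gamma)$ into the clique consisting of the directly indecomposable factors of $\tilde D_v$, with inter-fiber edges inherited from $\Core(\tilde\Gamma)$. This refinement is canonical because $\tilde D_v \in \mathcal{UDF}$, and the required first-order data is already available through the finer interpretation of $\Gamma^d$ given by Theorem \ref{thm: almost interpretation of d-graph} and Lemma \ref{lem: definability of Gamma^e}, which isolates the singular vertex subgroups.

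Second, suppose $\mathcal G \equiv \mathcal G'$ with $\mathcal G' = \mc{G}(\Gamma',\{D'_v\})$ in the same class. Passing to the almost positive reductions $(\tilde\Gamma,\{\tilde D_v\})$ and $(\tilde\Gamma',\{\tilde D'_v\})$, Theorem \ref{thm:elementary equivalence} yields a group-labelled graph isomorphism $\psi \colon \Core(\tilde\Gamma,\{\tilde D_v\}) \to \Core(\tilde\Gamma',\{\tilde D'_v\})$ with $\tilde D_v \equiv \tilde D'_{\psi(v)}$ for all $v$. Combined with the first part, this produces a fibered matching between the vertex sets of $\Core(\Gamma)$ and $\Core(\Gamma')$; I would then assemble $\phi$ from the fiberwise bijections of directly indecomposable factors and verify $D_v \equiv D'_{\phi(v)}$ from the uniform interpretations obtained above.

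The main obstacle is transferring the elementary equivalence $\tilde D_v \equiv \tilde D'_{\psi(v)}$ into a bijection between the directly indecomposable factors of the two sides that also preserves elementary equivalence. The property $\mathcal{UDF}$ alone only guarantees uniqueness up to isomorphism within a fixed group, and a priori two elementarily equivalent $\mathcal{UDF}$-products could have different numbers of directly indecomposable factors. The resolution I plan to use is interpretability: since the finer graph $\Gamma^d$ and its singular vertex subgroups are uniformly interpretable across the class (Theorem \ref{thm: almost interpretation of d-graph} and Lemma \ref{lem: definability of Gamma^e}), the directly indecomposable factors of each $\tilde D_v$ are uniformly interpreted, and so their matching across $\mathcal G \equiv \mathcal G'$ is a direct consequence of the elementary equivalence of the structures together with the uniqueness clause in $\mathcal{UDF}$ applied inside each $\tilde D_v$ and $\tilde D'_{\psi(v)}$.
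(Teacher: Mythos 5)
Your overall route -- pass to the almost positive reduction via Corollary \ref{c: collapsed almost positive}, apply Corollary \ref{cor:main interpretation} and Theorem \ref{thm:elementary equivalence} to the reduced decomposition, then ``un-collapse'' each reduced vertex group into its directly indecomposable factors using $\mathcal{UDF}$ -- is exactly the route the paper takes (the paper gives essentially no separate proof of this corollary; it is presented as following from the preceding ``reversal'' lemma together with Corollary \ref{cor:rigidity isomoprhism} and Theorem \ref{thm:elementary equivalence}).

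However, the obstacle you flag in your last paragraph is real, and your proposed resolution does not close it. Theorem \ref{thm: almost interpretation of d-graph} and Lemma \ref{lem: definability of Gamma^e} uniformly interpret the subgroups $E_v$ for $v\in\Gamma^e$ of the \emph{reduced} graph product structure -- that is, the groups $\tilde D_v=\prod_w D_w$ -- not the individual directly indecomposable factors $D_w$ sitting inside them. The entire definability machinery of Sections \ref{sec:8}--\ref{s: interpretation_of_base_structure_(first)} is set up relative to an (almost) positive reduced graph, and it never isolates an internal direct factor of a singular vertex group; indeed such factors are not first-order definable subgroups in general. Consequently ``the directly indecomposable factors of each $\tilde D_v$ are uniformly interpreted'' is false, and the matching of factors across $\mathcal G\equiv\mathcal G'$ cannot be read off from the interpretation. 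What remains is only the isomorphism-level uniqueness supplied by $\mathcal{UDF}$, which does not transfer along elementary equivalence: two elementarily equivalent $\mathcal{UDF}$-groups can have different numbers of directly indecomposable factors. The paper's own example $\mathbb Q\times\mathbb Q\equiv\mathbb Q$ (used precisely to show that nothing finer than the positive reduced graph is interpretable) realizes this failure inside the hypotheses of the corollary: $\mathbb Q$ is directly indecomposable, lies in $\mathcal{UDF}$, and is abelian, yet the two-vertex complete graph and the one-vertex graph have non-isomorphic cores. So the step from $\tilde D_v\equiv\tilde D'_{\psi(v)}$ to a fiberwise bijection of indecomposable factors preserving elementary equivalence is a genuine gap in your argument (and, one should say, a delicate point in the statement itself); it would need an additional hypothesis or argument ruling out $G\equiv G\times G$ phenomena among the vertex groups, rather than an appeal to the interpretability results.
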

	
	\subsubsection{Applications to RAAGs}
	
	\def\GG{\mathbb G}
	
	Let $\GG=\GG(\Gamma)$ be a RAAG. Then $\GG$ is a graph product with infinite cyclic vertex groups and so vertex groups satisfy a nontrivial positive sentence. Let $\tilde \Gamma$ be a positive reduced graph, then $\GG=\mathcal G(\tilde \Gamma, \gvv)$ where $G_v$ are free abelian groups. Notice that free abelian groups belong to $\mathcal{UDF}$. Therefore, from Corollary \ref{cor:iso graph} applied to RAAGs we have:
	
	\begin{cor}
		Let $\GG(\Gamma)$ and $\GG(\Delta) $ be RAAGs. Then, if $\GG(\Gamma) \equiv \GG(\Delta)$, then $\Core(\Gamma)\simeq \Core(\Delta)$.
	\end{cor}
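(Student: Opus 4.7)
My plan is to reduce the statement to Theorem \ref{thm:elementary equivalence}. First, I would rewrite each RAAG $\GG(\Gamma)$ as a graph product $\mathcal{G}(\tilde\Gamma,\{G_v\})$ with positively reduced underlying graph and free abelian vertex groups, by applying Lemma \ref{lem: graph is positive reduced}: concretely, the positive reduction identifies each maximal clique of commuting twin vertices of $\Gamma$ into a single vertex carrying a free abelian label of rank equal to the clique size. Writing $\GG(\Delta)=\mathcal{G}(\tilde\Delta,\{H_u\})$ similarly, the two graph products lie in a common class meeting the hypotheses of Theorem \ref{thm:elementary equivalence}, because any free abelian group satisfies the nontrivial positive sentence $\forall x\,\forall y\ [x,y]=1$.

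Applying the theorem to $\GG(\Gamma)\equiv\GG(\Delta)$, I obtain a graph isomorphism $f\colon\Core(\tilde\Gamma,\{G_v\})\to\Core(\tilde\Delta,\{H_u\})$ with $G_v\equiv H_{f(v)}$ for every vertex $v$. Since finitely generated free abelian groups are elementarily equivalent if and only if they have the same rank, $f$ preserves ranks strictly: if $G_v\simeq\mathbb{Z}^{k_v}$ then $H_{f(v)}\simeq\mathbb{Z}^{k_v}$ as well, so $f$ is an isomorphism of group-labeled graphs in the strict sense.

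The remaining step is to show that $\Core(\Gamma)$ is recovered from $\Core(\tilde\Gamma,\{G_v\})$ by a rank-preserving blow-up in which each vertex $v$ with label $\mathbb{Z}^{k_v}$ is replaced by a clique on $k_v$ vertices inheriting the adjacencies of $v$. I expect the main obstacle to be verifying the two combinatorial claims behind this description: (a) if $k_v\ge 2$, every member of the twin class in $\Gamma$ lying above $v$ is non-weak, hence belongs to $\Core(\Gamma)$; the key point is that any candidate witness for weakness must lie outside the star of some twin, and then, because all twins share the same star, outside the star of every other twin, so the resulting intersection of stars cannot coincide with the link of a single twin (which contains the other $k_v-1$ twins); (b) if $k_v=1$, the unique preimage is weak (respectively redundant) in $\Gamma$ if and only if $v$ is weak (respectively redundant) in $\tilde\Gamma$, which follows from a clean translation of stars and links through the quotient $\Gamma\twoheadrightarrow\tilde\Gamma$. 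Granted (a) and (b), the blow-up commutes with rank-preserving isomorphisms of group-labeled graphs, so applying it on both sides of $f$ yields the desired isomorphism $\Core(\Gamma)\simeq\Core(\Delta)$.
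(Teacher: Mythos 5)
Your proposal is correct and follows essentially the same route as the paper: the paper also passes to the positive reduced graph $\tilde\Gamma$ with free abelian vertex groups (Lemma \ref{lem: graph is positive reduced}), invokes the main interpretability/elementary-equivalence result, and then uses the unique direct-factorization of free abelian groups to blow each $\mathbb{Z}^{k_v}$-labelled vertex back up into a clique of $k_v$ twins (this is exactly the $\mathcal{UDF}$ mechanism behind Corollary \ref{cor:iso graph}, which the paper cites for this corollary). Your explicit checks (a) and (b) that weakness and redundancy are compatible with the collapse $\Gamma\twoheadrightarrow\tilde\Gamma$ supply the combinatorial details the paper leaves implicit in that citation, and they are sound.
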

	
	\begin{conj}
		We conjecture that the converse also holds:
		
		If $\Core(\Gamma) \simeq \Core(\Delta)$, then $\GG(\Gamma)\equiv \GG(\Delta)$.
		
		Furhtermore, $\Core(\Gamma)$ is the RAAG with the smallest graph in its elementary class, that is if $\mathbb G(\Delta) \equiv \mathbb G(\Core(\Gamma))$, then $\Core(\Gamma) < \Delta$.
		
		If $\mathbb G(\Gamma) \equiv \mathbb G(\Delta)$, then $\Delta$ is obtained from $\Core(\Gamma)$ by adding redundant vertices.
	\end{conj}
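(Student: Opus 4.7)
The plan is to reduce all three assertions of the conjecture to a single \emph{reduction lemma}: if $v$ is a redundant vertex of a reduced graph $\Gamma$ with $\mathbb Z$ vertex groups, then $\mathbb G(\Gamma)\equiv\mathbb G(\Gamma\setminus\{v\})$. Granting this, iterative removal of redundant vertices (which terminates by finiteness of $\Gamma$ and yields a well-defined graph by Lemma \ref{lem:ecore}) gives $\mathbb G(\Gamma)\equiv\mathbb G(\Core(\Gamma))$ for every RAAG in the class. The three assertions then follow at once: for the first, if $\Core(\Gamma)\simeq\Core(\Delta)$ then $\mathbb G(\Gamma)\equiv\mathbb G(\Core(\Gamma))\simeq\mathbb G(\Core(\Delta))\equiv\mathbb G(\Delta)$; for the second, if $\mathbb G(\Delta)\equiv\mathbb G(\Core(\Gamma))$ then Corollary \ref{cor:main interpretation} forces $\Core(\Delta)\simeq\Core(\Gamma)$, and since $\Core(\Delta)$ is an induced subgraph of $\Delta$ we conclude that $\Delta$ contains a copy of $\Core(\Gamma)$; for the third, elementarily equivalent RAAGs have isomorphic cores, and by construction the vertices of $\Delta$ outside $\Core(\Delta)$ are precisely those stripped in the core-reduction process, each of which is redundant at the stage it is removed.

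For the reduction lemma itself the aim is to exhibit $\mathbb G(\Gamma\setminus\{v\})$ as an elementary subgroup of $\mathbb G(\Gamma)$ via an explicit retraction. Writing $\mathbb G(\Gamma)=\mathbb G(\Gamma\setminus\{v\})\ast_{\mathbb G(\link v)}(\langle v\rangle\times\mathbb G(\link v))$, a witness $W=\{w_1,\dots,w_k\}$ for the redundancy of $v$ yields, by Lemma \ref{l: intersection_subgroups}, the identity $\mathbb G(\link v)=\bigcap_i C(w_i)$. Consequently any element of the form $u=w_1^{N_1}\cdots w_k^{N_k}$ centralises the amalgamated subgroup, so the assignment $v\mapsto u$ defines a retraction $r\colon\mathbb G(\Gamma)\twoheadrightarrow\mathbb G(\Gamma\setminus\{v\})$ fixing $\mathbb G(\Gamma\setminus\{v\})$ pointwise. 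The goal would be to show that for generic choices of the exponents $N_i$ the family of retractions $(r_N)_N$ forms a \emph{test sequence} in the sense of Sela, placing the amalgamation within the framework of elementary floors / hyperbolic towers and thereby witnessing $\mathbb G(\Gamma\setminus\{v\})\preceq\mathbb G(\Gamma)$.

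The hard part will be verifying the Sela-tower conditions for this amalgamation beyond the Droms setting, where the analogous question has already been treated through the recursive construction of elementary graph towers in Theorem \ref{thm:eqdromsraags}. Two difficulties are to be expected. First, general RAAGs are far from hyperbolic and contain large abelian subgroups which obstruct a direct import of Sela's machinery; one would need a JSJ-type decomposition of $\mathbb G(\Gamma)$ relative to $\mathbb G(\link v)$ together with a quantifier-reduction procedure for systems of equations over RAAGs. Second, one would need a Merzlyakov--Makanin-style theorem adapted to RAAGs (extending the formal-solution results of Section \ref{sec: generalized positive theory}) in order to propagate arbitrary first-order properties of tuples in $\mathbb G(\Gamma\setminus\{v\})$ through the retraction $r$. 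A plausible route is to combine the small-cancellation machinery of Section \ref{sec: from non-compatible to small cancellation} with a relative JSJ-decomposition to express $\mathbb G(\Gamma)$ as a sequence of graph-floor extensions over $\mathbb G(\Gamma\setminus\{v\})$, each of which can be verified to be elementary by Kharlampovich--Myasnikov--Sela-style arguments.
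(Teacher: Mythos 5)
The statement you are addressing is stated in the paper as a \emph{conjecture}: the authors give no proof of it, and the only case they settle is that of Droms RAAGs (Theorem \ref{thm:eqdromsraags}), where Sela's classification of groups elementarily equivalent to free products can be invoked. So there is no proof in the paper to compare yours against, and your text should be judged as an attempted resolution of an open problem.

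As such, it is not a proof. Your reduction of all three assertions to the single lemma ``if $v$ is redundant then $\mathbb G(\Gamma)\equiv\mathbb G(\Gamma\setminus\{v\})$'' is a reasonable organisation (and indeed the second and third assertions already follow from the \emph{proven} direction, Corollary \ref{cor:main interpretation} together with $\Core(\Core(\Gamma))=\Core(\Gamma)$ and the fact that $\Core(\Delta)$ is an induced subgraph of $\Delta$). But the reduction lemma is precisely the open content of the conjecture, and your argument for it stops exactly where the difficulty begins. The retraction $r\colon v\mapsto w_1^{N_1}\cdots w_k^{N_k}$ is well defined (each $w_i$ centralises $\mathbb G(\link v)$ since $\link v\subseteq\lk(w_i)$), and it does exhibit $\mathbb G(\Gamma\setminus\{v\})$ as a retract; but a retract need not be an elementary subgroup, and the claim that the family $(r_N)_N$ is a ``test sequence'' witnessing an elementary floor requires a Merzlyakov-type implicit function theorem and a tower/elementary-floor theory over arbitrary RAAG coefficient groups. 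You acknowledge this yourself in the last paragraph, but acknowledging that the key step ``would need'' machinery that does not currently exist is not a proof of that step: outside the Droms case (equivalently, outside the regime where the relevant splittings are free products and Sela's results apply verbatim), no such Merzlyakov theorem or quantifier elimination over RAAGs is available, and the small-cancellation results of Sections \ref{sec:small cancellation and formal solutions}--\ref{sec: from non-compatible to small cancellation} only control \emph{positive} (and almost positive) sentences, not arbitrary first-order sentences as your argument requires. Note also that even the base case of your induction is nontrivial: already for $\Gamma$ a path of length $\geq 4$ with an extra isolated (redundant) vertex, the claim $\mathbb G(\Gamma)\equiv\mathbb G(\Gamma)\ast\mathbb Z$ is exactly an instance the paper lists as a consequence of the conjecture for non-Droms graphs, not as a known fact. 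The proposal is therefore a plausible strategy, but the statement remains open.
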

	
	The above conjecture would state the following concrete results.
	
	\begin{example}
		\,
		\begin{itemize}
			\item Since the core of a non-abelian free group is the edgeless graph with 2 vertices, the conjecture would state that any two non-abelian free groups are elementarily equivalent. This is indeed the case as proven by Sela and Kharlampovich-Miasnikov.
			
			\item If $\mathbb G(\Gamma)$ is a RAAG with a nontrivial free product decomposition, then the conjecture asserts that $\mathbb G(\Gamma) \equiv \mathbb G(\Gamma) \ast F_k$, where $F_k$ is a free group. This result follows from Sela's work on the elementary theory of free products.
			
			\item More generally, if $\Gamma$ is a path with diameter greater than 2, then conjecture states that $\mathbb G(\Gamma) \equiv \mathbb G(\Gamma) \ast F_k$, where $F_k$ is a free group.
			
			\item Let $T$ be a tree, $\minCore(T)$ is the spanned by the non-leave vertices of $T$ and the core $T_0=\Core(\Gamma)$ is obtained from $\minCore(\Gamma)$ by adding precisely one leaf at each leaf of $\minCore(\Gamma)$ or 2 leaves if $\minCore(\Gamma)$ is one vertex {\rm(}that is if $T$ has diameter 2{\rm)}.
			By the conjecture, if $\mathbb G(\Delta) \equiv \mathbb G(T)$, then $\Delta$ is obtained from $T_0$ by adding leaves to any vertex from $\minCore(T)$ and adding isolated vertices.
			
			\item More generally, if $\mathbb G(\Gamma)$ is 2-dimensional, that is, $\Gamma$ does not contain triangles, $\Gamma$ coincides with its core and has diameter more than 2, then the conjecture states that $\mathbb G(\Delta) \equiv \mathbb G(\Gamma)$ if and only if $\Delta$ is obtained from $\Gamma$ by adding leaves in vertices of $\minCore(\Gamma)$ and isolated vertices.
		\end{itemize}
	\end{example}
	
	We next observe that having the same $\Core$ for RAAGs is sufficient to assure that they have the same universal theory.
	
	\begin{lemma} \label{lem:univequiv}
		If the cores of $\Gamma$ and $\Delta$ are isomorphic, $\Core(\Gamma)\simeq \Core(\Delta)$, then $\mathbb{G}(\Gamma)$ and $\mathbb{G}(\Delta)$ are universally equivalent.
	\end{lemma}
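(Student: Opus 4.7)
\medskip

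\noindent\textbf{Proof plan.} Since universal equivalence is transitive and, by the definition of $\Core$, the graph $\Gamma$ is obtained from $\Core(\Gamma)$ by successively reinserting redundant vertices, it suffices to prove the following one-vertex statement: if $\Gamma'$ is obtained from $\Gamma$ by removing a single redundant vertex $v$, with witnessing set $W=\{w_1,\dots,w_k\}\subseteq \Gamma'$ satisfying $\bigcap_{i=1}^k \st(w_i) = \link{v}$ and $w_i \neq v$, then $\mathbb{G}(\Gamma) \equiv_\forall \mathbb{G}(\Gamma')$. The inclusion $\Gamma' \hookrightarrow \Gamma$ of an induced subgraph gives a canonical embedding (in fact a retract) $\mathbb{G}(\Gamma')\hookrightarrow \mathbb{G}(\Gamma)$, so the easy direction $\Th_\forall(\mathbb{G}(\Gamma))\subseteq \Th_\forall(\mathbb{G}(\Gamma'))$ is immediate.

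\medskip

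\noindent For the reverse inclusion, I would construct a discriminating family of homomorphisms $\phi_n\colon \mathbb{G}(\Gamma)\to \mathbb{G}(\Gamma')$ defined as the identity on $\mathbb{G}(\Gamma')$ together with $\phi_n(v)= x^n$, where $x=w_1 w_2\cdots w_k$. This is a well-defined group homomorphism: by the witness condition, each $u\in \link{v}$ is adjacent to every $w_i$, hence commutes with $x$, so $\phi_n(v)$ centralizes $\mathbb{G}(\link{v})$, which is the only relation involving $v$ that must be preserved. Moreover, using the decomposition
\begin{equation*}
\mathbb{G}(\Gamma) = \mathbb{G}(\Gamma') *_{\mathbb{G}(\link{v})} \bigl(\mathbb{G}(\link{v})\times \langle v \rangle\bigr),
\end{equation*}
one checks that $x$ has infinite order in $\mathbb{G}(\Gamma')$ and $\langle x\rangle\cap\mathbb{G}(\link{v})=\{1\}$, because $x$ belongs to $\mathbb{G}(\{w_1,\dots,w_k\})$ while $w_i\notin\link{v}$ for all $i$.

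\medskip

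\noindent The core of the argument is to verify that $\{\phi_n\}_{n\ge 1}$ discriminates $\mathbb{G}(\Gamma)$ into $\mathbb{G}(\Gamma')$: for every finite set $F\subseteq \mathbb{G}(\Gamma)\setminus\{1\}$, there exists $n$ such that $\phi_n(g)\neq 1$ for all $g\in F$. The plan is to put each $g\in F$ into reduced normal form with respect to the amalgamated decomposition above, so that $g$ alternates between nontrivial cosets of $\mathbb{G}(\link{v})$ in $\mathbb{G}(\Gamma')$ and in $\mathbb{G}(\link{v})\times\langle v\rangle$. Applying $\phi_n$ replaces each occurrence of $v^{\pm a}$ by $x^{\pm an}$; for $n$ large enough (bounded in terms of the syllable lengths in the normal forms appearing in $F$), the blocks $x^{\pm an}$ cannot be absorbed by the intervening factors in $\mathbb{G}(\Gamma')$. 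Since each $g\in F$ yields only finitely many values of $n$ for which such absorption could occur, a common good $n$ exists for the whole finite set $F$.

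\medskip

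\noindent Discrimination yields $\Th_\exists(\mathbb{G}(\Gamma))\subseteq \Th_\exists(\mathbb{G}(\Gamma'))$: an existential sentence satisfied in $\mathbb{G}(\Gamma)$ by a tuple $\bar a$ is satisfied in $\mathbb{G}(\Gamma')$ by $\phi_n(\bar a)$ for any $n$ discriminating the finite set of nontrivial subwords of the quantifier-free matrix evaluated at $\bar a$. Dualising gives $\Th_\forall(\mathbb{G}(\Gamma'))\subseteq \Th_\forall(\mathbb{G}(\Gamma))$, completing the proof. The main obstacle is the discrimination step, i.e.\ controlling cancellation in the image; the cleanest route is probably to analyse the action of $\mathbb{G}(\Gamma')$ on the Bass--Serre tree associated with viewing $\mathbb{G}(\Gamma')$ as an HNN-like extension relative to the cyclic subgroup $\langle x\rangle$, and to apply the stability and small-cancellation tools (for instance Corollary~\ref{c: stable axis}) that have already been developed in the paper to rule out full cancellation of the $x^n$-blocks.
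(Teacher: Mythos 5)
Your overall strategy (reduce to removing one redundant vertex at a time, then discriminate $\mathbb{G}(\Gamma)$ into the retract $\mathbb{G}(\Gamma')$) is a legitimate route to universal equivalence and is different from the paper's, which instead embeds $\Gamma$ into the extension graph $\Core(\Gamma)^e$ by realising each redundant vertex as a well-chosen conjugate $w_{1}^{u}$ of a witness (produced by the small-cancellation machinery of Corollary~\ref{c: small cancellation}) and then invokes a tame-morphism criterion from their earlier work: $\Gamma<\Delta^{e}$ and $\Delta<\Gamma^{e}$ imply $\mathbb{G}(\Gamma)\equiv_{\forall}\mathbb{G}(\Delta)$. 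The easy inclusion of universal theories via the retraction is fine, as is the well-definedness of your maps.

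However, the discrimination step — which you correctly flag as the crux — fails for the specific family you propose. Sending $v$ to powers $x^{n}$ of the \emph{fixed} element $x=w_{1}\cdots w_{k}$ forces $\phi_{n}(v)$ to commute with $x$ for every $n$, whereas $v$ does not commute with $x$ in $\mathbb{G}(\Gamma)$ (the $w_{i}$ do not all lie in $\link{v}$). Concretely, take $\Gamma$ to be three isolated vertices $v,w_{1},w_{2}$, so $v$ is redundant with witness $\{w_{1},w_{2}\}$, $\mathbb{G}(\Gamma)=F_{3}$ and $\mathbb{G}(\Gamma')=F_{2}$: the element $g=[v,w_{1}w_{2}]$ is nontrivial, yet $\phi_{n}(g)=[(w_{1}w_{2})^{n},w_{1}w_{2}]=1$ for every $n$, so no member of your family separates $g$ from the identity and the family is not discriminating. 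The problem is structural, not a matter of taking $n$ large: the centraliser of $x^{n}$ in $\mathbb{G}(\Gamma')$ always contains the root of $x$, which is strictly larger than what the centraliser of $v$ permits. To repair the argument you must let the \emph{image} of $v$ range over a genuinely varying sequence of elements whose centralisers shrink to $\mathbb{G}(\link{v})$ in the limit — for instance the conjugates $w_{1}^{u}$ with $\supp(w_{1}^{u})=\{w_{1},\dots,w_{k}\}$ supplied by Corollary~\ref{c: small cancellation}, which is exactly the device the paper uses; powers of a single element cannot do the job.
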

	\begin{proof}
		
		Notice that if $\Gamma < \Delta^e$ and $\Delta < \Gamma^e$, then there is a tame morphism from $\GG(\Gamma) \to \GG(\Delta)$ and from $\GG(\Delta) \to \GG(\Gamma)$ and so from \cite[Theorem 4.13]{2015casals}, we have that $\GG(\Gamma) \equiv_\forall \GG(\Delta)$.
		
		In order to establish the lemma, it suffices to show that $\Gamma$ embeds into the extension graph $\Core(\Gamma)^e$ of $\Core(\Gamma)$. Indeed, as then $\Gamma < \Core(\Gamma)^e \simeq \Core(\Delta)^e < \Delta^e$ and similarly, $\Delta < \Gamma^e$.

		Let $v_1, \dots, v_l$ be all the redundant vertices of  $\Gamma$ and let $W_i=\{w_{i,j}$, $i=1,\dots, l$, $j=1,\dots, k_l\}$ be the vertices of $\Core(\Gamma)$ so that $w_i\notin\st(v_i)$ and $\cap_{j=1,\dots,k_i}\st(w_{i,j}) = \link{v_i}$, $i=1,\dots, l$, that is $W_i$ is a witness for $v_i$.
		
		We identify vertices and generators of the RAAG. Notice that $w_{i1}, w_{i2}\dots w_{ik_l}$ do not commute as otherwise $w_{i1}\in \lk(v_i)$. Therefore, as in a RAAG two non-commuting elements generate a free group, they have an irreducible action in a tree. Using for instance Corollary \ref{c: small cancellation}, we can find words $u_i$ such that $\ff(w_{i1}^{u_i})=\{\mathcal G(w_{i1}, \dots, w_{ik_i})\}$ and $w_{i1}^{u_i} \ne w_{j1}^{u_j}$ for all $i\ne j \in \{1, \dots, l\}$. Consider the subgraph of $\Core(\Gamma)^e$ spanned by $\Core(\Gamma)$ and $w_{i1}^{u_i}$. It follows from the property of $u_i$ that there is an edge of the form $(u,w_{i1}^{u_i}) \in E(\Gamma^e)$, where $u$ is a vertex of $\Core(\Gamma)$ if and only if there is an edge $(u,v_i)$ in $\Gamma$. Therefore, $\Gamma < \Core(\Gamma)^e$.
	\end{proof}

	\subsection{Rigidity results}
	
	The main consequence of our results is the rigidity of some classes of groups up to elementary equivalence and isomorphism.
	
	\subsubsection{Isomorphism rigidity of graph products}
	
	We now establish a result on isomorphic rigidity for the appropriate class of graph products.

	\begin{cor}\label{cor:rigidity isomoprhism}
		Let $\Gamma, \Delta$ be two finite {\rm(}almost{\rm)} positive reduced graphs and $G_v, G_v'$ be groups with nontrivial {\rm(}almost{\rm)} positive theory. Then, $\mathcal G(\Gamma, \gvv)\simeq \mathcal G(\Delta, \{G_v'\}_{v\in \Delta})$ if and only if there is a graph isomorphism $f:\Gamma \to \Delta$ and $G_v\simeq G_{f(v)}'$.
	\end{cor}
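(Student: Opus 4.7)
The forward direction is immediate: any labeled graph isomorphism $f\colon\Gamma\to\Delta$ with $G_v\simeq G'_{f(v)}$ extends, by the universal property of graph products, to a group isomorphism $\mc{G}(\Gamma,\gvv)\simeq\mc{G}(\Delta,\{G'_w\})$.

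For the converse, given an isomorphism $\phi\colon\mc{G}(\Gamma,\gvv)\to\mc{G}(\Delta,\{G'_w\})$, the plan is first to exploit that isomorphism implies elementary equivalence, so that Theorem \ref{thm:elementary equivalence} already yields a labeled graph isomorphism $\ECore(\Gamma,\gvv)\simeq\ECore(\Delta,\{G'_w\})$ restricting to one of the cores. Since the interpretation behind that theorem (Corollary \ref{cor:main interpretation} and Theorem \ref{thm: almost interpretation of d-graph}) is uniform and parameter-free, this isomorphism is in fact induced by $\phi$ itself via the pull-back of the definable families of triples $(U_v,E_v\times\bar{U}_v,E_v\times E_v^\perp)$, and more generally $\phi$ induces a labeled graph isomorphism $\Gamma^d/\mc{G}\to\Delta^d/\mc{G}'$.

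The main task is then to upgrade this to a labeled isomorphism of the full graphs $\Gamma\simeq\Delta$, i.e.\ to recognize $\Gamma^e/\mc{G}=\Gamma$ as a distinguished subset of $\Gamma^d/\mc{G}$ preserved by $\phi$. For vertex groups not elementarily equivalent to $\mathbb{Z}$ or $D_\infty$, being singular is detected by first-order conditions; the delicate cases are cyclic subgroups elementarily equivalent to $\mathbb{Z}$ and dihedral ones elementarily equivalent to $D_\infty$, where no purely first-order argument distinguishes singular vertex groups from non-singular cyclic or dihedral subgroups. The hard part is therefore to establish, under the (almost) positive reduced hypothesis on both $\Gamma$ and $\Delta$, that the collection of conjugacy classes of singular vertex groups is an isomorphism invariant of $\mc{G}$: this is the isomorphism-level analogue of the uniqueness of the (almost) positive reduced decomposition in the vein of Lemma \ref{lem: graph is positive reduced}, and it forces $\phi$ to send singular vertex groups to singular vertex groups, thus descending to a labeled bijection $\Gamma\to\Delta$ that is a graph isomorphism since it preserves commutation. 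I expect this uniqueness to be the crux, to be tackled by combining the abelianization-type invariants of $\mc{G}$ (which pin down the total number and isomorphism types of vertex groups, at least in favourable cases like RAAGs) with the core-preservation already obtained, in order to match up the redundant weak vertices on both sides.
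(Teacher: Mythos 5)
Your overall architecture matches the paper's: the forward direction is immediate, and for the converse one passes through elementary equivalence and the uniform interpretability results to get that $\phi$ preserves the definable families, hence induces a labelled isomorphism $\minCore(\Gamma,\gvv)\to\minCore(\Delta,\{G_v'\})$ (after composing with an inner automorphism). You have also correctly located the crux in the weak vertices. However, the way you propose to resolve that crux contains a genuine error and a genuine gap. The intermediate statement you aim for --- that the collection of conjugacy classes of singular vertex groups is an isomorphism invariant of $\mc{G}$, forcing $\phi$ to send singular vertex groups to singular vertex groups --- is false already for $F_2=\mathbb{Z}\ast\mathbb{Z}$ (whose edgeless two-vertex graph is positive reduced): an automorphism sending $a\mapsto ab$ does not preserve the conjugacy classes of the two vertex groups. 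The paper does not prove, and does not need, that $\phi$ respects weak singular vertex groups; it explicitly constructs a \emph{new} map on the weak vertices that is not induced by $\phi$.

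Moreover, the tool you propose for counting --- abelianization-type invariants --- only pins down the \emph{total} number of weak vertices, not their distribution among link-equivalence classes, which is what is actually needed. The paper's argument is: for a class $[v]$ of weak vertices with a common link, the subgroups $\mc{G}(\Gamma_{[v]}\cup\Gamma_{>[v]})$ and $\mc{G}(\Gamma_{>[v]})$ are matched up by $\phi$ (because the definable structure $\Lambda$ and the links are preserved), and the quotient $\mc{G}(\Gamma_{[v]}\cup\Gamma_{>[v]})/\mc{G}(\Gamma_{>[v]})$ is a free group whose rank is exactly the number of weak vertices in $[v]$; equality of these ranks on both sides lets one extend the graph isomorphism class by class, by induction on the height of the links. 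Your proposal stops short of this counting mechanism ("I expect this uniqueness to be the crux, to be tackled by\dots"), so as written it is a plan with its decisive step missing, and the step it does sketch would not go through.
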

	\begin{proof}
		Let $\mc{G}=\mathcal G(\Gamma, \gvv)$ and $\mc{G}'=\mathcal G(\Delta, \{G_v'\}_{v\in \Delta})$ be graph products and let $f:\mc{G} \to \mc{G}'$ be an isomorphism. In particular, we have that $\mc{G} \equiv \mc{G}'$. From the uniform definability of $\minCore(\Gamma^e)$ and $\Gamma^c$, see Lemma \ref{lem: definability of Gamma^e} and Lemma \ref{lem:definability of Gamma^c}, we have that $f$ induces isomorphisms $f:\minCore(\Gamma^e)\to \minCore(\Delta^e)$ and $f:\Gamma^e\to \Delta^e$ and more precisely, for all $v\in \Gamma^e$, we have that $f(G_v)=G_{v'}'$ for some $v'\in \Delta^e$ and for all $v\in \Gamma^c$, $f(E_v)=E_{v'}$ for some $v'\in \Delta^c$.
		
		So, up to composing by an inner automorphism if necessary, we can assume without loss of generality that $f$ induces an isomorphism $f:\minCore(\Gamma)\to \minCore(\Delta)$ and $f: \Lambda(\Gamma) \to \Lambda(\Delta)$, see Definition \ref{defn: graph I(Gamma)}.
		
		Let $v\in \Gamma\setminus \minCore(\Gamma)$ be a vertex with maximal link among the vertices in $\Gamma\setminus \minCore(\Gamma))$, that is, for all $w\in \Gamma \setminus \Core(\Gamma)$ we have that $\link{v} \not\subset \link{w}$. Consider $[v]$, that is the set of vertices of $\Lambda(\Gamma)$ with the same link as $\link{v}$. Let $\Gamma_{[v]}=\Gamma \cap [v]$ and let $\Gamma_{>[v]}=\{ w\in \Gamma \mid \link{v} \subsetneq \st(w)\}$. Notice that $\Gamma_{[v]}$ is an edgeless induced subgraph of $\Gamma$ generated by weak vertices with infinite cyclic associated groups, that is $\mc{G}(\Gamma_{[v]})$ is a free group. Indeed if $\link{v}=\link{w}$, then $(v,w)\notin E(\Gamma)$ (otherwise $w\in \link{v}=\link{w}$ a contradiction). From the maximality of $\link{v}$, we have that if $w\in \Gamma_{>[v]}$, then $v\in \minCore(\Gamma)$.
		
		We claim that for all $v_1\in [v]$, there is a conjugate of $E_{v_1}$ such that $E_{v_1} < \mc{G}(\Gamma_{[v]} \cup \Gamma_{>[v]})$.
		
		If $v_1\in [v]$ and $v_1\in \Gamma^e$, that is if $E_{v_1}$ is singular, then up to conjugacy, $v_1$ is conjugate to a vertex in $\Gamma_{[v]}$ (more precisely, there exists $g\in \mc{G}$ such that $E_{v_1}^g=E_{v_2}$ and $v_2\in \Gamma_{[v]}$).
		
		Assume now that $E_{v_1}$ is a non-singular cyclic subgroup. Then there exists $g\in \mc{G}$ such that $E_{v_1}^g < \mc{G}(\Gamma_1)$, $\Gamma_1<\Gamma$ is smallest subgraph with this property. As by definition, $\link{v}=\link{v_1}$, we have that $\link{v_1} = \Gamma_1^\perp$ and so $\Gamma_1 < \Gamma_{[v]} \cup \Gamma_{>[v]}$ and $E_{v_1}< \mc{G}(\Gamma_{[v]} \cup \Gamma_{>[v]})$.
		
		As $f$ induces isomorphisms $\Gamma^c \to \Delta^c$ and $\minCore(\Gamma^e)\to \minCore(\Delta^e)$, we have that $f([v])=[f(v)]$ and so $f(\mc{G}(\Gamma_{[v]} \cup \Gamma_{>[v]})) < \mc{G}(\Delta_{[f(v)]} \cup \Delta_{>[f(v)]})$. Symmetrically, we have that $f^{-1}(\mc{G}(\Delta_{[f(v)]} \cup \Gamma_{>[f(v)]}))<\mc{G}(\Gamma_{[v]} \cup \Gamma_{>[v]})$. Since $f$ is an isomorphism, $f^{-1}(f(\mc{G}(\Gamma_{[v]} \cup \Gamma_{>[v]})))=\mc{G}(\Gamma_{[v]} \cup \Gamma_{>[v]})$ and so we deduce that $f(\mc{G}(\Gamma_{[v]} \cup \Gamma_{>[v]})) = \mc{G}(\Delta_{[f(v)]} \cup \Delta_{>[f(v)]})$. Furthermore, since $\Gamma_{>[v]}\in \minCore(\Gamma)$, we have that $f(\Gamma_{>[v]})=\Delta_{>[f(v)]}$. Therefore, we have that:
		$$
		\mc{G}(\Gamma_{[v]}\cup \Gamma_{>[v]})/ \mc{G}(\Gamma_{>[v]}) \simeq \mc{G}(\Gamma_{[v]})\simeq \mc{G}(\Delta_{[f(v)]}) \simeq \mc{G}(\Delta_{[f(v)]}\cup \Delta_{>[f(v)})/ \mc{G}(\Delta_{>[f(v)}).
		$$
		
		As $\mc{G}(\Gamma_{[v]})$ and $\mc{G}(\Delta_{[f(v)]})$ are isomorphic free groups, they have the same rank. Then, the isomorphism $f:\minCore(\Gamma)\to \minCore(\Delta)$ can be extended to an isomorphism $f_{[v]}: \minCore(\Gamma) \cup \Gamma_{[v]} \to \minCore(\Delta) \cup \Delta_{[f(v)]}$. In fact, let $\Gamma_1 < \Gamma$ be the subset of weak vertices $[v]$ with infinite cyclic subgroups and maximal link and let $\Delta_1< \Delta$ be defined analogously. Then, the isomorphism $f:\minCore(\Gamma)\to \minCore(\Delta)$ can be extended to an isomorphism $f_1: \minCore(\Gamma) \cup \bigcup\limits_{v\in \Gamma_1} \Gamma_{[v]} \to \minCore(\Delta) \cup \bigcup\limits_{v\in \Delta_1} \Delta_{[f(v)]}$. We remark that the isomorphism $f_1$ is not induced from the isomorphism $f$.
		
		We proceed by induction on the height $m$ of the links of vertices in $\Gamma\setminus \minCore(\Gamma)$. Let $\Gamma_m < \Gamma$ be the subset of weak vertices $[v]$ with infinite cyclic subgroups and link of height less than or equal to $m$ and let $\Delta_m< \Delta$ be defined analogously. Assume by induction that there is an isomorphism $f_m$ that extends $f$ from $f_m: \minCore(\Gamma) \cup \bigcup\limits_{v\in \Gamma_m} \Gamma_{[v]} \to \minCore(\Delta) \cup \bigcup\limits_{v\in \Delta_m} \Delta_{[f(v)]}$.
		
		The induction step is analogous to the base of induction. Let $v\in \Gamma\setminus \minCore(\Gamma)$ be a vertex whose link has height $m+1$. Then $f([v])=[v']$ where $v'\in \Lambda(\Delta)$ and whose link also has height $m+1$.
		
		As above, for all $v_1\in [v]$, there is a conjugate of $E_{v_1}$ such that $E_{v_1} < \mc{G}(\Gamma_{[v]} \cup \Gamma_{>[v]})$ and we deduce that $f(\mc{G}(\Gamma_{[v]} \cup \Gamma_{>[v]})) = \mc{G}(\Delta_{[f(v)]} \cup \Delta_{>[f(v)]})$. Since $\Gamma_{>[v]}\in \minCore(\Gamma) \cup \bigcup\limits_{v\in \Gamma_m} \Gamma_{[v]}$, by induction we have that $f(\Gamma_{>[v]})=\Delta_{>[f(v)]}$. Therefore, we have that:
		$$
		\mc{G}(\Gamma_{[v]}\cup \Gamma_{>[v]})/ \mc{G}(\Gamma_{>[v]}) \simeq \mc{G}(\Gamma_{[v]})\simeq \mc{G}(\Delta_{[f(v)]}) \simeq \mc{G}(\Delta_{[f(v)]}\cup \Delta_{>[f(v)})/ \mc{G}(\Delta_{>[f(v)}).
		$$
		
		As $\mc{G}(\Gamma_{[v]})$ and $\mc{G}(\Delta_{[f(v)]})$ are isomorphic free groups, they have the same rank. Let $\Gamma_{m_1} < \Gamma$ be the subset of weak vertices $[v]$ with infinite cyclic subgroups and link of height $m+1$ and let $\Delta_{m+1}< \Delta$ be defined analogously. Then, the isomorphism $f:\minCore(\Gamma)\to \minCore(\Delta)$ can be extended to an isomorphism $f_{m+1}: \minCore(\Gamma) \cup \bigcup\limits_{v\in \Gamma_{m+1}} \Gamma_{[v]} \to \minCore(\Delta) \cup \bigcup\limits_{v\in \Delta_{m+1}} \Delta_{[f(v)]}$.
		
		Since the graphs in the class have property $AP_n$ (in particular if they are finite), we deduce that $f$ extends to an isomorphism of graphs $f_n: \Gamma \to \Delta$. Since for all $v\in \minCore(\Gamma)$ we have that $G_{v}\simeq G_{f(v)}'$ and for all $v\in \Gamma\setminus \minCore(\Gamma)$ we have that $G_v\simeq \mathbb Z \simeq G_{f_n(v)}'$, we have that $\mc{G}\simeq \mc{G}'$.
	\end{proof}
	
	This theorem recovers and generalises classical results on the classification of families of groups up to isomorphism. More precisely, it generalises the classification of RAAGs, RACGs and graph products of finite groups and it applies to graph products of groups with non-generic almost positive theory, for instance, graph products of nilpotent, solvable groups, groups satisfying an identity, groups with nontrivial center etc.
	\begin{cor}\
		\begin{itemize}
			\item Let $\GG(\Gamma)$ and $\GG(\Delta)$ be two RAAGs. Then $\GG(\Gamma)\simeq \GG(\Delta)$ if and only if $\Gamma\simeq \Delta$; in fact, if $\mc{G}$ and $\mc{G}'$ are graph products of groups in $\textbf C$ and in $\mathcal{UDF}$, then $\mc{G}=\mc{G}(\Gamma, \gvv)$ and $\mc{G}'=\mc{G}(\Delta, \{G_{v}'\}_{v\in \Delta})$ where $G_v$ and $G_{v}'$ are directly indecomposable, then $\mc{G}\simeq \mc{G'}$ if and only if there is a graph isomorphism $f:\Gamma \to \Delta$ and $G_v\simeq G_{f(v)}'$.
			
			\item Let $\mc{G}$ and $\mc{G}'$ be two graph products of finite groups. Then $\mc{G}=\mathcal G(\Gamma, \gvv)$, $\mc{G}'=\mathcal G(\Delta, \{G_{v}\}_{v\in \Delta}')$ where $\Gamma$ and $\Delta$ are reduced and $\mc{G}\simeq \mc{G}'$ if and only if there exists an isomorphism $f: \Gamma \to \Delta$ and $G_v \simeq G_{f(v)}'$.
			
			\item Let $\mc{G}$ and $\mc{G}'$ be two graph products of groups with nontrivial positive theory. Then $\mc{G}=\mathcal G(\Gamma, \gvv)$, $\mc{G}'=\mathcal G(\Delta, \{G_{v}\}_{v\in \Delta}')$, where $\Gamma$ and $\Delta$ are reduced and $\mc{G}\simeq \mc{G}'$ if and only if there exists an isomorphism $f: \Gamma \to \Delta$ and $G_v \simeq G_{f(v)}'$.
		\end{itemize}
	\end{cor}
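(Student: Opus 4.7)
The plan is to derive all three bullets from Corollary~\ref{cor:rigidity isomoprhism}, whose conclusion is the desired rigidity statement but whose hypothesis requires the underlying graphs to be \emph{positive reduced} rather than only \emph{reduced}. In each of the three settings, the essential work will therefore be to bridge these two notions, either by showing they coincide or by canonically recovering one from the other.

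For the RAAG bullet I will start by observing that when all vertex groups are $\mathbb Z$, reduced implies positive reduced. A putative positive-reducing subgraph $\Lambda=\Lambda_{1}\oplus\cdots\oplus\Lambda_{r}$ with $|V(\Lambda)|>1$ would require each $\mc{G}(\Lambda_{i})$ to be singular or non-singular dihedral; since $\mathbb Z$ has no involutions, the dihedral case is excluded and each $\Lambda_{i}$ must be a single vertex, so $\Lambda$ is a clique. The equal-external-star condition in Definition~\ref{defn:reduced graph} then forces all vertices of $\Lambda$ to share the same star in $\Gamma$, contradicting reducedness. Since $\mathbb Z$ has nontrivial positive theory (satisfying the commutator law $\forall x\,\forall y\,[x,y]=1$, which fails in non-abelian free groups), Corollary~\ref{cor:rigidity isomoprhism} applies directly and gives the RAAG statement; the more general half of the bullet (graph products of directly indecomposable vertex groups in $\mathcal{UDF}$ with nontrivial positive theory) follows by the same clique-and-star argument.

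For the finite-groups bullet I will use that finite groups have nontrivial positive theory (they satisfy an exponent identity $\forall x\,x^{N}=1$ for $N$ the exponent, which fails in a non-abelian free group). The strategy proceeds in three steps. First, apply Lemma~\ref{lem: graph is positive reduced} to each side to obtain positive reduced forms $(\Gamma^{+},\{G_{v}^{+}\})$ and $(\Delta^{+},\{G_{w}'^{+}\})$ whose vertex groups are finite direct products of the original vertex groups inside the collapsed joins. Second, apply Corollary~\ref{cor:rigidity isomoprhism} to produce an isomorphism $f^{+}\colon\Gamma^{+}\to\Delta^{+}$ with $G_{v}^{+}\simeq G_{f^{+}(v)}'^{+}$. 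Third, invoke Krull--Schmidt to decompose each $G_{v}^{+}$ canonically into directly indecomposable finite factors, and match these factors across $f^{+}$; combined with the join structure of each collapsed subgraph this reconstructs $\Gamma$ and $\Delta$ uniquely from their positive reductions, giving the required graph isomorphism $f\colon\Gamma\to\Delta$ with $G_{v}\simeq G_{f(v)}'$. The third bullet will follow by the same three-step scheme, with Theorem~\ref{lem:extensionpreservation} ensuring that positive-reduced vertex groups retain nontrivial positive theory.

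The hardest part will be this third step in the last two bullets: uniquely recovering the original vertex groups from the positive reduced form hinges on uniqueness of direct product decomposition. For finite groups this is classical Krull--Schmidt, but for arbitrary groups with nontrivial positive theory such uniqueness may fail, as Baumslag's nilpotent groups admitting direct decompositions with different numbers of indecomposable factors show. The cleanest way around this in the third bullet is to restrict to vertex groups which are directly indecomposable and lie in $\mathcal{UDF}$, in which case the bridging step reduces to the clique-and-star argument used for the RAAG bullet.
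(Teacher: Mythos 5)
Your argument for the first bullet has a genuine gap. The RAAG statement carries no reducedness hypothesis on $\Gamma$ and $\Delta$: it must cover, for instance, a clique on $n$ vertices (giving $\mathbb{Z}^{n}$), or any graph containing two vertices with equal stars. Such graphs are neither reduced nor positive reduced, so your observation that ``reduced implies positive reduced when all vertex groups are $\mathbb{Z}$'' --- which is correct --- does not allow you to apply Corollary~\ref{cor:rigidity isomoprhism} directly, and you never say how to handle the non-reduced case. The paper's proof does: it first positive-reduces both RAAGs via Lemma~\ref{lem: graph is positive reduced} (for $\mathbb{Z}$ vertex groups the collapsed subgraphs are exactly cliques of vertices with a common star, and the new vertex groups are free abelian), applies the rigidity corollary to the reduced forms, and then recovers the original graphs by noting that the preimage of each reduced vertex under the collapsing full morphism is a clique whose size equals the rank of the corresponding free abelian vertex group, so matching vertex groups forces matching clique sizes. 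You carry out precisely this kind of reconstruction in your treatment of the finite-group bullet, but omit it in the one bullet whose statement actually requires it.

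For the second and third bullets your three-step scheme (positive-reduce, apply Corollary~\ref{cor:rigidity isomoprhism}, undo the reduction using uniqueness of direct decompositions) is essentially the paper's: the paper proves the RAAG case and declares the $\mathcal{UDF}$ case ``analogous'', with the reconstruction supplied by the uniqueness lemma immediately preceding the corollary, and Krull--Schmidt is exactly what places finite groups in $\mathcal{UDF}$. Two caveats: your step of decomposing each collapsed vertex group ``into directly indecomposable finite factors'' does not literally apply when the positive reduction produces a $D_{\infty}$ vertex (e.g.\ the $4$-cycle right-angled Coxeter group is reduced but not positive reduced), so that case needs a separate word; and your worry about Baumslag's examples in the third bullet is legitimate, though on the reading where the conclusion concerns the (positive) reduced decompositions themselves rather than some finer original one, no Krull--Schmidt input is needed there at all.
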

	
	\begin{proof}
		Let $\GG(\Gamma)$ and $\GG(\Delta)$ be two isomorphic RAAGs. From Lemma \ref{lem: graph is positive reduced}, we have that $\GG(\Gamma)\simeq \mc{G}(\Gamma', \gvv)$ and $\GG(\Delta)\simeq \mc{G}(\Delta', \{G_{v'}'\})$ where $\Gamma'$ and $\Delta'$ are positive reduced and by construction, $G_v$ and $G_{v'}'$ are free abelian groups. From Corollary \ref{cor:rigidity isomoprhism}, we have that there exists a graph isomorphism $f:\Gamma' \to \Delta'$ and $G_v\simeq G_{f(v)}'$. The full morphism $\pi:\Gamma \to \Gamma'$ (resp. $\pi':\Delta\to \Delta'$) collapses cliques, that is, for all $v\in \Gamma'$, $\pi^{-1}(v)$ is a clique and the size of the clique is precisely the rank of the free abelian groups $G_v$. Since $G_v\simeq G_{f(v)}$ and they are free abelian groups, they have the same rank and so $\pi^{-1}(v)$ and ${\pi'}^{-1}(f(v))$ are cliques of the same dimension. This together the fact that $\Gamma'$ and $\Delta'$ are isomorphic imply that $\Gamma$ and $\Delta$ are also isomorphic.
		
		The proof for graph products of groups with groups in $\textbf C \cap \mathcal{UDF}$ is analogous.
	\end{proof}
	
	\subsubsection{Elementary rigidity of graph products}
	
	Our results applied to specific families of groups gives us rigidity of the elementary classification. Let $\textbf C$ be the class of group defined in Definition \ref{dfn: class section 9}.
	
	\begin{thm}[cf. \cite{CKR}, Rigidity of the class of graph products of finite groups]
		\
		
		Let $\textbf C_f$ be the class of finite groups. Let $G=\mathcal G(\Gamma, \gvv)$ and $H=\mathcal G (\Delta, \{H_v\}_{v\in \Delta})$ where $G_v \in \textbf C_f$, $ H_w \in \textbf C$. Then
		$$
		G \equiv H \Leftrightarrow G\simeq H.
		$$
		In particular, two right-angled Coxeter groups are elementarily equivalent if and only if they are isomorphic.
	\end{thm}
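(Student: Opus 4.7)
The direction $G\simeq H\Rightarrow G\equiv H$ is trivial. For the converse, my plan is to apply Theorem~\ref{thm:elementary equivalence} and combine it with the isomorphism rigidity of Corollary~\ref{cor:rigidity isomoprhism}. First, every finite group $G_v$ satisfies the nontrivial positive identity $\forall x\,x^{|G_v|}=1$, so $\mathbf{C}_f\subseteq\mathbf{C}$ and both $G$ and $H$ fall within the scope of the main interpretability results.

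Apply the almost positive reduction (Corollary~\ref{c: collapsed almost positive}) to $G$ and to $H$, obtaining presentations $G=\mc G(\tilde\Gamma,\{\tilde G_v\})$ and $H=\mc G(\tilde\Delta,\{\tilde H_w\})$ with $\tilde\Gamma,\tilde\Delta$ almost positive reduced and every vertex group satisfying a simple non-generic almost positive sentence. By the explicit construction of the reduction, each $\tilde G_v$ is of the form $\mc G(\Lambda_v,\{G_u\}_{u\in\Lambda_v})$ for a join $\Lambda_v\subseteq\Gamma$ one of whose direct factors is singular or non-singular dihedral; in particular $\tilde G_v$ contains a non-trivial involution and is therefore not elementarily equivalent to $\mathbb Z$. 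Consequently $\tilde\Gamma$ has no weak vertex, and $\tilde\Gamma=\minCore(\tilde\Gamma)=\Core(\tilde\Gamma)=\ECore(\tilde\Gamma)$. Theorem~\ref{thm:elementary equivalence} then produces a graph isomorphism $f\colon\ECore(\tilde\Gamma)\to\ECore(\tilde\Delta)$, restricting to an isomorphism of cores, with $\tilde G_v\equiv\tilde H_{f(v)}$ for every $v$. Since each $\tilde H_{f(v)}$ inherits torsion from $\tilde G_v$, none is $\equiv\mathbb Z$, so no vertex of $\Core(\tilde\Delta)$ is weak. Together with the uniform interpretability of the extended core (Corollary~\ref{cor:main interpretation}) and the identity $\tilde\Gamma=\ECore(\tilde\Gamma)$, this forces $\tilde\Delta=\ECore(\tilde\Delta)=\Core(\tilde\Delta)\simeq\tilde\Gamma$ as group-labelled graphs.

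What remains is to upgrade the elementary equivalences $\tilde G_v\equiv \tilde H_{f(v)}$ to isomorphisms $\tilde G_v\simeq\tilde H_{f(v)}$, after which Corollary~\ref{cor:rigidity isomoprhism} combines the graph isomorphism $\tilde\Gamma\simeq\tilde\Delta$ with the vertex-group isomorphisms to conclude $G\simeq H$. This last step is the main obstacle of the proof, because elementary equivalence of finitely generated groups does not in general imply isomorphism. Here one exploits the very specific structure of $\tilde G_v$: it is a graph product of finite groups on the join $\Lambda_v$, hence splits as a direct product whose factors are either graph products of finite groups on strictly smaller subgraphs or finite/infinite dihedral groups. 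The plan is therefore a nested application of Theorem~\ref{thm:elementary equivalence}: the underlying graph of $\tilde G_v$ is itself a join whose core collapses to a proper substructure, and by induction on the number of vertices of $\Gamma$ the elementary equivalence descends to pairs of groups of strictly smaller complexity. The recursion bottoms out at the case of either finite vertex groups (where elementary equivalence coincides with isomorphism) or of single infinite dihedral vertex groups, where the rigidity of $D_\infty$ within $\mathbf{C}$ is obtained by reading off first-order invariants such as the isomorphism type of the finite centre-like part (from centralizers of torsion elements) and the torsion-free rank of a maximal finite-index abelian subgroup. Combining these gives $\tilde G_v\simeq\tilde H_{f(v)}$, completing the argument.
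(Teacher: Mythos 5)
The paper offers no written proof of this theorem, so I am judging your argument on its own terms; it does follow the intended route (reduce the graphs, invoke Theorem~\ref{thm:elementary equivalence}, then upgrade elementary equivalence of vertex groups to isomorphism as in Corollary~\ref{cor:rigidity isomoprhism}), but the middle step has a genuine gap. You deduce ``$\tilde\Delta=\ECore(\tilde\Delta)=\Core(\tilde\Delta)$'' from the fact that the groups $\tilde H_{f(v)}$ for $v\in\Core(\tilde\Gamma)$ have torsion. That only controls the vertices of $\tilde\Delta$ lying in its core. All that $G\equiv H$ gives you via Corollary~\ref{cor:main interpretation} is the (extended) core of $\tilde\Delta$; weak redundant vertices of $\tilde\Delta$ outside the core are invisible to the elementary theory, and nothing in the stated hypotheses forbids them, since $\mathbb{Z}\in\textbf{C}_{\Phi}^{r}$. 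Indeed, by the free-product results of Sela that the paper itself invokes in Theorem~\ref{thm:eqdromsraags}, one has for instance $\mathbb{Z}_2\ast\mathbb{Z}_3\equiv\mathbb{Z}_2\ast\mathbb{Z}_3\ast\mathbb{Z}$, with both sides admissible graph products under the literal hypotheses and with identical cores, yet non-isomorphic. So ``no core vertex is weak $\Rightarrow$ $\tilde\Delta$ equals its core'' is simply false; the step only closes if one reads the hypothesis as in the introduction's version of this corollary, namely that the $H_w$ are themselves finite, in which case no vertex of $\Delta$ is weak by definition. Relatedly, your claim $\tilde\Gamma=\ECore(\tilde\Gamma)$ is also wrong: Lemma~\ref{lem:ECore and I(Gmma)} only gives $\Core(\tilde\Gamma)<\ECore(\tilde\Gamma)$, and already for a free product of two finite groups the extended core acquires extra vertices from non-singular cyclic subgroups; this is harmless because only the core isomorphism is needed, but it should not be asserted.

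Two smaller problems. A singular factor of a collapsed vertex group is a finite group, which need not contain an involution (take odd order); what you need, and what is true, is that it contains non-trivial torsion and hence is not elementarily equivalent to $\mathbb{Z}$. For the final upgrade from $\tilde G_v\equiv\tilde H_{f(v)}$ to isomorphism you should use the positive reduction (Corollary~\ref{c: collapsed positive}) rather than the almost positive one: under the almost positive collapse only one factor of the join need be singular or dihedral, so $\tilde G_v$ can be $G_u\times\mc{G}(\Lambda_2)$ with $\mc{G}(\Lambda_2)$ an arbitrary graph product of finite groups, and your induction has no obvious base. With the positive reduction each $\tilde G_v$ is a direct product of finitely many finite groups and copies of $D_\infty$; even then the proposed ``nested application of Theorem~\ref{thm:elementary equivalence}'' is not licit, because these groups are graph products over joins and hence not positive reduced, so the theorem does not apply to them. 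A direct argument is needed and available: elementarily equivalent groups one of which is finite are isomorphic, and $D_\infty$ is first-order rigid among the relevant groups exactly as in the proof of Theorem~\ref{thm:eqdromsracgs}, so the isomorphism type of such a product is determined by its elementary theory.
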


	Combining our results with the elementary classification of finitely generated nilpotent ($R$)-groups, see {\rm\cite{Oger, MiaSoh, CFKR}}, we deduce the following
	
	\begin{thm}[Rigidity of the class of graph products of (non-cyclic) fg nilpotent (well-structured nilpotent $R$-) groups]\
		
		Let $\textbf C_n$ be the class of {\rm(}non-cyclic{\rm)} finitely generated nilpotent groups. Let $G=\mathcal G(\Gamma, \gvv)$ and $H=\mathcal G (\Delta, \{H_v\}_{v\in \Delta}$, where $G_v\in \textbf C_n$ and $H_w \in \textbf C$. Then
		
		\begin{gather} 
			\begin{split}
				G \equiv H \Leftrightarrow & f:\Gamma \to \Delta \hbox{ is an isomorphism and } \\
				& G_v \times \mathbb Z \simeq H_{f(v)} \times \mathbb Z.
			\end{split}
		\end{gather} 
		
		\medskip 
		
		More generally, let $\textbf C_{n}(R)$ be the class of {\rm(}non-cyclic{\rm)} well-structured nilpotent $R$-groups. Let $G=\mathcal G(\Gamma, \gvv)$ and $H=\mathcal G(\Delta, \{H_v\}_{v\in\Delta})$ where $G_v \in \textbf C_{n}(R)$ and $H_w \in \textbf C$. Then,
		
		\begin{gather} 
			\begin{split}
				G \equiv H \Leftrightarrow  & f:\Gamma \to \Delta \hbox{ is an isomorphism and } \\
				& H_{f(v)} \hbox{ is an abelian deformation of } G_v.
			\end{split}
		\end{gather}
	\end{thm}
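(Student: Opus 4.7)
The plan is to derive the theorem as a direct specialisation of the main rigidity result Theorem \ref{thm:elementary equivalence}, combined with the classical characterisation of elementary equivalence among finitely generated nilpotent (resp.\ well-structured nilpotent $R$-) groups due to Oger (resp.\ Myasnikov-Sohrabi and Casals-Ruiz-Fournier-Kharlampovich-Roman). Every non-cyclic finitely generated nilpotent group has nontrivial centre and so satisfies the simple non-generic almost positive sentence $\exists z\,(z\neq 1\wedge\forall x\,[x,z]=1)$; moreover no such group is elementarily equivalent to $\mathbb{Z}$, to $D_\infty$, or to a cyclic group of order $2$. By Lemma \ref{lem: graph is positive reduced} we may assume $\Gamma$ and $\Delta$ are almost positive reduced, so that Theorem \ref{thm:elementary equivalence} applies.

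For the forward direction, assume $G\equiv H$. Theorem \ref{thm:elementary equivalence} produces a labelled graph isomorphism $f:\ECore(\Gamma,\gvv)\to\ECore(\Delta,\hww)$ with corresponding vertex groups elementarily equivalent, restricting to a labelled isomorphism of the cores. Since the vertex groups of $\Gamma$ are all non-cyclic nilpotent, none is elementarily equivalent to $\mathbb{Z}$, so no vertex of $\Gamma$ is weak and $\minCore(\Gamma,\gvv)=\Core(\Gamma,\gvv)=\Gamma$. Transporting along $f$ the constraint that the matched vertex groups on the $\Delta$-side are also non-cyclic nilpotent (hence not $\equiv\mathbb{Z}$, not $\equiv D_\infty$, and not of order $2$), together with the explicit description of the extra vertices of $\ECore$ above $\minCore$ given by Definition \ref{defn: graph I(Gamma)} (they carry only $\mathbb{Z}$- or order-$2$-labels) and the reducedness of $\Delta$, one concludes $\Delta=\minCore(\Delta,\hww)\simeq\Gamma$ via $f$, with $G_v\equiv H_{f(v)}$ vertex-wise. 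Oger's theorem then yields $G_v\times\mathbb{Z}\simeq H_{f(v)}\times\mathbb{Z}$; the $\textbf C_n(R)$ variant is analogous, giving that $H_{f(v)}$ is an abelian deformation of $G_v$.

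For the converse, assume $\Gamma\simeq\Delta$ via $f$ and $G_v\times\mathbb{Z}\simeq H_{f(v)}\times\mathbb{Z}$ (resp.\ $H_{f(v)}$ an abelian deformation of $G_v$) for every $v$. By Oger (resp.\ Myasnikov-Sohrabi/CFKR) this produces vertex-wise elementary equivalence $G_v\equiv H_{f(v)}$; one concludes $G\equiv H$ by observing that the graph product construction over a finite graph commutes with ultrapowers, so that $G^{\mathcal{U}}\simeq\G(\Gamma,\{G_v^{\mathcal{U}}\})$ and $H^{\mathcal{U}}\simeq\G(\Delta,\{H_v^{\mathcal{U}}\})$, and then iterating Keisler-Shelah to produce matching vertex-group ultrapowers $G_v^{\mathcal{U}}\simeq H_{f(v)}^{\mathcal{U}}$, whence $G^{\mathcal{U}}\simeq H^{\mathcal{U}}$ and $G\equiv H$. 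The principal obstacle in this plan is the forward-direction identification $\Delta\simeq\Gamma$: one must rule out that $\Delta$ carries extra weak vertices invisible to the labelled isomorphism of cores, which requires a careful matching of the extra vertices of $\ECore\setminus\minCore$ on both sides and crucially uses the rigidity imposed by the non-cyclic nilpotent hypothesis on $G_v$ to forbid non-singular cyclic classes or weak vertices on the $\Delta$-side that have no counterpart on the $\Gamma$-side.
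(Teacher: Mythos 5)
Your forward direction follows the route the paper intends: the theorem is stated there as a direct consequence of the main interpretation result (Theorem \ref{thm:elementary equivalence}, via Corollary \ref{cor:main interpretation}) combined with Oger's characterisation of elementary equivalence among finitely generated nilpotent groups, and your observations that non-cyclic finitely generated nilpotent groups have nontrivial centre, are not elementarily equivalent to $\mathbb{Z}$, $D_{\infty}$ or a group of order $2$, and therefore contribute no weak vertices (so $\Core(\Gamma,\gvv)=\minCore(\Gamma,\gvv)=\Gamma$) are exactly the right specialisation. The bookkeeping you flag as the main obstacle --- transporting non-weakness to the $\Delta$-side and discarding the extra $\mathbb{Z}$- and order-$2$-labelled vertices of $\ECore(\Delta)\setminus\minCore(\Delta)$ --- is the correct thing to check and is consistent with Definition \ref{defn: graph I(Gamma)}.

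The converse direction, however, contains a genuine error: the graph product construction does \emph{not} commute with ultrapowers. Already for a free product, $(\mathbb{Z}/2\mathbb{Z}\ast\mathbb{Z}/2\mathbb{Z})^{\mathcal{U}}$ is uncountable for a nonprincipal ultrafilter $\mathcal{U}$ on $\omega$, whereas $(\mathbb{Z}/2\mathbb{Z})^{\mathcal{U}}\ast(\mathbb{Z}/2\mathbb{Z})^{\mathcal{U}}\simeq D_{\infty}$ is countable; in general an ultrapower of $\G(\Gamma,\gvv)$ contains classes of sequences of normal forms of unbounded length, which do not live in $\G(\Gamma,\{G_v^{\mathcal{U}}\})$. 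If your claim were true, the generalized Vaught conjecture --- that an isomorphism of graphs together with vertex-wise elementary equivalence forces elementary equivalence of the graph products --- would follow immediately, but the paper explicitly leaves this as an open conjecture, and even the free-product case required Sela's work \cite{sela2010diophantine}. So the implication $\Leftarrow$ cannot be obtained by a formal ultrapower computation; the machinery actually developed in the paper yields only the direction $\Rightarrow$, and any honest proof of $\Leftarrow$ would have to invoke a Vaught-type transfer theorem for the relevant class of graph products rather than the identity $G^{\mathcal{U}}\simeq\G(\Gamma,\{G_v^{\mathcal{U}}\})$.
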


	\begin{thm}[Rigidity of the class of graph products of (some) solvable groups] \
		
		Let $\textbf C_{BSF}$ be the class of solvable Baumslag-Solitar groups and finitely generated free solvable groups. Let $G=\mathcal G(\Gamma, \gvv)$ and $H=\mathcal G (\Delta, \{H_v\}_{v\in \Delta})$ where $G_v\in \textbf C_{BSF}$ and $H_w \in \textbf C$. Then
		$$
		G \equiv H \Leftrightarrow G\simeq H.
		$$
		See {\rm\cite{Nies,CKBS, RSS}} for first-order rigidity results on the class of solvable Baumslag-Solitar groups and finitely generated free solvable groups.
	\end{thm}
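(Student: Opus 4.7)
The reverse implication is trivial, so I focus on the forward direction. The first step is to place the problem inside the framework of the preceding interpretation theorems. Every group in $\textbf C_{BSF}$ is solvable and hence satisfies a group identity, so by Lemma \ref{ex:groups_non_trivial_positive_theory}(1) each $G_v$ has nontrivial positive theory, which is a non-generic almost positive theory. Moreover, solvable Baumslag–Solitar groups and finitely generated free solvable groups are all directly indecomposable (none of them split as a nontrivial direct product), so collapsing $(\Gamma,\gvv)$ to its positive-reduced form using Lemma \ref{lem: graph is positive reduced} does not alter the vertex groups; similarly for $(\Delta,\hww)$ using the fact that $H_w\in\textbf C$ and the reduction procedure. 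Thus both graph products lie in a common subclass of $\mc{G}(\textbf K_n,\textbf C_\Phi)$ in which Theorem \ref{thm:elementary equivalence} applies.

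The second step is to apply Theorem \ref{thm:elementary equivalence} to $\G \equiv \G'$, obtaining a graph isomorphism $f\colon \ECore(\Gamma,\gvv)\to\ECore(\Delta,\hww)$ that restricts to an isomorphism between representatives of $\Core(\Gamma,\gvv)$ and $\Core(\Delta,\hww)$, and such that $G_v \equiv H_{f(v)}$ for each $v$. I now upgrade these elementary equivalences to isomorphisms. For a non-weak vertex $v$, $G_v\in\textbf C_{BSF}\setminus\{\mathbb Z\}$ (or $G_v=\mathbb Z$ with $v$ non-weak). Invoking the first-order rigidity of solvable Baumslag–Solitar groups from \cite{Nies,CKBS} and of finitely generated free solvable groups from \cite{RSS}, which assert that each such group is determined up to isomorphism by its first-order theory within a suitable class of finitely generated groups, one concludes $G_v\simeq H_{f(v)}$. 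For a weak vertex $v$ of $\Core(\Gamma,\gvv)$, the associated group is $\mathbb Z$ by construction; the same holds on the $\Delta$-side, so again $G_v\simeq H_{f(v)}$.

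The third step is to descend from the core back to the full graphs. Because all the vertex groups are directly indecomposable, the positive-reduced forms of $\Gamma$ and $\Delta$ coincide with the underlying graphs themselves; moreover, the only groups in $\textbf C_{BSF}$ that are elementarily equivalent to $\mathbb Z$ is $\mathbb Z$ itself, so every $\mathbb Z$-labelled vertex that appears in $\Gamma$ and is absorbed into the $\ECore$ construction is matched by a $\mathbb Z$-labelled vertex on the $\Delta$-side with the same star (up to conjugation), by tracking the equivalence classes $[v]$ of Definition \ref{defn: graph I(Gamma)}. This identification uses that $H_w\in\textbf C$ and $H_w\equiv\mathbb Z$ forces $H_w\simeq\mathbb Z$ under the rigidity hypotheses (so no stray $\mathbb Z$-vertices appear on one side without matching ones on the other). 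Combining the iso $f$ on cores with the identification of the remaining weak $\mathbb Z$-vertices yields a graph isomorphism $\tilde f\colon\Gamma\to\Delta$ with $G_v\simeq H_{\tilde f(v)}$, and Corollary \ref{cor:rigidity isomoprhism} (applied with vertex groups already known to be isomorphic) delivers $\G\simeq \G'$.

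The main obstacle is the third step: recovering weak $\mathbb Z$-vertices of the underlying graphs from the interpreted core. For general graph products this is precisely the obstruction behind the conjectural characterization discussed after Theorem \ref{thm:elementary equivalence}, but in the restricted class $\textbf C_{BSF}$ it is resolved by the rigidity of each isomorphism class under elementary equivalence together with the fact that $\mathbb Z$ is the unique $\textbf C_{BSF}$-group of its elementary type. Everything else in the argument is a routine bookkeeping of the level decomposition from Section \ref{s: interpretation_of_base_structure_(first)} combined with the cited rigidity theorems.
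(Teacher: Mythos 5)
Your overall route is the one the paper intends (the paper gives no explicit proof of this theorem; it is stated as a direct consequence of Theorem \ref{thm:elementary equivalence} together with the cited first-order rigidity results for the vertex groups), and steps one and two are essentially correct. However, two points need repair. First, your claim that passing to the positive-reduced form ``does not alter the vertex groups'' because the vertex groups are directly indecomposable is not right: reduction collapses joins of singular vertices with equal external stars, so e.g.\ an edge with both vertices labelled $BS(1,2)$ becomes a single vertex labelled $BS(1,2)\times BS(1,2)$. You must either assume $\Gamma,\Delta$ reduced or argue (as in Corollary \ref{cor:rigidity isomoprhism} and the $\mathcal{UDF}$ discussion) that the relevant direct products decompose uniquely, so that the original labelled graph can be recovered from the reduced one.

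Second, and more seriously, your third step is not a proof. The matching of weak $\mathbb Z$-labelled vertices across the two graphs is exactly what the elementary theory does \emph{not} control: the number of such vertices is not an elementary invariant (this is the content of $\mathbb Z^{n}\ast\mathbb Z\equiv\mathbb Z^{n}\ast F_{n}$ quoted in the introduction), and knowing that $\mathbb Z$ is the only group in $\textbf C_{BSF}$ elementarily equivalent to $\mathbb Z$ does not tell you how many $\mathbb Z$-vertices with a given link each graph has. The correct observation is that under the intended reading of $\textbf C_{BSF}$ no vertex group is elementarily equivalent to $\mathbb Z$ (solvable Baumslag--Solitar groups $BS(1,n)$ with $|n|\ge 2$ and non-abelian free solvable groups are not $\equiv\mathbb Z$), hence there are no weak vertices at all, $\Core(\Gamma,\gvv)=(\Gamma,\gvv)$, and the theorem follows already from your step two plus the rigidity citations. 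If instead one admits $\mathbb Z$ (free solvable of rank one) or $\mathbb Z^{2}=BS(1,1)$ as vertex groups, the statement itself fails by the Sela example above, so no argument along your step three could succeed; you should make the exclusion explicit rather than attempt to recover the weak vertices.
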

	
	\begin{thm}[Rigidity of the class of graph products of classic linear groups]
		\
		
		Let $\textbf C_{L}$ be the class of classic linear groups over a field $R$ ($GL_n(R), SL_n(R), UT_n(R)$), where $n\ge 3$.
		
		Let $G=\mathcal G(\Gamma, \gvv)$ and $H=\mathcal G (\Delta, \{H_v\}_{v\in \Delta})$, where $G_v, H_w \in \textbf C_{L}$. Then
		$G \equiv H$ if and only if there exist an isomorphism $f:\Gamma \to \Delta$ and and $G_v=X(R)$, $H_{f(v)}=X(R')$ are linear groups of the same type and  $R\equiv R'$. 
		
		See {\rm\cite{Mal, Belegradek}} for first-order rigidity results on the corresponding linear groups.
	\end{thm}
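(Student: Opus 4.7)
The plan is to combine Theorem \ref{thm:elementary equivalence}, the main interpretability result of the paper, with the classical Mal'cev-Belegradek elementary classification of linear groups over fields (see \cite{Mal,Belegradek}). The argument divides naturally into the two implications, both of which require slightly more than a direct application of earlier results.

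For the forward direction, I would first verify that every $X(R)\in\textbf{C}_L$ with $n\ge 3$ has nontrivial centre---scalar matrices for $GL_n$, diagonal $n$-th roots of unity for $SL_n$, and the top-right one-parameter subgroup for $UT_n$---so each satisfies the simple non-generic almost positive sentence $\exists z\,z\ne 1\wedge\forall x\,[x,z]=1$, placing $\textbf{C}_L\subseteq\textbf{C}_\Phi$ for an appropriate $\Phi$. Assuming $G\equiv H$, Corollary \ref{c: collapsed almost positive} allows us to re-present both graph products over almost positive reduced graphs $\Gamma',\Delta'$ whose vertex groups are iterated direct products of elements of $\textbf{C}_L$; by Lemma \ref{l: almost positive direct sums} these still lie in $\textbf{C}_\Phi$. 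Theorem \ref{thm:elementary equivalence} then produces an isomorphism of labelled cores with elementarily equivalent vertex groups. Since classical linear groups with $n\ge 3$ are non-abelian and therefore not elementarily equivalent to $\mathbb{Z}$, no vertex of $\Gamma',\Delta'$ is weak, so the core coincides with the entire reduced graph on each side. To promote this to an isomorphism of the \emph{original} $\Gamma,\Delta$, I would invoke a Remak-Krull-Schmidt style uniqueness of direct-product decompositions for classical linear groups, then apply the Mal'cev-Belegradek classification $X(R)\equiv Y(R')\Rightarrow X=Y,\,n=m,\,R\equiv R'$ factor-by-factor.

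For the backward direction, the classification yields $G_v\equiv H_{f(v)}$ for every $v$, and one invokes the general preservation fact that pointwise elementary equivalence of vertex groups together with an isomorphism of the (finite) underlying graph preserves elementary equivalence of graph products; this can be established by an Ehrenfeucht-Fra\"iss\'e game on canonical normal forms, running parallel games in the elementarily equivalent vertex groups with the finite graph controlling how normal forms concatenate. The principal obstacle I foresee is the direct-product rigidity step in the forward direction: the almost positive reduction may collapse a join of singular vertex groups from $\textbf{C}_L$ into a vertex whose associated group is a direct product no longer in $\textbf{C}_L$, so recovering the original graph requires showing that elementary equivalence between two such collapsed vertex groups forces a bijection between the underlying factors. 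This rigidity should follow from analysis of centres, derived series, and unipotent radicals in classical linear groups over fields, but must be justified carefully when $R$ is infinite and the classical form of Remak-Krull-Schmidt is unavailable.
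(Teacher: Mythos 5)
Your overall strategy for the forward direction is the one the paper intends (the theorem is stated there without an explicit proof, as a consequence of Theorem \ref{thm:elementary equivalence} — more precisely of Corollary \ref{cor:iso graph} — combined with the cited Mal'cev--Belegradek classification), but two of your steps do not hold as written. First, the claim that every group in $\textbf{C}_{L}$ with $n\ge 3$ has nontrivial centre is false: $Z(SL_n(R))$ consists of the scalar $n$-th roots of unity and is trivial for $SL_3(\mathbb{Q})$ or $SL_3(\mathbb{F}_2)$, and $Z(GL_n(\mathbb{F}_2))$ is trivial. The correct route into $\textbf{C}_{\Phi}$ is through nontrivial \emph{positive} theory, which Lemma \ref{ex:groups_non_trivial_positive_theory} already supplies: $UT_n(R)$ is nilpotent, $SL_n(R)$ over a field has bounded commutator width, and $GL_n(R)$ is an extension of $SL_n(R)$ by the abelian group $R^{*}$, so Theorem \ref{lem:extensionpreservation} applies. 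Second, your ``principal obstacle'' (undoing the collapse performed when passing to the positive reduced graph) is exactly what Corollary \ref{cor:iso graph} is designed for via the class $\mathcal{UDF}$; what still needs an argument, and what you correctly flag but do not supply, is that a finite direct product of classical linear groups determines its factors (types, ranks, and rings up to elementary equivalence) from its elementary theory — a naive Remak--Krull--Schmidt statement is indeed unavailable over infinite fields.

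The serious gap is the backward direction. The ``general preservation fact'' you invoke — that an isomorphism of the underlying graphs together with elementary equivalence of the vertex groups implies elementary equivalence of the graph products — is precisely the graph-product analogue of Vaught's conjecture, which this very paper states as an \emph{open} conjecture, and which even in the special case of free products required Sela's full machinery rather than any elementary method. The Ehrenfeucht--Fra\"iss\'e game on normal forms you propose does not work: Duplicator's parallel strategies inside the individual vertex groups do not compose, because after finitely many rounds Spoiler can exploit relations between syllables lying in distinct conjugates of vertex groups that the componentwise strategies cannot track; this is exactly the obstruction that kept the free-product case open for decades. Any honest treatment of this direction must either invoke a Sela-type theorem for the relevant class of graph products or exploit the specific shape of the vertex groups (for instance a uniform interpretation of the whole graph product in the tuple of rings), and neither is furnished by the earlier results of the paper; at minimum this direction should be flagged as resting on the cited rigidity results together with an unproven preservation principle rather than presented as routine.
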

	
	\subsection{Classification of finitely generated groups elementarily equivalent to Droms RAAGs}
	
	\begin{definition}\label{defn:structure Droms RAAGs}
		A \emph{right-angled Artin group} (RAAG for short) is a graph product $\mathcal G(\Gamma, \{G_v\}_{v\in \Gamma})$, where $G_v$ is infinite cylic for all $v\in \Gamma$.
		
		A RAAG is called \emph{Droms} if the defining graph $\Gamma$ does not contain induced cycles or paths of length more than 3. By convention, the length of a cycle and a path is determined by the number of vertices.
	\end{definition}
	
	\begin{definition}\label{defn:structure Droms RACGs}
		A \emph{right-angled Coxeter group} (RACG for short) is a graph product $\mathcal G(\Gamma, \{G_v\}_{v\in \Gamma})$, where $G_v$ is the group of order $2$ for all $v\in \Gamma$.
		
		As above, a RACG is called \emph{Droms} if the defining graph $\Gamma$ does not contain induced cycles or paths of length more than 3.
	\end{definition}
	
	Droms RAAGs admit different characterisations. On the one hand, they are precisely the RAAGs whose finitely generated subgroups are again RAAGs. They can also be described recursively as follows.
	
	\begin{definition} \label{defn:hheight}
		Let $D_1=\{\mathbb Z\}$, let $F_1$ be the class of free products of groups in $D_1$ (that is $D_1=\{F_n \mid n\in \mathbb N \cup \{0\}\})$ and let $Z_1$ be the class of groups of the form $\mathbb Z^m \times G$, where $G\in F_1$, $m\in \mathbb N$ (that is $Z_1=\{ \mathbb Z^m \times F_n\}$).
		
		Assume that $D_{i-1}$, $F_{i-1}$ and $Z_{i-1}$ have been defined. Then, we define $D_i=\bigcup\limits_{j=1, \dots i-1} Z_i$, $F_i$ is the free product of groups in $D_i$ and $Z_i$ is the class of groups of the form $\mathbb Z^m\times G$ where $G\in F_i$ and $m\in \mathbb N$.
	\end{definition}

	\begin{prop}[Droms, see \cite{Droms}]
		The class of Droms RAAGs coincides with the class of groups $\bigcup\limits_{i\in \mathbb N}D_i$.
	\end{prop}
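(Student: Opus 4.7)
The plan is to prove both inclusions by induction on complexity, reducing to a structural characterization of Droms graphs. First note that the Droms condition (no induced cycles or paths on more than 3 vertices) is equivalent to having no induced $P_4$ and no induced $C_4$, since any induced cycle on $\geq 5$ vertices already contains an induced $P_4$. This formulation makes the condition inductively tractable, as it passes to induced subgraphs and both families in the statement respect induced subgraph structure.

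The central structural lemma I would establish is: every Droms graph $\Gamma$ on $\geq 2$ vertices decomposes as $\Gamma = K_m \oplus (\Lambda_1 \sqcup \cdots \sqcup \Lambda_k)$, with $K_m$ a (possibly empty) clique, each $\Lambda_j$ a Droms graph on strictly fewer vertices, and $k \in \{0\} \cup \{2, 3, \dots\}$. To prove it, collect the universal vertices of $\Gamma$ into a clique $K_m$ and write $\Gamma = K_m \oplus \Delta$, where $\Delta$ has no universal vertex; the case $|V(\Delta)|=1$ would contradict the maximality of $K_m$, so either $\Delta = \emptyset$ or $|V(\Delta)| \geq 2$. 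As $\Delta$ is a cograph (no induced $P_4$), if $\Delta$ with $|V(\Delta)| \geq 2$ were connected it would be a nontrivial join $\Delta = \Delta_1 \oplus \Delta_2$; but then picking two non-adjacent vertices on each side would produce an induced $C_4$, forcing some $\Delta_i$ to be a clique, whose vertices would then be universal in $\Delta$, contradicting the extraction of $K_m$. Hence $\Delta$ is disconnected, yielding the claimed decomposition into $k \geq 2$ Droms components.

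With this lemma, the inclusion $\bigcup_i D_i \subseteq \{\text{Droms RAAGs}\}$ follows by induction on $i$: the base case $\mathbb{Z} \in D_1$ corresponds to a single-vertex graph, and for the step any $G \in D_i$ has the form $\mathbb{Z}^m \times (H_1 \ast \cdots \ast H_k)$ with each $H_j \in D_{i'}$ for some $i' < i$, inductively equal to $\mathcal{G}(\Lambda_j)$ for Droms graphs $\Lambda_j$. Then $G = \mathcal{G}(\Gamma)$ for $\Gamma := K_m \oplus (\Lambda_1 \sqcup \cdots \sqcup \Lambda_k)$, and one verifies $\Gamma$ is Droms directly: any induced $P_4$ or $C_4$ in $\Gamma$ cannot contain a vertex of $K_m$ (since no vertex of $P_4$ or $C_4$ is universal) and cannot span distinct $\Lambda_j$ (being connected, while there are no cross-component edges), so it would lie inside a single $\Lambda_j$, contradicting Droms-ness of $\Lambda_j$. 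Conversely, for $\{\text{Droms RAAGs}\} \subseteq \bigcup_i D_i$ I would induct on $|V(\Gamma)|$: the structural lemma gives $\Gamma = K_m \oplus (\Lambda_1 \sqcup \cdots \sqcup \Lambda_k)$ with each $\Lambda_j$ Droms on strictly fewer vertices, each $\mathcal{G}(\Lambda_j)$ lies in some $D_{i_j}$ by induction, and setting $i = \max_j i_j$ yields $\mathcal{G}(\Gamma) = \mathbb{Z}^m \times (\mathcal{G}(\Lambda_1) \ast \cdots \ast \mathcal{G}(\Lambda_k)) \in Z_i \subseteq D_{i+1}$.

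The main obstacle is the structural lemma itself, specifically ruling out a nontrivial join structure in the reduced graph $\Delta$ after extracting universal vertices. This is where both hypotheses enter essentially: the no-$P_4$ condition gives the cograph property (and hence a join decomposition for any connected cograph of size $\geq 2$), while the no-$C_4$ condition is what prevents the resulting join $\Delta_1 \oplus \Delta_2$ from having both sides non-complete. Once the lemma is in place, both inductions are essentially bookkeeping.
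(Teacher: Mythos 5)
The paper cites this proposition to Droms and gives no proof of its own, so there is no internal argument to compare against; your proof is correct and is essentially the standard one underlying Droms's recursive description. The structural lemma you isolate --- strip off the universal vertices to get $\Gamma=K_m\oplus\Delta$, use $P_4$-freeness (every connected cograph on at least two vertices is a join) together with $C_4$-freeness (a join with two non-adjacent vertices on each side contains an induced $C_4$) to force one side of any join to be a clique of universal vertices, and conclude that $\Delta$ is empty or disconnected --- is exactly the right decomposition, and both inductions are carried out correctly, including the edge cases $k=0$ (cliques, giving $\mathbb{Z}^m\in Z_1$), the exclusion of $P_4$'s and $C_4$'s meeting $K_m$ or spanning two components, and the nestedness $D_i\subseteq D_{i'}$ for $i\le i'$ needed to take $i=\max_j i_j$. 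The only ingredient you invoke without proof is the cograph decomposition theorem (Seinsche); this is standard, and if one wants the argument fully self-contained it can be replaced by a short direct verification that a connected $P_4$- and $C_4$-free graph on at least two vertices has a universal vertex, so there is no gap.
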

	
	In the next theorem we use the recursive structure of Droms RAAGs given in Definition \ref{defn:structure Droms RAAGs} to describe finitely generated group elementarily equivalent to a Droms RAAG. The main tools are Sela's results on finitely generated groups elementarily equivalent to a free product and our interpretability results of the base structure.
	
	The classification of finitely generated groups elementarily equivalent to a free group or a free product is basically given in terms of a family of groups called hyperbolic towers. We refer the reader to \cite{sela2006diophantineVI,sela2010diophantine} for a precise definition. In our context, the reader can take the definition of hyperbolic tower over a free group (resp. a free product) to be a finitely generated group elementarily equivalent to a non-abelian free group (resp. elementarily equivalent to a given free product).
	
	\begin{thm}  \label{thm:eqdromsraags}
		We use the notation of {\rm Definition \ref{defn:structure Droms RAAGs}}. For Droms RAAGs of height $1$, we have
		\begin{itemize}
			\item A finitely generated group $H$ is elementarily equivalent to $G\in D_1$, i.e. $H\equiv \mathbb Z$, if and only if $H \simeq \mathbb Z$.
			\item A finitely generated group $H$ is elementarily equivalent to $G \in F_1$, i.e. $H\equiv F_n$, $n\in \mathbb N$, if and only if $H$ is a hyperbolic tower.
			\item A finitely generated group $H$ is elementarily equivalent to $\mathbb Z^m \times G$, $G\in F_1$ if and only if $H \simeq \mathbb Z^m \times T$, where $T$ is a hyperbolic tower.
		\end{itemize}
		
		Assume that for $G\in D_{i-1}, F_{i-1}, Z_{i-1}$ we have a description of finitely generated groups $H$ elementarily equivalent to $G$.   For Droms RAAGs of height $i$, we have
		\begin{itemize}
			\item Let $G\in D_i$. By definition $G\in Z_j$ for some $j< i$ and so by induction we have described $H \equiv G$.
			\item Let $G\in F_i$, that is
			$$
			G= \bfrp\limits_{j=1, \dots, i-1} \left( G_{j,1} \ast \dots \ast G_{j,k_j}\right),
			$$
			where $G_{j,l}\in D_{j}$ for all $l=1, \dots, k_j$. A finitely generated group $H \equiv G$  if and only if $H$ is a hyperbolic tower over
			$$
			\bfrp\limits_{j=2, \dots,i-1} \left( H_{j,1} \ast \dots \ast H_{j,k_j}\right)
			$$
			and $H_{j,l}\equiv G_{j,l}$ for all $l$.
			
			\item Let $G\in Z_i$, that is $G=\mathbb Z^m \times G'$, where $G'\in F_i$. A f.g $H \equiv G$ if and only if $H\simeq \mathbb Z^m \times H'$ and $H' \equiv G'$.
		\end{itemize}
	\end{thm}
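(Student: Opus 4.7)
The plan is to prove the theorem by induction on the height $i$ of the Droms RAAG $G$ in the stratification of Definition \ref{defn:hheight}, handling both directions of the biconditional simultaneously at each step. The ``if'' direction follows in every case from standard preservation facts: elementary equivalence is preserved under direct products with a fixed finitely generated abelian factor, and under free products via the work of Sela \cite{sela2010diophantine}, while a hyperbolic tower is by construction elementarily equivalent to its floor. The substantive content therefore lies in the ``only if'' direction.

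For the base case $i = 1$, the group $\mathbb{Z}$ is axiomatizable among finitely generated groups (as the unique f.g.\ torsion-free abelian group of rank one); the case $G = F_n$ with $n \geq 2$ is Sela's/Kharlampovich--Myasnikov's theorem; and $G = \mathbb{Z}^m \times F_n$ with $n \geq 2$ reduces to the $F_n$ case via the definable center $Z(G) = \mathbb{Z}^m$ together with the splitting argument discussed in the last paragraph. For the inductive step with $G = \bfrp_{j,l}G_{j,l} \in F_i$, I would invoke Sela's classification of finitely generated groups elementarily equivalent to a free product \cite{sela2010diophantine}: the factors with $j \geq 2$ are freely indecomposable and non-cyclic (having the form $\mathbb{Z}^{m_{j,l}} \times (\cdot)$ with $m_{j,l} \geq 1$), whereas the factors with $j = 1$ are cyclic and get absorbed into the hyperbolic tower structure, which matches the indexing $j = 2, \dots, i-1$ in the theorem statement. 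Sela's theorem then yields that any f.g.\ $H \equiv G$ is a hyperbolic tower over $\bfrp_{j \geq 2, l} H_{j,l}$ with $H_{j,l} \equiv G_{j,l}$, and the inductive hypothesis applied to each $H_{j,l}$ (which has strictly smaller height) supplies the recursive description.

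The remaining case, $G = \mathbb{Z}^m \times G' \in Z_i$, is the most delicate. Here $Z(G) = \mathbb{Z}^m$ is definable because $G' \in F_i$ is a nontrivial free product of torsion-free Droms RAAGs and is therefore centerless, so from $H \equiv G$ one obtains $Z(H) \simeq \mathbb{Z}^m$ (the unique f.g.\ abelian group elementarily equivalent to $\mathbb{Z}^m$) and $H/Z(H) \equiv G'$, which is described by the previous case. The main obstacle is to show that the central extension $1 \to Z(H) \to H \to H/Z(H) \to 1$ splits, so that $H \simeq \mathbb{Z}^m \times H'$ with $H' \equiv G'$. To accomplish this I would combine two ingredients: first, the first-order property $Z(G) \cap [G,G] = 1$ bounded by any fixed commutator width transfers from $G$ to $H$, forcing $Z(H)$ to intersect any bounded-width product of commutators in $H$ trivially; second, the uniform interpretability of the extended core provided by Corollary \ref{cor:main interpretation}, applied to $G$ viewed as a graph product, singles out the $\mathbb{Z}^m$-vertex subgroup in a way that persists to $H$, pinning down a preferred complement. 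Lifting a generating set of the hyperbolic tower description of $H/Z(H)$ and using the centralizer structure forced by the tower over a free product of centerless groups then produces a section realizing the direct product decomposition. This splitting step is precisely where the combination of Sela's structural results on free products and the interpretability machinery of this paper is essential, and is the main obstacle of the proof.
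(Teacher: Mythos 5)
Your inductive skeleton matches the paper's, but the two substantive steps are handled differently and in both places your version has a gap. For the case $G\in F_i$, you attribute the factor matching ($H_{j,l}\equiv G_{j,l}$ with $i=i'$ and $k_j=k_j'$) directly to ``Sela's classification.'' The paper does not get this from the free-product classification alone: it first uses \cite{CK_free_products} to exhibit $H$ as a hyperbolic tower over \emph{some} free product $\bfrp H_{j,l}$, then uses the fact that a tower is elementarily equivalent to its base to reduce to two elementarily equivalent free products, and only then invokes Corollary \ref{cor:main interpretation} --- the interpretability of the core, which applies because the non-cyclic factors have nontrivial center and hence satisfy a simple non-generic almost positive sentence --- to conclude that the non-cyclic factors correspond bijectively and are pairwise elementarily equivalent. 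This is precisely the point where the machinery of the paper enters; your proposal skips it, and without it the identification of the factors of the base of $H$'s tower with the $G_{j,l}$ is not justified.

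For the case $G=\mathbb Z^m\times G'\in Z_i$, the paper does not prove the splitting of the central extension at all: it cites Corollary A of \cite{casals2010elements}, which directly yields $H\simeq Z(H)\times H'$ with $Z(H)\equiv\mathbb Z^m$ and $H'\equiv G'$. Your proposed replacement is incomplete: transferring the sentences ``$Z\cap\{\text{products of }\le k\text{ commutators}\}=1$'' gives $Z(H)\cap[H,H]=1$, but that alone does not produce a complement to $Z(H)$ in $H$ --- you would still need the image of $Z(H)$ in $H^{\mathrm{ab}}$ to be a direct summand and a compatible lift of a generating set of $H/Z(H)$, and the appeal to ``centralizer structure forced by the tower'' does not supply this. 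Since you yourself flag this as the main obstacle, the honest conclusion is that your argument is not closed there, whereas the paper closes it by citation to a prior result rather than by the interpretability machinery you invoke.
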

	\begin{proof}
		The base of induction follows from results of Szmielew for abelian groups, see \cite{Szmielew} and Sela and Kharlampovich-Miasnikov for free groups, see \cite{sela2006diophantineVI} and \cite{kharlampovich2006elementary}. If $H \equiv \mathbb Z^m \times G$, then from Corollary A in \cite{casals2010elements} we have that $H\simeq Z(H) \times H'$ where $Z(H)\equiv \mathbb Z^m$ and $H'\equiv G$. As $H$ is finitely generated so are $Z(H)$ and $H'$. Then as we argued above, we have that $Z(H)\simeq \mathbb Z^m$ and $H'$ is a hyperbolic tower.
		
		For the induction step, suppose that $G \in F_i$.
		If
		$$
		H \equiv G= \bfrp\limits_{j=1, \dots, i-1} \left( G_{j,1} \ast \dots \ast G_{j,k_j}\right),
		$$
		by \cite{CK_free_products} it follows that $H$ is a hyperbolic tower over $\bfrp\limits_{j=1, \dots,i'-1} \left( H_{j,1} \ast \dots \ast H_{j,k_j'}\right)$, where $H_{l,j} \in D_j$. Sela shows in \cite{sela2010diophantine} that the hyperbolic tower is elementarily equivalent to the base and so we have that
		$$
		G= \bfrp\limits_{j=1, \dots, i-1} \left( G_{j,1} \ast \dots \ast G_{j,k_j}\right) \equiv \bfrp\limits_{j=1, \dots,i'-1} \left( H_{j,1} \ast \dots \ast H_{j,k_j'}\right).
		$$
		
		Since $G_{j,l}$ and $H_{j',l'}$ are cyclic for $j=1$ and have nontrivial center (and are non-cyclic) for $j>1$ (and so  they satisfy a simple non-generic almost positive sentence), it follows from Corollary \ref{cor:main interpretation} that $G_{j,l}$ and $H_{j',l'}$ are interpretable for $j,j'>1$ and so $i=i'$, $k_j=k_j'$, for $j\in \{2, \dots, i\}$ and up to reordering we have that $G_{j,l} \equiv H_{j,l}$ for $j\in \{2, \dots, i\}$ and $l\in \{k_1, \dots, k_j\}$. Furthermore, since the core of $\Gamma$ is interpretable, we have that if $G=G_{i,1} \ast \mathbb Z$, then $H=H_{i_1} \ast F_n$ where $n \ge 1$. If $G$ has more than one factor non-cyclic, $k_1' \ge 0$.
		
		The converse is a consequence of Sela's results. Indeed, if $G_{j,l} \equiv H_{j,l}$ for $j\ge 2$ and $l\in \{1, \dots, k_j\}$ and if $G=G_{i,1} \ast \mathbb Z$, then $H=H_{i_1} \ast F_n$ where $n \ge 1$ then
		$$
		G= \bfrp\limits_{j=1, \dots, i-1} \left( G_{j,1} \ast \dots \ast G_{j,k_j}\right) \equiv \bfrp\limits_{j=1, \dots,i'-1} \left( H_{j,1} \ast \dots \ast H_{j,k_j'}\right).
		$$
		
		Since by Sela's results $\bfrp\limits_{j=1, \dots,i'-1} \left( H_{j,1} \ast \dots \ast H_{j,k_j'}\right)$ is elementarily equivalent to a hyperbolic tower over it, the result follows.
	\end{proof}

	\begin{thm} \label{thm:eqdromsracgs}
		We use the notation of {\rm Definition \ref{defn:structure Droms RACGs}} and {\rm Definition \ref{defn:hheight}}, which is adapted to the case of RACG in an obvious way.
		
		For Droms right-angled Coxeter groups of height $1$, we have
		\begin{itemize}
			\item A finitely generated group $H$ is elementarily equivalent to $G\in D_1$, i.e. $H\equiv \mathbb Z_2$, if and only if $H \simeq \mathbb Z_2$.
			\item If a finitely generated group $H$ is elementarily equivalent to $G \in F_1$, $G=*_{i=1,\dots, k}\mathbb Z_2$, $k\ge 2$, then if $k=2$, then $H\simeq D_\infty$, otherwise,  $H\equiv *_{i=1,\dots, k}\mathbb Z_2$ is a hyperbolic tower over $G$.
			\item A finitely generated group $H$ is elementarily equivalent to $\mathbb Z_2^m \times G$, $G\in F_1$ if and only if $H \simeq \mathbb Z_2^{m} \times D_\infty$.
		\end{itemize}
		
		Assume that for $G\in D_{i-1}, F_{i-1}, Z_{i-1}$ we have a description of finitely generated groups $H$ elementarily equivalent to $G$.
		
		For Droms right-angled Coxeter groups of height $i$, we have
		\begin{itemize}
			\item Let $G\in D_i$. By definition, $G\in Z_j$ for some $j< i$ and so by induction we have described $H \equiv G$.
			\item Let $G\in F_i$, that is
			$$
			G= \bfrp\limits_{j=1, \dots, i-1} \left( G_{j,1} \ast \dots \ast G_{j,k_j}\right),
			$$
			where $G_{j,l}\in D_{j}$ for all $l=1, \dots, k_j$. A finitely generated group $H \equiv G$  if and only if $H$ is a hyperbolic tower over
			$$
			\bfrp\limits_{j=2, \dots,i-1} \left( H_{j,1} \ast \dots \ast H_{j,k_j}\right)
			$$
			and $H_{j,l}\equiv G_{j,l}$ for all $l$.
			
			\item Let $G\in Z_i$, that is $G=\mathbb Z_2^m \times G'$, where $G'\in F_i$. A finitely generated $H \equiv G$ if and only if $H\simeq \mathbb Z_2^m \times H'$ and $H' \equiv G'$.
		\end{itemize}
	\end{thm}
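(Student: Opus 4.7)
The plan is to mirror the proof of Theorem~\ref{thm:eqdromsraags}, replacing the infinite cyclic vertex groups with $\mathbb{Z}_{2}$ and substituting $D_{\infty}$ in the role played by $\mathbb{Z}$ inside the weak-vertex analysis. The proof proceeds by induction on the height $i$ in the recursive description of Droms RACGs, using three main ingredients: (a) the classical description of finitely generated groups elementarily equivalent to a free product, via \cite{CK_free_products} and \cite{sela2010diophantine}; (b) the center-splitting result for groups with nontrivial abelian center (Corollary A of \cite{casals2010elements}); and (c) our interpretability theorem, Corollary~\ref{cor:main interpretation}, applied to the graph product decomposition of the given group, noting that $\mathbb{Z}_{2}$ satisfies the nontrivial positive sentence $\forall x\, x^{2}=1$, so the class of vertex groups appearing satisfies a simple non-generic almost positive sentence.

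For the base cases ($i=1$), the group $\mathbb{Z}_{2}$ is finite, so any finitely generated $H\equiv\mathbb{Z}_{2}$ is isomorphic to $\mathbb{Z}_{2}$ by finite rigidity. For $G=\bfrp_{l=1}^{k}\mathbb{Z}_{2}$ with $k=2$, we have $G=D_{\infty}$, which is virtually $\mathbb{Z}$; a standard argument (finiteness of ends plus the classification of finitely generated groups elementarily equivalent to $\mathbb{Z}$) gives $H\simeq D_{\infty}$. For $k\geq 3$, the group $G$ is hyperbolic and non-elementary, and Sela's classification of finitely generated groups elementarily equivalent to a free product \cite{sela2010diophantine} yields that $H$ is a hyperbolic tower over $G$. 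For $G=\mathbb{Z}_{2}^{m}\times G'$ with $G'\in F_{1}$, we apply \cite[Corollary~A]{casals2010elements} to conclude $H\simeq Z(H)\times H'$ with $H'\equiv G'$; since $Z(H)\equiv\mathbb{Z}_{2}^{m}$ is finite, finite rigidity gives $Z(H)\simeq\mathbb{Z}_{2}^{m}$, and the previous cases identify $H'$.

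For the induction step with $G\in F_{i}$, write $G=\bfrp_{j,l}G_{j,l}$ as in the statement. If $H\equiv G$, then by \cite{CK_free_products} the group $H$ decomposes as a hyperbolic tower over a free product $\bfrp_{j,l}H_{j,l}$ where each $H_{j,l}$ is freely indecomposable and non-cyclic (for the non-$\mathbb{Z}_{2}$ pieces), hence elementarily equivalent to a $D_{j'}$-group. Sela's results ensure that the tower is elementarily equivalent to its base, so $G\equiv\bfrp_{j,l}H_{j,l}$. At this point the key input is Corollary~\ref{cor:main interpretation}: the groups $G_{j,l}$ with $j\geq 2$ have nontrivial center (since $\mathbb{Z}_{2}^{m}$ with $m\geq 1$ is a direct factor), hence satisfy a simple non-generic almost positive sentence, and are directly indecomposable modulo free product factors, so by the interpretation result they are matched up to elementary equivalence with the $H_{j,l}$. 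Invoking the induction hypothesis on each $G_{j,l}$, $j\geq 2$, then completes the description. The case $G\in Z_{i}$, i.e.\ $G=\mathbb{Z}_{2}^{m}\times G'$ with $G'\in F_{i}$, reduces to the $F_{i}$ case by another application of \cite[Corollary~A]{casals2010elements}.

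The main technical obstacle will be in the induction step for $G\in F_{i}$, specifically in correctly tracking the number of $\mathbb{Z}_{2}$-free factors of level $1$. In the RAAG case this number is genuinely not an elementary invariant (it is absorbed into the hyperbolic tower), and the same phenomenon occurs here for the rank-one pieces isomorphic to $D_{\infty}$ or $\mathbb{Z}_{2}$ which sit inside the tower; we must argue that the interpretability of the (extended) core of the underlying graph product combined with Sela's tower theory does capture the correct isomorphism data for the higher-level factors $G_{j,l}$, $j\geq 2$, while allowing the free-product atoms at level $1$ to be modified only by additional free factors accounted for by the tower. The other subtlety, requiring care but not novelty, is that in the converse direction one must verify that any such candidate $H$ is actually elementarily equivalent to $G$: this follows by combining Sela's statement that a hyperbolic tower is elementarily equivalent to its base with the fact that elementary equivalence of the top-level factors propagates to their free product, which is again a consequence of the first-order nature of free product decompositions together with the interpretability of factors.
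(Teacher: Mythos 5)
Your overall strategy coincides with the paper's: the proof is declared to be identical to that of Theorem~\ref{thm:eqdromsraags}, with the \emph{only} new ingredient being the claim that a finitely generated group elementarily equivalent to $D_\infty$ is isomorphic to $D_\infty$. That claim is precisely where your argument has a genuine gap. You justify it by ``finiteness of ends plus the classification of finitely generated groups elementarily equivalent to $\mathbb{Z}$,'' but the number of ends (equivalently, being virtually $\mathbb{Z}$) is not known to be preserved under elementary equivalence, so from $H\equiv D_\infty$ alone you cannot conclude that $H$ is two-ended. Without first establishing that $H$ is virtually cyclic, the ``standard argument'' you invoke does not get started, and general rigidity results for finitely generated virtually abelian groups (Oger-type theorems) are not applicable either, since they presuppose that \emph{both} groups are virtually abelian.

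The paper closes this gap with a definability trick: the formula $\phi(x)\equiv x=1 \vee \bigl(x^2\ne 1 \wedge \forall y\ x^y=x^{\pm 1}\bigr)$ defines in $D_\infty$ its infinite cyclic subgroup of index $2$, and both ``$\phi$ defines a subgroup'' and ``that subgroup has index $2$'' are first-order, so they transfer to any $H\equiv D_\infty$. Then $\phi(H)$ is a definable index-$2$ subgroup with $\phi(H)\equiv\mathbb{Z}$; being of finite index in a finitely generated group it is finitely generated, hence isomorphic to $\mathbb{Z}$ by Szmielew, and $H$ is then a nontrivial semidirect product of $\mathbb{Z}$ by $\mathbb{Z}_2$, i.e.\ $H\simeq D_\infty$. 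If you replace your ends argument by this (or an equivalent definable-subgroup argument), the rest of your proposal — the base cases via finite rigidity and \cite[Corollary A]{casals2010elements}, and the induction step via \cite{CK_free_products}, Sela's tower-equals-base theorem, and Corollary~\ref{cor:main interpretation} — matches the paper's intended proof.
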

	\begin{proof}
		Proof is identical to that of Theorem \ref{thm:eqdromsraags} once we settle the case of finitely generated groups which are elementarily equivalent to the infinite dihedral group. If $G$ is finitely generated and elementarily equivalent to $D_\infty$, we claim that $G$ is isomorphic to $D_\infty$. Indeed, the formula $\phi(x):\ x=1 \vee (x^2\ne 1 \wedge \forall y \, x^y = x^{\pm 1})$ defines an index $2$ subgroup of $D_\infty$. In fact, $D_\infty$ is the nontrivial semidirect product of the infinite cyclic subgroup $\phi(D_\infty)$ and an element of order $2$. If $G$ is finitely generated and $G\equiv D_\infty$, we have that $\phi(G)\equiv \phi(D_\infty)\simeq \mathbb Z$ and $\phi(G)$ is of index $2$ in $G$. Since $G$ is finitely generated, so is $\phi(G)$ and as $\phi(G)\equiv \mathbb Z$, we have that $\phi(G)\simeq \mathbb Z$. Then $G$ is the nontrivial semidirect product of an infinite cyclic group and the group of order $2$ and so it is isomorphic to $D_\infty$.
	\end{proof}

	\section{Open questions}
	
	In this section we collect some of the questions that arise naturally from our work.
	
	\bigskip
	
	\paragraph{\textbf{From simple to all non-generic almost positive sentences}}
	
	The most general condition that we require to the vertex groups is that they satisfy a \emph{simple} non-generic almost positive sentence, and so in particular, we only allow for one inequality. Can one generalise the results to non-generic almost positive sentences and so consider finitely many inequalities? More precisely, we pose the following
	
	\begin{question}
		
		Let $\G(\Gamma, \gvv) \in \mc{G}(\textbf{F}_N, \textbf{C}_{\Phi}^r)$ {\rm(}see {\rm Definition \ref{d: K_N_and_C})}, where $N>0$ and $\Phi$ consists of non-generic almost positive sentences and $\Gamma$ is almost positive reduced for any $\G(\Gamma,\gvv)$ in $\textbf C$.
		
		Is the $\Core(\Gamma, \gvv)$ {\rm(}uniformly{\rm)} interpretable in $\G$?
	\end{question}
	
	\paragraph{\textbf{Characterisation of groups that have generic almost positive theory.}}
	
	Given a theory, that is a set of sentences, it is natural to try to describe the groups that are models of that theory.
	
	Makanin describe the positive sentences that are satisfied by non-abelian free groups - they are precisely the positive sentences that admit formal solutions. We then can wonder what are groups that satisfy the same positive sentences as a free group, i.e. what are the groups with trivial positive theory.
	
	A way to establish that a group has trivial positive theory is to show that it projects to a free group or more generally to any group with trivial positive theory, since if a group satisfies a positive sentence, so does any quotient.
	
	The most general tool up to date to show that a group has trivial positive theory is via small cancellation techniques. In \cite{casals2019positive}, we gave sufficient conditions for a finitely generated group acting on a simplicial tree to have trivial positive theory and in \cite{Simon} the authors show that in fact all acylindrically hyperbolic groups have trivial positive theory.
	
	As the definition of non-generic almost positive sentences is given by the existence of a (relative) formal solution, we also expect that small cancellation tools may be useful to describe classes of groups that do not satisfy any non-generic almost positive sentence.
	
	For instance, one can deduce from the deep work of Sela on the elementary theory of torsion-free hyperbolic groups, that these groups do not satisfy any non-generic almost positive sentence. One can then ask the following
	
	\begin{question}
		Can acylindrically hyperbolic groups satisfy a non-generic almost positive sentence?
	\end{question}
	
	We expect that the answer will be negative. As we mentioned, in \cite{casals2019positive}, we show that a finitely generated group acting on a tree with weak small cancellation elements has trivial positive theory. Roughly speaking, a tuple of elements is weakly small cancellation if it is ``small cancellation modulo the kernel of the action on the tree", see Definition 3.2 in \cite{casals2019positive}. Notice that having weak small cancellation elements is not enough to assure that a group cannot satisfy a non-generic almost positive sentence - indeed, $\mathbb Z\times F_2$ contains weak small cancellation elements but satisfies a non-generic almost positive sentence, namely it has a nontrivial center.
	
	Consider the class $\mathbf W$ of finitely generated groups acting on a simplicial tree that contain weakly small cancellation elements. For each finite tuple $W$ of weakly hyperbolic elements, let $K_W$ be the intersection of the kernels of the axis of the elements in $W$. Consider the subclass of groups $\mathbf K \subset \mathbf W$ that satisfy that the intersection of all the kernels $K_W$ where $W$ runs over all weak small cancellations tuples is trivial, i.e. $\bigcap\limits_{W \hbox{ weak small cancellation}} K_W= 1$. One can ask the following
	
	\begin{question}
		Do groups from the class $\mathbf K$ satisfy a non-generic almost positive sentence?
	\end{question}
	
	Notice that non-solvable Baumslag-Solitar groups belong to the class $\mathbf K$ so a concrete question is the following one.
	
	\begin{question}
		Do non-solvable Baumslag-Solitar groups satisfy a non-generic almost positive sentence?
	\end{question}
	
	\paragraph{\textbf{More general sentences.}}
	
	Our results apply to groups that satisfy a simple non-generic almost positive sentence (and conjecturally, to all non-generic almost positive sentences). However, we believe that the theory could be wider and contain more general sentences.
	
	We expect that non-solvable Baumslag-Solitar groups do not satisfy a non-generic almost positive sentence. However, they do satisfy the following sentence
	$$
	\phi \equiv \exists z\ne 1 \forall x \exists y\ne 1 \,\, ([z,y]=1 \wedge [y,y^x]=1.
	$$
	
	Although the sentence is not almost positive, it is in some sense ``non-generic": the only formal solution for the positive part of the sentence $([z,y]=1 \wedge [y,y^x]=1$ is $y=1$ and so formal solutions cannot ``witness" the inequalities. In this case, one can proof that in the free product of non-solvable Baumslag-Solitar groups the factors are interpretable.
	
	We believe that one should be able to define a theory of ``non-generic generalize positive" sentences and extend our results to groups that are models of this theory, namely, if one considers a graph product of groups in this class, the core of (an adjusted notion of) a reduced graph and the associated vertex groups should be interpretable. We formulate this in the following
	
	\begin{question}
		What is the maximal theory that allows to recover a (meaningful) base structure of the graph product if the vertex groups are models of that theory?
	\end{question}

	\paragraph{\textbf{The core is an elementary invariant.}}
	
	In the introduction we have conjectured that the core of the graph would determine the elementary class inside the family of RAAGs. In fact, we believe that it is going to characterise the elementary class of groups inside the appropriate class of graph products, more precisely, we conjecture:
	
	\begin{conj}
		Let $\G(\Gamma, \gvv), \G(\Gamma', \{G_v'\}) \in \mc{G}(\textbf{F}_N, \textbf{C}_{\Phi}^r)$ {\rm(}see {\rm Definition \ref{d: K_N_and_C})}, where $N>0$ and either:
		\begin{itemize}
			\item $\Phi$ consists only of nontrivial positive sentences and $\Gamma$ is positive reduced for any $\G(\Gamma,\gvv)$ in $\textbf C$.
			\item $\Phi$ consists non-generic almost positive sentences and $\Gamma$ is almost positive reduced for any $\G(\Gamma,\gvv)$ in $\textbf C$.
		\end{itemize} 
		
		Then $\G(\Gamma, \gvv) \equiv \G(\Gamma', \{G_v'\}_{v\in \Gamma'})$ if and only if there is an isomorphism $f:\Core(\Gamma) \to \Core(\Gamma')$ and $G_v \equiv G_{f(v)}'$.
	\end{conj}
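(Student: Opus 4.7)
The forward implication is essentially Theorem~\ref{thm:elementary equivalence} of the paper, so the content of the conjecture lies entirely in the converse: assuming $f:\Core(\Gamma)\to \Core(\Gamma')$ is an isomorphism of group-labelled graphs with $G_v \equiv G_{f(v)}'$, we must show $\G(\Gamma,\gvv)\equiv \G(\Gamma',\{G_v'\})$. The plan is to reduce this to two independent statements: (i) a \emph{preservation} statement: if two group-labelled graphs have the same underlying graph and pairwise elementarily equivalent vertex groups, then the corresponding graph products are elementarily equivalent; and (ii) a \emph{redundancy} statement: $\G(\Gamma,\gvv)\equiv\G(\Core(\Gamma),\gvv)$ for every $\G$ in the class. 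Granting both, the converse follows by chaining $\G(\Gamma,\gvv)\equiv \G(\Core(\Gamma),\gvv)\equiv \G(\Core(\Gamma'),\{G_v'\})\equiv \G(\Gamma',\{G_v'\})$, where the middle equivalence uses $f$ together with (i).

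For (i), the approach is a Feferman--Vaught style argument tailored to graph products. Choose a non-principal ultrafilter $\mathcal{U}$ on a suitable index set and $\omega$-saturated elementary extensions $G_v \preceq G_v^*$ such that, for each $v\in\Gamma$, we have $G_v^* \cong (G_v')^*$ via an isomorphism $\sigma_v$ compatible with $f$; this is standard from $G_v\equiv G_{f(v)}'$ and Keisler--Shelah. The assignment $v\mapsto \sigma_v$ induces an isomorphism of the ambient graph products $\G(\Gamma,\{G_v^*\})\cong \G(\Gamma,\{(G_v')^*\})$ by the universal property of graph products. One must then show that the natural map $\G(\Gamma,\gvv)\hookrightarrow\G(\Gamma,\{G_v^*\})$ is itself an elementary embedding. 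This is the heart of the argument: the classical Feferman--Vaught theorem handles direct products, and arguments of Sela, as generalized by the first two authors and Remeslennikov (\cite{sela2010diophantine, CK_free_products}), handle free products; the graph product case should follow by an induction on the join/free-product decomposition of $\Gamma$, using the normal form of graph products and the interpretation machinery of Section~\ref{sec: levels} to reduce Boolean combinations of formulas to their restrictions on vertex subgroups.

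For (ii), the plan is to iteratively remove redundant vertices one at a time and show each removal is an elementary equivalence. Let $v$ be redundant with witness $W=\{w_1,\dots,w_k\}\subseteq V(\Gamma)\setminus\{v\}$, so $\bigcap_i\st(w_i)=\link{v}$ and $G_v\equiv\Z$. Write $\Gamma_0=\Gamma\setminus\{v\}$. Using the amalgam decomposition of $\G(\Gamma)$ over $\G(\link{v})$ with the vertex factor $\subg{v}\times \G(\link{v})$, and noting that $\G(\link{v})$ embeds diagonally into $\bigcap_{i=1}^{k}\G(\st(w_i))$ inside $\G(\Gamma_0)$, one constructs an elementary graph tower of $\G(\Gamma_0)$ over $\G(\Gamma)$ in the sense generalized from Sela's hyperbolic towers. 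Concretely, one needs to exhibit test-sequences / shortening quotients showing that every formula satisfied in $\G(\Gamma)$ admits a witness in $\G(\Gamma_0)$ and vice versa. The special case in which all $w_i$ are pairwise non-adjacent in $\Gamma_0$ (so $\subg{w_1,\dots,w_k}$ generates a free product, and $v$ plays the role of an extra free factor absorbable into a floor) is exactly the setting handled in Theorem~\ref{thm:eqdromsraags} for Droms RAAGs, and the general situation should be approachable by induction on the complexity of the link of $v$.

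The principal obstacle is step (ii) in full generality: even for RAAGs this generalizes Sela's free-product tower theorem to an arbitrary commutation pattern among the witnesses $w_i$, and the existing tower technology is engineered for rank-one JSJ-pieces rather than the higher-rank abelian pieces that appear when $\link{v}$ is nontrivial and non-free. One should expect to need a version of the shortening argument relative to $\G(\link{v})$ and its orthogonals, together with a description of definable sets in graph products over an $\ECore$ base analogous to Sela's definable sets over a tower floor. Control of the almost-positive fragment via the machinery of Sections~\ref{sec: generalized positive theory}--\ref{sec: from non-compatible to small cancellation} should, at minimum, guarantee that the shortening quotients land in groups of strictly smaller complexity in the sense of Definition~\ref{d: minimal_graph_subg_and_factors}, allowing an induction on the clique number and the size of $\Gamma$.
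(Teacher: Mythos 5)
The statement you are proving is stated in the paper as an open conjecture (it appears in the ``Open questions'' section, where the authors explicitly describe it as a generalization of Vaught's conjecture from free products to graph products); the paper offers no proof, so your proposal cannot be compared against one. What you have written is a strategy outline rather than a proof, and both of its pillars have genuine gaps. For step (i), your induction ``on the join/free-product decomposition of $\Gamma$'' does not get off the ground for most graphs: a finite simplicial graph need be neither a join nor disconnected (e.g.\ a cycle of length $5$), so the base case of your induction is precisely the directly indecomposable, connected, non-complete case, which is not covered by Feferman--Vaught (direct products) or by Sela's theorem (free products). This case is exactly the content of Vaught's conjecture for graph products and is open even for RAAGs; Keisler--Shelah gives you isomorphic ultrapowers of the vertex groups and hence an isomorphism $\G(\Gamma,\{G_v^*\})\cong\G(\Gamma,\{(G_v')^*\})$, but the claim that $\G(\Gamma,\gvv)\hookrightarrow\G(\Gamma,\{G_v^*\})$ is elementary is not known and does not follow from the interpretation machinery of the paper, which runs in the opposite direction (it recovers invariants from elementary equivalence; it does not establish elementary equivalence).

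For step (ii), you correctly identify the obstacle yourself, but it is worth being explicit that this is not a technical refinement of existing tower technology: the paper only handles the case where the redundant vertex is absorbed into a genuine free-product floor (Theorems \ref{thm:eqdromsraags} and \ref{thm:eqdromsracgs} for Droms RAAGs/RACGs), and no shortening argument or test-sequence construction relative to a nontrivial $\G(\link{v})$ exists in the literature or in this paper. Finally, note a mismatch in your treatment of the forward direction: the conjecture as stated allows $\Phi$ to consist of arbitrary non-generic almost positive sentences, whereas Theorem \ref{thm:elementary equivalence} requires them to be \emph{simple}; the forward implication for non-simple sentences is itself posed as an open question in the same section, so even the half of the statement you dismiss as ``essentially Theorem \ref{thm:elementary equivalence}'' is not fully established by the paper under the hypotheses as written.
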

	
	Notice that the above conjecture is a generalization of Vaught's conjecture on the elementary equivalence of free products of pairs of elementarily equivalent groups to the context of graph products in the class: if $G_{v}\equiv G_{v'}$ for $v\in \Gamma$, then $\G(\Gamma, \gvv)\equiv \G(\Gamma', \{G_v'\}_{v\in \Gamma'})$. In the case of free products, Zlil Sela gave a positive answer in \cite{sela2010diophantine}.
	
	\paragraph{\textbf{From finite to infinite graphs.}}
	
	Although we have formulated most of the main results assuming the graph $\Gamma$ to be finite, in fact, the main graph theoretical property that we need is the Property $K_n$, see Definition \ref{defn:propDn} (which implies property $AP_{2n}$). In this context, we can ask if the above conjecture generalises to infinite graphs. We state it in the following question:
	
	\begin{question}
		Let $\G(\Gamma, \gvv), \G(\Gamma', \{G_v'\}) \in \mc{G}(\textbf{K}_N, \textbf{C}_{\Phi}^r)$ {\rm(}see {\rm Definition \ref{d: K_N_and_C})}, where $N>0$ and either:
		\begin{itemize}
			\item $\Phi$ consists only of nontrivial positive sentences and $\Gamma$ is a (possibly infinite) positive reduced for any $\G(\Gamma,\gvv)$ in $\textbf C$.
			\item $\Phi$ consists non-generic almost positive sentences and $\Gamma$ is (possibly infinite) almost positive reduced for any $\G(\Gamma,\gvv)$ in $\textbf C$.
		\end{itemize} 
		
		If $\G(\Gamma, \gvv) \equiv \G(\Gamma', \{G_v'\}_{v\in \Gamma'})$, then $\Core(\Gamma) \equiv \Core(\Gamma')$.
	\end{question}
	
	One can also consider under which conditions we have a converse.

	A specific question is the following one.
	
	Let $P_\infty$ be the infinite $\mathbb Z$-chain. More precisely, $V(P_\infty)=\{v_n \mid n\in \mathbb Z\}$ and $(v_n,v_m)\in E(P_\infty)$ if and only if $|n-m|=1$. Let $Q_\infty$ be the disjoint union of two copies of $P_\infty$. One can show that $P_\infty \equiv Q_\infty$.

	\begin{question}
		Consider the graph products $\G_1=\mathcal G(P_\infty, \{\mathbb Z\}_{v\in P_\infty})$ and $\G_2=\mathcal G(Q_\infty, \{\mathbb Z\}_{v\in P_\infty})$. Are $\mathcal G_1 \equiv \G_2$?
	\end{question}
	
	\bibliography{references}
	\addcontentsline{toc}{section}{References}

\end{document}